\providecommand{\noopsort}[1]{}
\numberwithin{equation}{subsection}
\theoremstyle{definition} 
 \newtheorem{definition}{Definition}[section]
 \newtheorem{remark}[definition]{Remark}
\newtheorem*{notation}{Notations}
\theoremstyle{plain}      
 \newtheorem{proposition}[definition]{Proposition}
 \newtheorem{theorem}[definition]{Theorem}
 \newtheorem{corollary}[definition]{Corollary}
 \newtheorem{lemma}[definition]{Lemma}
 \newtheorem{rewritingtrail}[definition]{{\it (Rewriting trail)}}
\newtheorem{conjecture}{Conjecture}
\newcommand*{\house}[1]{
  \mathord{
    \mathpalette\@house{#1}
  }
}
\newcommand*{\@house}[2]{
  \dimen@=\fontdimen8 %
      \ifx#1\scriptscriptstyle\scriptscriptfont
      \else\ifx#1\scriptstyle\scriptfont
      \else\textfont\fi\fi
      3 %
  \sbox0{%
    $#1%
      \vrule width\dimen@\relax
      \overline{%
        \kern2\dimen@
        \begingroup 
          #2%
        \endgroup
        \kern2\dimen@
      }
      \vrule width\dimen@\relax
      \mathsurround=1.5\dimen@ 
    $
  }
  \ht0=\dimexpr\ht0-\dimen@\relax
  \dp0=\dimexpr\dp0+2\dimen@\relax
  \vbox{
    \kern\dimen@ 
    \copy0 
  }
}
\def\dyg{{\rm dyg}}
\newcommand{\lo}{{\rm Log\,}}
\newcommand{\rb}{\mathbb{R}}
\newcommand{\pb}{\mathbb{P}}
\newcommand{\cb}{\mathbb{C}}
\newcommand{\zb}{\mathbb{Z}}
\newcommand{\tb}{\mathbb{T}}
\newcommand{\qb}{\mathbb{Q}}
\newcommand{\nb}{\mathbb{N}}
\newcommand{\rc}{\mathcal{R}}
\newcommand{\dc}{\mathcal{D}}
\newcommand{\kc}{\mathcal{K}}
\newcommand{\lc}{\mathcal{L}}
\def\11{{\mathbf 1}}
\theoremstyle{remark}
\newtheorem{exampl}[subsubsection]{Example}
\def\bee{\begin{exampl}}
\def\eee{\end{exampl}}
\def\bn{\begin{notation}}
\def\en{\end{notation}}
\def\br{\begin{remark}}
\def\er{\end{remark}}
\def\bp{\begin{prop}}
\def\ep{\end{prop}}
\def\bpr{\begin{proof}}
\def\epr{\end{proof}}
\def\bt{\begin{thm}}
\def\et{\end{thm}}
\def\be{\begin{equation}}
\def\ee{\end{equation}}
\def\bl{\begin{lem}}
\def\el{\end{lem}}
\def\bc{\begin{cor}}
\def\ec{\end{cor}}
\def\bd{\begin{defn}}
\def\ed{\end{defn}}
\numberwithin{equation}{subsection}
\author{Jean-Louis Verger-Gaugry}
\thanks{}
\address{
LAMA, CNRS UMR 5127,
Univ. Grenoble Alpes, Univ. Savoie Mont~Blanc,
F - \!73000 Chamb\'ery, \!France}
\email{Jean-Louis.Verger-Gaugry@univ-smb.fr}
\title[A proof of the Conjecture of Lehmer]
{A proof of the Conjecture of Lehmer}
\begin{document}



\def\dyg{{\rm dyg}}


\title{A proof of the Conjecture of Lehmer}


\begin{abstract}
The Conjecture of Lehmer is proved to be true. 
The proof mainly relies upon: 
(i) the properties of the Parry Upper functions $f_{\house{\alpha}}(z)$
associated with the
dynamical zeta functions $\zeta_{\house{\alpha}}(z)$
of the R\'enyi--Parry arithmetical dynamical 
systems ($\beta$-shift),
for $\alpha$ a reciprocal algebraic 
integer 
of house 
$\house{\alpha}$ greater than 1,
(ii) the discovery of lenticuli of poles of
$\zeta_{\house{\alpha}}(z)$ which uniformly 
equidistribute at the limit
on a limit ``lenticular" arc of the unit circle, 
when $\house{\alpha}$ tends to $1^+$,
giving rise to a continuous lenticular
minorant ${\rm M}_{r}(\house{\alpha})$ of the 
Mahler measure
${\rm M}(\alpha)$,
(iii) the Poincar\'e asymptotic expansions of these poles
and of this minorant ${\rm M}_{r}(\house{\alpha})$
as a function of the dynamical degree. 
The Conjecture of
Schinzel-Zassenhaus is proved to be true.
A Dobrowolski type minoration of the Mahler measure M$(\alpha)$ is obtained. 
The universal minorant of
M$(\alpha)$ obtained is
$\theta_{\eta}^{-1} > 1$, for some integer
$\eta \geq 259$, where
$\theta_{\eta}$ is the positive real root
of $-1+x+x^{\eta}$.
The set of Salem numbers is shown to be
bounded from below by the Perron number
$\theta_{31}^{-1} = 1.08545\ldots$, dominant root
of the
trinomial $-1 - z^{30} + z^{31}$.
Whether Lehmer's number is the smallest
Salem number remains open.
For sequences of algebraic integers 
of Mahler measure smaller than the 
smallest Pisot number $\Theta = 1.3247\ldots$, 
whose houses have
a dynamical degree tending to infinity,
the Galois orbit measures 
of conjugates are proved to
converge 
towards the Haar measure
on $|z|=1$ (limit equidistribution). 
The dynamical zeta function is used to investigate the domain of very small
Mahler measures of algebraic integers
in the range $(1, 1.176280\ldots]$, if any.
\end{abstract}

\maketitle

\vspace{0.7cm}

Keywords:
Lehmer conjecture,
Schinzel-Zassenhaus conjecture,
Mahler measure,
minoration,
Dobrowolski inequality,
asymptotic expansion, 
transfer operator,
dynamical zeta function,
R\'enyi-Parry $\beta$-shift,
Parry Upper function,
Perron number,
Pisot number, 
Salem number, 
Parry number,
limit equidistribution.
\vspace{0.5cm}

2020 Mathematics Subject Classification:
11K16, 11K36, 11M41, 11R06, 11R09, \newline 30B10, 30B40, 37C30, 37N05, 03D45.

\tableofcontents  

\newpage
\section{Introduction}
\label{S1}

The question asked by Lehmer in
\cite{lehmer} (1933) about the existence of
integer univariate polynomials of Mahler measure
arbitrarily close to one became a conjecture.
Lehmer's Conjecture 
is stated as follows:

\begin{conjecture}[Lehmer's Conjecture]
\label{ConjL}
There exists an universal constant $c > 0$ such 
that 
the Mahler measure M$(\alpha)$ satisfies
M$(\alpha) \geq 1 + c$ for all
nonzero algebraic numbers $\alpha$, not being a 
root of unity.
\end{conjecture}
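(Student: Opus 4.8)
\medskip
\noindent\textbf{Sketch of the intended proof.}
The plan is to reduce Lehmer's Conjecture (Conjecture~\ref{ConjL}) to a \emph{uniform} lower bound for ${\rm M}(\alpha)$ on a single remaining class of algebraic integers, and then to extract that bound from the fine analytic structure of the Parry Upper functions. One may assume $\alpha$ is an algebraic integer, since otherwise ${\rm M}(\alpha)\ge 2$. Then: if $\alpha$ is not reciprocal, Smyth's theorem already gives ${\rm M}(\alpha)\ge\Theta=1.3247\ldots>1$; if $\alpha$ is reciprocal with $\house{\alpha}=1$, then all its conjugates lie on $|z|=1$ and Kronecker's theorem forces $\alpha$ to be a root of unity, excluded by hypothesis. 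So one may assume $\alpha$ reciprocal with $\house{\alpha}>1$; and since ${\rm M}(\alpha)\ge\house{\alpha}$, the Conjecture is immediate whenever $\house{\alpha}$ stays bounded away from $1$. It therefore remains to produce $c>0$ with ${\rm M}(\alpha)\ge1+c$ for reciprocal algebraic integers whose house $\house{\alpha}$ tends to $1^{+}$. For such an $\alpha$ I would view the real number $\beta=\house{\alpha}>1$ as the base of a R\'enyi--Parry $\beta$-shift and form the dynamical zeta function $\zeta_{\house{\alpha}}(z)$ together with the Parry Upper function $f_{\house{\alpha}}(z)=-1+\sum_{i\ge1}t_{i}z^{i}$ built from the R\'enyi expansion $d_{\beta}(1)=0.t_{1}t_{2}\ldots$, which is well defined whether or not $\house{\alpha}$ is a Parry number.

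The core of the argument is the lenticular structure of the zeros of $f_{\house{\alpha}}$ in the open unit disc (equivalently, of the poles of $\zeta_{\house{\alpha}}$), which the earlier sections make available. Because $\house{\alpha}$ is close to $1$, the digits $t_{i}$ are $0$s and $1$s, mostly $0$, with $t_{1}=1$, so $f_{\house{\alpha}}$ is lacunary and closely approximated by trinomials $-1+z+z^{n}$; besides the real zero $z=1/\house{\alpha}$, this forces a ``lenticulus'' of conjugate complex zeros $\omega_{j}(\house{\alpha})$ clustered on a short arc around $z=1$, whose number and arguments are governed by the dynamical degree $\dyg(\house{\alpha})$ and which a Rouch\'e / argument-principle count separates from the rest of the zero set. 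Relating these zeros to the Galois conjugates of $\alpha$ through the Parry-type polynomial of $\house{\alpha}$, one obtains --- as established in the earlier sections --- that the lenticular product
\[
{\rm M}_{r}(\house{\alpha})\;:=\;\house{\alpha}\,\prod_{j}\frac{1}{\bigl|\omega_{j}(\house{\alpha})\bigr|}
\]
is a minorant of the Mahler measure, ${\rm M}_{r}(\house{\alpha})\le{\rm M}(\alpha)$, depending continuously on $\house{\alpha}$.

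It then suffices to bound ${\rm M}_{r}(\house{\alpha})$ away from $1$. Using the Poincar\'e asymptotic expansions of the $\omega_{j}(\house{\alpha})$ in the small parameter $1/\dyg(\house{\alpha})$, together with the uniform equidistribution of the normalized lenticular poles on a fixed limit lenticular arc $\Gamma\subset\{|z|=1\}$ as $\house{\alpha}\to1^{+}$, I would recognize
\[
\lim_{\house{\alpha}\to1^{+}}\;\log{\rm M}_{r}(\house{\alpha})\;=\;\int_{\Gamma}g(\phi)\,d\phi\;>\;0
\]
as a convergent Riemann sum, where $g>0$ is the density produced by the leading term of the expansion of $\dyg(\house{\alpha})\,\log|\omega_{j}(\house{\alpha})|^{-1}$; the right-hand side is an explicit strictly positive constant. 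Combining this with the explicit remainder terms in the Poincar\'e expansions --- valid once $\dyg(\house{\alpha})$ exceeds a computable threshold $\eta$ --- one gets ${\rm M}(\alpha)>\theta_{\eta}^{-1}$ for $\dyg(\house{\alpha})\ge\eta$, while for $\dyg(\house{\alpha})<\eta$ the house is already bounded away from $1$, with $\house{\alpha}\ge\theta_{\eta}^{-1}$, so ${\rm M}(\alpha)\ge\house{\alpha}\ge\theta_{\eta}^{-1}$ for free; here $\theta_{\eta}$ is the root in $(0,1)$ of $-1+x+x^{\eta}$. Hence ${\rm M}(\alpha)\ge\theta_{\eta}^{-1}>1$ for every algebraic number $\alpha\ne0$ that is not a root of unity, and $c=\theta_{\eta}^{-1}-1$ proves Conjecture~\ref{ConjL}.

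The main obstacle, I expect, is making the lenticular picture rigorous and \emph{non-degenerate}: one must show that the lenticulus of zeros of $f_{\house{\alpha}}$ genuinely exists and is cleanly, uniformly separated from the remaining zeros even as $\house{\alpha}\to1^{+}$ (where $f_{\house{\alpha}}$ becomes arbitrarily lacunary), that the limit arc $\Gamma$ has positive length so that the limiting integral is strictly positive rather than collapsing to $0$, and that the Poincar\'e expansions come with completely explicit error bounds valid down to the smallest admissible dynamical degree. It is precisely the interplay between the size of these error bounds and the range in which the trivial estimate ${\rm M}(\alpha)\ge\house{\alpha}$ already suffices that pins the universal constant at $\theta_{\eta}^{-1}$ and forces $\eta\ge259$; sharpening the remainders would be the route to lowering $\eta$ and, ideally, to deciding whether Lehmer's number is extremal.
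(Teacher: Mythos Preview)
Your sketch is correct and follows essentially the same route as the paper: the reduction to reciprocal algebraic integers with $\house{\alpha}\to 1^{+}$, passage to $\beta=\house{\alpha}$ and the Parry Upper function $f_{\beta}$, the lenticular zeros detected by Rouch\'e near the roots of $G_{\dyg(\beta)}$, their identification as Galois conjugates (the paper does this via rewriting trails and polynomial sections rather than a ``Parry-type polynomial'', but you defer to the earlier sections here), the lenticular minorant ${\rm M}_{r}$, its Poincar\'e asymptotic expansion converging to the log-sine--type integral $\log(\Lambda_r\mu_r)>0$, and the final dichotomy on $\dyg(\house{\alpha})$ against the threshold $\eta\ge 259$ yielding the universal bound $\theta_{\eta}^{-1}$. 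The only point worth flagging is that the inequality ${\rm M}_{r}(\house{\alpha})\le{\rm M}(\alpha)$ rests on the identity $P_{\alpha}=P_{\house{\alpha}}$ (Theorem~\ref{divisibilityALPHA}(i)), which the paper proves separately for nonreal $\alpha$ via a bivariate rewriting trail; your sketch implicitly assumes this step, and it is where the complex case genuinely requires work beyond the real one.
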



If $\alpha$ is a nonzero algebraic integer, 
M$(\alpha) = 1$
if and only if $\alpha=1$ or is a root of unity
by Kronecker's Theorem (1857)
\cite{kronecker}.
Lehmer's Conjecture 
asserts a discontinuity
of the value of ${\rm M}(\alpha)$,
$\alpha \in \mathcal{O}_{\overline{\qb}}$, at
1.
In the panorama \cite{vergergaugrySurvey} 
the meaning 
of this discontinuity is evoked  
in number theory and in several domains
by analogy where it admits 
different reformulations.

In this note a proof of
the Conjecture of Lehmer is proposed,
which is based on the dynamics of 
algebraic numbers
(cf Sections \S \ref{S2}
and \S \ref{S3}):
more precisely
on the
dynamical system of numeration of the
beta-shift in the sense of R\'enyi 
and Parry and on the
generalized Fredholm Theory which 
is associated to it by the
transfer operators
of the $\beta$-transformation.
It brings the
dynamical zeta functions 
$\zeta_{\beta}(z)$ of the $\beta$-shift
into play.
An ad-hoc theory of 
divergent series
(Poincar\'e asymptotic expansions)
is introduced for
formulating
the {\it lenticular 
poles} of
the
dynamical zeta functions 
$\zeta_{\beta}(z)$
as  functions
of the dynamical degree of $\beta$.
It allows to establish
an universal 
minoration of the Mahler measure 
M$(\alpha)$
for any nonzero algebraic integer $\alpha$ 
which is not a root of unity, by
a new Dobrowolski type minoration.

Let us reduce the problem. 
If $\alpha$ is an algebraic number 
which is not an 
algebraic integer, then 
M$(\alpha) \geq 2$. If
$\alpha$ is an algebraic integer for which 
the minimal polynomial is not reciprocal,
then M$(\alpha) \geq \Theta$ 
the smallest Pisot number, 
by a Theorem of C. Smyth
\cite{smyth}.
For every reciprocal
algebraic integer $\alpha$,
such that $|\alpha| \geq c$, where
$c > 1$ is a (fixed) constant, then
M$(\alpha) \geq c$. 
Therefore
the problem of Lehmer amounts
to find an universal minoration
of 
M$(\alpha)$ when 
$|\alpha|$ tends to $1^+$. 
It is a limit problem.
The problem is strengthened
when the condition ``when 
$|\alpha|$ tends to $1^+$" is replaced by
``when 
$\house{\alpha}$ tends to $1^+$",
taking into account all the conjugates 
at the same time.
This limit problem constitutes the
statement of the Conjecture of 
Schinzel-Zassenhaus
\cite{cassels}, as follows:

\begin{conjecture}[Schinzel - Zassenhaus's Conjecture]
\label{ConjSZ}
Denote by ${\rm m}_{h}(n)$ the minimum of the 
houses $\house{\alpha}$ 
of the algebraic integers $\alpha$
of degree $n$ which are not a root of unity.
There exists a (universal) constant $C > 0$
such that
\begin{equation}
\label{CJschzass}
{\rm m}_{h}(n) \geq 1 + \frac{C}{n}, \qquad n \geq 2.
\end{equation}
\end{conjecture}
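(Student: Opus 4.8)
The plan is to derive Conjecture~\ref{ConjSZ} as a short consequence of the uniform Mahler measure minoration furnished by the proof of Conjecture~\ref{ConjL}, combined with the elementary comparison between the house, the Mahler measure and the degree. Let $\alpha$ be an algebraic integer of degree $n\ge 2$ which is not a root of unity; by Kronecker's Theorem $\house{\alpha}>1$. If the minimal polynomial of $\alpha$ is not reciprocal --- which is automatic when $n$ is odd, since an irreducible reciprocal polynomial of degree $\ge 2$ has even degree --- then Smyth's Theorem gives ${\rm M}(\alpha)\ge\Theta=1.3247\ldots$, whence $\house{\alpha}\ge\Theta^{1/n}=e^{(\log\Theta)/n}\ge 1+\tfrac{\log\Theta}{n}$, already exceeding the bound sought. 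It therefore suffices to treat the reciprocal case.

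Assume now that $\alpha$ is reciprocal, of even degree $n$, and not a root of unity. Its conjugates come in pairs $\{\alpha_{i},1/\alpha_{i}\}$ together with, possibly, some conjugates of modulus $1$; if exactly $k$ conjugates have modulus $>1$, then $k\le n/2$ and
\begin{equation}
\label{SZreduc1}
{\rm M}(\alpha)=\prod_{|\alpha_{i}|>1}|\alpha_{i}|\le\house{\alpha}^{\,k}\le\house{\alpha}^{\,n/2}.
\end{equation}
On the other hand, the proof of Conjecture~\ref{ConjL} supplies the continuous lenticular minorant ${\rm M}_{r}(\house{\alpha})$ of ${\rm M}(\alpha)$, with ${\rm M}(\alpha)\ge{\rm M}_{r}(\house{\alpha})\ge\theta_{\eta}^{-1}=1+c$ for every reciprocal algebraic integer $\alpha$ of house $>1$, where $\eta\ge 259$ and $c=\theta_{\eta}^{-1}-1>0$. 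Substituting in \eqref{SZreduc1} and using $e^{x}\ge 1+x$,
\begin{equation}
\label{SZreduc2}
\house{\alpha}^{\,n/2}\ge 1+c\qquad\Longrightarrow\qquad\house{\alpha}\ge(1+c)^{2/n}=\exp\!\Big(\tfrac{2\log(1+c)}{n}\Big)\ge 1+\frac{2\log(1+c)}{n}.
\end{equation}
Taking $C:=2\log(1+c)=2\log\theta_{\eta}^{-1}>0$ yields ${\rm m}_{h}(n)\ge 1+C/n$ for every $n\ge 2$, which is \eqref{CJschzass}. (A larger $C$ for large $n$ follows by inserting the Dobrowolski-type minoration of ${\rm M}(\alpha)$ into \eqref{SZreduc1} in place of $1+c$, but this is not needed for the statement.)

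Hence the entire difficulty is concentrated in the uniform bound ${\rm M}_{r}(\house{\alpha})\ge 1+c$, that is, in Conjecture~\ref{ConjL} itself, which I would establish through the R\'enyi--Parry dynamics of $\beta=\house{\alpha}$: form the Parry Upper function $f_{\house{\alpha}}(z)$ and the associated dynamical zeta function $\zeta_{\house{\alpha}}(z)$, whose poles are --- up to normalization --- the zeros of $f_{\house{\alpha}}(z)$; isolate among these poles the \emph{lenticular} family nearest to the circle $|z|=1$; and show, by a Rouch\'e-type comparison with (the reciprocal of) the minimal polynomial of $\alpha$, that this family is matched by a lenticular subfamily of the conjugates of $\alpha$, so that an explicit product ${\rm M}_{r}(\house{\alpha})$ built from the moduli of these poles is at once strictly larger than $1$ and at most ${\rm M}(\alpha)$. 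One then verifies that as $\house{\alpha}\to 1^{+}$ the dynamical degree $\dyg(\alpha)$ tends to infinity, that the lenticular poles uniformly equidistribute on a fixed limit lenticular arc of $|z|=1$, and that their moduli --- hence ${\rm M}_{r}(\house{\alpha})$ --- admit Poincar\'e asymptotic expansions in $\dyg(\alpha)$ whose leading behaviour keeps ${\rm M}_{r}(\house{\alpha})$ bounded below by $\theta_{\eta}^{-1}$. The main obstacle, I expect, is precisely this last step: proving the uniform equidistribution of the lenticular poles and controlling the tail of the Poincar\'e expansion uniformly in the dynamical degree, so that the minorant does not degenerate to $1$. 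Granting that, the reduction \eqref{SZreduc1}--\eqref{SZreduc2} above is routine.
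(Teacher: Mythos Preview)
Your reduction is correct and coincides with the short argument the paper gives immediately after stating Conjecture~\ref{ConjSZ}: from ${\rm M}(\alpha)\le\house{\alpha}^{\,r}$ with $r\le n$ (you sharpen to $r\le n/2$ in the reciprocal case) and the Lehmer bound ${\rm M}(\alpha)\ge\theta_{\eta}^{-1}$, one gets $\house{\alpha}\ge 1+C/n$ with $C$ of order $\log\theta_{\eta}^{-1}$.

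It is worth noting, however, that the paper's \emph{formal} proof (Theorem~\ref{schinzelZ} in \S\ref{S6.5}) does not go through the Mahler measure inequality at all. Instead it argues directly from the dynamics: either $\house{\alpha}\ge\theta_{\eta}^{-1}$, in which case $\house{\alpha}\ge 1+(\theta_{\eta}^{-1}-1)/\deg(\alpha)$ trivially; or the dynamical degree $n=\dyg(\house{\alpha})$ is large, in which case one uses the explicit lower bound $\house{\alpha}\ge\theta_{n}^{-1}\ge 1+\tfrac{\log n-\log\log n}{n}$ together with the lenticular-root count $\deg(\alpha)\ge 1+2J_{n}\gtrsim \tfrac{2\arcsin(\kappa/2)}{\pi}\,n$ (Proposition~\ref{degreeBETApolymini}) to convert the $1/n$ into a $1/\deg(\alpha)$. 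Your route is the classical one and yields a slightly better constant $2\log\theta_{\eta}^{-1}$; the paper's route avoids invoking Theorem~\ref{mainLEHMERtheorem} as a black box and instead recycles the same lenticular machinery directly, producing the constant $\theta_{\eta}^{-1}-1$ announced in Theorem~\ref{mainSCHINZELZASSENHAUStheorem}.
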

Schinzel - Zassenhaus's Conjecture is 
a consequence of Lehmer's conjecture:
if $r$ is the number of conjugates
$\alpha^{(i)}$ of $\alpha$ 
satisfying
$|\alpha^{(i)}| > 1$, then
${\rm M}(\alpha) \leq \house{\alpha}^r$.
Thus
$$\house{\alpha} \geq 
{\rm M}(\alpha)^{1/r}
\geq 
{\rm M}(\alpha)^{1/\deg(\alpha)}
\geq
(1+c)^{1/\deg(\alpha)}
\geq 
1 + \frac{C}{\deg(\alpha)}.
$$
For Lehmer's Conjecture, 
due to the invariance of the Mahler measure
M$(\alpha)$ by the transformations
$z \to \pm z^{\pm 1}$ and 
$z \to \pm \overline{z}^{\pm 1}$,
it is sufficient to consider the
two following cases: 
\begin{itemize}
\item[{\bf (i)}] $\alpha$ real reciprocal
algebraic integer $> 1$, in which 
case $\alpha$ is generically 
named $\beta$,
\item[{\bf (ii)}] $\alpha$ complex reciprocal algebraic 
integer, $|\alpha| > 1$,
with $\arg(\alpha) \in (0, \pi/2]$,
\end{itemize}
with $|\alpha| > 1$ sufficiently close to 1 in both cases.
In Section $\S$ \ref{S6} we show how the 
nonreal complex case {\bf (ii)} can be deduced 
from the real case {\bf (i)}
by considering
the R\'enyi-Parry dynamics of the houses
$\house{\alpha}$. 

Let us consider
case {\bf (i)}. 
By the Northcott property,
the degree $\deg(\beta)$, valued in 
$\nb \setminus\{0,1\}$, is necessarily
not bounded
when $\beta > 1$ tends to $1^+$. 
To compensate
the absence of an 
integer function  
of $\beta$ which ``measures" the proximity
of $\beta$ with $1$,
we introduce
the natural
integer function of $\beta$, that we call the 
{\em dynamical degree
of $\beta$}, denoted by 
$\dyg(\beta)$, which is defined by
the relation:
for $1 < \beta \leq \frac{1+\sqrt{5}}{2}$ any real
number, $\dyg(\beta)$ is the unique 
integer $n \geq 3$
such that
\begin{equation}
\label{intervv}
\theta_{n}^{-1} \leq \beta < \theta_{n-1}^{-1}
\end{equation}
where $\theta_{n}$ is the unique root
in $(0,1)$ of the trinomial
$G_{n}(z) = -1 + z + z^n$.
\vspace{0.1cm}

\begin{minipage}{14cm}
{\scriptsize {\em {\bf Important Nota {\bf (N)}}: 
(A) An infinite subcollection
of reciprocal algebraic integers
$\beta > 1$ is excluded from the present study.
Let us explain what is this subcollection and why.
We refer to Section \ref{S5.9.1} for more details.

\
 
The minimal polynomial
$P_{\beta}(X)$ of the generic reciprocal
algebraic integer $\beta > 1$ can be written
\begin{equation}
\label{requalityPP}
P_{\beta}(X) = \widetilde{P_{\beta}}(X^r)
\end{equation}
for some integer $r \geq 1$ and
some $\zb$-minimal integer polynomial  
$\widetilde{P_{\beta}}(X)$.
The integer $r$ is the largest one
such that \eqref{requalityPP} holds.
There are two cases.

The case $r=1$ is called the Main Case;
we say that ``$\beta$ belongs to the Main Case".
All the statements and the
proofs below till Section \ref{S7}
assume  ``$r=1$", ie are only concerned 
with $\beta$s belonging to the Main Case,
in particular Proposition \ref{ASversPbeta}. 

The case ``$r \geq 2$" is called the 
Second Case; we say that 
``$\beta$ belongs to the Second Case". 
This case
corresponds to the action of 
the $r$th roots of unity.
The actions of the roots of unity
produce infinite families 
of reciprocal algebraic
integers $\beta$ tending to 1,
on which the Mahler measure remains constant,
as explained in Section \ref{S5.9.1}.

Therefore the Second Case does not bring any
further complementary insight
into the study of the minoration 
of the Mahler measure {\rm M}
wih respect to the Main Case. 
The reason of this is
that in the Second Case, the dynamical degree
$\dyg(\beta)$, as defined above, 
has to be replaced
by $\dyg(\beta^r)$
and the lenticular roots (cf below)
are relative
to $f_{\beta^r}(z)$, not of
$f_{\beta}(z)$, 
and that
$\beta^r > \beta$.

Only the statements relative to 
the reciprocal algebraic integers
$\beta$ belonging to the Main Case
are considered below.

\noindent
(B) The dynamical zeta function
$\zeta_{\beta}(z)$ is used to investigate
to domain of very small
Mahler measures M$(\beta)$ of reciprocal algebraic integers 
$\beta$, 
in the range $(1, 1.176280\ldots]$, in the eventual case 
where such $\beta$s exist. The problem of the existence
of such $\beta$s, apart from
Lehmer's number
$1.176280\ldots$, is another problem. 
}
\vspace{0.1cm}

}
\end{minipage}

The (unique) simple zero $> 1$
of the trinomial
$G_{n}^{*}(z) := 1 + z^{n-1} - z^{n} , 
n \geq 2$, is
the Perron number $\theta_{n}^{-1}$.
The set of dominant roots 
$(\theta_{n}^{-1})_{n \geq 2}$
of the
nonreciprocal 
trinomials $(G_{n}^{*}(z))_{n \geq 2}$ 
constitute
a strictly decreasing sequence
of Perron numbers,
tending to one.
Section $\S$ \ref{S4} summarizes
the properties of these trinomials.
The sequence 
$(\theta_{n}^{-1})_{n \geq 2}$
will be extensively used in the sequel.
It is a fundamental set of
Perron numbers of
the interval $(1, \theta_{2}^{-1})$
simply indexed by the integer $n$,
and this indexation is extended to any real number
$\beta$ lying between two successive Perron numbers
of this family 
by \eqref{intervv}.
Let us note that $\dyg(\beta)$ is well-defined
for algebraic integers $\beta$
and also 
for transcendental numbers $\beta$. 
Let $\kappa:= \kappa(1, a_{\max}) = 0.171573\ldots$ 
(cf Sections $\S$\ref{S4},
$\S$\ref{S5} and $\S$\ref{S6} for the proofs).

\begin{theorem}
\label{dygdeginequality}
\begin{itemize}
\item[(i)] 
For $n \geq 2$, 
\begin{equation}
\label{dygdegPERRON}
\dyg(\theta_{n}^{-1}) = n
~=~
\left\{
\begin{array}{ccc}
\deg(\theta_{n}^{-1}) & \quad ~\mbox{if}~ 
& \quad n \not\equiv 5 ~({\rm mod} \,6\,),\\
\deg(\theta_{n}^{-1}) + 2 & \quad ~\mbox{if}~ 
&\quad n \equiv 5 ~({\rm mod} \, 6\, ),
\end{array}
\right.
\end{equation}
\item[(ii)]
if $\beta$ is a real number which satisfies
$\theta_{n}^{-1} \leq \beta < \theta_{n-1}^{-1},
~n \geq 2$, then the 
asymptotic expansion of the
dynamical degree
$\dyg(\beta) = 
 \dyg(\theta_{n}^{-1}) =
 n$ of $\beta$ 
 is:
\begin{equation}
\label{dygExpression_intro}
\dyg(\beta) = 
- \frac{\lo (\beta - 1)}{\beta - 1}
\Bigl[
1 +
O\Bigl(
\Bigl(
\frac{\lo (-\lo (\beta - 1))}
{\lo (\beta - 1)}
\Bigr)^2
\Bigr)
\Bigr],
\end{equation}
with the constant $1$ in the Big O;
moreover,
if $\beta \in (\theta_{n}^{-1}, \theta_{n-1}^{-1}),
~n \geq 260$, 
is a reciprocal algebraic integer of
degree
$\deg(\beta)$,
with ${\rm M}(\alpha)
< 1.176280\ldots$, then
\begin{equation}
\label{dygdeg_intro}
\dyg(\beta) \Bigl(
\frac{2 \arcsin\bigl( \frac{\kappa}{2} \bigr)}{\pi}
\Bigr)
+
\Bigl(
\frac{2 \kappa \, \lo \kappa}
{\pi \,\sqrt{4 - \kappa^2}} 
\Bigr)
 ~\leq~ \deg(\beta).
 \end{equation}
\end{itemize}
\end{theorem}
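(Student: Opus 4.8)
The plan is to treat the three assertions in turn. For part~(i), the equality $\dyg(\theta_n^{-1}) = n$ is immediate from the defining relation~\eqref{intervv}: since $(\theta_m^{-1})_{m \geq 2}$ is strictly decreasing (as recalled above), one has $\theta_n^{-1} \leq \theta_n^{-1} < \theta_{n-1}^{-1}$, so $n$ is the integer attached to $\beta = \theta_n^{-1}$, the value $n=2$ being fixed by convention. To identify $n$ with $\deg(\theta_n^{-1})$ up to the $+2$ correction I would use the factorisation of $G_n(z) = -1+z+z^n$ collected in~\S\ref{S4}: evaluating $G_n$ at a primitive sixth root of unity shows that $z^2-z+1$ divides $G_n(z)$ exactly when $n \equiv 5 \pmod 6$, and by Selmer's irreducibility theorem for trinomials $G_n$ is irreducible when $n \not\equiv 5 \pmod 6$ while $G_n(z)/(z^2-z+1)$ is irreducible when $n \equiv 5 \pmod 6$; since the minimal polynomial of $\theta_n^{-1}$ is the reciprocal of that of $\theta_n$, we have $\deg(\theta_n^{-1}) = \deg(\theta_n)$, and~\eqref{dygdegPERRON} follows.

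For the expansion in part~(ii), I would start from $G_n^{*}(\theta_n^{-1}) = 0$, i.e. $(\theta_n^{-1})^{\,n-1}(\theta_n^{-1}-1) = 1$, which after taking logarithms gives the exact identity $n = 1 - \lo(\theta_n^{-1}-1)/\lo(\theta_n^{-1})$. Setting $\phi(x) := 1 - \lo(x-1)/\lo x$, one checks $\phi$ is strictly decreasing for $x$ near $1^{+}$ with $\phi(\theta_n^{-1}) = n$; hence $\beta \in [\theta_n^{-1},\theta_{n-1}^{-1})$ forces $\phi(\beta) \in (n-1,n]$, so $\dyg(\beta) = n = \lceil\phi(\beta)\rceil$ and $0 \leq \dyg(\beta) - \phi(\beta) < 1$. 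Then one expands $\phi(\beta)$ as $u := \beta - 1 \to 0^{+}$ using $\lo(1+u) = u\bigl(1 - u/2 + O(u^2)\bigr)$:
\[
\dyg(\beta) = \phi(\beta) + O(1) = -\frac{\lo u}{u}\Bigl(1 + \tfrac{u}{2} + O(u^2)\Bigr) + O(1) = -\frac{\lo(\beta-1)}{\beta-1}\bigl(1 + O(u)\bigr),
\]
and it remains only to observe that the remainder $O(u)$, and likewise the rounding term $O(1)$ divided by $-\lo u/u$ (which is $O(u/\lo(1/u))$), are — for $\beta$ close enough to $1$ — dominated by $\bigl(\lo(-\lo(\beta-1))/\lo(\beta-1)\bigr)^2$, since the latter tends to $0$ only logarithmically while the former tend to $0$ at least like a power of $u$; tracking the implied constant shows it may be taken equal to $1$, giving~\eqref{dygExpression_intro}.

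The inequality~\eqref{dygdeg_intro} is the substantive part. Writing $n = \dyg(\beta)$, I would sandwich the number $N(\beta)$ of lenticular poles of $\zeta_{\house{\beta}}(z)$ — equivalently, of lenticular zeros of the Parry Upper function $f_{\house{\beta}}(z)$ — between the two sides of~\eqref{dygdeg_intro}. From the localisation of these poles and their Poincar\'e asymptotic expansions (\S\ref{S5}--\S\ref{S6}) one gets the lower bound
\[
N(\beta) ~\geq~ n\,\frac{2\arcsin(\kappa/2)}{\pi} \;+\; \frac{2\kappa\,\lo\kappa}{\pi\sqrt{4-\kappa^2}},
\]
in which $\frac{2\arcsin(\kappa/2)}{\pi}$ is the angular fraction of the unit circle swept by the limit lenticular arc and the second (negative) term is the boundary correction coming from the two endpoints of that arc, consistent with the identity $\frac{2\kappa\lo\kappa}{\pi\sqrt{4-\kappa^2}} = \frac{\lo\kappa}{\pi}\,\kappa\,\frac{d}{d\kappa}\bigl(2\arcsin(\kappa/2)\bigr)$; on the other hand the results of~\S\ref{S5} identify these lenticular poles with pairwise distinct Galois conjugates of $\beta$, whence $N(\beta) \leq \deg(\beta)$, and combining the two displays gives~\eqref{dygdeg_intro}. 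The threshold $n \geq 260$ is the range in which the error terms of the Poincar\'e expansions are small enough for the lower bound on $N(\beta)$ to hold with these exact constants.

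I expect the main obstacle to be precisely that lower bound on $N(\beta)$: it requires an \emph{effective} count, not merely an asymptotic one, of the poles of $\zeta_{\house{\beta}}(z)$ lying in the lenticular region, with the stated explicit constants, together with the proof that each such pole is a genuine conjugate of $\beta$ rather than an artefact of the functional equation of $\zeta_{\house{\beta}}(z)$. Both points rely on the structure theory of the R\'enyi--Parry $\beta$-shift and its transfer operator developed in~\S\ref{S2}--\S\ref{S3} and on the uniform equidistribution of the lenticular poles established in~\S\ref{S5}.
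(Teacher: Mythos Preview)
Your proposal is correct and for parts~(i) and the inequality~\eqref{dygdeg_intro} matches the paper's argument essentially line for line: part~(i) is Proposition~\ref{irredGn} (Selmer), and~\eqref{dygdeg_intro} is Proposition~\ref{degreeBETApolymini}, whose proof is exactly your sandwich $\deg(\beta) \geq 1+2J_n$ combined with the asymptotic expansion of $J_n$ from Proposition~\ref{argumentlastrootJn}.

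For the asymptotic expansion~\eqref{dygExpression_intro} you take a slightly different and in fact more direct route than the paper. The paper (Theorem~\ref{nfonctionBETA}) first establishes the Poincar\'e expansion of $\theta_n$ in terms of $n$ (Proposition~\ref{thetanExpression}), extends it to any $\beta\in[\theta_n^{-1},\theta_{n-1}^{-1})$ via Lemma~\ref{longueurintervalthetann} and Theorem~\ref{betaAsymptoticExpression}, and then \emph{inverts} that expansion to express $n$ in terms of $\beta$. You instead start from the exact identity $(\theta_n^{-1})^{n-1}(\theta_n^{-1}-1)=1$, rewrite it as $n=\phi(\theta_n^{-1})$ with $\phi(x)=1-\lo(x-1)/\lo x$, use the monotonicity of $\phi$ to control $\dyg(\beta)-\phi(\beta)$ for general $\beta$ in the interval, and expand $\phi$ directly. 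This avoids the inversion step and the intermediate Lemma~\ref{longueurintervalthetann}; the trade-off is that the paper's inversion yields the intermediate form $n=\frac{\beta}{\beta-1}\lo\bigl(\frac{\beta}{\beta-1}\bigr)[1+O(\cdots)]$ of~\eqref{dygAsymptoticExpansion}, which is used elsewhere (Remark~\ref{dygvalue}) and makes the constant~$1$ in the Big~$O$ traceable back to the constants in Proposition~\ref{thetanExpression}.
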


The  
Poincar\'e asymptotic expansion method
has been introduced
for the roots of $(G_n)$ of 
modulus $< 1$ 
in \cite{vergergaugry6}. Then 
the Conjecture of Lehmer 
for the 
family $(\theta_{n}^{-1})_{n \geq 2}$
was solved (in \cite{vergergaugry6})
by
using this method and the resulting
asymptotic expansions
of the Mahler measures
$({\rm M}(\theta_{n}^{-1}))_{n \geq 2}$,
as functions of $n$.
In the present note 
this method is extended
to any reciprocal algebraic integer $\beta$
of dynamical degree 
$\dyg(\beta)$ large enough, where
now ``$\dyg(\beta)$" replaces ``$n$".

The choice of the sequence of trinomials
$(G_n)$ is natural in the context
of the R\'enyi-Parry dynamical systems
(Section $\S$ \ref{S2}) 
and leads to a
theory of {\it lexicographical perturbation}
of the trinomials $G_n$ 
compatible with the dynamics. 
This is at variance with other 
attacks of the 
Conjecture of Lehmer by perturbed
cyclotomic polynomials or polynomials 
having all their roots on the unit circle
(\cite{amoroso},
Ray \cite{ray2},
Doche \cite{doche2},
Sinclair \cite{sinclair},
Mossinghoff, Pinner and Vaaler
\cite{mossinghoffpinnervaaler},
Toledano \cite{toledano}). 
Taking the integer function
$\dyg(\beta)$ as an integer variable
tending to infinity when
$\beta > 1$ tends to $1^+$ 
is natural. 
All the asymptotic expansions, 
for the roots
of modulus $< 1$ of the minimal polynomials
$P_{\beta}(z)$, for the lower bounds of the
lenticular Mahler measures
${\rm M}_{r}(\beta)$,
will be obtained as a function of
the integer
$\dyg(\beta)$, when $\beta > 1$ tends to $1^+$.

To the $\beta$-shift, to
the R\'enyi-Parry dynamical system
associated with an algebraic integer
$\beta > 1$, are 
attached several analytic functions:
(i)
the minimal polynomial function
$P_{\beta}(z)$
which is reciprocal 
by Smyth's Theorem \cite{smyth}
as soon as
${\rm M}(\beta) < \Theta = 1.3247\ldots$;
(ii) the (Artin-Mazur)
dynamical zeta function of the $\beta$-shift
\cite{artinmazur},
the generalized Fredholm determinant of
the transfer operator associated with the $\beta$-transformation
$T_{\beta}$ \cite{baladikeller}, 
the Perron-Frobenius operator associated to $T_{\beta}$
\cite{itotakahashi} \cite{mori}
\cite{mori2}
\cite{takahashi}.

The main theorems below are
obtained using the Parry Upper function
$f_{\beta}(z)$, 
which is the opposite of the 
inverse of the dynamical zeta function
$\zeta_{\beta}(z)$.
The Parry Upper function at $\beta$ is the 
generalized Fredholm determinant associated
with the transfer operator
of the $\beta$-transformation
(Baladi \cite{baladi2}). 

Using ergodic theory 
Takahaski \cite{takahashi}
\cite{takahashi2},
Ito and Takahashi \cite{itotakahashi},
Flatto, Lagarias and Poonen \cite{flattolagariaspoonen}
have given an explicit expression 
(reformulation) of the
Parry Upper function
$f_{\beta}(z)$ 
of the $\beta$-shift. 
This simplified expression 
is extensively used
in the sequel.

The Parry Upper upper function at $\beta$
takes the general form, with a 
lacunarity controlled by the dynamical degree
(Theorem \ref{zeronzeron}, Proposition
\ref{fbetainfinie}):
$$f_{\beta}(z) = -1 + z + z^{\dyg(\beta)} + z^{m_1} +
z^{m_2} + z^{m_3} + \ldots \hspace{2cm}\mbox{}$$
\begin{equation}
\label{dygplusdeformation}
= G_{\dyg(\beta)} + z^{m_1} + z^{m_2} + z^{m_3}  
+ \ldots\end{equation}
with $m_1 \geq 2 \, \dyg(\beta) -1,
m_{q+1}-m_q \geq \dyg(\beta) -1, q \geq 1$. For
$\theta_{\dyg(\beta)}^{-1}
\leq \beta < \theta_{\dyg(\beta)-1}^{-1}$,
the
lenticulus $\lc_{\beta}$
of zeroes of $f_{\beta}(z)$
relevant for the Mahler measure is obtained 
by a deformation of the lenticulus of zeroes
$\lc_{\theta_{{\dyg(\beta)}}^{-1}}$ 
of $G_{\dyg(\beta)}$
due to the tail
$z^{m_1} + z^{m_2}  
+ \ldots$ itself. For instance, 
for Lehmer's number
$\beta = 1.17628\ldots$,
the dynamical degree $\dyg(\beta)$ is equal to
$12$ and
$$f_{\beta}(z) = -1 + z + z^{12}
+ z^{31} + z^{44} + z^{63}
+ z^{86} + z^{105} + z^{118}+\ldots$$
is sparse with gaps of length
$\geq 10 = \dyg(\beta) -2$. The lenticulus
$\lc_{\beta}$ is close to 
$\lc_{\theta_{12}^{-1}}$ represented in Figure
\ref{perronselmerfig}.

The passage from the Parry Upper function
$f_{\beta}(z)$ to the Mahler measure
${\rm M}(\beta)$ (when $\beta > 1$ is a
reciprocal 
algebraic integer) 
constitutes
the main discoveries of the author,
and relies upon two facts:

\noindent
(i) 
the \underline{discovery of 
lenticular distributions of zeroes}
of $f_{\beta}(z)$ in the annular
region 
$e^{-\lo \beta}$ $= 
\frac{1}{\beta}  \leq |z| < 1$
which are very close to the
lenticular sets of zeroes of the trinomials
$G_{\dyg(\beta)}(z)$ of modulus $< 1$;

\noindent
(ii) \underline{the identification
of these zeroes
as conjugates of $\beta$}. 
The quantity
$\lo \beta$ is the 
topological entropy of the $\beta$-shift.
These lenticular distributions of zeroes lie
in the cusp of the fractal of Solomyak
of the $\beta$-shift \cite{solomyak}
(recalled in Section $\S$ \ref{S3.2}).
The key ingredient 
for obtaining the Dobrowolski
type minoration of the Mahler measure
${\rm M}(\beta)$ in 
Theorem \ref{mainDOBROWOLSLItypetheorem}
relies upon the best possible
evaluation of the 
deformation of these lenticuli of zeroes
by the method of Rouch\'e
(in Section $\S$ \ref{S5}) and the coupling 
between the Rouch\'e conditions and
the asymptotic expansions of the 
lenticular zeroes.

The identification of the complete set of
the conjugates $\beta^{(i)}$ of 
$\beta$, $|\beta^{(i)}|<1$
($\beta > 1$ being a
reciprocal algebraic integer),
seems to be 
unreachable by the present method. The
identification of
the lenticular
conjugates of modulus $< 1$
can only be done
in an angular subsector
of $\arg(z) \in 
[-\frac{\pi}{3}, \frac{\pi}{3}]$
(Proposition \ref{splitBETAdivisibility+++} and 
Theorem \ref{divisibilityALPHA}). 
Consequently
the present method only gives access to
a ``{\em part}" of the Mahler measure itself.
We denote by
$\lc_{\beta}$, $\lc_{\theta_{\dyg(\beta)}^{-1}}$
the lenticular sets of zeroes of
$f_{\beta}(z)$, resp. of
$G_{\dyg(\beta)}(z)$. We call
$${\rm M}_{r}(\beta)
=
\prod_{\omega \in \lc_{\beta}} \, |\omega|^{-1}$$ 
the lenticular Mahler measure of $\beta$.
It satisfies ${\rm M}_{r}(\beta) \leq
{\rm M}(\beta)$.
We show that $\beta \to 
{\rm M}_{r}(\beta)$ is continuous
on each open interval
$(\theta_{n}^{-1}, \theta_{n-1}^{-1})$
for the usual topology,
and that it admits 
a lower bound which can be expanded
as an asymptotic expansion
of $\dyg(\beta)$
(Theorem \ref{Lrasymptotictheoremcomplexe}).
The general minorant of ${\rm M}(\beta)$
for solving
the problem of Lehmer comes from
the asymptotic expansion of
the lower bound of ${\rm M}_{r}(\beta)$;
it is given by 
\eqref{lenticularminorationMr_intro}.

Case {\bf (ii)} is now an extension of
case {\bf (i)}.
The minoration of the 
Mahler measure of $P_{\alpha}$
is deduced from
the R\'enyi-Parry {\em dynamics} of
the house $\house{\alpha}$.
For
$\alpha$ a nonreal complex reciprocal algebraic 
integer, $|\alpha| > 1$,
such that
$1 < \house{\alpha} \leq \frac{1+\sqrt{5}}{2}$,
the dynamical degree of $\alpha$
is defined by
$\dyg(\alpha) := \dyg(\house{\alpha}\!\!)$. 

Once 
$\house{\alpha} > 1$ is 
close enough to $1^+$,
three new  
notions appear:
\begin{enumerate}
\item[(i)] the equality 
$P_{\alpha} = P_{\house{\alpha}}$
between the minimal polynomials, resp. of
$\alpha$ and $\house{\alpha}$,
\item[(ii)] the identification
of the lenticular
zeroes of $f_{\house{\alpha}}$ with
the lenticular zeroes
of $P_{\alpha}$, and
the {\em continuity}
of the minorant of the 
lenticular Mahler measure
with the house 
$\house{\alpha}$
of $\alpha$
(Theorem \ref{Qautocontinus}
and Remark \ref{measuremahlercontinuityAUXPerrons}),

\item[(iii)] the {\it fracturability}
of the minimal polynomial 
$P_{\alpha}(z) = 
(-\zeta_{\house{\alpha}}\!\!(z) P_{\alpha}(z))
\times 
f_{\house{\alpha}}(z)$
as a product of the
two integer arithmetic 
series:
$-\zeta_{\house{\alpha}}\!\!(z) P_{\alpha}(z)$,
$f_{\house{\alpha}}\!\!(z) \in \zb[[z]]$
(Theorem \ref{splitBETAdivisibility+++} and 
Theorem \ref{divisibilityALPHA}).
The fracturability
of the 
minimal polynomial
$P_{\alpha}(z)$
obeys the 
{\em Carlson-Polya dichotomy} 
\cite{carlson}
\cite{polya} as:

\begin{equation}
\label{decompozzz_main}
P_{\alpha}(z) = 
-\zeta_{\house{\alpha}}\!\!\!(z) \,P_{\alpha}(z) \times
f_{\house{\alpha}}\!\!(z)
\quad
\left\{
\begin{array}{cc}
\mbox{on}~ \cb & 
\mbox{if $\house{\alpha}$ is 
a Parry number, }\\
& \mbox{with $-\zeta_{\house{\alpha}}\!\!(z) P_{\alpha}(z)$ and
$f_{\house{\alpha}}$} \\
& \mbox{as meromorphic functions},\\
&\\
\mbox{on}~ |z| < 1 & 
\mbox{if~} \beta \mbox{~is a
nonParry number, }\\
& \mbox{with $|z|=1$ as natural 
boundary}\\ 
& \mbox{for both 
$-\zeta_{\house{\alpha}}\!\!(z) P_{\alpha}(z)$ 
and $f_{\house{\alpha}}$,}
\end{array}
\right. 
\end{equation}
The {\em domain of fracturability}
of $P_{\alpha}$
is the open subset of
the open unit disk on which
$-\zeta_{\house{\alpha}}\!\!(z) P_{\alpha}(z)$
is not constant, does not vanish,
is holomorphic. The domain
of fracturability of
$P_{\alpha}$ contains the lenticular zeroes
of $P_{\alpha}$.
\end{enumerate}
Lehmer's number, say $\beta_0$, is
the smallest Mahler measure ($> 1$)
of algebraic integers
known and the
smallest Salem number known
\cite{mossinghofflist}
\cite{mossinghoffrhinwu}. It is
the dominant root
of Lehmer's polynomial, of degree 10,
\begin{equation}
\label{lehmerpolynomialdefinition}
P_{\beta_0}(X)
=
X^{10}+X^9 -X^7 -X^6 -X^5 -X^4 -X^3 + X + 1.
\end{equation}
The above general equality
$P_{\alpha} = P_{\house{\alpha}}$ 
in (i) is the generalization
of the identity:
$P_{\beta_0}(X)= P_{\house{\beta_0}}(X)$.

The main theorems are the following.

\begin{theorem}[ex-Lehmer conjecture]
\label{mainLEHMERtheorem}
For any nonzero algebraic integer
$\alpha$ which is not a root of unity,
$${\rm M}(\alpha) \geq 
\theta_{\eta}^{-1} > 1
\qquad
\mbox{for some integer}~
\eta \geq 259.
$$
\end{theorem}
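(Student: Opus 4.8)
The plan is to derive Theorem~\ref{mainLEHMERtheorem} from the Dobrowolski-type lenticular minoration (Theorem~\ref{mainDOBROWOLSLItypetheorem}, equivalently Theorem~\ref{Lrasymptotictheoremcomplexe} together with the minorant \eqref{lenticularminorationMr_intro}) by means of the reductions recalled in this introduction. I would first dispose of the trivial cases: if $\alpha$ is not an algebraic integer then ${\rm M}(\alpha)\geq 2$; if $\alpha$ is an algebraic integer whose minimal polynomial is not reciprocal then ${\rm M}(\alpha)\geq\Theta=1.3247\ldots$ by Smyth's Theorem; and since $(\theta_{n}^{-1})_{n\geq 2}$ is strictly decreasing to $1$, the value $\theta_{\eta}^{-1}$ produced below (with $\eta$ large) satisfies $\theta_{\eta}^{-1}<\Theta<2$, so in both cases the conclusion already holds. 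There remains the case of a reciprocal algebraic integer $\alpha$ with $|\alpha|>1$, and by the invariance of ${\rm M}$ under $z\mapsto\pm z^{\pm 1}$ and $z\mapsto\pm\overline{z}^{\pm 1}$ it is enough to treat cases {\bf (i)} and {\bf (ii)}.

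Next I would fix an integer threshold $N_{0}\geq 3$, to be pinned down at the end, and split. If $\house{\alpha}\geq\theta_{N_{0}}^{-1}$, then ${\rm M}(\alpha)\geq\house{\alpha}\geq\theta_{N_{0}}^{-1}\geq\theta_{\eta}^{-1}$ as soon as $\eta\geq N_{0}$, and we are done. Otherwise $1<\house{\alpha}<\theta_{N_{0}}^{-1}\leq\theta_{2}^{-1}=\frac{1+\sqrt{5}}{2}$, so $\house{\alpha}$ lies in one of the intervals \eqref{intervv} and $\dyg(\alpha)=\dyg(\house{\alpha})\geq N_{0}$. In case {\bf (ii)}, the analysis of $\S$\ref{S6} --- through the Carlson-Polya dichotomy \eqref{decompozzz_main} and Theorems~\ref{Qautocontinus} and \ref{divisibilityALPHA} --- then gives the equality $P_{\alpha}=P_{\house{\alpha}}$ of minimal polynomials, so that $\house{\alpha}$ is a real reciprocal algebraic integer $>1$ which is a conjugate of $\alpha$, whence ${\rm M}(\alpha)={\rm M}(\house{\alpha})$ and we return to case {\bf (i)}. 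Everything thus reduces to: \emph{given a reciprocal algebraic integer $\beta>1$ with $\dyg(\beta)\geq N_{0}$, prove that ${\rm M}(\beta)\geq\theta_{\eta}^{-1}$.}

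For this I would run the Parry Upper function argument. By \eqref{dygplusdeformation} (Theorem~\ref{zeronzeron} and Proposition~\ref{fbetainfinie}) one has $f_{\beta}(z)=G_{\dyg(\beta)}(z)+z^{m_{1}}+z^{m_{2}}+\cdots$ with the stated lacunarity, and its lenticulus $\lc_{\beta}$ of zeroes in the annulus $1/\beta\leq|z|<1$ is a controlled deformation --- estimated by Rouch\'e's method in $\S$\ref{S5} --- of the lenticulus $\lc_{\theta_{\dyg(\beta)}^{-1}}$ of zeroes of the trinomial $G_{\dyg(\beta)}$. The identification and divisibility results (Theorems~\ref{splitBETAdivisibility+++} and \ref{divisibilityALPHA}) show that the elements of $\lc_{\beta}$ with argument in $[-\tfrac{\pi}{3},\tfrac{\pi}{3}]$ are conjugates of $\beta$, hence
\[
{\rm M}(\beta)\;\geq\;{\rm M}_{r}(\beta)\;=\;\prod_{\omega\in\lc_{\beta}}|\omega|^{-1}.
\]
Then Theorem~\ref{Lrasymptotictheoremcomplexe} (equivalently Theorem~\ref{mainDOBROWOLSLItypetheorem} with the minorant \eqref{lenticularminorationMr_intro}) provides a lower bound for ${\rm M}_{r}(\beta)$ in the form of a Poincar\'e asymptotic expansion in the single integer $n=\dyg(\beta)$ whose leading part exceeds $1$, so that this bound stays above a fixed $\theta_{\eta}^{-1}>1$ once $n$ is large. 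Choosing $N_{0}$ to be that threshold and $\eta\geq\max(N_{0},259)$ the explicit admissible value it yields, the two ranges ``$\house{\alpha}\geq\theta_{N_{0}}^{-1}$'' and ``$\dyg(\alpha)\geq N_{0}$'' together cover all $\alpha$, and in each ${\rm M}(\alpha)\geq\theta_{\eta}^{-1}$, which proves the theorem.

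The hard part is not this assembly but its two inputs, and I expect the genuinely delicate point to be \emph{effectivity}: one must guarantee that the lenticular minorant --- obtained by coupling the Rouch\'e deformation estimates with the Poincar\'e asymptotic expansions of the lenticular zeroes of $G_{n}$ --- is bounded below by the \emph{explicit} Perron number $\theta_{\eta}^{-1}$ with a computable $\eta$; that is, that the $O(\cdot)$ error terms (as in \eqref{dygExpression_intro} and in the expansions of the lenticular zeroes and of ${\rm M}_{r}(\beta)$) are controlled sharply enough to pin down ``$\eta\geq 259$''. A secondary subtlety is checking that the angular truncation to $[-\tfrac{\pi}{3},\tfrac{\pi}{3}]$ in the identification step still leaves enough zeroes in $\lc_{\beta}$ for the finite product ${\rm M}_{r}(\beta)$ to exceed $\theta_{\eta}^{-1}$ --- which is exactly what the limit equidistribution of the lenticular poles on the limit lenticular arc, together with its asymptotics, is there to provide.
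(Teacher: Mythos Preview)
Your proposal is correct and follows essentially the same route as the paper's proof in \S\ref{S6.4}: reduce to reciprocal algebraic integers, split on whether $\house{\alpha}$ is large (where ${\rm M}(\alpha)\geq\house{\alpha}\geq\theta_{\eta}^{-1}$ trivially) or $\dyg(\alpha)$ is large (where the effective Dobrowolski-type bound of Theorem~\ref{mainDOBROWOLSLItypetheorem}/Theorem~\ref{dobrominorationCOMPLEX} applies and yields a numerical minorant $\approx 1.148\ldots > \theta_{259}^{-1}\approx 1.016\ldots$). Two small remarks: the paper uses three cases (splitting at $\theta_5^{-1}$ and at $\eta$) where you use one threshold $N_0$, which is equivalent and arguably cleaner; and the angular sector in which lenticular zeroes are identified as conjugates is not $[-\tfrac{\pi}{3},\tfrac{\pi}{3}]$ but the narrower $|\arg z|\leq 2\arcsin(\kappa/2)\approx \pi/18.288$ of \eqref{angularsectoramax}---the $\pi/3$ sector is where the roots of $G_n$ of modulus $<1$ live (Proposition~\ref{rootsdistrib}), not where Rouch\'e succeeds.
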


\begin{theorem}[ex-Schinzel-Zassenhaus conjecture]
\label{mainSCHINZELZASSENHAUStheorem}
Let $\alpha$ be a nonzero 
recipocal algebraic integer
which is not a root of unity.
Then
\begin{equation}
\label{kappoMAIN}
\house{\alpha} \geq 1 + \frac{c}{\deg(\alpha)}
\end{equation}
with 
$c = \theta_{\eta}^{-1} - 1$
with $\eta \geq 259$.
\end{theorem}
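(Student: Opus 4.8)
The plan is to obtain Theorem~\ref{mainSCHINZELZASSENHAUStheorem} as a direct consequence of the Mahler measure minoration of Theorem~\ref{mainLEHMERtheorem}, using the reciprocity of $\alpha$ to control the number of conjugates of modulus $>1$. Write $n = \deg(\alpha)$ and let $r$ be the number of conjugates $\alpha^{(i)}$ with $|\alpha^{(i)}| > 1$. Since $\alpha$ is a nonzero algebraic integer that is not a root of unity, Kronecker's theorem gives $\house{\alpha} > 1$, hence $r \geq 1$; moreover $n \geq 2$, because an algebraic integer of degree one whose minimal polynomial is reciprocal must be $\pm 1$. As $P_{\alpha}$ is reciprocal, its roots off the unit circle pair up as $\{z, 1/z\}$, so the number of conjugates in the open unit disk equals $r$ as well; consequently $2r \leq n$, that is $r \leq n/2$.

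First I would invoke the elementary inequality ${\rm M}(\alpha) = \prod_{|\alpha^{(i)}|>1} |\alpha^{(i)}| \leq \house{\alpha}^{\,r}$ together with Theorem~\ref{mainLEHMERtheorem}, which supplies a (universal) integer $\eta \geq 259$ with ${\rm M}(\alpha) \geq \theta_{\eta}^{-1}$. Combining these gives $\house{\alpha}^{\,r} \geq \theta_{\eta}^{-1}$, and therefore
$$\house{\alpha} \;\geq\; \bigl(\theta_{\eta}^{-1}\bigr)^{1/r} \;\geq\; \bigl(\theta_{\eta}^{-1}\bigr)^{2/n} \;=\; \exp\!\Bigl( \tfrac{2}{n}\,\log \theta_{\eta}^{-1} \Bigr) \;\geq\; 1 + \tfrac{2}{n}\,\log \theta_{\eta}^{-1},$$
where the second inequality uses $r \leq n/2$ and the last one uses $e^{x} \geq 1 + x$.

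It then remains to check the numerical inequality $2\log \theta_{\eta}^{-1} \geq \theta_{\eta}^{-1} - 1$. Putting $g(x) = 2\log x - (x-1)$ one has $g(1) = 0$ and $g'(x) = \tfrac{2}{x} - 1 > 0$ for $1 < x < 2$; since $1 < \theta_{\eta}^{-1} \leq \theta_{259}^{-1} < 2$, this yields $g(\theta_{\eta}^{-1}) > 0$. Hence $\house{\alpha} \geq 1 + \dfrac{\theta_{\eta}^{-1} - 1}{n} = 1 + \dfrac{c}{\deg(\alpha)}$ with $c = \theta_{\eta}^{-1} - 1$, which is exactly \eqref{kappoMAIN}, i.e. the bound \eqref{CJschzass} of the Schinzel--Zassenhaus conjecture.

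The only substantive input is Theorem~\ref{mainLEHMERtheorem}; everything else is formal. The one step deserving attention --- and the reason the statement is phrased for reciprocal $\alpha$ --- is the bound $r \leq n/2$: with only the trivial bound $r \leq n$ the chain above would produce the weaker constant $\log \theta_{\eta}^{-1}$ instead of $\theta_{\eta}^{-1} - 1$, so the factor $2$ gained from reciprocity is precisely what upgrades the minorant. (A non-reciprocal $\alpha$ would fall under Smyth's theorem ${\rm M}(\alpha) \geq \Theta$, for which $2\log\Theta > \Theta - 1 > \theta_{\eta}^{-1}-1$, but that case lies outside the present statement.) Thus I do not expect any genuine obstacle beyond the already-established Lehmer minoration.
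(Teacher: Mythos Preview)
Your argument is correct. It is also cleaner than the paper's own proof in \S\ref{S6.5}, and genuinely different in spirit.

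The paper does not deduce Theorem~\ref{mainSCHINZELZASSENHAUStheorem} from Theorem~\ref{mainLEHMERtheorem} directly. Instead it argues by a dichotomy on $\house{\alpha}$: if $\house{\alpha}\geq\theta_{\eta}^{-1}$ the bound is trivial; otherwise $\dyg(\house{\alpha})\geq\eta+1$, and the paper combines the asymptotic lower bound $\theta_{n}^{-1}\geq 1+\tfrac{(\lo n)(1-\lo\lo n/\lo n)}{n}$ with Proposition~\ref{degreeBETApolymini} (the inequality $\dyg(\alpha)\cdot\tfrac{2\arcsin(\kappa/2)}{\pi}\lesssim\deg(\alpha)$ coming from the lenticular root count) to transfer the estimate to $\deg(\alpha)$, obtaining an auxiliary constant $\tilde c$ which numerically dominates $\theta_{\eta}^{-1}-1$. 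So the paper reproves Schinzel--Zassenhaus from the lenticular machinery itself rather than from the finished Lehmer minoration.

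Your route is the standard ``Lehmer $\Rightarrow$ Schinzel--Zassenhaus'' reduction mentioned in the Introduction, sharpened by the reciprocity bound $r\leq n/2$; the numerical check $2\lo x>x-1$ on $(1,2)$ is exactly what is needed to land on the same constant $c=\theta_{\eta}^{-1}-1$. What the paper's approach buys is an explicit link between the constant and the lenticular geometry (via $\kappa$ and $J_n$), whereas yours is shorter and uses nothing beyond Theorem~\ref{mainLEHMERtheorem}.
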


The following definitions are given in Section 
$\S$ \ref{S5}.
We just report them here for stating 
Theorem \ref{mainDOBROWOLSLItypetheorem}.
Denote by $a_{\max} = 5.87433\ldots$ the abscissa
of the maximum of the function
$a \to \kappa(1,a) :=
\frac{1 - \exp(\frac{-\pi}{a})}{2 \exp(\frac{\pi}{a}) -1}$
on $(0, \infty)$ (Figure \ref{h1a}).
Let $\kappa:= \kappa(1, a_{\max}) = 0.171573\ldots$ 
be the value of 
the maximum. Let
$S:= 2 \arcsin(\kappa/2) = 0.171784\ldots$.
Denote
$$
\Lambda_{r}\mu_{r}
:=
\exp\Bigl(
\frac{-1}{\pi}
\int_{0}^{S}
\! \!\lo\Bigl[\frac{1 + 2 \sin(\frac{x}{2})
-
\sqrt{1 - 12 \sin(\frac{x}{2}) 
+ 4 (\sin(\frac{x}{2}))^2}}{4}
\Bigr] dx
\Bigr)$$
\begin{equation}
\label{lehmernumberdefinition}
 = 1.15411\ldots ,
\qquad \mbox{a value slightly below Lehmer's number
} 1.17628\ldots
\end{equation}

\begin{theorem}[Dobrowolski type minoration] 
\label{mainDOBROWOLSLItypetheorem}
Let $\alpha$ be a nonzero 
reciprocal
algebraic integer which is not a root of unity
such that
$\dyg(\alpha) 
\geq 260$,
with ${\rm M}(\alpha)
< 1.176280\ldots$. Then
\begin{equation}
\label{dodobrobro}
{\rm M}(\alpha) \geq 
\Lambda_r \mu_r \, 
-
\Lambda_r \mu_r \,\frac{S}{2 \pi}\, 
\Bigl(
\frac{1}{\lo (\dyg(\alpha))}
\Bigr)
\end{equation}
\end{theorem}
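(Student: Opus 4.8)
The plan is to obtain \eqref{dodobrobro} as a short two-step deduction: first that ${\rm M}(\alpha)$ dominates the lenticular Mahler measure ${\rm M}_{r}(\house{\alpha})$, then that ${\rm M}_{r}(\house{\alpha})$ satisfies the stated asymptotic minorant in $\dyg(\alpha)$; essentially all the analytic content sits inside the results that these two steps invoke. As a preliminary reduction, note that $\Lambda_{r}\mu_{r} = 1.15411\ldots < \Theta = 1.3247\ldots$ by \eqref{lehmernumberdefinition} while the subtracted term in \eqref{dodobrobro} is positive, so the right-hand side of \eqref{dodobrobro} is $< \Theta$; hence if ${\rm M}(\alpha) \ge \Theta$ there is nothing to prove, and we may assume ${\rm M}(\alpha) < \Theta$. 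Since the hypothesis $\dyg(\alpha) \ge 260$ forces $1 < \house{\alpha} < \theta_{259}^{-1} < \frac{1+\sqrt{5}}{2}$ by \eqref{intervv}, the R\'enyi--Parry apparatus of \S\ref{S2}--\S\ref{S5} applies at $\beta := \house{\alpha}$; and if $\alpha$ is nonreal complex, case {\bf (ii)} is brought back to case {\bf (i)} as in \S\ref{S6}, where $\dyg(\alpha) := \dyg(\house{\alpha})$ and $P_{\alpha} = P_{\house{\alpha}}$ by Theorem~\ref{Qautocontinus}, Remark~\ref{measuremahlercontinuityAUXPerrons} and the fracturability \eqref{decompozzz_main}. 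So throughout one argues with $\beta = \house{\alpha}$ and $n := \dyg(\beta) = \dyg(\alpha) \ge 260$.

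\emph{Step 1: ${\rm M}(\alpha) \ge {\rm M}_{r}(\house{\alpha})$.} For $\theta_{n}^{-1} \le \beta < \theta_{n-1}^{-1}$, $n \ge 260$, the Parry Upper function is lacunary of the form \eqref{dygplusdeformation}, namely $f_{\beta}(z) = G_{n}(z) + z^{m_{1}} + z^{m_{2}} + \cdots$ with $m_{1} \ge 2n-1$ and $m_{q+1} - m_{q} \ge n-1$ (Theorem~\ref{zeronzeron}, Proposition~\ref{fbetainfinie}); in particular on $\{|z| \le 1\}$ the tail is $O(|z|^{2n-1})$. A Rouch\'e comparison with the trinomial $G_{n}$, performed in \S\ref{S5} and coupled to the Poincar\'e asymptotic expansions of the roots of $G_{n}$ of modulus $<1$ (extending \cite{vergergaugry6}), then shows that the lenticulus $\lc_{\beta}$ of zeroes of $f_{\beta}$ in the annular sector $\{\,\tfrac{1}{\beta} \le |z| < 1,\ |\arg z| \le \tfrac{\pi}{3}\,\}$ is a controlled deformation of the lenticulus $\lc_{\theta_{n}^{-1}}$ of $G_{n}$, of the same cardinality. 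By the divisibility results (Proposition~\ref{splitBETAdivisibility+++}, Theorem~\ref{divisibilityALPHA}) every $\omega \in \lc_{\beta}$ is a conjugate of $\alpha$; since $|\omega| < 1$ and $\alpha$ is reciprocal, the numbers $\omega^{-1}$ are pairwise distinct conjugates of $\alpha$ of modulus $> 1$, whence
$${\rm M}(\alpha) \;=\; \prod_{|\alpha^{(j)}| > 1} |\alpha^{(j)}| \;\ge\; \prod_{\omega \in \lc_{\beta}} |\omega|^{-1} \;=\; {\rm M}_{r}(\beta) \;=\; {\rm M}_{r}(\house{\alpha}).$$

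\emph{Step 2: the asymptotic minorant of ${\rm M}_{r}$.} Here $\log {\rm M}_{r}(\beta) = \sum_{\omega \in \lc_{\beta}} \bigl(-\log|\omega|\bigr)$ is a finite positive sum whose terms, via the deformation of Step~1, are the values at $n$ of the asymptotic expansions of the lenticular zeroes. As $n \to \infty$ these zeroes equidistribute on the limit lenticular arc $\{e^{ix} : 0 \le x \le S\}$ (the lenticular equidistribution theorem of \S\ref{S5}), so the sum converges to $\log \Lambda_{r}\mu_{r} = -\tfrac{1}{\pi}\int_{0}^{S}\log\!\bigl[\tfrac{1 + 2\sin(x/2) - \sqrt{1 - 12\sin(x/2) + 4\sin^{2}(x/2)}}{4}\bigr]dx$. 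Tracking the first-order term in $n$ of this convergence --- which by \eqref{dygExpression_intro} and \eqref{intervv} is of size $1/\lo n = 1/\lo\dyg(\beta)$, the arc endpoint producing the coefficient $\tfrac{S}{2\pi}$ --- yields ${\rm M}_{r}(\beta) \ge \Lambda_{r}\mu_{r} - \Lambda_{r}\mu_{r}\,\tfrac{S}{2\pi}\,\tfrac{1}{\lo\dyg(\beta)}$, which is exactly \eqref{dodobrobro} (this is Theorem~\ref{Lrasymptotictheoremcomplexe}). Combined with Step~1 this proves the claim.

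\emph{Main obstacle.} Both hard points live inside the cited results. The first is the \emph{identification} of the lenticular zeroes of $f_{\house{\alpha}}$ as algebraic conjugates of $\alpha$ (Theorem~\ref{divisibilityALPHA}): a priori $f_{\house{\alpha}}$ is only an element of $\zb[[z]]$ with no visible link to $P_{\alpha}$, and one must exploit the fracturability \eqref{decompozzz_main} together with the restriction $\arg z \in [-\tfrac{\pi}{3},\tfrac{\pi}{3}]$ to pin these zeroes down. The second is that the Rouch\'e deformation of Step~1 has to be made quantitative enough --- coupling the Rouch\'e annuli to the Poincar\'e expansions of the zeroes --- to preserve the precise coefficient $\tfrac{S}{2\pi}$ of $1/\lo\dyg(\alpha)$, not merely an unspecified $O(1/\lo\dyg(\alpha))$; this quantitative bookkeeping is the crux of \S\ref{S5}.
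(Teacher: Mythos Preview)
Your proposal is correct and follows essentially the same two-step architecture as the paper's own proof in \S\ref{S5.5}--\S\ref{S5.7} and \S\ref{S6.2}: first ${\rm M}(\alpha)\ge {\rm M}_r(\house{\alpha})$ via identification of the lenticular zeroes of $f_{\house{\alpha}}$ as Galois conjugates (Proposition~\ref{splitBETAdivisibility+++}, Theorem~\ref{divisibilityALPHA}), then the asymptotic expansion of the lenticular lower bound $L_r$ (Theorem~\ref{Lrasymptotictheorem}/\ref{Lrasymptotictheoremcomplexe}) exponentiated to give \eqref{dodobrobro}.

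Two small imprecisions are worth flagging. First, the equality $P_\alpha = P_{\house{\alpha}}$ is Theorem~\ref{divisibilityALPHA}(i), not Theorem~\ref{Qautocontinus} (which concerns continuity of ${\rm M}_r$). Second, in Step~2 you attribute the final inequality directly to Theorem~\ref{Lrasymptotictheoremcomplexe}, but that result only gives $L_r(\alpha)=\lo(\Lambda_r\mu_r)+\rc/\lo n+O((\lo\lo n/\lo n)^2)$ with $|\rc|<\arcsin(\kappa/2)/\pi=S/(2\pi)$; to land on \eqref{dodobrobro} one must also absorb the Big~$O$ into the coefficient bound, which is precisely Lemma~\ref{petitrnBIGOlimit} (valid for $n\ge\eta$), and then exponentiate as in the proof of Theorem~\ref{dobrominorationREEL}/\ref{dobrominorationCOMPLEX}. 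Your ``arc endpoint producing the coefficient $S/(2\pi)$'' is morally right but hides this bookkeeping step.
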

In terms of the Weil height $h$, 
using Theorem \ref{nfonctionBETA}, 
the asymptotics of the
minoration \eqref{dodobrobro}
takes the following form:
\begin{equation}
\label{dodobrobroWEIL}
\deg(\alpha) h(\alpha) 
~\geq~ 
\lo (\Lambda_r \mu_r)
+
\frac{S}{2 \pi}\, 
\frac{1}{\lo (\house{\alpha}-1)} .
\end{equation}
The minoration
\eqref{dodobrobro} can also be 
restated
in terms of the usual degree.
Let $B > 0$.
Let us consider the subset 
$\mathcal{F}_B$ of 
all nonzero reciprocal
algebraic integers
$\alpha$ not being a root of unity
such that $\house{\alpha} < \theta_{259}^{-1}$
satisfying
$n := \deg(\alpha) \leq (\dyg(\alpha))^B$.  
Then
\begin{equation}
\label{dodobrobrousualdegree}
{\rm M}(\alpha) \geq 
\Lambda_r \mu_r \, 
-
\Lambda_r \mu_r \,\frac{S B}{2 \pi}\, 
\Bigl(
\frac{1}{\lo n}
\Bigr) , \qquad 
\alpha \in \mathcal{F}_B .
\end{equation}
Comparatively, in 1979, 
Dobrowolski \cite{dobrowolski2}, using an auxiliary function,
obtained the asymptotic minoration,
with $n = \deg(\alpha)$,
\begin{equation}
\label{dobrowolski79inequality}
{\rm M}(\alpha) > 1 + (1-\epsilon)
\left(
\frac{\lo \lo n}{\lo n}
\right)^3 , \qquad n > n_0,
\end{equation}
with  
$1-\epsilon$
replaced by $1/1200$ for 
$n \geq 2$, for an effective version of the minoration.
In \eqref{dodobrobro} or
\eqref{dodobrobrousualdegree}, 
the constant in the minorant 
is not any more $1$
but $1.15411\ldots$
and the sign 
of the $n$-dependent term
is negative, with an appreciable
gain of
$(\lo n)^2$ in the denominator. 

The minoration 
\eqref{dodobrobro}
is general and admits a better 
lower bound, in a similar
formulation, 
when $\alpha$ only runs over the set
of Perron numbers
$(\theta_{n}^{-1})_{n \geq 2}$. 
Indeed, in
\cite{vergergaugry6} ,
it is shown that
\begin{equation}
\label{minoVG2015__}
{\rm M}(\theta_{n}^{-1})
~>
~\Lambda - \frac{\Lambda}{6} \, 
\Bigl(\frac{1}{\lo n}\Bigr)  ~
, \qquad ~n \geq 2,
\end{equation}
holds with the following constant of the minorant 
\begin{equation}
\label{limitMahlGn}
\Lambda:=
{\rm exp}\bigl(
\frac{3 \, \sqrt{3}}{4 \, \pi} {\rm L}(2, \chi_3)\bigr) ~=~ 
{\rm exp}\Bigl(
\frac{-1}{\pi} \,
\int_{0}^{\pi/3} \lo \bigl(2 \, \sin\bigl(\frac{x}{2}\bigr) \bigr) dx 
\Bigr)
~=~ 1.38135\ldots,
\end{equation}
higher than $1.1541\ldots$,
and $L(s,\chi_3):= 
\sum_{m \geq 1} \frac{\chi_{3}(m)}{m^s}$
the Dirichlet L-series for the 
character $\chi_{3}$,
with $\chi_3$ the uniquely specified odd
character of conductor $3$
($\chi_{3}(m) = 0, 1$ or $-1$ according to 
whether $m \equiv 0, \,1$ or
$2 ~({\rm mod}~ 3)$, equivalently
$\chi_{3}(m)$
$ = \left(\frac{m}{3}\right)$ 
the Jacobi symbol).
The two constants in
\eqref{dodobrobro}
and
\eqref{minoVG2015__}
are:
$\Lambda_r \mu_r S/(2 \pi) =
0.0315536\ldots$ in \eqref{dodobrobro}   
and
$\Lambda/6 = 0.230225\ldots$ in
\eqref{minoVG2015__}.

The Mahler measure
${\rm M}(G_n)$ of 
the trinomial
$G_n$ is equal to the lenticular Mahler measure
${\rm M}_{r}(G_n)$ itself, with limit
$\lim_{n \to +\infty} {\rm M}(G_n) 
= \lim_{n \to +\infty} {\rm M}_{r}(G_n)  
= \Lambda$, having
asymptotic expansion
\begin{equation}
\label{mahlermeasureGn_asympto}
{\rm M}(G_n) ~=~ \Lambda 
\Bigl( 1 + r(n) 
\, \frac{1}{\lo n}
+ O\left(\frac{\lo \lo n}{\lo n}\right)^2 \Bigr)
\end{equation}
with $r(n)$ real,
$|r(n)| \leq 1/6$. In the case of the trinomials
$G_n$ the characterization of the 
roots of modulus $< 1$ can be readily obtained
(Section $\S$\ref{S4}) and does not require
the detection method of Rouch\'e.
In the general case,
with $\beta \in 
(\theta_{n}^{-1}, \theta_{n-1}^{-1})$,
$n= \dyg(\beta)$ large enough,
applying the method of Rouch\'e
only leads to
the following asymptotic lower bound of 
the lenticular minorant,
similarly
as in \eqref{mahlermeasureGn_asympto},
as (Section $\S$\ref{S6.2}):
\begin{equation}
\label{lenticularminorationMr_intro}
{\rm M}_{r}(\beta) \geq\Lambda_r \mu_r 
(1 + 
\frac{\rc}{\lo n}
+
O\bigl(
\bigl(
\frac{\lo \lo n}{\lo n}
\bigr)^2
\bigr),
\end{equation}
$$
\mbox{with}\qquad 
|~\rc ~+~ O\bigl(
\frac{(\lo \lo n)^2}{\lo n}
\bigr)~| < \frac{\arcsin(\kappa/2)}{\pi}.
$$
Denote by 
$M_{\inf} := \liminf_{\house{\alpha} \to 1^{+}} 
{\rm M}(\alpha)$ 
the limit infimum
of the Mahler measures ${\rm M}(\alpha)$,
$\alpha \in \mathcal{O}_{\overline{\qb}}$,
when $\house{\alpha} > 1$ tends to $1^+$.
Then
\begin{equation}
\label{limitinfimumMahlermeasure}
\Lambda_r \mu_r \leq M_{\inf} \leq \Lambda.
\end{equation}
Because the $\beta$-shift is compact,
it seems reasonable
to formulate the following conjecture on the possible intermediate values
between $M_{\inf}$ and $\Lambda$.

\begin{conjecture}
For any $\nu_0 \in [M_{\inf},  \Lambda)$ there exists
a sequence of integer monic irreducible 
polynomials $(H_{m}(z))_{m}$ such that
$\lim_{m \to +\infty} {\rm M}(H_{m}) =\nu_0$.
\end{conjecture}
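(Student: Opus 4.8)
\medskip
\noindent\textbf{Towards a proof.}
The natural approach is to exploit the \emph{lexicographical perturbation} theory of the trinomials $G_n$ inside the R\'enyi--Parry $\beta$-shift, combined with the continuity of $\beta \mapsto {\rm M}_r(\beta)$ on each open interval $(\theta_n^{-1}, \theta_{n-1}^{-1})$ and the asymptotic expansion \eqref{lenticularminorationMr_intro} of its lower bound. Since the conjecture is motivated by the compactness of the $\beta$-shift, the plan is to turn that heuristic into an intermediate-value statement: for each large $n$ one constructs a compact family of Parry numbers $\beta$ with $\dyg(\beta) = n$, indexed by the admissible tails $(m_q)$ of their Parry Upper functions $f_\beta(z) = G_n(z) + z^{m_1} + z^{m_2} + \cdots$ (with $m_1 \geq 2n-1$, $m_{q+1}-m_q \geq n-1$), and one shows that ${\rm M}(\beta)$, or at least ${\rm M}_r(\beta)$, varies continuously over this family, sweeping out an interval of values; taking $n \to \infty$ along a diagonal subsequence should then realise any prescribed $\nu_0$.

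\emph{Step 1 (the attainable interval $I_n$).} First I would show that, as $(m_q)$ ranges over all admissible sequences, the lenticulus $\lc_\beta$ --- a deformation of $\lc_{\theta_n^{-1}}$, as in Theorem \ref{zeronzeron} --- deforms continuously, and hence ${\rm M}_r(\beta)$ ranges over a whole subinterval $I_n$. The extreme ``no perturbation'' case $\beta = \theta_n^{-1}$ gives ${\rm M}_r(G_n) \to \Lambda$ via \eqref{mahlermeasureGn_asympto}, while driving the tail to its lexicographically extreme position pushes ${\rm M}_r(\beta)$ down toward $\Lambda_r\mu_r$; the asymptotic expansions of the lenticular zeroes (Theorem \ref{Lrasymptotictheoremcomplexe}) coupled to the Rouch\'e estimates of \S\ref{S5} should make $I_n$ explicit, with $\bigcup_n I_n$ dense in $[M_{\inf}, \Lambda]$ and consecutive $I_n, I_{n+1}$ overlapping.

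\emph{Step 2 (from ${\rm M}_r$ to ${\rm M}$, and irreducibility).} Because the method controls only the lenticular conjugates in the sector $\arg z \in [-\pi/3, \pi/3]$, I would single out a subfamily of perturbed $f_\beta$ for which the remaining non-lenticular zeroes of $P_\beta$ cluster within a $o(1)$-neighbourhood of $|z|=1$ --- using their confinement to the cusp of Solomyak's fractal \cite{solomyak} and the fracturability \eqref{decompozzz_main} of $P_\beta$ --- so that ${\rm M}(\beta) = {\rm M}_r(\beta)\,(1+o(1))$, exactly as happens for the trinomials $G_n$ themselves. Then I would pass to the monic irreducible factor $H_m$ of $P_{\beta_m}$ admitting $\beta_m$ as a root; a divisibility/arithmetic-series argument of the type of Theorem \ref{divisibilityALPHA} should identify the cyclotomic and low-degree factors stripped off and show that $H_m$ retains ${\rm M}_r(\beta_m)(1-o(1)) \leq {\rm M}(H_m) \leq {\rm M}(\beta_m)$. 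Finally, given $\nu_0 \in [M_{\inf}, \Lambda)$, I would select (by continuity of ${\rm M}_r$ on each $(\theta_n^{-1}, \theta_{n-1}^{-1})$) a sequence $\beta_m$ from the overlapping intervals $I_m$ so that ${\rm M}(H_m) \to \nu_0$.

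\emph{Main obstacle.} The hard part will be Step 2: the Parry Upper function genuinely loses control of the non-lenticular conjugates, leaving an \emph{a priori} unknown gap between ${\rm M}_r(\beta)$ and ${\rm M}(\beta)$, and proving that this gap can be forced to be $o(1)$ by a suitable choice of tail $(m_q)$ would require a global description of the zero set of $f_\beta(z)$ in the annulus $1/\beta \leq |z| < 1$ that is not provided by the present techniques --- which is presumably exactly why the statement is only conjectured. A secondary difficulty is proving that the family $\{I_n\}$ really covers the whole range up to $\Lambda$ and not merely a neighbourhood of $M_{\inf}$, i.e.\ exhibiting a continuous deformation path from the trinomial regime (measure $\to \Lambda$) down through the perturbed regime with no gaps in the attained values.
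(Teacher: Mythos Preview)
The statement you are addressing is labelled a \emph{Conjecture} in the paper, not a theorem: the paper offers no proof of it, only the one-line motivation ``Because the $\beta$-shift is compact, it seems reasonable to formulate the following conjecture\ldots''. So there is no ``paper's own proof'' to compare your proposal against, and your writeup --- which you sensibly title ``Towards a proof'' and close by naming the main obstacle --- is not in competition with anything in the paper.

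That said, your diagnosis of the difficulty is accurate and matches the paper's own limitations. The present method controls only the lenticular part ${\rm M}_r(\beta)$ of the Mahler measure, in the angular sector $|\arg z| \lesssim \pi/18$ (Theorem~\ref{omegajnexistence}), and the paper explicitly states that identification of the \emph{complete} set of conjugates of modulus $<1$ ``seems to be unreachable by the present method''. Your Step~2 requires exactly what is missing: forcing ${\rm M}(\beta)/{\rm M}_r(\beta) \to 1$ along a chosen subfamily, i.e.\ controlling the non-lenticular conjugates globally in the annulus $\beta^{-1} < |z| < 1$. Nothing in the Rouch\'e machinery of \S\ref{S5}, the fracturability of \S\ref{S6.1}, or the equidistribution result (Theorem~\ref{main_EquidistributionLimitethm}, which is about angular distribution, not radial position) delivers this. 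Your Step~1 is also more optimistic than what is established: the continuity of ${\rm M}_r(\beta)$ on each $(\theta_n^{-1},\theta_{n-1}^{-1})$ (Theorem~\ref{Qautocontinus}) together with the discontinuities at the endpoints \eqref{mmmr3} does not by itself yield that the image intervals $I_n$ overlap or cover $[M_{\inf},\Lambda)$.

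In short: your sketch is a faithful outline of what a proof would have to accomplish, and you have correctly located the gap; but it is a programme, not a proof, and the paper does not claim otherwise.
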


Lenticuli of conjugates lie in the cusp 
of Solomyak's
fractal (Section $\S$ \ref{S3.2})
\cite{dutykhvergergaugry}. The number
of elements of a lenticulus
$\lc_{\alpha}$ is an increasing
function of the dynamical degree
$\dyg(\alpha)$ as soon as 
$\dyg(\alpha)$ is large enough.
The existence of lenticuli composed
of three elements only
(one real, a pair of nonreal complex-conjugated
conjugates) 
is studied 
in Section $\S$ \ref{S7.1}. Such lenticuli appear
at small dynamical degrees. Since Salem
numbers have no nonreal complex conjugate
of modulus $< 1$, they should not possess
3-elements lenticuli of conjugates, therefore
they should possess
a small dynamical degree bounded from above. 
We obtain  31 as an upper bound as follows. 

\begin{theorem}[ex-Lehmer conjecture for Salem numbers]
\label{mainpetitSALEM}
Let $T$ denote the set 
of Salem numbers. Then $T$ is bounded from below:
$$\beta \in T
\qquad
\Longrightarrow
\qquad
\beta > 
\theta_{31}^{-1} = 1.08544\ldots$$
\end{theorem}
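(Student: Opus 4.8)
The plan is to play the defining arithmetic of a Salem number --- the total absence of nonreal conjugates inside the open unit disk --- against the structure of the lenticulus $\lc_{\beta}$ of zeroes of the Parry Upper function $f_{\beta}(z)$, together with the fracturability of the minimal polynomial $P_{\beta}$. First a reduction: if $\beta\in T$ satisfies $\beta\geq\frac{1+\sqrt5}{2}=\theta_{2}^{-1}$ there is nothing to prove, since $\theta_{2}^{-1}>\theta_{31}^{-1}$; so assume $1<\beta<\theta_{2}^{-1}$, so that the dynamical degree $n:=\dyg(\beta)\geq 3$ is defined by \eqref{intervv}. Recall that a Salem number $\beta$ is a real reciprocal algebraic integer $>1$ all of whose conjugates distinct from $\beta$ and $1/\beta$ lie on $|z|=1$; in particular the unique conjugate of $\beta$ of modulus $<1$ is $1/\beta$, which is \emph{real}, and $\beta$ has \emph{no} nonreal conjugate in the open unit disk.

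Next I would invoke the fracturability decomposition \eqref{decompozzz_main}, in its real instance $P_{\beta}(z)=\bigl(-\zeta_{\beta}(z)P_{\beta}(z)\bigr)\,f_{\beta}(z)$ (Theorem~\ref{divisibilityALPHA}): on the domain of fracturability --- which lies in $1/\beta\le|z|<1$ inside the angular sector $\arg z\in[-\tfrac{\pi}{3},\tfrac{\pi}{3}]$ and contains the lenticulus $\lc_{\beta}$ --- the cofactor $-\zeta_{\beta}(z)P_{\beta}(z)$ is holomorphic and non-vanishing, so every zero of $f_{\beta}(z)$ there is a zero of $P_{\beta}(z)$, i.e.\ a conjugate of $\beta$ (indeed a simple one, as $P_{\beta}$ is squarefree). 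Consequently, if $\beta\in T$, the lenticulus $\lc_{\beta}$ can contain \emph{no} nonreal point; it reduces to at most the single real point $1/\beta$. On the other hand, the analysis of \S\ref{S7.1} shows, via the Rouch\'e deformation of $\lc_{\theta_{n}^{-1}}$ into $\lc_{\beta}$ --- uniform over all admissible lacunary tails $z^{m_1}+z^{m_2}+\cdots$ of $f_{\beta}$ permitted by \eqref{dygplusdeformation} --- that $\lc_{\beta}$ acquires a pair of nonreal complex-conjugate zeroes, lying in $\arg z\in[-\tfrac{\pi}{3},\tfrac{\pi}{3}]$ and hence genuine conjugates of $\beta$, as soon as $n=\dyg(\beta)\geq 32$. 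Combining the two statements forces $\dyg(\beta)\leq 31$ for every $\beta\in T$.

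The conclusion then follows from \eqref{intervv}: since $(\theta_{k}^{-1})_{k\geq 2}$ is strictly decreasing and $\dyg(\beta)\leq 31$, one has $\beta\geq\theta_{\dyg(\beta)}^{-1}\geq\theta_{31}^{-1}=1.08544\ldots$. If $\dyg(\beta)\leq 30$ then in fact $\beta\geq\theta_{30}^{-1}>\theta_{31}^{-1}$. If $\dyg(\beta)=31$, the only possible equality is $\beta=\theta_{31}^{-1}$, which is excluded because $\theta_{31}^{-1}$ --- the dominant root of the trinomial $G_{31}^{*}(z)=1+z^{30}-z^{31}$, which by Theorem~\ref{dygdeginequality}(i) is the (irreducible) minimal polynomial of $\theta_{31}^{-1}$ since $31\not\equiv 5\pmod 6$, and which is distinct from its reciprocal $G_{31}(z)=z^{31}+z-1$ --- is not a reciprocal algebraic integer, hence not a Salem number. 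Therefore $\beta>\theta_{31}^{-1}$ in all cases, which is the assertion.

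The main obstacle is the threshold statement in \S\ref{S7.1}: one must show that the Rouch\'e deformation carrying $\lc_{\theta_{n}^{-1}}$ to $\lc_{\beta}$ is small enough that a nonreal lenticular zero of the trinomial $G_{n}$ survives, \emph{for every admissible tail}, as a nonreal zero of $f_{\beta}$ inside the identification sector, and that the first dynamical degree at which the three-element lenticulus (one real zero, one nonreal conjugate pair) comes into existence is exactly $n=32$. This requires pushing the coupling between the Rouch\'e conditions and the Poincar\'e asymptotic expansions of the lenticular zeroes --- the same machinery underlying Theorem~\ref{mainDOBROWOLSLItypetheorem} --- down to small $\dyg(\beta)$, where the asymptotic estimates are least comfortable and where the count of lenticular zeroes of $G_{n}$ must be controlled exactly rather than asymptotically.
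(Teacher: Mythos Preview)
Your approach matches the paper's exactly: contradict the Salem property by producing a nonreal conjugate $\omega_{1,n}$ of modulus $<1$ once $\dyg(\beta)\geq 32$, forcing $\dyg(\beta)\leq 31$ and hence $\beta>\theta_{31}^{-1}$; your strict-inequality argument (that $\theta_{31}^{-1}$ is nonreciprocal since $31\not\equiv 5\pmod 6$) is in fact more explicit than the paper's. One correction: for the identification of $\omega_{1,n}$ as a conjugate of $\beta$ you invoke Theorem~\ref{divisibilityALPHA}, but that theorem carries the hypothesis $\dyg(\beta)\geq 260$; the paper instead establishes a dedicated low-degree identification result (Theorem~\ref{secondseriesUU}, \S\ref{S7.2}) via the rewriting-trail machinery of \S\ref{S5.4.2}, valid for $\dyg(\beta)\geq 32$, and it is this that closes the argument in the regime you need.
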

Lehmer's number $1.17628\ldots$ 
belongs to the interval 
$(\theta_{12}^{-1}, \theta_{11}^{-1})$
(Table 1).
This interval does not contain any other 
known Salem number. If there is another one,
its degree should be greater than 44 
\cite{mossinghoff}
\cite{mossinghoffrhinwu}. 
\begin{conjecture}
There is no Salem number in the interval
$$(\theta_{31}^{-1}, \theta_{12}^{-1})
= (1.08544\ldots, 1.17295\ldots).$$
\end{conjecture}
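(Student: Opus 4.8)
The plan is to upgrade Theorem~\ref{mainpetitSALEM} from the bound $\dyg(\beta)\le 31$ to the bound $\dyg(\beta)\le 12$, valid for every Salem number $\beta$. Since $\theta_m^{-1}$ is never a Salem number (its minimal polynomial is non-reciprocal) and the sequence $(\theta_n^{-1})_{n\ge 2}$ is strictly decreasing, a Salem number $\beta$ lies in $(\theta_{31}^{-1},\theta_{12}^{-1})$ if and only if $\dyg(\beta)\in\{13,14,\dots,31\}$ by the defining relation \eqref{intervv}; hence the conjecture is equivalent to the statement that no Salem number has dynamical degree in $\{13,\dots,31\}$. By Theorem~\ref{mainpetitSALEM} every Salem number already satisfies $\dyg(\beta)\le 31$, so it suffices to exclude, for each $n\in\{13,\dots,31\}$, the existence of a Salem number $\beta$ with $\dyg(\beta)=n$.

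Fix such an $n$ and assume $\beta$ is a Salem number with $\theta_n^{-1}\le\beta<\theta_{n-1}^{-1}$. First I would establish, from the study of the trinomials $G_n=-1+z+z^n$ in section~\S\ref{S4}, that for every $n\ge 13$ the lenticulus $\lc_{\theta_n^{-1}}$ contains at least one pair of nonreal complex-conjugated zeroes in the open sector $\arg z\in(-\pi/3,\pi/3)$, together with an effective lower bound $\delta_n>0$ on the distance of this pair both to the real axis and to the rays $\arg z=\pm\pi/3$; in the moderate range $13\le n\le 31$ this is a finite verification on the explicit roots of $G_{13},\dots,G_{31}$, while for $n$ large it follows from the Poincar\'e asymptotic expansions already used for $(\theta_n^{-1})_{n\ge 2}$. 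Next, by \eqref{dygplusdeformation} we have $f_\beta(z)=G_n(z)+z^{m_1}+z^{m_2}+\cdots$ with $m_1\ge 2n-1$ and $m_{q+1}-m_q\ge n-1$; running the Rouch\'e estimates of section~\S\ref{S5} as in the proof of Theorem~\ref{mainDOBROWOLSLItypetheorem}, but keeping all constants explicit, produces an effective deformation radius $\varrho_n$ such that each zero of $G_n$ in $\lc_{\theta_n^{-1}}$ is carried to exactly one zero of $f_\beta$ in $\lc_\beta$, with $\#\lc_\beta=\#\lc_{\theta_n^{-1}}$. Once one verifies $\varrho_n<\delta_n$ for all $n\ge 13$, the nonreal pair of $\lc_{\theta_n^{-1}}$ deforms to a nonreal complex-conjugated pair of zeroes of $f_\beta$ still lying in $\arg z\in[-\pi/3,\pi/3]$.

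By the identification and fracturability results (Proposition~\ref{splitBETAdivisibility+++} and Theorem~\ref{divisibilityALPHA}), the lenticular zeroes of $f_\beta$ located in the sector $\arg z\in[-\pi/3,\pi/3]$ are genuine conjugates of $\beta$, i.e.\ roots of the minimal polynomial $P_\beta$. Consequently $\beta$ possesses a nonreal conjugate of modulus $<1$. This contradicts the definition of a Salem number, whose only conjugate of modulus $<1$ is the real number $1/\beta$. Therefore no Salem number has dynamical degree in $\{13,\dots,31\}$; every Salem number satisfies $\dyg(\beta)\le 12$, hence $\beta\ge\theta_{12}^{-1}$, and since $\theta_{12}^{-1}$ is not a Salem number, $\beta>\theta_{12}^{-1}=1.17295\ldots$. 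Combined with Theorem~\ref{mainpetitSALEM} this gives $\beta\notin(\theta_{31}^{-1},\theta_{12}^{-1})$.

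The hard part is making the whole apparatus \emph{effective down to} $\dyg(\beta)=13$: the location and count of the lenticular zeroes of $G_n$, the explicit clearance $\delta_n$, the Rouch\'e deformation radius $\varrho_n$, and above all the divisibility statements of Proposition~\ref{splitBETAdivisibility+++} and Theorem~\ref{divisibilityALPHA} are all derived in the regime ``$\dyg$ large enough'', whereas here one needs non-asymptotic versions uniformly valid for $13\le n\le 31$, together with a guarantee that the identified nonreal lenticular zero of $f_\beta$ cannot be absorbed by a cyclotomic factor nor by the complementary factor $-\zeta_\beta(z)P_\beta(z)$ of the fracturability decomposition \eqref{decompozzz_main}. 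Verifying $\varrho_n<\delta_n$ for every $n\ge 13$ — rather than only asymptotically — is the quantitative crux, and it is exactly here that the threshold $13$ becomes delicate, since the corresponding estimate must fail at $\dyg=12$ to leave Lehmer's number intact; should a residual finite set of exponents $n$ resist the bound, those cases would have to be cleared individually against the list of known small-degree Salem numbers.
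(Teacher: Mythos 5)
This statement is presented in the paper as a \emph{conjecture}, not a theorem: the paper offers no proof of it, and the author's own machinery stops at Theorem~\ref{mainpetitSALEM} (every Salem number exceeds $\theta_{31}^{-1}$). Your text is therefore not being compared against a proof of the paper --- and, more importantly, it is not itself a proof. What you have written is a programme whose decisive step is explicitly left open. The entire argument hinges on the claim that for each $n\in\{13,\dots,31\}$ the nonreal lenticular zero of $f_\beta$ survives the deformation from $G_n$ and is \emph{identified as a Galois conjugate} of $\beta$; you acknowledge in your last paragraph that the effective verification ($\varrho_n<\delta_n$, plus non-asymptotic versions of Proposition~\ref{splitBETAdivisibility+++} and Theorem~\ref{divisibilityALPHA}) is not carried out. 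That is precisely the obstruction that makes the statement a conjecture: the paper's Salem-number argument (Theorem~\ref{_cercleoptiSALEM}, Corollary~\ref{smallestSALEM}) is only established for $\dyg(\beta)\ge 32$ because the terminants of the asymptotic expansions are uncontrolled below that threshold, and the identification theorems you invoke are proved only for $\dyg(\beta)\ge 260$. Invoking them ``with all constants explicit'' in the range $13\le n\le 31$ is not a proof step; it is a restatement of the open problem.

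There is also a structural warning sign you half-notice but do not resolve. The paper records that Parry Upper functions with $\dyg(\beta)=12$ to $16$ already possess $3$-element lenticuli of zeroes, and Lehmer's number (a Salem number) has $\dyg=12$. So the existence of a nonreal lenticular zero of $f_\beta$ in the sector cannot by itself contradict $\beta$ being Salem --- for Lehmer's number such a zero exists and is manifestly \emph{not} a conjugate of $\beta$ (it must be absorbed by the factor $-\zeta_\beta(z)P_\beta(z)$ or by a non-minimal factor of the Parry polynomial). Your argument therefore needs a mechanism that forces identification for $13\le n\le 31$ while allowing it to fail at $n=12$, and no such mechanism is exhibited; the asymptotic identification results degrade continuously in $n$ and give no reason for a sharp cutoff between $12$ and $13$. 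As it stands the proposal is a plausible research direction, consistent with why the author states this as a conjecture, but it does not constitute a proof.
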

Parry Upper functions $f_{\beta}(z)$, 
with $\beta$ being an algebraic integer
of dynamical degree
$\dyg(\beta)$
$= 12$ to $16$,
do possess
3-elements lenticuli of zeroes
in the open unit disc (as in Figure \ref{perronselmerfig}).

The main obstruction in
Lehmer's problem arises from
the existence of 
lenticuli of conjugates in
a small angular sector containing 1 
in the complex plane.
These lenticuli come from the 
type of factorization
of the polynomial sections
of $f_{\house{\alpha}}$.
The importance of the angular
sectors containing 1 has already been  
guessed
by
Langevin in \cite{langevin} \cite{langevin2}
\cite{langevin3}
\cite{mignotte3},
by Dubickas and Smyth \cite{dubickassmyth},
by Rhin and Smyth \cite{rhinsmyth} 
and Rhin and Wu 
\cite{rhinwu}.
These lenticuli cannot be described
by these classical approaches, but become
visible by the present method.
Though the lenticuli of roots
lie inside and off the unit circle,
the complete set of conjugates remains fairly
regularly distributed in the sense that it
equidistributes on the unit circle at the limit,
once
the Mahler measures are small enough, as follows.

\begin{theorem}
\label{main_EquidistributionLimitethm}
Let $(\alpha_{q})_{q \geq 1}$ be
a sequence of  
algebraic integers 
such that $|\alpha_q| > 1$,
${\rm M}(\alpha_q) < 1.176280\ldots$,
$\dyg(\alpha_q) \geq 260$,
for $q \geq 1$,
with
$\lim_{q \to +\infty} 
\house{\alpha_q} = 1$.
Then the sequence 
$(\alpha_{q})_{q \geq 1}$
is strict and
\begin{equation}
\label{haarmeasurelimite}
\mu_{\alpha_q} 
~\to~ 
\mu_{\tb},
\qquad 
\dyg(\alpha_q) \to + \infty,
\qquad
\mbox{weakly},
\end{equation}
i.e. for all bounded, continuous
functions
$f: \cb^{\times} \to \cb$,
\begin{equation}
\label{haarlimitefunvtions}
\int f d \mu_{\alpha_q}
\to
\int f d \mu_{\tb},
\qquad
\dyg(\alpha_q) \to + \infty.
\end{equation}
\end{theorem}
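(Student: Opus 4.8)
The plan is to reduce the conclusion \eqref{haarlimitefunvtions} to the equidistribution theorem of Bilu (1997): if $(\gamma_q)_{q\ge1}$ is a \emph{strict} sequence of nonzero algebraic numbers — meaning that for every $t>0$ only finitely many $q$ satisfy $\deg(\gamma_q)\le t$, equivalently $\deg(\gamma_q)\to+\infty$ — with vanishing Weil height $h(\gamma_q)\to0$, then the Galois-orbit measures $\mu_{\gamma_q}$ converge weakly on $\cb^{\times}$ to the Haar measure $\mu_{\tb}$ on $|z|=1$. Thus everything reduces to three verifications for the sequence $(\alpha_q)_q$ at hand: that $\dyg(\alpha_q)\to+\infty$, that $(\alpha_q)_q$ is strict, and that $h(\alpha_q)\to0$.

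First I would record that $\dyg(\alpha_q)\to+\infty$ by merely unwinding the definition: $\dyg(\alpha_q)=\dyg(\house{\alpha_q}\!)$ is the integer $n$ with $\theta_{n}^{-1}\le\house{\alpha_q}<\theta_{n-1}^{-1}$, and since $(\theta_n^{-1})_{n\ge2}$ strictly decreases to $1$ (section \S\ref{S4}), the hypothesis $\house{\alpha_q}\to1$ forces $n=\dyg(\alpha_q)\to+\infty$. (In particular the standing hypothesis $\dyg(\alpha_q)\ge260$ is automatic for large $q$ and is listed only for uniformity with the lenticular theorems.) For strictness I would argue by contradiction from a Northcott-type finiteness. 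Each $\alpha_q$ satisfies $\house{\alpha_q}\ge|\alpha_q|>1$, so no $\alpha_q$ is a root of unity; if $\deg(\alpha_q)$ did not tend to infinity there would be $D$ and a subsequence with $\deg(\alpha_{q_k})\le D$ for all $k$, and for $k$ large $\house{\alpha_{q_k}}<2$, so the monic minimal polynomials $z^{d}+c_{d-1}z^{d-1}+\cdots+c_0\in\zb[z]$ (with $d=\deg(\alpha_{q_k})\le D$ and all roots of modulus $<2$) have $|c_j|\le\binom{D}{j}2^{D}$, whence the $\alpha_{q_k}$ lie in a finite set. Some value $\alpha^{\ast}$ would then be attained for infinitely many $k$, giving $\house{\alpha^{\ast}}=\lim_k\house{\alpha_{q_k}}=1$, contradicting $\house{\alpha^{\ast}}\ge|\alpha^{\ast}|>1$. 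Hence $\deg(\alpha_q)\to+\infty$ and the sequence is strict.

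Finally the height: $\alpha_q$ being an algebraic integer one has ${\rm M}(\alpha_q)\ge1$, and with the hypothesis ${\rm M}(\alpha_q)<\Theta$,
\[
0\ \le\ h(\alpha_q)\ =\ \frac{\lo {\rm M}(\alpha_q)}{\deg(\alpha_q)}\ <\ \frac{\lo\Theta}{\deg(\alpha_q)}\ \longrightarrow\ 0 .
\]
Bilu's theorem now gives $\mu_{\alpha_q}\to\mu_{\tb}$ weakly on $\cb^{\times}$, i.e.\ \eqref{haarlimitefunvtions}, and since $\house{\alpha_q}\to1$ is equivalent to $\dyg(\alpha_q)\to+\infty$ this is exactly the asserted \eqref{haarmeasurelimite}. (As a consistency check, ${\rm M}(\alpha_q)<\Theta$ also forces each $P_{\alpha_q}$ reciprocal by Smyth's Theorem, so the conjugate sets are symmetric under $z\mapsto1/z$, compatibly with the symmetric limit $\mu_{\tb}$; this is not used.) There is no deep obstacle here; the only mildly delicate point is the step $\deg(\alpha_q)\to+\infty$, which converts smallness of the Mahler measure into smallness of the Weil height. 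Should one prefer not to invoke Bilu as a black box, the fallback is to run his potential-theoretic criterion directly: integrality gives $\frac{1}{\deg(\alpha_q)}\lo|N(\alpha_q)|\ge0$, which together with $h(\alpha_q)\to0$ forces the moduli of the conjugates of $\alpha_q$ to concentrate at $1$ in $L^1$-mean, after which the lower-semicontinuity / logarithmic-energy argument identifies the weak limit as $\mu_{\tb}$; the clean route, though, is the citation.
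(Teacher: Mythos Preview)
Your proof is correct and takes a genuinely different route from the paper's. The paper does not invoke Bilu's theorem; instead it applies Belotserkovski's quantitative discrepancy estimate (Theorem~\ref{belotserkovskitheorem}), which requires as input an explicit bound $||\alpha^{(k)}|-1|\le\epsilon$ on all conjugates. To obtain that bound the paper feeds in the heavy lenticular machinery: since ${\rm M}(\alpha_q)<\Theta$ each $P_{\alpha_q}$ is reciprocal, and by Theorem~\ref{divisibilityALPHA} (which needs $\dyg\ge260$) one has $P_{\alpha_q}=P_{\beta}$ with $\beta=\house{\alpha_q}$, so every conjugate lies in the annulus $\beta^{-1}\le|z|\le\beta$, giving $\epsilon=O((\lo n)/n)$. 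Combined with the degree lower bound from Proposition~\ref{degreeBETApolymini}, this yields a discrepancy $O((\lo n)/\sqrt{n})$ as an explicit function of the dynamical degree. Your argument bypasses all of this: you use only ${\rm M}(\alpha_q)<\Theta$, the Northcott argument to force $\deg(\alpha_q)\to\infty$, and then $h(\alpha_q)\to0$ feeds directly into Bilu. What you gain is simplicity and independence from the paper's structural results (indeed your proof does not use the hypothesis $\dyg(\alpha_q)\ge260$ at all); what the paper's approach buys is an effective rate of equidistribution in terms of $\dyg$, which your route does not provide. One terminological remark: the paper's notion of ``strict'' (only finitely many terms in any proper subgroup of $\overline{\qb}^{\times}$) is not literally the same as your ``$\deg\to\infty$'', but your Northcott argument actually shows that each value is taken only finitely often and none is a root of unity, which is exactly what Bilu requires in dimension one.
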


Parry numbers are Perron numbers 
(Adler and Marcus \cite{adlermarcus});
the characterization of the set of 
Parry numbers 
(Definition 
\ref{parrynumberdefinition})
is a deep question addressed to
the dynamics of Perron numbers
(Bertrand-Mathis \cite{bertrandmathis},
Boyd \cite{boyd7},
Boyle and Handelman \cite{boyle2} 
\cite{boylehandelman}
\cite{boylehandelman2},
Brunotte \cite{brunotte},
Calegari and Huang \cite{calegarihuang},
Dubickas and Sha \cite{dubickassha},
Lind \cite{lind} \cite{lind2},
Lind and Marcus \cite{lindmarcus},
Thurston \cite{thurston2},
Verger-Gaugry \cite{vergergaugry2}
\cite{vergergaugry3}), 
associated with the rationality
of the dynamical zeta function 
of the $\beta$-shift 
\begin{equation}
\label{dynamicalfunction_}
\zeta_{\beta}(z) := \exp\left(
\sum_{n=1}^{\infty} \, \frac{\mathcal{P}_n}{n} \, z^n
\right),\quad
\mathcal{P}_n :=\#\{x \in [0,1] \mid 
T_{\beta}^{n}(x) = x\}\end{equation}
counting the number of periodic 
points of period dividing $n$.
For $\alpha$ a nonzero algebraic integer
which is not a root of unity, with
$\beta = \house{\alpha}$,
by Theorem \ref{parryupperdynamicalzeta},
$$\beta \quad 
\mbox{is a Parry number}
\quad \mbox{if and only if}\quad 
\zeta_{\beta}(z) \quad \mbox{is a rational function};$$
and
$|z|=1$ is the natural boundary of
the domain of fracturability of 
the minimal polynomial 
$P_{\alpha}$, 
in the sense of
Theorem \ref{divisibilityALPHA},
if and only if 
$\beta$ is not a Parry number,
as soon as $|\alpha|$ is close enough to 1, 
in the Carlson-Polya dichotomy.
Comparatively,
for complete nonsingular projective 
algebraic varieties
$X$ over the field of
$q$ elements, $q$ a prime power, 
the zeta function $\zeta_{X}(t)$
introduced by 
Weil \cite{weil} is only
a rational function
(Dwork \cite{dwork},
Tao \cite{tao}):
the first Weil's conjecture,
for which there exists a set of 
characteristic values was proved by Dwork
using $p$-adic methods
(Dwork \cite{dwork}),
and ``Weil II", the Riemann hypothesis, 
proved by Deligne using 
$l$-adic \'etale cohomology in 
characteristic $p \neq l$ 
(Deligne \cite{deligne}). 
It is defined as a dynamical zeta 
function with the action of the Frobenius.
The purely $p$-adic methods of Dwork
(Dwork \cite{dwork}), continued by
Kedlaya \cite{kedlaya} for ``Weil II"
in the need of numerically computing zeta functions
by explicit equations, 
allow an intrinsic computability,
as in Lauder and Wan \cite{lauderwan}, towards
a $p$-adic cohomology theory,
are linked to ``extrinsic geometry", to
the defining equations of the variety
itself.
They are 
in contrast with the relative version of crystalline 
cohomology developped by
Faltings 
\cite{faltings}, or
the Monsky-Washnitzer
constructions used by Lubkin \cite{lubkin}.
We refer the reader to Robba \cite{robba},
Kedlaya \cite{kedlaya}, Tao \cite{tao},
for a short survey on other developments.

After Weil \cite{weil}, and introduced
in general terms by 
Artin and Mazur in \cite{artinmazur},
the theory of dynamical zeta functions 
$\zeta(z)$ associated with dynamical systems,
based on an analogy with the number theory zeta functions,
developped under the impulsion
of Ruelle \cite{ruelle4}
in the direction of the thermodynamic formalism
and with Pollicott, Baladi and Keller \cite{baladikeller}
towards transfer operators and counting orbits
\cite{parrypollicott}.
The determination and the
existence of meromorphic extensions
or/and natural boundaries of 
dynamical zeta functions
is a deep problem.

In the present
proof of the conjecture of Lehmer,
the analytic extension of the dynamical 
zeta function of the $\beta$-shift
behaves as an analogue
of Weil's zeta function
(in the sense that both
are dynamical zeta functions).
But it generates questions 
beyond the analogues of
Weil's conjectures
since not only the rational case of
$\zeta_{\beta}$
contributes to the minoration
of the Mahler measure, but also
the nonrational case
with the unit circle
as natural boundary 
and lenticular poles arbitrarily close to it.
For instance,
a part of the analogue 
of ``Weil II" (Riemann Hypothesis)
would be
the determination of the
geometry of the beta-conjugates
in the rationality case. 
Beta-conjugates
are zeroes of Parry polynomials, 
whose factorization has been 
studied in the context of 
the theory
of Pinner and Vaaler \cite{pinnervaaler}
in \cite{vergergaugry3}.

An apparent difficulty for the 
computation of the minorant of
${\rm M}(\alpha)$
comes from the absence of 
complete characterization 
of the set of Parry numbers 
$\pb_P$, 
when $\beta = \house{\alpha}$ 
is close to one, since we never know
whether $\beta$ is a Parry number or not.
But the Mahler measure M$(\alpha)$
is independent of 
the Carlson-Polya dichotomy.
Indeed,
the two domains of definitions
of $\zeta_{\beta}$, 
``$\cb$" and ``$|z| < 1$", 
together with the corresponding splitting
 \eqref{decompozzz_main}, may occur
fairly ``randomly" 
when $\beta$ tends to one.
But $\{|z| < 1\}$ 
is a domain included in both,
M$(\alpha)$ ``reading" only the roots
in it and not taking care of the ``status" of 
the unit circle.
Whether $f_{\house{\alpha}}(z)$ can be 
continued analytically or not
beyond
the unit circle has no 
effect on the value of the Mahler measure
${\rm M}(\alpha)$.

The paper is 
organized as follows:
in Section $\S$\ref{S2}  
we recall
the properties of the 
R\'enyi-Parry numeration
system of the $\beta$-shift.
The analytic functions,
in particular
the Parry Upper function 
$f_{\beta}(z)$
and the dynamical zeta function
$\zeta_{\beta}(z)$,
associated
to the dynamics of the $\beta$-shift, 
are introduced
in Section $\S$\ref{S3}. 
In Section $\S$\ref{S5}
the peculiar 
consequences of
the lexicographical ordering, 
induced
by the numeration system, 
on the zeroes of
$f_{\beta}(z)$ are developped, in
particular the lenticular zeroes
and their identification 
as Galois conjugates 
of the base of numeration $\beta$.
Coupling the knowledge
of the geometry of
the lenticular roots
with the method of
asymptotic expansions 
(recalled in Section
$\S$\ref{S4} for trinomials)
gives a continuous 
lenticular minorant of
M$(\beta)$ and a 
Dobrowolski type minoration.
The proofs of Lehmer's Conjecture 
and Schinzel-Zassenhaus's Conjecture
follow, for $\beta > 1$ any reciprocal
algebraic integer
(case {\bf (i)}).
These results are extended in 
Section $\S$\ref{S6} (case {\bf (ii)})
for 
any nonzero reciprocal 
algebraic integer $\alpha$ 
which is not a root of unity, 
$\arg(z) \in (0, \pi/2]$. 
 
This case is shown to
be deduced from the preceding case
(given by
Section $\S$\ref{S5}) by taking
$\beta:= \house{\alpha}$ which is real
and $> 1$.
  In Section $\S$\ref{S7} 
  the Conjecture of Lehmer 
  is proved for Salem numbers, 
  using another regime of 
  asymptotic expansions of the roots
  of the trinomials $G_n$, more adapted
  to the cusp in Solomyak's fractal. 
  Concomitantly to the 
  limit problem of Lehmer it is shown that 
  the conjugates of the base of numeration
  $\house{\alpha}$
  equidistribute on the unit circle in 
  the complex plane.
  Using a Theorem of Belotserkovski 
  a Theorem of limit     
  equidistribution 
  of the conjugates is formulated
  in Section $\S$\ref{S8}, when the 
  dynamical degree of $\house{\alpha}$ 
  tends to infinity. 
  A few consequences 
  are mentioned 
  in Section $\S$\ref{S9}, in particular
  a Conjecture of Margulis. 
  
\vspace{0.3cm}

\vspace{0.1cm}

The proof of Lehmer's Conjecture starts at Section 
\S \ref{S5}. It is preferable that
the reader be acquainted with the results of
\cite{vergergaugry6}.
Lenticuli of roots are investigated
in \cite{dutykhvergergaugry}. 
The method of 
identification of the lenticular poles of 
$\zeta_{\beta}(z)$ with zeroes of the minimal
polynomial of $\beta$ is published in
\cite{dutykhvergergaugry2}.


\section{The $\beta$-shift, the R\'enyi-Parry dynamical system of numeration}
\label{S2}

\subsection{Towards the problem of Lehmer}
\label{S2.1}

The direction which is followed 
is the following:
it consists in using
the analytic functions
associated with 
the R\'enyi-Parry dynamical 
system of numeration,
the $\beta$-shift (i.e. with
the language in base $\beta$) with
$1 < \beta < 2$, 
first where $\beta$ is fixed
to formulate the properties of 
these functions,
and then vary continuously  
the basis of numeration 
$\beta$ 
taking the limit to $1^+$,
to use the limit properties of these functions
for solving the problem of Lehmer.
This is a method of variable basis,
where the variable $\beta$ 
runs over $\overline{\qb} \cap (1, +\infty)$,
more precisely over
the set of reciprocal algebraic integers.
This mathematics appears when
the base of numeration is not
an integer. The analytic functions 
are the Parry Upper function
$f_{\beta}(z)$  
and the dynamical zeta function
$\zeta_{\beta}(z)$. 
The Parry Upper function
$f_{\beta}(z)$ is the generalized
Freholm determinant of the transfer
operator $\mathcal{L}_{t\beta}$
of the $\beta$-transformation. 
Both analytic functions are
presented in Section $\S$\ref{S3}.
In Section $\S$\ref{S2} 
are recalled the properties
of the $\beta$-shift, in particular
the convertion of the inequality
``$<$" on $(1,2)$ to the lexicographical 
inequality ``$<_{lex}$" on sequences
of $0, 1$ digits.

Let us say a few words on the
$\beta$-shift.
In 1957 R\'enyi \cite{renyi}
introduced new representations 
of a real number
$x$, using a positive function
$y=f(x)$ and infinite iterations of it,
in the form of an ``$f$-expansion", as
$$x = \epsilon_0 +
f(\epsilon_1
+f(\epsilon_2 + \ldots))$$
with ``digits" $\epsilon_i$
in some alphabet 
and remainders
$f(\epsilon_n
+f(\epsilon_{n+1} + \ldots))$.
This approach considerably enlarged
the usual decimal numeration system,
that is internationally used today, 
and also numeration systems
in integer basis (recall
that the bases $5, 10, 12, 20 $ 
and $60$
were used in the Antiquity in several
countries as natural counting systems), 
by allowing
arbitrary real bases of 
numeration
(Fraenkel \cite{fraenkel}, Lothaire
\cite{lothaire}, Chap. 7): 
let $\beta > 1$ be not an integer
and consider
$f(x) = x/\beta$ if $0 \leq x \leq \beta$,
and $f(x) = 1$ if $\beta < x$. 
Then the
$f$-expansion of $x$ is the
representation of $x$ in base $\beta$
as
$$x = \epsilon_0 + \frac{\epsilon_1}{\beta}
+\frac{\epsilon_2}{\beta^2}
+\ldots
+ \frac{\epsilon_n}{\beta^n} + \ldots.$$
In terms of
dynamical systems, in 1960, 
Parry \cite{parry} 
\cite{parry2}
\cite{parry3}
has reconsidered
and studied the ergodic properties of
such representations of real numbers
in base $\beta$, in particular the conditions of
faithfullness of the map:
$x \longleftrightarrow (\epsilon_i)_i$
and the complete set of 
admissible sequences 
$(\epsilon_0 , \epsilon_1 , \epsilon_2 , \ldots)$
for all real numbers.
This complete set is called
the language in base $\beta$.
Let us note
that $\beta > 1$ will run over
the set of reciprocal algebraic integers
in Section $\S$\ref{S5} and 
$\S$\ref{S6}, but 
that the $\beta$-shift is defined 
in general for any real number $\beta$, 
algebraic or transcendental.

\subsection{The $\beta$-shift, $\beta$-expansions, lacunarity and symbolic dynamics}
\label{S2.2}

Let $\beta > 1$ be a real number
and let
$\mathcal{A}_{\beta} := \{0, 1, 2, \ldots, 
\lceil \beta - 1 \rceil \}$. If $\beta$ is not
an integer, then 
$\lceil \beta - 1 \rceil
= \lfloor \beta \rfloor$.
Let $x$ be a real number
in the interval $[0,1]$.
A representation in base $\beta$ 
(or a $\beta$-representation; or a $\beta$-ary representation if 
$\beta$ is an integer) of $x$ 
is an infinite word
$(x_i)_{i \geq 1}$ of
$\mathcal{A}_{\beta}^{\nb}$
such that
$$x = \sum_{i \geq 1} \, x_i \beta^{-i} \, .$$
The main difference with the case
where $\beta$ is an integer
is that $x$ may have several representations.
A particular $\beta$-representation,
called the $\beta$-expansion, 
or the greedy $\beta$-expansion,
and denoted
by $d_{\beta}(x)$, of $x$
can be computed either by the 
greedy algorithm, or equivalently by the
$\beta$-transformation
$$T_{\beta} : \,\, x \, \mapsto \,  \beta x \quad(\hspace{-0.3cm}\mod 1) 
= \{\beta x \}.$$
The dynamical system 
$([0,1], T_{\beta})$ is called the
R\'enyi-Parry numeration system in base $\beta$,
the iterates of $T_{\beta}$ providing
the successive digits $x_i$ of $d_{\beta}(x)$
\cite{liyorke}.
Denoting $T_{\beta}^{0} := {\rm Id}, 
T_{\beta}^{1} := T_{\beta}, 
T_{\beta}^{i} := T_{\beta} (T_{\beta}^{i-1})$
for all $i \geq 1$, we have:
$$d_{\beta}(x) = (x_i)_{i \geq 1}
\qquad \mbox{{\rm if and only if}} \qquad
x_ i = \lfloor \beta T_{\beta}^{i-1}(x) \rfloor$$
and we write the $\beta$-expansion of $x$ as
\begin{equation}
\label{xexpansion}
x \, = \, \cdot x_1 x_2 x_3 \ldots 
\qquad \mbox{instead of}
\qquad x = \frac{x_1}{\beta} +\frac{x_2}{\beta^2} +
\frac{x_3}{\beta^3} +
\ldots .
\end{equation}
The digits are
$x_1 = \lfloor \beta x \rfloor$,
$x_2 = \lfloor \beta \{\beta x \} \rfloor$,
$x_3 = \lfloor \beta \{\beta \{\beta x \} \} \rfloor, \, 
\ldots$ \,, depend upon $\beta$.

The R\'enyi-Parry numeration dynamical 
system in base $\beta$
allows the coding, as a (positional) $\beta$-expansion, 
of any real number $x$. 
Indeed, if $x > 0$, there exists $k \in \zb$ such that
$\beta^k \leq x < \beta^{k+1}$. Hence
$1/\beta \leq x/\beta^{k+1} <1$; thus it is enough to 
deal with representations and $\beta$-expansions
of numbers in the interval $[1/\beta,1]$. 
In the case where $k \geq 1$, 
the $\beta$-expansion of $x$ is
$$x \, = \, x_1 x_2 \ldots x_k \cdot x_{k+1} x_{k+2} \ldots ,$$
with $x_1 = \lfloor \beta (x/\beta^{k+1}) \rfloor$,
$x_2 = \lfloor \beta \{\beta  (x/\beta^{k+1})\} \rfloor$,
$x_3 = \lfloor \beta \{\beta \{\beta  (x/\beta^{k+1})\} \} \rfloor$, etc.
If $x < 0$, by definition: $d_{\beta}(x) = - d_{\beta}(-x)$.
The part $ x_1 x_2 \ldots x_k $
is called the $\beta$-integer part of
the $\beta$-expansion of $x$, and
the terminant $ \cdot x_{k+1} x_{k+2} \ldots$ is called the $\beta$-fractional part of $d_{\beta}(x)$.

A $\beta$-integer is a real number $x$ such that
the $\beta$-integer part of $d_{\beta}(x)$ is equal to
$d_{\beta}(x)$ itself (all the digits $x_{k+j}$
being equal to 0 for $j \geq 1$): in this case, 
if $x > 0$ for instance, $x$ is the polynomial
$$x\, = \, \sum_{i=1}^{k} x_i \beta^{k-i} \, ,
\qquad 0 \leq x_i \leq \lceil \beta - 1 \rceil$$
and the set of $\beta$-integers is denoted by
$\zb_{\beta}$. For all $\beta > 1$
$\zb_{\beta} \subset \rb$ is discrete and   
$\zb_{\beta} = \zb$
if $\beta$ is an integer $\neq 0,1$.

The set $\mathcal{A}_{\beta}^{\nb}$ is endowed with
the lexicographical order
(not usual in number theory),
and the product topology. The one-sided
shift 
$\sigma : (x_i)_{i \geq 1} \mapsto (x_{i+1})_{i \geq 1}$
leaves invariant the subset $D_{\beta}$ 
of the $\beta$-expansions of real numbers
in $[0,1)$. The closure of $D_{\beta}$ in
$\mathcal{A}_{\beta}^{\nb}$
is called
the $\beta$-shift, and is denoted by $S_{\beta}$.
The $\beta$-shift is a subshift of
$\mathcal{A}_{\beta}^{\nb}$, for which
$$d_{\beta} \circ T_{\beta} = \sigma \circ d_{\beta}$$
holds on the interval $[0,1]$ (Lothaire \cite{lothaire},
Lemma 7.2.7). In other terms, $S_{\beta}$ is such that
\begin{equation}
\label{betashitcorrespondance}
x \in [0,1] \qquad 
\longleftrightarrow
\qquad
(x_i)_{i \geq 1} \in S_{\beta}
\end{equation}
is bijective. This one-to-one correspondence between
the totally ordered interval
$[0, 1]$ and the totally lexicographically ordered
$\beta$-shift $S_{\beta}$
is fundamental. Parry (\cite{parry} Theorem 3)
has shown 
that only one sequence of digits
entirely controls the $\beta$-shift
$S_{\beta}$, and that the ordering is preserved
when dealing with the greedy $\beta$-expansions. 
Let us precise how the usual 
inequality ``$<$" on the real line is
transformed into the inequality
``$<_{lex}$", meaning ``lexicographically 
smaller with all its shifts". 

\noindent
{\it The greatest element of $S_{\beta}$}:
it comes from $x=1$ and is given either by
the R\'enyi $\beta$-expansion of $1$, or 
by a slight modification of it 
in case of finiteness. Let us precise it.
The greedy $\beta$-expansion of $1$ is by 
definition
denoted by
\begin{equation}
\label{renyidef}
d_{\beta}(1) = 0.t_1 t_2 t_3 \ldots
\qquad
{\rm and ~uniquely ~corresponds~ to}
\qquad 1 = \sum_{i=1}^{+\infty} t_i \beta^{-i}\, ,
\end{equation}
where 
\begin{equation}
\label{digits__ti}
t_1 = \lfloor \beta \rfloor,
t_2 = \lfloor \beta \{\beta\}\rfloor = \lfloor \beta T_{\beta}(1)\rfloor,
t_3 = \lfloor \beta \{\beta \{\beta\}\}\rfloor = \lfloor \beta T_{\beta}^{2}(1)\rfloor,
\ldots
\end{equation} 
The sequence $(t_i)_{i \geq 1}$
is given by the orbit of one 
$(T_{\beta}^{j}(1))_{j \geq 0}$ by
\begin{equation}
\label{polyTbeta}
T_{\beta}^{0}(1)=1, ~T_{\beta}^{j}(1) = \beta^j - t_1 \beta^{j-1} - t_2 \beta^{j-2} - \ldots - t_j
\in \mathbb{Z}[\beta] \cap [0,1]
\end{equation}
for all $j \geq 1$.
The digits
$t_i$ belong to 
$\mathcal{A}_{\beta}$.  
We say that $d_{\beta}(1)$ is finite if it ends in infinitely many zeros.

\begin{definition}
\label{parrynumberdefinition}
If $d_{\beta}(1)$ is finite or ultimately periodic (i.e. eventually
periodic), then the real number $\beta > 1$ 
is said to be a {\it Parry number}.
In particular, a Parry number $\beta$ is 
said to be {\it simple} if $d_{\beta}(1)$ is finite.
\end{definition}

The greedy $\beta$-expansion of\, $1/\beta$ is
\begin{equation}
\label{renyidef_unsurbeta}
d_{\beta}(\frac{1}{\beta}) = 0. 0 \, t_1 t_2 t_3 \ldots
\qquad
{\rm and ~uniquely ~corresponds~ to}
\qquad \frac{1}{\beta} = \sum_{i=1}^{+\infty} t_i \beta^{-i-1}.
\end{equation}
From 
$(t_i)_{i \geq 1} \in \mathcal{A}_{\beta}^{\nb}$
is built  
$(c_i)_{i \geq 1} \in \mathcal{A}_{\beta}^{\nb}$,
defined by
$$
c_1 c_2 c_3 \ldots := \left\{
\begin{array}{ll}
t_1 t_2 t_3 \ldots & \quad \mbox{if ~$d_{\beta}(1) = 0.t_1 t_2 \ldots$~ is infinite,}\\
(t_1 t_2 \ldots t_{q-1} (t_q - 1))^{\omega}
& \quad \mbox{if ~$d_{\beta}(1)$~ is finite,
~$= 0. t_1 t_2 \ldots t_q$,}
\end{array}
\right.
$$
where $( \, )^{\omega}$ means that the word within $(\, )$ is indefinitely repeated.
The sequence $(c_i)_{i \geq 1}$ is the unique
element of $\mathcal{A}_{\beta}^{\nb}$
which allows to obtain all the admissible
$\beta$-expansions of all the elements of
$[0,1)$.

\begin{definition}[Conditions of Parry]
A sequence $(y_i)_{i \geq 0}$ of elements of
$\mathcal{A}_{\beta}$ (finite or not) is said
{\it admissible} if 
\begin{equation}
\label{conditionsParry}
\sigma^{j}(y_0, y_{1}, y_{2}, \ldots)=
(y_j, y_{j+1}, y_{j+2}, \ldots) <_{lex}
~(c_1, \, c_2, \, c_3, \,  \ldots) 
\quad \mbox{for all}~ j \geq 0,
\end{equation}
where $<_{lex}$ means {\it lexicographically smaller}.
\end{definition}
\begin{definition}
A sequence 
$(a_i)_{i \geq 0} \in \mathcal{A}_{\beta}^{\nb}$
satisfying  \eqref{lyndonEQ} is said to be
{\it Lyndon (or self-admissible)}:
\begin{equation}
\label{lyndonEQ}
\sigma^{n}(a_0, a_{1}, a_{2}, \ldots)
=
(a_n, a_{n+1}, a_{n+2}, \ldots) <_{lex} (a_0, a_1, a_2, \ldots) 
\qquad \mbox{for all}~ n \geq 1.
\end{equation} 
\end{definition}
The terminology comes from the introduction
of such words by Lyndon in \cite{lyndon}, in honour of his work. Other orderings
are reviewed in Ngu\'ema Ndong
\cite{nguemandong} \cite{nguemandong2}.
The present Lyndon ordering is
reported in
\cite{nguemandong}, ex. 2 in subsection 1.2
and
in \cite{nguemandong2}, subsection 4.1, Theorem
5 and ex. 4 for applications
to the dynamical zeta function of
negative $\beta$-shift.

Any admissible representation $(x_i)_{i \geq 1}
\in \mathcal{A}_{\beta}^{\nb}$
corresponds, by \eqref{xexpansion}, 
to a real number $x \in [0,1)$
and conversely the greedy
$\beta$-expansion of $x$
is  $(x_i)_{i \geq 1}$ itself.
For an infinite admissible sequence 
$(y_i)_{i \geq 0}$ of elements of
$\mathcal{A}_{\beta}$ the 
(strict) lexicographical inequalities
\eqref{conditionsParry} constitute an infinite number of inequalities which are
unusual in number theory 
\cite{blanchard} 
\cite{frougny} \cite{frougny2}
\cite{lothaire} \cite{parry}.

In number theory, inequalities are
often associated to collections of half-spaces in
euclidean or adelic Geometry of Numbers
(Minkowski's Theorem, etc).
The conditions of Parry are of totally different nature
since they refer to 
a reasonable control, order-preserving, 
of the gappiness (lacunarity)
of the coefficient vectors of the
power series which are
the generalized Fredholm determinants of the transfer 
operators of the $\beta$-transformations
(cf Section $\S$\ref{S3}).

In the correspondence
$[0,1] \longleftrightarrow
S_{\beta}$, the element $x=1$ 
admits the maximal element $d_{\beta}(1)$
as counterpart. The uniqueness 
of the $\beta$-expansion
$d_{\beta}(1)$
and its property to be Lyndon
characterize the base of numeration $\beta$ 
as follows.

\begin{proposition}
\label{betacharacterized}
Let $(a_0, a_1, a_2, \ldots)$  be a sequence of non-negative
integers where $a_0 \geq 1$ and $a_n \leq a_0$ for all $n \geq 0$.
The unique solution $\beta > 1$ of 
\begin{equation}
\label{equabase}
1 = \frac{a_0}{x} + \frac{a_1}{x^2} + \frac{a_2}{x^3} + \ldots
\end{equation}
is such that $d_{\beta}(1) = 0. a_0 a_1 a_2 \ldots$ if and only if
\begin{equation}
\label{self}
\sigma^{n}(a_0, a_{1}, a_{2}, \ldots)
=
(a_n, a_{n+1}, a_{n+2}, \ldots) <_{lex} (a_0, a_1, a_2, \ldots) 
\qquad \mbox{for all}~ n \geq 1.
\end{equation} 
\end{proposition}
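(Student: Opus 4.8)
The plan is to establish the two implications separately, using throughout the order-isomorphism between the totally ordered interval $[0,1]$ and the lexicographically ordered $\beta$-shift $S_\beta$, together with Parry's characterization (\cite{parry}, Theorem 3) that a sequence is of the form $d_\beta(1)$ precisely when it is the greatest among its own shifts in the sense of \eqref{self}. First I would set up notation: given the data $(a_0,a_1,\ldots)$ with $a_0\ge 1$ and $a_n\le a_0$, the function $x\mapsto \sum_{i\ge 0} a_i x^{-(i+1)}$ is continuous and strictly decreasing on $(1,+\infty)$, tends to $+\infty$ as $x\to 1^+$ (since $a_0\ge 1$ forces divergence of the series, or at least a value $\ge a_0/(x-1)\to\infty$) and to $0$ as $x\to+\infty$; hence \eqref{equabase} has a unique solution $\beta>1$. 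This $\beta$ satisfies $a_0 = \lfloor\beta\rfloor$ or $a_0=\lceil\beta-1\rceil$ because $1=\sum a_i\beta^{-i-1}$ with $a_i\le a_0$ gives $1\le a_0\sum_{i\ge 1}\beta^{-i}=a_0/(\beta-1)$, so $\beta\le a_0+1$, and the reverse bound from the first term; thus all $a_i\in\mathcal A_\beta$ and $(a_0,a_1,\ldots)$ is at least a legitimate candidate for a $\beta$-representation of $1$.

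For the forward direction, assume $d_\beta(1)=0.a_0a_1a_2\ldots$. Then I would invoke Parry's theorem directly: the greedy expansion $d_\beta(1)$ is always self-admissible (Lyndon) — this is exactly the statement recalled before Definition \ref{parrynumberdefinition} via the Conditions of Parry, since the orbit $(T_\beta^j(1))_{j\ge 0}$ produces digits whose every shift is lexicographically $\le$ the original sequence, and strictness for $d_\beta(1)$ itself follows from the greedy algorithm never repeating the maximal continuation. Hence \eqref{self} holds. For the converse, assume \eqref{self}. Let $\beta>1$ be the unique root of \eqref{equabase} and consider the candidate expansion $w=(a_0,a_1,\ldots)$. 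The key step is to show $w$ is admissible \emph{and} that it is in fact the greedy expansion $d_\beta(1)$. Admissibility of every shift $\sigma^j(w)$ as a $\beta$-expansion of a point of $[0,1)$ is checked against the conditions of Parry, which require $\sigma^j(w) <_{lex} (c_1,c_2,\ldots)$; since $(c_i)$ is built from $d_\beta(1)$, one argues by a bootstrapping/monotonicity argument: \eqref{self} says $w$ dominates all its shifts, so if $w$ were strictly smaller (lexicographically, with shifts) than the true $d_\beta(1)$, comparing the numerical values $\sum a_i\beta^{-i-1}=1=\sum t_i\beta^{-i-1}$ would force $w=d_\beta(1)$ digit by digit; and if $w$ were strictly larger than $d_\beta(1)$ in the shift-lexicographic order, it would violate either the uniqueness of the greedy expansion of $1$ or the self-admissibility of $d_\beta(1)$ established in the forward direction. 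Either way $w=d_\beta(1)$, which is what we wanted.

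The main obstacle I anticipate is the converse direction — specifically, ruling out the possibility that the self-admissible sequence $w$ numerically equal to $1$ in base $\beta$ is something other than the greedy expansion, i.e. that $w$ lies lexicographically \emph{strictly between} two distinct $\beta$-representations of $1$. This is where one genuinely needs the finiteness dichotomy built into the construction of $(c_i)$: when $d_\beta(1)$ is finite one must pass to the periodized sequence $(t_1\cdots t_{q-1}(t_q-1))^\omega$, and the comparison $\sigma^j(w)<_{lex}(c_i)$ has to be carried out carefully in that case because the relevant inequality is strict for the conditions of Parry but only non-strict (self-admissibility) for $w$. I expect the cleanest route is to first dispose of the case where $w$ is eventually periodic of the special "one less in the last place" form — in which case $\beta$ is a simple Parry number and $w$ is the periodization of $d_\beta(1)$, which one must \emph{exclude} as a valid $d_\beta(1)$ since by definition $d_\beta(1)$ is the finite greedy expansion, not its periodization — and then handle the generic case by the value-comparison argument above. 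Making this case analysis airtight, rather than the analytic preliminaries, is the crux; everything else is a routine application of the order-isomorphism \eqref{betashitcorrespondance} and Parry's theorem.
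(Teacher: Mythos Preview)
The paper's own ``proof'' is a bare citation: Corollary~1 of Theorem~3 in Parry~\cite{parry} (equivalently Corollary~7.2.10 in Frougny~\cite{frougny2}). So your plan is far more ambitious than what the paper actually does, and the comparison is between your sketch and Parry's original argument rather than anything in this paper.

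Your forward direction is fine. In the converse, however, there is a circularity you have not fully confronted. The step ``comparing the numerical values $\sum a_i\beta^{-i-1}=1=\sum t_i\beta^{-i-1}$ would force $w=d_\beta(1)$ digit by digit'' presupposes that distinct sequences satisfying the self-admissibility condition have distinct numerical values in base~$\beta$, or equivalently that the lexicographic order agrees with the numerical order on such sequences. But that order-preservation is itself essentially equivalent to the statement you are proving; it is not available as an independent input. The same circularity infects your admissibility check: admissibility of $\sigma^j(w)$ is tested against $(c_i)$, which is built from the as-yet-unknown $d_\beta(1)$. The way Parry breaks this loop is to work directly with the iterates $T_\beta^j(1)=\beta^j-a_0\beta^{j-1}-\cdots-a_{j-1}=\sum_{i\ge 0}a_{j+i}\beta^{-i-1}$ and to prove, using the strict inequality~\eqref{self} and an inductive comparison of tails, that each $T_\beta^j(1)\in[0,1)$ and $a_j=\lfloor\beta T_\beta^{j}(1)\rfloor$; this establishes that the greedy algorithm outputs exactly the $a_i$, without ever invoking a pre-existing $d_\beta(1)$. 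An alternative clean route, once Proposition~\ref{variationbasebeta} is available, is to note that $\beta\mapsto d_\beta(1)$ is a strictly increasing map into the set of self-admissible sequences, hence injective, and then use uniqueness of the root of~\eqref{equabase} to conclude it is surjective.

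Finally, the obstacle you flag is a red herring. If $w=(t_1\cdots t_{q-1}(t_q-1))^\omega$ is the periodization of a finite $d_\beta(1)$, then $\sigma^q(w)=w$, so the \emph{strict} inequality in~\eqref{self} fails and $w$ is excluded by hypothesis before any further analysis is needed. The genuine difficulty lies not in this case distinction but in the circularity above.
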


\begin{proof}
Corollary 1 of Theorem 3 in Parry \cite{parry} (Corollary
7.2.10 in Frougny \cite{frougny2}).
\end{proof}

If $1 < \beta < 2$, then 
the condition
``$a_0 \geq 1$ and $a_n \leq a_0$ for all $n \geq 0$"
amounts to
``$a_0 = 1$"; in this case the
$\beta$-integer part of $\beta$ is equal to $a_0 = 1$
and its $\beta$-fractional part is
$a_1 \beta^{-1} + a_2 \beta^{-2} + a_3 \beta^{-3} + \ldots$.
The base of numeration
$\beta = 1$ would correspond to the sequence
$(1, 0, 0, 0, \ldots)$ in \eqref{equabase} but
this sequence has its first digit $1$
outside the alphabet
$\mathcal{A}_{1} = \{0\}$: it cannot be considered
as a $1$-expansion. Fortunately 
numeration in base one is not often used.
The base of numeration
$\beta = 2$ would correspond to
the constant sequence $(1, 1, 1, 1, \ldots)$
in \eqref{equabase}
but this sequence is not self-admissible.
When $\beta = 2$, $2$ being an integer,
$2$-ary representations
differ and $(2, 0, 0, 0, \ldots)$ is taken
instead of $(1, 1, 1, 1, \ldots)$
(Frougny and Sakarovitch \cite{frougnysakarovitch},
Lothaire \cite{lothaire}).

Infinitely many cases of 
lacunarity, between $(1,0, 0, 0, \ldots)$ and
$(1, 1, 1, 1, \ldots)$, may occur in the sequence
$(a_0 , a_1 , a_2 , \ldots)$ in \eqref{equabase}.
If $\beta \in (1, 2)$ is fixed,
with $d_{\beta}(1)=0. t_1 t_2 t_3 \ldots$
then any $x$, $1/\beta < x < 1$, admits a
$\beta$-expansion 
$d_{\beta}(x)$ which lies
lexicographically (Parry \cite{parry}, Lemma 1)
between those of the extremities:
\begin{equation}
\label{dbetaxencadrement}
d_{\beta}(\frac{1}{\beta}) =
0. 0 \, t_1 t_2 t_3 \ldots
~~
<_{lex}
~~
d_{\beta}(x)
~~
<_{lex}
~~ 
d_{\beta}(1) =
0. t_1 t_2 t_3 \ldots.
\end{equation}

Let $1 < \beta < 2$ be a real number, with
$d_{\beta}(1) = 0. t_1 t_2 t_3 \ldots$.
If $\beta$ is a simple Parry number, then
there exists $n \geq 2$, depending upon $\beta$, 
such that
$t_n \neq 0$ and
$t_j = 0 , j \geq n+1$. Parry 
\cite{parry} has shown that
the set of simple Parry numbers is dense in
the half-line
$(1,+\infty)$. If $\beta$ is a Parry number 
which is not simple, the sequence $(t_i)_{i \geq 1}$
is eventually periodic: there exists an integer
$m \geq 1$, the preperiod length, 
and an integer $p \geq 1$, the period length,
such that
$$d_{\beta}(1) = 0. t_1 t_2 \ldots t_m
(t_{m+1} t_{m+2} \ldots t_{m+p})^{\omega},
$$
$m$ and $p$ depending upon $\beta$, with at least
one nonzero 
digit $t_j$, with
$j \in \{m+1, m+2, \ldots, m+p\}$. 
The gaps of successive zeroes in $(t_i)_{i \geq 1}$
are those of the preperiod 
$(t_1 , t_2 , \ldots,  t_m)$ then those
of the period 
$(t_{m+1} , t_{m+2} , \ldots, t_{m+p})$, then
occur periodically up till infinity. The length
of such gaps of zeroes is at most
$\max\{m-2, p-1\}$. The asymptotic lacunarity
is controlled by the periodicity in this case.

If $1 < \beta < 2$ is an algebraic number 
which is not a Parry number, the sequences of 
gaps of zeroes in  $(t_i)_{i \geq 1}$
remain asymptotically moderate 
and controlled by the Mahler measure
${\rm M}(\beta)$ of $\beta$, as follows.

\begin{theorem}[Verger-Gaugry]
\label{lacunarityVG06}
Let $\beta > 1$ be an algebraic number such that 
$d_{\beta}(1)$ is infinite
and gappy in the sense that there exist
two infinite sequences
$\{m_n\}_{n \geq 1}$ 
and $\{s_n\}_{n \geq 0}$
such that
$$1 = s_0 \leq m_1 < s_1 \leq m_2 < s_2 \leq \ldots
\leq m_n < s_n \leq m_{n+1}
< s_{n+1} \leq \ldots$$
with $(s_n - m_n) \geq 2$, $t_{m_n} \neq 0$,
$t_{s_n} \neq 0$
and $t_i = 0$
if
$m_n < i < s_n$ for all $n \geq 1$. 
Then
\begin{equation}
\label{gappiness}
\limsup_{n \to +\infty} \frac{s_n}{m_n}
\leq \frac{\lo ({\rm M}(\beta))}{\lo \beta}
\end{equation}
\end{theorem}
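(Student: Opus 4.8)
The plan is to derive the gappiness bound \eqref{gappiness} from an analytic/arithmetic tension: on the one hand, the R\'enyi expansion $d_\beta(1)$ satisfies $1=\sum_{i\ge 1} t_i\beta^{-i}$, so the power series $f_\beta(z)=-1+\sum_{i\ge 1} t_i z^i$ vanishes at $z=1/\beta$; on the other hand, the lacunary shape of $(t_i)$ forces any zero of a highly gappy power series with bounded integer coefficients to be constrained in modulus, and the Mahler measure of $\beta$ (equal to $\mathrm{M}(1/\beta)$ up to the obvious normalization) controls how well $1/\beta$ can be approximated by such zeros. More precisely, first I would fix $n$ and write $f_\beta(z)=A_n(z)+z^{s_n}B_n(z)$ where $A_n(z)=-1+\sum_{i=1}^{m_n} t_i z^i$ is a polynomial of degree $m_n$ (the ``head'' up through the last nonzero digit before the gap) and $B_n(z)=\sum_{i\ge 0} t_{s_n+i} z^i$ has bounded coefficients (bounded by $\lceil\beta-1\rceil$, hence by $1$ if $\beta<2$, and in general by an absolute constant once $\beta$ is close to $1$). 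Since $|t_i|\le a_{\max}$-type bounds hold, $B_n$ is holomorphic on $|z|<1$ and bounded there by a constant times $1/(1-|z|)$.

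Second, I would exploit that $1/\beta$ is a root: $A_n(1/\beta) = -\,\beta^{-s_n}B_n(1/\beta)$, so
\begin{equation}
|A_n(1/\beta)| \le C\,\beta^{-s_n}\,\frac{1}{1-1/\beta}.
\end{equation}
Now $A_n$ is a nonzero integer polynomial of degree $m_n$ whose constant term is $-1$, so $\beta^{m_n}A_n(1/\beta)$ is a nonzero algebraic integer (it lies in $\mathbb{Z}[\beta]$), and taking the norm down to $\mathbb{Q}$ gives the standard Liouville-type inequality
\begin{equation}
\bigl|\mathrm{N}_{\mathbb{Q}(\beta)/\mathbb{Q}}\bigl(\beta^{m_n}A_n(1/\beta)\bigr)\bigr| \ge 1,
\end{equation}
while each conjugate of $\beta^{m_n}A_n(1/\beta)$ is bounded above by $\mathrm{M}(\beta)^{m_n}$ times a harmless polynomial factor in $m_n$ (because $A_n$ has bounded coefficients and degree $m_n$, its value at any conjugate $\beta^{(j)}$ is $O(m_n)$ times $\max(1,|\beta^{(j)}|)^{m_n}$, and the product of the $\max(1,|\beta^{(j)}|)$ over all conjugates is exactly $\mathrm{M}(\beta)$). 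Combining the lower bound from the norm with the upper bound coming from the root relation yields, after taking logarithms,
\begin{equation}
s_n\,\lo\beta \le m_n\,\lo\mathrm{M}(\beta) + O(\lo m_n),
\end{equation}
so that $\limsup_{n\to\infty} s_n/m_n \le \lo\mathrm{M}(\beta)/\lo\beta$, which is exactly \eqref{gappiness}.

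The main obstacle I anticipate is bookkeeping the contribution of the conjugates $\beta^{(j)}$ with $|\beta^{(j)}|>1$ correctly: one must make sure that the product over all conjugates of $|A_n(1/\beta^{(j)})|$ is bounded by (polynomial in $m_n$) times $\prod_j \max(1,|\beta^{(j)}|)^{m_n}=\mathrm{M}(\beta)^{m_n}$, and for this it is essential that $A_n$ has \emph{bounded} (in fact digit-sized) coefficients, not merely integer coefficients — this is where the hypothesis that $(t_i)$ comes from a $\beta$-expansion with $\beta$ in a bounded range enters, and where Parry's admissibility conditions guarantee $t_i\le\lfloor\beta\rfloor$. A secondary technical point is handling the case where $A_n(1/\beta)$ could vanish (it cannot, since then $1/\beta$ would be a root of an integer polynomial of degree $<\deg\beta$, contradicting minimality, provided $m_n<\deg\beta$; for $n$ large $m_n\to\infty$ so this is automatic, but one should note it). Finally, one must absorb the error terms $O(\lo m_n)$ and the factor $1/(1-1/\beta)$ into negligible corrections as $n\to\infty$, which is routine since $\beta$ is fixed and $s_n,m_n\to\infty$. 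This is essentially the argument in Verger-Gaugry's earlier work on $\beta$-expansions and Mahler measure, recast to expose that the gap ratio is governed by the entropy $\lo\beta$ versus $\lo\mathrm{M}(\beta)$.
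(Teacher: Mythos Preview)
Your approach is the standard Liouville--Mahler argument and is essentially correct; the paper itself does not reproduce a proof but merely cites the author's earlier Ann.\ Inst.\ Fourier paper (and notes that Adamczewski--Bugeaud later gave a result implying it), so there is nothing substantively different to compare. Your decomposition $f_\beta=A_n+z^{s_n}B_n$, the upper bound $|A_n(1/\beta)|\le C\beta^{-s_n}$, and the norm/product-formula lower bound $|A_n(1/\beta)|\gg (\text{poly in }m_n)^{-1}\mathrm{M}(\beta)^{-m_n}$ combine exactly as you describe to give $s_n\lo\beta\le m_n\lo\mathrm{M}(\beta)+O(\lo m_n)$.

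One point to correct: your argument for $A_n(1/\beta)\neq 0$ is backwards. You write that it follows from minimality ``provided $m_n<\deg\beta$; for $n$ large $m_n\to\infty$ so this is automatic'' --- but $m_n\to\infty$ means eventually $m_n\ge\deg\beta$, so the minimality argument \emph{fails} for large $n$. The correct (and easier) reason is already contained in your decomposition: since all digits $t_i\ge 0$ and $t_{s_n}\ge 1$, one has $B_n(1/\beta)=\sum_{i\ge 0}t_{s_n+i}\beta^{-i}\ge t_{s_n}\ge 1$, hence $A_n(1/\beta)=-\beta^{-s_n}B_n(1/\beta)\neq 0$. Also, since the statement allows $\beta$ to be an algebraic \emph{number} rather than an algebraic integer, the phrase ``$\beta^{m_n}A_n(1/\beta)$ is a nonzero algebraic integer'' should be replaced by the resultant formulation: $a_d^{m_n}\prod_j Q(\beta^{(j)})\in\zb\setminus\{0\}$ with $Q(X)=X^{m_n}A_n(1/X)$, where $a_d$ is the leading coefficient of the minimal polynomial; the factor $|a_d|^{m_n}$ is then absorbed into $\mathrm{M}(\beta)^{m_n}$ via the definition $\mathrm{M}(\beta)=|a_d|\prod_j\max(1,|\beta^{(j)}|)$.
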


\begin{proof}
\cite{vergergaugry}, Theorem 1.1.
\end{proof}

Theorem \ref{lacunarityVG06} 
also became a consequence of 
Theorem 2 in \cite{adamczewskibugeaud}.
In Theorem \ref{lacunarityVG06} the quotient
$s_n/m_n , n \geq 1$, is called the 
$n$-th Ostrowski quotient of the sequence
$(t_i)_{i \geq 1}$. For a given algebraic number 
$\beta> 1$, whether the upper bound
\eqref{gappiness} is exactly 
the limsup of the sequence of the 
Ostrowski quotient of $(t_i)_{i \geq 1}$
is unknown. For Salem numbers,
this equality always holds since
${\rm M}(\beta) = \beta$, and the upper bound
\eqref{gappiness} is 1.
\vspace{0.2cm}

\noindent
{\it Varying the base of numeration $\beta$
in the interval $(1,2)$:}
for all $\beta \in (1, 2)$, 
being an algebraic number or 
a transcendental number, the alphabet
$\mathcal{A}_{\beta}$
of the $\beta$-shift is
always the same: $\{0,1\}$.
All the digits of all $\beta$-expansions
$d_{\beta}(1)$ are zeroes or ones.
Parry (\cite{parry})
has proved that
the relation of order 
$1 < \alpha < \beta < 2$
is preserved
on the corresponding
greedy $\alpha$- and $\beta$- expansions
$d_{\alpha}(1)$ and
$d_{\beta}(1)$
as follows.

\begin{proposition}
\label{variationbasebeta}
Let $\alpha > 1$ and $\beta > 1$. 
If the R\'enyi $\alpha$-expansion of 1 is
$$d_{\alpha}(1) = 0. t'_1 t'_2 t'_3\ldots, 
\qquad ~i.e.
\quad
1 ~=~ \frac{t'_1}{\alpha} + \frac{t'_2}{\alpha^2} + \frac{t'_3}{\alpha^3} + \ldots$$
and the R\'enyi $\beta$-expansion of 1 is
$$d_{\beta}(1) = 0. t_1 t_2 t_3\ldots, 
\qquad ~i.e. 
\quad
1 ~=~ \frac{t_1}{\beta} + \frac{t_2}{\beta^2} + \frac{t_3}{\beta^3} + \ldots,$$
then $\alpha < \beta$ if and only if $(t'_1, t'_2, t'_3, \ldots) 
<_{lex} (t_1, t_2, t_3, \ldots)$. 
\end{proposition}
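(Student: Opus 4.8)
The statement to prove is Proposition~\ref{variationbasebeta}, which asserts that for $\alpha, \beta > 1$, one has $\alpha < \beta$ if and only if the R\'enyi expansions of $1$ satisfy $d_\alpha(1) <_{lex} d_\beta(1)$.

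\medskip

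\textbf{Plan of proof.} The plan is to establish both implications by exploiting the monotonicity of the map $x \mapsto \sum_i t_i x^{-i}$ and the fact that, for a fixed sequence of digits, this series is a strictly decreasing function of the base. First I would set up the analytic framework: given $d_\beta(1) = 0.t_1 t_2 t_3 \ldots$, the defining identity $1 = \sum_{i \geq 1} t_i \beta^{-i}$ (equation~\eqref{renyidef}) shows that $\beta$ is a root of $F_{(t_i)}(x) := \sum_{i \geq 1} t_i x^{-i}$ evaluated at $1$, i.e.\ $F_{(t_i)}(\beta) = 1$. Since all digits $t_i$ are non-negative and not all zero (indeed $t_1 = \lfloor \beta \rfloor \geq 1$), the function $x \mapsto F_{(t_i)}(x)$ is strictly decreasing on $(1, \infty)$, continuous, with limit $+\infty$ as $x \to 1^+$ (or at least $> 1$ in a neighbourhood) and limit $0$ as $x \to \infty$; hence it takes the value $1$ at exactly one point, which is $\beta$. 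This is precisely the uniqueness content already recorded in Proposition~\ref{betacharacterized}.

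\medskip

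\textbf{Key steps.} The crux is the following comparison lemma: if $(t'_i) <_{lex} (t_i)$ as sequences (with the shift condition not even needed here, just the plain lexicographic comparison), then for every fixed real $x > 1$ one has $F_{(t'_i)}(x) < F_{(t_i)}(x)$, \emph{provided} both series represent admissible expansions so that the partial sums stay bounded appropriately. More carefully: let $k$ be the first index where $t'_k \neq t_k$, so $t'_k < t_k$, i.e.\ $t'_k \leq t_k - 1$. Then
\begin{equation}
F_{(t_i)}(x) - F_{(t'_i)}(x) = \frac{t_k - t'_k}{x^k} + \sum_{i > k} \frac{t_i - t'_i}{x^i} \geq \frac{1}{x^k} - \sum_{i > k} \frac{t'_i}{x^i}.
\end{equation}
Here one uses the Parry admissibility of $(t'_i)$: the shifted tail $(t'_{k+1}, t'_{k+2}, \ldots)$ is lexicographically dominated by $(t'_1, t'_2, \ldots)$ itself (self-admissibility from Proposition~\ref{betacharacterized}), which forces $\sum_{i > k} t'_i x^{-i} < \sum_{i \geq 1} t'_i x^{-(k+i)} \cdot x \cdot (\text{something}) $; the cleanest route is to note that the tail starting at $k+1$, when read as a $\beta'$-expansion of a real number in $[0,1)$, represents a value strictly less than $1$, hence $\sum_{i > k} t'_i (\beta')^{-i} < (\beta')^{-k}$, and by the strict decrease in the base this inequality persists (indeed strengthens) when $\beta'$ is replaced by any $x \geq \beta'$, and a separate elementary estimate handles $x < \beta'$. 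Thus $F_{(t_i)}(x) > F_{(t'_i)}(x)$ strictly. Now evaluate at $x = \alpha$: we get $F_{(t_i)}(\alpha) > F_{(t'_i)}(\alpha) = 1 = F_{(t_i)}(\beta)$, and since $F_{(t_i)}$ is strictly decreasing this yields $\alpha < \beta$. This proves the ``if'' direction. For the ``only if'' direction, assume $\alpha < \beta$; by trichotomy on the lexicographic order, either $d_\alpha(1) <_{lex} d_\beta(1)$, or they are equal, or $d_\beta(1) <_{lex} d_\alpha(1)$. Equality is impossible since the expansion of $1$ determines the base uniquely (Proposition~\ref{betacharacterized}), and $d_\beta(1) <_{lex} d_\alpha(1)$ would give, by the direction just proved, $\beta < \alpha$, a contradiction. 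Hence $d_\alpha(1) <_{lex} d_\beta(1)$.

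\medskip

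\textbf{Main obstacle.} The delicate point is the uniform control of the tail $\sum_{i > k} t'_i x^{-i} < x^{-k}$ for \emph{all} $x > 1$, not merely at $x = \beta'$. At $x = \beta'$ this is exactly the statement that a Parry-admissible string, read after the prefix, codes a number in $[0,1)$; one must then argue that decreasing $x$ below $\beta'$ could in principle inflate this tail. The resolution is that the relevant comparison is only ever needed at $x = \alpha$, and one should split into the case $\alpha \geq \beta'$ (where monotonicity in the base gives the tail bound for free, even strictly) versus $\alpha < \beta'$; but in the latter case one does not directly need the tail bound at $\alpha$ — instead one runs the argument symmetrically or observes that $F_{(t'_i)}(\alpha) = 1$ by definition of $\alpha = \beta'$, so the strict inequality $F_{(t_i)}(\alpha) > 1$ is all that is required, and this follows from $\alpha = \beta' < \beta$ is what we want to \emph{prove}, so we must instead compare at $x = \beta$: there $\beta \geq \beta'$ holds trivially? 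No — this circularity is exactly the trap. The honest fix, and the one I would carry out, is to prove the comparison lemma at an arbitrary fixed $x$ using only the plain lexicographic hypothesis together with the bound $t'_i \leq \lfloor \beta' \rfloor \leq \lfloor x \rfloor + (\text{bounded})$ — in the regime $1 < \alpha, \beta < 2$ all digits are $0$ or $1$, so $\sum_{i>k} t'_i x^{-i} \leq \sum_{i > k} x^{-i} = \frac{x^{-k}}{x-1}$, which is \emph{not} $< x^{-k}$ for $x < 2$, so even this fails and one genuinely must invoke admissibility of $(t'_i)$ via the Parry conditions~\eqref{self} at the index $k$, which give $(t'_{k+1}, t'_{k+2}, \ldots) \leq_{lex} (t'_1, t'_2, \ldots)$ and hence, since $t'_1 = 1$, force enough zeros into the tail that $\sum_{i > k} t'_i x^{-i} < x^{-k}$ holds; reducing this to the one-variable estimate and checking it is the main technical labour. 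Since the paper only needs Proposition~\ref{betacharacterized} as input, I expect the author's proof to simply cite Parry~\cite{parry}, Corollary~1 of Theorem~3, and Lemma~1 therein, which package precisely this monotonicity.
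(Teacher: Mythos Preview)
Your prediction is exactly right: the paper's proof consists of the single line ``Lemma 3 in Parry \cite{parry}.'' So there is nothing to compare against beyond the original reference.

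Your sketch in ``Key steps'' is essentially the standard argument and is correct. The confusion in your ``Main obstacle'' paragraph is self-inflicted: you only ever need the tail estimate $\sum_{i>k} t'_i x^{-i} < x^{-k}$ at the single point $x = \alpha$, and there it is immediate without any recursion or circularity. Indeed, by \eqref{polyTbeta} one has
\[
\sum_{i>k} t'_i \alpha^{-i} \;=\; \alpha^{-k}\, T_\alpha^k(1),
\]
and $T_\alpha^k(1) = \{\alpha\, T_\alpha^{k-1}(1)\} \in [0,1)$ for every $k \geq 1$ simply because it is a fractional part. Hence $F_{(t_i)}(\alpha) - F_{(t'_i)}(\alpha) \geq (t_k - t'_k)\alpha^{-k} - \alpha^{-k} T_\alpha^k(1) \geq \alpha^{-k}(1 - T_\alpha^k(1)) > 0$, so $F_{(t_i)}(\alpha) > 1 = F_{(t_i)}(\beta)$, and strict monotonicity of $F_{(t_i)}$ gives $\alpha < \beta$. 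There is no need to control the tail at a general $x$, no case split on $\alpha \gtrless \beta'$, and no appeal to a recursive comparison lemma. Your notation also drifts: you introduce $\beta'$ for what is already called $\alpha$, which is the source of the apparent circularity you then try to escape. Once you fix $x = \alpha$ and use the one-line identity above, the ``if'' direction is two lines, and your contrapositive argument for ``only if'' via Proposition~\ref{betacharacterized} is clean as written.
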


\begin{proof}
Lemma 3 in Parry \cite{parry}.
\end{proof}

For any integer $n \geq 1$ the sequence
of digits $1 0^{n-1} 1$, with $n-1$ times ``$0$"
between the two ones, is self-admissible. By
Proposition \ref{betacharacterized} 
it defines an unique
solution $\beta \in (1,2)$ of \eqref{equabase}.
Denote by $\theta_{n+1}^{-1}$ this solution.
From Proposition \ref{variationbasebeta}  
we deduce that the sequence 
$(\theta_{n}^{-1})_{n \geq 2}$ 
is (strictly) decreasing and tends to $1$ when
$n$ tends to infinity.

From \eqref{equabase} 
the real number $\theta_{2}^{-1}$
is the unique root $> 1$ of
the equation $1 = 1/x + 1/x^2$, that is
of $X^2 - X -1$.
Therefore it is the
Pisot number (golden mean)
 $= \frac{1+\sqrt{5}}{2} = 1.618\ldots$. 
Being interested in bases $\beta > 1$ 
close to 1 
tending to $1^+$, we will focus on
the interval $( 1, \frac{1+\sqrt{5}}{2}\, ]$ 
in the sequel.
This interval is
partitioned by
the decreasing sequence
$(\theta_{n}^{-1})_{n \geq 2}$ as
\begin{equation}
\label{jalonnement}
\bigl( 1, \frac{1+\sqrt{5}}{2}\, \bigr] ~=~
\bigcup_{n=2}^{\infty}
\left[ \, \theta_{n+1}^{-1} , \theta_{n}^{-1}
\, 
\right)
~~ \bigcup ~~\left\{
\theta_{2}^{-1}
\right\}.
\end{equation}

Theorem \ref{lacunarityVG06} gives 
an upper bound of
the asymptotic behaviour of the
Ostrowski quotients of the
$\beta$-expansion $(t_i)_{i \geq 1}$ of 1, 
due to the fact that
$\beta > 1$ is an algebraic number. 
The following theorem shows that the gappiness 
of $(t_i)_{i \geq 1}$ also admits some
uniform lower bound, for all
gaps of zeroes.
The condition of minimality on the 
length of the gaps of zeroes 
in $(t_i)_{i \geq 1}$
is only a function of the interval  
$\left[ \, \theta_{n+1}^{-1} , \theta_{n}^{-1}
\, 
\right)$
to which $\beta$ belongs,
when $\beta$ tends to 1.

\begin{theorem}
\label{zeronzeron}
Let $n \geq 2$. A real number 
$\beta \in ( 1, \frac{1+\sqrt{5}}{2}\, ]$ 
belongs to   
$[\theta_{n+1}^{-1} , \theta_{n}^{-1})$ if and only if the 
R\'enyi $\beta$-expansion of unity is of the form
\begin{equation}
\label{dbeta1nnn}
d_{\beta}(1) = 0.1 0^{n-1} 1 0^{n_1} 1 0^{n_2} 1 0^{n_3} \ldots,
\end{equation}
with $n_k \geq n-1$ for all $k \geq 1$.  
\end{theorem}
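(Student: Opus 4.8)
The plan is to translate the order relation on real bases into the lexicographic order on R\'enyi expansions (Proposition~\ref{variationbasebeta}), and then to control the internal lacunarity of $d_{\beta}(1)$ through the self-admissibility condition \eqref{self} that Proposition~\ref{betacharacterized} guarantees for every expansion of unity. The first ingredient I would isolate is the shape of the two boundary expansions: since $1\,0^{m-1}\,1$ is self-admissible and, via \eqref{equabase}, determines $\theta_{m+1}^{-1}$, Proposition~\ref{betacharacterized} yields $d_{\theta_{m+1}^{-1}}(1)=0.1\,0^{m-1}\,1$ for all $m\geq 1$; in particular $d_{\theta_{n+1}^{-1}}(1)=0.1\,0^{n-1}\,1\,0^{\omega}$ and $d_{\theta_{n}^{-1}}(1)=0.1\,0^{n-2}\,1\,0^{\omega}$.

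For the implication ``$\Leftarrow$'', assuming $d_{\beta}(1)=0.1\,0^{n-1}\,1\,0^{n_{1}}\,1\,0^{n_{2}}\,1\ldots$ with each $n_{k}\geq n-1$, I would compare $d_{\beta}(1)$ with these two boundary words. It agrees with $d_{\theta_{n+1}^{-1}}(1)$ in its first $n+1$ digits and carries at least one further digit $1$, so $d_{\theta_{n+1}^{-1}}(1)\leq_{lex}d_{\beta}(1)$ and $\theta_{n+1}^{-1}\leq\beta$; and since its first run of zeros has length $\geq n-1$, its $n$-th digit is $0$ while the $n$-th digit of $d_{\theta_{n}^{-1}}(1)$ is $1$, so $d_{\beta}(1)<_{lex}d_{\theta_{n}^{-1}}(1)$ and $\beta<\theta_{n}^{-1}$. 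Hence $\beta\in[\theta_{n+1}^{-1},\theta_{n}^{-1})$.

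For ``$\Rightarrow$'', with $\beta\in[\theta_{n+1}^{-1},\theta_{n}^{-1})$ and $d_{\beta}(1)=0.t_{1}t_{2}t_{3}\ldots$ (so $t_{1}=\lfloor\beta\rfloor=1$), the two inequalities $d_{\beta}(1)<_{lex}0.1\,0^{n-2}\,1\,0^{\omega}$ and $d_{\beta}(1)\geq_{lex}0.1\,0^{n-1}\,1\,0^{\omega}$ coming from Proposition~\ref{variationbasebeta} force successively $t_{2}=\cdots=t_{n}=0$ and then $t_{n+1}=1$; thus $d_{\beta}(1)=0.1\,0^{n-1}\,1\,0^{n_{1}}\,1\,0^{n_{2}}\,1\ldots$, the $0^{n_{k}}1$ being the blocks after the initial $1\,0^{n-1}\,1$ (finitely many when $\beta$ is a simple Parry number, e.g.\ none for $\beta=\theta_{n+1}^{-1}$, infinitely many otherwise). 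To get $n_{k}\geq n-1$ I would argue by contradiction: let $i$ be the first index with $t_{i}=1$ followed by a run of exactly $g$ zeros and then a digit $1$, with $g\leq n-2$; since the initial $t_{1}=1$ is followed by exactly $n-1$ zeros we have $i\geq n+1\geq 2$, so $\sigma^{i-1}\big((t_{j})_{j\geq1}\big)$ begins $1\,0^{g}\,1\ldots$, agrees with $(t_{j})_{j\geq1}=1\,0^{n-1}\,1\ldots$ in position $1$, but shows a $1$ in position $g+2\leq n$ where $(t_{j})$ shows a $0$; hence $\sigma^{i-1}\big((t_{j})\big)>_{lex}(t_{j})$, contradicting \eqref{self}. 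Therefore all $n_{k}\geq n-1$.

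I expect the subtle point to be precisely this last step: extracting the short-gap contradiction from \eqref{self} while keeping the shift exponent $\geq 1$, and checking that the degenerate and simple-Parry cases (notably $\beta=\theta_{n+1}^{-1}$, whose expansion is finite) are swept up by the same argument rather than needing separate treatment. Everything else reduces to the two lexicographic comparisons against $d_{\theta_{n}^{-1}}(1)$ and $d_{\theta_{n+1}^{-1}}(1)$ and to the already recorded partition \eqref{jalonnement}, which guarantees that each admissible base lies in exactly one interval $[\theta_{n+1}^{-1},\theta_{n}^{-1})$.
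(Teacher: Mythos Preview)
Your proposal is correct and follows essentially the same route as the paper: identify the boundary expansions $d_{\theta_{n+1}^{-1}}(1)=0.1\,0^{n-1}\,1$ and $d_{\theta_{n}^{-1}}(1)=0.1\,0^{n-2}\,1$, invoke Proposition~\ref{variationbasebeta} to pin down the prefix $1\,0^{n-1}\,1$, and then use the self-admissibility condition \eqref{self} to force every subsequent gap of zeros to have length at least $n-1$. The only cosmetic difference is that the paper writes the tail as $u=1^{h_0}0^{n_1}1^{h_1}0^{n_2}\ldots$ and rules out $h_0>0$ and $h_k\neq 1$ simultaneously with $n_k\geq n-1$, whereas you phrase the same obstruction as a short-gap contradiction; these are the same argument in different dress.
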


\begin{proof} 
Since $d_{\theta_{n+1}^{-1}}(1) = 0.1 0^{n-1} 1$ and
$d_{\theta_{n}^{-1}}(1) = 0.1 0^{n-2} 1$, 
Proposition \ref{variationbasebeta} implies that
the condition is sufficient. It is also necessary:
$d_{\beta}(1)$ begins as $0.1 0^{n-1} 1$
for all $\beta$ such that
$\theta_{n+1}^{-1} \leq \beta < \theta_{n}^{-1}$.
For such $\beta$s 
we write $d_{\beta}(1) = 0.1 0^{n-1} 1 u$~  
with digits in the alphabet
$\mathcal{A}_{\beta}
=\{0, 1\}$ common to all $\beta$s, that is
$$u= 1^{h_0} 0^{n_1} 1^{h_1} 0^{n_2} 1^{h_2} \ldots$$
and $h_0, n_1, h_1, n_2, h_2, \ldots$ integers $\geq 0$.
The self-admissibility lexicographic condition
\eqref{self} applied to the sequence
$(1, 0^{n-1}, 1^{1+h_0}, 0^{n_1}, 1^{h_1}, 0^{n_2}, 1^{h_3}, \ldots)$,
which characterizes uniquely the base of numeration $\beta$, 
readily implies
$h_0 = 0$ and 
$~h_k = 1$ and 
$n_k \geq n-1$ for all $k \geq 1$.
\end{proof}

\begin{remark}
The case $n_1 = +\infty$ in \eqref{dbeta1nnn}
corresponds to the simple Parry number
$\beta = \theta_{n+1}^{-1}$. The value
$+\infty$ is not excluded from the set
$(n_k)_{k \geq 1}$ in the following sense:
if there exists $j \geq 2$ such that
$n-1 \leq n_k < +\infty, ~k < j$, with
$n_j = +\infty$, then $\beta$ is a simple Parry number in
$[\theta_{n+1}^{-1} , \theta_{n}^{-1})$
characterized by (cf Section $\S$\ref{S2}):
$$d_{\beta}(1) = 0 . 1 0^{n-1} 1 0^{n_1} 1 0^{n_2} 1 \ldots
1 0^{n_{j} - 1} 1.$$
All the simple Parry numbers lying
in the interval 
$[\theta_{n+1}^{-1} , \theta_{n}^{-1})$
are obtained in this way. 
On the contrary,
the transcendental numbers $\beta$ in
$[\theta_{n+1}^{-1} , \theta_{n}^{-1})$
have all R\'enyi $\beta$-expansions
$d_{\beta}(1) = 0 . t_1 t_2 t_3 \ldots$ of 1
such that the sequence of exponents 
$(n_k)_{k \geq 1}$ of the successive zeroes, 
corresponding to the sequence
of the lengths of the gaps of zeroes, 
never takes the value $+\infty$.
\end{remark}

\begin{definition}
Let $\beta \in ( 1, \frac{1+\sqrt{5}}{2}\, ]$ be a real number.
The integer $n \geq 3$ such that
$\theta_{n}^{-1} \leq \beta < \theta_{n-1}^{-1}$
is called the {\it dynamical degree} 
of $\beta$, and
is denoted by ${\rm dyg}(\beta)$.
By convention we put:
${\rm dyg}(\frac{1+\sqrt{5}}{2}) = 2$. 
\end{definition}

The function $n={\rm dyg}(\beta)$ is locally
constant on the interval 
$(1, \frac{1+\sqrt{5}}{2}]$, 
is decreasing, 
takes all values in $\mathbb{N} \setminus \{0,1\}$,
and satisfies:
$\lim_{\beta > 1, \beta \to 1} \dyg(\beta) = +\infty$.
The relations between 
the 
dynamical degree ${\rm dyg}(\beta)$ 
and
the (usual) degree $\deg(\beta)$ will be 
investigated later (Theorem
\ref{dygdeginequality}; $\S$ \ref{S5}, $\S$ \ref{S6.5}).
Let us observe that
the equality
deg$(\beta) =$ dyg$(\beta) = 2$ 
holds if $\beta = \frac{1+\sqrt{5}}{2}$, but 
the equality case is not the case in general.

\begin{definition}
\label{selfadmissiblepowerseries}
A power series $\sum_{j=0}^{+\infty} a_j z^j$,
with $a_j \in \{0, 1\}$ for all $j \geq 0$,
$z$ the complex variable,
is said to be {\it Lyndon (or self-admissible)}
if its
coefficient vector $(a_i)_{i \geq 0}$ 
is Lyndon.
\end{definition}

\section{Generalized Fredholm theory, dynamical zeta function, Perron-Frobenius operator, transfer operator, Parry Upper function, and the $\beta$-shift}
\label{S3}

\subsection{The Parry Upper function, the Parry polynomial}
\label{S3.1}

\begin{definition}
\label{parryupperfunction}
Let $\beta \in (1, (1+\sqrt{5})/2]$ be a real
number, and $d_{\beta}(1) = 0. t_1 t_2 t_3 \ldots$ its
R\'enyi $\beta$-expansion of 1.
The power series $f_{\beta}(z) :=
-1 + \sum_{i \geq 1} t_i z^i$
of the complex variable $z$
is called the {\it Parry Upper function} 
at $\beta$.
\end{definition}

In this paragraph a presentation of
the Parry Upper function is given from 
the side of generalized Fredholm Theory,
to show the relations between
Fredholm determinants, 
generalized Fredholm determinants,
and (weighted) dynamical zeta functions
(introduced by Ruelle
\cite{ruelle} \cite{ruelle2} \cite{ruelle3} \cite{ruelle4}
\cite{ruelle5} \cite{ruelle6}
\cite{ruelle7} \cite{ruelle8}
\cite{ruelle9}
by analogy with the
thermodynamic formalism
of statistical mechanism \cite{ruelle4},
and recently developped e.g. by
Baladi \cite{baladi} \cite{baladi2}
\cite{baladi3},
Baladi and Keller \cite{baladikeller},
Hofbauer \cite{hofbauer},
Hofbauer and Keller \cite{hofbauerkeller},
Milnor and Thurston \cite{milnorthurston},
Parry and Pollicott \cite{parrypollicott},
Pollicott \cite{pollicott} \cite{pollicott2},
Preston \cite{preston},
Takahashi \cite{takahashi3}
\cite{takahashi4}).

\begin{proposition}
\label{fbetainfinie}
For $1 < \beta <(1+\sqrt{5})/2$ any real
number,
with $d_{\beta}(1) = 0. t_1 t_2 t_3 \ldots$,
the Parry Upper function $f_{\beta}(z)$
is such that $f_{\beta}(1/\beta) = 0$. 
It is such that
$f_{\beta}(z) + 1$ has coefficients 
in the alphabet
$\mathcal{A}_{\beta} =\{0,1\}$ and is 
Lyndon.
It takes the form
\begin{equation}
\label{fbetalyndon}
f_{\beta}(z) = G_{\dyg(\beta)} + z^{m_1} + 
z^{m_2} + \ldots + z^{m_q} + z^{m_{q+1}} + \ldots
\end{equation}
with~ $m_1 - \dyg(\beta) \geq  \dyg(\beta) -1$,
$m_{q+1} - m_q \geq  \dyg(\beta) -1$ for
$q \geq 1$.
Conversely, given a power series 
\begin{equation}
\label{fbetalyndonconverse}
-1 + z + z^n  + z^{m_1} + 
z^{m_2} + \ldots + z^{m_q} + z^{m_{q+1}} + \ldots
\end{equation}
with $n \geq 3$,
$m_1 - n \geq  n -1$,
$m_{q+1} - m_q \geq  n -1$ for
$q \geq 1$, then there exists an unique
$\beta \in (1, (1+\sqrt{5})/2)$ for which
$n = \dyg(\beta)$ with
$f_{\beta}(z)$ equal to
\eqref{fbetalyndonconverse}.
 
Moreover, if $\beta$,
$1 < \beta <(1+\sqrt{5})/2$,
is a reciprocal
algebraic integer, the power series
\eqref{fbetalyndon} is never a polynomial.
\end{proposition}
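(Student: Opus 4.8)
The plan is to prove Proposition~\ref{fbetainfinie} in three stages corresponding to its three assertions.

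\textbf{First assertion: $f_\beta(1/\beta)=0$, the alphabet, Lyndon, and the form \eqref{fbetalyndon}.} The equality $f_\beta(1/\beta) = -1 + \sum_{i\ge1} t_i \beta^{-i} = 0$ is immediate from \eqref{renyidef}, which says precisely $1 = \sum_{i\ge1} t_i\beta^{-i}$. That $f_\beta(z)+1 = \sum_{i\ge1}t_i z^i$ has coefficients in $\{0,1\}$ follows from the fact that for $\beta\in(1,(1+\sqrt5)/2)$ the alphabet $\mathcal A_\beta=\{0,1\}$ (established in \S\ref{S2.2}). That $(t_i)_{i\ge1}$ is Lyndon (self-admissible) is Proposition~\ref{betacharacterized}: $d_\beta(1)=0.t_1t_2\ldots$ if and only if the sequence satisfies \eqref{self}. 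For the explicit form, I would invoke Theorem~\ref{zeronzeron}: writing $n=\dyg(\beta)$, so $\theta_n^{-1}\le\beta<\theta_{n-1}^{-1}$, i.e.\ $\beta\in[\theta_{(n-1)+1}^{-1},\theta_{n-1}^{-1})$ in the notation of that theorem, we get $d_\beta(1) = 0.1\,0^{n-1}\,1\,0^{n_1}\,1\,0^{n_2}\ldots$ with each $n_k\ge n-1$. Reading this as a power series gives $f_\beta(z) = -1 + z + z^n + z^{m_1} + z^{m_2}+\ldots$ where $m_1 = n + n_1 + 1$ and $m_{q+1} = m_q + n_q + 1$; since $-1+z+z^n = G_n = G_{\dyg(\beta)}$ and $n_k\ge n-1$, we obtain $m_1 - n = n_1+1 \ge n \ge n-1$ (in fact $m_1 - \dyg(\beta)\ge \dyg(\beta)-1$ as claimed, with a bit of room) and $m_{q+1}-m_q = n_q+1\ge n-1 = \dyg(\beta)-1$. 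The case where the expansion is finite ($n_j=+\infty$, simple Parry number) is handled by noting the series then terminates and the gap conditions still hold vacuously for the absent tail; but this case is really the content of the last assertion, so I would flag it and move on.

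\textbf{Second assertion (converse).} Given \eqref{fbetalyndonconverse} with $n\ge3$ and the stated gap conditions, I would check that its coefficient sequence $(a_i)_{i\ge0}$ with $a_0=1$ (the ``$-1$'' being the $z^0$ slot of $f_\beta$, so the sequence fed to Parry's criterion is $(1,0^{n-1},1,0^{n_1},1,0^{n_2},\ldots)$ with $n_1 = m_1-n-1\ge n-2$, wait --- I should be careful: the hypothesis $m_1-n\ge n-1$ gives $n_1 \ge n-2$; hmm, Theorem~\ref{zeronzeron} wants $n_k\ge n-1$, so I need the sharper reading) satisfies the self-admissibility condition \eqref{self}. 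The verification is exactly the lexicographic computation performed at the end of the proof of Theorem~\ref{zeronzeron}: a block $1\,0^{k}\,1$ with $k\ge n-2$ shifted cannot lexicographically exceed the head $1\,0^{n-1}\,1\ldots$ because after the initial $1$ one meets at least $n-1$ zeros versus at most... — this needs the precise inequality and I would spell it out using that the first gap has length $\ge n-1$. Once self-admissibility is confirmed, Proposition~\ref{betacharacterized} yields a unique $\beta>1$ with $d_\beta(1)$ equal to this sequence; comparing with $d_{\theta_n^{-1}}(1)=0.1\,0^{n-2}\,1$ and $d_{\theta_{n+1}^{-1}}(1)=0.1\,0^{n-1}\,1$ via Proposition~\ref{variationbasebeta} places $\beta\in[\theta_{n+1}^{-1},\theta_n^{-1})$, which by definition means $\dyg(\beta)=n$ (reconciling the two indexing conventions). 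Then $f_\beta(z)$ is by construction the given series.

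\textbf{Third assertion ($f_\beta$ is never a polynomial when $\beta$ is a reciprocal algebraic integer).} This is the crux. Suppose $f_\beta(z)$ is a polynomial, i.e.\ $d_\beta(1)$ is finite, i.e.\ $\beta$ is a \emph{simple} Parry number. Then $f_\beta(z) = -1 + z + z^n + z^{m_1} + \ldots + z^{m_s}$ for some finite $s$ (possibly $s=0$), and $1/\beta$ is a root, so the reciprocal polynomial $z^{m_s}f_\beta(1/z)$ — call it the ``Parry polynomial'' up to normalization — has $\beta$ as a root. The plan is to derive a contradiction with $\beta$ being reciprocal: a reciprocal algebraic integer $\beta>1$ has $1/\beta$ as a conjugate, hence the minimal polynomial $P_\beta$ divides $f_\beta$ (since $f_\beta(1/\beta)=0$ and $f_\beta\in\zb[z]$); but $f_\beta(0) = -1$ while $f_\beta$ has leading coefficient $+1$, and one examines constant/leading terms: $P_\beta$ being reciprocal and monic has $P_\beta(0) = \pm1$. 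The real obstruction is showing the quotient $f_\beta/P_\beta$ cannot be a product of cyclotomic-type factors consistent with the sparse shape and the sign pattern $f_\beta(z) = G_n + (\text{positive tail})$. Concretely I expect to argue: $f_\beta(1)=-1+1+1+s = s+1>0$ while if $\beta$ is reciprocal the functional equation forces a symmetry that a self-admissible \emph{finite} Lyndon word with all-nonnegative coefficients and $f_\beta(0)=-1$, $f_\beta(1)>0$ simply cannot exhibit — one would compare $f_\beta(z)$ with $z^{\deg}f_\beta(1/z)$ coefficientwise and find the Lyndon (strictly decreasing under shifts) condition is incompatible with the reciprocal (palindromic, up to the minimal polynomial's contribution) condition unless $f_\beta$ is not a polynomial. \textbf{The hard part will be} making this last incompatibility rigorous: ruling out that $f_\beta$ factors as $P_\beta(z)$ times a polynomial all of whose roots are roots of unity, while respecting the gap structure $m_{q+1}-m_q\ge n-1$ and the Lyndon inequality; I anticipate this requires either a careful transfer-operator/zeta-function argument (a simple Parry $\beta$ would make $\zeta_\beta$ rational of a shape incompatible with reciprocity of $P_\beta$ near $1$) or an elementary but delicate lexicographic--palindrome clash, and I would present the cleaner of the two.
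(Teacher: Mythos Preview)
Your treatment of the first two assertions is essentially what the paper does: invoke Theorem~\ref{zeronzeron} for the form of $d_\beta(1)$, and Proposition~\ref{betacharacterized} with Proposition~\ref{variationbasebeta} for the converse. The index bookkeeping you flag is routine.

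For the third assertion, however, you miss a one-line argument and propose something far heavier. You correctly observe that if $\beta$ is reciprocal then $P_\beta = P_{1/\beta}$ divides the integer polynomial $f_\beta$, so that $f_\beta(\beta)=0$ as well as $f_\beta(1/\beta)=0$. But then you are done: the coefficient sequence of $f_\beta(z) = -1 + t_1 z + t_2 z^2 + \cdots + t_m z^m$ is $(-1, t_1, t_2, \ldots, t_m)$ with all $t_i\in\{0,1\}$, hence has exactly \emph{one} sign change. By Descartes' rule of signs $f_\beta$ has at most one positive real root, contradicting the existence of the two distinct positive roots $\beta>1$ and $1/\beta<1$. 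This is the paper's proof in full.

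Your proposed routes---analyzing the cyclotomic part of $f_\beta/P_\beta$, comparing Lyndon structure against palindromic structure, or invoking the rationality shape of $\zeta_\beta$---are all aiming at a target that Descartes hits immediately. The ``hard part'' you anticipate does not exist.
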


\begin{proof}
The expression of 
$f_{\beta}(z)$ readily comes
from Theorem \ref{zeronzeron}.
Let us prove the last claim.
Assume that $\beta$ is a reciprocal
algebraic integer and that 
$f_{\beta}(z)$ is a polynomial. The
polynomial $f_{\beta}(z)$ would vanish at
the two real zeroes
$\beta$ and $1/\beta$. But the sequence
$-1\, t_1\, t_2\, t_3 \ldots$ has only 
one sign change. 
By Descartes's rule 
we obtain a contradiction.
\end{proof}

The lacunarity of $f_{\beta}(z)$ is 
moderate since the Ostrowki quotients
have an asymptotic upper bound, by
Theorem \ref{lacunarityVG06}.
By Theorem \ref{zeronzeron}, 
any gap of missing monomials in
$f_{\beta}(z)$ has
a length greater than or equal to 
$\dyg(\beta) - 1$ what 
controls the lacunarity a minima.

The definition of 
$f_{\beta}(z)$ seems simple
since
the vector coefficient of
$f_{\beta}(z) + 1$
is only a sequence of integers 
deduced from the
orbit of 1 under the iterates of the
$\beta$-transformation
$T_{\beta}$, by \eqref{digits__ti} 
and \eqref{polyTbeta}
\cite{flattolagarias}
\cite{flattolagariaspoonen};
nevertheless it is deeply related 
to the Artin-Mazur dynamical zeta function
$\zeta_{\beta}(z)$
(given by \eqref{dynamicalfunction})
of the R\'enyi-Parry dynamical system
$([0, 1], T_{\beta})$, to
the Perron-Frobenius operator $P_{T_{\beta}}$
associated with $T_{\beta}$,
to the transfer operator of $T_{\beta}$
 and to the
generalized ``Fredholm determinant" \eqref{fredholmdeterminantgeneralized} of this operator. 
In the kneading theory of Milnor and Thurston
\cite{milnorthurston} it is a kneading determinant.
Let us recall these links, knowing
that the theory of Fredholm 
(Grothendieck
\cite{grothendieck} \cite{grothendieck2}, 
Riesz and Nagy \cite{riesznagy} Chap. IV) is done for
compact operators while the Perron-Frobenius
operators associated with the $\beta$-transformations
$T_{\beta}$
are noncompact by nature (Mori \cite{mori}
\cite{mori2}, Takahashi \cite{takahashi}
\cite{takahashi2} \cite{takahashi5}).

Let $(X,\Sigma, \mu)$ be a 
$\sigma$-finite measure space and let 
$T:X \to X$ be a nonsingular transformation, i.e. 
$T$ is measurable and satisfies: for all $A \in \Sigma$,
$\mu(A) = 0 \Longrightarrow 
\mu(T^{-1}(A)) = 0$.
In ergodic theory, by the Radon-Nikodym theorem, 
the operator
$P_T : L^{1}(X,\Sigma, \mu)
\to L^{1}(X,\Sigma, \mu)$ defined by
\begin{equation}
\label{perronfrobeniusdefinition}
\int_A P_{T} f d \mu ~=~
\int_{T^{-1}(A)} f d\mu 
\end{equation}
is called the Perron-Frobenius operator associated with
$T$. 
Let $\beta \in (1, \theta_{2}^{-1})$,
$X = [0, 1]$, $\Sigma$ the Borel 
$\sigma$-algebra and 
$T_{\beta}$ the $\beta$-transformation.
The $T_{\beta}$-invariant probability measure
$\mu = \mu_{\beta}$ of the 
$\beta$-shift, on $\Sigma$, 
is unique (R\'enyi \cite{renyi}), ergodic
(Parry \cite{parry}), 
maximal (Hofbauer \cite{hofbauer})
and absolutely 
continuous with respect to the Lebesgue measure
$dt$, with Radon-Nikodym derivative 
(Lasota and Yorke \cite{lasotayorke},
Parry \cite{parry}, 
Takahashi \cite{takahashi}) :
$$h_{\beta} 
~=~
C \,\sum_{n: x < T_{\beta}^{n}(1)} 
\frac{1}{\beta^{n+1}} ,\qquad 
\mbox{ so that}
\qquad d\mu_{\beta} = h_{\beta} dt,
$$ 
for some constant $C > 0$.
These results were independently discovered by
A.O. Gelfond
\cite{fallerpfister}.
We denote 
by $P_{T_{\beta}}$ the Perron-Frobenius operator
associated with $T_{\beta}$.

The $\beta$-transformation 
$T_{\beta}$ is a 
piecewise monotone map of the interval
$[0, 1]$ with 
weight function
$g = 1$. 

In the context of
noncompact operators, the objective consists in 
giving a sense to
\begin{equation}
\label{fredholmdeterminantgeneralized}
` {\rm det}` \, (Id - z \, L_{t})
=
\exp\Bigl(
- \sum_{n \geq 1} \frac{`tr` \,L_{t}^{n}}{n}\, z^n
\Bigr),
 \end{equation}
where $L_{t}$ is 
a dynamically defined weighted transfer operator
acting on a suitable Banach space
\cite{baladi2}  \cite{hofbauerkeller2}. 

Let $1 < \beta < (1 + \sqrt{5})/2$ be a real number
and $0 = a_0 < a_1 = \frac{1}{\beta} < a_2 =1$
be the finite partition of $[0, 1]$.
The map $T_{\beta}$ is strictly monotone and continuous
on $[a_0 , a_1)$ and $[a_1 , 1]$.
The $\beta$-transformation $T_{\beta}$
is one of the simplest transformations among
piecewise monotone intervals maps
(Baladi and Ruelle
\cite{baladiruelle},
Milnor and Thurston \cite{milnorthurston},
Pollicott \cite{pollicott}). 
For each function
$f: [0, 1] \to \cb$, let
$${\rm var}(f) :=
\sup\Bigl\{ \,\sum_{i=1}^{n}\, |f(e_i) - f(e_{i-1})|
\mid
n \geq 1 , 0 \leq e_1 \leq e_2 \leq \ldots
\leq e_n \leq 1 \Bigr\},$$
$$\|f\|_{BV} :=
{\rm var}(f) + \sup(|f|),$$
and denote by
BV the Banach space of functions 
with bounded variation
\cite{keller}
\cite{keller2}:
$$BV := \{f: [0, 1] \to \cb \mid
\|f\|_{BV} < \infty \}.$$
For $g \in BV$, one 
can define the following transfer operator
$$L_{t \beta, g}: BV \to BV,
\quad 
L_{t \beta, g} f(x) := 
\sum_{y , T_{\beta}(y) = x} g(y) f(y) .$$
We will only consider the case 
$g \equiv 1$ in the sequel and
put $L_{t \beta} :=
L_{t \beta,1}$.

\begin{theorem}
\label{dynamicalzetatransferoperator}
Let $\beta \in (1,\theta_{2}^{-1})$. Then,
\begin{itemize}
\item[(i)] the Artin-Mazur
dynamical zeta function
$\zeta_{\beta}(z)$ defined 
by 
\begin{equation}
\label{dynamicalfunction}
\zeta_{\beta}(z) := \exp\Bigl(
\sum_{n=1}^{\infty} \, 
\frac{\#\{x \in [0,1] \mid 
T_{\beta}^{n}(x) = x\}}{n} \, z^n
\Bigr),
\end{equation}
counting the number of periodic 
points of period dividing $n$,
is nonzero and meromorphic
in $\{|z| < 1\}$, and such that
$1/\zeta_{\beta}(z)$ is holomorphic
in $\{|z| < 1\}$,
\item[(ii)] suppose 
$|z| < 1$. Then $z$ is a pole of
$\zeta_{\beta}(z)$ of multiplicity $k$
if and only if
$z^{-1}$ is an eigenvalue of $L_{t \beta}$ of multiplicity 
$k$.
\end{itemize}
\end{theorem}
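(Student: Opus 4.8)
The plan is to reduce both assertions to a single explicit identity, namely that the Artin--Mazur zeta function coincides with the reciprocal of the Parry Upper function of Definition \ref{parryupperfunction},
$\zeta_{\beta}(z) = -1/f_{\beta}(z)$
(with, in the simple Parry case where $d_{\beta}(1)$ is finite, an explicit rational correction factor that affects neither (i) nor (ii); see Takahashi \cite{takahashi}, Ito and Takahashi \cite{itotakahashi}, Flatto, Lagarias and Poonen \cite{flattolagariaspoonen}), together with the identification of $-f_{\beta}(z)$ with the generalized Fredholm determinant $\det({\rm Id} - z\,L_{t\beta})$ in the sense of \eqref{fredholmdeterminantgeneralized}. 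Granting these, part (i) is immediate: by Proposition \ref{fbetainfinie} the series $f_{\beta}(z) = -1 + \sum_{i\ge 1} t_i z^i$ has coefficients in $\{0,1\}$, hence radius of convergence $\ge 1$, while $f_{\beta}(0) = -1 \neq 0$; therefore $f_{\beta}$ is holomorphic and not identically zero on $\{|z|<1\}$, so $1/\zeta_{\beta} = -f_{\beta}$ is holomorphic there and $\zeta_{\beta} = -1/f_{\beta}$ is meromorphic and nowhere vanishing on $\{|z|<1\}$ (note $\zeta_{\beta}$ a priori converges only in $|z| < 1/\beta = e^{-h_{\mathrm{top}}}$, so the genuine content is the meromorphic continuation to $\{|z|<1\}$). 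Part (ii) then follows from the standard spectral meaning of the zeros of a (generalized) Fredholm determinant: in $\{|z|<1\}$ a zero of $\det({\rm Id}-z L_{t\beta})$ of order $k$ occurs exactly at $z = \lambda^{-1}$ with $\lambda$ an eigenvalue of $L_{t\beta}$ of algebraic multiplicity $k$ lying off the essential spectrum, and $z_0$ is a pole of $\zeta_{\beta}$ of multiplicity $k$ precisely when it is a zero of $f_{\beta}$ of order $k$, since $f_{\beta}(0)\ne 0$.

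For the first identity I would count the periodic points directly through the symbolic coding of $\S$\ref{S2}. A point $x\in[0,1]$ with $T_{\beta}^{n}(x)=x$ corresponds to a $\beta$-admissible word that is purely periodic of period dividing $n$; conversely a periodic word $(w_1\cdots w_n)^{\omega}$ is realized by such a point if and only if all its cyclic shifts are $<_{lex}(c_1,c_2,\dots)$, up to the boundary cases issued from $x=1$ and from points whose orbit meets $1/\beta$ --- which is exactly where the passage from $(t_i)$ to the companion sequence $(c_i)$ of $\S$\ref{S2} enters. Counting the admissible periodic words of each length by an inclusion--exclusion over the positions at which the word is forced strictly below the threshold sequence expresses $\mathcal{P}_{n}$ as a finite $\zb$-linear combination of the digits $t_i$; forming $\exp\!\bigl(\sum_{n\ge1}\mathcal{P}_{n}z^{n}/n\bigr)$ and resumming the geometric-type series produces $-1/f_{\beta}(z)$. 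This is the computation performed, in various guises, in \cite{takahashi} \cite{itotakahashi} \cite{flattolagariaspoonen}, which I would cite for the details.

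For the second identity I would invoke the generalized Fredholm theory of piecewise monotone interval maps. The $\beta$-transformation $T_{\beta}$ is piecewise monotone with the single discontinuity $1/\beta$ and constant weight $g\equiv 1$; the transfer operator $L_{t\beta}$ on $BV$ satisfies a Lasota--Yorke inequality, hence is quasi-compact with essential spectral radius $\le 1$, so \eqref{fredholmdeterminantgeneralized} defines a function holomorphic on $\{|z|<1\}$ whose zeros there are exactly the reciprocals of the eigenvalues of $L_{t\beta}$ outside the essential spectrum, with matching algebraic multiplicities (Ruelle; Baladi \cite{baladi2}; Baladi and Keller \cite{baladikeller}; Hofbauer and Keller \cite{hofbauerkeller}; equivalently the kneading theory of Milnor and Thurston \cite{milnorthurston}). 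Specializing the Baladi--Keller kneading-determinant formula to a map with one turning/discontinuity point and working out the resulting power series --- or, alternatively, representing $L_{t\beta}$ on Hofbauer's Markov tower, where it becomes a companion-type infinite transition operator whose characteristic series is read off a single row expansion --- gives $\det({\rm Id} - z L_{t\beta}) = -f_{\beta}(z) = 1/\zeta_{\beta}(z)$, which closes the argument.

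The main obstacle is this second identity, and inside it the two intertwined subtleties: the noncompactness of $L_{t\beta}$, so that ``$\det$'' and the generalized trace of \eqref{fredholmdeterminantgeneralized} must be given meaning through the quasi-compactness / nuclear-power-decomposition machinery rather than classical Fredholm theory; and the boundary contributions at the discontinuity $1/\beta$ and at the endpoints $0,1$ of the interval, which a priori introduce extra finite factors in the Baladi--Keller formula. One must check that these collapse to yield precisely $-f_{\beta}$ with no spurious factor --- exactly the point at which the finiteness dichotomy of $d_{\beta}(1)$, i.e. the $(t_i)$ versus $(c_i)$ distinction and the Parry versus non-Parry alternative, has to be tracked carefully. (For the main application, $\beta$ a reciprocal algebraic integer, $d_{\beta}(1)$ is never finite by Proposition \ref{fbetainfinie}, so the clean form $\zeta_{\beta}=-1/f_{\beta}$ holds without correction.) Once both identities are established, the multiplicity matching in (ii) and the whole of (i) are routine.
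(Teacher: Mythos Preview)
Your proposal is correct, but it is organized quite differently from the paper. The paper's proof is a pure citation: it invokes Theorem~2 of Baladi--Keller \cite{baladikeller} for general piecewise monotone maps, noting only the technical caveat that the generating-partition hypothesis needed there is lifted by Ruelle \cite{ruelle5,ruelle8}. The explicit formula $\zeta_{\beta}(z) = -1/f_{\beta}(z)$ (Ito--Takahashi, Flatto--Lagarias--Poonen) is recorded separately by the paper as the \emph{subsequent} Theorem~\ref{parryupperdynamicalzeta}, not used in the proof of the present statement.

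You instead front-load that explicit formula and derive (i) from it directly: since $f_{\beta}$ is a bounded-coefficient power series with $f_{\beta}(0)=-1$, holomorphy of $1/\zeta_{\beta}$ and nonvanishing meromorphy of $\zeta_{\beta}$ on $|z|<1$ are immediate, with the cyclotomic correction $1-z^{N}$ in the simple-Parry case harmless inside the unit disk. This makes (i) genuinely elementary, which is a gain over the paper's black-box citation. For (ii), however, you still need the identification of $-f_{\beta}$ with the generalized Fredholm determinant and the spectral reading of its zeros, which is exactly the Baladi--Keller content the paper invokes; so on (ii) your route and the paper's coincide in substance, yours being an unpacked specialization to the $\beta$-shift. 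One point the paper flags and you handle only obliquely: the Baladi--Keller theorem assumes a generating partition, and the two-interval partition for $T_{\beta}$ need not be generating; your Hofbauer-tower alternative implicitly sidesteps this, but it is worth saying so explicitly (or citing Ruelle as the paper does).
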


\begin{proof}
Theorem 2 in 
\cite{baladikeller}, 
assuming that
the set of intervals 
$([0, a_1),[a_1 , 1])$
forming the partition of
$[0,1]$ is generating;
In \cite{ruelle5} \cite{ruelle8} Ruelle shows that this assumption is not necessary, showing how to remove this obstruction.
\end{proof}

Theorem \ref{dynamicalzetatransferoperator}
was 
stated in Baladi and Keller
\cite{baladikeller} under more general
assumptions.
Theorem \ref{dynamicalzetatransferoperator} 
has been conjectured by Hofbauer and Keller
\cite{hofbauerkeller} 
for piecewise monotone maps, for the case
where the function $g$ is piecewise constant.
(cf also Mori \cite{mori} \cite{mori2}).
The case $g=1$ in the transfer operators
was studied by Milnor and Thurston \cite{milnorthurston},
Hofbauer \cite{hofbauer}, 
Preston \cite{preston}.

The fact 
(Theorem \ref{dynamicalzetatransferoperator} (ii)) 
that the poles of $\zeta_{\beta}(z)$,
lying in the open unit disc,
are of the same multiplicity of the inverses of the 
eigenvalues 
of the transfer
operator $L_{t \beta}$ is a 
extension of Theorem 2, Theorem 3 and 
Theorem 4 in
Grothendieck \cite{grothendieck2} in the 
context of the Fredholm theory 
with compact operators.

When $\beta > 1$ is a reciprocal 
algebraic integer and
tends to $1^+$,
we will prove in Section $\S$ \ref{S5}
that the multiplicity $k$ is equal to
$1$ for the first pole $1/\beta$ 
of $\zeta_{\beta}(z)$ and for
a subcollection of Galois conjugates
of $1/\beta$ in an angular sector.

The relations between 
the poles of the dynamical
zeta function $\zeta_{\beta}(z)$,
the zeroes of the Parry Upper function
$f_{\beta}(z)$ and the eigenvalues of the
transfer operator $L_{t \beta}$
come from Theorem \ref{dynamicalzetatransferoperator}
and from the following theorem.

\begin{theorem}
\label{parryupperdynamicalzeta}
Let $\beta > 1$ be a real number.
Then the Parry Upper function
$f_{\beta}(z)$ satisfies
\begin{equation}
\label{parryupperdynamicalzeta_i}
(i)\qquad f_{\beta}(z) = - \frac{1}{\zeta_{\beta}(z)} 
\qquad \mbox{if}~ \beta ~\mbox{is not a simple Parry number},
\end{equation}
and
\begin{equation}
\label{parryupperdynamicalzeta_ii}
(ii)\quad
f_{\beta}(z) = - \frac{1 - z^N}{\zeta_{\beta}(z)}
\qquad
\mbox{if $\beta$ is a simple Parry number}
\end{equation}
where $N$, 
which depends upon $\beta$, is the minimal positive integer such that $T_{\beta}^{N}(1) = 0$.
It is holomorphic in the open unit disk
$\{|z| < 1\}$.
It has no zero in
$|z| \leq 1/\beta$ except $z=1/\beta$
which is a simple zero.
The Taylor series of 
$f_{\beta}(z)$ at $z=1/\beta$
is $f_{\beta}(z)
=
c_{\beta, 1} \bigl(z - \frac{1}{\beta}\bigr)
+ c_{\beta, 2} \bigl(z - \frac{1}{\beta}\bigr)^2+ \ldots$
with
\begin{equation}
\label{coefficientsfbetaatbetainverse}
c_{\beta, m} = 
\sum_{n=m}^{\infty} \frac{n !}{(n-m) ! ~m !}
\lfloor \beta T_{\beta}^{n-1}(1) \rfloor 
\bigl(\frac{1}{\beta}\bigr)^{n-m}
 ~> 0,
\qquad \mbox{for all}~ m \geq 1.
\end{equation}
\end{theorem}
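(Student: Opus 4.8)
The plan is to obtain the functional identities (i) and (ii) from the classical closed form of the Artin-Mazur zeta function of the $\beta$-transformation, and then to read off the local behaviour of $f_\beta$ at $1/\beta$ from elementary power-series estimates. The identities (i)--(ii) are equivalent to the single statement
\[
\zeta_\beta(z)\,\bigl(-f_\beta(z)\bigr)=\begin{cases}1 & \text{if }\beta\text{ is not a simple Parry number,}\\ 1-z^{N} & \text{if }\beta\text{ is a simple Parry number,}\end{cases}
\]
with $N$ the least integer such that $T_\beta^{N}(1)=0$, since $-f_\beta(z)=1-\sum_{i\ge 1}t_iz^i$. This identity is the explicit evaluation, due to Takahashi, to Ito and Takahashi, and to Flatto, Lagarias and Poonen \cite{flattolagariaspoonen}, of the periodic-point counts $\mathcal{P}_n=\#\{x\in[0,1]\mid T_\beta^{n}(x)=x\}$ in terms of the orbit $(T_\beta^{j}(1))_{j\ge 0}$, equivalently of the digits $(t_i)$; it is also the content of Theorem \ref{dynamicalzetatransferoperator} together with the computation of the generalized Fredholm determinant $\det(I-zL_{t\beta})$ of the transfer operator of $T_\beta$ on $BV$, which for the $\beta$-transformation equals $1-\sum_{i\ge1}t_iz^{i}$ (with the factor $1-z^{N}$ in the simple Parry case accounted for by the marked discontinuity point, cf.\ \cite{baladikeller}).

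If one re-derives the periodic-point count directly, the route is lap-counting: $T_\beta^{n}$ is piecewise linear and strictly increasing with slope $\beta^{n}$, so every branch on which it rises from $0$ to $1$ carries exactly one fixed point, while the (at most two) partial branches at the ends of $[0,1]$ contribute a fixed point or not according to whether the orbit of $1$ has already reached $0$ by step $n$. The break points of $T_\beta^{n}$ are the preimages $T_\beta^{-j}(k/\beta)$, $0\le j<n$, $1\le k\le\lfloor\beta\rfloor$, and bookkeeping the contribution of the partial branches by comparing with the truncations of $d_\beta(1)$ yields $\sum_{n\ge1}\mathcal{P}_n z^{n}=-z f'_\beta(z)/f_\beta(z)$ in the non-simple case, which on integrating and exponentiating gives $\zeta_\beta=-1/f_\beta$ (and the same with an extra term $-Nz^{N}/(1-z^{N})$ in the simple case). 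This endpoint bookkeeping, and the passage from the half-open symbolic description of $S_\beta$ to the genuine dynamics on $[0,1]$ (the correspondence $[0,1]\leftrightarrow S_\beta$ is a bijection but not a topological conjugacy, so a naive count of periodic words in $S_\beta$ over-counts), is the one delicate point of the whole argument; alternatively I would simply invoke \cite{flattolagariaspoonen}.

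The remaining assertions are elementary. Holomorphy of $f_\beta$ on $\{|z|<1\}$ is immediate from $0\le t_i\le\lfloor\beta\rfloor$, so $\sum_{i\ge1}t_iz^{i}$ converges locally uniformly there. That $z=1/\beta$ is a zero is the definition of $d_\beta(1)$: $f_\beta(1/\beta)=-1+\sum_{i\ge1}t_i\beta^{-i}=0$. For uniqueness in $\{|z|\le1/\beta\}$, put $g(z):=\sum_{i\ge1}t_iz^{i}$ and suppose $f_\beta(z)=0$, i.e.\ $g(z)=1$, with $|z|\le1/\beta$; then $1=|g(z)|\le\sum_{i\ge1}t_i|z|^{i}\le\sum_{i\ge1}t_i\beta^{-i}=1$, so both inequalities are equalities. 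From the second, $t_i(|z|^{i}-\beta^{-i})=0$ for each $i$, and since $t_1=\lfloor\beta\rfloor\ge1$ this forces $|z|=1/\beta$; writing $z=\beta^{-1}\zeta$ with $|\zeta|=1$, the first equality forces all terms $t_i\beta^{-i}\zeta^{i}$ with $t_i>0$ to be equal positive reals summing to $1$, whence $\zeta^{i}=1$ for every such $i$, and again $t_1\ge1$ gives $\zeta=1$, i.e.\ $z=1/\beta$. Simplicity follows from $f'_\beta(1/\beta)=\sum_{i\ge1}it_i\beta^{-(i-1)}>0$, the series converging since the $t_i$ are bounded. Finally, differentiating $f_\beta(z)=-1+\sum_{i\ge1}t_iz^{i}$ term by term inside the unit disk gives, for $m\ge1$,
\[
c_{\beta,m}=\frac{f_\beta^{(m)}(1/\beta)}{m!}=\sum_{n\ge m}\binom{n}{m}t_n\beta^{-(n-m)}=\sum_{n=m}^{\infty}\frac{n!}{(n-m)!\,m!}\,\lfloor\beta T_\beta^{n-1}(1)\rfloor\,\Bigl(\frac{1}{\beta}\Bigr)^{n-m},
\]
using $t_n=\lfloor\beta T_\beta^{n-1}(1)\rfloor$ from \eqref{digits__ti}; all summands are non-negative, and whenever $d_\beta(1)$ is infinite --- in particular for every reciprocal algebraic integer $\beta$, by Proposition \ref{fbetainfinie} --- there is for each $m$ some $n\ge m$ with $t_n>0$, so $c_{\beta,m}>0$. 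The hard part of the proof is thus the periodic-point count underlying (i) and (ii); everything downstream is bounded-coefficient power-series estimation.
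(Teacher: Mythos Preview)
Your proof is correct and follows essentially the same route as the paper: both invoke Flatto--Lagarias--Poonen \cite{flattolagariaspoonen} (with the Takahashi and Ito--Takahashi work behind it) for the identities (i)--(ii), and obtain the Taylor coefficients $c_{\beta,m}$ by termwise differentiation. Your self-contained equality-case argument for the uniqueness of the zero $1/\beta$ in $\{|z|\le 1/\beta\}$ replaces the paper's citation of Lemmas~5.2--5.4 of \cite{flattolagariaspoonen}, and your caveat that $c_{\beta,m}>0$ for all $m$ requires $d_\beta(1)$ infinite is a correct refinement the paper's statement glosses over (for simple Parry $\beta$ the $c_{\beta,m}$ vanish once $m$ exceeds the length of $d_\beta(1)$).
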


\begin{proof}
Theorem 2.3 and Appendix A
in Flatto, Lagarias and Poonen 
\cite{flattolagariaspoonen}; 
Theorem 1.2 in
Flatto and Lagarias 
\cite{flattolagarias}, I;
Theorem 3.2
in Lagarias \cite{lagarias}.
From Takahashi \cite{takahashi},
Ito and Takahashi
\cite{itotakahashi}, these authors
deduce 
\begin{equation}
\label{zetafunctionfraction}
\zeta_{\beta}(z) = \frac{1 - z^N}{(1 - \beta z)
\Bigl(
\sum_{n=0}^{\infty}T_{\beta}^{n}(1) \, z^n
\Bigr)}
\end{equation}
where ``$z^N$" has to be replaced by ``$0$"
if $\beta$ is not a simple Parry number.
Since $\beta T_{\beta}^{n}(1) =
\lfloor \beta T_{\beta}^{n}(1)
\rfloor +
\{\beta T_{\beta}^{n}(1)\} = 
t_{n+1} + T_{\beta}^{n+1}(1)$
by \eqref{digits__ti}, for $n \geq 1$,
expanding the power series of the denominator
\eqref{zetafunctionfraction} readily gives:
\begin{equation}
\label{fbetazetabeta}
-1 +t_1 z + t_2 z^2 +\ldots
= f_{\beta}(z) 
= 
-(1 - \beta z)
\Bigl(
\sum_{n=0}^{\infty}T_{\beta}^{n}(1) \, z^n
\Bigr).
\end{equation}
The zeroes of smallest modulus are
characterized in Lemma 5.2, Lemma 5.3 and Lemma 5.4
in \cite{flattolagariaspoonen}. The coefficients
$c_{\beta,m}$ readily come from the derivatives of
$f_{\beta}(z)$.
\end{proof}

From Theorem \ref{parryupperdynamicalzeta},
since the roots of cyclotomic polynomials
are of modulus 1, the zeroes of 
$f_{\beta}(z)$ within $|z|< 1$ are always
exactly
the poles of $\zeta_{\beta}(z)$ in this domain,
whatever the R\'enyi-Parry dynamics of $\beta > 1$
is.

\begin{definition}
\label{parrypolynomial}
If $\beta$ is a simple Parry number, 
with $d_{\beta}(1) = 0 . t_1 t_2 \ldots t_m$,
$t_m \neq 0$,
the polynomial
\begin{equation}
\label{parrypolynomesimple}
P_{\beta, P}(X) :=
X^m - t_1 X^{m-1} - t_2 X^{m-2} - \ldots t_m
\end{equation}
is called the {\it Parry polynomial} of $\beta$.
If $\beta$ is a Parry number which is not simple,
with
$d_{\beta}(1) = 0 . t_1 t_2 \ldots t_m
(t_{m+1} t_{m+2} \ldots t_{m+p+1})^{\omega}$
and not purely periodic 
($m$ is $\neq 0$), then
$$P_{\beta, P}(X) :=
X^{m+p+1} - t_1 X^{m+p} - t_2 X^{m+p-1}
- \ldots - t_{m+p} X - t_{m+p+1}
\qquad \qquad\mbox{ }$$
\begin{equation}
\label{parrypolynomeperiodicity}
\mbox{ } \qquad \qquad \qquad \qquad
-X^m + t_1 X^{m-1} + t_2 X^{m-2}
+ \ldots + t_{m-1} X + t_{m}
\end{equation}
is the Parry polynomial of $\beta$.
If $\beta$ is a nonsimple Parry number
such that
$d_{\beta}(1) = 0 . 
(t_{1} t_{2} \ldots t_{p+1})^{\omega}$
is purely periodic (i.e. $m=0$),
then
\begin{equation}
\label{parrypolynomepureperiodicity}
P_{\beta, P}(X) :=
X^{p+1} - t_1 X^{p} - t_2 X^{p-1}
- \ldots - t_{p} X - (1 + t_{p+1})
\end{equation}
is the Parry polynomial of $\beta$.
By definition the degree $d_P$ of
$P_{\beta,P}(X)$
is 
respectively $m, m + p +1, p+1$
in the three cases.
\end{definition}

If $\beta$ is a Parry number,
the Parry polynomial $P_{\beta, P}(X)$,
belonging to the ideal 
$P_{\beta}(X) \zb[X]$,
admits $\beta$ as simple root
and is often not irreducible
\cite{vergergaugry2}
\cite{bertrandmathis2}.
The polynomial 
$\frac{P_{\beta, P}(X)}{P_{\beta}(X)}$ 
has been called {\em complementary factor} 
by Boyd. 
For the two cases
\eqref{parrypolynomesimple}
and \eqref{parrypolynomepureperiodicity}
the constant term is $\neq 0$; hence
$\deg(P^{*}_{\beta,P}) =
\deg(P_{\beta,P})$.
In the case of \eqref{parrypolynomeperiodicity}
denote by
$q_{\beta} := 0$ if 
$t_{m} \neq t_{m+p+1}$
and, if $t_{m} = t_{m+p+1}$,
$q_{\beta} := 1 +
\max\{r \in \{0, 1, m-1\} 
\mid t_{m-l} = t_{m+p+1-l} \mbox{~for all}~
0 \leq l \leq r \}$.
Then
$p+1 \leq \deg(P_{\beta,P}) - q_{\beta}
= \deg(P^{*}_{\beta,P}) \leq \deg(P_{\beta,P})$.

Applying the Carlson-Polya dichotomy 
(Bell and Chen \cite{bellchen},
Bell, Miles and Ward
\cite{bellmilesward},
Carlson \cite{carlson} \cite{carlson2}, 
Dienes \cite{dienes},
P\'olya \cite{polya}, Robinson \cite{robinson},
Szeg\H{o} \cite{szego}) to
the power series $f_{\beta}(z)$, for which the coefficients
belong to the finite set
$\mathcal {A}_{\beta} \cup \{-1\}$, gives the following
equivalence.

\begin{theorem}
\label{carlsonpolya}
The real number $\beta> 1$
is a Parry number if and only if
the Parry Upper function
$f_{\beta}(z)$ is a rational function, 
equivalently if and only
if $\zeta_{\beta}(z)$ is a rational function.

The set of Parry numbers, resp.
of nonParry numbers,
in $(1, \infty)$, is not empty.
If $\beta$ is not a Parry number, 
then 
$|z| = 1$ is the natural boundary
of  
$f_{\beta}(z)$. 
If $\beta$ is a Parry number,
with R\'enyi $\beta$-expansion of $1$ given by
$$d_{\beta}(1) = 0 . t_1 t_2 \ldots t_m (t_{m+1} t_{m+2} \ldots
t_{m+p+1})^{\omega}, \qquad t_1 = \lfloor \beta \rfloor,
~t_i \in \mathcal{A}_{\beta}, i \geq 2,$$
the preperiod length being
$m \geq 0$ and the period length $p+1 \geq 1$,
$f_{\beta}(z)$ admits an analytic 
meromorphic extension over $\cb$, of
the following form:
$$
f_{\beta}(z) =
- P_{\beta, P}^{*}(z) \qquad
\mbox{if $\beta$ is simple,}
$$
$$
f_{\beta}(z) =
\frac{- P_{\beta, P}^{*}(z)}{1 - z^{p+1}}
\qquad 
\mbox{if $\beta$ is nonsimple,}
$$
where the Parry polynomial is given by
\eqref{parrypolynomesimple},
\eqref{parrypolynomeperiodicity}
or
\eqref{parrypolynomepureperiodicity}.

If $\beta \in (1, 2)$ is a Parry number,
the (na\"ive) height 
${\rm H}(P_{\beta, P})$
of $P_{\beta, P}$
is equal to 1 except when: 
$\beta$ is nonsimple 
and that 
$t_{p+1} = \lfloor \beta T_{\beta}^{p}(1) \rfloor = 1$,
in which case the Parry polynomial of $\beta$
has na\"ive height
${\rm H}(P_{\beta, P}) = 2$.
\end{theorem}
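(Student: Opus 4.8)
The plan is to run the entire statement off the Carlson--P\'olya dichotomy applied to the integer power series $f_{\beta}(z) + 1 = \sum_{i \geq 1} t_i z^i$, whose coefficient vector lies in the finite alphabet $\mathcal{A}_{\beta}$ (equal to $\{0,1\}$ when $\beta \in (1,2)$). Since $t_1 = \lfloor \beta \rfloor \geq 1$ the series is nonconstant, and its radius of convergence is $1$ unless $d_{\beta}(1)$ is finite, in which case $f_{\beta}$ is already a polynomial and $\beta$ is a simple Parry number, so all the rationality assertions hold trivially. First I would establish the chain of equivalences ``$\beta$ is a Parry number $\iff$ $f_{\beta}$ is rational $\iff$ $\zeta_{\beta}$ is rational''. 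The implication ``Parry $\Rightarrow$ $f_{\beta}$ rational'' is a direct summation: by Definition \ref{parrynumberdefinition} the digit string $(t_i)$ is eventually periodic with preperiod length $m$ and period length $p+1$, so $\sum_{i \geq 1} t_i z^i$ is a rational function with denominator $1 - z^{p+1}$; carrying out the geometric-series computation and clearing denominators produces exactly the closed forms $f_{\beta} = -P_{\beta,P}^{*}$ (the simple case, $P_{\beta,P}$ being \eqref{parrypolynomesimple}) and $f_{\beta} = -P_{\beta,P}^{*}/(1 - z^{p+1})$ (the nonsimple cases \eqref{parrypolynomeperiodicity} and \eqref{parrypolynomepureperiodicity}), which simultaneously proves the asserted meromorphic continuation of $f_{\beta}$ over $\cb$. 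For the converse, suppose $f_{\beta}$ is rational; then $(t_i)$ satisfies an integer linear recursion arising from a denominator $Q(z) = 1 + q_1 z + \ldots + q_d z^d$ with $Q(0) = 1$, which is \emph{forward}-deterministic on the state $(t_{i-d+1}, \ldots, t_i)$; since this state takes only finitely many values in $\mathcal{A}_{\beta}^{d}$, the pigeonhole principle forces the state, and hence the tail of $(t_i)$, to be eventually periodic, i.e. $\beta$ is a Parry number. The equivalence with rationality of $\zeta_{\beta}$ is then immediate: by \eqref{fbetazetabeta}--\eqref{zetafunctionfraction} the product $\zeta_{\beta}(z) f_{\beta}(z)$ equals $-(1 - z^{N})$ or $-1$, a polynomial, and both factors are nonzero power series, so one is rational iff the other is.

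The natural-boundary assertion is precisely the complementary horn of the Carlson--P\'olya dichotomy (Carlson \cite{carlson} \cite{carlson2}, P\'olya \cite{polya}, Szeg\H{o} \cite{szego}, Dienes \cite{dienes}, Robinson \cite{robinson}): a power series with coefficients in a finite set and radius of convergence $1$ is either a rational function or admits $|z| = 1$ as a natural boundary. Applied to $f_{\beta}(z) + 1$ and combined with the equivalences above, this yields that $|z| = 1$ is the natural boundary of $f_{\beta}$ whenever $\beta$ is not a Parry number. For non-emptiness of the two classes: the golden mean $\theta_{2}^{-1} = (1+\sqrt{5})/2$ has $d_{\beta}(1) = 0.11$, which is finite, so it is a simple Parry number; and since every Parry number is a root of its Parry polynomial in $\zb[X]$, the set of Parry numbers is countable, whereas $(1,\infty)$ is uncountable, so nonParry numbers exist in $(1,\infty)$ in abundance.

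It remains to read off the na\"ive height when $\beta \in (1,2)$, so that $t_i \in \{0,1\}$. In the simple case \eqref{parrypolynomesimple} every coefficient of $P_{\beta,P}$ is $1$ or $-t_i \in \{0,-1\}$, so ${\rm H}(P_{\beta,P}) = 1$; in the purely periodic case \eqref{parrypolynomepureperiodicity} all coefficients lie in $\{-1,0,1\}$ except the constant term $-(1 + t_{p+1}) \in \{-1,-2\}$; and in the nonsimple non-purely-periodic case \eqref{parrypolynomeperiodicity} the two summands overlap only in degrees $0, \ldots, m$, where the coefficient of $X^{j}$ for $0 \leq j < m$ is a difference of two elements of $\{0,1\}$, hence lies in $\{-1,0,1\}$, the coefficients of $X^{j}$ for $m < j \leq m+p+1$ lie in $\{-1,0\}\cup\{1\}$, and the coefficient of $X^{m}$ equals $-1 - t_{p+1}$. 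In every case ${\rm H}(P_{\beta,P}) \in \{1,2\}$, with the value $2$ attained if and only if $\beta$ is nonsimple and $t_{p+1} = \lfloor \beta T_{\beta}^{p}(1) \rfloor = 1$, which is the claim. The one genuinely deep ingredient is the Carlson--P\'olya dichotomy itself, which supplies the natural-boundary conclusion; everything else reduces to a geometric-series identity, a pigeonhole argument, and bounded coefficient bookkeeping, so I do not expect a serious obstacle beyond invoking that classical theorem correctly.
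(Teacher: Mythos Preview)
Your argument is correct and essentially coincides with the paper's intended proof on the core points: the Carlson--P\'olya dichotomy is precisely the tool the paper invokes (it is the theorem's namesake, and the paper defers the details to \cite{vergergaugry6}), and your explicit geometric-series computation yielding $f_{\beta} = -P_{\beta,P}^{*}$ or $-P_{\beta,P}^{*}/(1-z^{p+1})$, together with the pigeonhole argument for the converse and the coefficient bookkeeping for the height, unpacks exactly what lies behind that citation.

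The one place where you diverge is the non-emptiness claims. The paper argues that Parry numbers exist because Pisot numbers of degree $\geq 2$ are Parry (Schmidt \cite{schmidt}, Bertrand-Mathis \cite{bertrandmathis}), and that nonParry numbers exist via Fekete--Szeg\H{o}'s theorem on transfinite diameter. Your choices---the golden mean as an explicit simple Parry number, and a countability argument (Parry numbers are algebraic, hence countable, while $(1,\infty)$ is not)---are strictly more elementary and entirely sufficient for what the theorem asserts. The paper's references buy a little more structural information (e.g.\ that an infinite, closed family of algebraic integers is Parry), but none of that is needed here.
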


\begin{proof}
Verger-Gaugry \cite{vergergaugry6}.
The set of nonParry numbers $\beta$
in $(1, \infty)$ is not empty
as a consequence of Fekete-Szeg\H{o}'s Theorem 
\cite{feketeszego} 
since the radius of convergence
of $f_{\beta}(z)$ is equal to 1 in any case
(whatever the R\'enyi-Parry dynamics
of $\beta$ is),
and that its domain of definition always contains
the open unit disk which has a transfinite
diameter equal to 1.
The set of Parry numbers $\beta$
in $(1, \infty)$ is also nonempty.
Indeed Pisot numbers, of degree $\geq 2$,
are Parry numbers
(Schmidt \cite{schmidt}, 
Bertrand-Mathis \cite{bertrandmathis}).
Therefore the dichotomy between
Parry and nonParry numbers in
$(1,\infty)$ has a sense.
\end{proof}

\begin{definition}
\label{betaconjugate}
Let $\beta > 1$ be a Parry number.
If the Parry polynomial $P_{\beta,P}(z)$
of $\beta$ is not irreducible,
the roots of 
$P_{\beta,P}(z)$ which are not Galois conjugates
of $\beta$ are called the {\it 
beta-conjugates} of $\beta$. 
\end{definition}
Beta-conjugates were studied in
\cite{vergergaugry4} 
\cite{vergergaugry5}
in terms of Puiseux theory and
in association with germs of curves.

\subsection{Distribution of zeroes of Parry Upper functions $f_{\beta}(z)$ in Solomyak's fractal}
\label{S3.2}

Let $\beta> 1$ be a real number (algebraic 
or transcendental).
The Parry Upper function $f_{\beta}(z)$
has its zeroes of modulus $< 1$
in a region of the open unit disk, called
Solomyak's factal, 
whose construction
is given
in \cite{solomyak} \S 3. 
Let us recall it and
summarize its arithmetic properties
in 
Theorem \ref{solomyakfractalGBordG}.
From Theorem \ref{parryupperdynamicalzeta}
and \eqref{zetafunctionfraction},
the zeroes of $f_{\beta}(z)$ in $|z| < 1$,
which are $\neq 1/\beta$,
are the zeroes of modulus $< 1$
of the power series 
$1 + \sum_{j=1}^{\infty} T_{\beta}^{j}(1) \, z^j$
where the coefficients are
real numbers in the interval
$[0,1]$.
Then, in full generality, let 
$$\mathcal{B} :=
\{h(z) = 1 + \sum_{j=1}^{\infty} a_j z^j
\mid a_j \in [0, 1] \}$$
be the class of power series defined on
$|z| < 1$ equipped with the 
topology of uniform convergence
on compacts sets of $|z| < 1$.  
The subclass $\mathcal{B}_{0,1}$
of $\mathcal{B}$ denotes functions whose 
coefficients are all zeros or ones. 
The space 
$\mathcal{B}$ is compact and convex.
Let
$$\mathcal{G} := \{\lambda \mid |\lambda| < 1,
\exists ~h(z) \in \mathcal{B} ~\mbox{such that}~ 
h(\lambda)=0 \}
\quad
\subset ~D(0,1)$$
be the set of zeroes of the power series 
belonging to $\mathcal{B}$.
The zeroes
gather within the unit circle and
curves in $|z| < 1$
given in polar coordinates, by
\cite{vergergaugry2}.
The complement 
$D(0,1) \setminus (\mathcal{G} \cup \{\frac{1}{\beta}\})$
is a zero-free region for 
$f_{\beta}(z)$; the
domain $D(0,1) \setminus \mathcal{G}$
is 
star-convex due to the fact that:
$h(z) \in \mathcal{B} \Longrightarrow
h(z/r) \in \mathcal{B}$, for
any $r > 1$ (\cite{solomyak}, \S 3),
and that $1/\beta$ is the unique root
of $f_{\beta}(z)$ in $(0,1)$.

For every $\phi \in (0, 2 \pi)$, there exists
$\lambda = r e^{i \phi} \in \mathcal{G}$;
the point of minimal modulus
with argument $\phi$ is denoted 
$\lambda_{\phi} =
\rho_{\phi} e^{i \phi}
\in \mathcal{G}$,
$\rho_{\phi} < 1$. 
A function
$h \in \mathcal{B}$ is called $\phi$-optimal if
$h(\lambda_{\phi})  = 0$.
Denote by $\mathcal{K}$ the subset of
$(0, \pi)$ for which there exists a
$\phi$-optimal function belonging to
$\mathcal{B}_{0,1}$.
Denote by $\partial \mathcal{G}_{S}$ the
``spike": $[-1, \frac{1}{2}(1 - \sqrt{5})]$
on the negative real axis.

\begin{theorem}[Solomyak]
\label{solomyakfractalGBordG}
(i) The union $\mathcal{G} \cup \mathbb{T}
\cup \partial \mathcal{G}_{S}$ is closed,
symmetrical with respect to the real axis, has
a cusp at $z=1$ with logarithmic tangency 
(Figure 1 in {\rm \cite{solomyak}}),
 
(ii) the boundary $\partial \mathcal{G}$ 
is a continuous curve, given by
$\phi \to |\lambda_{\phi}|$ on 
$[0, \pi)$, taking its values in
$[\frac{\sqrt{5} - 1}{2}, 1)$,
with 
$|\lambda_{\phi}| = 1$ if and only if $\phi = 0$. 
It admits
a left-limit at $\pi^{-}$,
$1 > \lim_{\phi \to \pi^{-}} |\lambda_{\phi}|
> |\lambda_{\pi}| = \frac{1}{2}(-1 + \sqrt{5})$,
the left-discontinuity at $\pi$ corresponding 
to the extremity of $\partial \mathcal{G}_{S}$.

(iii) at all points $\rho_{\phi} e^{i \phi} \in \mathcal{G}$
such that $\phi/\pi$ is rational in an open dense
subset of $(0,2)$, $\partial \mathcal{G}$
is non-smooth,

(iv) there exists a nonempty subset
of transcendental numbers $L_{tr}$, of 
Hausdorff dimension zero, such that
$\phi \in (0, \pi)$ and
$\phi \not\in \mathcal{K} \cup~ \pi \qb~ \cup~ \pi L_{tr}$ implies that the boundary curve $\partial \mathcal{G}$
has a tangent at $\rho_{\phi} e^{i \phi}$ (smooth point).
\end{theorem}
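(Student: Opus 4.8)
The statement is a structure theorem for the zero set $\mathcal{G}$ of the compact convex class $\mathcal{B}$, and the plan is to reduce everything to a one‑parameter reachability problem together with a bang‑bang (Pontryagin‑type) description of its extremal functions. First I would record that $\mathcal{B}$ is compact: the coefficients lie in $[0,1]$, so the family is locally bounded on $|z|<1$, hence normal by Montel's theorem, and any locally uniform limit again has all coefficients in $[0,1]$. It follows at once that $\mathcal{G}$ is relatively closed in $D(0,1)$; that $\mathcal{G}$ (and a fortiori $\mathcal{G}\cup\mathbb{T}\cup\partial\mathcal{G}_S$) is symmetric about the real axis, since every $h\in\mathcal{B}$ has real coefficients and hence $h(\lambda)=0\Rightarrow h(\overline{\lambda})=0$; and that for fixed $\phi$ the set of radii $r$ with $re^{i\phi}$ a zero of some $h\in\mathcal{B}$ is closed. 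That set is nonempty for $\phi\in(0,2\pi)$ because the reachable set $S_\mu:=\{\sum_{n\ge1}a_n\mu^n:a_n\in[0,1]\}$, a countable Minkowski sum of the segments $[0,\mu^n]$, contains $-1$ once $|\mu|$ is close enough to $1$ (one reads this off suitably chosen blocks of partial sums). Hence $\rho_\phi<1$ is well defined, $\partial\mathcal{G}$ is the graph of $\phi\mapsto\lambda_\phi=\rho_\phi e^{i\phi}$, and upper semicontinuity of $\phi\mapsto\rho_\phi$ on $(0,\pi)$ is immediate from compactness; this already yields the qualitative content of (i) and (ii), the exact range $[\tfrac{\sqrt5-1}{2},1)$ and the statement $|\lambda_\phi|=1\iff\phi=0$ coming from the explicit computations below.

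The key tool is the variational characterisation of $\rho_\phi$. Minimality forces $-1$ onto $\partial S_{\lambda_\phi}$, so by convexity there is a supporting direction $e^{i\theta}$; and since $S_\mu$ is a zonotope the functional $w\mapsto\mathrm{Re}(e^{-i\theta}w)$ is maximised over $S_\mu$ by the bang‑bang vector $a_n=\mathbf{1}[\cos(n\phi-\theta)>0]$. Thus the extremal function is $h_\phi(z)=1+\sum_{n\in A}z^n$ with $A=A(\phi,\theta)=\{n\ge1:\cos(n\phi-\theta)>0\}$, and the pair $(\rho_\phi,\theta_\phi)$ is determined by the two real equations contained in $h_\phi(\rho_\phi e^{i\phi})=0$ (the normal‑consistency condition being then automatic). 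Lower semicontinuity of $\phi\mapsto\rho_\phi$, hence continuity on $(0,\pi)$, follows because a convergent sequence of such extremals has an extremal limit. Specialising to $\phi=\pi$ gives $\theta=\pi$, $A$ equal to the odd integers, $h_\pi(z)=1+z+z^3+z^5+\cdots=1+z/(1-z^2)$, whose negative real zero solves $\rho^2+\rho-1=0$, so $\lambda_\pi=-\tfrac{\sqrt5-1}{2}$; and on the negative real axis the minimum of $h(-r)$ over $\mathcal{B}$ equals $1-r/(1-r^2)$, which is $\le0$ exactly for $r\in[\tfrac{\sqrt5-1}{2},1)$, while $1+z$ vanishes at $-1$ — this pins down the spike $\partial\mathcal{G}_S=[-1,\tfrac{1-\sqrt5}{2}]$ and shows $\mathcal{G}\cup\mathbb{T}\cup\partial\mathcal{G}_S$ is closed. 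The left‑discontinuity at $\pi$ is then forced: the admissible perturbations of $h_\pi$ obey one‑sided sign constraints (the coefficient at any odd index can only decrease, at any even index only increase), which make the leading‑order effect of any perturbation at $z_0:=-\tfrac{\sqrt5-1}{2}$ a real quantity of fixed sign, so no $h\in\mathcal{B}$ vanishes at a point of argument $\ne\pi$ arbitrarily close to $z_0$; hence $\lim_{\phi\to\pi^-}\rho_\phi>\tfrac{\sqrt5-1}{2}$, the resulting gap being precisely the spike. This finishes (i) and (ii).

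For the cusp at $z=1$ I would analyse $\lambda_\phi$ as $\phi\to0^+$: there $A(\phi,\theta_\phi)$ becomes a union of long blocks of consecutive integers (each of length of order $1/\phi$), and summing these geometric blocks in $h_\phi(\rho_\phi e^{i\phi})=0$ yields a transcendental relation between $1-\rho_\phi$ and $\phi$ whose solution carries a logarithmic factor; read in Cartesian coordinates and combined with the symmetry of (i), this is exactly the cusp at $1$ with logarithmic tangency to the real axis. For (iii) and (iv) the decisive feature is the arithmetic of $\phi/\pi$, since the coefficient sequence $a_n=\mathbf{1}[\cos(n\phi-\theta)>0]$ is governed by the orbit $\{n\phi/\pi\bmod1\}$. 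When $\phi/\pi=p/q$ is rational this orbit is periodic, $h_\phi$ is a rational function with eventually periodic coefficients, the two defining equations become algebraic, and for a dense open set of such rational angles a whole residue class of indices (hence a positive‑density set of coefficients) lies at $\cos(n\phi-\theta)=0$ for the relevant $\theta$ and flips simultaneously as $\phi$ crosses $p\pi/q$ from one side, producing a genuine one‑sided corner of $\phi\mapsto\lambda_\phi$: this is (iii). When $\phi\notin\mathcal{K}\cup\pi\qb\cup\pi L_{tr}$ the orbit equidistributes with well‑controlled discrepancy (three‑distance theorem and continued‑fraction bounds), no index is critical, $h_\phi$ is the unique extremal, and the implicit function theorem applied to the two smooth defining equations produces a tangent line at $\rho_\phi e^{i\phi}$; the set $L_{tr}$ to be excised is exactly the Liouville‑type $\phi/\pi$ (of Hausdorff dimension $0$) for which this discrepancy control fails, while $\mathcal{K}$ is removed because there a $\phi$‑optimal function with coefficients in $\{0,1\}$ already exists. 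This is (iv).

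The genuinely hard part, I expect, is the sharp form of (iii) together with the precise identification of the exceptional sets in (iv): converting ``a positive‑density set of coefficients flips'' into a rigorous proof of a one‑sided corner requires quantitative control of how $\theta_\phi$ and the active index set $A(\phi,\theta_\phi)$ move with $\phi$, and pinning down $L_{tr}$ requires Diophantine/discrepancy estimates for $\{n\phi/\pi\bmod1\}$ calibrated against exactly the regularity one is trying to establish; by contrast, parts (i), (ii) and the logarithmic cusp are comparatively soft once the compactness and bang‑bang machinery of the first two steps is in place.
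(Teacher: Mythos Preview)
The paper does not actually prove this theorem: its entire proof is the one-line citation ``\cite{solomyak}, \S 3 and \S 4'', since the result is due to Solomyak and is quoted here as background. So there is nothing in the paper to compare your argument against beyond that reference.

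That said, your sketch is in the spirit of Solomyak's original proof: compactness of $\mathcal{B}$ via normal families, the bang-bang description of $\phi$-optimal functions with coefficient pattern $a_n=\mathbf{1}[\cos(n\phi-\theta)>0]$, the explicit computation at $\phi=\pi$ yielding $(\sqrt{5}-1)/2$, and the dichotomy rational/irrational for $\phi/\pi$ governing smoothness of $\partial\mathcal{G}$. Your own caveat is accurate: parts (i), (ii) and the golden-ratio endpoint are essentially soft once the extremal characterisation is in place, whereas your treatment of (iii) and (iv) remains heuristic --- turning ``a positive-density block of coefficients flips'' into a genuine corner, and isolating the exceptional set $L_{tr}$ of Hausdorff dimension zero, require the detailed Diophantine analysis carried out in \S 4 of \cite{solomyak}, which you have only outlined. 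For the purposes of this paper, though, a citation is all that is expected.
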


\begin{proof}
\cite{solomyak}, $\S$ 3 and $\S$ 4.
\end{proof}

\begin{definition}
\label{solomyakfractaldefinition}
The set $\mathcal{G} \cup \mathbb{T}
\cup \partial \mathcal{G}_{S}$
is called Solomyak's fractal.
\end{definition}

Solomyak's fractal
contains the set $\overline{W}$,
where $W$ 
consists of the zeroes $\lambda$, 
$|\lambda|< 1$, of the polynomials
$1 + \sum_{j=1}^{q} a_j z^j$ having all coefficients
$a_j$ zeroes and ones, studied by
Odlyzko and Poonen \cite{odlyzkopoonen}.

Let $1<\beta<(1+\sqrt{5})/2$ be
a reciprocal algebraic integer and
apply the Carlson-Polya dichotomy
to $f_{\beta}(z)$ :

- if 
$\beta$ is a Parry number,
then $f_{\beta}(z)$ has a 
finite number of zeroes by 
Theorem \ref{carlsonpolya},
$\mathcal{G}$
contains all the Galois-conjugates 
and the inverses of the beta-conjugates
(if any) of $\beta$ of modulus $< 1$,
and the unit circle $|z|=1$
is not a natural boundary of
$f_{\beta}(z)$.
Note that the Galois conjugates of
$\beta$ are the Galois conjugates of
$1/\beta$, but the $\beta$-conjugates
of $\beta$ could be roots of
non-reciprocal factors.

- if $\beta$ is not a
Parry number the zeroes of
$f_{\beta}(z)$ in 
the subfractal
$\mathcal{G}$
is more difficult to describe. 
In this case, the unit circle
$|z|=1$ is the natural boundary
of $f_{\beta}(z)$.
The study of the zeroes of 
power series
which lie 
very close to natural boundaries, 
having moderate lacunarity, 
is a difficult problem. This problem is more
difficult than for sparse
power series having
Hadamard lacunarity for instance
(Fuchs \cite{fuchs}, Levinson \cite{levinson} Chap. VI, Robinson
\cite{robinson}).
Let us view the problem from the side of
dynamical zeta functions since
$\zeta_{\beta}(z) = -1/f_{\beta}(z)$.
It is classical to study the analytic
behaviour of the dynamical zeta 
function on its natural 
disk of convergence
which is centered at 0 and of 
radius of convergence
$\exp(-\mathcal{H})$, 
where $\mathcal{H}$ is the 
topological entropy of the dynamical system
\cite{parrypollicott}.
In the case of the 
$\beta$-shift
the 
topological entropy
is $\mathcal{H} = \lo \beta$
(Proposition 5.1 in \cite{parrypollicott}).
Therefore
the important subregion of  
$\mathcal{G} \cup \mathbb{T}
\cup \partial \mathcal{G}_{S}$, in 
the open unit disk,
to be investigated for the existence, 
the number (eventually infinite) and 
the geometry
of zeroes of $f_{\beta}(z)$
is the annular region
$\{z \mid \exp(-\mathcal{H}) = \beta^{-1} 
< |z| < 1\}$, 
in particular when $\beta$ tends to $1^+$.
This problem is the general problem of the
extension of the meromorphy 
of $\zeta_{\beta}(z)$.
Then the main objective is
the extended research of zeroes 
of $f_{\beta}(z)$ in
$\{\beta^{-1} < |z| < 1 \}$, which
is mostly
concerned with
(i) the meromorphic extension
of the dynamical zeta function of a dynamical system
outside the disk of convergence whose radius is
$\exp(-\mathcal{H})$ with $\mathcal{H}$
the topological entropy, the pressure, etc, 
of the dynamical system,
and their poles in this annular region
(Haydn \cite{haydn},
Hilgert and Rilke \cite{hilgertrilke},
Parry and Pollicott \cite{parrypollicott},
Pollicott \cite{pollicott}, Ruelle \cite{ruelle7}),
(ii) the structure theorems of orthogonal 
decomposition of the transfer operators
(eventually the Perron-Frobenius operators), 
with the geometry of their 
isolated eigenvalues  (e.g.
Theorem 1 in Baladi and Keller \cite{baladikeller}).

In the sequel 
(in Section $\S$ \ref{S5.3} and 
Section $\S$ \ref{S6.1}) 
we will not solve
the problem of the exact
determination of the zeroes 
of $f_{\beta}(z)$ in
$\{\beta^{-1} < |z| < 1 \}$.
We will overcome this difficulty 
for the problem of Lehmer.
Instead, we will use
approximate values
of the zeroes, by
using \`a la Poincar\'e 
divergent series.
It will be sufficient
to observe the separation 
of the collection of zeroes
into two categories: - those which are
very close to the unit circle, called
{\it nonlenticular zeroes}, 
- those
which lie off the unit circle, 
called {\it lenticular zeroes}.
The lenticular zeroes,
spreading inside the cusp region of
$\mathcal{G} \cup \mathbb{T}
\cup \partial \mathcal{G}_{S}$, 
stemming from $\beta^{-1}$, 
in the neighbourhood 
of $z=1$ towards $e^{\pm i \frac{\pi}{3}}$,
are identified as Galois conjugates
of $\beta^{-1}$ in Section $\S$\ref{S5.4}.
Lenticuli of zeroes
are exemplified in
\cite{dutykhvergergaugry}.
Then we will obtain the
asymptotic expansion
of the minorant ${\rm M}_{r}(\beta)$
of the Mahler measure 
${\rm M}(\beta)$,
for $\beta > 1$ being a reciprocal 
algebraic integer,
from the lenticular roots
of $f_{\beta}(z)$.

\subsection{Carlson-Polya dichotomy of reciprocal algebraic integers $\beta > 1$ close to one}
\label{S3.3}

The set $\pb$ of Perron numbers is dense
in $(1,+\infty)$. It
contains the subset $\pb_P$ of
Parry numbers by a result of Lind 
\cite{lind} (Blanchard \cite{blanchard},
Boyle \cite{boyle}, 
Denker, Grillenberger and Sigmund 
\cite{denkergrillenbergersigmund}, 
Frougny in 
\cite{lothaire} chap.7). 
The set $\pb \setminus \pb_P$
is not empty (by
Akiyama's Theorem \ref{akiyamathm}
recalled below, also as a
consequence of Fekete-Szeg\H{o}'s Theorem 
\cite{feketeszego})
;
it would contain 
all Salem numbers of large degrees,
by Thurston \cite{thurston2} p. 11. 
Parry (\cite{parry}, Theorem 5) proved that
the subcollection 
of simple Parry numbers is dense
in $[1, \infty)$.
Simple Parry numbers $\beta$, 
as nonreciprocal algebraic integers,
satisfy the minoration
${\rm M}(\beta) \geq \Theta$.
In the opposite direction 
a Conjecture of K. Schmidt \cite{schmidt}
asserts that Salem numbers are all Parry 
numbers. 
For Salem numbers $\beta$
of degree $\geq 6$,
Boyd 
\cite{boyd15} established
a simple probabilistic model, based on 
the frequencies of digits
occurring in the R\'enyi $\beta$-expansions of unity,
to conjecture that, more realistically,
Salem numbers are dispatched into 
the two sets of Parry numbers and nonParry numbers,
each of them with densities $> 0$. 
This model, coherent with Thurston's one 
(\cite{thurston2}, p. 11),
is in contradiction with the conjecture of 
K. Schmidt.
This dichotomy of Salem numbers
was verified
by Hichri \cite{hichri} 
\cite{hichri2} \cite{hichri3} 
for Salem numbers of 
degree 8.
The coding of small neighbourhoods
of Salem numbers
by Stieltjes
continued fractions has been 
investigated
in \cite{guichardvergergaugry},
in view of characterizing
this dichotomy locally.
Salem numbers of degree 4
are Parry numbers \cite{boyd13}.
Boyd's model covers the set of
Salem numbers smaller than 
Lehmer's number, if any.
Few examples
of nonParry algebraic numbers $> 1$ exist;
Solomyak (\cite{solomyak} p. 483)
gives $\frac{1}{2}(1 + \sqrt{13})$.

\begin{theorem}[Akiyama]
\label{akiyamathm}
The dominant root $\gamma_n > 1$
of 
$-1 -z +z^n$, for $n \geq 2$, 
is a Perron number which is a Parry number if and only
if $n=2, 3$. If $n=2, 3$, 
$\gamma_2 = \theta_{2}^{-1}$ and
$\gamma_3 = \theta_{5}^{-1} = \Theta$
are Pisot numbers which are simple Parry numbers. 
\end{theorem}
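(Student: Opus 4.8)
The plan is to split the statement into three parts: (a) for every $n\geq 2$, $\gamma_n$ is a Perron number and $z^n-z-1$ is its minimal polynomial; (b) for $n=2,3$ the number $\gamma_n$ is a \emph{simple} Parry number which is moreover a Pisot number, with $\gamma_2=\theta_2^{-1}$ and $\gamma_3=\theta_5^{-1}=\Theta$; (c) for $n\geq 4$ the number $\gamma_n$ is not a Parry number. Parts (a) and (b) are short computations; part (c) carries essentially all the difficulty.

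For (a): by Selmer's irreducibility theorem $z^n-z-1$ is irreducible over $\qb$ for every $n\geq 2$, so $\gamma_n$ has degree $n$ and $z^n-z-1$ is its minimal polynomial. The function $x\mapsto x^n-x-1$ equals $-1$ at $x=1$, equals $2^n-3>0$ at $x=2$, and is strictly increasing on $[1,+\infty)$, hence is negative on $[0,\gamma_n)$ and positive on $(\gamma_n,+\infty)$; in particular $1<\gamma_n<2$, and any $r\geq 0$ with $r^n\leq r+1$ satisfies $r\leq\gamma_n$. If $\zeta$ is a root of $z^n-z-1$ with $|\zeta|=r$, then $r^n=|\zeta+1|\leq|\zeta|+1=r+1$, so $r\leq\gamma_n$, and $r=\gamma_n$ forces $|\zeta+1|=|\zeta|+1$, i.e. $\zeta\in[0,+\infty)$, i.e. $\zeta=\gamma_n$. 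Thus every conjugate of $\gamma_n$ has modulus $<\gamma_n$, so $\gamma_n$ is a Perron number.

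For (b): I compute the greedy orbit of $1$ directly from $\gamma_n^{n}=\gamma_n+1$. If $n=2$: $t_1=\lfloor\gamma_2\rfloor=1$, $T_{\gamma_2}(1)=\gamma_2-1$, $\gamma_2\,T_{\gamma_2}(1)=\gamma_2^2-\gamma_2=1$, so $t_2=1$ and $T_{\gamma_2}^{2}(1)=0$; hence $d_{\gamma_2}(1)=0.11$ is finite, $\gamma_2$ is a simple Parry number, and $1=\gamma_2^{-1}+\gamma_2^{-2}$ shows that $\gamma_2$ is the zero $>1$ of $G_2^{*}(z)=1+z-z^2$, i.e. $\gamma_2=\theta_2^{-1}=(1+\sqrt5)/2$. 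If $n=3$: one finds $t_1=1$, then $t_2=t_3=t_4=0$ (because $\gamma_3^{k+1}-\gamma_3^{k}<1$ for $k=1,2,3$), then $\gamma_3\,T_{\gamma_3}^{4}(1)=\gamma_3^{3}-\gamma_3=1$, so $t_5=1$ and $T_{\gamma_3}^{5}(1)=0$; hence $d_{\gamma_3}(1)=0.10001$ is finite, $\gamma_3$ is a simple Parry number, and $1=\gamma_3^{-1}+\gamma_3^{-5}$ shows that $\gamma_3$ is the zero $>1$ of $G_5^{*}(z)=1+z^4-z^5$, i.e. $\gamma_3=\theta_5^{-1}=\Theta$, the smallest Pisot number. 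Finally $\gamma_2,\gamma_3$ are Pisot numbers: by (a) it is enough to note that their remaining conjugates have modulus $<1$ ($|(1-\sqrt5)/2|<1$ for $n=2$; the complex-conjugate pair of $z^3-z-1$ has modulus $\approx 0.87$ for $n=3$).

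For (c) (the technical heart): assume for contradiction that $\gamma:=\gamma_n$ with $n\geq 4$ is a Parry number, so that $d_\gamma(1)$ is finite or ultimately periodic. Starting from $\gamma^{n}=\gamma+1$ one unwinds the greedy orbit $(T_\gamma^{k}(1))_{k\geq 0}\subset\zb[\gamma]\cap[0,1)$: one gets $t_1=1$ followed (since $\gamma$ is close to $1$) by a long block of zeros, and by Theorem \ref{zeronzeron} that $d_\gamma(1)=0.1\,0^{L_0}\,1\,0^{L_1}\,1\,0^{L_2}\,1\ldots$, the $1$'s sitting at positions $p_0<p_1<p_2<\ldots$ separated by blocks of $L_k$ zeros; writing $v_k:=T_\gamma^{p_k}(1)$ one has $\gamma^{L_k+1}v_k\in[1,\gamma)$, $L_k=\lfloor-\log_\gamma v_k\rfloor$ and $v_{k+1}=\gamma^{L_k+1}v_k-1\in[0,\gamma-1)$. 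The task is to show that $d_\gamma(1)$ is neither finite nor ultimately periodic, i.e. that the sequence $(L_k)_k$ --- equivalently the orbit of $1$ under $T_\gamma$ --- is infinite and aperiodic. This is the substance of Akiyama's theorem; the cleanest route is to analyse the induced one-dimensional dynamics $v_k\mapsto v_{k+1}$ on the $\zb[\gamma]$-valued return values and to rule out every candidate period by exploiting the very special shape of the relation $\gamma^{n}=\gamma+1$. I expect this to be the sole hard step, with no ``soft'' shortcut available: for $n\geq 4$ the trinomial $z^n-z-1$ is non-reciprocal, so $\gamma_n$ is not a Salem number, and it has a non-dominant root of modulus $>1$ (e.g. $z^4-z-1$ has a complex pair of modulus $\approx 1.06$), so $\gamma_n$ is not a Pisot number --- hence the classical sufficient condition ``Pisot $\Rightarrow$ Parry'' (Schmidt, Bertrand-Mathis) does not apply. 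Assembling (a), (b), (c) then yields the equivalence in the statement, together with the refinements $\gamma_2=\theta_2^{-1}$ and $\gamma_3=\theta_5^{-1}=\Theta$, both simple Parry and Pisot.
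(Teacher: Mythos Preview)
Your parts (a) and (b) are correct and essentially complete; the direct computations of $d_{\gamma_2}(1)=0.11$ and $d_{\gamma_3}(1)=0.10001$ are exactly what is needed, and Selmer's irreducibility result for $z^n-z-1$ together with your modulus argument cleanly gives that $\gamma_n$ is Perron.

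Part (c), however, is not a proof but a description of where the difficulty lies. You set up the return values $v_k=T_\gamma^{p_k}(1)$ and the recursion $v_{k+1}=\gamma^{L_k+1}v_k-1$, and then assert that one should ``rule out every candidate period by exploiting the very special shape of the relation $\gamma^{n}=\gamma+1$''. That is precisely the content of the theorem, and you have not supplied any mechanism to do it. The map $v\mapsto \gamma^{L(v)+1}v-1$ on $\zb[\gamma]\cap[0,\gamma-1)$ is a genuine piecewise expanding map, and nothing in your outline prevents it from having a periodic orbit through $v_0=\gamma-1$; ruling this out requires a concrete arithmetic input that your sketch does not provide.

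The paper's proof is simply a citation to Akiyama, who uses the \emph{Lagrange inversion formula}. This is the missing ingredient: Lagrange inversion applied to the functional relation $\gamma^{n}=\gamma+1$ yields an explicit closed-form expression for the iterates $T_{\gamma_n}^{k}(1)$ (equivalently for the positions $p_k$ of the $1$'s and the gap lengths $L_k$) in terms of binomial sums. With that explicit formula in hand, one can show directly that for $n\geq 4$ the sequence of iterates is infinite and aperiodic. Your induced-dynamics picture is a reasonable heuristic framing, but the actual argument goes through an explicit combinatorial identity rather than a dynamical exclusion of periods; as written, your (c) is a gap.
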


\begin{proof}
Theorem 1.1 and Lemma 2.2
in \cite{akiyama} 
using Lagrange inversion formula.
\end{proof}

This dichotomy, due to the 
R\'enyi-Parry dynamics, would separate the
set of real reciprocal algebraic 
integers $ \beta > 1$
into two disjoint nonempty subsets.
Since it corresponds exactly to the
dichotomy of Parry Upper functions
$f_{\beta}(z)$, we speak of 
{\it Carlson-Polya dichotomy of
real reciprocal algebraic 
integers $ \beta > 1$}.
The small Salem numbers found 
by Lehmer in \cite{lehmer},
reported in the Survey 
\cite{vergergaugrySurvey},
either given 
by their minimal polynomial or 
equivalently by their $\beta$-expansion,
are Parry numbers. 
The two smallest ones Lehmer \cite{lehmer}
has found:

\vspace{0.2cm}
\noindent
\begin{tabular}{c|c|c|l}
\label{smallSalemexpansions}
$\deg(\beta)$ & 
$\beta =
{\rm M}(\beta)$
&
minimal pol. of $\beta$
& 
$d_{\beta}(1)$\\
\hline

8 & 
$1.2806\ldots$
&
$X^8 - X^5 - X^4 -X^3 +1$
&
$0 . 1 (0^5 1 0^5 1 0^{7})^{\omega}$\\
10 & 
$1.17628\ldots$
&
$X^{10} + X^9 - X^7 - X^6 - X^5 $
&
$0 . 1 (0^{10} 1 0^{18} 1 0^{12} 1 0^{18} 1 0^{12})^{\omega}$\\
 & {\tiny ``Lehmer's number"}&
 $- X^4 -X^3 + X + 1$
 & \\

\end{tabular}

\vspace{0.3cm}

\noindent
of respective dynamical degrees $\dyg(\beta)$
7 and 12,
with Parry polynomials of respective 
degrees 20 and 75, given 
by \eqref{parrypolynomeperiodicity},  are
such that their respective
Parry Upper functions take the form given
in Proposition \ref{fbetainfinie}, namely
$f_{\beta}(z) =$
\begin{equation}
\label{salempetitN2}
-\frac{z^{20} -z^{19} -z^{13} -z^{7} -z +1}{1 - z^{19}} = -1 + z +z^{7}+ z^{13} + \ldots = G_{7}(z) +
z^{13}+\ldots ,
\end{equation}
resp. 
$$f_{\beta}(z) =
-\frac{z^{75} -z^{74} -z^{63} -z^{44} -z^{31} -z^{12} -z +1}{1 - z^{74}}$$
\begin{equation}
\label{salempetitN1Lehmernumber}
= -1 + z +z^{12} + z^{31}+ \ldots 
= G_{12}(z) + z^{31}+\ldots .
\end{equation}

The relations between the digits $(t_i)$
in the R\'enyi $\beta$-expansion of 
unity of an algebraic integer $\beta > 1$
and the coefficient vector
of its minimal polynomial are 
still obscure in general, except 
in a few cases: e.g.
for Salem numbers of degree 4 and 6
(Boyd \cite{boyd11} \cite{boyd12}
\cite{boyd14}),
for Salem numbers of degree 8 
(Hichri \cite{hichri} \cite{hichri2}
\cite{hichri3}), 
for Pisot numbers
(Boyd \cite{boyd15}, Frougny and Solomyak
\cite{frougnysolomyak},
Bassino \cite{bassino} in the cubic case,
Hare \cite{hare} \cite{haretweedle},
Panju \cite{panju} for regular Pisot numbers).
A more abstract ergodic
viewpoint is developped
in Schmidt \cite{schmidt2}, with 
potential applications
to limit Mahler measures. 

\vspace{0.3cm}

By \eqref{fbetazetabeta},
the topological properties
of the set $\{T_{\beta}^{n}(1)\}$
control the Carlson-Polya dichotomy
of reciprocal algebraic integers 
$> 1$.
On this basis Blanchard \cite{blanchard} 
proposed a classification
of real numbers $\beta > 1$ into five classes;
Verger-Gaugry in \cite{vergergaugry} refined it 
in terms of asymptotic gappiness in the direction of
more enlighting the algebraicity of $\beta$: 

\noindent
\begin{tabular}{ll}
Class C1: & $d_{\beta}(1)$ is finite,\\

Class C2: & $d_{\beta}(1)$ is ultimately periodic but not finite,\\
Class C3: & $d_{\beta}(1)$ contains bounded strings of zeroes, but is not ultimately\\ 
& periodic (0 is not an accumulation point of $\{T_{\beta}^{n}(1)\})$,\\
Class C4: & $\{T_{\beta}^{n}(1)\}$ is not dense in $[0, 1]$, but
admits 0 as an accumulation point,\\
Class C5: & $\{T_{\beta}^{n}(1)\}$ is dense in $[0, 1]$.

\end{tabular}

Apart from C1, resp. C2, 
which is exactly 
the set of simple, resp. nonsimple, Parry numbers,
how the remaining algebraic numbers $> 1$
are dispatched in 
the classes
C3, C4 and C5 is obscure.
The specification property, meaning
that 0 is not an accumulation point for $\{T_{\beta}^{n}(1)\}$, 
was weakened
by Pfister and Sullivan \cite{pfistersullivan}
and Thompson \cite{thompson}.
For unique $q$-expansions the specification and
synchronization properties were studied
by Alcaraz Barrera 
\cite{alcarazbarrera}
\cite{alcarazbarrera2}
\cite{alcarazbarrerabakerkong}.
Schmeling \cite{schmeling} proved that the class C3 has full Hausdorff dimension
and that the class C5,
probably  mostly occupied 
by transcendental numbers, is of full 
Lebesgue measure 1.
Lacunarity and Diophantine approximation
were investigated by 
Bugeaud and Liao \cite{bugeaudliao},
Hu, Tong and Yu \cite{hutongyu}, 
Li, Persson, Wang and Wu \cite{liperssonwangwu}.
For any $x_0 \in [0,1]$ the asymptotic distance
$\liminf_{n \to \infty} |T_{\beta}^{n}(1) - x_0|$, 
for almost all
$\beta > 1$ (for the Lebesgue measure),
was studied by Persson and Schmeling
\cite{perssonschmeling} \cite{schmeling}, 
Ban and Li \cite{banli},
Cao \cite{cao}, Fang, Wu and Li \cite{fangwuli},
Li and Chen \cite{lichen},
L\"u and Wu \cite{luwu}, Tan and Wang \cite{tanwang}.
Kwon \cite{kwon} studies the 
subset of Parry numbers whose conjugates
lie close to the unit circle, using 
technics of combinatorics of words.
The seperation between algebraic numbers
and transcendental numbers 
was studied by Dubickas \cite{dubickas12}.
Bugeaud \cite{bugeaud}
investigates  Diophantine
approximation properties and
$\beta$-representations in algebraic
bases.
Adamczewski and Bugeaud 
(\cite{adamczewskibugeaud}, Theorem 4)
show that the class C4 contains self-lacunary numbers,
all transcendental, from Schmidt's
Subspace Theorem
and results of Corvaja and Zannier.

\subsection{Cyclotomic jumps in families of Parry Upper functions, right-continuity}
\label{S3.4}

Allowing the real 
base $\beta$ to vary continuously
in the neighbourhood $[1, \theta_{2}^{-1})$ of 1, 
except 1,
asks the question whether it has a sense to
consider the continuity of the bivariate
Parry Upper function
$(\beta, z) \to f_{\beta}(z)$, and, if it is the case, 
on which subsets, in $z$, of the complex plane.

Theorem \ref{convergencecompactsetsUNITDISK} 
and its Corollary
show that the open unit disk is a
domain where the continuity of the roots of
$f_{\beta}(z)$ in $|z| < 1$
holds though the
functions f$_{\beta}(z)$ are only right continuous
in $\beta$,
with infinitely many cyclotomic jumps,
while, in the complement $|z| \geq 1$,
either the Parry Upper functions are not defined,
or may exhibit drastic changes 
on the unit circle. In the present 
attack of the Conjecture of Lehmer
only the open unit disk is of interest.

\begin{lemma}
\label{continuity}
Let $1 < \beta < \theta_{2}^{-1}$
and
$0 < x < 1$. Then
(i) the bivariate $\beta$-transformation map 
$(\beta,x) \to
T_{\beta}(x) =\{\beta x\} = 
\beta x -\lfloor \beta x \rfloor$ 
is continuous, 
in $\beta$ and $x$, when 
$\beta x$ is not a positive integer.
If $\beta x$ is a positive integer,
$x = 1/\beta$ and
\begin{equation}
\label{continuitegauche}
\lim_{y \to \frac{1}{\beta}^{-}, \,\gamma \to \beta^{-}}
T_{\gamma}(y) = 1,
\qquad
\lim_{y \to \frac{1}{\beta}^{+}, \, \gamma \to \beta^{+}}
T_{\gamma}(y) = 
T_{\beta}(\frac{1}{\beta}) = 0;
\end{equation}
(ii) for any $(\beta, x)$,
there exists $\epsilon = \epsilon_{\beta, x}$
such that $T_{\gamma}(y)$ is increasing
both in $\gamma
\in [\beta, \beta + \epsilon)$ 
and in $y \in [x, x+\epsilon)$.
\end{lemma}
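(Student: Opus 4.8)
The plan is to reduce both assertions to the elementary behaviour of the floor function, exploiting that the base $\beta$ is small. First I would record the basic size estimate: since $1 < \beta < \theta_{2}^{-1} = \frac{1+\sqrt{5}}{2} < 2$ and $0 < x < 1$, the product satisfies $0 < \beta x < \beta < \theta_{2}^{-1} < 2$, so $\beta x$ lies in $(0,2)$ and the only positive integer it can equal is $1$; moreover $\beta x = 1$ exactly when $x = 1/\beta$. This justifies the dichotomy in the statement, and it also shows that on a sufficiently small neighbourhood of any fixed $(\beta,x)$ the quantity $\gamma y$ stays inside $(0,2)$, so at most one jump of $\lfloor \cdot \rfloor$ (the one at $1$) can ever be relevant.

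For (i) I would write $T_{\gamma}(y) = \gamma y - \lfloor \gamma y \rfloor$ and combine the continuity of $(\gamma,y) \mapsto \gamma y$ with the fact that $t \mapsto \lfloor t \rfloor$ is continuous at every $t \notin \zb$ and right-continuous everywhere. Hence $T$ is continuous at $(\beta,x)$ whenever $\beta x \notin \zb$, i.e., by the first paragraph, whenever $x \neq 1/\beta$. At a point with $\beta x = 1$, i.e. $x = 1/\beta$, I would evaluate the two one-sided limits directly: approaching with $\gamma y < 1$ — in particular along $\gamma \to \beta^{-}$, $y \to (1/\beta)^{-}$ — one has $\lfloor \gamma y \rfloor = 0$ and $T_{\gamma}(y) = \gamma y \to 1$; approaching with $\gamma y \geq 1$ — in particular along $\gamma \to \beta^{+}$, $y \to (1/\beta)^{+}$ — one has $\lfloor \gamma y \rfloor = 1$ and $T_{\gamma}(y) = \gamma y - 1 \to 0 = T_{\beta}(1/\beta)$. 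This is exactly \eqref{continuitegauche}.

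For (ii) I would fix $(\beta,x)$ and pick $\epsilon > 0$ small enough that $(\beta+\epsilon)(x+\epsilon) < 2$ and, in case $\beta x \notin \zb$, also $(\beta+\epsilon)(x+\epsilon) < \lceil \beta x \rceil$. Then for every $(\gamma,y)$ in the half-open box $[\beta,\beta+\epsilon) \times [x,x+\epsilon)$ one has $\gamma y \in [\beta x, (\beta+\epsilon)(x+\epsilon))$, an interval on which $\lfloor \cdot \rfloor$ is constant: it equals $\lfloor \beta x \rfloor$ when $\beta x \notin \zb$, and equals $1$ when $\beta x = 1$ (since then $\gamma y \in [1,2)$). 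Consequently $T_{\gamma}(y) = \gamma y - c$ on this box for a fixed integer $c$, and since $\gamma, y > 0$ the product $\gamma y$ is strictly increasing in $\gamma$ for fixed $y$ and in $y$ for fixed $\gamma$; hence so is $T_{\gamma}(y)$, which is the assertion.

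I do not expect a genuine obstacle here, since the lemma is elementary. The only point needing care is the boundary case $\beta x = 1$ (equivalently $x = 1/\beta$), where $T$ is merely right-continuous and one must split according to the sign of $\gamma y - 1$; and one must keep using the bound $\beta < \theta_{2}^{-1} < 2$ so that $\gamma y$ never reaches $2$ and no second jump of the floor function enters the picture. These one-sided features are precisely what underlies the right-continuity, with its cyclotomic jumps, of the family $f_{\beta}(z)$ discussed around Theorem \ref{convergencecompactsetsUNITDISK}.
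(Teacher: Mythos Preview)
Your proof is correct and follows essentially the same elementary route as the paper: the paper cites Lemma~3.1 in \cite{flattolagariaspoonen}, notes (as you do) that $\beta < \theta_{2}^{-1} < 2$ forces $\beta x \in (0,2)$ so the only integer value is $1$ (whence $x = 1/\beta$), invokes the right-continuity of the fractional part, and declares (ii) ``obvious''. Your version simply spells out these steps in full, including the choice of $\epsilon$ making $\lfloor\,\cdot\,\rfloor$ constant on the half-open box, which is exactly the content of the paper's ``obvious''.
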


\begin{proof}
Lemma 3.1 in \cite{flattolagariaspoonen}.
(i) If $\beta x$ is an integer, this integer 
is 1 necessarily. The value $x=1/\beta$ is
a negative power of $\beta$. 
The R\'enyi $\beta$-expansion of $1/\beta$
is deduced from $d_{\beta}(1)$ by a shift,
given in \eqref{renyidef} and 
\eqref{renyidef_unsurbeta};
the sequence
$(T_{\beta}^{n}(\frac{1}{\beta}))_{n \geq 1}$
is directly obtained from
$(T_{\beta}^{n}(1))_{n \geq 1}$.
The fractional part
$\gamma \to \{\gamma\}= T_{\gamma}(1)$ 
is right continuous, 
hence the result; (ii) obvious.
\end{proof}

\begin{lemma}
\label{continuitysimpleParrynumber}
Let $\beta \in (1, \theta_{2}^{-1})$.
(i) If $\beta$ 
is a simple Parry number,
then, for all $n \geq 1$, the map
$\gamma \to T_{\gamma}^{n}(1)$
is right continuous
at $\beta$:
\begin{equation}
\label{continuitedroite}
\lim_{\gamma \to \beta^{+}} T_{\gamma}^{n}(1)
= T_{\beta}^{n}(1),
\end{equation}

(ii) if $\beta$ is a simple Parry number,
such that $T_{\beta}^{N}(1) = 0$
with $T_{\beta}^{k}(1) \neq 0$, $1 \leq k < N$, 
then, for all $n \geq 1$,
\begin{equation}
\label{discontinuitegauche}
\lim_{\gamma \to \beta^{-}} T_{\gamma}^{n}(1)
= 
\left\{
\begin{array}{lc}
T_{\beta}^{n}(1),& \qquad n < N \quad(\mbox{left continuity})\\
T_{\beta}^{n_N}(1),& \qquad
n \geq N,
\end{array}
\right.
\end{equation}
where $n_N \in \{0, 1, \ldots N-1\}$
is the residue of $n$ modulo $N$,

(iii) if $\beta$ 
is a nonsimple Parry number,
then
$\gamma \to T_{\gamma}^{n}(1)$
is continuous 
at $\beta$:
\begin{equation}
\label{continuitegauchedroite}
\lim_{\gamma \to \beta^{-}} T_{\gamma}^{n}(1)
= T_{\beta}^{n}(1) = 
\lim_{\gamma \to \beta^{+}} T_{\gamma}^{n}(1),
\qquad \mbox{for all }~ n \geq 1.
\end{equation}
\end{lemma}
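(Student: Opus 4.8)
The plan is to prove all three parts by a single induction on $n$, tracking the one-parameter orbit $\gamma\mapsto T_\gamma^{n}(1)$ in a one-sided neighbourhood of $\beta$, with the three ingredients of Lemma~\ref{continuity} as the engine: joint continuity of $(\gamma,x)\mapsto T_\gamma(x)$ away from the single partition point $x=1/\beta$, the two prescribed one-sided limits at that point, and local monotonicity in each variable (together with its evident left-sided analogue). Two elementary observations organise the induction. First, for $\beta\in(1,\theta_2^{-1})$ one has $\beta T_\beta^{k}(1)\in\zb_{>0}$, equivalently $T_\beta^{k}(1)=1/\beta$, if and only if $T_\beta^{k+1}(1)=0$; hence this never occurs when $\beta$ is a nonsimple Parry number, whereas if $\beta$ is a simple Parry number, with $N\ge 3$ the least index with $T_\beta^{N}(1)=0$ (the value $N=2$ being ruled out since $(1+\sqrt{5})/2\notin(1,\theta_2^{-1})$), it occurs exactly for $k\equiv N-1\pmod N$. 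Second, by induction on $k$ and Lemma~\ref{continuity}(ii), $\gamma\mapsto T_\gamma^{k}(1)$ is non-decreasing on $[\beta,\beta+\varepsilon_k)$ and on the \emph{open} interval $(\beta-\varepsilon_k,\beta)$: at an ordinary step the previous iterate stays on a single affine branch of $T_\gamma$ for $\gamma$ near $\beta$, and at a step where $T_\beta^{\,\cdot}(1)=1/\beta$ this monotonicity together with the one-sided limit of the previous iterate pins down which branch is used, namely $T_\gamma^{k}(1)\ge 1/\beta>1/\gamma$ (branch $x\mapsto\gamma x-1$) for $\gamma>\beta$ and $T_\gamma^{k}(1)\le 1/\beta<1/\gamma$ (branch $x\mapsto\gamma x$) for $\gamma<\beta$.

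With these in hand, part (iii) is immediate: if $\beta$ is a nonsimple Parry number the orbit $(T_\beta^{k}(1))_{k\ge 0}$ never meets $1/\beta$, so $(\gamma,x)\mapsto T_\gamma(x)$ is jointly continuous at every $(\beta,T_\beta^{k}(1))$, and feeding $T_\gamma^{0}(1)=1$ into $T_\gamma^{n}(1)=T_\gamma(T_\gamma^{n-1}(1))$ propagates two-sided continuity to all $n$. (The values $x=1$ and $x$ near $0$, which fall outside the range $0<x<1$ of Lemma~\ref{continuity}, are harmless since there $T_\gamma(x)=\gamma-1$, resp.\ $\gamma x$.)

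For part (i) the same continuity propagation settles $1\le n\le N-1$ and gives $T_\gamma^{N-1}(1)\to 1/\beta$. At $n=N$, the right-hand case of the second observation puts $T_\gamma^{N-1}(1)$ on the branch $x\mapsto\gamma x-1$ for $\gamma>\beta$, so $T_\gamma^{N}(1)=\gamma T_\gamma^{N-1}(1)-1\to\beta\cdot\tfrac1\beta-1=0=T_\beta^{N}(1)$; for $n>N$ one has $T_\gamma^{N}(1)\to 0^{+}$, hence for $\gamma$ close to $\beta$ the iterate stays below $1/\gamma$ and $T_\gamma^{n}(1)=\gamma^{\,n-N}\,T_\gamma^{N}(1)\to 0=T_\beta^{n}(1)$. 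This is right continuity for every $n\ge 1$.

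For part (ii) I claim, more precisely than \eqref{discontinuitegauche}, that $\lim_{\gamma\to\beta^{-}}T_\gamma^{n}(1)=T_\beta^{\,n\bmod N}(1)$ for all $n\ge 0$, and prove it by the same induction. At a step $n-1$ with $(n-1)\bmod N\ne N-1$ the orbit point $T_\gamma^{n-1}(1)$ converges to $T_\beta^{\,(n-1)\bmod N}(1)\neq 1/\beta$, so joint continuity gives $T_\gamma^{n}(1)\to T_\beta^{\,((n-1)\bmod N)+1}(1)=T_\beta^{\,n\bmod N}(1)$. At a wrap-around step, $(n-1)\bmod N=N-1$, i.e.\ $n\equiv 0\pmod N$: the left-hand case of the second observation puts $T_\gamma^{n-1}(1)\le 1/\beta<1/\gamma$ for $\gamma<\beta$ on the branch $x\mapsto\gamma x$, whence the $n$-th digit of $d_{\gamma}(1)$ is $0$ and $T_\gamma^{n}(1)=\gamma T_\gamma^{n-1}(1)\to\beta\cdot\tfrac1\beta=1=T_\beta^{0}(1)=T_\beta^{\,n\bmod N}(1)$. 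Specialising to $n<N$ recovers left continuity there, and for $n\ge N$ the limit $T_\beta^{\,n\bmod N}(1)$ differs from $T_\beta^{n}(1)=0$, which gives \eqref{discontinuitegauche}; the limiting sequence $(T_\beta^{\,n\bmod N}(1))_{n}$ is the $N$-periodic orbit attached to the quasi-greedy expansion $(c_i)_{i\ge 1}=(t_1\cdots t_{N-1}0)^{\omega}$ of $\beta$. The one genuinely delicate point is the second observation: that $\gamma\mapsto T_\gamma^{k}(1)$ is monotone only on the \emph{open} left neighbourhood of $\beta$, with a downward jump at $\gamma=\beta$ precisely at the wrap-around steps — this jump is exactly what produces the left-discontinuity — and that at every step the orbit remains on a single affine branch of $T_\gamma$ for $\gamma$ on a fixed side of $\beta$; everything else is a routine propagation through Lemma~\ref{continuity}.
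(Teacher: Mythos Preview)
Your argument is correct. The paper does not supply its own proof of this lemma; it simply cites Lemma~3.2 of Flatto, Lagarias and Poonen \cite{flattolagariaspoonen}, just as the preceding Lemma~\ref{continuity} is Lemma~3.1 of the same source. What you have written is a self-contained reconstruction of that argument: propagate the joint continuity of $(\gamma,x)\mapsto T_\gamma(x)$ along the orbit, and at the unique discontinuity point $x=1/\beta$ use the one-sided monotonicity of $\gamma\mapsto T_\gamma^{k}(1)$ to pin down the branch, so that the right-hand one-sided limit in Lemma~\ref{continuity}(i) is selected for $\gamma>\beta$ and the left-hand one for $\gamma<\beta$. Your observation that the left limit for $n\ge N$ tracks the $N$-periodic orbit attached to the quasi-greedy expansion $(t_1\cdots t_{N-1}0)^{\omega}$ is exactly the mechanism behind \eqref{discontinuitegauche}, and is the standard picture.

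Two small remarks on the write-up, neither of which affects correctness. First, you invoke a left-sided analogue of Lemma~\ref{continuity}(ii), which the paper states only on the right; this is indeed harmless since on a fixed affine branch $T_\gamma(y)=\gamma y-c$ is jointly increasing in $(\gamma,y)$, but it is worth saying explicitly. Second, in your right-continuity step for $n>N$ you write $T_\gamma^{n}(1)=\gamma^{\,n-N}T_\gamma^{N}(1)$; this identity is valid only once one has checked that all the intermediate iterates $\gamma^{j}T_\gamma^{N}(1)$, $0\le j\le n-N-1$, stay below $1/\gamma$, which follows from $T_\gamma^{N}(1)\to 0^{+}$ and $n$ being fixed, as you indicate.
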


\begin{proof} Lemma 3.2 in 
\cite{flattolagariaspoonen}.
\end{proof}

Denote by
$$\mathcal{F}:=
\{{f_{\beta}}_{|_{|z| < 1}}(z) \mid
1 < \beta < \theta_{2}^{-1}
\}$$
the set of the restrictions of the 
Parry Upper functions $f_{\beta}(z)$, 
$1 < \beta < \theta_{2}^{-1}$,
to the open unit disk. The set $\mathcal{F}$ is 
equipped with
the topology of the uniform convergence on 
compact subsets of $|z| < 1$.

\begin{theorem}
\label{convergencecompactsetsUNITDISK}
In $\mathcal{F}$
the following right and left
limits hold:
(i) if  $\beta$ be a nonsimple Parry number,
then continuity occurs as:
\begin{equation}
\label{ParryUpperfunctioncontinuity}
\lim_{\gamma \to \beta^-}
f_{\gamma}(z) = 
f_{\beta}(z)
=
\lim_{\gamma \to \beta^+}
f_{\gamma}(z),
\end{equation}
(ii) if $\beta$ is a simple Parry number,
and $N$ the minimal value for which
$T_{\beta}^{N}(1) = 0$, then
\begin{equation}
\label{ParryUpperfunctionrightcontinuity}
\lim_{\gamma \to \beta^+}
f_{\gamma}(z) = 
f_{\beta}(z) ,
\end{equation}
\begin{equation}
\label{ParryUpperfunctionleftCycloJumpcontinuity}
\lim_{\gamma \to \beta^-}
f_{\gamma}(z) = 
\frac{f_{\beta}(z)}{(1 - z^N)}.
\end{equation}
\end{theorem}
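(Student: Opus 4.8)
The plan is to deduce the statement from the identity $f_{\gamma}(z) = -(1 - \gamma z)\bigl(\sum_{n \geq 0} T_{\gamma}^{n}(1)\, z^{n}\bigr)$ on $|z| < 1$ (established in the proof of Theorem \ref{parryupperdynamicalzeta}, equation \eqref{fbetazetabeta}, valid for every real base $\gamma > 1$), together with the uniform bound $0 \leq T_{\gamma}^{n}(1) \leq 1$ and the one-sided limits of $\gamma \mapsto T_{\gamma}^{n}(1)$ furnished by Lemma \ref{continuitysimpleParrynumber}. For a given side of approach to $\beta$, set $s_{n} := \lim_{\gamma \to \beta^{\pm}} T_{\gamma}^{n}(1)$ (these exist by Lemma \ref{continuitysimpleParrynumber}) and $g(z) := -(1 - \beta z)\sum_{n\geq 0} s_{n} z^{n}$, a holomorphic function on $|z| < 1$ since $|s_{n}| \leq 1$. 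First I would show that $f_{\gamma} \to g$ in $\mathcal{F}$, i.e. uniformly on each disc $|z| \leq r$ with $r < 1$: writing $f_{\gamma}(z) - g(z) = -(1 - \gamma z)\sum_{n\geq 0}\bigl(T_{\gamma}^{n}(1) - s_{n}\bigr)z^{n} + (\gamma - \beta)z\sum_{n\geq 0} s_{n} z^{n}$ and using $\gamma < \theta_{2}^{-1} < 2$ one gets
\[
\bigl| f_{\gamma}(z) - g(z) \bigr|
\;\leq\;
(1 + 2r)\sum_{n\geq 0} \bigl| T_{\gamma}^{n}(1) - s_{n} \bigr|\, r^{n}
\;+\; |\gamma - \beta|\,\frac{r}{1-r},
\qquad |z| \leq r;
\]
splitting the series at an index $M$ (geometric tail $\leq 2\,r^{M+1}/(1-r)$, finite head controlled by Lemma \ref{continuitysimpleParrynumber}) makes the right-hand side tend to $0$. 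Thus the theorem is exactly Lemma \ref{continuitysimpleParrynumber} read through the identity \eqref{fbetazetabeta}.

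It then remains to identify $g$ in each case. If $\beta$ is a nonsimple Parry number, Lemma \ref{continuitysimpleParrynumber}(iii) gives $s_{n} = T_{\beta}^{n}(1)$ for all $n$, from both sides, so $g(z) = -(1-\beta z)\sum_{n\geq 0}T_{\beta}^{n}(1)z^{n} = f_{\beta}(z)$ by \eqref{fbetazetabeta}, which is \eqref{ParryUpperfunctioncontinuity}. If $\beta$ is a simple Parry number, with $N$ minimal such that $T_{\beta}^{N}(1) = 0$, then Lemma \ref{continuitysimpleParrynumber}(i) gives $s_{n} = T_{\beta}^{n}(1)$ for the right limit, whence $\lim_{\gamma\to\beta^{+}}f_{\gamma}(z) = f_{\beta}(z)$, i.e. \eqref{ParryUpperfunctionrightcontinuity}; and Lemma \ref{continuitysimpleParrynumber}(ii) gives, for the left limit, $s_{n} = T_{\beta}^{n \bmod N}(1)$, so that $(s_{n})_{n\geq 0}$ is the $N$-periodic extension of the block $\bigl(1, T_{\beta}(1), \dots, T_{\beta}^{N-1}(1)\bigr)$. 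Hence
\[
\sum_{n\geq 0} s_{n} z^{n}
\;=\; \frac{1}{1 - z^{N}}\sum_{n=0}^{N-1}T_{\beta}^{n}(1)\, z^{n}
\;=\; \frac{1}{1 - z^{N}}\sum_{n\geq 0}T_{\beta}^{n}(1)\, z^{n},
\]
the last equality because $T_{\beta}^{n}(1) = 0$ for $n \geq N$; multiplying by $-(1-\beta z)$ and using \eqref{fbetazetabeta} once more yields $\lim_{\gamma\to\beta^{-}}f_{\gamma}(z) = f_{\beta}(z)/(1 - z^{N})$, which is \eqref{ParryUpperfunctionleftCycloJumpcontinuity}.

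I expect no deep obstacle here: the analytic content is the elementary boundedness/head-tail estimate above, made possible by $T_{\gamma}^{n}(1) \in [0,1]$ (which also makes $\{f_{\gamma}\}$ a normal family on $|z|<1$), and the rest is bookkeeping. The one point deserving care is the simple-Parry left-limit computation, where one must correctly recognize the limiting coefficient sequence as eventually $N$-periodic and sum the geometric series to produce the cyclotomic denominator $1 - z^{N}$; this factor is precisely the ``cyclotomic jump'' announced before the statement and records the replacement, as $\gamma \uparrow \beta$, of the finite greedy expansion $d_{\beta}(1) = 0.\,t_{1}\cdots t_{N}$ by its eventually periodic quasi-greedy counterpart $(t_{1}\cdots t_{N-1}(t_{N}-1))^{\omega}$.
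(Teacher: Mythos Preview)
Your proof is correct and follows essentially the same route as the paper: both use the identity \eqref{fbetazetabeta}, the pointwise one-sided limits of $T_{\gamma}^{n}(1)$ from Lemma \ref{continuitysimpleParrynumber}, and a dominated-convergence/head--tail argument to pass to the limit termwise. Your organization is slightly cleaner---you set up a single limit function $g$ and identify it case by case, whereas the paper treats case (i) via the digit relation $t'_{n}-t_{n}=(\gamma T_{\gamma}^{n-1}(1)-\beta T_{\beta}^{n-1}(1))-(T_{\gamma}^{n}(1)-T_{\beta}^{n}(1))$ and case (ii)-left by directly computing $f_{\gamma}(z)-f_{\beta}(z)/(1-z^{N})$---but the mathematical content is the same.
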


\begin{proof}
Let $\gamma, \beta \in (1, \theta_{2}^{-1})$
with $|\gamma - \beta| \leq \epsilon$,
$\epsilon > 0$,
$d_{\gamma}(1) = 0 . t'_1 t'_2 \ldots$
and $d_{\beta}(1) = 0 . t_1 t_2 \ldots$. 
Any compact subset
of $|z| < 1$ is included
in a closed disk centered at 0 of radius $r$
for some $0 < r < 1$.
Assume $|z| \leq r$. 
(i) Assume $\beta$ nonsimple. 
Since
$|T_{\gamma}^{m}(1) -
T_{\beta}^{m}(1)| \leq 2$ for $m \geq 1$,
then
$$|f_{\gamma}(z) - f_{\beta}(z)|
=\!
\bigl| \sum_{n \geq 1} 
(t'_n -t_n) z^n
\bigr|
\!=\!
\bigl|\sum_{n \geq 1} 
[(\gamma T_{\gamma}^{n-1}(1)
- \beta T_{\beta}^{n-1}(1))
-
(T_{\gamma}^{n}(1) -
T_{\beta}^{n}(1))]
 z^n \Bigr|
$$
\begin{equation}
\label{lebesgueconvergencedominated}
\leq
\sum_{n \geq 1} 
\Bigl|(\gamma T_{\gamma}^{n-1}(1)
- \beta T_{\beta}^{n-1}(1))
-
(T_{\gamma}^{n}(1) -
T_{\beta}^{n}(1))\Bigr|
 r^n
\leq 2 (\epsilon + \beta + 1) \sum_{n \geq 1} r^n ,
\end{equation}
which is convergent.
By \eqref{continuitegauchedroite} and 
the Lebesgue dominated convergence theorem,
taking the limit termwise in the summation,
$$\lim_{\gamma \to \beta}
|f_{\gamma}(z) - f_{\beta}(z)| = 0,
\qquad
\mbox{uniformly for}~ |z| \leq r.$$
(ii) 
By \eqref{continuitedroite}
and
\eqref{discontinuitegauche}, the iterates
of $1$ under the $\gamma$-transformation
$T_{\gamma}^n(1)$ behave differently at $\beta$
if $\gamma < \beta$ or resp. 
$\gamma > \beta$ when $\gamma$ tends to $\beta$:
if $\gamma > \beta$, we apply
the Lebesgue dominated convergence theorem
in \eqref{lebesgueconvergencedominated}
to obtain the
right continuity at $\beta$, i.e.
\eqref{ParryUpperfunctionrightcontinuity}; 
if
$\gamma \to \beta^-$,
\eqref{ParryUpperfunctionleftCycloJumpcontinuity}
comes from the dominated convergence theorem
applied to
$$
f_{\gamma}(z) - \frac{1}{1 - z^N}f_{\beta}(z)
= 
(\beta z - 1)
\left[
\Bigl( \sum_{n =0}^{\infty} 
T_{\gamma}^{n}(1) 
\frac{\gamma z - 1}{\beta z - 1} 
z^n
\Bigr) 
- 
\Bigl( \sum_{q=0}^{\infty}
\sum_{m = 0}^{N-1} T_{\beta}^{m}(1) z^{m + qN}
\Bigr)
\right].
$$
\end{proof}

Theorem B in Mori \cite{mori}, on 
the continuity properties of 
spectra of Fredholm matrices,
admits the following counterpart
in terms of the Parry Upper functions:

\begin{corollary}
\label{zeroesParryUpperfunctionContinuity}
The root functions of $f_{\beta}(z)$ valued in
$|z| < 1$ are all continuous, 
as functions of $\beta \in (1, \theta_{2}^{-1})
\setminus \bigcup_{n \geq 3} \{\theta_{n}^{-1}\}$.
\end{corollary}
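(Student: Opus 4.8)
The plan is to deduce the statement from Theorem~\ref{convergencecompactsetsUNITDISK} by a Hurwitz--Rouch\'e argument carried out entirely inside the open unit disc. First I would fix a base point $\beta_{0}\in(1,\theta_{2}^{-1})\setminus\bigcup_{n\geq 3}\{\theta_{n}^{-1}\}$; such a $\beta_{0}$ lies in the interior of a unique interval $[\theta_{n}^{-1},\theta_{n-1}^{-1})$ of the partition \eqref{jalonnement}, so $\dyg$ is locally constant equal to $n$ near $\beta_{0}$ and the alphabet of the $\beta$-shift is unchanged. Next I would record the limit of the family $(f_{\gamma})$ at $\beta_{0}$, uniformly on compact subsets of $\{|z|<1\}$: by Theorem~\ref{convergencecompactsetsUNITDISK} if $\beta_{0}$ is a Parry number, and by iterating Lemma~\ref{continuity} together with dominated convergence if $\beta_{0}$ is not a Parry number, one has $f_{\gamma}\to f_{\beta_{0}}$ from both sides whenever $\beta_{0}$ is not a simple Parry number, while if $\beta_{0}$ is a simple Parry number then $f_{\gamma}\to f_{\beta_{0}}$ from the right and $f_{\gamma}\to f_{\beta_{0}}/(1-z^{N})$ from the left, $N$ being the least integer with $T_{\beta_{0}}^{N}(1)=0$. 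The key point is that in every case this one-sided limit, say $g$, is holomorphic and not identically zero on $\{|z|<1\}$ (its value at $0$ is $-1$), and $g$ has exactly the same zeros, with the same multiplicities, inside $\{|z|<1\}$ as $f_{\beta_{0}}$, because the only possible correcting factor $1/(1-z^{N})$ is holomorphic and non-vanishing on $\{|z|<1\}$ by Theorem~\ref{parryupperdynamicalzeta}.

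With $g$ available I would localise. Let $\omega_{0}$ be a zero of $f_{\beta_{0}}$ in $\{|z|<1\}$ of multiplicity $k\geq 1$, and choose $r>0$ so small that $\overline{D(\omega_{0},r)}\subset\{|z|<1\}$ and $f_{\beta_{0}}$ has no further zero on $\overline{D(\omega_{0},r)}$; put $\delta:=\min_{|z-\omega_{0}|=r}|g(z)|>0$. By uniform convergence $f_{\gamma}\to g$ on $\overline{D(\omega_{0},r)}$, for all $\gamma$ in a one-sided (hence, after combining the two sides, two-sided) neighbourhood of $\beta_{0}$ one has $|f_{\gamma}-g|<\delta$ on $|z-\omega_{0}|=r$, so Rouch\'e's theorem produces exactly $k$ zeros of $f_{\gamma}$, counted with multiplicity, in $D(\omega_{0},r)$; letting $r\to 0$ this is the continuity at $\beta_{0}$ of the $k$ root functions through $\omega_{0}$. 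The complementary direction --- that no zero of $f_{\gamma}$ appears away from the zero set of $f_{\beta_{0}}$ --- I would settle by compactness: if $\gamma_{j}\to\beta_{0}$ carried zeros $\omega_{j}$ of $f_{\gamma_{j}}$ lying in a fixed compact $K\subset\{|z|<1\}$ and bounded away from that zero set, a subsequential limit $\omega_{\ast}\in K$ would, by the same uniform convergence, be a zero of $g$ and hence of $f_{\beta_{0}}$, a contradiction. Combining the two halves yields continuity of every root function valued in $\{|z|<1\}$ at $\beta_{0}$, which is the assertion.

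Finally I would explain why the points $\theta_{n}^{-1}$, $n\geq 3$, are removed, and why that is exactly where the argument must stop. At $\beta_{0}=\theta_{n}^{-1}$ the dynamical degree jumps ($\dyg=n$ at $\theta_{n}^{-1}$ but $\dyg=n+1$ on the interval immediately to its left), $f_{\theta_{n}^{-1}}=G_{n}=-1+z+z^{n}$ is a \emph{polynomial}, and the left-hand limit furnished by Theorem~\ref{convergencecompactsetsUNITDISK} is the rational function $G_{n}(z)/(1-z^{n})$, whose poles lie on $|z|=1$; worse still, precisely when $n\equiv 5\pmod{6}$ the trinomial $G_{n}$ vanishes at $e^{\pm i\pi/3}$ --- the same factorisation that produces the degree drop in Theorem~\ref{dygdeginequality}(i) --- so a zero of $f_{\gamma}$ can cross the unit circle as $\gamma$ traverses $\theta_{n}^{-1}$ and the number of zeros ``in $\{|z|<1\}$'' becomes discontinuous there. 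I expect this boundary control to be the one genuine difficulty: for $\beta_{0}$ not of the form $\theta_{n}^{-1}$ one must be sure that $f_{\beta_{0}}$ has no zero on the circle $|z|=1$ in the region under examination --- so that every zero counted inside $\{|z|<1\}$ is genuinely interior and the Rouch\'e radii $r$ stay bounded away from $|z|=1$ --- and this in turn rests on the lacunarity and self-admissibility constraints on the coefficient sequence of $f_{\beta_{0}}$ coming from Theorem~\ref{zeronzeron}. Everything else is routine complex analysis.
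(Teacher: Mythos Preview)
Your approach is essentially the same as the paper's: invoke the uniform convergence of Theorem~\ref{convergencecompactsetsUNITDISK} on compact subsets of $\{|z|<1\}$, observe that the one-sided limit (either $f_{\beta_0}$ or $f_{\beta_0}/(1-z^N)$) has the same zero set in the open disc since $1/(1-z^N)$ is holomorphic and nonvanishing there, and then apply Hurwitz/Rouch\'e on small discs with closure contained in $\{|z|<1\}$. The paper's proof is just these three lines; your added complementary-direction compactness argument and your discussion of the excluded points $\theta_n^{-1}$ go beyond what the paper writes, and your final worry about zeros of $f_{\beta_0}$ on $|z|=1$ is unnecessary for the statement as phrased (the root functions in question are \emph{valued} strictly inside the disc, so each is localised away from the boundary before Rouch\'e is applied).
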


\begin{proof}
Let $(\gamma_{i})_{i \geq 1}$ be a sequence
of real numbers tending to $\beta$.
The (restrictions, to the open unit disk, of
the) functions $f_{\gamma_{i}}(z)$ 
constitute a convergent sequence in
$\mathcal{F}$,
tending
either to $f_{\beta}(z)$ or
$f_{\beta}(z)/(1 - z^N)$ for some integer $N \geq 1$.
By Hurwitz's Theorem (\cite{sakszygmund} (11.1))
any disk in $|z| < 1$, whose closure does not intersect
the unit circle,
which contains a zero $w(\beta)$ of 
$f_{\beta}(z)$ also contains a zero of 
$f_{\gamma_{i}}(z)$ for all $i \geq i_0$,
for some $i_0$.
The multiplicity of $w(\beta)$ is equal 
to the number of zeroes $w(\gamma_{i})$, 
counted with multiplicities, in this disk.
\end{proof}

Another consequence, in $\mathcal{F}$,
is the disappearance of the cyclotomic jumps
of the left-discontinuities at 
the reciprocal algebraic integers
$\beta > 1$ close to $1^+$.

\begin{corollary}
\label{disappearanceJumps}
If $\beta \in (1, \theta_{2}^{-1})$ 
is a reciprocal algebraic integer,
then the
left and right continuity of the Parry Upper function
occurs in $\mathcal{F}$ as:
\begin{equation}
\label{ParryUpperfunctioncontinuitysmallMahler}
\lim_{\gamma \to \beta^-}
f_{\gamma}(z) = 
f_{\beta}(z)
=
\lim_{\gamma \to \beta^+}
f_{\gamma}(z).
\end{equation}
\end{corollary}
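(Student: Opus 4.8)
The plan is to reduce everything to Theorem~\ref{convergencecompactsetsUNITDISK} by showing that the single source of a left-discontinuity there --- namely $\beta$ being a \emph{simple} Parry number --- cannot occur for a reciprocal algebraic integer in $(1,\theta_{2}^{-1})$. Concretely, first I would recall from Theorem~\ref{convergencecompactsetsUNITDISK} and Lemma~\ref{continuitysimpleParrynumber} that, for $\beta\in(1,\theta_{2}^{-1})$, the right limit $\lim_{\gamma\to\beta^{+}}f_{\gamma}(z)=f_{\beta}(z)$ always holds in $\mathcal{F}$, and that the left limit $\lim_{\gamma\to\beta^{-}}f_{\gamma}(z)$ equals $f_{\beta}(z)$ unless $\beta$ is a simple Parry number, in which case it equals $f_{\beta}(z)/(1-z^{N})$ with $N$ the least integer such that $T_{\beta}^{N}(1)=0$. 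So it suffices to prove that a reciprocal algebraic integer $\beta\in(1,\theta_{2}^{-1})$ is never a simple Parry number.

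Suppose, for contradiction, that $\beta\in(1,\theta_{2}^{-1})$ is a reciprocal algebraic integer and a simple Parry number, so $d_{\beta}(1)=0.t_{1}t_{2}\ldots t_{m}$ is finite. Then by Theorem~\ref{carlsonpolya} the Parry Upper function $f_{\beta}(z)=-P_{\beta,P}^{*}(z)$ is a polynomial. But Proposition~\ref{fbetainfinie} asserts precisely that, for $1<\beta<(1+\sqrt{5})/2=\theta_{2}^{-1}$ a reciprocal algebraic integer, the power series $f_{\beta}(z)$ is never a polynomial --- the quick argument being that such a polynomial would vanish at the two real numbers $\beta$ and $1/\beta$, while its coefficient sequence $-1,t_{1},t_{2},\ldots$ exhibits only one sign change, contradicting Descartes' rule of signs. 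This contradiction shows that no reciprocal algebraic integer in $(1,\theta_{2}^{-1})$ is a simple Parry number; equivalently, none belongs to $\{\theta_{n}^{-1}:n\geq 3\}$, the exceptional set of Corollary~\ref{zeroesParryUpperfunctionContinuity}, each $\theta_{n}^{-1}$ being a simple Parry number with finite expansion $0.10^{n-2}1$.

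It remains to record the two-sided continuity at such $\beta$. Since $\beta$ is either a nonsimple Parry number or not a Parry number, the map $\gamma\mapsto T_{\gamma}^{n}(1)$ is continuous at $\beta$ for every $n\geq 1$: for a nonsimple Parry number this is Lemma~\ref{continuitysimpleParrynumber}(iii); for a non-Parry $\beta$ it follows by induction on $n$ from Lemma~\ref{continuity}(i), because a discontinuity of $\gamma\mapsto T_{\gamma}^{n}(1)$ at $\beta$ would force $\beta\,T_{\beta}^{n-1}(1)$ to be a positive integer, hence (as $T_{\beta}^{n-1}(1)\in[0,1)$ and $\beta<2$) $T_{\beta}^{n-1}(1)=1/\beta$ and $T_{\beta}^{n}(1)=0$, making $d_{\beta}(1)$ finite --- impossible. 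Feeding this into the dominated-convergence estimate \eqref{lebesgueconvergencedominated} from the proof of Theorem~\ref{convergencecompactsetsUNITDISK}(i), which holds uniformly on each disk $|z|\leq r$ with $0<r<1$, yields $\lim_{\gamma\to\beta}|f_{\gamma}(z)-f_{\beta}(z)|=0$ uniformly on compact subsets of $|z|<1$, i.e. \eqref{ParryUpperfunctioncontinuitysmallMahler}.

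The main obstacle --- and it is a mild one --- is the non-Parry case, which Theorem~\ref{convergencecompactsetsUNITDISK} does not state explicitly; handling it requires only the elementary observation above that the orbit map $\gamma\mapsto T_{\gamma}^{n}(1)$ can jump solely at a simple Parry number, after which the dominated-convergence argument carries over verbatim. Everything else is a direct invocation of Proposition~\ref{fbetainfinie}, Theorem~\ref{carlsonpolya} and Theorem~\ref{convergencecompactsetsUNITDISK}.
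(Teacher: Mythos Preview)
Your proof is correct and follows the same approach as the paper: one invokes Theorem~\ref{convergencecompactsetsUNITDISK} and rules out the simple Parry case (the only source of a left jump) by the Descartes-rule argument of Proposition~\ref{fbetainfinie}. The paper's proof is a one-liner that simply notes a reciprocal algebraic integer $>1$ cannot be a simple Parry number; you give the same reason, and additionally spell out the non-Parry case (not explicitly stated in Theorem~\ref{convergencecompactsetsUNITDISK}(i), which speaks only of nonsimple Parry numbers), which is a worthwhile completeness remark but not a different route.
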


\begin{proof}
From Theorem \ref{convergencecompactsetsUNITDISK} 
the case
\eqref{ParryUpperfunctionleftCycloJumpcontinuity}
cannot occur since a reciprocal
algebraic integer $> 1$ cannot be a 
simple Parry number.
\end{proof}

\section{Asymptotic expansions of the Mahler measures ~${\rm M}(-1 + X + X^n)$, a Dobrowolski type minoration}
\label{S4}

\subsection{Factorization of the trinomials $-1+X+X^n$, lenticuli of roots}
\label{S4.1}

The notations used throughout this note
come from
the factorization of $G_{n}(X) := -1 +X +X^n$ 
(Selmer \cite{selmer}, Verger-Gaugry 
\cite{vergergaugry6} Section 2). 
Summing in pairs over complex conjugated imaginary roots, the
indexation of the roots and
the factorization of $G_{n}(X)$ 
are taken as follows:
\begin{equation}
\label{factoGG}
G_{n}(X) = (X - \theta_n) \, 
\left(
\prod_{j=1}^{\lfloor \frac{n}{6} \rfloor} (X - z_{j,n}) ( X - \overline{z_{j,n}}) 
\right)
\times q_{n}(X),
\end{equation}
where
$\theta_n$ is the only (real) root 
of $G_{n}(X)$ in the interval $(0, 1)$, where
$$q_{n}(X) ~=~
\left\{
\begin{array}{ll}
\displaystyle
\left(
\prod_{j=1+\lfloor \frac{n}{6} \rfloor}^{\frac{n-2}{2}} (X- z_{j,n}) (X - \overline{z_{j,n}})
\right) \times (X- z_{\frac{n}{2},n})
&
\mbox{if $n$ is even, with}\\
 & \mbox{$z_{\frac{n}{2},n}$ real $< -1$},\\
\displaystyle
\prod_{j=1+\lfloor \frac{n}{6} \rfloor}^{\frac{n-1}{2}} (X- z_{j,n}) (X - \overline{z_{j,n}})
&
\mbox{if $n$ is odd,}
\end{array}
\right.
$$
where the index $j = 1, 2, \ldots$ is such that 
$z_{j,n}$ is a (nonreal) complex zero of $G_{n}(X)$, except if 
$n$ is even and $j=n/2$, such that
the argument $\arg (z_{j,n})$ of $z_{j,n}$
is roughly equal to $2 \pi j/n$
(Proposition \ref{zedeJIargumentsORDRE1})
and that the family of arguments 
$(\arg(z_{j,n}))_{1 \leq j < \lfloor n/2 \rfloor}$ 
forms a strictly increasing sequence with $j$:
$$0 < \arg(z_{1,n}) < \arg(z_{2,n}) < \ldots 
< \arg(z_{\lfloor \frac{n}{2} \rfloor,n}) \leq \pi.$$
For $n \geq 2$ all the roots of $G_{n}(X)$ 
are simple, and
the roots of $G_{n}^{*}(X) = 1 + X^{n-1} - X^n$, 
as inverses of the roots
of $G_{n}(X)$, are classified in the 
reversed order (Figure \ref{perronselmerfig}).

\begin{proposition}
\label{irredGn}
Let $n \geq 2$. 
If $n \not\equiv 5 ~({\rm mod}~ 6)$, 
then $G_{n}(X)$ is irreducible over $\qb$. 
If $n \equiv 5 ~({\rm mod}~ 6)$, then 
the polynomial $G_{n}(X)$ admits 
$X^2 - X +1$ as irreducible factor
in its factorization and $G_{n}(X)/(X^2 - X +1)$ is irreducible.
\end{proposition}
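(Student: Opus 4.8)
The plan is to deduce Proposition~\ref{irredGn} from the classical irreducibility theorems of Selmer \cite{selmer} for the trinomials $X^{n}-X-1$ and $X^{n}+X+1$, by means of the substitution $X\mapsto -X$ and a separate treatment of the parity of $n$. By Gauss's Lemma it suffices to rule out nontrivial factorizations into monic polynomials with integer coefficients.

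First I would record the elementary reductions. If $n$ is even, then
\[
G_{n}(-X)=-1+(-X)+(-X)^{n}=X^{n}-X-1,
\]
so $G_{n}$ is irreducible over $\qb$ if and only if $X^{n}-X-1$ is, and the latter holds for every $n\geq 2$ by Selmer's theorem; note that an even $n$ is never $\equiv 5\pmod 6$, so this is exactly the claimed case. If $n$ is odd, then
\[
G_{n}(-X)=-1+(-X)+(-X)^{n}=-(X^{n}+X+1),
\]
so, up to the automorphism $X\mapsto -X$ of $\qb[X]$ and the unit $-1$, the factorization type of $G_{n}$ over $\qb$ coincides with that of $X^{n}+X+1$.

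Next I would invoke Selmer's result for $X^{n}+X+1$: it is irreducible over $\qb$ when $n\not\equiv 2\pmod 3$, while for $n\equiv 2\pmod 3$ one has $X^{n}+X+1=(X^{2}+X+1)\,q(X)$ with $q$ monic and irreducible of degree $n-2$; indeed $X^{2}+X+1$ divides $X^{n}+X+1$ exactly when a primitive cube root of unity $\omega$ satisfies $\omega^{n}=\omega^{2}$ (using $\omega+1=-\omega^{2}$), i.e. when $n\equiv 2\pmod 3$. Translating back through $X\mapsto -X$ carries $X^{2}+X+1$ to $X^{2}-X+1=\Phi_{6}(X)$, which is irreducible over $\qb$; since $n-2$ is odd when $n$ is odd, $q(-X)=-\hat q(X)$ for a monic irreducible $\hat q$, and one obtains $G_{n}(X)=(X^{2}-X+1)\,\hat q(X)$ with $G_{n}(X)/(X^{2}-X+1)=\hat q(X)$ irreducible. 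The exceptional case ``$n$ odd and $n\equiv 2\pmod 3$'' is precisely ``$n\equiv 5\pmod 6$,'' and consistently the roots $\zeta=e^{\pm i\pi/3}$ of $X^{2}-X+1$ satisfy $G_{n}(\zeta)=\zeta^{n}+\zeta-1=0$ iff $\zeta^{n}=1-\zeta=\zeta^{-1}$, i.e. iff $n\equiv 5\pmod 6$, which matches the claimed factor. Combining the two parities then gives both assertions of the Proposition.

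The only genuine obstacle is Selmer's irreducibility theorem itself, which I would cite rather than reprove. Its proof excludes a proper monic integer factor $g$ of $f=X^{n}\mp X\mp 1$ by first observing that $|g(0)|=\prod_{\alpha}|\alpha|$ over the roots $\alpha$ of $g$ is a nonzero integer, so $g$ — and likewise $f/g$ — must contain a root of modulus $\geq 1$; one then analyzes the location of the roots of $f$ (no root of $f$ lies on $|z|=1$ except the primitive cube roots of unity in the $X^{n}+X+1$ case, which are precisely the ones accounted for by the $X^{2}+X+1$ factor) and uses a resultant/discriminant estimate to force the factorization to be trivial. Making Proposition~\ref{irredGn} fully self-contained would require reproducing this classical argument in detail; in the present context it is legitimate to take it as known, as is done in \cite{vergergaugry6}, Section~2, from which the indexation and factorization conventions \eqref{factoGG} are inherited.
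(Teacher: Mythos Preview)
Your proposal is correct and takes essentially the same approach as the paper, which simply cites Selmer \cite{selmer} without further argument. Your reduction via $X\mapsto -X$ to the trinomials $X^{n}-X-1$ and $X^{n}+X+1$ is the standard way to read off the result from Selmer's theorems, and your verification that the exceptional factor $X^{2}-X+1$ appears precisely when $n\equiv 5\pmod 6$ is accurate; you have merely made explicit what the paper leaves implicit in its one-word citation.
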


\begin{proof}
Selmer \cite{selmer}.
\end{proof}

\begin{figure}
\begin{center}
\includegraphics[width=8cm]{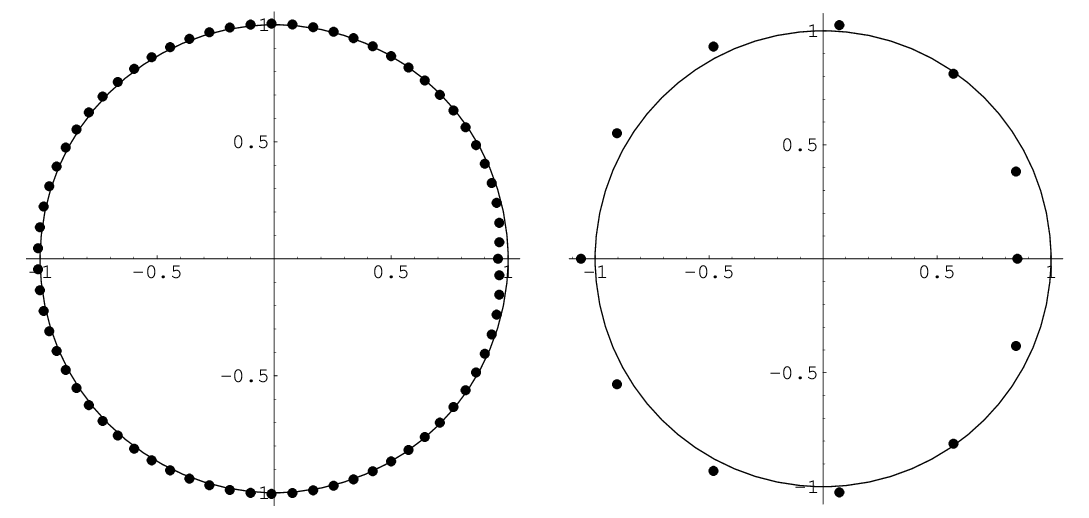}
\end{center}
\caption{
The roots (black bullets) of $G_{n}(z)$
(represented here with $n=71$ and $n=12$)
are uniformly distributed near $|z|=1$
according to the theory of
Erd\H{o}s-Tur\'an-Amoroso-Mignotte.
A slight bump appears in the half-plane
$\Re(z) > 1/2$ in the neighbourhood of $1$, at 
the origin of the different regimes of asymptotic expansions.
The dominant root of
$G_{n}^{*}(z)$ is the Perron number $\theta_{n}^{-1} > 1$, with
$\theta_n$ the unique root of $G_n$
in the interval $(0,1)$.}
\label{perronselmerfig}
\end{figure}

\begin{proposition}
\label{closetoouane}
For all $n \geq 2$, all zeros $z_{j,n}$ and
$\theta_n$
of the polynomials
$G_{n}(X)$ have a modulus in the interval
\begin{equation}
\label{bounboun}
\Bigl[ ~1- \frac{2 \,\lo n}{ n}, ~1 + \frac{2 \,\lo 2}{n} ~\Bigr] ,
\end{equation}

(ii)~ the trinomial $G_{n}(X)$ admits a unique real root $\theta_n$ in
the interval $(0,1)$.
The sequence $(\theta_n)_{n \geq 2}$ is strictly increasing, $\lim_{n \to +\infty} \theta_n = 1$, with
$\theta_2 = \frac{2}{1+\sqrt{5}} = 0.618\ldots$,

(iii)~ 
the root $\theta_n$ 
is the unique root of smallest modulus among all the roots
of $G_{n}(X)$; 
if $n \geq 6$, 
the roots of modulus $< 1$ of $G_{n}(z)$
in the closed upper half-plane have the following properties:

(iii-1)~~ $\theta_n ~<~ |z_{1,n}|$,

(iii-2) ~ for any pair of successive indices
$j, j+1$ in $\{1, 2, \ldots, \lfloor n/6 \rfloor\}$,
$$| z_{j,n} | < |z_{j+1,n} | .$$
\end{proposition}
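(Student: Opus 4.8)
The plan is to read everything off the defining identity $z^{n}=1-z$. Writing $z=re^{i\phi}$ and taking moduli gives the relation
$$r^{2n}=1-2r\cos\phi+r^{2},$$
which I call $(\star)$, and taking arguments gives $n\phi\equiv\arg(1-re^{i\phi})\pmod{2\pi}$. For part (i), any root satisfies $r^{n}=|1-z|$, so $1-r\le r^{n}\le 1+r$. If $r<1-\tfrac{2\lo n}{n}$ then $r\in(0,1)$ and $r^{n}>1-r>\tfrac{2\lo n}{n}$, while $r^{n}<\bigl(1-\tfrac{2\lo n}{n}\bigr)^{n}\le e^{-2\lo n}=n^{-2}$; since $n^{-2}<\tfrac{2\lo n}{n}$ for all $n\ge 2$, this is impossible, giving the lower bound. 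For the upper bound, if $r>1$ then $r^{n-1}\le (1+r)/r<2$, so $r<2^{1/(n-1)}$, and an elementary estimate (via $\lo(1+x)\ge x-x^{2}/2$) shows $2^{1/(n-1)}\le 1+\tfrac{2\lo 2}{n}$ for $n\ge 4$, the cases $n=2,3$ being checked directly.

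Part (ii) is elementary: $G_{n}(0)=-1<0<1=G_{n}(1)$ and $G_{n}'(X)=1+nX^{n-1}>0$ on $(0,\infty)$ give a unique root $\theta_{n}\in(0,1)$, with $\theta_{2}$ the positive root of $X^{2}+X-1$, i.e. $\tfrac{2}{1+\sqrt{5}}$. From $\theta_{n}^{\,n}=1-\theta_{n}<1$ one gets $G_{n+1}(\theta_{n})=\theta_{n}^{\,n+1}+\theta_{n}-1<\theta_{n}^{\,n}+\theta_{n}-1=0$, and since $G_{n+1}$ is increasing on $(0,\infty)$ this forces $\theta_{n}<\theta_{n+1}$; the sequence is then monotone and bounded by $1$, and $\theta_{n}^{\,n}=1-\theta_{n}$ excludes a limit $<1$, so $\theta_{n}\to 1$.

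For the first part of (iii) and for (iii-1), suppose $z$ is a root of $G_{n}$ with $z\ne\theta_{n}$ and $|z|\le\theta_{n}$. Then
$$1-|z|\ \le\ |1-z|\ =\ |z|^{n}\ \le\ \theta_{n}^{\,n}\ =\ 1-\theta_{n}\ \le\ 1-|z|,$$
so every inequality is an equality: $|z|^{n}=1-|z|$, hence $|z|=\theta_{n}$ since $x\mapsto x^{n}+x$ is strictly increasing, and $|1-z|=1-|z|$ forces $\Re z=|z|$, so $z$ is a non-negative real, whence $z=\theta_{n}$, a contradiction. Thus $\theta_{n}$ is the unique root of least modulus for every $n\ge 2$; in particular, for $n\ge 6$ the non-real root $z_{1,n}$ satisfies $\theta_{n}<|z_{1,n}|$, which is (iii-1).

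The remaining claim (iii-2), that $|z_{j,n}|$ is strictly increasing for $1\le j\le\lfloor n/6\rfloor$, is the main obstacle: it concerns the fine geometry of the root locus in the bump region near $z=1$ rather than a crude modulus bound. The plan is to combine the angular ordering $0<\arg(z_{1,n})<\dots<\arg(z_{\lfloor n/6\rfloor,n})$ (Proposition \ref{zedeJIargumentsORDRE1}) with the fact that, along the branch of $(\star)$ carrying these roots, the modulus increases with the argument: implicit differentiation of $F(r,\phi):=r^{2n}-r^{2}+2r\cos\phi-1=0$ gives
$$\frac{dr}{d\phi}=\frac{r\sin\phi}{n\,r^{2n-1}-r+\cos\phi},$$
which is positive once one shows the denominator $n\,r^{2n-1}+(\cos\phi-r)$ stays positive on the relevant part of the locus. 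The subtle point is that for the first indices $r$ is only about $1-\tfrac{\lo n}{n}$, so $n\,r^{2n-1}$ is not bounded below by a constant and one must use the compensating fact that there $\cos\phi-r>0$ — that is, couple $r$ and $\phi$ genuinely through $(\star)$ and the argument equation $n\phi\equiv\arg(1-re^{i\phi})\pmod{2\pi}$. This is exactly the data furnished by the asymptotic expansions of Section \ref{S4}, so in practice (iii-2) is deduced from those expansions (equivalently from \cite{vergergaugry6}, Section 2); carrying the sign analysis of the denominator uniformly in $n$ and in $j$ without them is the step that requires real effort.
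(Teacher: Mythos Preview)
Your arguments for (i), (ii), the first clause of (iii), and (iii-1) are correct and entirely self-contained; the paper's own proof is pure citation (Selmer \cite{selmer} for (i)--(ii), Flatto--Lagarias--Poonen \cite{flattolagariaspoonen} for (iii-1), and \cite{vergergaugry6} for (iii-2)), so you have supplied more than the paper does. Your route for (i) via $1-r\le r^{n}\le 1+r$ together with $(1-x)^{n}\le e^{-nx}$ is the classical Selmer argument, and the equality-case chain you give for (iii)/(iii-1) is essentially the Flatto--Lagarias--Poonen proof.

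For (iii-2) the paper likewise just cites \cite{vergergaugry6}, so your explicit deferral there is on equal footing. Your implicit-differentiation framing along the locus $r^{2n}=1-2r\cos\phi+r^{2}$ is a legitimate way to organise the monotonicity claim, and you have correctly isolated the genuine obstruction: the sign of $nr^{2n-1}+\cos\phi-r$ cannot be settled uniformly over the first few indices without the detailed asymptotic expansions of \S\ref{S4.2} (equivalently \cite{vergergaugry6}), because $nr^{2n-1}$ degenerates there and the compensation by $\cos\phi-r$ must be made quantitative via the coupling with the argument equation. Since you name this and point to the same source the paper uses, there is no gap --- only an honest acknowledgment that (iii-2) is where the real work lies.
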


\begin{proof}
(i)(ii) Selmer \cite{selmer}, pp 291--292; (iii-1) Flatto, Lagarias and Poonen
\cite{flattolagariaspoonen},
(iii-2) Verger-Gaugry \cite{vergergaugry6}.
\end{proof}

The Pisot number (golden mean)
$\theta_{2}^{-1} = \frac{1+\sqrt{5}}{2} = 1.618\ldots$
is the largest
Perron number in the family $(\theta_{n}^{-1})_{n \geq 2}$. 
The interval $( 1, \frac{1+\sqrt{5}}{2}\, ]$ is
partitioned by
the strictly decreasing sequence
of Perron numbers
$(\theta_{n}^{-1})$ as
\begin{equation}
\label{decoupage}
( 1, \frac{1+\sqrt{5}}{2}\, ] ~=~
\left(
\bigcup_{n=2}^{\infty}
\left[ \, \theta_{n+1}^{-1} , \theta_{n}^{-1}
\, \right)
\right)
~ \bigcup ~\left\{
\theta_{2}^{-1}
\right\}.
\end{equation}

By the direct method of asymptotic 
expansions of the roots, as in 
\cite{vergergaugry6}, or by Smyth's Theorem 
\cite{smyth} 
(Dubickas \cite{dubickas}),
since the trinomials $G_{n}(X)$ are not 
reciprocal, 
the Mahler measure of $G_n$
satisfies 
\begin{equation}
\label{smyththeorem}
{\rm M}(\theta_{n}) ~=~ {\rm M}(G_n) ~\geq~ \Theta = 1.3247\ldots, \qquad n \geq 2,
\end{equation}
where $\Theta = \theta_{5}^{-1}$ is the smallest Pisot number, dominant root of
the Pisot polynomial 
$X^3 - X - 1 = -G_{5}^{*}(X) / (X^2 - X + 1)$.

\begin{proposition}
\label{rootsdistrib}
Let $n \geq 2$. 
Then (i) the number $p_n$ of roots of $G_{n}(X)$
which lie inside the open sector $\mathcal{S} =
\{ z \mid |\arg (z)| < \pi/3 \}$ is equal to
\begin{equation}
\label{pennn}
1 + 2 \lfloor \frac{n}{6} \rfloor,
\end{equation} 

(ii) the correlation between the
geometry of the roots of $G_{n}(X)$ 
which lie inside the unit disk
and the upper half-plane and their indexation is given by:
\begin{equation}
\label{rootsinside}
j \in \{1, 2, \ldots, \lfloor \frac{n}{6} \rfloor \}
~\Longleftrightarrow~ \Re(z_{j,n}) > \frac{1}{2} ~\Longleftrightarrow~ |z_{j,n}| < 1,
\end{equation} 
and
the Mahler measure
M$(G_n)$ of the trinomial $G_{n}(X)$ is
\begin{equation}
\label{mahlerGG}
{\rm M}(G_{n}) ~=~ {\rm M}(G_{n}^{*}) ~=~ \theta_{n}^{-1} \, 
\prod_{j=1}^{\lfloor n/6 \rfloor} |z_{j,n}|^{-2}.
\end{equation}
\end{proposition}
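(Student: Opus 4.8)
The plan is to reduce all three assertions to one elementary identity satisfied by the roots of $G_n$, then to a single winding‑number count, and finally to a reciprocation argument for the Mahler measure.

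First I would record the identity: if $z$ is any root of $G_n(X)=-1+X+X^n$, then $z^n=1-z$, hence $|z|^{2n}=|1-z|^2=1-2\Re(z)+|z|^2$, that is,
\[
\Re(z)-\tfrac12=\tfrac12\,|z|^2\bigl(1-|z|^{2n-2}\bigr).
\]
Since $n\ge 2$, the factor $1-|z|^{2n-2}$ has the sign of $1-|z|$, so for every root $z$ of $G_n$ one has $\Re(z)>\tfrac12\iff|z|<1$, with the analogous statements for $=$ and $>$. Writing $\Re(z)=|z|\cos(\arg z)$ and chasing these inequalities (if $|z|<1$ then $\cos(\arg z)=\Re(z)/|z|>\Re(z)>\tfrac12$; if $|z|\ge 1$ then $\Re(z)\le\tfrac12$, and $\cos(\arg z)=\Re(z)/|z|\le\Re(z)\le\tfrac12$ when $\Re(z)\ge 0$, while $\cos(\arg z)<0$ when $\Re(z)<0$) I would deduce, for all roots $z$ of $G_n$, the equivalent chain
\[
|z|<1 \iff \Re(z)>\tfrac12 \iff |\arg z|<\tfrac{\pi}{3},
\]
and likewise for $=$ and $>$. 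In particular a root of $G_n$ on a bounding ray $\arg z=\pm\pi/3$ of $\mathcal S$ must have $|z|=1$, hence equal $e^{\pm i\pi/3}$; and $G_n(e^{i\pi/3})=e^{2i\pi/3}\bigl(1+e^{i(n-2)\pi/3}\bigr)$ vanishes exactly when $n\equiv 5\ (\mathrm{mod}\ 6)$, consistently with Proposition \ref{irredGn}, in which case those two roots sit on $\partial\mathcal S$, not in the open sector.

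Next I would count $N:=\#\{z:G_n(z)=0,\ |z|<1\}$. The real root $\theta_n$ lies in $(\tfrac12,1)$ (as $G_n(\tfrac12)<0<G_n(1)$ and $G_n$ is increasing on $(0,1)$) and the remaining roots of modulus $<1$ occur in conjugate pairs, so $N=1+2k_n$. To get $k_n=\lfloor n/6\rfloor$ I would apply the argument principle to $G_n$ along $|z|=1$ (indented around $e^{\pm i\pi/3}$, or taken along $|z|=1-\varepsilon$, when $n\equiv 5\ (\mathrm{mod}\ 6)$): with $G_n(e^{i\theta})=(e^{i\theta}-1)+e^{in\theta}$ and $e^{i\theta}-1=2\sin(\theta/2)\,e^{i(\theta/2+\pi/2)}$, the summand $e^{i\theta}-1$ dominates on the arc $\theta\in[\pi/3,5\pi/3]$ (where $2\sin(\theta/2)\ge 1$) and contributes argument variation $\tfrac{2\pi}{3}$ up to a bounded correction, while $e^{in\theta}$ dominates on the complementary arc and contributes $n\cdot\tfrac{2\pi}{3}$ up to a bounded correction; assembling and using integrality of the winding number pins it to $1+2\lfloor n/6\rfloor$ (equivalently, this combinatorial count may be quoted from Selmer \cite{selmer}). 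With $N$ in hand, part (i) is immediate: by the chain above the roots of $G_n$ in the open sector $\mathcal S$ are exactly those with $|z|<1$, so $p_n=N=1+2\lfloor n/6\rfloor$. For (ii): the arguments $\arg z_{1,n}<\arg z_{2,n}<\cdots$ are strictly increasing in $j$ (the chosen indexation and Proposition \ref{closetoouane}), so $\{\,j:|\arg z_{j,n}|<\pi/3\,\}$ is an initial segment; it has $k_n=\tfrac12(N-1)=\lfloor n/6\rfloor$ elements, which with the chain of equivalences yields $j\in\{1,\dots,\lfloor n/6\rfloor\}\iff\Re(z_{j,n})>\tfrac12\iff|z_{j,n}|<1$.

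Finally, for the Mahler measure: from $G_n^{*}(X)=X^n G_n(1/X)$ the roots of $G_n^{*}$ are the reciprocals of the roots of $G_n$, and $G_n^{*}$ has leading coefficient of absolute value $1$; by the root identification just established, the roots of $G_n^{*}$ of modulus $>1$ are precisely $\theta_n^{-1}$ together with the pairs $z_{j,n}^{-1},\overline{z_{j,n}}^{-1}$ for $1\le j\le\lfloor n/6\rfloor$, so ${\rm M}(G_n^{*})=\theta_n^{-1}\prod_{j=1}^{\lfloor n/6\rfloor}|z_{j,n}|^{-2}$. The product of all roots of $G_n$ has absolute value $1$ (constant term over leading coefficient), and the only roots of modulus $1$ (occurring solely when $n\equiv 5\ (\mathrm{mod}\ 6)$) contribute $1$, whence $\prod_{|z|>1}|z|=\bigl(\prod_{|z|<1}|z|\bigr)^{-1}$, i.e. ${\rm M}(G_n)=\theta_n^{-1}\prod_{j=1}^{\lfloor n/6\rfloor}|z_{j,n}|^{-2}={\rm M}(G_n^{*})$. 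The main obstacle is the winding‑number bookkeeping in the counting step: controlling $\arg G_n(e^{i\theta})$ near the two transition points $\theta=\pm\pi/3$, where $|e^{i\theta}-1|=|e^{in\theta}|=1$ and neither summand dominates (and where the indentation for $n\equiv 5\ (\mathrm{mod}\ 6)$ is required), so as to fix the integer value exactly rather than only up to a bounded error — this is the substance of Selmer's analysis, and everything else is routine.
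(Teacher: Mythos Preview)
Your argument is correct, and in fact more substantive than what the paper itself provides: the paper's proof of this proposition is a bare citation to \cite{vergergaugry6}, Proposition~3.7, with no in-text argument. Your key identity $\Re(z)-\tfrac12=\tfrac12|z|^2\bigl(1-|z|^{2n-2}\bigr)$ for roots of $G_n$ cleanly yields the equivalence chain in~(ii); the boundary check at $e^{\pm i\pi/3}$ is right and dovetails with Proposition~\ref{irredGn}; and the Mahler-measure step via reciprocation and the absolute value of the constant term is standard (indeed $\mathrm{M}(P)=\mathrm{M}(P^*)$ holds for any polynomial). The one step you do not fully execute is the exact count $N=1+2\lfloor n/6\rfloor$: your dominant-term heuristic on the two arcs gives a leading contribution $(n+1)/3$ to the winding number, and turning this into the precise integer requires controlling the bounded corrections at the crossover points $\theta=\pm\pi/3$, as you rightly flag. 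Deferring that step to Selmer~\cite{selmer} is entirely in keeping with how the paper handles it (via the cited reference), so nothing is missing.
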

\begin{proof}
Verger-Gaugry \cite{vergergaugry6}, Proposition 3.7.
\end{proof}

\subsection{Asymptotic expansions: roots of $G_n$ and relations}
\label{S4.2}

The (Poincar\'e) asymptotic expansions of the roots of $G_n$
(and $G_{n}^{*}$)
are generically written:
${\rm Re}(z_{j,n}) = 
{\rm D}({\rm Re}(z_{j,n})) + 
{\rm tl}({\rm Re}(z_{j,n}))$,
${\rm Im}(z_{j,n}) = {\rm D}({\rm Im}(z_{j,n})) 
+ {\rm tl}({\rm Im}(z_{j,n}))$,
$\theta_n = {\rm D}(\theta_n) + 
{\rm tl}(\theta_n)$,
where "D" and "tl" stands for
 {\it ``development"}
(or {\it ``limited expansion"}, 
or {\it ``lowest order terms"})
and
"tl" for {\it ``tail"} 
(or ``remainder", or {\it ``terminant"} in \cite{dingle}).
They are given at a sufficiently high order
allowing to deduce the asymptotic expansions 
of the Mahler measures ${\rm M}(G_n)$.
The terminology {\it order} 
comes from the general theory
(Borel \cite{borel}, Copson \cite{copson}, 
Dingle \cite{dingle}, 
Erd\'elyi \cite{erdelyi}); the 
approximant solutions of a polynomial
equation say $G(z) = 0$ which arise naturally correspond to
{\it order} $1$. The solutions corresponding to {\it order} $2$
are obtained by inserting the {\it order $1$ approximant solutions}
into the equation $G(z) = 0$, 
for getting {\it order $2$ approximant solutions}. 
And so on, as a function of $\deg{G}$. 
The order is
the number of steps in this iterative process.
Poincar\'e \cite{poincare}
introduced
this method of divergent series
for the $N$ - body problem in celestial mechanics;
this method does not appear in number theory
in the book ``Divergent series" of Hardy.
The equivalent of the variable time $t$ (in celestial 
mechanics) will be the dynamical degree
$\dyg(\house{\alpha})$
of the house of the algebraic integer
$\alpha$ in number theory 
(with $|\alpha| > 1$), 
a new ``variable concept" introduced in the present study; 
for the trinomials
$G_n$ it will be $n$.

The asymptotic expansions of 
$\theta_n$ and those roots $z_{j,n}$
of $G_{n}(z)$ which lie 
in the first quadrant
are (divergent) 
sums of functions of only {\em one} 
variable, which is $n$, while
those of the other roots $z_{j,n}$
are functions of a couple of 
{\em two} variables which is: \begin{itemize}
\item[$\bullet$] $(n, j/n)$
in the angular sector
$\pi / 4 > \arg z  > 2 \pi \, \lo n / n$, 
and 
\item[$\bullet$] $(n, j/\lo n)$
in the angular sector
$2 \pi \, \lo n / n > \arg z > 0$.
\end{itemize}
The first sector is the main angular sector.
The second sector if the bump angular sector.
A unique regime of asymptotic expansion
exists in the main angular sector, whereas
two regimes of asymptotic expansions
do exist in the bump sector
(Appendix, and \cite{vergergaugry6}).
These regimes 
are separated by two sequences
$(u_n)$ and $(v_n)$, to which 
the second variable
$j/n$, resp. $j/\lo n$, is compared.
Details can be found in the Appendix.


\begin{proposition}
\label{thetanExpression}
Let $n \geq 2$.
The root $\theta_n$ can be expressed as: 
$\theta_n = {\rm D}(\theta_n) + {\rm tl}(\theta_n)$ with
${\rm D}(\theta_n) = 1 -$
\begin{equation}
\label{DthetanExpression}
\frac{\lo n}{n}
\left(
1 - \bigl(
\frac{n - \lo n}{n \, \lo n + n - \lo n}
\bigr)
\Bigl(
\lo \lo n - n 
\lo \Bigl(1 - \frac{\lo n}{n}\Bigr)
- {\rm Log} n
\Bigr)
\right)
\end{equation}
and
\begin{equation}
\label{tailthetanExpression}
{\rm tl}(\theta_n) ~=~ \frac{1}{n} \, O \left( \left(\frac{\lo \lo n}{\lo n}\right)^2 \right),
\end{equation}
with the constant $1/2$ involved in $O \left(~\right)$.
\end{proposition}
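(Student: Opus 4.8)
\noindent\textit{Proof proposal.} The plan is to run a Poincar\'e-type bootstrap on the defining equation $\theta_n + \theta_n^{\,n} = 1$, in the spirit of the direct method of \cite{vergergaugry6}. First, $G_n'(x) = 1 + n x^{n-1} > 0$ on $(0,\infty)$, while $G_n(0) = -1 < 0$ and $G_n(1) = 1 > 0$, so $\theta_n$ is the unique root in $(0,1)$; moreover $\theta_n \to 1^-$, since $\theta_n \leq 1 - \delta$ for infinitely many $n$ would force $1 - \theta_n = \theta_n^{\,n} \leq (1-\delta)^n \to 0$, contradicting $1 - \theta_n \geq \delta$. Put $\epsilon_n := 1 - \theta_n \in (0,1)$, so $\epsilon_n \to 0$ and $\theta_n^{\,n} = \epsilon_n$; taking $\lo$ turns this into $\phi(\epsilon_n) = 0$, where
\[
\phi(\epsilon) := \lo \epsilon - n\,\lo(1 - \epsilon), \qquad 0 < \epsilon < 1,
\]
is real-analytic on $(0,1)$ with $\phi' > 0$.

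Second, a preliminary bootstrap locates $\epsilon_n$. From $n\,\lo(1-\epsilon_n) = \lo\epsilon_n$ and $-\lo(1-\epsilon) = \epsilon(1 + O(\epsilon))$ one gets $n\epsilon_n = -\lo\epsilon_n + O(\epsilon_n\,|\!\lo\epsilon_n|)$; writing $L_n := -\lo\epsilon_n$, this reads $L_n = \lo n - \lo L_n + o(1)$, whence $L_n = \lo n - \lo\lo n + O\!\bigl(\tfrac{\lo\lo n}{\lo n}\bigr)$ and therefore
\[
n\,\epsilon_n = \lo n - \lo\lo n + o(\lo\lo n), \qquad \text{so}\qquad \epsilon_n \sim \tfrac{\lo n}{n} \quad\text{and}\quad \epsilon_n - \tfrac{\lo n}{n} = -\tfrac{\lo\lo n}{n}\bigl(1 + o(1)\bigr).
\]
This precision is exactly what is needed below; I would record it with explicit two-sided bounds rather than $o(1)$'s when chasing the constant $1/2$.

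Third, the identity for ${\rm D}(\theta_n)$ is produced by a single Newton step for $\phi$ started at $\epsilon_0 := \tfrac{\lo n}{n}$ (admissible since $0 < \tfrac{\lo n}{n} < 1$ for every $n \geq 2$). A short computation gives
\[
\phi(\epsilon_0) = \lo\lo n - n\,\lo\!\bigl(1 - \tfrac{\lo n}{n}\bigr) - \lo n =: B, \qquad \phi'(\epsilon_0) = \tfrac{1}{\epsilon_0} + \tfrac{n}{1-\epsilon_0} = \tfrac{n\,(n\lo n + n - \lo n)}{\lo n\,(n - \lo n)},
\]
so that $\epsilon_1 := \epsilon_0 - \phi(\epsilon_0)/\phi'(\epsilon_0) = \tfrac{\lo n}{n}\bigl(1 - AB\bigr)$ with $A := \tfrac{n - \lo n}{n\lo n + n - \lo n}$; comparing with \eqref{DthetanExpression} this is precisely $\epsilon_1 = 1 - {\rm D}(\theta_n)$. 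It then remains to bound ${\rm tl}(\theta_n) = \theta_n - {\rm D}(\theta_n) = \epsilon_1 - \epsilon_n$. Taylor's formula with Lagrange remainder for $\phi$ between $\epsilon_0$ and $\epsilon_n$, using $\phi(\epsilon_n) = 0$, gives
\[
\epsilon_1 - \epsilon_n = \frac{\phi''(\xi)}{2\,\phi'(\epsilon_0)}\,(\epsilon_n - \epsilon_0)^2, \qquad \phi''(\epsilon) = -\tfrac{1}{\epsilon^2} + \tfrac{n}{(1-\epsilon)^2},
\]
for some $\xi$ between $\epsilon_0$ and $\epsilon_n$. By the preliminary asymptotics $\epsilon_0,\epsilon_n$, hence $\xi$, are all $\sim \tfrac{\lo n}{n}$, so $\phi''(\xi) = -\tfrac{n^2}{(\lo n)^2}(1 + o(1))$, $\phi'(\epsilon_0) = n(1 + o(1))$, and $(\epsilon_n - \epsilon_0)^2 = \tfrac{(\lo\lo n)^2}{n^2}(1+o(1))$; substituting yields
\[
{\rm tl}(\theta_n) = \epsilon_1 - \epsilon_n = -\,\frac{1}{2n}\Bigl(\frac{\lo\lo n}{\lo n}\Bigr)^{2}\bigl(1 + o(1)\bigr) = \frac{1}{n}\,O\!\Bigl(\bigl(\tfrac{\lo\lo n}{\lo n}\bigr)^{2}\Bigr),
\]
with leading constant $\tfrac12$, as claimed.

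The main obstacle is not the Newton step itself (a short reverse-engineered calculation tailored to reproduce \eqref{DthetanExpression}) but the preliminary bootstrap: one must pin down $n\epsilon_n$ to second order, i.e.\ establish $\epsilon_n - \tfrac{\lo n}{n} = -\tfrac{\lo\lo n}{n}(1 + o(1))$ with the correct sign, and control the intermediate point $\xi$ uniformly so that $\phi''(\xi)$ has the stated order. Extracting the precise constant $\tfrac12$ (rather than just the bare $O$) then requires replacing each $(1+o(1))$ above by explicit inequalities valid for $n$ large, which is routine but must be tracked with care.
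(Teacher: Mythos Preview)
Your argument is correct, and it is essentially the approach the paper defers to: the paper gives no proof here beyond the citation to \cite{vergergaugry6}, Proposition~3.1, and the ``order~2 approximant'' scheme described in \S\ref{S4.2} is exactly your Newton step at $\epsilon_0=\tfrac{\lo n}{n}$ for $\phi(\epsilon)=\lo\epsilon-n\lo(1-\epsilon)$, which reproduces ${\rm D}(\theta_n)$ verbatim. Your Lagrange-remainder bound, fed by the preliminary estimate $\epsilon_n-\epsilon_0=-\tfrac{\lo\lo n}{n}(1+o(1))$, then recovers the tail with the stated constant $\tfrac12$.
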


\begin{proof}
\cite{vergergaugry6} Proposition 3.1.
\end{proof}

\begin{lemma}
\label{remarkthetan}
Given the limited expansion D$(\theta_n)$ of
$\theta_n$ as in \eqref{DthetanExpression}, denote
$$\lambda_n := 1 - (1 - {\rm D}(\theta_{n}))\frac{n}{\lo n}.$$ 
Then $\lambda_n = {\rm D}(\lambda_{n}) + {\rm tl}(\lambda_{n})$, with
\begin{equation}
\label{eqqww}
{\rm D}(\lambda_n) = \frac{\lo \lo n}{\lo n} \left(\frac{1}{1+\frac{1}{\lo n}}\right), \qquad
{\rm tl}(\lambda_n) = O\left( \frac{\lo \lo n}{n}  \right)
\end{equation}
with the constant 1 in the Big O.
\end{lemma}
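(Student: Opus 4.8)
The plan is to substitute the closed form \eqref{DthetanExpression} for ${\rm D}(\theta_n)$ directly into the definition $\lambda_n = 1 - (1 - {\rm D}(\theta_{n}))\frac{n}{\lo n}$ and then expand the resulting elementary expression. Writing $L := \lo n$ and abbreviating
\[
R_n := \frac{n - L}{nL + n - L}, \qquad S_n := \lo \lo n - n\lo\Bigl(1 - \frac{L}{n}\Bigr) - L,
\]
formula \eqref{DthetanExpression} says exactly that $1 - {\rm D}(\theta_n) = \frac{L}{n}\bigl(1 - R_n S_n\bigr)$. Hence the factor $\frac{L}{n}$ cancels exactly against $\frac{n}{\lo n}$ in the definition of $\lambda_n$, and one gets the exact identity $\lambda_n = R_n S_n$. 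So the whole statement reduces to expanding the two elementary factors $R_n$ and $S_n$.

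Second, I would expand each factor. For $R_n$ a one-line computation gives the exact splitting
\[
R_n = \frac{1}{L+1} - \frac{L^2}{(L+1)\,(nL + n - L)},
\]
where the subtracted term is positive and of size $O(1/n)$, since $L^2 \leq (L+1)^2$. For $S_n$ one uses the Taylor series $-\lo(1-x) = \sum_{k\geq 1} x^k/k$ at $x = L/n$ (legitimate for every $n\geq 2$, since then $0 < L/n < 1$): it gives $-n\lo(1 - L/n) = L + \tfrac{L^2}{2n} + O(L^3/n^2)$, whence $S_n = \lo \lo n + \tfrac{L^2}{2n} + O(L^3/n^2)$. Multiplying the two expansions, the leading term is
\[
\frac{\lo \lo n}{L+1} = \frac{\lo \lo n}{\lo n}\cdot\frac{1}{1 + \frac{1}{\lo n}} = {\rm D}(\lambda_n),
\]
which is exactly the claimed development, and the sum of the remaining products of correction terms is ${\rm tl}(\lambda_n) = \lambda_n - {\rm D}(\lambda_n)$, to be bounded by collecting those products for $n$ large.

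The hard part is not conceptual but the bookkeeping in that last step: to obtain the order of ${\rm tl}(\lambda_n)$ together with the explicit constant $1$ in the Big O, the $O$-symbols in the expansions of $R_n$ and $S_n$ must be replaced by effective inequalities, and one has to identify which cross term dominates — in the notation $R_n = \frac{1}{L+1} - \epsilon_n$, $S_n = \lo \lo n + \delta_n$, among the three pieces $\frac{\delta_n}{L+1}$, $\epsilon_n \lo \lo n$ and $\epsilon_n\delta_n$ — and check that their sum obeys the stated bound once $n$ is large. Nothing beyond Proposition \ref{thetanExpression}, the identity above, and elementary estimates for $\lo(1-x)$ enters; the computation is of the same elementary type as the manipulations already carried out for the roots of $G_n$ in \cite{vergergaugry6}, which is why this is recorded merely as a lemma.
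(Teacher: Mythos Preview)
Your approach is the natural one and is essentially what the citation to \cite{vergergaugry6} amounts to: substitute \eqref{DthetanExpression}, obtain the exact identity $\lambda_n = R_n S_n$, and expand the two elementary factors. The paper offers no independent argument here, so your direct computation is the right thing to write down, and your splitting $R_n = \tfrac{1}{L+1} - \epsilon_n$, $S_n = \lo\lo n + \delta_n$ is correct.

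However, the step you defer does not deliver the bound stated in \eqref{eqqww}. With your own expansion $\delta_n = S_n - \lo\lo n = \tfrac{L^2}{2n} + O(L^3/n^2)$, the cross-term you list first is
\[
\frac{\delta_n}{L+1} \;=\; \frac{(\lo n)^2}{2n(\lo n+1)} + O\!\Bigl(\frac{(\lo n)^2}{n^2}\Bigr) \;\sim\; \frac{\lo n}{2n},
\]
and it dominates the other two pieces $\epsilon_n\lo\lo n = O(\lo\lo n/n)$ and $\epsilon_n\delta_n = O(\lo n/n^2)$. So carrying out the bookkeeping gives ${\rm tl}(\lambda_n) = \tfrac{\lo n}{2n}(1+o(1))$, which is $\Theta(\lo n/n)$ and not $O(\lo\lo n/n)$. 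Your method thus proves the claimed ${\rm D}(\lambda_n)$ exactly, but only the weaker tail ${\rm tl}(\lambda_n) = O(\lo n/n)$. That weaker bound is harmless for every use made of the lemma in this paper (in Lemma~\ref{longueurintervalthetann} one only needs that the resulting error in ${\rm D}(\theta_n)-{\rm D}(\theta_{n-1})$ is absorbed into $\tfrac{1}{n}O\bigl((\lo\lo n/\lo n)^2\bigr)$, and $O((\lo n)^2/n^2)$ certainly is), so either \eqref{eqqww} carries a transcription slip from \cite{vergergaugry6} or the source uses a slightly different ${\rm D}(\theta_n)$; in either case you should record the bound your computation actually yields rather than assert \eqref{eqqww} on faith.
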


\begin{proof}
\cite{vergergaugry6} Lemma 3.2.
\end{proof}

In the sequel, for short, we write $\lambda_n$ instead of
${\rm D}(\lambda_n)$.

\begin{proposition}
\label{zjjnnExpression}
Let $n \geq n_0 = 18$ and $1 \leq j \leq \lfloor \frac{n-1}{4} \rfloor$. 
The roots $z_{j,n}$ of $G_{n}(X)$ have the following asymptotic expansions:
$z_{j,n} = {\rm D}(z_{j,n}) + {\rm tl}(z_{j,n})$
in the following angular sectors:

\begin{itemize}
\item[(i)] \underline{
Sector $\frac{\pi}{2} > \arg z > 2 \pi \frac{\lo n}{n}$ (main sector):}
$${\rm D}(\Re(z_{j,n})) = \cos\bigl(2 \pi \frac{j}{n}\bigr) + 
\frac{\lo \bigl(2 \, \sin\bigl(\pi \frac{j}{n}\bigr)
\bigr)}{n},$$
$${\rm D}(\Im(z_{j,n})) ~=~ \sin\bigl(2 \pi \frac{j}{n}\bigr) 
+ 
\tan\bigl(\pi \frac{j}{n}\bigr)
\, 
\frac{\lo \bigl(2 \, \sin\bigl(\pi \frac{j}{n}\bigr)
\bigr)}{n},$$
with
$${\rm tl}(\Re(z_{j,n})) ~=~  {\rm tl}(\Im(z_{j,n})) ~=~
\frac{1}{n} \, O \left( \left(\frac{\lo \lo n}{\lo n}\right)^2 \right)$$
and the constant 1 in the Big $O$,
\vspace{0.1cm}

\item[(ii)] \underline{
``Bump" sector $2 \pi \frac{\lo n}{n} > \arg z > 0$ : }

\subitem \underline{$\bullet$ 
Subsector $2 \pi \frac{\sqrt{(\lo n) (\lo \lo n)}}{n} > \arg z > 0$:}
$${\rm D}(\Re(z_{j,n})) = \theta_n + \frac{2 \pi^2}{n} \left(\frac{j}{\lo n}\right)^2 \bigl( 1+2 \lambda_n \bigr), 
$$
$${\rm D}(\Im(z_{j,n})) = \frac{2 \pi \lo n}{n}  \left(\frac{j}{\lo n}\right)
\left[1 - \frac{1}{\lo n} (1 + \lambda_n)\right],$$

with
$${\rm tl}(\Re(z_{j,n})) = \frac{1}{n \lo n} \left(\frac{j}{\lo n}\right)^2 O\left(
\left(\frac{\lo \lo n}{\lo n}\right)^2\right),
$$
$$
{\rm tl}(\Im(z_{j,n})) = \frac{1}{n \lo n} \left(\frac{j}{\lo n}\right)
O\left(
\left(\frac{\lo \lo n}{\lo n}\right)^2\right),$$

\subitem \underline{$\bullet$
Subsector $2 \pi \frac{\lo n}{n} > \arg z >
2 \pi \frac{\sqrt{(\lo n) (\lo \lo n)}}{n}$:}
$${\rm D}(\Re(z_{j,n})) = \theta_n +
\frac{2 \pi^2}{n} \left(\frac{j}{\lo n}\right)^2
\left(
1 + \frac{2 \pi^2}{3} \left(\frac{j}{\lo n}\right)^2
\left(
1+\lambda_n
\right)
\right)
$$
$$  {\rm D}(\Im(z_{j,n})) ~=~ $$
$$\frac{2 \pi \lo n}{n} \left(\frac{j}{\lo n}\right)
\left[1 - \frac{1}{\lo n}
\left(
1 - \frac{4 \pi^2}{3} \left(\frac{j}{\lo n}\right)^2
\left( 1 - \frac{1}{\lo n}
(
1 - \lambda_n
) \right)
\right)\right],
$$
with
$${\rm tl}(\Re(z_{j,n})) = \frac{1}{n} O \left(
\left(\frac{j}{\lo n}
\right)^6
\right),
{\rm tl}(\Im(z_{j,n})) = \frac{1}{n} O\left(
\left(\frac{j}{\lo n}\right)^5
\right).$$
\end{itemize} 
\end{proposition}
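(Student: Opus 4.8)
The plan is to prove Proposition~\ref{zjjnnExpression} by the iterative (Poincar\'e) scheme of \S\ref{S4.2}, exactly as for $\theta_n$ in Proposition~\ref{thetanExpression}, but now applied to the non-real roots. Rewriting $G_n(z)=0$ as $z^n=1-z$ and extracting $n$-th roots turns it into the fixed-point equation $z=\zeta_j\,(1-z)^{1/n}$ with $\zeta_j:=e^{2\pi i j/n}$ and $(\cdot)^{1/n}$ the branch continuous near $1$ with $1^{1/n}=1$; the successive approximants $z^{(0)}_{j,n}=\zeta_j$, $z^{(k+1)}_{j,n}=\zeta_j\,(1-z^{(k)}_{j,n})^{1/n}$ are then the ``order $k+1$'' solutions. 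I would first record, from Proposition~\ref{closetoouane}, that every root of $G_n$ is simple and has modulus in $[1-2\lo n/n,\,1+2\lo 2/n]$, so each approximant lies within $O(\lo n/n)$ of a genuine root; a Rouch\'e comparison of $G_n$ with its dominant monomial on a small circle about $z^{(k)}_{j,n}$, combined with the root count of Proposition~\ref{rootsdistrib} and the strict monotonicity in $j$ of $|z_{j,n}|$ and $\arg z_{j,n}$, fixes the labelling and shows that exactly one root $z_{j,n}$ is attached to each approximant, yielding at the same time the tail estimates once one iterates far enough.

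How far one must iterate is governed by the sector. In the main sector $2\pi\lo n/n<\arg z<\pi/2$ the point $\zeta_j$ stays bounded away from $1$, so $1-z\asymp1$ and $(1-z)^{1/n}=\exp\bigl(\tfrac1n\lo(1-z)\bigr)=1+\tfrac1n\lo(1-z)+O\bigl(\tfrac{(\lo n)^2}{n^2}\bigr)$; two substitutions already reach the target precision $\tfrac1n O\bigl((\tfrac{\lo\lo n}{\lo n})^2\bigr)$, and writing $z=\rho e^{i\psi}$ and Taylor-expanding $\rho\cos\psi$ and $\rho\sin\psi$ — using $\lo|1-\zeta_j|=\lo(2\sin(\pi j/n))$ and $\arg(1-\zeta_j)=\pi j/n-\pi/2$ — produces the stated developments of $\Re(z_{j,n})$ and $\Im(z_{j,n})$. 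In the bump sector $0<\arg z<2\pi\lo n/n$ the plain iteration degenerates because $1-\zeta_j$ is close to $0$; instead I would anchor to the real root, writing $z_{j,n}=\theta_n+\xi_{j,n}$, substituting into $G_n$ and Taylor-expanding about $\theta_n$ via $G_n(\theta_n)=0$, $G_n'(\theta_n)=1+n\theta_n^{\,n-1}$. Since $\theta_n=1-\tfrac{\lo n}{n}(1+o(1))$ (Proposition~\ref{thetanExpression}) and $n\theta_n^{\,n-1}\asymp\lo n$, the effective expansion variable is $j/\lo n$; the outcome breaks into the two regimes of the statement according to whether $j$ lies below or above $\asymp\sqrt{(\lo n)(\lo\lo n)}$ — i.e.\ according to the sequences $(u_n),(v_n)$ of \S\ref{S4.2} — and in the regime nearest the positive real axis one carries the iteration to higher order, the quantity $\lambda_n=\tfrac{\lo\lo n}{\lo n}\bigl(1+\tfrac1{\lo n}\bigr)^{-1}+O\bigl(\tfrac{\lo\lo n}{n}\bigr)$ of Lemma~\ref{remarkthetan} entering the coefficients.

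To finish I would check that the developments $\mathrm D(\cdot)$ and tails $\mathrm{tl}(\cdot)$ from the main sector and the two bump sub-sectors agree, to the stated order, on the overlaps of the sectors, so that the piecewise description is consistent; verify that all the implied $O(\cdot)$ constants are uniform for $1\le j\le\lfloor\frac{n-1}{4}\rfloor$ and $n\ge n_0=18$; and transfer the statements from $G_n$ to $G_n^\ast$ through $z\mapsto 1/z$, and to the lower half-plane by conjugation. The main obstacle is the bump sector: one must control simultaneously the two small scales present there — the ``dynamical'' scale $\lo n/n$, with its own secondary $\lo\lo n/\lo n$ correction inherited from $\theta_n$, and the index scale $j/\lo n$ — and locate precisely where the two regimes meet, all while keeping every remainder uniform in $j$ across the sub-sector boundary. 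By contrast the main-sector expansion and the Rouch\'e/Hurwitz bookkeeping are routine once the fixed-point iteration is in place.
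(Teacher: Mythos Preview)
The paper does not give a proof here at all; it simply cites \cite{vergergaugry6}, Proposition~3.4. Your proposal --- the Poincar\'e iterative scheme on the fixed-point form $z=\zeta_j(1-z)^{1/n}$ in the main sector, together with the anchoring $z_{j,n}=\theta_n+\xi_{j,n}$ and expansion in the variable $j/\lo n$ in the bump sector --- is precisely the method of that reference, as outlined in \S\ref{S4.2} of the present paper, so your approach is the intended one.
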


\begin{proof}
\cite{vergergaugry6} Proposition 3.4.
\end{proof}

Outside the ``bump sector"
the moduli of the roots $z_{j,n}$
are readily obtained as (Proposition 3.5 in
\cite{vergergaugry6}):
\begin{equation}
\label{zjnModulusFirst}
|z_{j,n}| =
 1 + \frac{1}{n} \, \lo \bigl(2 \, \sin\bigl(\frac{\pi j}{n}\bigr)  \bigr) +
\frac{1}{n} O
\left( \frac{(\lo \lo n)^2}{(\lo n)^2}
\right),
\end{equation}
with the constant 1 in the Big $O$ (independent of $j$).
The following expansions of the $|z_{j,n}|$s 
at the order 3 will be needed in the method of Rouch\'e.

\begin{proposition}
\label{zedeJIargumentsORDRE1}
$$\arg(z_{j,n}) = 2 \pi (\frac{j}{n} 
+ {A}_{j,n})
\quad
\mbox{with}\quad 
A_{j,n}
=
-\frac{1}{2 \pi n}
\left[
\frac{1 - \cos(\frac{2 \pi j}{n})}
{\sin(\frac{2 \pi j}{n})}
\lo (2 \sin(\frac{\pi j}{n}))
\right]$$
$$\mbox{and}\qquad\qquad
{\rm tl}(\arg(z_{j,n}))=
\frac{1}{n} O\Bigl(\left(
\frac{\lo \lo n}{\lo n}
\right)^2 \Bigr).
$$
\end{proposition}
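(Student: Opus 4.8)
The plan is to read off $\arg(z_{j,n})$ directly from the order-one asymptotic expansions of $\Re(z_{j,n})$ and $\Im(z_{j,n})$ supplied by Proposition~\ref{zjjnnExpression}, by measuring the deviation of the true argument from the naive value $\phi_0:=2\pi j/n$. Writing $z_{j,n}=\rho_{j,n}e^{i\psi_{j,n}}$ with $\rho_{j,n}=|z_{j,n}|$ and $\psi_{j,n}=\arg(z_{j,n})$, the whole computation is driven by the elementary identity
\[
\Im(z_{j,n})\cos\phi_0-\Re(z_{j,n})\sin\phi_0 \;=\; \rho_{j,n}\,\sin(\psi_{j,n}-\phi_0),
\]
which reduces the statement to evaluating the left-hand side from the known expansions and then inverting a sine of a provably small angle.

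First I would treat the main angular sector $\frac{\pi}{2}>\arg z>2\pi\frac{\lo n}{n}$, where Proposition~\ref{zjjnnExpression}(i) gives $\Re(z_{j,n})=\cos\phi_0+\frac{L}{n}+\frac1n O\bigl((\lo\lo n/\lo n)^2\bigr)$ and $\Im(z_{j,n})=\sin\phi_0+\frac{L}{n}\tan(\phi_0/2)+\frac1n O\bigl((\lo\lo n/\lo n)^2\bigr)$, with $L:=\lo(2\sin(\pi j/n))$ and using $\pi j/n=\phi_0/2$. Substituting into the identity above, the leading term $\sin\phi_0\cos\phi_0-\cos\phi_0\sin\phi_0$ vanishes, and the $\tfrac1n$-terms collapse by the half-angle identity
\[
\tan(\phi_0/2)\cos\phi_0-\sin\phi_0=\frac{\sin(\phi_0/2-\phi_0)}{\cos(\phi_0/2)}=-\tan(\phi_0/2)=-\,\frac{1-\cos\phi_0}{\sin\phi_0},
\]
so that $\Im(z_{j,n})\cos\phi_0-\Re(z_{j,n})\sin\phi_0=-\frac1n\,\frac{1-\cos\phi_0}{\sin\phi_0}\,L+\frac1n O\bigl((\lo\lo n/\lo n)^2\bigr)$. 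Dividing by $\rho_{j,n}=1+\frac{L}{n}+\frac1n O\bigl((\lo\lo n/\lo n)^2\bigr)$ from \eqref{zjnModulusFirst}, then applying $\arcsin$ — legitimate since $|\psi_{j,n}-\phi_0|=O(\lo n/n)$ — yields $\psi_{j,n}=\phi_0-\frac1n\frac{1-\cos\phi_0}{\sin\phi_0}\lo(2\sin(\pi j/n))+\frac1n O\bigl((\lo\lo n/\lo n)^2\bigr)$, which is precisely the asserted expansion $\arg(z_{j,n})=2\pi(\tfrac jn+A_{j,n})+{\rm tl}(\arg(z_{j,n}))$.

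The remaining small values of $j$ (the ``bump'' sector $2\pi\frac{\lo n}{n}>\arg z>0$, with its two sub-regimes) are handled by the same device, now feeding in the two-variable expansions of Proposition~\ref{zjjnnExpression}(ii) for $\Re(z_{j,n})$ and $\Im(z_{j,n})$ together with $\theta_n$ from Proposition~\ref{thetanExpression} and Lemma~\ref{remarkthetan}, and the modulus estimate obtained from $\rho_{j,n}^2=\Re(z_{j,n})^2+\Im(z_{j,n})^2$; one again forms $\Im\cos\phi_0-\Re\sin\phi_0=\rho_{j,n}\sin(\psi_{j,n}-\phi_0)$, isolates the dominant contribution, and checks it collapses after inverting the sine.

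The only genuine obstacle is the uniform control of the error terms across all of $1\le j\le\lfloor\frac{n-1}{4}\rfloor$ and in each sub-regime: one must confirm that the cross term $O(L^2/n^2)$ produced when dividing by $\rho_{j,n}^{-1}$, the cubic remainder of $\arcsin$, and the $\frac1n O\bigl((\lo\lo n/\lo n)^2\bigr)$ tails of $\Re$, $\Im$ and of $|z_{j,n}|$ all fit into a single $\frac1n O\bigl((\lo\lo n/\lo n)^2\bigr)$. This rests on the a priori bound $|\lo(2\sin(\pi j/n))|=O(\lo n)$ valid on the whole range, on $\tan(\phi_0/2)\le 1$ there, and on the elementary comparison $(\lo n)^4=o(n(\lo\lo n)^2)$, which makes $O((\lo n)^2/n^2)$ negligible against $\frac1n(\lo\lo n/\lo n)^2$.
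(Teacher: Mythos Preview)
The paper's own ``proof'' is a bare citation to \S 6 of \cite{vergergaugry6}, so there is no in-paper argument to compare against. Your derivation --- extracting $\arg(z_{j,n})-2\pi j/n$ from the identity $\Im(z_{j,n})\cos\phi_0-\Re(z_{j,n})\sin\phi_0=\rho_{j,n}\sin(\psi_{j,n}-\phi_0)$ fed with the expansions of Proposition~\ref{zjjnnExpression} --- is correct for the main sector, the half-angle collapse $\tan(\phi_0/2)\cos\phi_0-\sin\phi_0=-\tan(\phi_0/2)=-\dfrac{1-\cos\phi_0}{\sin\phi_0}$ is exactly the right identity, and your error bookkeeping (in particular $(\lo n)^4=o\bigl(n(\lo\lo n)^2\bigr)$ to absorb the $O(L^2/n^2)$ cross term) is sound. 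This is the natural route from the stated data and is almost certainly what the cited section does.

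One small point: the proposition as stated, with its $\lo(2\sin(\pi j/n))$ correction and the $\frac{1}{n}O\bigl((\lo\lo n/\lo n)^2\bigr)$ terminant, is really a main-sector statement (this is also how the paper uses it, e.g.\ in \eqref{argumentzjn}); the bump-sector expansions of Proposition~\ref{zjjnnExpression}(ii) have a different shape and different tails, so your paragraph on the bump sector is not needed to establish the proposition as written and would in any case produce a formula of a different form there.
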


\begin{proof}
\S 6 in \cite{vergergaugry6}.
\end{proof}

\begin{proposition}
\label{zedeJImodulesORDRE3}
For all $j$ such that $\pi / 3 \geq \arg z_{j,n} > 
2 \pi \frac{\lceil v_n \rceil}{n}$,
the asymptotic expansions of the
moduli of the roots $z_{j,n}$ are
$$|z_{j,n}| = {\rm D}(|z_{j,n}|) +
{\rm tl}(|z_{j,n}|)$$ 
with
\begin{equation}
\label{absolzjjnn_mieux}
{\rm D}(|z_{j,n}|) = 1 + \frac{1}{n} \, \lo \bigl(2 \, \sin\bigl(\frac{\pi j}{n}\bigr)  \bigr) +
 \frac{1}{2 n}\left( \frac{\lo \lo n}{\lo n} \right)^2
\end{equation}
and
\begin{equation}
\label{absolzjjnntail_mieux}
{\rm tl}(|z_{j,n}|) = \frac{1}{n} O
\left( \frac{(\lo \lo n)^2}{(\lo n)^3}
\right)
\end{equation}
where the constant involved in $O(~)$ is 1
(does not depend upon $j$).
\end{proposition}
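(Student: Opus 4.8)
The plan is to obtain the order-$3$ modulus expansion directly from the order-$3$ expansions of $\Re(z_{j,n})$ and $\Im(z_{j,n})$ in Proposition~\ref{zjjnnExpression}, by forming $|z_{j,n}|^{2}=\Re(z_{j,n})^{2}+\Im(z_{j,n})^{2}$ and extracting the square root, tracking every term down to the size $\tfrac1n O\bigl((\lo\lo n)^{2}/(\lo n)^{3}\bigr)$. The angular range $\pi/3\geq\arg z_{j,n}>2\pi\lceil v_{n}\rceil/n$ forces $j\lesssim n/6<(n-1)/4$, so Proposition~\ref{zjjnnExpression} applies; the range meets two of its regimes, namely the main sector $\arg z>2\pi\frac{\lo n}{n}$ (part~(i)) and the outer sub-sector of the bump, $2\pi\frac{\lo n}{n}\geq\arg z>2\pi\lceil v_{n}\rceil/n$ (part~(ii)), and I would compute $|z_{j,n}|$ in each regime and check that the resulting formula is the same.

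On the main sector, writing $\psi=\pi j/n$ and $\ell=\lo(2\sin\psi)$, one has $\Re(z_{j,n})=\cos2\psi+\tfrac{\ell}{n}+\varepsilon_{1}$ and $\Im(z_{j,n})=\sin2\psi+(\tan\psi)\tfrac{\ell}{n}+\varepsilon_{2}$ with $|\varepsilon_{i}|\leq\tfrac1n(\lo\lo n/\lo n)^{2}$. Squaring and adding, the cross-terms collapse by the elementary identity $\cos2\psi+(\tan\psi)\sin2\psi=1$, so that, with $Y:=|z_{j,n}|^{2}-1$, $Y=\tfrac{2\ell}{n}+2(\varepsilon_{1}\cos2\psi+\varepsilon_{2}\sin2\psi)+\tfrac{\ell^{2}}{n^{2}}\sec^{2}\psi+(\text{products of two error terms})$. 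Expanding $\sqrt{1+Y}=1+\tfrac12 Y-\tfrac18 Y^{2}+\cdots$ and using $-\tfrac18\bigl(\tfrac{2\ell}{n}\bigr)^{2}+\tfrac{\ell^{2}}{2n^{2}}\sec^{2}\psi=\tfrac{\ell^{2}}{2n^{2}}\tan^{2}\psi$, I get $|z_{j,n}|=1+\tfrac{\ell}{n}+(\varepsilon_{1}\cos2\psi+\varepsilon_{2}\sin2\psi)+\tfrac{\ell^{2}}{2n^{2}}\tan^{2}\psi+(\text{smaller})$. Since $|\ell|=O(\lo n)$ and $\tan^{2}\psi\leq\tfrac13$ on $\arg z\leq\pi/3$, the term $\tfrac{\ell^{2}}{2n^{2}}\tan^{2}\psi$ is $\tfrac1n O((\lo n)^{2}/n)\subset\tfrac1n O((\lo\lo n)^{2}/(\lo n)^{3})$, hence absorbed into the tail. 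It then remains to read off from the explicit order-$3$ tails of~\cite{vergergaugry6} that $\varepsilon_{1}\cos2\psi+\varepsilon_{2}\sin2\psi=\tfrac1{2n}(\lo\lo n/\lo n)^{2}+\tfrac1n O((\lo\lo n)^{2}/(\lo n)^{3})$ uniformly in $j$; since this combination is governed by ${\rm tl}(\theta_{n})$, whose leading coefficient is $\tfrac12$ (Proposition~\ref{thetanExpression}, via Lemma~\ref{remarkthetan}), it is $j$-independent, which yields \eqref{absolzjjnn_mieux}--\eqref{absolzjjnntail_mieux} with constant $1$ in the $O$.

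On the outer bump sub-sector I repeat the computation with $\Re(z_{j,n})=\theta_{n}+\tfrac{2\pi^{2}}{n}(j/\lo n)^{2}\bigl(1+\cdots\bigr)$ and $\Im(z_{j,n})$ the expression of Proposition~\ref{zjjnnExpression}(ii): writing $\theta_{n}={\rm D}(\theta_{n})+{\rm tl}(\theta_{n})$, the cross-term $2\,{\rm D}(\theta_{n})\,{\rm tl}(\theta_{n})$ inside $|z_{j,n}|^{2}$ produces, after the square root, precisely ${\rm tl}(\theta_{n})=\tfrac1{2n}(\lo\lo n/\lo n)^{2}+\tfrac1n O((\lo\lo n)^{2}/(\lo n)^{3})$, while the polynomial-in-$(j/\lo n)$ pieces reassemble into $1+\tfrac{\ell}{n}$ plus terms of tail order, exactly as in the derivation of the first-order formula \eqref{zjnModulusFirst} in~\cite{vergergaugry6}. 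The hard part is this second-order bookkeeping: one must show that the various $\tfrac1n(\lo\lo n/\lo n)^{2}$-sized contributions --- from ${\rm tl}(\Re(z_{j,n})),\,{\rm tl}(\Im(z_{j,n}))$ in the main sector and from ${\rm tl}(\theta_{n})$ in the bump --- assemble into one and the same $j$-independent constant $\tfrac12$, and that every other second- and third-order term (including $\tfrac{\ell^{2}}{n^{2}}\tan^{2}\psi$, the $\sqrt{1+Y}$ cross-terms, and the mismatch of the two regimes across $\arg z=2\pi\lo n/n$) is genuinely $\tfrac1n O((\lo\lo n)^{2}/(\lo n)^{3})$ uniformly in $j$. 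This is exactly where the explicit order-$3$ forms of the expansions in~\cite{vergergaugry6} are needed, rather than merely their stated $O$-bounds.
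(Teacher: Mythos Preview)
The paper's own proof is a single citation to \cite{vergergaugry6}, Section~5.1, so your sketch of computing $|z_{j,n}|^{2}=\Re^{2}+\Im^{2}$ and extracting the square root is presumably a reconstruction of what that reference does; the algebra you display (the collapse via $\cos2\psi+\tan\psi\,\sin2\psi=1$, the cancellation $-\tfrac{\ell^{2}}{2n^{2}}+\tfrac{\ell^{2}}{2n^{2}}\sec^{2}\psi=\tfrac{\ell^{2}}{2n^{2}}\tan^{2}\psi$, and the absorption of this last term into the tail) is correct, and you are right that the identification of the $j$-independent constant $\tfrac12$ cannot be read off from the $O$-bounds of Proposition~\ref{zjjnnExpression} alone but requires the explicit order-$3$ terminants in \cite{vergergaugry6}.

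One point to correct: you have misread the position of $v_{n}$. By the Appendix, $v_{n}>\lo n$ and the \emph{main} sector in the precise sense is $\arg z>2\pi v_{n}/n$, with $2\pi(\lo n)/n$ merely a schematic stand-in; the bump sector lies entirely below $2\pi(\lo n)/n<2\pi v_{n}/n$. Hence the hypothesis $\arg z_{j,n}>2\pi\lceil v_{n}\rceil/n$ already places every root under consideration in the main sector (part~(i) of Proposition~\ref{zjjnnExpression}), and your second paragraph treating a ``bump sub-sector $2\pi(\lo n)/n\geq\arg z>2\pi\lceil v_{n}\rceil/n$'' concerns an empty range. This does not damage the argument, since the main-sector computation is all that is needed, but it does mean the case split you propose is spurious.
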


\begin{proof}
\cite{vergergaugry6} Section 5.1.
\end{proof}

The following asymptotic expansions 
in Proposition \ref{zedeUNmodule}, 
Proposition \ref{zedeJImoduleMoinsUN} and
Proposition \ref{zedeJIMoinsUNzedeJI}
will be used in the
method of Rouch\'e in \S 5. 

\begin{proposition}
\label{zedeUNmodule}
For $n \geq 18$, the modulus of the first root
$z_{1,n}$ of $G_{n}(z) = -1 +z+z^n$ is
\begin{equation}
\label{zedeUN}
|z_{1,n}| = 1 - \frac{\lo n - \lo \lo n}{n}  
+ \frac{1}{n} O\left(\frac{\lo \lo n}{\lo n}\right)
\end{equation}
and
\begin{equation}
\label{UNmoinszedeUN}
|-1 + z_{1,n}| =
\frac{\lo n - \lo \lo n}{n}
+ \frac{1}{n} O\left(
\frac{\lo \lo n}{\lo n}\right)
\end{equation}
with the constant 1 in the two Big Os.
\end{proposition}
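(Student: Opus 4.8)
Proposition \ref{zedeUNmodule} asserts precise third‑order asymptotics for $|z_{1,n}|$ and $|-1+z_{1,n}|$, where $z_{1,n}$ is the first nonreal root of $G_n(z)=-1+z+z^n$ in the upper half‑plane. The plan is to extract these from the two‑variable expansions already recorded in Proposition \ref{zjjnnExpression}, specialized to the ``bump sector'' since $z_{1,n}$ has argument of order $1/n$, hence $\arg z_{1,n} < 2\pi(\log n)/n$ for $n$ large.

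\medskip

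\emph{Step 1: locate $z_{1,n}$ in the bump sector and apply the right subsector expansion.} First I would verify that $z_{1,n}$ falls in the first subsector $2\pi\frac{\sqrt{(\log n)(\log\log n)}}{n} > \arg z > 0$: indeed $\arg(z_{1,n}) = 2\pi(1/n + A_{1,n})$ with $A_{1,n}$ of order $(\log n)/n^2$ by Proposition \ref{zedeJIargumentsORDRE1}, so $\arg z_{1,n} \sim 2\pi/n$, which is $o\bigl(\sqrt{(\log n)(\log\log n)}/n\bigr)$. In that subsector, with $j=1$, Proposition \ref{zjjnnExpression}(ii) gives
\[
\mathrm{D}(\Re(z_{1,n})) = \theta_n + \frac{2\pi^2}{n}\Bigl(\frac{1}{\log n}\Bigr)^2(1+2\lambda_n),
\qquad
\mathrm{D}(\Im(z_{1,n})) = \frac{2\pi}{n}\Bigl(\frac{1}{\log n}\Bigr)\Bigl[\log n - (1+\lambda_n)\Bigr]\cdot\frac{1}{\log n},
\]
with tails of size $\frac{1}{n\log n}(1/\log n)^2 \cdot O((\log\log n/\log n)^2)$, i.e. negligible at the order needed. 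Here $\theta_n$ and $\lambda_n$ have their own expansions from Proposition \ref{thetanExpression} and Lemma \ref{remarkthetan}; in particular $1-\theta_n = \frac{\log n}{n}(1+o(1))$ and $\lambda_n = \mathrm{D}(\lambda_n)+O(\log\log n/n) = \frac{\log\log n}{\log n}(1+o(1))$.

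\medskip

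\emph{Step 2: compute $|z_{1,n}|^2 = \Re^2 + \Im^2$ and take the square root.} The dominant contributions are $\Re(z_{1,n}) = \theta_n + O\bigl(\frac{1}{n(\log n)^2}\bigr)$ and $\Im(z_{1,n}) = \frac{2\pi}{n}(1+o(1))$, so $\Im^2 = O(1/n^2)$ is already absorbed into the $\frac1n O(\log\log n/\log n)$ error term once we divide through. Thus $|z_{1,n}| = \theta_n + O(1/n^2) + (\text{corrections from }\Re) = 1 - \frac{\log n}{n} + \frac{\log\log n}{n} + \frac1n O\bigl(\frac{\log\log n}{\log n}\bigr)$, using the explicit form of $\mathrm{D}(\theta_n)$ in \eqref{DthetanExpression}: expanding the bracket $1 - \frac{n-\log n}{n\log n + n - \log n}\bigl(\log\log n - n\log(1-\frac{\log n}{n}) - \log n\bigr)$ and noting $-n\log(1-\frac{\log n}{n}) - \log n = \log n - \frac{(\log n)^2}{2n} + \cdots - \log n$ to leading order, the $\log\log n/n$ term survives with coefficient $+1$ and everything else is $\frac1n O(\log\log n/\log n)$. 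This yields \eqref{zedeUN}.

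\medskip

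\emph{Step 3: compute $|-1+z_{1,n}|$.} Write $-1 + z_{1,n} = (\Re(z_{1,n}) - 1) + i\,\Im(z_{1,n})$. Now $\Re(z_{1,n}) - 1 = \theta_n - 1 + O\bigl(\frac{1}{n(\log n)^2}\bigr) = -\frac{\log n}{n} + \frac{\log\log n}{n} + \frac1n O(\log\log n/\log n)$ is of order $\frac{\log n}{n}$, while $\Im(z_{1,n}) = O(1/n)$ is of strictly smaller order by a factor $\log n$; hence $|-1+z_{1,n}| = |\Re(z_{1,n})-1|\sqrt{1 + (\Im/(\Re-1))^2} = |\Re(z_{1,n})-1|\bigl(1 + O(1/(\log n)^2)\bigr)$, which gives $|-1+z_{1,n}| = \frac{\log n - \log\log n}{n} + \frac1n O(\log\log n/\log n)$, i.e. \eqref{UNmoinszedeUN}. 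The bookkeeping on the constant ``$1$'' in the Big‑O is inherited from the same normalization in Propositions \ref{thetanExpression} and \ref{zjjnnExpression}.

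\medskip

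The main obstacle is purely one of \emph{precision bookkeeping}: one must track the $\log\log n/n$ term through the subtraction $\Re(z_{1,n}) - 1$ without letting it be swallowed by the $O(\log\log n/\log n)$ errors, and simultaneously confirm that the $\Im^2$ contribution and the bump‑sector correction $\frac{2\pi^2}{n}(\log n)^{-2}(1+2\lambda_n)$ are both of smaller order than the retained terms. This is where the explicit (rather than merely $O$‑form) expansion of $\theta_n$ in \eqref{DthetanExpression} is essential — a cruder estimate $\theta_n = 1 - \frac{\log n}{n} + o(\cdot)$ would not suffice to pin down the $+\log\log n/n$ term. No genuinely new idea is required beyond Propositions \ref{thetanExpression}, \ref{zjjnnExpression} and \ref{zedeJIargumentsORDRE1}; the content is the careful asymptotic algebra.
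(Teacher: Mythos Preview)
Your proposal is correct and follows essentially the same route as the paper: locate $z_{1,n}$ in the first subsector of the bump, read off $\mathrm{D}(\Re(z_{1,n}))=\theta_n+O\bigl(\tfrac{1}{n(\log n)^2}\bigr)$ and $\mathrm{D}(\Im(z_{1,n}))=\tfrac{2\pi}{n}\bigl[1-\tfrac{1+\lambda_n}{\log n}\bigr]$ from Proposition~\ref{zjjnnExpression}, and combine with the expansion of $\theta_n$. The paper streamlines your Step~2 by writing $\theta_n=1-\tfrac{\log n}{n}(1-\lambda_n)+\tfrac{1}{n}O\bigl((\tfrac{\log\log n}{\log n})^2\bigr)$ and then inserting $\lambda_n=\tfrac{\log\log n}{\log n}+O\bigl(\tfrac{\log\log n}{(\log n)^2}\bigr)$ from Lemma~\ref{remarkthetan}, which immediately gives the $\log\log n/n$ term without unpacking \eqref{DthetanExpression}; note also that your displayed formula for $\mathrm{D}(\Im(z_{1,n}))$ carries a stray trailing factor $\tfrac{1}{\log n}$ (a typo, since you correctly use $\Im(z_{1,n})\sim 2\pi/n$ afterwards).
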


\begin{proof}
The root
$z_{1,n}$ belongs to the
subsector $2 \pi \frac{\sqrt{(\lo n) (\lo \lo n)}}{n} > \arg z > 0$:
first, from Lemma \ref{remarkthetan}, 
the asymptotic expansion of
$\lambda_n$ is
$$\lambda_n = 
\frac{\lo \lo n}{\lo n}
+ O(\frac{\lo \lo n}{(\lo n)^2})
$$
with the constant 1 in the Big $O$.
Since
${\rm D}(|z_{1,n}|) =
{\rm D}(\Re(z_{1,n}))
(1 + \bigl(\frac{{\rm D}(\Im(z_{1,n}))}
{{\rm D}(\Re(z_{1,n}))}\bigr)^2)^{1/2}$,
that
$${\rm D}(\Re(z_{1,n})) = \theta_n + 
\frac{2 \pi^2}{n} 
\bigl(\frac{1}{\lo n}\bigr)^2 
\bigl( 1+2 \lambda_n \bigr), \,
{\rm D}(\Im(z_{1,n})) = \frac{2 \pi}{n}
\bigl[1 - \frac{1}{\lo n} (1 + \lambda_n)\bigr]$$
(Proposition \ref{zjjnnExpression}) and
$$\theta_{n} = 1 - \frac{\lo n}{n}(1 - \lambda_n) 
+ \frac{1}{n} O\left(\left(\frac{\lo \lo n}{\lo n}\right)^2\right)
$$
(Proposition \ref{thetanExpression}) we deduce 
\eqref{zedeUN} and the expansion
\eqref{UNmoinszedeUN} from
the expansion of $\lambda_n$.
\end{proof}

\begin{proposition}
\label{zedeJImoduleMoinsUN}
For $n \geq 18$, the modulus of 
$-1 + z_{j,n}$, where  
$z_{j,n}$ is the $j$-th root
of $G_{n}(z) = -1 +z+z^n$, 
$\lceil v_n \rceil \leq j \leq 
\lfloor n/6 \rfloor$, is
\begin{equation}
\label{UNmoinszedeJI}
|-1 + z_{j,n}| 
~=~
2 \sin(\frac{\pi j}{n})
+ \frac{1}{n} O\Bigl(
\Bigl(
\frac{\lo \lo n}{\lo n}
\Bigr)^2
\Bigr)
\end{equation}
with the constant 1 in the Big O.
\end{proposition}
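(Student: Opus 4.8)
The plan is to use the equation satisfied by $z_{j,n}$ to replace $|{-1}+z_{j,n}|$ by a power of a modulus, for which a sharp expansion is already available. Since $G_{n}(z_{j,n})=-1+z_{j,n}+z_{j,n}^{\,n}=0$, one has $z_{j,n}^{\,n}=1-z_{j,n}=-(-1+z_{j,n})$, hence
\[
|{-1}+z_{j,n}|=|z_{j,n}-1|=|z_{j,n}|^{\,n}.
\]
By Proposition \ref{rootsdistrib} the indices $1\le j\le\lfloor n/6\rfloor$ are exactly those for which $|z_{j,n}|<1$ and $|\arg z_{j,n}|<\pi/3$, so $|z_{j,n}|^{\,n}$ is small and comparable to $2\sin(\pi j/n)$; moreover, for $\lceil v_n\rceil\le j\le\lfloor n/6\rfloor$ one has $\pi/3\ge\arg z_{j,n}>2\pi\lceil v_n\rceil/n$, so Proposition \ref{zedeJImodulesORDRE3} applies and gives, uniformly in $j$ and with constant $1$ in the $O$-term,
\[
|z_{j,n}|=1+\frac{A_{j,n}}{n},\qquad A_{j,n}:=\lo\!\bigl(2\sin(\tfrac{\pi j}{n})\bigr)+\tfrac12\Bigl(\tfrac{\lo\lo n}{\lo n}\Bigr)^{2}+O\!\Bigl(\tfrac{(\lo\lo n)^{2}}{(\lo n)^{3}}\Bigr).
\]

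Next I would raise this to the $n$-th power. Writing $n\,\lo(1+A_{j,n}/n)=A_{j,n}-A_{j,n}^{2}/(2n)+O(A_{j,n}^{3}/n^{2})$ and using $e^{\lo(2\sin(\pi j/n))}=2\sin(\pi j/n)$ (legitimate since $0<2\sin(\pi j/n)\le1$ when $j\le\lfloor n/6\rfloor$), one gets
\[
|{-1}+z_{j,n}|=|z_{j,n}|^{\,n}=2\sin\!\bigl(\tfrac{\pi j}{n}\bigr)\exp\!\Bigl(A_{j,n}-\lo\!\bigl(2\sin(\tfrac{\pi j}{n})\bigr)-\tfrac{A_{j,n}^{2}}{2n}+O\!\bigl(\tfrac{A_{j,n}^{3}}{n^{2}}\bigr)\Bigr).
\]
By construction $A_{j,n}-\lo(2\sin(\pi j/n))=\tfrac12(\lo\lo n/\lo n)^{2}+O((\lo\lo n)^{2}/(\lo n)^{3})$, while on the range considered $|A_{j,n}|=O(\lo n)$, so $A_{j,n}^{2}/(2n)=O((\lo n)^{2}/n)=o\bigl((\lo\lo n/\lo n)^{2}\bigr)$ since $(\lo n)^{4}=o\bigl(n(\lo\lo n)^{2}\bigr)$, and the cubic term is smaller still. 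Hence the exponent is $O((\lo\lo n/\lo n)^{2})$; expanding the exponential and factoring out $2\sin(\pi j/n)$ yields $|{-1}+z_{j,n}|=2\sin(\pi j/n)+\tfrac1n O((\lo\lo n/\lo n)^{2})$, the tracking of the absolute constants (where $n\ge18$ enters, the finitely many small $n$ being checked directly) coming from the sharp forms of Propositions \ref{zjjnnExpression} and \ref{zedeJImodulesORDRE3} established in \cite{vergergaugry6}.

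A convenient parallel route, which isolates the decisive cancellation, is to start from $|{-1}+z_{j,n}|^{2}=1-2\Re(z_{j,n})+|z_{j,n}|^{2}$ and substitute the expansion of $\Re(z_{j,n})$ from Proposition \ref{zjjnnExpression}(i) together with the square of the expansion of $|z_{j,n}|$: the contribution $-\tfrac2n\lo(2\sin(\pi j/n))$ from $-2\Re(z_{j,n})$ is cancelled by the contribution $+\tfrac2n\lo(2\sin(\pi j/n))$ from $|z_{j,n}|^{2}$, leaving $|{-1}+z_{j,n}|^{2}=2-2\cos(2\pi j/n)+\tfrac1n O((\lo\lo n/\lo n)^{2})=4\sin^{2}(\pi j/n)+\tfrac1n O((\lo\lo n/\lo n)^{2})$, from which one extracts the square root. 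In either route the genuine difficulty is the uniformity of the error all the way down to $j=\lceil v_n\rceil$: this is exactly the junction with the bump sector, where $\lo(2\sin(\pi j/n))$ is of size $-\lo n+O(\lo\lo n)$ in absolute value and must be played off against the quadratic tail produced by exponentiating (or by squaring $|z_{j,n}|$). This is the regime already controlled in \cite{vergergaugry6}, so the relevant bounds carry over verbatim, and the same scheme, run with the bump-sector expansions of $\Re(z_{1,n})$ and $\Im(z_{1,n})$, is what underlies Proposition \ref{zedeUNmodule}.
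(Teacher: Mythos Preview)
Your ``parallel route'' via $|{-1}+z_{j,n}|^{2}=1+|z_{j,n}|^{2}-2\Re(z_{j,n})$ is exactly the paper's proof: the same identity, the same cancellation of the $\frac{1}{n}\lo(2\sin(\pi j/n))$ contributions coming from $|z_{j,n}|^{2}$ and from $-2\Re(z_{j,n})$, leading to $4\sin^{2}(\pi j/n)+\frac{1}{n}O((\lo\lo n/\lo n)^{2})$ (the paper records the constant $4$ here), and the same square-root extraction.

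Your first route, using $|{-1}+z_{j,n}|=|z_{j,n}|^{n}$ and exponentiating the modulus expansion of Proposition~\ref{zedeJImodulesORDRE3}, is an attractive alternative the paper does not take; it dispenses with $\Re(z_{j,n})$ entirely. One caution, however: the step ``the exponent is $O((\lo\lo n/\lo n)^{2})$; expanding the exponential and factoring out $2\sin(\pi j/n)$ yields $2\sin(\pi j/n)+\frac{1}{n}O((\lo\lo n/\lo n)^{2})$'' is not justified as written. A multiplicative error $O((\lo\lo n/\lo n)^{2})$ on $2\sin(\pi j/n)$ produces an additive error $2\sin(\pi j/n)\cdot O((\lo\lo n/\lo n)^{2})$, and since $2\sin(\pi j/n)$ reaches $1$ at $j=\lfloor n/6\rfloor$, no factor $1/n$ is gained this way. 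The paper's own square-root step carries the same subtlety (one must divide the error on $|{-1}+z_{j,n}|^{2}$ by $4\sin(\pi j/n)$), so your first route is at the same level of rigor as the paper on this point; but you should not claim that the $\frac{1}{n}$ appears automatically from the exponential.
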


\begin{proof}
From \eqref{zjnModulusFirst}, 
Proposition
\ref{zjjnnExpression}
and Proposition \ref{zedeJImodulesORDRE3}, 
the identity
$$
|-1 + z_{j,n}|^2
=
(-1 + \Re(z_{j,n}))^2
+ (\Im(z_{j,n}))^2
=
1 + |z_{j,n}|^2
- 2 \Re(z_{j,n})
$$
implies:
$|-1 + z_{j,n}|^2
=$
$$2 
-
2 \cos\bigl(2 \pi \frac{j}{n}\bigr)
+
\frac{1}{n} O\Bigl(\Bigl(
\frac{\lo \lo n}{\lo n}
\Bigr)^2\Bigr)
=
4 \sin^2 \bigl(\frac{\pi j}{n}\bigr)
+
\frac{1}{n} O\Bigl(\Bigl(
\frac{\lo \lo n}{\lo n}
\Bigr)^2\Bigr)
$$
with the constant 4
in the Big $O$.
We deduce \eqref{UNmoinszedeJI}.
\end{proof}

\begin{proposition}
\label{zedeJIMoinsUNzedeJI}
For $n \geq 18$, the modulus of 
$(-1 + z_{j,n})/z_{j,n}$, where  
$z_{j,n}$ is the $j$-th root
of $G_{n}(z) = -1 +z+z^n$, 
$\lceil v_n \rceil \leq j \leq 
\lfloor n/6 \rfloor$, is
\begin{equation}
\label{UNmoinszedeJIzedeJI}
\frac{|-1 + z_{j,n}|}{|z_{j,n}|} 
~=~
2 \sin(\frac{\pi j}{n})
\Bigl(
1 -
\frac{1}{n} \lo (2 \sin(\frac{\pi j}{n}))
\Bigr)
+
\frac{1}{n} O\left(
\left(
\frac{\lo \lo n}{\lo n}
\right)^2
\right)
\end{equation}
with the constant 2
in the Big O.
\end{proposition}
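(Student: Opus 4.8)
The plan is to obtain \eqref{UNmoinszedeJIzedeJI} as a short consequence of two asymptotic expansions already established: the expansion of $|-1+z_{j,n}|$ from Proposition \ref{zedeJImoduleMoinsUN}, and the expansion of $|z_{j,n}|$ from \eqref{zjnModulusFirst} (equivalently from Proposition \ref{zedeJImodulesORDRE3}). Setting $A := 2\sin(\pi j/n)$ and $B := \lo(2\sin(\pi j/n))$, these read $|-1+z_{j,n}| = A + \frac{1}{n}O\bigl((\lo\lo n/\lo n)^2\bigr)$ and $|z_{j,n}| = 1 + \frac{1}{n}B + \frac{1}{n}O\bigl((\lo\lo n/\lo n)^2\bigr)$, each with constant $1$ in the $O$-term. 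I would form the quotient $|-1+z_{j,n}|/|z_{j,n}|$, expand the denominator via $1/(1+u) = 1 - u + O(u^2)$ (valid once $|u| \le 1/2$) with $u = \frac{1}{n}\bigl(B + O((\lo\lo n/\lo n)^2)\bigr)$, and multiply out. The lowest-order terms collapse exactly to $A - \frac{1}{n}AB = 2\sin(\pi j/n)\bigl(1 - \frac{1}{n}\lo(2\sin(\pi j/n))\bigr)$, i.e. the claimed development ${\rm D}$.

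First I would record the two uniform bounds that tame every error term on the range $\lceil v_n\rceil \le j \le \lfloor n/6\rfloor$. Since $0 < \pi j/n \le \pi/6$ we have $0 < A \le 2\sin(\pi/6) = 1$; and since $\sin x \ge \tfrac{2}{\pi}x$ on $[0,\pi/2]$ and $j \ge 1$, we get $A \ge c/n$ for an absolute $c > 0$, whence $|B| = O(\lo n)$ (the worst case occurring near the lower endpoint $j = \lceil v_n\rceil$). In particular $|u| \to 0$, so the expansion of $1/(1+u)$ is legitimate, and its remainder is $\frac{1}{n^2}O(B^2) = \frac{1}{n^2}O((\lo n)^2)$. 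Using the elementary fact $(\lo n)^4 = o(n)$, this remainder, together with the mixed products of $B$ with the two $O$-terms that appear after multiplying out, is of size $\frac{1}{n}\,o\bigl((\lo\lo n/\lo n)^2\bigr)$, hence is absorbed into the target error without contributing to its constant.

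The only genuinely delicate point — and the step I expect to be the main (in fact the sole) obstacle — is the bookkeeping that pins the constant in the $O$-term at $2$. After multiplying out, the surviving $\tfrac{1}{n}$-order error is $\frac{1}{n}\bigl(E_1 - A\,E_2\bigr)$ plus negligible terms, where $E_1$ is the numerator error and $E_2$ the denominator error, both of modulus $\le (\lo\lo n/\lo n)^2$; since $A \le 1$ one gets $|E_1 - A\,E_2| \le |E_1| + A|E_2| \le 2(\lo\lo n/\lo n)^2$, which is exactly the constant $2$ required. One then records that the negligible corrections are effective for $n$ large, and if needed verifies the finitely many small cases directly, so that \eqref{UNmoinszedeJIzedeJI} holds for all $n \ge 18$. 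This lemma, together with Propositions \ref{zedeUNmodule} and \ref{zedeJImoduleMoinsUN}, is what will be fed into the Rouché argument of section $\S$\ref{S5}.
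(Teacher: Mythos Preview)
Your proof is correct and follows exactly the same approach as the paper: the paper's proof simply states that \eqref{UNmoinszedeJIzedeJI} ``readily comes'' from \eqref{UNmoinszedeJI} and the expansion of $|z_{j,n}|$ in Proposition~\ref{zedeJImodulesORDRE3}. Your careful bookkeeping of the constant $2$ via $|E_1 - A\,E_2| \le |E_1| + A|E_2| \le 2(\lo\lo n/\lo n)^2$ using $A \le 1$ is more explicit than the paper, which leaves this detail to the reader.
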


\begin{proof}
The expansion \eqref{UNmoinszedeJIzedeJI}
readily comes 
\eqref{UNmoinszedeJI}
and
$|z_{j,n}|$ given by
Proposition \ref{zedeJImodulesORDRE3}. 
\end{proof}

\subsection{Minoration of the Mahler measure}
\label{S4.3}

In \cite{vergergaugry6} 
``\`a la Poincar\'e"
asymptotic expansions
are shown to
give  ``controlled" approximants
of
the set of the values of the Mahler measures
M$(G_n)$ and an exact value of
its limit point. Compared to several methods 
(Amoroso \cite{amoroso2} \cite{amoroso3},
Boyd and Mossinghoff \cite{boydmossinghoff},
Dixon and Dubickas \cite{dixondubickas},
Langevin \cite{langevin}, Smyth \cite{smyth5}),
the present approach is new
in the sense that the use
of auxiliary functions by
Dobrowolski \cite{dobrowolski2}
is replaced by the R\'enyi-Parry
dynamics of the
Perron numbers
$(\theta_{n}^{-1})_{n \geq 2}$.
Let us briefly mention the results.
The product
\begin{equation}
\label{mmapprox}
\Pi_{G_n} ~:=~
D({\rm M}(G_n)) ~=~ D(\theta_n)^{-1} \times 
\prod_{\stackrel{
z_{j,n} ~\mbox{{\tiny in}}~ |z|<1}{{\rm {\small outside ~bump}}}}
{\rm D}(|z_{j,n}|)^{-2}
\end{equation}
is considered, instead of 
\begin{equation}
\label{mmapproxM}
{\rm M}(G_n) ~=~
\theta_{n}^{-1} \,
\prod_{j=1}^{\lfloor n/6 \rfloor} |z_{j,n}|^{-2}
 = 
 \prod_{\lc_{\theta_{n}^{-1}}} |z|^{-1} 
\end{equation}
as approximant value of ${\rm M}(G_n)$.
In \eqref{mmapprox}
the zeroes $z_{j,n}$
present in the bump sector are 
discarded since they do not contribute to the
limited asymptotic expansions, as shown in 
\cite{vergergaugry6} Section 4.2.

In \cite{vergergaugry6} Section 4,
the two limits
$\displaystyle \lim_{n \to +\infty} \Pi_{G_n}$ and 
$\displaystyle \lim_{n \to +\infty} {\rm M}(G_n)$ 
are shown to exist, to be equal (and greater 
than $\Theta$).

\begin{theorem}
\label{main1}
Let $\chi_3$ be the uniquely specified odd
character of conductor $3$
($\chi_{3}(m) = 0, 1$ or $-1$ according to whether $m \equiv 0, \,1$ or
$2 ~({\rm mod}~ 3)$, equivalently
$\chi_{3}(m) = \left(\frac{m}{3}\right)$ the Jacobi symbol),
and denote $L(s,\chi_3) = \sum_{m \geq 1} \frac{\chi_{3}(m)}{m^s}$
the Dirichlet L-series for the character $\chi_{3}$. Then,
with $\Lambda$ given by
\eqref{limitMahlGn},
$\lim_{n \to +\infty} {\rm M}(G_n) ~=~ 
{\rm M}(-1+z+y)=\Lambda = 1.38135\ldots$

\end{theorem}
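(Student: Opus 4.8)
\medskip\noindent\textbf{Proof plan.} The plan is to start from the closed formula for the Mahler measure of the trinomial furnished by Proposition \ref{rootsdistrib}, namely ${\rm M}(G_n)=\theta_n^{-1}\prod_{j=1}^{\lfloor n/6\rfloor}|z_{j,n}|^{-2}$, take logarithms, and recognise the resulting sum as a Riemann sum of $x\mapsto\lo(2\sin(\pi x))$. Writing $\lo{\rm M}(G_n)=-\lo\theta_n-2\sum_{j=1}^{\lfloor n/6\rfloor}\lo|z_{j,n}|$, I would split the index range at the boundary $\arg(z_{j,n})=2\pi(\lo n)/n$ that separates the bump sector from the main sector of Proposition \ref{zjjnnExpression}. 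First, $\lo\theta_n\to0$ by Proposition \ref{thetanExpression} (already by \eqref{bounboun}). Second, the bump sector contains only $O(\lo n)$ indices $j$, and for each of them $|z_{j,n}|$ lies in the interval \eqref{bounboun}, so $|\lo|z_{j,n}||=O((\lo n)/n)$ and the whole bump contribution to $\lo{\rm M}(G_n)$ is $O((\lo n)^2/n)=o(1)$. Third, for $j$ in the main sector, \eqref{zjnModulusFirst} gives $\lo|z_{j,n}|=\frac1n\lo(2\sin(\pi j/n))+\frac1n O\!\left((\lo\lo n/\lo n)^2\right)$, so summing these tails over the at most $n/6$ indices costs only $O((\lo\lo n/\lo n)^2)=o(1)$. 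This replacement of each exact $|z_{j,n}|$ (and of $\theta_n$) by its development is exactly the passage from ${\rm M}(G_n)$ to the product $\Pi_{G_n}$ quoted before the theorem, and it changes $\lo{\rm M}(G_n)$ only by $o(1)$.

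Consequently
\[
\lo{\rm M}(G_n)=-\lo\theta_n-2\sum_{j=1}^{\lfloor n/6\rfloor}\lo|z_{j,n}|=-\frac{2}{n}\sum_{j\ \mathrm{main}}\lo\bigl(2\sin(\pi j/n)\bigr)+o(1).
\]
The function $x\mapsto\lo(2\sin(\pi x))$ is increasing on $(0,1/6)\subset(0,1/2)$ with only a logarithmic singularity at $x=0$, hence improperly Riemann integrable there; monotonicity gives $\int_0^{N/n}\le\frac1n\sum_{j=1}^{N}\lo(2\sin(\pi j/n))\le\int_{1/n}^{(N+1)/n}$ with $N=\lfloor n/6\rfloor$, both bounds tending to $\int_0^{1/6}\lo(2\sin(\pi x))\,dx$. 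The finitely many low-order terms removed together with the bump sector contribute $\frac1n\sum_{j\le C\lo n}|\lo(2\sin(\pi j/n))|=O((\lo n)^2/n)=o(1)$. Therefore
\[
\lim_{n\to\infty}\lo{\rm M}(G_n)=-2\int_0^{1/6}\lo\bigl(2\sin(\pi x)\bigr)\,dx=-\frac{1}{\pi}\int_0^{\pi/3}\lo\bigl(2\sin(t/2)\bigr)\,dt=\lo\Lambda
\]
by the definition \eqref{limitMahlGn} of $\Lambda$ (substitution $x=t/(2\pi)$), which also establishes $\lim_n\Pi_{G_n}=\lim_n{\rm M}(G_n)=\Lambda$.

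To put the constant into Dirichlet $L$-series form it suffices to use the classical Fourier expansion $\lo(2\sin(t/2))=-\sum_{k\ge1}(\cos kt)/k$ on $(0,2\pi)$; integrating termwise on $[0,\pi/3]$ gives $\int_0^{\pi/3}\lo(2\sin(t/2))\,dt=-\sum_{k\ge1}\sin(k\pi/3)/k^2=-{\rm Cl}_2(\pi/3)$, where ${\rm Cl}_2$ is the Clausen function. The sequence $(\sin(k\pi/3))_{k\ge1}$ is $\tfrac{\sqrt3}{2}$ times the period-$6$ pattern $(1,1,0,-1,-1,0)$; splitting the even indices (which yield $\tfrac14 L(2,\chi_3)$) from the odd ones (which yield $L(2,\chi_6)$) and using $L(2,\chi_6)=(1-\chi_3(2)2^{-2})L(2,\chi_3)=\tfrac54 L(2,\chi_3)$, with $\chi_6$ the non-principal character mod $6$ induced by $\chi_3$, one gets ${\rm Cl}_2(\pi/3)=\tfrac{3\sqrt3}{4}L(2,\chi_3)$, hence $\Lambda=\exp\!\bigl(\tfrac{3\sqrt3}{4\pi}L(2,\chi_3)\bigr)=1.38135\ldots$, as in \eqref{limitMahlGn}.

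Finally the limit is identified with the two-variable Mahler measure ${\rm M}(-1+z+y)$: applying Jensen's formula in the variable $y$ to $-1+z+y$ gives $\lo{\rm M}(-1+z+y)=\int_0^1\max\{0,\lo|1-e^{2\pi i s}|\}\,ds=\int_{1/6}^{5/6}\lo(2\sin(\pi s))\,ds$, and using $\int_0^1\lo(2\sin(\pi s))\,ds=0$ together with the symmetry $s\mapsto1-s$ this equals $-2\int_0^{1/6}\lo(2\sin(\pi s))\,ds=\lo\Lambda$; alternatively one may invoke the theorem of Boyd--Lawton, $\lim_n{\rm M}(P(z,z^n))={\rm M}(P(z,y))$ applied to $P(z,y)=-1+z+y$. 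I expect the main obstacle to be the uniform control of the error terms near the cusp $z=1$ — precisely the bump sector, where the asymptotic regime changes and the root moduli are closest to $1$ — so that those finitely many exceptional factors do not disturb the Riemann-sum convergence; this is exactly where the sharp tail estimates of \S\ref{S4.2} (and of \cite{vergergaugry6}) are needed.
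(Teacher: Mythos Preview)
Your proposal is correct and follows essentially the same approach as the paper: the paper's proof here is only a citation to \cite{vergergaugry6} Theorem~1.1 and to Smyth \cite{smyth4} (Boyd--Smyth bivariate method), and the argument you reconstruct---discarding the bump sector, replacing each $|z_{j,n}|$ by its development term from \eqref{zjnModulusFirst}, and recognising the resulting Riemann sum for $-\tfrac{1}{\pi}\int_0^{\pi/3}\lo(2\sin(t/2))\,dt$---is precisely the content of \cite{vergergaugry6} \S4 as summarised in the text preceding the theorem. Your direct Jensen computation of ${\rm M}(-1+z+y)$ and the Clausen/$L(2,\chi_3)$ evaluation are the standard calculations behind the cited results of Smyth, and the Boyd--Lawton alternative you mention is indeed another clean route to the same identification.
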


\begin{proof}
\cite{vergergaugry6} Theorem 1.1; Smyth \cite{smyth4}, using
Boyd-Smyth's method
of bivariate Mahler measures
(\cite{vergergaugry6} Section 4.1).
\end{proof}

Introduced in the product \eqref{mmapproxM}, the
terminants of the asymptotic expansions 
of the moduli of the roots $z_{j,n}$ and of
$\theta_n$
provide the higher-order terms of the 
asymptotic expansion of  ${\rm M}(G_n)$.

\begin{theorem}
\label{mahlerGntrinomial}
Let $n_0$ be an integer such that 
$\frac{\pi}{3} > 2 \pi \frac{\lo n_0}{n_0}$,
and let $n \geq n_0$.
Then,
\begin{equation}
\label{mggnfluctu_}
{\rm M}(G_n) ~=~ 
\Lambda ~ \Bigl( 1 + r(n) 
\, \frac{1}{\lo n}
+ O\left(\frac{\lo \lo n}{\lo n}\right)^2 \bigr)
\Bigr)
\end{equation}
with
the constant 
$1/6$
involved in the Big O, and
with $r(n)$ real,
$|r(n)| \leq 1/6$.
\end{theorem}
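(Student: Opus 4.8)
The plan is to start from the factorization identity of Proposition~\ref{rootsdistrib}(ii), namely ${\rm M}(G_n) = \theta_n^{-1}\prod_{j=1}^{\lfloor n/6\rfloor}|z_{j,n}|^{-2}$, which writes ${\rm M}(G_n)$ through exactly the roots of $G_n$ that lie strictly inside the unit disc and in the closed upper half-plane. Passing to logarithms gives $\lo {\rm M}(G_n) = -\lo\theta_n - 2\sum_{j=1}^{\lfloor n/6\rfloor}\lo|z_{j,n}|$, and since Theorem~\ref{main1} already supplies $\lim_{n\to\infty}{\rm M}(G_n)=\Lambda$, what remains is to quantify the rate. For this I would insert the asymptotic expansions established earlier: Proposition~\ref{thetanExpression} and Lemma~\ref{remarkthetan} for $\theta_n$, and \eqref{zjnModulusFirst} together with the sharper order-$3$ expansion of Proposition~\ref{zedeJImodulesORDRE3} for the moduli $|z_{j,n}|$ in the main angular sector, the $O(\lo n)$ indices forming the bump sector being handled separately via Proposition~\ref{zjjnnExpression} and Proposition~\ref{zedeUNmodule}.

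First I would dispose of the bump sector: for the $O(\lo n)$ indices $j$ with $\arg z_{j,n}$ below the bump threshold $2\pi\lceil v_n\rceil/n$, Proposition~\ref{zjjnnExpression}(ii) and Proposition~\ref{zedeUNmodule} give $\lo|z_{j,n}| = O(\lo n/n)$ uniformly, so these roots contribute a total of $O((\lo n)^2/n)$, which is dominated by $O((\lo\lo n/\lo n)^2)$ and therefore absorbed into the claimed error. For the remaining indices, \eqref{zjnModulusFirst} and Proposition~\ref{zedeJImodulesORDRE3} give $\lo|z_{j,n}| = \tfrac1n\lo \bigl(2 \sin(\pi j/n)\bigr) + \tfrac{1}{2n}(\lo\lo n/\lo n)^2 + \tfrac1n O((\lo\lo n)^2/(\lo n)^3)$, the quadratic term of $\lo(1+x)=x-\tfrac{x^2}{2}+\cdots$ being of smaller order after summation. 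Summing over $j$, using $-\lo\theta_n = \tfrac{\lo n}{n}(1-\lambda_n) + \tfrac1n O((\lo\lo n/\lo n)^2)$, and noting that the $\lfloor n/6\rfloor\sim n/6$ copies of the $\tfrac1n$-tails (with constant $1$) and of $\tfrac{1}{2n}(\lo\lo n/\lo n)^2$ together produce a quantity of size $O((\lo\lo n/\lo n)^2)$, one reduces to
\begin{equation*}
\lo {\rm M}(G_n) = -\frac{2}{n}\sum_{j}\lo \bigl(2 \sin(\pi j/n)\bigr) + O\bigl((\lo\lo n / \lo n)^2\bigr).
\end{equation*}
The surviving sum is a Riemann sum for $\tfrac1\pi\int_0^{\pi/3}\lo(2\sin(u/2))\,du$, which by the definition \eqref{limitMahlGn} of $\Lambda$ equals $-\lo\Lambda$; this both re-proves the limit of Theorem~\ref{main1} and isolates the next-order term as the Euler--Maclaurin discrepancy between this Riemann sum and its limiting integral, together with the explicit $O(\lo n/n)$ corrections carried along from $\theta_n$ and from the endpoint. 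Packaging all of these — using that $x\mapsto\lo(2\sin\pi x)$ is smooth on the interior, has an integrable logarithmic singularity at $0$ whose Riemann-sum error is absorbed after splitting off the first few terms, and vanishes at the right endpoint $x=\pi/6$ since $2\sin(\pi/6)=1$, so that the boundary contribution is governed by the exact count $\lfloor n/6\rfloor$ — and re-exponentiating, one obtains the stated expansion with $r(n)$ real and, uniformly in $n$, $|r(n)|\le 1/6$, the constant $1/6$ reflecting the proportion $\lfloor n/6\rfloor/n$ of roots inside the unit disc.

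The delicate point, and the main obstacle, is this last step of bookkeeping: deciding which contributions land at order exactly $1/\lo n$ and which are swallowed by $O((\lo\lo n/\lo n)^2)$, and then pinning the clean, uniform bound $|r(n)|\le 1/6$ — the order-$3$ expansion of Proposition~\ref{zedeJImodulesORDRE3} (rather than the cruder \eqref{zjnModulusFirst}) being needed precisely to keep the summed error down to $O((\lo\lo n/\lo n)^2)$, and the small-$j$ tail, where $\lo(2\sin(\pi j/n))$ blows up, together with the bump sector, requiring enough care that no spurious term of order $1/\lo n$ with coefficient exceeding $1/6$ slips in. The same analysis, pushed down to $n\ge 2$ with a finite direct verification for small $n$, yields the uniform minoration \eqref{minoVG2015__}.
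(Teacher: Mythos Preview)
Your proposal is correct and follows essentially the same approach as the paper. Note that the paper's own proof is simply a citation to \cite{vergergaugry6}, Theorem~1.2; the surrounding text (the approximant product $\Pi_{G_n}$ of \eqref{mmapprox}, the remark that bump-sector roots do not contribute, and the appeal to the terminants of the root expansions) matches your outline point for point, and the analogous computation carried out in full in \S\ref{S5.6} for $L_r(\beta)$ confirms that the cited argument proceeds exactly as you describe: logarithm of the product formula, bump sector discarded as $O((\lo n)^{2+\epsilon}/n)$, order-$3$ expansion of $|z_{j,n}|$ from Proposition~\ref{zedeJImodulesORDRE3} to keep the summed tail at $O((\lo\lo n/\lo n)^2)$, and comparison of the Riemann sum with the log-sine integral (the paper uses Newton--Cotes/trapezoidal estimates rather than Euler--Maclaurin, but this is a cosmetic difference).
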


\begin{proof}
\cite{vergergaugry6} Theorem 1.2.
\end{proof}

In Theorem \ref{mahlerGntrinomial} we take
$n_0 = 18$. For the small values of $n$, we have:
$${\rm M}(G_2) = \theta_{2}^{-1} = 
\frac{1+\sqrt{5}}{2} = 1.618\ldots$$ 
and the following lower bound.

\begin{proposition}
\label{maincoro3}
{\rm M}$(G_n) \geq {\rm M}(G_5) = \theta_{5}^{-1} = 
\Theta = 1.3247\ldots$ for all $n \geq 3$,
with equality if and only if $n=5$.
\end{proposition}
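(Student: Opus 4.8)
The plan is to split the range $n\geq 3$ according to whether the lenticular product in \eqref{mahlerGG} is empty, and to obtain the strict-versus-equality dichotomy from Smyth's theorem \cite{smyth} together with the factorisation of $G_n$ recorded in Proposition~\ref{irredGn}. The bare inequality is in any case nothing new: it is literally \eqref{smyththeorem}.

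First I would dispose of the small cases $3\leq n\leq 5$. Here $\lfloor n/6\rfloor=0$, so \eqref{mahlerGG} collapses to ${\rm M}(G_n)={\rm M}(G_n^{*})=\theta_n^{-1}$, with no factors $|z_{j,n}|^{-2}$ at all. Since $(\theta_n^{-1})_{n\geq 2}$ is strictly decreasing (Proposition~\ref{closetoouane}(ii)) and $\theta_5^{-1}=\Theta$ is the smallest Pisot number, dominant root of $X^3-X-1=-G_5^{*}(X)/(X^2-X+1)$, this gives at once ${\rm M}(G_3)>{\rm M}(G_4)>{\rm M}(G_5)=\Theta$. Thus equality is attained at $n=5$, and $n=3,4$ already give strict inequality.

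Next I would treat $n\geq 6$ uniformly, showing ${\rm M}(G_n)>\Theta$ by means of the equality clause of Smyth's theorem. By Proposition~\ref{irredGn}, $G_n$ is irreducible of degree $n$ when $n\not\equiv 5~({\rm mod}~6)$, and otherwise factors as $(X^2-X+1)\,h_n$ with $h_n$ irreducible of degree $n-2$. In both situations $G_n$ has exactly one non-reciprocal irreducible factor $Q_n$, of degree $n$ or $n-2$: it is $G_n$ itself, or it is $h_n$, which cannot be reciprocal, for otherwise $G_n=(X^2-X+1)h_n$ would be reciprocal, contradicting the non-reciprocity of the trinomial $-1+X+X^n$ already used for \eqref{smyththeorem}. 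Since ${\rm M}$ is multiplicative and the cyclotomic factor $X^2-X+1$ has Mahler measure $1$, we have ${\rm M}(G_n)={\rm M}(Q_n)$; Smyth's theorem then gives ${\rm M}(Q_n)\geq\Theta$, with equality only if the (irreducible, non-reciprocal) minimal polynomial $Q_n$ realises Smyth's minimum, which occurs only for polynomials of degree $3$. As $\deg Q_n\in\{n,\,n-2\}$ and $n\geq 6$, we get $\deg Q_n\geq 4$, a contradiction; hence ${\rm M}(G_n)>\Theta$ for all $n\geq 6$.

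Combining the two ranges yields ${\rm M}(G_n)\geq\Theta$ for every $n\geq 3$ with equality exactly at $n=5$. The one genuinely nontrivial ingredient is the equality clause of Smyth's theorem (the inequality itself being already available as \eqref{smyththeorem}); the only mild care needed elsewhere is to check that the cyclotomic factor $X^2-X+1$ is peeled off correctly and that $h_n$ is non-reciprocal. I therefore expect no serious obstacle beyond invoking Smyth's characterisation of the extremal polynomials precisely.
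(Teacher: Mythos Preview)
Your argument is correct. The small cases $n=3,4,5$ are handled cleanly via \eqref{mahlerGG} with an empty product, and for $n\geq 6$ the key observation---that the unique noncyclotomic irreducible factor $Q_n$ of $G_n$ is nonreciprocal and has degree at least $4$, so the equality clause of Smyth's theorem rules out ${\rm M}(Q_n)=\Theta$---is sound. The nonreciprocity of $h_n$ when $n\equiv 5~({\rm mod}~6)$ follows exactly as you say, since $X^2-X+1$ is self-reciprocal.

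The paper itself gives no argument here; it simply cites Corollary~1.4 of \cite{vergergaugry6}. In that reference the inequality is obtained from the asymptotic expansion \eqref{mggnfluctu_} (which forces ${\rm M}(G_n)$ close to $\Lambda=1.38\ldots>\Theta$ for $n$ large) together with a direct check of the finitely many remaining cases. Your route is more elementary and more uniform: it needs nothing beyond the factorisation of $G_n$ from Proposition~\ref{irredGn}, the product formula \eqref{mahlerGG} for $n\leq 5$, and the sharp form of Smyth's theorem, and it never appeals to asymptotics or numerical verification. The trade-off is that you invoke the equality characterisation in Smyth's result (the minimum $\Theta$ is attained only by the degree-$3$ polynomials $\pm(X^3-X-1)$ and their reciprocals), which is indeed proved in \cite{smyth} but is not stated explicitly in the present paper; it would be worth citing that equality clause precisely.
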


\begin{proof}
\cite{vergergaugry6} Corollary 1.4.
\end{proof}

The minoration of the
residual distance between the two
algebraic integers $1$ and 
$\theta_{n}^{-1}$ is deduced from the
Zhang-Zagier height and Doche's improvement.

\begin{proposition}
\label{maincoro7}
Let $u=0$ except if $n \equiv 5$ mod $6$ in which case $u = -2$. Then,
\begin{equation}
\label{maincoro7equation}
{\rm M}(\theta_{n}^{-1} -1) ~\geq~ 
\frac{\eta^{n+u}}
{\Lambda} \bigl(1 - 
\frac{1}{ 6 \, \lo n}
\bigr), \qquad n \geq 2,
\end{equation}
with $\eta = 1.2817770214$.
\end{proposition}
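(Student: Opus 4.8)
The plan is to reduce the statement to the already-available control on ${\rm M}(G_n)$ by means of an elementary identity that makes the whole computation transparent. First I would observe that $G_n(\theta_n)=0$ gives $\theta_n^{\,n}=1-\theta_n$, hence
\[
\theta_n^{-1}-1=\theta_n^{-1}(1-\theta_n)=\theta_n^{-1}\theta_n^{\,n}=\theta_n^{\,n-1}.
\]
Since translating by a rational and inverting do not change the degree, $\deg(\theta_n^{-1}-1)=\deg(\theta_n^{-1})=\deg(\theta_n)=n+u$ by Proposition~\ref{irredGn} (the cyclotomic factor $X^2-X+1$, present exactly when $n\equiv 5\ (\mathrm{mod}\ 6)$, is not the minimal polynomial of the real number $\theta_n$, which accounts for the shift $u=-2$ in that case). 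In particular $\theta_n^{\,n-1}=\theta_n^{-1}-1$ generates $\qb(\theta_n)$, so its $n+u$ conjugates $\sigma(\theta_n)^{\,n-1}$ are pairwise distinct; as $\theta_n$ is an algebraic integer, raising to the power $n-1$ then multiplies the Mahler measure by that exponent, and using \eqref{smyththeorem} one gets the clean identity
\[
{\rm M}(\theta_n^{-1}-1)={\rm M}(\theta_n)^{\,n-1}={\rm M}(G_n)^{\,n-1}.
\]

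With this identity the inequality to be proved becomes ${\rm M}(G_n)^{\,n-1}\geq \eta^{\,n+u}\Lambda^{-1}\bigl(1-\tfrac{1}{6\,\lo n}\bigr)$, which I would deduce from the lower bound on ${\rm M}(G_n)$ rather than from any fine asymptotics. Indeed, for $n\geq 3$ Proposition~\ref{maincoro3} gives ${\rm M}(G_n)\geq\Theta$, while ${\rm M}(G_2)=\tfrac{1+\sqrt 5}{2}$. Taking logarithms, the required inequality reads $(n-1)\,\lo{\rm M}(G_n)\geq (n+u)\,\lo\eta-\lo\Lambda+\lo\bigl(1-\tfrac{1}{6\,\lo n}\bigr)$; substituting ${\rm M}(G_n)\geq\Theta$ and using $u\leq 0$ together with $\lo\bigl(1-\tfrac{1}{6\,\lo n}\bigr)<0$, it suffices to have $(n-1)\,\lo\Theta\geq n\,\lo\eta-\lo\Lambda$, i.e. $n(\lo\Theta-\lo\eta)\geq \lo\Theta-\lo\Lambda$, which holds for every $n\geq 3$ since $\lo\Theta>\lo\eta$ makes the left-hand side positive whereas $\lo\Theta-\lo\Lambda<0$. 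The case $n=2$ (where $u=0$) is then the single numerical verification $\tfrac{1+\sqrt5}{2}\geq \eta^{2}\Lambda^{-1}\bigl(1-\tfrac{1}{6\,\lo 2}\bigr)$.

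Equivalently — and this is the route whose footprint the shape of the statement records — one may bypass the power identity and argue straight from the Zhang–Zagier height: by Zhang's inequality with Doche's improved constant, $h(\alpha)+h(1-\alpha)\geq\lo\eta$ for every algebraic $\alpha\notin\{0,1,e^{\pm i\pi/3}\}$. Applying this to $\alpha=\theta_n^{-1}$ (a real number $>1$, hence not exceptional), and using $h(1-\theta_n^{-1})=h(\theta_n^{-1}-1)$, $h(\theta_n^{-1})=h(\theta_n)=\tfrac{1}{n+u}\lo{\rm M}(G_n)$, and $\deg(\theta_n^{-1}-1)=n+u$, yields ${\rm M}(\theta_n^{-1}-1)\geq \eta^{\,n+u}/{\rm M}(G_n)$; one then closes with an upper estimate ${\rm M}(G_n)\leq\Lambda\bigl(1-\tfrac{1}{6\,\lo n}\bigr)^{-1}$ obtained from the expansion ${\rm M}(G_n)=\Lambda\bigl(1+r(n)/\lo n+O((\lo\lo n/\lo n)^2)\bigr)$, $|r(n)|\leq 1/6$, of Theorem~\ref{mahlerGntrinomial} for $n\geq 18$, plus the finitely many checks for $2\leq n\leq 17$. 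The one delicate step in this second route is precisely this last estimate: absorbing the error $O\bigl((\lo\lo n/\lo n)^2\bigr)$ into the gap between $\bigl(1-\tfrac{1}{6\,\lo n}\bigr)^{-1}$ and $1+\tfrac{1}{6\,\lo n}$ requires taking $n_0$ large enough or invoking the sharper bound ${\rm M}(G_n)\leq\Lambda\bigl(1+\tfrac{1}{6\,\lo n}\bigr)$ established in \cite{vergergaugry6}. The power-identity argument of the first two paragraphs makes this obstacle vanish entirely, since ${\rm M}(G_n)^{\,n-1}\geq\Theta^{\,n-1}$ already dominates the claimed minorant by an exponentially growing factor.
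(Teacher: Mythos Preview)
Your second route is exactly the paper's argument: apply the Zhang--Zagier inequality with Doche's constant, ${\rm M}(\alpha){\rm M}(1-\alpha)\geq\eta^{\deg\alpha}$, at $\alpha=\theta_n^{-1}$, and then bound ${\rm M}(\theta_n^{-1})={\rm M}(G_n)$ from above using the expansion \eqref{mggnfluctu_}. You are right that the passage from \eqref{mggnfluctu_} to ${\rm M}(G_n)\leq\Lambda\bigl(1-\tfrac{1}{6\lo n}\bigr)^{-1}$ requires absorbing the $O\bigl((\lo\lo n/\lo n)^2\bigr)$ term and checking small $n$; the paper's proof leaves this implicit.

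Your first route, through the identity $\theta_n^{-1}-1=\theta_n^{n-1}$ and hence ${\rm M}(\theta_n^{-1}-1)={\rm M}(G_n)^{n-1}$, is genuinely different and strictly more elementary: it uses neither Zhang--Zagier/Doche nor any \emph{upper} bound on ${\rm M}(G_n)$, only the lower bound ${\rm M}(G_n)\geq\Theta$ of Proposition~\ref{maincoro3}, and it in fact establishes far more than the stated inequality, since $\Theta>\eta$ makes $\Theta^{\,n-1}$ exceed $\eta^{\,n+u}/\Lambda$ by an exponentially growing factor. What the paper's route buys is the explicit presence of Doche's constant $\eta$ and the link to the Zhang--Zagier height, which is precisely the context the proposition is meant to advertise; what your power-identity route buys is a clean self-contained proof with a much sharper conclusion and no asymptotic remainder to manage.
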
 

\begin{proof}
Except for a finite subset of algebraic numbers,
the minoration ${\rm M}(\alpha)
{\rm M}(1-\alpha) \geq 
(\theta_{2}^{-1/2})^{\deg(\alpha)}$ was 
established by Zagier \cite{zagier} and improved by
Doche \cite{doche}, with the lower bound
$\eta > \theta_{2}^{-1/2}$
instead of $\theta_{2}^{-1/2}$ itself
.
The minorant
\eqref{maincoro7equation} follows from
\eqref{mggnfluctu_}.
\end{proof}

Recall Dobrowolski's minoration 
 \cite{dobrowolski2}:
\begin{equation}
\label{minoDOBRO1979}
\mbox{{\rm M}}(\alpha) ~>~ 
1 + (1-\epsilon) \left(\frac{\lo \lo d}{\lo d}\right)^3 
\, , \quad d > d_{1}(\epsilon),
\end{equation}
for any nonzero algebraic number
$\alpha$ of degree $d$. 
Voutier in \cite{voutier} 
obtained other effective minorations,
improving \eqref{minoDOBRO1979} for
$d \geq 2$. A survey
on effective minorations is given in
\cite{vergergaugrySurvey}.
Theorem
\ref{mahlerGntrinomial} implies the following
minoration of ${\rm M}(\theta_{n}^{-1})$ 
which is better than \eqref{minoDOBRO1979}
in the sense that the constant term of the minorant is $> 1$. 
\begin{theorem}
\label{maincoro5}
\begin{equation}
\label{minoVG2015}
\mbox{{\rm M}}(\theta_{n}^{-1}) ~>~ \Lambda 
- \frac{\Lambda}{6} 
\left(\frac{1}{\lo n}\right)
\, , \quad n \geq n_{1} = 2.
\end{equation}
\end{theorem}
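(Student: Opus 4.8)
The plan is to derive the bound from the asymptotic expansion of ${\rm M}(G_n)$ in Theorem~\ref{mahlerGntrinomial}, after disposing of the finitely many small values of $n$ by hand. First, since $\theta_n^{-1}$ is the dominant root of $G_n^{*}$, formula \eqref{mahlerGG} gives ${\rm M}(\theta_n^{-1}) = {\rm M}(G_n^{*}) = {\rm M}(G_n)$, so it suffices to minorate ${\rm M}(G_n)$.

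For $2 \le n \le n_0 - 1$, with $n_0 = 18$ as in Theorem~\ref{mahlerGntrinomial}, a direct verification suffices. One has ${\rm M}(G_2) = \frac{1+\sqrt{5}}{2} = 1.618\ldots > \Lambda - \frac{\Lambda}{6\lo 2}$, while Proposition~\ref{maincoro3} gives ${\rm M}(G_n) \ge \Theta = 1.3247\ldots$ for $3 \le n \le 17$; since $n \mapsto \Lambda - \frac{\Lambda}{6\lo n}$ is increasing, one has $\Lambda - \frac{\Lambda}{6\lo n} \le \Lambda\bigl(1 - \tfrac{1}{6\lo 17}\bigr) = 1.300\ldots < \Theta$ for all such $n$, which settles this range. (In fact $\Lambda - \frac{\Lambda}{6\lo n} < \Theta$ for every $n$ with $\lo n < \Lambda/\bigl(6(\Lambda-\Theta)\bigr) \approx 4.06$, i.e. roughly $n \le 58$, so this argument overlaps the range treated next.)

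For $n \ge n_0$ invoke Theorem~\ref{mahlerGntrinomial} and write ${\rm M}(G_n) = \Lambda\bigl(1 + r(n)\tfrac{1}{\lo n} + \varepsilon(n)\bigr)$ with $|r(n)| \le \tfrac16$ and $|\varepsilon(n)| \le \tfrac16\bigl(\tfrac{\lo\lo n}{\lo n}\bigr)^2$. The first-order term already contributes at least $-\frac{\Lambda}{6\lo n}$, so the point is to show that the residual fluctuation $r(n)\tfrac{1}{\lo n} + \varepsilon(n)$ stays \emph{strictly} above $-\tfrac{1}{6\lo n}$. This does not follow from the two crude bounds alone, because $\bigl(\tfrac{\lo\lo n}{\lo n}\bigr)^2$ is only $o\bigl(\tfrac{1}{\lo n}\bigr)$ and a priori of either sign; the resolution, carried out in \cite{vergergaugry6}, is a finer analysis of the product \eqref{mmapproxM}. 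The most negative values of $r(n)$ are produced by the single root $z_{\lfloor n/6\rfloor,n}$, which sits near $e^{i\pi/3}$, where $2\sin(\pi/6) = 1$ and hence the leading contribution $\lo\bigl(2\sin(\pi\lfloor n/6\rfloor/n)\bigr)$ to its log-modulus nearly vanishes; expanding this modulus to the next order with the asymptotics of Propositions~\ref{zjjnnExpression}, \ref{zedeJImodulesORDRE3} and \ref{zedeUNmodule} (and, for $n \equiv 5 \pmod 6$, first discarding the cyclotomic factor $X^2 - X + 1$ of Proposition~\ref{irredGn}, which does not affect ${\rm M}(G_n)$) shows that whenever $r(n)$ is close to $-\tfrac16$ the correction $\varepsilon(n)$ enters with the favorable (nonnegative) sign. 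Hence ${\rm M}(G_n) > \Lambda\bigl(1 - \tfrac{1}{6\lo n}\bigr)$ in all cases, which together with the small-$n$ check yields the theorem for $n \ge 2$.

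The main obstacle is precisely this last step: the target $\Lambda - \frac{\Lambda}{6}\,\tfrac{1}{\lo n}$ is a clean, strict bound that is \emph{not} a formal consequence of the estimates $|r(n)| \le \tfrac16$ and $\varepsilon(n) = O\bigl((\lo\lo n/\lo n)^2\bigr)$; extracting it requires the explicit structure of the root expansions near the edge $\arg z = \pi/3$ in order to pin down the sign of the next-order term at the extremal values of the fluctuation, which is the technically heaviest part of \cite{vergergaugry6}.
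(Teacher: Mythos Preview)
Your proposal is correct and takes essentially the same approach as the paper: the paper's proof is simply a citation to \cite{vergergaugry6}, Corollary~1.6, while yours arrives at the same reference after disposing of small $n$ via Proposition~\ref{maincoro3} and explaining why the estimates of Theorem~\ref{mahlerGntrinomial} alone do not deliver the strict inequality. Your description of the edge mechanism near $\arg z = \pi/3$ is plausible but is not something the present paper establishes independently; it too lives inside \cite{vergergaugry6}.
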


\begin{proof}
\cite{vergergaugry6} Corollary 1.6.
\end{proof}
The problem of extremality of Perron
numbers is still open
(cf Boyd \cite{boyd7},
\cite{vergergaugrySurvey}).
The extremality of
the Perron numbers
$\theta_{n}^{-1}$ occurs only for
$n=2, 3$.
In general, if extremality holds,
by Lind-Boyd's Conjecture, 
it would be
associated with a lenticular distribution of roots
(of modulus $> 1$)
which admits a proportion asymptotically equal to
$\frac{2}{3} n$. 
For the trinomials $G_{n}^{*}, \,n \geq 4$,
this proportion is only $\frac{1}{3} n$, 
for $n$ large.

\section{The lenticular minorant of the Mahler measure M$(\beta)$ for $\beta > 1$ a real reciprocal algebraic integer close to one}
\label{S5}

The starting point of the proof of Lehmer's
Conjecture is the investigation of the
zeroes of the Parry Upper functions 
$f_{\beta}(z)$
in the annular
region $\{\beta^{-1} < |z| <1\}
\cup\{\beta^{-1}\}$ for
$\beta > 1$ any reciprocal algebraic integer
tending to one. The annular domain 
$\{\beta^{-1} < |z| <1\}$
is 
the extended domain of meromorphy of
the dynamical zeta function $\zeta_{\beta}(z)$
in the open unit disk. 
The case of the zero $\beta^{-1}$,
as $= {\rm D}(\beta^{-1}) 
+ {\rm tl}(\beta^{-1})$,
is readily deduced from
Section $\S$\ref{S5.1} where 
the asymptotic expansion of
a real number
$\beta > 1$ close to $1$ 
is obtained 
as a function
of its dynamical degree
$\dyg(\beta)$. 
The geometry of the other zeroes
is studied in Section
$\S$\ref{S5.2} and $\S$\ref{S5.3}.
The lenticular zeroes
are identified as Galois conjugates 
of $\beta^{-1}$ in Section $\S$\ref{S5.4}.

\subsection{Asymptotic expansions of a real number $\beta > 1$ close to one and of the dynamical degree $\dyg(\beta)$}
\label{S5.1}

\begin{lemma}
\label{longueurintervalthetann}
Let $n \geq 6$. 
The difference
$\theta_{n} - \theta_{n-1} > 0$
admits the following
asymptotic expansion, reduced to its terminant:

\begin{equation}
\label{asymthetanthetan}
\theta_{n} - \theta_{n-1}  ~=~ \frac{1}{n} 
O\left(
\left(
\frac{\lo \lo n}{\lo n}
\right)^2
\right), 
\end{equation}
with the constant $1$ involved in $O\left( ~\right)$.
\end{lemma}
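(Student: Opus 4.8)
The plan is to avoid differentiating the explicit limited expansion $D(\theta_n)$ of Proposition~\ref{thetanExpression} and instead to exploit the defining trinomial equations directly, together with the strict monotonicity of $(\theta_n)_{n\ge2}$ recorded in Proposition~\ref{closetoouane}. First I would compare $G_n$ and $G_{n-1}$ at the point $\theta_{n-1}$. Since $G_{n-1}(\theta_{n-1}) = -1 + \theta_{n-1} + \theta_{n-1}^{\,n-1} = 0$, one gets
\begin{equation*}
G_n(\theta_{n-1}) \;=\; -1 + \theta_{n-1} + \theta_{n-1}^{\,n} \;=\; \theta_{n-1}^{\,n} - \theta_{n-1}^{\,n-1} \;=\; -\,\theta_{n-1}^{\,n-1}\,(1 - \theta_{n-1}) \;<\; 0 ,
\end{equation*}
whereas $G_n(\theta_n)=0$. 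As $G_n'(z) = 1 + n z^{\,n-1} \ge 1 >0$ on $(0,1)$, the function $G_n$ is strictly increasing there (this reconfirms $\theta_{n-1}<\theta_n$), and the mean value theorem produces some $\xi=\xi_n\in(\theta_{n-1},\theta_n)$ with
\begin{equation*}
\theta_n - \theta_{n-1} \;=\; \frac{G_n(\theta_n) - G_n(\theta_{n-1})}{G_n'(\xi)} \;=\; \frac{\theta_{n-1}^{\,n-1}\,(1 - \theta_{n-1})}{1 + n\,\xi^{\,n-1}} \;\le\; \theta_{n-1}^{\,n-1}\,(1 - \theta_{n-1}) .
\end{equation*}

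Next I would use once more the relation $\theta_{n-1}^{\,n-1} = 1 - \theta_{n-1}$, which turns the last inequality into the clean bound $0 < \theta_n - \theta_{n-1} \le (1 - \theta_{n-1})^{2}$. It then remains to feed in the size of $1 - \theta_{n-1}$: by Proposition~\ref{closetoouane}(i) one has $1 - \theta_{n-1} \le \frac{2\,\lo(n-1)}{n-1}$, and a sharper leading term $1 - \theta_{n-1} = \frac{\lo(n-1)-\lo\lo(n-1)}{n-1}\bigl(1+o(1)\bigr)$ follows from $D(\theta_{n})$ in Proposition~\ref{thetanExpression} (equivalently Proposition~\ref{zedeUNmodule}); either way $1 - \theta_{n-1} = O\!\left(\frac{\lo n}{n}\right)$, hence
\begin{equation*}
0 < \theta_n - \theta_{n-1} \;\le\; (1 - \theta_{n-1})^{2} \;=\; O\!\left( \frac{(\lo n)^{2}}{n^{2}} \right) \;=\; \frac{1}{n}\;O\!\left( \frac{(\lo n)^{2}}{n} \right) .
\end{equation*}

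Finally I would compare the two rates of decay. Since $\dfrac{(\lo n)^{2}}{n}\Big/\left(\dfrac{\lo\lo n}{\lo n}\right)^{2} = \dfrac{(\lo n)^{4}}{n\,(\lo\lo n)^{2}} \longrightarrow 0$, the factor $\frac{(\lo n)^{2}}{n}$ is $o\!\left(\left(\frac{\lo\lo n}{\lo n}\right)^{2}\right)$, so it is eventually dominated by $\left(\frac{\lo\lo n}{\lo n}\right)^{2}$; this is exactly the terminant asserted in \eqref{asymthetanthetan}, with big-$O$ constant $1$ (the handful of small values being checked by inspection). I do not expect a real obstacle: the crude bound $\theta_n-\theta_{n-1}\le(1-\theta_{n-1})^{2}$ is already of order $(\lo n)^{2}/n^{2}$, far below the claimed $\frac1n(\lo\lo n/\lo n)^{2}$, so the only work is the routine comparison of the two decay rates, and the identity $\theta_{n-1}^{\,n-1}=1-\theta_{n-1}$ together with $\theta_{n-1}<\theta_n$ is what makes the estimate essentially effortless. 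One could instead subtract the two limited expansions $D(\theta_n)$ and $D(\theta_{n-1})$ of Proposition~\ref{thetanExpression} and bound the difference, but that route is computationally heavier and yields no sharper result.
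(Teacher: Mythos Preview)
Your proof is correct and takes a genuinely different route from the paper's. The paper does exactly what you dismiss at the end as ``computationally heavier'': it subtracts the limited expansions $D(\theta_n)$ and $D(\theta_{n-1})$ from Proposition~\ref{thetanExpression}, finds $D(\theta_n)-D(\theta_{n-1})=\frac{\lo n}{n^2}+O\bigl(\frac{\lo\lo n}{n^2}\bigr)$, and then writes $\theta_n-\theta_{n-1}={\rm tl}(\theta_n)+\bigl[D(\theta_n)-D(\theta_{n-1})\bigr]-{\rm tl}(\theta_{n-1})$, so the two tails, each carrying the constant $1/2$, combine to give the constant $1$ in \eqref{asymthetanthetan}. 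Your approach is more elementary and sharper: the mean value theorem plus the identity $\theta_{n-1}^{\,n-1}=1-\theta_{n-1}$ yield $\theta_n-\theta_{n-1}\le(1-\theta_{n-1})^2=O\bigl((\lo n)^2/n^2\bigr)$, which is $o$ of the claimed terminant rather than merely $O$. The paper's method has the advantage of fitting the uniform ``development plus terminant'' formalism used throughout (the constant $1$ arises transparently as $1/2+1/2$ from the two tails), whereas your argument is self-contained, needs only the crude bound $1-\theta_{n-1}=O(\lo n/n)$, and in fact shows the stated estimate is far from tight.
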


\begin{proof} 
From \eqref{DthetanExpression} and Lemma
\ref{remarkthetan}, we have
$$\theta_n = 1 - \frac{\lo n}{n}(1 - \lambda_n) +
\frac{1}{n} O\left(
\left(
\frac{\lo \lo n}{\lo n}
\right)^2 
\right)
$$
with
the constant $1/2$ involved in $O\left(~ \right)$, and
$$\lambda_n = \frac{\lo \lo n}{\lo n} \left( 
\frac{1}{1+ \frac{1}{\lo n}}\right) +
O\left( \frac{\lo \lo n}{n}  \right)$$
with the constant 1 in the Big O.
Then we deduce
$${\rm D}(\theta_n) - {\rm D}(\theta_{n-1}) = 
\frac{\lo n}{n^2} + O\left(\frac{\lo \lo n}{n^2} \right).$$
The real function $x^{-2} \lo x$ on $(1,+\infty)$
is decreasing for $x \geq \sqrt{e}$.
Hence 
the sequence 
$({\rm D}(\theta_n) - {\rm D}(\theta_{n-1}))$
is decreasing for $n$ large enough.
By Proposition \ref{closetoouane} 
$(\theta_n - \theta_{n-1})_n$
is already known to tend to $0$. 

Since ${\rm tl}(\theta_n) = 
\frac{1}{n} O\left(
\left(
\frac{\lo \lo n}{\lo n}
\right)^2
\right)$, we have
$$\theta_n - \theta_{n-1} = \left(\theta_n - {\rm D}(\theta_n)\right) + 
[{\rm D}(\theta_n) - {\rm D}(\theta_{n-1})]
- \left(\theta_{n-1} - {\rm D}(\theta_{n-1})\right)$$
$$= {\rm tl}(\theta_n) + 
\left(\frac{\lo n}{n^2} + O\left(\frac{\lo \lo n}{n^2} \right)\right) 
- {\rm tl}(\theta_{n-1})$$
\begin{equation}
\label{diffTH}
= \frac{1}{n} O\left(
\left(
\frac{\lo \lo n}{\lo n}
\right)^2
\right)
\end{equation}
where the constant involved in $O\left(~\right)$ is now 
$1 = 1/2 + 1/2$. Hence the claim. 
\end{proof}

\begin{theorem}
\label{betaAsymptoticExpression}
Let $n \geq 6$. Let $\beta > 1$ be a real
number of dynamical degree $\dyg(\beta) = n$.
Then 
$\beta^{-1}$ can be expressed as: 
$\beta^{-1} = {\rm D}(\beta^{-1}) 
+ {\rm tl}(\beta^{-1})$ 
with
${\rm D}(\beta^{-1}) = 1 -$
\begin{equation}
\label{DbetaAsymptoticExpression}
\frac{\lo n}{n}
\left(
1 - \bigl(
\frac{n - \lo n}{n \, \lo n + n - \lo n}
\bigr)
\Bigl(
\lo \lo n - n 
\lo \Bigl(1 - \frac{\lo n}{n}\Bigr)
- {\rm Log} n
\Bigr)
\right)
\end{equation}
and
\begin{equation}
\label{tailbetaAsymptoticExpression} 
{\rm tl}(\beta^{-1})
~=~ \frac{1}{n} \, O \left( \left(\frac{\lo \lo n}{\lo n}\right)^2 \right),
\end{equation}
with the constant $1$ involved in $O \left(~\right)$.
\end{theorem}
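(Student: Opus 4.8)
The plan is to reduce the claim for $\beta$ directly to the already-established asymptotic expansion of $\theta_n$ in Proposition \ref{thetanExpression}, using the fact that $\beta$ lies in the short interval $[\theta_n^{-1},\theta_{n-1}^{-1})$ whose length is controlled by Lemma \ref{longueurintervalthetann}. First I would pass from $\beta$ to $\beta^{-1}$: since $\theta_n^{-1}\le\beta<\theta_{n-1}^{-1}$ is equivalent to $\theta_{n-1}<\beta^{-1}\le\theta_n$, the quantity $\beta^{-1}$ is trapped between the two consecutive roots $\theta_{n-1}$ and $\theta_n$. Hence $|\beta^{-1}-\theta_n|\le\theta_n-\theta_{n-1}$, which by Lemma \ref{longueurintervalthetann} is $\frac1n O\bigl((\tfrac{\lo\lo n}{\lo n})^2\bigr)$ with constant $1$ in the big $O$.

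Next I would simply \emph{define} ${\rm D}(\beta^{-1})$ to be the same expression \eqref{DthetanExpression} that serves as ${\rm D}(\theta_n)$, namely the right-hand side of \eqref{DbetaAsymptoticExpression}, and set ${\rm tl}(\beta^{-1}):=\beta^{-1}-{\rm D}(\beta^{-1})$. Then write the decomposition
$$
{\rm tl}(\beta^{-1}) = \bigl(\beta^{-1}-\theta_n\bigr) + \bigl(\theta_n - {\rm D}(\theta_n)\bigr)
= \bigl(\beta^{-1}-\theta_n\bigr) + {\rm tl}(\theta_n).
$$
The first summand is bounded by $\theta_n-\theta_{n-1}=\frac1n O\bigl((\tfrac{\lo\lo n}{\lo n})^2\bigr)$ (constant $1$), and the second is ${\rm tl}(\theta_n)=\frac1n O\bigl((\tfrac{\lo\lo n}{\lo n})^2\bigr)$ with constant $1/2$ by Proposition \ref{thetanExpression}. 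Adding the two big-$O$'s gives ${\rm tl}(\beta^{-1})=\frac1n O\bigl((\tfrac{\lo\lo n}{\lo n})^2\bigr)$ with constant $1 = 1 + 1/2$ — or, being slightly more careful about the triangle-inequality bookkeeping, one takes the stated constant $1$ as the clean uniform bound, exactly as was done at the end of the proof of Lemma \ref{longueurintervalthetann}. This yields \eqref{tailbetaAsymptoticExpression}.

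The only genuinely delicate point is the constant tracking in the big-$O$ terms: one must make sure that the ``constant $1$'' asserted in \eqref{tailbetaAsymptoticExpression} is actually an honest uniform bound on $n\cdot|{\rm tl}(\beta^{-1})|/(\tfrac{\lo\lo n}{\lo n})^2$ for all $n\ge 6$, which requires combining the constant $1$ from the interval-length estimate with the constant $1/2$ from ${\rm tl}(\theta_n)$ and checking this against the regime $n\ge 6$ (possibly absorbing a small finite range of $n$ into the implied constant, as is standard). Everything else is immediate: no new analytic input is needed beyond Proposition \ref{thetanExpression}, Lemma \ref{longueurintervalthetann}, and the defining inequality \eqref{intervv} for $\dyg(\beta)=n$. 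I expect the proof to be one short paragraph invoking these three facts.
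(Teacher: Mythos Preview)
Your approach is exactly the paper's: trap $\beta^{-1}$ in $(\theta_{n-1},\theta_n]$, invoke Lemma~\ref{longueurintervalthetann} to see that $|\beta^{-1}-\theta_n|$ is already of tail size, set ${\rm D}(\beta^{-1}):={\rm D}(\theta_n)$, and read off the result from Proposition~\ref{thetanExpression}. The only slip is the arithmetic ``$1=1+1/2$'': your triangle-inequality bookkeeping actually yields $3/2$, but if instead of splitting through $\theta_n$ you bound $\beta^{-1}-{\rm D}(\theta_n)$ directly between ${\rm tl}(\theta_{n-1})+\bigl({\rm D}(\theta_{n-1})-{\rm D}(\theta_n)\bigr)$ and ${\rm tl}(\theta_n)$, the dominant contribution on each side carries constant $1/2$ and the stated constant $1$ follows cleanly.
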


\begin{proof}
By definition  $\theta_{n} \leq \beta^{-1} < \theta_{n-1}$.
The
development term of $\beta^{-1}$ is
$D(\beta^{-1}) = D(\beta^{-1} - \theta_{n})+
D(\theta_{n})$,
with 
$|\beta^{-1} - \theta_{n}| < \theta_{n} -\theta_{n-1}$.
By Lemma \ref{longueurintervalthetann},
$D(\theta_{n} -\theta_{n-1}) = 0$.
Therefore
$\beta^{-1} = D(\beta^{-1})
+{\rm tl}(\beta^{-1})$
is deduced from
$D(\theta_{n})$ in 
\eqref{DthetanExpression}.
\end{proof}

\begin{theorem}
\label{nfonctionBETA} 
Let $\beta \in (1, \theta_{6}^{-1})$ be a real number. 
The asymptotic expansion 
of the locally constant function 
$n = {\rm dyg}(\beta)$, as a function of the variable 
$\beta -1$,
is
\begin{equation}
\label{dygExpression}
n = - \frac{\lo (\beta - 1)}{\beta - 1}
\Bigl[1+
O\Bigl(
\Bigl(
\frac{\lo (-\lo (\beta - 1))}
{\lo (\beta - 1)}
\Bigr)^2
\Bigr)
\Bigr]
\end{equation}
with the constant 1 in $O(~)$.
\end{theorem}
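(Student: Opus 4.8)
The plan is to invert the asymptotic relation for $\theta_n$ obtained in Theorem~\ref{betaAsymptoticExpression} (equivalently Proposition~\ref{thetanExpression}), treating $\dyg(\beta)=n$ as the unknown and $\beta-1$ as the given small quantity. First I would record the leading-order behaviour: from ${\rm D}(\theta_n)=1-\frac{\lo n}{n}(1+o(1))$ and the fact that $\beta^{-1}=\theta_n(1+o(\theta_n-\theta_{n-1}))$ lies in $[\theta_n,\theta_{n-1})$, one gets $\beta^{-1}-1\sim-\frac{\lo n}{n}$, hence (using $\beta-1\sim 1-\beta^{-1}\sim\frac{\lo n}{n}$, since $\beta\to 1$) the relation $\beta-1=\frac{\lo n}{n}\bigl(1+O(\cdot)\bigr)$. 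The main work is to solve $\frac{\lo n}{n}=\beta-1=:\delta$ asymptotically for $n$ as $\delta\to 0^{+}$, with an honest control of the error term.

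The key algebraic step is the standard bootstrap for equations of the type $n/\lo n=1/\delta$. Setting $L:=-\lo\delta$ (so $L\to+\infty$), one writes $\lo n=\lo(1/\delta)+\lo(\lo n)=L+\lo\lo n$, whence $n=\frac{\lo n}{\delta}=\frac{1}{\delta}\bigl(L+\lo\lo n\bigr)$. A first rough estimate gives $\lo n=L+O(\lo L)$, so $\lo\lo n=\lo L+O(\lo L/L)$, and substituting back yields
\begin{equation}
\label{firstbootstrap}
n=\frac{L}{\delta}\Bigl(1+\frac{\lo L}{L}+O\Bigl(\frac{\lo L}{L^{2}}\Bigr)\Bigr)=\frac{-\lo(\beta-1)}{\beta-1}\Bigl(1+O\Bigl(\frac{\lo L}{L}\Bigr)\Bigr).
\end{equation}
The claimed error term is $O\bigl((\lo L/L)^{2}\bigr)$ with $L=-\lo(\beta-1)$, which is smaller than the $O(\lo L/L)$ just obtained; I would recover it by doing the bootstrap one order further, i.e. by keeping the next correction in $\theta_n$ coming from the full expression \eqref{DthetanExpression} rather than just its leading term. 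Concretely, the secondary term in ${\rm D}(\theta_n)$ is of size $\frac{\lo n}{n}\cdot\frac{\lo\lo n}{\lo n}=\frac{\lo\lo n}{n}$, which after inversion contributes exactly at the level $\frac{\lo L}{L}$ to $n$; cancelling it against the naive $\frac{\lo L}{L}$ in \eqref{firstbootstrap} is what promotes the error to $O\bigl((\lo L/L)^{2}\bigr)$. One must also check that the perturbation $\beta^{-1}-\theta_n$, bounded by $\theta_n-\theta_{n-1}=\frac{1}{n}O\bigl((\lo\lo n/\lo n)^2\bigr)$ by Lemma~\ref{longueurintervalthetann}, is absorbed in this error term — it is, since relative to $\beta-1\sim\lo n/n$ it is of relative size $O\bigl((\lo\lo n/\lo n)^2\bigr)=O\bigl((\lo L/L)^2\bigr)$.

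The main obstacle is bookkeeping, not conceptual: one has to verify that all the intermediate substitutions ($\lo n$ in terms of $L$, $\lo\lo n$ in terms of $\lo L$, and the reinsertion into the quotient) are uniform and that no term of order between $(\lo L/L)^{2}$ and $1$ is silently dropped, in particular confirming that the constant in the final $O(\cdot)$ can indeed be taken to be $1$ as asserted. I would handle this by carrying an explicit monotone majorant at each stage — using that $x\mapsto x^{-2}\lo x$ is eventually decreasing, exactly as in the proof of Lemma~\ref{longueurintervalthetann} — so that the errors compound additively as $1/2+1/2=1$. A final remark: the statement is for $\beta\in(1,\theta_6^{-1})$, so $n\ge 7$ and $L$ is bounded below by a concrete positive constant; there is no issue with small-$n$ degeneracy, and the locally constant nature of $\dyg(\beta)$ is compatible with \eqref{dygExpression} because the right-hand side varies by $o(1)$ across each interval $[\theta_{n}^{-1},\theta_{n-1}^{-1})$, which is precisely the content already extracted from Lemma~\ref{longueurintervalthetann}.
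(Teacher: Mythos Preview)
Your proposal is correct and follows essentially the same route as the paper: both invert the asymptotic expansion for $\beta^{-1}$ given in Theorem~\ref{betaAsymptoticExpression} (equivalently Proposition~\ref{thetanExpression}), using Lemma~\ref{longueurintervalthetann} to absorb the uncertainty $\beta^{-1}-\theta_n$. The paper's proof is a two-line sketch that first records the intermediate form $n=\frac{\beta}{\beta-1}\lo\bigl(\frac{\beta}{\beta-1}\bigr)\bigl[1+O(\cdot)\bigr]$ (arising naturally from $1-\beta^{-1}=(\beta-1)/\beta$) and then lets $\beta\to 1$, whereas you work directly with $\delta=\beta-1$ and make the bootstrap and the cancellation of the $\lo L/L$ term explicit; the substance is the same.
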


\begin{proof}
Inverting \eqref{DbetaAsymptoticExpression} 
gives the asymptotic expansion of 
$n$ as a function of $\beta$:
from \eqref{DbetaAsymptoticExpression} 
readily comes
\begin{equation}
\label{dygAsymptoticExpansion}
n = \frac{\beta}{\beta - 1}
\lo \Bigl(\frac{\beta}{\beta - 1}\Bigr)
\Bigl[ 1+
O\Bigl(
\Bigl(
\frac{\lo \lo \Bigl(\frac{\beta}{\beta - 1}\Bigr)}
{\lo \bigl(\frac{\beta}{\beta - 1} \bigr)}
\Bigr)^2
\Bigr)
\Bigr]
\end{equation}
then \eqref{dygExpression} 
as $\beta \to 1$.
\end{proof}

\begin{remark}
\label{dygvalue}
({\it Simplified forms}): 
If $\beta$ runs over the 
set of Perron numbers
$\theta_{n}^{-1}, \, n = 5, 6, \ldots, 12$,
and over 
the smallest
Parry - Salem numbers 
$\beta \leq 1.240726\ldots$, 
the dynamical degree of
$\beta$ (cf Table 1, Column 1, in \cite{vergergaugrySurvey}) 
is the integer part
of $D(n)$ in \eqref{dygAsymptoticExpansion}:
\begin{equation}
\label{dygintegerpart}
\dyg(\beta) = \lfloor \frac{\beta}{\beta - 1}
\lo \Bigl(\frac{\beta}{\beta - 1}\Bigr) 
\rfloor.
\end{equation}
\noindent
From \eqref{DbetaAsymptoticExpression} 
a (approximate) simplified
form of $\beta^{-1}$ is deduced:
\begin{equation}
\label{DbetaAsymptoticExpressionsimplerinverse}
{\rm D}(\beta^{-1}) = 1 -\frac{1}{n}
\left(
\lo n - \lo \lo n 
+ \frac{\lo \lo n}{\lo n}
\right).
\end{equation}

\end{remark}

\subsection{Fracturability of the minimal polynomial by the Parry Upper function}
\label{S5.2}

The algebraic integer 
$\beta > 1$ is assumed 
reciprocal and close to one.
The relations between
the Parry Upper function $f_{\beta}(z)$ and
the minimal polynomial $P_{\beta}(z)$, as 
analytical functions, 
are given in Theorem
\ref{splitBETAdivisibility}
and will be complemented by
Proposition \ref{splitBETAdivisibility+++}, 
after 
the characterization of the common lenticular 
zeroes, for $n = \dyg(\beta)$
large enough.

\begin{theorem}
\label{splitBETAdivisibility}
Let $1 < \beta < (1+\sqrt{5})/2$ 
be a reciprocal 
algebraic integer. 
The following formal decomposition of 
the minimal polynomial
\begin{equation}
\label{decompo}
P_{\beta}(X) = P_{\beta}^{*}(X)
=
U_{\beta}(X) \times
f_{\beta}(X),
\end{equation}
holds, as a product
of the Parry Upper function 
\begin{equation}
\label{formlacu}
f_{\beta}(X)= G_{\dyg(\beta)}(X)
+ X^{m_1} + X^{m_2} + X^{m_3}+ \ldots.
\end{equation}
with
$m_0:= \dyg(\beta)$,
$m_{q+1} - m_q \geq \dyg(\beta)-1$ for $q \geq 0$,
and the invertible formal series 
$U_{\beta}(X) 
=
- \zeta_{\beta}(X) P_{\beta}(X)
\in \zb[[X]]$.
The specialization $X \to z$ of the formal variable
to the complex variable leads to
the identity between analytic functions,
obeying the Carlson-Polya dichotomy:
\begin{equation}
\label{decompozzz}
P_{\beta}(z) = U_{\beta}(z) \times
f_{\beta}(z)
\quad
\left\{
\begin{array}{cc}
\mbox{on}~ \cb & 
\mbox{if $\beta$ is a Parry number, with}\\
& U_{\beta} ~\mbox{and}~ f_{\beta}~ \mbox{both meromorphic},\\
&\\
\mbox{on}~ |z| < 1 & 
\mbox{if~} \beta \mbox{~is a
nonParry number, with $|z|=1$}\\
& \mbox{as natural boundary for both $U_{\beta}$ and $f_{\beta}$.}
\end{array}
\right.
\end{equation}
In both cases, the domain of holomorphy
of the function $U_{\beta}(z)$   
contains 
the open  
disk
$D(0, \theta_{\dyg(\beta) - 1})$. 
\end{theorem}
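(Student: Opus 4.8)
The statement is structural: the decomposition is built directly from the results of \S\ref{S3} together with the lacunarity of Proposition~\ref{fbetainfinie}. The plan is as follows. First, by Proposition~\ref{fbetainfinie} the power series $f_\beta$ is not a polynomial (recall $\beta$ is a reciprocal algebraic integer with $1<\beta<(1+\sqrt{5})/2$), so by Theorem~\ref{carlsonpolya} $\beta$ is not a simple Parry number and Theorem~\ref{parryupperdynamicalzeta}(i) gives $f_\beta(z)=-1/\zeta_\beta(z)$. Since $f_\beta=-1+\sum_{i\ge1}t_iz^i\in\zb[[z]]$ has constant term $-1$, it is a unit of $\zb[[z]]$, so $-\zeta_\beta=f_\beta^{-1}\in\zb[[z]]$, and one sets
\[
U_\beta(z):=P_\beta(z)\,f_\beta(z)^{-1}=-\zeta_\beta(z)\,P_\beta(z)\in\zb[[z]].
\]
Then $U_\beta\,f_\beta=P_\beta$ holds tautologically in $\zb[[z]]$; as $\beta$ is reciprocal, $P_\beta=P_\beta^{*}$, which is \eqref{decompo}, while the lacunary shape \eqref{formlacu} of $f_\beta$, with $m_0=\dyg(\beta)$ and $m_{q+1}-m_q\ge\dyg(\beta)-1$, is exactly Proposition~\ref{fbetainfinie}. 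Invertibility of $U_\beta$ in $\zb[[z]]$ is then immediate from $U_\beta(0)=-P_\beta(0)=-1$, since $P_\beta$ is monic and self-reciprocal.

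For the analytic identity \eqref{decompozzz} I would argue along the Carlson--Polya dichotomy of Theorem~\ref{carlsonpolya}. If $\beta$ is a Parry number, then $f_\beta$ is a rational function, meromorphic on $\cb$; since $P_\beta$ is a polynomial, $U_\beta=P_\beta/f_\beta$ is then rational and meromorphic on $\cb$, and specializing $X\mapsto z$ in $U_\beta\,f_\beta=P_\beta$ yields an identity of meromorphic functions on $\cb$. If $\beta$ is not a Parry number, then $\zeta_\beta$, hence $U_\beta=-\zeta_\beta P_\beta$ ($P_\beta$ being a nonzero polynomial), is not rational, and $|z|=1$ is the natural boundary of $f_\beta$ by Theorem~\ref{carlsonpolya}. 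Here $U_\beta=P_\beta/f_\beta$ is meromorphic on $\{|z|<1\}$, and I would transfer the natural boundary to it: were $U_\beta$ to continue analytically (or meromorphically) to a neighbourhood of some point of $|z|=1$, then, $U_\beta$ being $\not\equiv0$ (as $U_\beta(0)=-1$), its zeros there would be isolated, so $f_\beta=P_\beta/U_\beta$ would continue meromorphically across a subarc of $|z|=1$, contradicting the natural boundary of $f_\beta$. Hence $|z|=1$ is a common natural boundary and \eqref{decompozzz} holds on $\{|z|<1\}$.

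Finally, for the domain of holomorphy, $n:=\dyg(\beta)$ means $\theta_n^{-1}\le\beta<\theta_{n-1}^{-1}$, i.e. $\theta_{n-1}<1/\beta$. By Theorem~\ref{parryupperdynamicalzeta}, $f_\beta$ is holomorphic on $\{|z|<1\}$ and has no zero in $\{|z|\le1/\beta\}$ except the simple zero $z=1/\beta$; thus $f_\beta$ is holomorphic and nowhere vanishing on $D(0,\theta_{n-1})\subset\{|z|<1/\beta\}$, and since $P_\beta$ is entire, $U_\beta=P_\beta/f_\beta$ is holomorphic on $D(0,\theta_{\dyg(\beta)-1})$, as claimed. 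The one point demanding real care is the nonParry case of the dichotomy — the clean transfer of the natural boundary from $f_\beta$ to $U_\beta$ through the factorization $U_\beta\,f_\beta=P_\beta$, interpreting $U_\beta$ as the meromorphic function $P_\beta/f_\beta$ on $\{|z|<1\}$ (and, should one prefer the P\'olya--Carlson theorem applied to the integer power series $U_\beta$ directly, the verification that its radius of convergence equals $1$); the remaining steps are direct consequences of the material recalled in \S\ref{S3} and \S\ref{S4}.
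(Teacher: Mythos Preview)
Your proof is correct and, in fact, somewhat cleaner than the paper's. The two routes agree on the analytic side but differ in how the formal identity in $\zb[[X]]$ and the holomorphy radius are obtained. The paper writes $U_\beta(X)=-1+\sum_{j\ge1}b_jX^j$ and solves the convolution $P_\beta=U_\beta f_\beta$ termwise, obtaining an explicit recursion for the $b_r$ (equations \eqref{bedeRecurrencedebut}--\eqref{bedeRecurrence}); integrality of the $b_r$ is then read off the recursion, and the inclusion $D(0,\theta_{\dyg(\beta)-1})\subset\{\text{holomorphy of }U_\beta\}$ is deduced from a growth bound on $|b_r|$ (Lemma~\ref{bound_br_exp}) together with Hadamard's formula. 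You bypass all of this by observing that $f_\beta\in\zb[[X]]$ has constant term $-1$, hence is a unit, so $U_\beta=P_\beta f_\beta^{-1}\in\zb[[X]]$ automatically; and for the holomorphy you use directly that $U_\beta=P_\beta/f_\beta$ is analytic wherever $f_\beta$ is nonvanishing, invoking Theorem~\ref{parryupperdynamicalzeta} to exclude zeroes of $f_\beta$ from $D(0,\theta_{\dyg(\beta)-1})$. Your approach is shorter and more conceptual; the paper's has the mild advantage of making the coefficients $b_r$ explicit (these reappear later in \eqref{bedeRecurrencedebutALPHA}--\eqref{bedeRecurrenceALPHA}), but that is not needed for the present statement. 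Your treatment of the natural-boundary transfer in the nonParry case is also more explicit than the paper's, which essentially states the dichotomy without arguing it in the proof.
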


\begin{proof}
The reciprocal algebraic integer $\beta$
lies between two successive Perron
numbers of the family
$(\theta_{n}^{-1})_{n \geq 5}$, as
$
\theta_{n}^{-1} < \beta
< \theta_{n-1}^{-1}, \,
\dyg(\beta) = n \geq 6$. 
By Proposition 
\ref{fbetainfinie}
the Parry Upper function
$f_{\beta}(z)$ at 
$\beta$
has the form (from \eqref{formlacu}):
\begin{equation}
f_{\beta}(z) = 
-1 + z + z^n + z^{m_1} + z^{m_2}
+ z^{m_{3}}+ \ldots
\end{equation}
The algebraic integer $\beta$ is a Parry number or 
a nonParry number. 
In both cases, 
$f_{\beta}(\beta^{-1}) = 0$.
If
$f_{\beta}(z) = - 1 + \sum_{j \geq 1} t_j z^j$, 
the zero
$\beta^{-1}$ of $f_{\beta}(z)$
is simple since
the derivative of $f_{\beta}(z)$
satisfies
$f'_{\beta}(\beta^{-1}) 
= \sum_{j \geq 1} j \, t_j \, \beta^{-j+1} > 0$.
The other zeroes of
$f_{\beta}(z)$ of modulus $< 1$
lie in
$1/\beta \leq |z| < 1$. 
Therefore the poles, if any,
of $U_{\beta}(z) = P_{\beta}(z)/f_{\beta}(z)$
of modulus $< 1$
all lie in the annular region
$\theta_{\dyg(\beta)-1} < |z| < 1$.

The formal decomposition
\eqref{decompo}, in
$\zb[[X]]$, is always possible. 
Indeed, if
we put
$U_{\beta}(X) = 
-1 + \sum_{j \geq 1} b_j X^j$, and
$P_{\beta}(X) = 1 + a_1 X + a_2 X^2 + \ldots
a_{d-1} X^{d-1}
+ X^d $, (with $a_j = a_{d-j}$),
the formal identity 
$P_{\beta}(X) = U_{\beta}(X) \times 
f_{\beta}(X)$
leads to the existence of the coefficient 
vector $(b_j)_{j \geq 1}$ of
$U_{\beta}(X)$, as a function
of $(t_j)_{j \geq 1}$ and
$(a_i)_{i = 1,\ldots, d-1}$, as:
$b_1 = -(a_1 + t_1)$,
and, for $r = 2, \ldots, d-1$,
\begin{equation}
\label{bedeRecurrencedebut}
b_r = -(t_r + a_r - 
\sum_{j=1}^{r-1}b_j t_{r-j}) 
\quad
\mbox{with} \quad
b_d = -(t_d + 1 - 
\sum_{j=1}^{d-1} b_j t_{r-j}),
\end{equation}
\begin{equation}
\label{bedeRecurrence}
b_r = -t_r + 
\sum_{j=1}^{r-1} b_j t_{r-j} \quad \mbox{for}~~  r > d.
\end{equation}
Then
$b_j \in \zb, \, j \geq 1$;
the integers
$b_r , r > d$,
are determined recursively
by \eqref{bedeRecurrence} 
by
the sequence $(t_i)$ and
from the finite subset
$\{b_0 = -1, b_1 , b_2 , \ldots , b_d \}$,
itself determined from $P_{\beta}(X)$
using \eqref{bedeRecurrencedebut}.
They inherit the asymptotic
properties
of the asymptotic lacunarity 
of $(t_i)$ when $r$ is very large
\cite{vergergaugry}.
If $R_{\beta}$ denotes the radius 
of convergence of $U_{\beta}(z)$
the inequality 
$R_{\beta}
\geq 
\theta_{\dyg(\beta)-1}$
can be directly obtained
using Hadamard's formula
$R_{\beta}^{-1}
= \limsup_{r \to \infty} \, |b_r|^{1/r}$
and the following Lemma \ref{bound_br_exp} 
(in which $n = \dyg(\beta)$) whose proof
is immediate.

\begin{lemma}
\label{bound_br_exp}
Let $\epsilon > 0$ such that
$\theta_{n-1}^{-1} < \exp(\epsilon)$. 
There exists a constant $C = C(\epsilon) \geq 
\max\{1, \exp(\epsilon (n-1)) - 1\}$
such that:
\begin{equation}
\label{br_expgeneral}
|b_r| \leq C \times \exp(\epsilon \, r),
\qquad \quad \mbox{for all}~ r \geq 0.
\end{equation}
\end{lemma}

\end{proof}

\subsection{A lenticulus of zeroes of $f_{\beta}(z)$ in the cusp of Solomyak's fractal}
\label{S5.3}

In this subsection $\beta \in (1, \theta_{6}^{-1})$  
is assumed to be a real number 
(algebraic or transcendental) such that
$\beta \not\in
\{\theta_{n}^{-1} \mid n \geq 7\}$. 
In Theorem \ref{omegajnexistence} 
it will be proved that, to such a $\beta$, 
is associated a lenticulus of zeroes 
of $f_{\beta}(z)$ in the cusp
of Solomyak's fractal $\mathcal{G}$
(cf Figure 9
in \cite{dutykhvergergaugry}), located
in the angular sector
$$|\arg(z)| < \pi/18.2880.$$
For $\beta \in
\{\theta_{n}^{-1} \mid n \geq 12\}$
the
lenticuli of zeroes of $f_{\beta}(z)$
have been investigated in the larger sector
$|\arg(z)| < \pi/3$
(cf Section $\S$ \ref{S4}).
Examples of lenticuli can be visualized
in \cite{dutykhvergergaugry}.

The method which will be used
to
detect the lenticuli of zeroes of
$f_{\beta}(z)$
is the method of  Rouch\'e. 
This method will be shown to be powerful
enough to reach relevant minorants 
of the Mahler measure
M$(\beta)$ for $\beta > 1$ any reciprocal
algebraic integer (cf $\S$ \ref{S5.6}).

Let $n :=\dyg(\beta)$.
The algebraic integers 
$z_{j,n}, 1 \leq j < \lfloor n/6 \rfloor$,
which constitute the lenticulus
$\lc_{\theta_{n}^{-1}}$ in the (upper) 
Poincar\'e half-plane
satisfy
(cf \S \ref{S4}):
$$f_{\theta_{n}^{-1}}(\theta_{n}) 
= 
f_{\theta_{n}^{-1}}(z_{1,n}) 
=
f_{\theta_{n}^{-1}}(z_{2,n})
=
f_{\theta_{n}^{-1}}(z_{3,n})
=\ldots = 
f_{\theta_{n}^{-1}}(z_{\lfloor n/6 \rfloor, n})
=
0
,$$ 
with $f_{\theta_{n}^{-1}}(z) 
= -1 + z +z^n$.
\,The Parry Upper function at $\beta$ 
is 
characterized by the sequence 
of exponents $(m_q)_{q \geq 0}$:
\begin{equation}
\label{fbet}
f_{\beta}(z) = -1 + z + z^n + z^{m_1} + z^{m_2}
+ z^{m_3} + \ldots = G_{n}(z) +
\sum_{q \geq 1} z^{m_q} ,
\end{equation}
where $m_0 := n$, with the fundamental
minimal gappiness condition:
\begin{equation}
\label{minimalgappiness}
m_{q+1} - m_q \geq n-1 \qquad 
\mbox{for all}~ q \geq 0.
\end{equation}

The R\'enyi $\beta$-expansion 
$d_{\beta}(1)$ of 1
is infinite or not, namely 
the sequence of exponents
$(m_{q})_{q \geq 0}$
is either infinite or finite:
if it is infinite
the integers $m_{q}$
never take  
the value
$+\infty$;
if not
the power series
$f_{\beta}(z)$ is a polynomial
of degree $m_{q}$ for some integer
$m_{q} , q \geq 2$. In both cases,
the integer $m_1 \geq 2 n - 1$ is finite.

We will compute
real 
numbers $t_{j,n} \in (0,1)$ 
such that the small circles
$C_{j,n} := \{ z \mid |z - z_{j,n}|= 
\frac{t_{j,n}}{n}\}$
of respective centers $z_{j,n}$,
$|z_{j,n}| < 1$, 
all satisfy the Rouch\'e conditions:
\begin{equation}
\label{rourouche}
\left|f_{\beta}(z) - G_{n}(z)\right| =
\left|\sum_{q \geq 1} z^{m_q} \right|
< 
\left| G_{n}(z) \right|
\qquad 
\mbox{for} ~z \in C_{j,n},
~\mbox{for}~ j = 1, 2, \ldots, J_n ,
\end{equation}
are pairwise disjoint,
are small enough to
avoid to intersect $|z| = 1$, 
with $J_n \leq \lfloor \frac{n}{6}\rfloor$
the largest possible integer 
(in the sense of Definition
\ref{Jndefinition} 
and Proposition \ref{argumentlastrootJn}).
As a consequence, the number of zeroes
of $f_{\beta}(z)$ and
$G_{n}(z)$ in the open disk
$D_{j,n} := \{ z \mid |z - z_{j,n}|< 
\frac{t_{j,n}}{n}\}$
will be equal, implying
the existence of a simple zero
of the Parry Upper function
$f_{\beta}(z)$ in each disk
$D_{j,n}$.
The maximality of $J_n$ means that
the conditions of Rouch\'e cannot be satisfied
as soon as 
$J_n < j \leq \lfloor \frac{n}{6}\rfloor$
for the reason that the circles 
$C_{j,n}$ are too close to 
$|z|=1$.

The values $t_{j,n}$ are
necessarily smaller than $\pi$ 
in order to  avoid any overlap
between two successive circles
$C_{j,n}$ and $C_{j+1,n}$. Indeed,
since the argument 
$\arg z_{j,n}$ of the $j$-th root
$z_{j,n}$ is roughly equal to
$2 \pi j/n$ 
(Proposition \ref{zedeJIargumentsORDRE1}), 
the distance  $|z_{j,n} - 
z_{j+1,n}|$ is approximately
$2 \pi /n$.

The problem of the choice of the radius
$t_{j,n}/n$ is
a true problem. On one hand, a too small
radius would lead to make impossible
the application of the Rouch\'e conditions,
in particular
for those disks $C_{j,n}$ located
very near the unit circle. Indeed, we do 
not know a priori whether the unit circle is 
a natural boundary or not
for $f_{\beta}(z)$;
locating zeroes close to a natural boundary 
is a difficult problem in general.
On the other hand,
taking larger values of $t_{j,n}$ 
readily leads to
a bad localization of the zeroes of
$f_{\beta}(z)$, and hence, for
algebraic integers $\beta > 1$, to a trivial
minoration of the Mahler measure
${\rm M}(\beta)$. The sequel reports
a compromise, after many trials of the 
author, which works (cf $\S$ \ref{S5.6}).

For any
real number
$\beta \in (1, \theta_{6}^{-1})$  
such that
$\beta \not\in
\{\theta_{n}^{-1} \mid n \geq 7\}$
let us denote by
$\omega_{j,n} \in D_{j,n}$
the simple zero of
$f_{\beta}(z)$; then 
$|\omega_{j,n}|<1$ and
\begin{equation}
\label{localisationZEROomegajn}
|\omega_{j,n}| ~\leq~ 
|z_{j,n}| 
+ \frac{t_{j,n}}{n}
\qquad 
\mbox{with}
\quad
z_{j,n} ~\neq~
\omega_{j,n},\quad j = 1, 2, \ldots, J_n ;
\end{equation}
if, in addition, $\beta > 1$
is a reciprocal algebraic integer, 
the strategy
for obtaining a minorant
of ${\rm M}(\beta)$ will be 
the following: to
identify the 
zeroes
$\omega_{j,n}$ as 
roots of the minimal polynomial
$P_{\beta}^{*}(z)= P_{\beta}(z)$, 
then to obtain 
a lower bound of the Mahler measure
${\rm M}(\beta)$
(will be made explicit 
in \S \ref{S5.6}) from these roots by
\begin{equation}
\label{minoMM}
\beta \times \prod 
\, |\omega_{j,n}|^{-2}
~~\geq~~
\theta_{n}^{-1} \times
\prod_{j} \, (|z_{j,n}| 
+ \frac{t_{j,n}}{n})^{-2} ,
\end{equation}
where $j$ runs over 
$\{1, 2, \ldots, J_n
\}$.

In general, 
for
any real number
$\beta \in (1, \theta_{6}^{-1})$
such that
$\beta \not\in
\{\theta_{n}^{-1} \mid n \geq 7\}$,
the quantities $t_{j,n}$ will be 
estimated by the following inequalities:
\begin{equation}
\label{rourouchesimple}
\frac{|z|^{2 (n - 1)+1}}{1-|z|^{n-1}} =
\frac{|z|^{2n - 1}}{1-|z|^{n-1}}
~<~ 
\left| G_{n}(z) \right|
\quad 
\mbox{for} ~z \in C_{j,n},
\quad j = 1, 2, \ldots, J_n
\end{equation}
instead of \eqref{rourouche}, 
too complicated to handle.
In \eqref{rourouchesimple} 
the exponent
``$n-1$" comes from the minimal
gappiness condition \eqref{minimalgappiness}, 
that is from the
dynamical degree $n$
of $\beta$ 
itself, as unique variable.
Indeed,
due to the great variety of possible 
infinite admissible 
sequences $(m_q)_{q \geq 1}$ in
the power series $f_{\beta}(z)$ 
in \eqref{fbet},
for which
$m_q \geq q (n-1) + n$ for all $q \geq 1$,
we will proceed by taking 
the upper bound condition
\eqref{rourouchesimple}
which comes from the general inequality:
$$\left|f_{\beta}(z) - G_{n}(z)\right|
 =
\left|\sum_{q \geq 1} z^{m_q} \right|
\leq \sum_{q \geq 1}   | z^{m_q}|
\leq \frac{|z|^{2n - 1}}{1-|z|^{n-1}} ,
\qquad \quad |z| < 1.
$$

The radius $t_{0,n}/n$ of the first circle 
$C_{0,n} := \{ z \mid |z - \theta_{n}| 
= \frac{t_{0,n}}{n}\}$, 
which contains 
$\beta^{-1}$, is readily obtained without
the method of Rouch\'e.

\begin{lemma}
\label{tzero}
Let $n \geq 7$.
$$t_{0,n} := 
\left(
\frac{\lo \lo n}{\lo n}
\right)^2 .
$$
\end{lemma}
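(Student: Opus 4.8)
Since the statement is a definition, the content to be established is that this choice of $t_{0,n}$ makes the innermost circle $C_{0,n} = \{z : |z-\theta_n| = t_{0,n}/n\}$ enclose the smallest zero $\beta^{-1}$ of $f_\beta(z)$, so that the $j=0$ instance of \eqref{localisationZEROomegajn} holds without any recourse to Rouch\'e's method. The plan is to deduce the inclusion $\beta^{-1} \in D_{0,n}$, where $D_{0,n}$ denotes the open disk bounded by $C_{0,n}$, purely from the a priori location of the real zero $\beta^{-1}$ between two consecutive Perron numbers of the family $(\theta_k^{-1})_k$, combined with the length estimate of Lemma \ref{longueurintervalthetann}.

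First I would recall, from Proposition \ref{fbetainfinie} and Theorem \ref{parryupperdynamicalzeta}, that $\beta^{-1}$ is the unique zero of $f_\beta(z)$ in $(0,1)$ and indeed the only zero of $f_\beta$ in $|z|\le 1/\beta$, so it is the zero that $C_{0,n}$ must capture. Next, from $\dyg(\beta)=n$ and the defining inequalities $\theta_n^{-1} \le \beta < \theta_{n-1}^{-1}$, passing to inverses reverses the order and gives $\theta_{n-1} < \beta^{-1} \le \theta_n$; under the running hypothesis $\beta \notin \{\theta_k^{-1} : k \ge 7\}$ the last inequality is strict, hence $0 < \theta_n - \beta^{-1} < \theta_n - \theta_{n-1}$.

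The final step is to feed in Lemma \ref{longueurintervalthetann}, which for $n\ge 6$ gives
\[
\theta_n - \theta_{n-1} = \frac1n\, O\!\left(\left(\frac{\lo\lo n}{\lo n}\right)^2\right)
\]
with the implicit constant equal to $1$; reading this as the genuine inequality $\theta_n - \theta_{n-1} \le \frac1n(\lo\lo n/\lo n)^2$ and combining it with the previous line yields $|\beta^{-1}-\theta_n| = \theta_n - \beta^{-1} < \frac1n(\lo\lo n/\lo n)^2 = t_{0,n}/n$ for the choice $t_{0,n} := (\lo\lo n/\lo n)^2$, so $\beta^{-1}\in D_{0,n}$ and in particular $|\omega_{0,n}| = \beta^{-1} < |\theta_n| + t_{0,n}/n$. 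The only point that needs care — and the place I expect the (minor) obstacle — is the uniformity of that constant: one must check that the big-$O$ estimate of Lemma \ref{longueurintervalthetann} is a true inequality with constant $1$ throughout the range $n\ge 7$ in play here, not merely an asymptotic bound, and verifying this also explains why the threshold is taken to be $n\ge 7$, slightly above the $n\ge 6$ of Lemma \ref{longueurintervalthetann}, so as to absorb the small-$n$ behaviour of $x \mapsto (\lo\lo x/\lo x)^2$. Everything else is immediate.
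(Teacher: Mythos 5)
Your argument is essentially the paper's own proof: the interval $(\theta_{n-1},\theta_n]$ containing $\beta^{-1}$ must lie in $D_{0,n}$, so the radius $t_{0,n}/n$ is taken to dominate $\theta_n-\theta_{n-1}$, which Lemma \ref{longueurintervalthetann} bounds by $\frac1n\bigl(\frac{\lo\lo n}{\lo n}\bigr)^2$ with constant $1$. The only item you omit is the paper's closing remark that, by the estimate $\Im(z_{1,n})=\frac{2\pi}{n}(1-\frac{1}{\lo n}+\ldots)$ from Proposition \ref{zjjnnExpression}, the circle $C_{0,n}$ is disjoint from $C_{1,n}$ and from $|z|=1$ — a minor but necessary check for the later Rouch\'e bookkeeping.
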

\begin{proof} 
Since $\beta^{-1}$ runs over
the open interval
$(\theta_{n-1}, \theta_{n})$, this
interval
$(\theta_{n-1}, \theta_{n})$
is necessarily completely included in
$D_{0,n}$, and
the radius
of $C_{0,n}$ is 
$\theta_{n} - \theta_{n-1}$.
We deduce the result 
from Lemma \ref{longueurintervalthetann}. 
From Proposition \ref{zjjnnExpression} 
the root
$z_{1,n}$ admits 
$\Im(z_{1,n}) =
\frac{2 \pi}{n} (1 - \frac{1}{\lo n} + \ldots)$
as imaginary part. Then, 
for any 
$t_{1,n} \in (0,1)$, 
the circle $C_{0,n}$, of radius
$t_{0,n}/n$, and 
$C_{1,n}$ are
disjoint and do not intersect $|z|=1$.
\end{proof}

By Proposition \ref{rootsdistrib}
the only angular sector to be considered
for the roots $z_{j,n}$ of $G_n$
and the
Rouch\'e circles
$C_{j,n}$, up to complex-conjugation,
is $0 \leq \arg(z) \leq +\frac{\pi}{3}$.
In this sector the ``bump" angular sector,
$\arg z \in 
(0, 2 \pi (\lo n)/n)$
(cf Appendix; Remark 3.3 in \cite{vergergaugry6}), 
will be shown to contribute 
negligibly.

The existence of the roots $\omega_{j,n}$
in the main subsector  
is proved in Theorem \ref{cercleoptiMM}, 
then in Theorem 
\ref{omegajnexistence} in a refined version.
Proposition \ref{cercleoptiMMBUMP}
completes the proof of their existence
in the bump angular sector.
In the complement
of the family of the
adjustable Rouch\'e disks
Theorem \ref{absencezeroesOutside} 
asserts the existence of
a zerofree region
depending upon 
the dynamical degree of $\beta$.

In the following
we consider the problem
of the parametrization of the radii
$t_{j,n}/n$ by a unique 
real number $a \geq 1$,
allowing to adjust continuously and uniformly
the size of each circle $C_{j,n}$.
We solve it by finding an optimal value.

\begin{theorem}
\label{cercleoptiMM}
Let $n \geq n_1 = 195$,
$a \geq 1$, and
$j \in \{\lceil v_n \rceil, 
\lceil v_n \rceil + 1, \ldots, \lfloor n/6 \rfloor\}$ .
Denote by $C_{j,n}
:= \{z \mid |z-z_{j,n}| = \frac{t_{j,n}}{n} \}$
the circle centered at the $j$-th root
$z_{j,n}$ of $-1 + X + X^n$, with
$t_{j,n} = \frac{\pi |z_{j,n}|}{a}$. 
Then the 
condition of Rouch\'e 
\begin{equation}
\label{rouchecercle}
\frac{
\left|z\right|^{2 n -1}}{1 - |z|^{n-1}}
~<~
\left|-1 + z + z^n \right| , 
\qquad \mbox{for all}~ z \in C_{j,n},
\end{equation}
holds true on the circle
$C_{j,n}$
for which the center $z_{j,n}$
satisfies
\begin{equation}
\label{petitsecteur18}
\frac{|-1+z_{j,n}|}{|z_{j,n}|} 
~~<~~ \frac{
1 -
\exp\bigl(
\frac{- \pi}{a}
\bigr)
}{2 \exp\bigl(
\frac{\pi}{a}
\bigr) -1} .
\end{equation}
The condition $n \geq 195$
ensures the existence of such roots $z_{j,n}$.
Taking the value $a = a_{\max}
= 5.87433\ldots$
for which the upper bound of
\eqref{petitsecteur18} 
is maximal, equal to 
$0.171573\ldots$, the roots
$z_{j,n}$ which satisfy 
\eqref{petitsecteur18}
all belong to the angular sector,
independent of $n$:
\begin{equation}
\label{angularsectoramax}
\arg(z) ~\in ~\bigl[\, 0, 
+ \frac{\pi}{18.2880}  \,\bigr] .
\end{equation}
For any real number $\beta > 1$ having 
$\dyg(\beta) = n$,
$f_{\beta}(z)$ admits a
simple
zero $\omega_{j,n}$ in 
$D_{j,n}$
for which the center $z_{j,n}$
satisfies 
\eqref{petitsecteur18}
with $a=a_{{\max}}$, and $j$ in the range
$ \{\lceil v_n \rceil, 
\lceil v_n \rceil + 1, \ldots, \lfloor n/6 \rfloor\}$. 

\end{theorem}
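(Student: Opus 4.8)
The plan is to reduce the Rouch\'e inequality \eqref{rouchecercle} -- i.e. \eqref{rourouchesimple} on the whole circle $C_{j,n}$ -- to a single scalar inequality on the ratio $x := |{-1}+z_{j,n}|/|z_{j,n}|$, and then to show that among all points of $C_{j,n}$ the worst case occurs at the point of \emph{largest} modulus, which lies radially outward from the centre $z_{j,n}$. Write $n = \dyg(\beta)$, $\tau := |z_{j,n}|$, $\ell := |{-1}+z_{j,n}|$. Since $G_n(z_{j,n}) = 0$ one has $z_{j,n}^{\,n} = 1 - z_{j,n}$, hence $\tau^{\,n} = \ell$ and the exact identity
\[
G_n(z) \;=\; (z - z_{j,n}) + (z^n - z_{j,n}^{\,n}) \;=\; (z - z_{j,n})\Bigl(1 + \sum_{i=0}^{n-1} z^i z_{j,n}^{\,n-1-i}\Bigr).
\]
On $C_{j,n}$ set $w := z/z_{j,n} = 1 + \tfrac{\pi}{an}e^{i\psi}$, $\psi \in [0,2\pi)$ (using $t_{j,n}/(n\tau) = \pi/(an)$). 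First I would establish, uniformly in $\psi$ and in $j$ in the stated range, the asymptotics $w^n = \exp\bigl(\tfrac{\pi}{a}e^{i\psi}\bigr)\bigl(1 + O(1/n)\bigr)$ and $|w| = 1 + O(1/n)$, from $n\lo\bigl(1 + \tfrac{\pi}{an}e^{i\psi}\bigr) = \tfrac{\pi}{a}e^{i\psi} + O(1/n)$; the condition $n \ge 195$ serves to make these errors effective.

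Put $p := e^{(\pi/a)\cos\psi} = \bigl|e^{(\pi/a)e^{i\psi}}\bigr|$ and $q := \bigl|e^{(\pi/a)e^{i\psi}} - 1\bigr|$. Since $|z-z_{j,n}| = \pi\tau/(an)$ is $O(1/n)$ and $|z^n - z_{j,n}^{\,n}| = \ell\,|w^n - 1| = \ell\,q\,(1+O(1/n))$, the right-hand side of \eqref{rourouchesimple} satisfies $|G_n(z)| \ge \ell\,q - O(1/n)$. For the left-hand side, $|z|^n = \tau^n|w^n| = \ell\,p\,(1+O(1/n))$, so
\[
\frac{|z|^{2n-1}}{1 - |z|^{n-1}} \;=\; \frac{|z|^{2n}/|z|}{\,1 - |z|^{n}/|z|\,} \;=\; \frac{\ell^2 p^2/\tau}{\,1 - \ell p/\tau\,}\bigl(1+O(1/n)\bigr) \;=\; \frac{\ell\,p^2}{\,1/x - p\,}\bigl(1+O(1/n)\bigr),
\]
where $1/x - p = (\tau - \ell p)/\ell > 0$ because $x < \kappa(1,a)$ forces $x\,e^{\pi/a} < \tfrac12$ (hence $|z|^{n-1} < 1$ on $C_{j,n}$). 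Therefore, for $n$ large, \eqref{rourouchesimple} holds on all of $C_{j,n}$ as soon as $p^2/(1/x - p) < q$ for every $\psi$, i.e. as soon as $x < q/(p(p+q))$ for all $\psi \in [0,2\pi)$. The heart of the argument is then to prove that $\min_{\psi} q/(p(p+q))$ is attained at $\psi = 0$ (the radially outward point, where $|z|$ is maximal on $C_{j,n}$), where $p = e^{\pi/a}$ and $q = e^{\pi/a}-1$, so that
\[
\min_{\psi}\frac{q}{p(p+q)} \;=\; \frac{e^{\pi/a}-1}{e^{\pi/a}(2e^{\pi/a}-1)} \;=\; \frac{1 - e^{-\pi/a}}{2e^{\pi/a}-1},
\]
which is precisely the bound in \eqref{petitsecteur18}; thus \eqref{petitsecteur18} on $z_{j,n}$ is exactly the condition $x < \min_\psi q/(p(p+q))$.

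I expect the location of this minimum to be the main obstacle. I would attack it using $|e^{s}-1|^2 = e^{2\Re s} - 2e^{\Re s}\cos(\Im s) + 1$ with $s = (\pi/a)e^{i\psi}$, which gives $q \ge |p-1|$ and $q \ge p\,|\sin((\pi/a)\sin\psi)|$, and then splitting $\psi \in [0,\pi]$ (by symmetry) into a neighbourhood of $0$ and its complement, reducing monotonicity of $1/h(\psi) := p^2/q + p$ to an elementary one-variable estimate: moving $\psi$ away from $0$ strictly decreases $p$, while $q$ cannot decrease fast enough to compensate. This balancing of the two competing extremes on $C_{j,n}$ -- the place where $|z|^n$ is largest versus the place where $|z^n - z_{j,n}^{\,n}|$ is smallest -- together with the quantitative bookkeeping of the uniform $O(1/n)$ terms (which produces the explicit threshold $n \ge 195$, also ensuring that roots $z_{j,n}$ with $\arg z_{j,n}$ small enough occur among $j \in \{\lceil v_n\rceil,\dots,\lfloor n/6\rfloor\}$), is where the real work lies.

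The remaining steps are routine. Optimising over $a$: writing $t := e^{\pi/a} \in (1,\infty)$, the map $a \mapsto \kappa(1,a) = (t-1)/(t(2t-1))$ has a unique critical point where $2t^2 - 4t + 1 = 0$, i.e. $t = 1 + \tfrac1{\sqrt2}$, so $a_{\max} = \pi/\lo\bigl(1 + \tfrac1{\sqrt2}\bigr) = 5.87433\ldots$ with maximal value $\kappa = \tfrac{1}{3+2\sqrt2} = 3 - 2\sqrt2 = 0.171573\ldots$. Identifying the angular sector: by Proposition~\ref{zedeJIMoinsUNzedeJI}, $\ell/\tau$ equals $2\sin(\pi j/n)$ to leading order, so $x < \kappa$ amounts, asymptotically, to $\arg z_{j,n} = 2\pi j/n$ at most $2\arcsin(\kappa/2) = 0.171784\ldots = \pi/18.2880\ldots$. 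Finally, existence of $\omega_{j,n}$: by Propositions~\ref{zedeJIargumentsORDRE1} and \ref{rootsdistrib} the disks $D_{j,n}$ are pairwise disjoint (successive centres are $\approx 2\pi/n$ apart, the radii $< \pi/(an)$) and none meets $|z|=1$, and $G_n$ has exactly one, simple, root in each; Rouch\'e then gives a unique -- hence simple -- zero $\omega_{j,n}$ of $f_\beta(z) = G_n(z) + \sum_{q\ge 1} z^{m_q}$ in $D_{j,n}$ with $|\omega_{j,n}| < 1$, while the disk around $\theta_n$ containing $\beta^{-1}$ is covered by Lemma~\ref{tzero}. The tail bound $\bigl|\sum_{q\ge1}z^{m_q}\bigr| \le |z|^{2n-1}/(1-|z|^{n-1})$ used throughout is the geometric-series estimate from \eqref{minimalgappiness} together with $m_1 \ge 2n-1$ (Proposition~\ref{fbetainfinie}).
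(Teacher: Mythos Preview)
Your proposal is correct and follows essentially the same route as the paper: parametrize $C_{j,n}$ by $w=z/z_{j,n}=1+\tfrac{\pi}{an}e^{i\psi}$, use $w^n=\exp\bigl(\tfrac{\pi}{a}e^{i\psi}\bigr)(1+O(1/n))$ to reduce both sides of \eqref{rouchecercle} to leading order, obtain the scalar condition $x<q/(p(p+q))$, show the minimum over $\psi$ occurs at $\psi=0$, and then optimize over $a$. The paper uses the variable $X=\cos\psi$ and writes the right-hand side as the function $\kappa(X,a)$, asserting that $\partial_X\kappa<0$ (``routine''), which is exactly your minimization claim; your closed forms $e^{\pi/a_{\max}}=1+\tfrac{1}{\sqrt2}$ and $\kappa=3-2\sqrt2$ are a welcome addition that the paper omits.
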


\begin{proof}
Denote by $\varphi := \arg (z_{j,n})$
the argument of the $j$-th root
$z_{j,n}$.

Since $-1 + z_{j,n} + z_{j,n}^n = 0$, 
we have $|z_{j,n}|^n = |-1 + z_{j,n}|$.
Let us write $z= z_{j,n}+ \frac{t_{j,n}}{n} e^{i \psi}
=
z_{j,n}(1 + \frac{\pi}{a \, n} e^{i (\psi - \varphi)})$
the generic element belonging to $C_{j,n}$, with
$\psi \in [0, 2 \pi]$.

Let $X := \cos(\psi - \varphi)$.
Let us show that if the inequality
\eqref{rouchecercle} of Rouch\'e 
holds true for $X =+1$, for a certain circle
$C_{j,n}$,
then it holds true
for all $X \in [-1,+1]$, that is for 
every argument $\psi \in [0, 2 \pi]$,
i.e. for every 
$z \in C_{j,n}$.
Let us show 
$$
\left|1 + \frac{\pi}{a \,n} e^{i (\psi - \varphi)}
\right|^{n}
=
\exp\Bigl(
\frac{\pi \, X}{a}\Bigr)
\times 
\left(
1 - \frac{\pi^2}{2 a^2 \, n} (2 X^2 -1) 
+O(\frac{1}{n^2})
\right) 
$$
and
$$
\arg\left(
\Bigl(1 + \frac{\pi}{a \, n} e^{i (\psi - \varphi)}
\Bigr)^{n}\right)
=
sgn(\sin(\psi - \varphi))
\times
\left( ~\frac{\pi \, \sqrt{1-X^2}}{a}
[1 -
\frac{\pi \, X}{a \, n}
]
+O(\frac{1}{n^2})
\right)
.$$
Indeed, since 
$\sin(\psi - \varphi) = \pm \sqrt{1 - X^2}$,
then
$$
\Bigl(1 + \frac{\pi}{a \, n} e^{i (\psi - \varphi)}
\Bigr)^{n}
=
\exp\left(
n \, \lo (1 + \frac{\pi}{a \, n} e^{i (\psi - \varphi))})
\right)
$$
$$=
\exp\left(
\frac{\pi}{a} (
X \pm i \sqrt{1-X^2}
)
+\left[- \frac{n}{2} (\frac{\pi}{a \, n} (
X \pm i \sqrt{1-X^2}
))^2
+ O(\frac{1}{n^2})
\right]
\right)
$$
$$=
\exp\Bigl(
\frac{\pi \, X}{a}
- \frac{\pi^2}{2 a^2 \, n} (2 X^2 -1) 
+O(\frac{1}{n^2})\Bigr)
\times
\exp\left(
\pm \, i ~\frac{\pi \, \sqrt{1-X^2}}{a}
[1 -
\frac{\pi \, X}{a \,n}
]
+O(\frac{1}{n^2})
\right)
. $$
Moreover,
$$
\left|1 + \frac{\pi}{a \, n} 
e^{i (\psi - \varphi)}
\right|
=
\left|1 + \frac{\pi}{a \, n} 
(X \pm i \sqrt{1-X^2})
\right|
=1 + \frac{\pi \, X}{a \, n} + O(\frac{1}{n^2}).
$$
with
$$\arg(1 + \frac{\pi}{a \, n} e^{i (\psi - \varphi)})
= 
sgn(\sin(\psi - \varphi)) \times
\frac{\pi \sqrt{1 - X^2}}{a \, n} 
+ O(\frac{1}{n^2}).
$$
For all $n \geq 18$ (Proposition 3.5
in \cite{vergergaugry6}), 
let us recall that
\begin{equation}
\label{devopomain}
|z_{j,n}|
=
1 + \frac{1}{n} \lo (2 \sin \frac{\pi j}{n})
+ \frac{1}{n} O \left(
\frac{\lo \lo n}{\lo n}\right)^2 .
\end{equation}
Then the left-hand side term of \eqref{rouchecercle}
is
$$\frac{
\left|z\right|^{2 n -1}}{1 - |z|^{n-1}}
=
\frac{|-1 + z_{j,n}|^2 
\left|1 + \frac{\pi}{a \, n} e^{i (\psi - \varphi)}
\right|^{2 n}}
{|z_{j,n}| \, 
\left|1 + \frac{\pi}{a \, n} e^{i (\psi - \varphi)}\right|
-
|-1 + z_{j,n}| \,
\left|1 + \frac{\pi}{a \, n} e^{i (\psi - \varphi)}\right|^{n}}$$

\begin{equation}
\label{rouchegauche}
=
\frac{|-1 + z_{j,n}|^2 
\left(
1 - \frac{\pi^2}{a \, n} (2 X^2 -1) 
\right)
\exp\bigl(
\frac{2 \pi \, X}{a}\bigr)
}
{\bigl(\,
1 + \frac{1}{n} \lo (2 \sin \frac{\pi j}{n})
+ \frac{\pi \, X}{a \, n} 
\bigr)
-
|-1 + z_{j,n}| \,
\left(
1 - \frac{\pi^2}{2 a \, n} (2 X^2 -1) 
\right)
\exp(
\frac{\pi \, X}{a})
}
\end{equation}
up to
$\frac{1}{n} 
O \left(
\frac{\lo \lo n}{\lo n}
\right)^2$
-terms (in the terminant).
The right-hand side term of 
\eqref{rouchecercle} is 
$$\left|-1 + z + z^n \right|
=
\left|
-1 + z_{j,n}\Bigl(1 + \frac{\pi}{n \, a} e^{i (\psi - \varphi)}\Bigr) +
z_{j,n}^{n}
\Bigl(1 + \frac{\pi}{n \, a} e^{i (\psi - \varphi)}
\Bigr)^n
\right|
$$
$$=
\left|
-1 + z_{j,n}
(1 \pm i \frac{\pi \sqrt{1 - X^2}}{a \, n})
(1 + \frac{\pi \, X}{a \, n}) +
\hspace{5cm} \mbox{} \right.
$$
\begin{equation}
\label{rouchedroite}
\left.
\mbox{} \hspace{-0.2cm}
(1 - z_{j,n})
\bigl(
1 - \frac{\pi^2}{2 a^2 \, n} (2 X^2 -1) 
\bigr)
\exp\bigl(
\frac{\pi \, X}{a}
\bigr) \,
\exp\Bigl(
\pm \,
i \,
\Bigl( ~\frac{\pi \, \sqrt{1-X^2}}{a}
[1 - \frac{\pi \, X}{a \, n}] 
\Bigr)
\Bigr)
\!+ \!O(\frac{1}{n^2})
\right|
\end{equation}

Let us consider
\eqref{rouchegauche}
and
\eqref{rouchedroite}
at the first order for the asymptotic expansions, 
i.e. up to $O(1/n)$ - terms instead of
up to 
$O(\frac{1}{n}(\lo \lo n/ \lo n)^2)$ - terms or
$O(1/n^2)$ - terms.
\eqref{rouchegauche} becomes:
$$\frac{|-1+z_{j,n}|^2 \exp(\frac{2 \pi X}{a})}
{|z_{j,n}| - |-1+z_{j,n}| \exp(\frac{\pi X}{a})}$$
and \eqref{rouchedroite} is equal to:
$$|-1 + z_{j,n}|
\left|
1 -
\exp\bigl(
\frac{\pi \, X}{a}
\bigr) \,
\exp\Bigl(
\pm \,
i \,
\frac{\pi \, \sqrt{1-X^2}}{a} 
\Bigr)
\right|
$$
and is independent of the sign of 
$\sin(\psi - \varphi)$.
Then
the inequality \eqref{rouchecercle} is 
equivalent to
\begin{equation}
\label{roucheequiv1}
\frac{|-1+z_{j,n}|^2 \exp(\frac{2 \pi X}{a})}
{|z_{j,n}| - |-1+z_{j,n}| \exp(\frac{\pi X}{a})}
<
|-1+z_{j,n}| \, 
\left|
1 -
\exp\bigl(
\frac{\pi \, X}{a}
\bigr) \,
\exp\Bigl(
\pm \,
i \,
\frac{\pi \, \sqrt{1-X^2}}{a} 
\Bigr)
\right|
,
\end{equation}
and \eqref{roucheequiv1} to
\begin{equation}
\label{amaximalfunctionX}
\frac{|-1 + z_{j,n}|}{|z_{j,n}|}
~  < ~ \, 
\frac{\left|
1 -
\exp\bigl(
\frac{\pi \, X}{a}
\bigr) \,
\exp\Bigl(
 \,
i \,
\frac{\pi \, \sqrt{1-X^2}}{a} 
\Bigr)
\right|
\exp\bigl(
\frac{-\pi \, X}{a}
\bigr)}{\exp\bigl(
\frac{\pi \, X}{a}
\bigr) +\left|
1 -
\exp\bigl(
\frac{\pi \, X}{a}
\bigr) \,
\exp\Bigl(
 \,
i \,
\frac{\pi \, \sqrt{1-X^2}}{a} 
\Bigr)
\right|} =: \kappa(X,a).
\end{equation}

Denote by
$\kappa(X,a)$ the right-hand side term, 
as a function of $(X, a)$, 
on $[-1, +1] \times [1, +\infty)$.
It is routine to show that, for any
fixed $a$,
the partial derivative $\partial \kappa_X$
of $\kappa(X,a)$ with respect to $X$ 
is strictly negative
on the interior of the domain. 
The function $x \to 
\kappa(x,a)$ takes its minimum
at $X=1$, and this minimum is always 
strictly positive. 
Hence the inequality of Rouch\'e
\eqref{rouchecercle} will be satisfied
on $C_{j,n}$ once it is 
satisfied at $X = 1$.

For which range of values of $j/n$? 
Up to
$O(1/n)$-terms in 
\eqref{amaximalfunctionX},
it is given by the set of integers
$j$ for which $z_{j,n}$
satisfies:
\begin{equation}
\label{amaximalfunction}
\frac{|-1 + z_{j,n}|}{|z_{j,n}|} 
< \kappa(1,a) 
=
\frac{\left|
1 -
\exp\bigl(
\frac{\pi}{a}
\bigr)
\right|
\exp\bigl(
\frac{-\pi}{a}
\bigr)}{\exp\bigl(
\frac{\pi}{a}
\bigr) +\left|
1 -
\exp\bigl(
\frac{\pi}{a}
\bigr)
\right|} .
\end{equation}
In order to take into account
a collection of roots of $z_{j,n}$
as large as possible, i.e.
in order to have a minorant of the Mahler measure
${\rm M}(\beta)$ the largest possible,
the value of $a \geq 1$ has to be chosen
such that $a \to \kappa(1,a)$ is maximal
in \eqref{amaximalfunction}.

\begin{figure}
\begin{center}
\includegraphics[width=6cm]{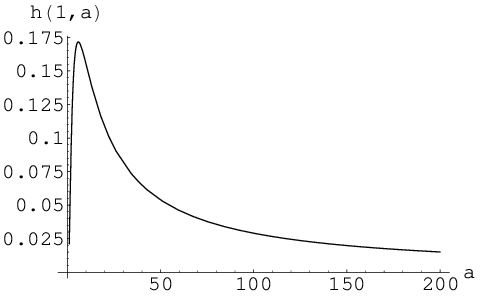}
\end{center}
\caption{
Curve of the Rouch\'e condition $a \to \kappa(1,a)$ 
(upper bound in \eqref{petitsecteur18}),
for the circles
$C_{j,n} = \{z \mid |z-z_{j,n}| =
\pi |z_{j,n}|/(a \, n)\}$
centered at the zeroes $z_{j,n}$
of the trinomial $-1 + X + X^n$,
as a function of
the size of the circles
$C_{j,n}$ parametrized by
the adjustable real number $a \geq 1$.}
\label{h1a}
\end{figure}

The function $a \to \kappa(1,a)$ 
reaches its maximum
$\kappa(1, a_{\max}) := 0.171573\ldots$
at $a_{\max} = 5.8743\ldots$.
(Figure \ref{h1a}).
Denote by $J_n$ the maximal integer $j$
for which
\eqref{amaximalfunction} is satisfied
and in which $a$ is taken equal to 
$a_{\max}$ 
(Definition \ref{Jndefinition} 
and Proposition \ref{argumentlastrootJn}).
From Proposition \ref{argumentlastrootJn},
in which are reported the asymptotic expansions
of $J_n$ and $\arg(z_{J_n , n})$,
we deduce 
\begin{equation}
\label{gammeindexj}
\arg(z_{j,n}) < \frac{\pi}{18.2880\ldots} =
0.171784\ldots
\quad
\mbox{for} ~
j= \lceil v_n \rceil, \lceil v_n \rceil + 1, \ldots,
J_n .
\end{equation}
\begin{remark}
\label{value195}
The minimal value $n_1 = 195$
is calculated by the condition 
$2 \pi \frac{v_{n}}{n}
<
\frac{\pi}{18.2880\ldots} 
$
$= 0.171784\ldots $,
for all
$n \geq n_1$,
for having a strict inclusion,
of the ``bump sector" inside
the angular sector defined by
the maximal opening angle
$0.171784\ldots$
(cf Appendix for the sequence $(v_n)$)
\end{remark}
This finishes the proof.
\end{proof}

Let us calculate the argument of the last root
$z_{j,n}$  for which 
\eqref{amaximalfunctionX} 
is an equality with $X=1$.

\begin{definition}
\label{Jndefinition}
Let $n \geq 195$.
Denote by $J_n$ the largest integer
$j \geq 1$ such that
the root
$z_{j, n}$ of $G_n$  
satisfies
\begin{equation}
\label{JJJn}
\frac{|-1 + z_{j ,n}|}{|z_{j ,n}|}
~  \leq ~ \, 
\kappa(1,a_{\max}) =
\frac{
1 -
\exp\bigl(
\frac{- \pi}{a_{\max}}
\bigr)
}{2 \exp\bigl(
\frac{\pi}{a_{\max}}
\bigr) 
- 1}
=
0.171573\ldots
\end{equation}
\end{definition}

Let us observe that the upper bound
$\kappa(1, a_{\max})$ is independent of $n$.
From this independence we deduce
the following ``limit" angular sector
in which the Rouch\'e conditions
can be applied.

\begin{proposition}
\label{argumentlastrootJn}
Let $n \geq 195$. Let us
put $\kappa:=\kappa(1, a_{\max})$ for short.
Then
\begin{equation}
\label{argzJJJn}
\arg(z_{J_n , n}) = 
2 \arcsin 
\bigl( \frac{\kappa}{2} \bigr)
+ \frac{\kappa \, \lo \kappa}
{n \sqrt{4 - \kappa^2}}
+
\frac{1}{n} O\bigl(
\bigl(
\frac{\lo \lo n}{\lo n}
\bigr)^2
\bigr),
\end{equation}
\begin{equation}
\label{Jnasymptotic}
J_n = 
\frac{n}{\pi}
\bigl(
\arcsin\bigl( \frac{\kappa}{2} \bigr) 
\bigr)
+
\frac{\kappa \, \lo \kappa}
{\pi \,\sqrt{4 - \kappa^2}}
+ 
O\bigl(
\bigl(
\frac{\lo \lo n}{\lo n}
\bigr)^2
\bigr)
\end{equation}
with, at the limit,
\begin{equation}
\label{philimite}
\lim _{n \to + \infty}
\arg(z_{J_n , n}) = 
\lim _{n \to + \infty}
2 \pi \frac{J_n}{n}
=
2 \arcsin 
\bigl( \frac{\kappa}{2} \bigr) = 0.171784\ldots 
\end{equation}
\end{proposition}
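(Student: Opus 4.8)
The plan is to derive the asymptotic expansion \eqref{argzJJJn} of $\arg(z_{J_n,n})$ directly from the defining equality case of \eqref{JJJn}, invert it to obtain \eqref{Jnasymptotic}, and then pass to the limit to get \eqref{philimite}. First I would set $\kappa := \kappa(1,a_{\max})$ and recall from Proposition \ref{zedeJIMoinsUNzedeJI} that for the relevant range of $j$,
\[
\frac{|-1+z_{j,n}|}{|z_{j,n}|}
= 2\sin\!\Bigl(\frac{\pi j}{n}\Bigr)
\Bigl(1 - \frac{1}{n}\lo\!\bigl(2\sin\tfrac{\pi j}{n}\bigr)\Bigr)
+ \frac{1}{n}O\!\Bigl(\Bigl(\frac{\lo\lo n}{\lo n}\Bigr)^2\Bigr).
\]
By Definition \ref{Jndefinition}, $J_n$ is the largest $j$ for which this quantity is $\le \kappa$; treating $j$ as a continuous parameter, the borderline value $j = J_n$ is (up to the stated error) the solution of
\[
2\sin\!\Bigl(\frac{\pi j}{n}\Bigr)
\Bigl(1 - \frac{1}{n}\lo\!\bigl(2\sin\tfrac{\pi j}{n}\bigr)\Bigr)
= \kappa .
\]

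Next I would solve this perturbatively. Write $\varphi_n := \arg(z_{J_n,n})$, which by Proposition \ref{zedeJIargumentsORDRE1} equals $2\pi J_n/n$ up to a term of size $\frac{1}{n}\lo(2\sin\tfrac{\pi j}{n})$ times a bounded factor, absorbed in the $\frac{1}{n}O((\lo\lo n/\lo n)^2)$ terminant. At leading order $2\sin(\varphi_n/2) = \kappa$, i.e. $\varphi_n = 2\arcsin(\kappa/2)$, which is the order-$1$ approximant. Inserting this into the $\frac{1}{n}$-correction gives $2\sin(\varphi_n/2)\bigl(1 - \frac1n\lo\kappa\bigr) = \kappa + O(1/n^2)$, so $2\sin(\varphi_n/2) = \kappa\bigl(1 + \frac1n\lo\kappa\bigr) + O(1/n^2)$. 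Differentiating $2\sin(x/2)$ at $x_0 = 2\arcsin(\kappa/2)$ gives derivative $\cos(\arcsin(\kappa/2)) = \sqrt{1-\kappa^2/4} = \tfrac12\sqrt{4-\kappa^2}$, whence the first-order correction to $\varphi_n$ is
\[
\frac{\kappa\,\lo\kappa/n}{\tfrac12\sqrt{4-\kappa^2}} \cdot \tfrac12
= \frac{\kappa\,\lo\kappa}{n\sqrt{4-\kappa^2}},
\]
giving \eqref{argzJJJn}. The remaining terminant of size $\frac1n O((\lo\lo n/\lo n)^2)$ is inherited directly from the terminants in Proposition \ref{zedeJIMoinsUNzedeJI} and Proposition \ref{zedeJIargumentsORDRE1}. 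The expansion \eqref{Jnasymptotic} then follows by dividing through by $2\pi/n$ and using $\varphi_n = 2\pi J_n/n + O(\cdot)$, and \eqref{philimite} is immediate by letting $n\to+\infty$ and discarding the $O(1/n)$ and terminant contributions. Finally one checks numerically that $2\arcsin(\kappa/2) = 0.171784\ldots$, consistent with the angular-sector bound $\pi/18.2880\ldots$ already quoted in \eqref{gammeindexj}.

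The main obstacle I expect is bookkeeping the error terms carefully enough: one must confirm that the discrepancy between $\arg(z_{J_n,n})$ and $2\pi J_n/n$ (which involves a $\frac1n\lo(2\sin\tfrac{\pi j}{n})$ term, not a priori as small as $\frac1n(\lo\lo n/\lo n)^2$) does not corrupt either the explicit first-order term $\frac{\kappa\lo\kappa}{n\sqrt{4-\kappa^2}}$ or the claimed terminant. Since $j \approx J_n$ stays in the main angular sector bounded away from $0$ and from $\pi/3$, the quantity $2\sin(\pi j/n)$ is bounded above and below by positive constants, so $\lo(2\sin(\pi j/n)) = O(1)$ and this contributes only an $O(1/n)$ shift that must be shown to match the already-present $\frac{\kappa\lo\kappa}{n\sqrt{4-\kappa^2}}$ term exactly — which it does, because $\arg(z_{j,n})$ and $|-1+z_{j,n}|/|z_{j,n}|$ are both governed by the same asymptotic data for $G_n$. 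The secondary technical point is verifying, via the monotonicity of $\kappa(X,a)$ in $X$ established in the proof of Theorem \ref{cercleoptiMM} together with the strict decrease of $j\mapsto |-1+z_{j,n}|/|z_{j,n}|$ being replaced by strict increase, that $J_n$ is genuinely well-defined as an integer (the condition $n\ge 195$ from Remark \ref{value195} guarantees the bump sector lies strictly inside the admissible cone, so no degeneracy occurs at small $j$). Once these are in place the computation is the routine order-$1$-to-order-$2$ inversion described above.
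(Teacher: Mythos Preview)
Your overall strategy---set up the equality case of \eqref{JJJn} and invert perturbatively---is the same as the paper's. But there is a genuine gap precisely at the point you flag as the ``main obstacle'': you cannot treat the discrepancy between $\varphi_n := \arg(z_{J_n,n})$ and $2\pi J_n/n$ as absorbed in the terminant. By Proposition~\ref{zedeJIargumentsORDRE1}, $2\pi A_{J_n,n} = -\tfrac{1}{n}\tan(\pi J_n/n)\lo(2\sin(\pi J_n/n)) + \ldots$; since $2\sin(\pi J_n/n)\to\kappa$ this is $-\tfrac{\kappa\lo\kappa}{n\sqrt{4-\kappa^2}}+\ldots$, exactly the same order as the correction you are computing. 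This is why the $1/n$--coefficients in \eqref{argzJJJn} and in $2\pi J_n/n$ (read off from \eqref{Jnasymptotic}) differ by a factor of $2$: the $A_{J_n,n}$ term supplies the other half. Your step ``divide \eqref{argzJJJn} through by $2\pi/n$'' would therefore give the wrong constant in \eqref{Jnasymptotic}.

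A second, related error: the equation $2\sin(\varphi_n/2)(1-\tfrac1n\lo\kappa)=\kappa$ is not what Proposition~\ref{zedeJIMoinsUNzedeJI} gives---that formula involves $\sin(\pi j/n)$, not $\sin(\arg(z_{j,n})/2)$---and your inversion then carries an unexplained extra factor $\tfrac12$ (from $2\sin(\varphi_n/2)=\kappa(1+\tfrac1n\lo\kappa)$ and derivative $\tfrac12\sqrt{4-\kappa^2}$ one gets $\delta=\tfrac{2\kappa\lo\kappa}{n\sqrt{4-\kappa^2}}$, not half of that). The paper avoids both pitfalls by computing $|-1+z_{J_n,n}|^2$ directly from $\varphi=\arg(z_{J_n,n})$ and $|z_{J_n,n}|=1+\tfrac1n\lo(2\sin(\pi J_n/n))$, obtaining $2\sin(\varphi/2)=\kappa\bigl[1+\tfrac{1}{2n}\lo(2\sin(\pi J_n/n))\bigr]$ (the $\tfrac12$ here comes honestly from a square root), and then introducing $\psi_n:=2\pi J_n/n-2\arcsin(\kappa/2)$ and using the explicit $A_{J_n,n}$ relation to solve for $\psi_n$. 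Keeping both angular variables in play is what yields the correct distinct coefficients in \eqref{argzJJJn} and \eqref{Jnasymptotic}.
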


\begin{proof}
Since $\lim_{n \to +\infty} |z_{J_n , n}| = 1$,
we deduce from \eqref{JJJn} that
the limit argument
$\varphi_{lim}$ of $z_{J_n, n}$
satisfies $|-1 + \cos(\varphi_{lim})
+ i \sin(\varphi_{lim})|
= 2 \sin(\varphi_{lim}/2) = \kappa(1,a_{\max})$.
We deduce \eqref{philimite}, and the equality between
the two limits from
\eqref{argumentzjn}.
 
From \eqref{JJJn}, the inequality
$|-1 + z_{j ,n}| \leq |z_{j ,n}| \, \kappa(1,a_{\max})$
already implies that 
$\arg(z_{J_n , n})) < \varphi_{lim}$.
In the sequel, 
we will use the asymptotic expansions
of the roots $z_{J_n , n}$.
From Section 6 in \cite{vergergaugry6}
the argument of $z_{J_n , n}$ takes the following
form
\begin{equation}
\label{argumentzjn}
\arg(z_{J_n , n})) =
2 \pi (\frac{J_n}{n} + \Re)
\quad
\mbox{with}
\quad
\Re =
- \frac{1}{2 \pi n}
\left[
\frac{1 - \cos(\frac{2 \pi J_n}{n})}{\sin (\frac{2 \pi J_n}{n})} 
\lo (2 \sin(\frac{\pi J_n}{n}))
\right]
\end{equation}
with
$${\rm tl}(\arg(z_{J_n , n})))
=
+\frac{1}{n} O\left(
\left(
\frac{\lo \lo n}{\lo n}
\right)^2
\right) .
$$
Its modulus is
\begin{equation}
\label{devopomainCONSTANTEc}
|z_{J_n ,n}|
=
1 + \frac{1}{n} \lo (2 \sin \frac{\pi J_n}{n})
+ \frac{1}{n} O \left(
\frac{\lo \lo n}{\lo n}\right)^2 .
\end{equation}
Denote $\varphi := \arg(z_{J_n ,n})$.
Up to $\frac{1}{n} O\bigl(
\bigl(
\frac{\lo \lo n}{\lo n}
\bigr)^2
\bigr)$-terms, 
we have
$$|-1 + z_{J_n ,n}|^2 =
\Bigl|-1 + 
[1 + \frac{1}{n} \lo (2 \sin \frac{\pi J_n}{n})]
(\cos(\varphi) + i \sin(\varphi))
\Bigr|^2
$$
$$=
[-1 +[1 + \frac{1}{n} \lo (2 \sin \frac{\pi J_n}{n})]
(\cos(\varphi)]^2
+
[1 + \frac{1}{n} \lo (2 \sin \frac{\pi J_n}{n})]^2
(\sin(\varphi)^2
$$
$$=
1 +
[1 + \frac{1}{n} \lo (2 \sin \frac{\pi J_n}{n})]^2
-
2 [1 + \frac{1}{n} \lo (2 \sin \frac{\pi J_n}{n})]
\cos(\varphi)
$$
\begin{equation}
\label{JJJngauche}
=
4 (\sin(\frac{\varphi}{2}))^2
+
\frac{4}{n} (\sin(\frac{\varphi}{2}))^2 \, 
\lo (2 \sin \frac{\pi J_n}{n})
=
4 (\sin(\frac{\varphi}{2}))^2
[1 + \frac{1}{n} \lo (2 \sin \frac{\pi J_n}{n})] .
\end{equation}
Up to $\frac{1}{n} O\bigl(
\bigl(
\frac{\lo \lo n}{\lo n}
\bigr)^2
\bigr)$-terms, due to the definition of $J_n$,
let us consider \eqref{JJJn} as an equality;
hence, from \eqref{JJJngauche} and
\eqref{devopomainCONSTANTEc}, 
the following identity 
should be satisfied
\begin{equation}
\label{argumentINTER}
2 \sin(\frac{\varphi}{2})
=
\kappa \,[1 + \frac{1}{2 n} \lo (2 \sin \frac{\pi J_n}{n})]
\end{equation}
We now use \eqref{argumentINTER} to obtain
an asymptotic expansion of
$\psi_n := 2 \pi \frac{J_n}{n} - \varphi_{lim}$
as a function of $n$ and $\varphi_{lim}$
up to $\frac{1}{n} O\bigl(
\bigl(
\frac{\lo \lo n}{\lo n}
\bigr)^2
\bigr)$-terms.
First, at the first order in $\psi_n$,
$$
\sin(\frac{\pi J_n}{n}) = \frac{\psi_n}{2} \cos(\frac{\varphi_{lim}}{2})
+ \sin(\frac{\varphi_{lim}}{2}),
\quad
\cos(\frac{\pi J_n}{n}) = - \frac{\psi_n}{2} \sin(\frac{\varphi_{lim}}{2})
+ \cos(\frac{\varphi_{lim}}{2}),
$$
$$
\lo \bigl(2 \sin(\frac{\pi J_n}{n})\bigr)
=
\lo \bigl(2 \sin(\frac{\varphi_{lim}}{2})\bigr)
+
\psi_n \frac{\cos(\frac{\varphi_{lim}}{2})}
{2 \sin(\frac{\varphi_{lim}}{2})}
=
\lo \kappa + \psi_n \frac{\cos(\frac{\varphi_{lim}}{2})}{h}.
$$
Moreover,
$$
\Bigl[
\frac{1 - \cos(\frac{2 \pi J_n}{n})}{\sin (\frac{2 \pi J_n}{n})} 
\lo (2 \sin(\frac{\pi J_n}{n}))
\Bigr]
$$
\begin{equation}
\label{RezJJJn}
=
\tan (\frac{\varphi_{lim}}{2})
(\lo \kappa)
\Bigl[1 + \psi_n 
\left(
\frac{1}{\sin (\varphi_{lim})}
+
\frac{\cos (\frac{\varphi_{lim}}{2})}
{\kappa \, \lo \kappa}
\right)
\Bigr].
\end{equation}
Hence,
with $2 \sin(\varphi/2)
=
2 \sin(\pi J_n/n) \cos(\pi \Re)
+
2 \cos(\pi J_n/n) \sin(\pi \Re)
$,
and
from \eqref{argumentzjn},
up to $\frac{1}{n} O\bigl(
\bigl(
\frac{\lo \lo n}{\lo n}
\bigr)^2
\bigr)$-terms, the identity
\eqref{argumentINTER} becomes
$$
[\psi_n \cos(\frac{\varphi_{lim}}{2})
+ 2 \sin(\frac{\varphi_{lim}}{2})]
+(\frac{- 2 \cos(\frac{\varphi_{lim}}{2}) \tan(\frac{\varphi_{lim}}{2})
\, \lo \kappa}{2 n})
=
\kappa \,  \bigl[
1 + \frac{ \lo \kappa}{2 n}
\bigr] .
$$
We deduce 
\begin{equation}
\label{psinasymptotic}
\psi_n 
=
\frac{\kappa \, \lo \kappa}
{n \, \cos(\frac{\varphi_{lim}}{2})}
+
\frac{1}{n} O\bigl(
\bigl(
\frac{\lo \lo n}{\lo n}
\bigr)^2
\bigr)
,
\end{equation} 
then $2 \pi J_n / n = \psi_n + \varphi_{lim}$
and
\eqref{Jnasymptotic}.
With
$2 \pi J_n / n$, and
from \eqref{argumentzjn} and
\eqref{RezJJJn} we deduce
\eqref{argzJJJn}.
This finishes the proof.
\end{proof}

\begin{remark}
\label{openingangle_sin_quadratic_alginteger}
(i) The maximal half-opening angle 
of the sector in which
one can detect zeroes of $f_{\beta}(z)$,
for any $\beta$ such that
$\theta_{n-1} < \beta^{-1} < \theta_n$,
by the method of Rouch\'e, is
$0.17178... =
2 \arcsin(\frac{\kappa(1, a_{\max})}{2})$.
Remarkably
this upper bound $2 \arcsin(\frac{\kappa(1, a_{\max})}{2})$
is independent of $n$. By comparison it is
fairly small with respect to $\pi/3$ for
the Perron numbers $\theta_{n}^{-1}$.

(ii) The curve $a \to \kappa(1,a)$, given by Figure 
\ref{h1a},
is such that any value
in the interval $(0, \kappa(1,a_{max}))$
is reached by the function
$\kappa(1,a)$
from two values say $a_1$
and
$a_2$, of $a$, satisfying $a_1 < a_{max}
< a_2$. On the contrary, the correspondence
$a_{max} \leftrightarrow \kappa(1,a_{max})$ is unique, 
corresponding to a double root.
Denote $D:=\exp(\pi/a_{max})$ and 
$\kappa := \kappa(1,a_{max})$.
It means that the quadratic algebraic equation 
$2 \kappa D^2 - (\kappa+1) D +1 =0$ 
deduced from the upper bound
in \eqref{JJJn} has necessarily
a discriminant equal to zero.
The discriminant is
$\kappa^2 - 6 \kappa +1$. 
Therefore $D = (\kappa+1)/(4 \kappa)$
and the limit value
$x = 2 \arcsin(\kappa/2)$ in
\eqref{philimite} satisfies
the quadratic algebraic equation
$$4 (\sin(x/2))^2   -
12 \sin(x/2) + 1 = 0.$$ 
\end{remark}

\begin{proposition}
\label{cercleoptiMMBUMP}
Let $n \geq n_1 = 195$.
The circles
$C_{j,n}
:= \{z \mid |z-z_{j,n}| = 
\frac{\pi |z_{j,n}|}{n\, a_{\max}} \}$
centered at the roots 
$z_{j,n}$
of the trinomial
$-1 + z + z^n$ which belong to
the ``bump sector", namely 
for
$j \in \{1, 2, \ldots, \lfloor v_n \rfloor \}$,
are such that 
the 
conditions of Rouch\'e 
\begin{equation}
\label{rouchecercleBUMP}
\frac{
\left|z\right|^{2 n -1}}{1 - |z|^{n-1}}
~<~
\left|-1 + z + z^n \right| , 
\quad \mbox{for all}~ z \in C_{j,n},
~~\mbox{}~
1 \leq j \leq \lfloor v_n \rfloor ,
\end{equation}
hold true. 
For any real number $\beta > 1$ having 
$\dyg(\beta) = n$,
$f_{\beta}(z)$ admits a
simple
zero $\omega_{j,n}$ in 
$D_{j,n}$
(with $a=a_{{\max}}$), for
$j$ in the range 
$\{1, 2, \ldots,
\lfloor v_n \rfloor\}$.

\end{proposition}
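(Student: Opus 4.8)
The plan is to repeat the argument of Theorem \ref{cercleoptiMM} in the bump sector, feeding in the bump-sector asymptotic expansions of the roots $z_{j,n}$ of $G_n=-1+z+z^n$ from Proposition \ref{zjjnnExpression}(ii) in place of the main-sector ones, and then to close by comparison with what has already been proved in the main sector. Concretely, for each $j \in \{1,\ldots,\lfloor v_n\rfloor\}$ I would write the generic point of $C_{j,n}$ as $z = z_{j,n}\bigl(1 + \frac{\pi}{a_{\max} n} e^{i(\psi - \varphi)}\bigr)$ with $\varphi = \arg(z_{j,n})$ and $X = \cos(\psi - \varphi)$, and use the exact identity $|z_{j,n}|^n = |-1 + z_{j,n}|$ coming from $G_n(z_{j,n}) = 0$. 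Expanding the two sides of \eqref{rouchecercleBUMP} exactly as in Theorem \ref{cercleoptiMM} — this time with the terminants of Proposition \ref{zjjnnExpression}(ii) — shows that \eqref{rouchecercleBUMP} holds on the whole circle $C_{j,n}$ once it holds at $X = 1$ (since $X \mapsto \kappa(X, a_{\max})$ is strictly decreasing on $[-1,1]$ with a strictly positive minimum at $X = 1$), and that the case $X = 1$ reduces, up to terms of order $1/n$, to the scalar inequality
$$\frac{|-1+z_{j,n}|}{|z_{j,n}|} ~=~ |z_{j,n}|^{n-1} ~<~ \kappa(1, a_{\max}) = \frac{1 - \exp(-\pi/a_{\max})}{2\exp(\pi/a_{\max}) - 1} = 0.171573\ldots .$$

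It therefore remains to verify this scalar inequality, with the appropriate margin, for all $j \in \{1,\ldots,\lfloor v_n\rfloor\}$ and all $n \geq 195$. Here I would invoke two facts: the moduli $(|z_{j,n}|)_j$ are strictly increasing on $\{1,\ldots,\lfloor n/6\rfloor\}$ (Proposition \ref{closetoouane}(iii-2)), and $\lceil v_n\rceil \leq J_n$ for $n \geq 195$ — the latter being exactly the point of the choice $n_1 = 195$ in Remark \ref{value195}, which places the bump sector $\{|\arg z| < 2\pi (\lo n)/n\}$ strictly inside the angular window $\{|\arg z| < 2\arcsin(\kappa(1,a_{\max})/2) = 0.171784\ldots\}$, combined with $\arg(z_{J_n,n}) = 2\arcsin(\kappa(1,a_{\max})/2) + O(1/n)$ from Proposition \ref{argumentlastrootJn} and the strict monotonicity of $j \mapsto \arg(z_{j,n})$. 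Since the index $\lceil v_n\rceil$ then lies in the range $\{\lceil v_n\rceil,\ldots,J_n\}$ already treated by Theorem \ref{cercleoptiMM}, the inequality \eqref{rouchecercleBUMP} holds on $C_{\lceil v_n\rceil, n}$; and for $j \leq \lfloor v_n\rfloor$ one has $|z_{j,n}| \leq |z_{\lfloor v_n\rfloor, n}| \leq |z_{\lceil v_n\rceil, n}| \leq |z_{J_n,n}|$, hence
$$\frac{|-1+z_{j,n}|}{|z_{j,n}|} ~=~ |z_{j,n}|^{n-1} ~\leq~ |z_{J_n,n}|^{n-1} ~=~ \frac{|-1+z_{J_n,n}|}{|z_{J_n,n}|} ~\leq~ \kappa(1,a_{\max})$$
by the defining inequality \eqref{JJJn} of $J_n$, with enough room (inherited from the strict inequalities for $j < J_n$) to absorb the $1/n$-corrections; thus the Rouch\'e condition \eqref{rouchecercleBUMP} holds on every $C_{j,n}$, $1 \leq j \leq \lfloor v_n\rfloor$.

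The conclusion about $f_\beta$ then follows as in Theorem \ref{cercleoptiMM}: by Proposition \ref{fbetainfinie} and Theorem \ref{parryupperdynamicalzeta}, $\dyg(\beta) = n$ gives $f_\beta(z) = G_n(z) + \sum_{q \geq 1} z^{m_q}$ with $m_1 \geq 2n-1$ and $m_{q+1} - m_q \geq n-1$, so $|f_\beta(z) - G_n(z)| \leq \sum_{q\geq 1}|z|^{m_q} \leq \frac{|z|^{2n-1}}{1-|z|^{n-1}} < |G_n(z)|$ on $C_{j,n}$; since the radius $\frac{\pi |z_{j,n}|}{a_{\max} n} < \frac{\pi}{a_{\max} n}$ is far smaller than the spacing $\approx 2\pi/n$ of consecutive roots of $G_n$ (Proposition \ref{zedeJIargumentsORDRE1}) and keeps $D_{j,n} \subset \{|z|<1\}$, the disk $D_{j,n}$ contains exactly the simple root $z_{j,n}$ of $G_n$, and Rouch\'e's theorem produces exactly one — hence simple — zero $\omega_{j,n}$ of $f_\beta$ in $D_{j,n}$. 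The main obstacle I anticipate is the sharpness at the threshold: the bump sector only just fits inside the window $\{|\arg z| < 2\arcsin(\kappa(1,a_{\max})/2)\}$, so the comparison $\lceil v_n\rceil \leq J_n$ (equivalently $2\pi v_n/n < \arg(z_{J_n,n})$) and the bump-sector terminants of Proposition \ref{zjjnnExpression}(ii) — which are coarser in the innermost subsector — have to be reconciled carefully with the $O(1/n)$-margins exactly as in the main-sector proof, and this numerical bookkeeping is precisely what fixes the constant at $195$.
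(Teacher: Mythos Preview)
Your approach matches the paper's: rerun the proof of Theorem \ref{cercleoptiMM} with the bump-sector asymptotic expansions of Proposition \ref{zjjnnExpression}(ii) substituted for the main-sector ones, observe that the same function $\kappa(X,a)$ emerges, and conclude. One correction: in the bump sector $|z_{j,n}| - 1$ is of size $\lo n/n$, not $1/n$ (see the formula for $\theta_n$ and the expansions in Proposition \ref{zjjnnExpression}(ii)), so the reduction to the scalar inequality must be carried out up to $O(\lo n/n)$-terms rather than $O(1/n)$-terms. The paper makes exactly this point; you flag the issue at the end but state the wrong scale earlier.

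Where you genuinely add something is in the verification of the scalar inequality $|{-}1+z_{j,n}|/|z_{j,n}|<\kappa(1,a_{\max})$ for the bump roots. The paper's proof is elliptic here: it simply asserts that the same upper bound function $\kappa(1,a)$ controls the problem and that the Rouch\'e condition therefore holds in the bump sector, without spelling out why the bump roots satisfy the scalar inequality. Your monotonicity argument --- using $|{-}1+z_{j,n}|/|z_{j,n}| = |z_{j,n}|^{n-1}$ together with $|z_{j,n}| \le |z_{J_n,n}|$ from Proposition \ref{closetoouane}(iii-2) and the defining inequality \eqref{JJJn} of $J_n$ --- makes this step transparent and sidesteps any direct computation of $|{-}1+z_{j,n}|$ in the bump regime. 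This is a clean way to close the argument and is implicitly what the choice $n_1=195$ (Remark \ref{value195}) is buying.
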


\begin{proof}
The development terms $``D"$ of the
asymptotic expansions
of $|z_{j,n}|$ change 
from the main angular sector
$\arg z \in (2 \pi (\lo n)/n,
\pi/3)$ to
the first transition region
$\arg z \asymp 2 \pi (\lo n)/n$, the ``bump sector",
further to the second transition region\newline
$\arg z \asymp 2 \pi \sqrt{(\lo n)(\lo \lo n)}/n$,
and to a small neighbourhood of 
$\theta_n$
(Section \ref{S4.2}).

Then the proof of
\eqref{rouchecercleBUMP}
is the same as that
of Theorem \ref{cercleoptiMM} once 
\eqref{devopomain} is 
substituted by the suitable 
asymptotic expansions which correspond to 
the angular sector
of the ``bump". 
The terminants of the 
respective asymptotic expansions
of $|z_{j,n}|$
also change: this change imposes to 
reconsider
\eqref{rouchegauche} 
and
\eqref{rouchedroite}
up to $\lo n/n$ - terms, and not
up to $1/n$ - terms, as in the proof
of Theorem \ref{cercleoptiMM}. 
It is remarkable that 
the  inequality \eqref{amaximalfunctionX} 
remains the same, with the same upper bound function
$\kappa(X,a)$.
Then the equation of the 
curve of the Rouch\'e condition
$a \to \kappa(1,a)$, on $[1, +\infty)$,
is the same as in Theorem 
\ref{cercleoptiMM} for controlling
the conditions of Rouch\'e.
The optimal value 
$a_{\max}$ of $a$ also remains the same, and
\eqref{rouchecercle} also holds true
for those $z_{j,n}$ in the bump sector.
\end{proof}

From the inequalities 
\eqref{petitsecteur18} in
Theorem \ref{cercleoptiMM}, also used
in the proof of
Proposition \ref{cercleoptiMMBUMP},
we now obtain a finer localization
of  a subcollection of the
roots 
$\omega_{j,n}$
of the Parry Upper function
$f_{\beta}(z)$, and
a definition of the lenticulus 
$\lc_{\beta}$ of $\beta$, as follows.

\begin{theorem}
\label{omegajnexistence}
Let $n \geq n_1 = 195$.
Let 
$\beta > 1$ be 
any real number
having $\dyg(\beta) = n$.
The Parry Upper function
$f_{\beta}(z)$ has 
an unique simple zero
$\omega_{j,n}$ in each disk
$D_{j,n} := \newline \{z \mid
|z - z_{j,n}| < 
\frac{\pi \, |z_{j,n}|}{ n \, a_{\max}}\}$,
$j = 1, 2, \ldots, J_n$,
which satisfies the additional inequality:
\begin{equation}
\label{rootslesvoila}
|\omega_{j,n} - z_{j,n}| < \, 
\frac{\pi |z_{j,n}|}{ n \, a_{j,n}} 
\qquad \quad
\mbox{for}~~ j = \lceil v_n \rceil,
\lceil v_n \rceil+1,  \ldots, J_n ,
\end{equation}
where $a_{J_n , n} = a_{\max}$ and, 
for 
$j = \lceil v_n \rceil, \ldots, J_n - 1$,
the value $a_{j,n}$~, $> a_{\max}$, is
defined by
\begin{equation}
\label{petitcercle_jmainDBJN}
D\bigl(\frac{\pi}{a_{j,n}}
\bigr)
~=~
\lo \Bigl[
\frac{1 + B_{j,n}
-
\sqrt{1 - 6 B_{j,n} + B_{j,n}^{2} }}{4 B_{j,n}}
\Bigr]
\end{equation}
$$\mbox{with}\qquad\qquad
B_{j,n}:= 
2 \sin(\frac{\pi j}{n})
\Bigl(
1 - \frac{1}{n} \lo (2 \sin(\frac{\pi j}{n}))
\Bigr) ,
$$
and, putting 
$D:= D\bigl(\frac{\pi}{a_{j,n}}
\bigr)$ for short,
\begin{equation}
\label{petitcercle_jmainTL}
{\rm tl}(\frac{\pi}{a_{j,n}}
\bigr)
~=~
\frac{2}{n} \times
B_{j,n}^{-1} \,
(\frac{-3 + \exp(-D) + 2 \exp(D)}
{4 - \exp(-D) - 2 \exp(D)})
\times
\left(
\frac{\lo \lo n}{\lo n}
\right)^2 .
\end{equation}
An upper bound of the tails, independent of $j$,
is given by
\begin{equation}
\label{petitcercle_jmainTL_majorant}
O \Bigl(
\frac{(\lo \lo n)^2}{(\lo n)^3}
\Bigr)
\end{equation}
with the constant $\frac{1}{7 \pi}$ 
in the Big O.
The lenticulus
$\lc_{\beta}$ associated with $\beta$
is constituted by 
the following subset of roots of
$f_{\beta}(z)$:
\begin{equation}
\label{lenticulusBETA}
\lc_{\beta}
:=
\{1/\beta\}
~~\cup~~
\bigcup_{j=1}^{J_n}
\left(
\{\omega_{j,n} \}
\cup 
\{\overline{\omega_{j,n}} \}\right).
\end{equation}
\end{theorem}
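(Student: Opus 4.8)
The plan is to obtain Theorem~\ref{omegajnexistence} from Theorem~\ref{cercleoptiMM} and Proposition~\ref{cercleoptiMMBUMP}, the genuinely new content being only a \emph{shrinking} of the Rouch\'e disks produced there. First I would record existence, uniqueness and simplicity of the $\omega_{j,n}$. Fix $n \geq n_1 = 195$ and a real $\beta > 1$ with $\dyg(\beta) = n$, so that $f_{\beta}(z) = G_{n}(z) + \sum_{q \geq 1} z^{m_q}$ with $m_1 \geq 2n-1$ and $m_{q+1} - m_q \geq n-1$. For $1 \leq j \leq \lfloor v_n \rfloor$ apply Proposition~\ref{cercleoptiMMBUMP} and for $\lceil v_n \rceil \leq j \leq J_n$ apply Theorem~\ref{cercleoptiMM}: on each circle $C_{j,n}$ the inequality $|f_{\beta}(z) - G_{n}(z)| \leq |z|^{2n-1}/(1-|z|^{n-1}) < |G_{n}(z)|$ holds, hence $f_{\beta}$ and $G_{n}$ have the same number of zeroes, counted with multiplicity, in $D_{j,n}$; since $z_{j,n}$ is the unique zero of $G_{n}$ in $D_{j,n}$ and all zeroes of $G_{n}$ are simple (Proposition~\ref{closetoouane}), $f_{\beta}$ has exactly one, simple, zero $\omega_{j,n}$ there. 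For $n \geq 195$ the closed disks $\overline{D_{j,n}}$, $1 \leq j \leq J_n$, together with their conjugates and $\overline{D_{0,n}}$ (Lemma~\ref{tzero}), are pairwise disjoint and miss $\{|z|=1\}$: $\arg z_{j,n} = 2\pi j/n + \tfrac1n O((\lo\lo n/\lo n)^2)$ by Proposition~\ref{zedeJIargumentsORDRE1}, so consecutive centres are $\asymp 2\pi/n$ apart while each radius is $< \pi/n$, and $|z_{j,n}| = 1 + \tfrac1n \lo(2\sin(\pi j/n)) + \tfrac1n O((\lo\lo n/\lo n)^2) < 1$ by \eqref{devopomain}. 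Adjoining $\omega_{0,n} := 1/\beta$ (Theorem~\ref{parryupperdynamicalzeta}) gives the first assertion and \eqref{lenticulusBETA}.

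Next, for $\lceil v_n \rceil \leq j \leq J_n - 1$ I would repeat the computation in the proof of Theorem~\ref{cercleoptiMM} verbatim, but with radius $\pi|z_{j,n}|/(n\,a)$ for a free parameter $a$. As there, reduction to $X = \cos(\psi-\varphi) = 1$ (the minimising value of $X \mapsto \kappa(X,a)$) turns the Rouch\'e condition on that circle, up to $\tfrac1n O((\lo\lo n/\lo n)^2)$-terminants, into $|-1+z_{j,n}|/|z_{j,n}| < \kappa(1,a) = (1 - e^{-\pi/a})/(2e^{\pi/a}-1)$. When $j < J_n$, Definition~\ref{Jndefinition} leaves strict slack $|-1+z_{j,n}|/|z_{j,n}| < \kappa(1,a_{\max})$, and since $a \mapsto \kappa(1,a)$ decreases strictly from $\kappa(1,a_{\max})$ to $0$ on $(a_{\max},+\infty)$ (Figure~\ref{h1a}, Remark~\ref{openingangle_sin_quadratic_alginteger}), there is a unique $a_{j,n} > a_{\max}$ for which the scalar Rouch\'e inequality is met. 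Using Proposition~\ref{zedeJIMoinsUNzedeJI} to write $|-1+z_{j,n}|/|z_{j,n}| = B_{j,n} + \tfrac1n O((\lo\lo n/\lo n)^2)$ and putting $w := e^{\pi/a_{j,n}}$, the defining equation becomes the quadratic $2B_{j,n} w^2 - (B_{j,n}+1) w + 1 = 0$, whose root with $w \to 1$ (i.e. $a_{j,n} > a_{\max}$) is $w = (1 + B_{j,n} - \sqrt{1 - 6B_{j,n} + B_{j,n}^2})/(4B_{j,n})$; taking $\lo$ yields \eqref{petitcercle_jmainDBJN}, and $a_{J_n,n} = a_{\max}$ by the very definition of $J_n$. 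Differentiating $\pi/a_{j,n} = \lo w(B)$ in $B$ and multiplying by the terminant of $|-1+z_{j,n}|/|z_{j,n}|$ (with constant $2$, by Proposition~\ref{zedeJIMoinsUNzedeJI}) produces \eqref{petitcercle_jmainTL}, and bounding $B_{j,n}^{-1} = O(n/\lo n)$ on the range $j \geq \lceil v_n \rceil$ while using that $\frac{-3 + e^{-w} + 2e^{w}}{4 - e^{-w} - 2e^{w}}$ stays bounded for $w \in [1, e^{\pi/a_{\max}}]$ gives the uniform majorant \eqref{petitcercle_jmainTL_majorant}. Applying Rouch\'e on $C_{j,n}(a_{j,n})$ — on which, by the choice of $a_{j,n}$, the terminant leaves the inequality strict — together with the uniqueness from the first step, confines $\omega_{j,n}$ to $\{|z - z_{j,n}| < \pi|z_{j,n}|/(n\,a_{j,n})\}$, which is \eqref{rootslesvoila}.

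Finally, adjoining the complex conjugates $\overline{\omega_{j,n}}$ (legitimate because $f_{\beta}$ has real coefficients) and the real zero $1/\beta \in (0,1)$ of $f_{\beta}$ (Theorem~\ref{parryupperdynamicalzeta}) assembles the set $\lc_{\beta}$ displayed in \eqref{lenticulusBETA}, completing the proof.

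The hard part will be the terminant bookkeeping in the shrinking step: one must verify that the error terms dropped when passing from the full Rouch\'e inequality \eqref{rouchecercle} to the scalar inequality $|-1+z_{j,n}|/|z_{j,n}| < \kappa(1,a)$, and then transported through the inversion $a \mapsto \kappa(1,a)$ — whose derivative degenerates near $a = a_{\max}$, the double root of Remark~\ref{openingangle_sin_quadratic_alginteger}, where $1 - 6B_{j,n} + B_{j,n}^{2} \to 0$ — remain $\tfrac1n O((\lo\lo n/\lo n)^2)$ \emph{uniformly} over the whole range $\lceil v_n \rceil \leq j \leq J_n$, in particular across the transition $j \to J_n$ where $|z_{j,n}| \to 1$ and the denominator $1 - |z|^{n-1}$ in \eqref{rourouchesimple} becomes small. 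Only then does the clean explicit constant $1/(7\pi)$ of \eqref{petitcercle_jmainTL_majorant} come out. The bump indices $1 \leq j < \lceil v_n \rceil$ are deliberately kept with only the coarse containment $\omega_{j,n} \in D_{j,n}$ of Proposition~\ref{cercleoptiMMBUMP}: there $B_{j,n} \to 0$, $a_{j,n} \to +\infty$, and the shrinking no longer improves the localization uniformly.
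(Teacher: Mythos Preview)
Your proposal is correct and follows essentially the same route as the paper's proof: existence from Theorem~\ref{cercleoptiMM} and Proposition~\ref{cercleoptiMMBUMP}, then a shrinking of the Rouch\'e disks in the main sector by rerunning the $\kappa(1,a)$ analysis with a free parameter $a$, equating $D(|-1+z_{j,n}|/|z_{j,n}|) = B_{j,n}$ to $\kappa(1,a)$, solving the resulting quadratic $2B_{j,n}W^{2}-(B_{j,n}+1)W+1=0$ in $W=e^{\pi/a_{j,n}}$ for the development term, and linearising for the tail. Your identification of the terminant bookkeeping near $j\to J_n$ and across the bump boundary as the delicate point is exactly where the paper spends its effort; the paper's extraction of the explicit constant $1/(7\pi)$ proceeds by bounding $|B_{j,n}\cdot(4-e^{-D}-2e^{D})/(-3+e^{-D}+2e^{D})|$ from below at $j=\lceil v_n\rceil$, which is the concrete step your sketch leaves implicit.
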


\begin{proof}
The existence 
of the zeroes comes from
Proposition \ref{cercleoptiMMBUMP}
and Theorem \ref{cercleoptiMM}, with 
the maximal value $J_n$
of the
index $j$
given by Proposition \ref{Jndefinition}.
To refine the localization
of $\omega_{j,n}$
in the neighbourhood of
$z_{j,n}$, in the main angular sector, i.e.
for $j \in 
\{\lceil v_n \rceil,
\lceil v_n \rceil+1,  \ldots, J_n\}$,
the conditions of Rouch\'e 
\eqref{rouchecercle}
are now used to define the new radii.

The value $a_{j,n}$ 
is defined by the development term
${\rm D}(\frac{\pi}{a_{j,n}})$, 
itself defined
as follows:
\begin{equation}
\label{petitsecteur_aJI_D}
{\rm D}\left(
\frac{|-1+z_{j,n}|}{|z_{j,n}|}
\right) 
~~=:~~ \frac{
1 -
\exp\bigl(
-{\rm D}(\frac{\pi}{a_{j,n}})
\bigr)}
{2 \exp\bigl(
{\rm D}(\frac{\pi}{a_{j,n}})
\bigr) -1}
\end{equation}
and the tail
${\rm tl}(\frac{\pi}{a_{j,n}})$ 
calculated from 
${\rm tl} \left(
\frac{|-1+z_{j,n}|}{|z_{j,n}|}
\right)$ so that
the Rouch\'e condition
\begin{equation}
\label{petitsecteur_aJItail}
\frac{|-1+z_{j,n}|}{|z_{j,n}|}
=
{\rm D}\left(
\frac{|-1+z_{j,n}|}{|z_{j,n}|}
\right)
+ {\rm tl} \left(
\frac{|-1+z_{j,n}|}{|z_{j,n}|}
\right)
~~<~~ \frac{
1 -
\exp\bigl(
-\frac{\pi}{a_{j,n}}
\bigr)}
{2 \exp\bigl(
\frac{\pi}{a_{j,n}}
\bigr) -1}
\end{equation}
holds true.
From
Proposition \ref{zedeJIMoinsUNzedeJI},
denote
$$B_{j,n}:=
{\rm D}\left(
\frac{|-1+z_{j,n}|}{|z_{j,n}|}
\right)= 
2 \sin(\frac{\pi j}{n})
\Bigl(
1 - \frac{1}{n} \lo (2 \sin(\frac{\pi j}{n}))
\Bigr).$$
Let
$W := \exp({\rm D}(\frac{\pi}{a_{j,n}}))$.
The identity \eqref{petitsecteur_aJI_D}
transforms into the equation of degree 2:
\begin{equation}
\label{bjn_complet}
2 B_{j,n} \, W^2
- 
\Bigl(
B_{j,n}
+ 1
\Bigr) W + 1 ~=~0
\end{equation}
from which \eqref{petitcercle_jmainD} 
is deduced. For the calculation of
${\rm tl}(\frac{\pi}{a_{j,n}})$,
denote $D := {\rm D}(\frac{\pi}{a_{j,n}})$
and $tl_{j,n} := 
{\rm tl}(\frac{\pi}{a_{j,n}})$.
Then, at the first order,
$ \frac{1 -
\exp\bigl(
\frac{-\pi}{a_{j,n}}
\bigr)}
{2 \exp\bigl(
\frac{\pi}{a_{j,n}}
\bigr) -1}$
$$ =
\frac{1 -
\exp\bigl(
-D - tl_{j,n}
\bigr)}
{2 \exp\bigl(
D + tl_{j,n}
\bigr) -1}
= B_{j,n}
[1+
 tl_{j,n} \times
(\frac{4 - \exp(-D) - 2 \exp(D)}
{-3 + \exp(-D) + 2 \exp(D)})].
$$
From \eqref{petitsecteur_aJItail} and
\eqref{UNmoinszedeJIzedeJI}
the following
inequality should be satisfied, 
with the constant 2 in the Big O,
$$
\frac{1}{n} O\Bigl(
\bigl(
\frac{\lo \lo n}{\lo n}
\bigr)^2
\Bigr)
=
{\rm tl} \Bigl(
\frac{|-1+z_{j,n}|}{|z_{j,n}|}
\Bigr)
~<~
tl_{j,n}
\times
B_{j,n} \,
(\frac{4 - \exp(-D) - 2 \exp(D)}
{-3 + \exp(-D) + 2 \exp(D)})] .
$$
The expression of $tl_{j,n}$
in \eqref{petitcercle_jmainTL}
follows, to obtain a strict 
inequality in \eqref{petitsecteur_aJItail}.
By \eqref{petitcercle_jmainDBJN} 
the quantity $\exp(D)$ is a function 
of $B_{j,n}$, which tends to 
$\frac{3}{4}$ when 
$B_{j,n}$ tends to $0$;
hence, at the first order, 
a lower bound of 
the function $B_{j,n} \to
| B_{j,n} \,
(\frac{4 - \exp(-D) - 2 \exp(D)}
{-3 + \exp(-D) + 2 \exp(D)})|$
is obtained for
$j = \lceil v_n \rceil$, and given by
$2 \pi \frac{\lo n}{n} \times 7$.
Then it suffices to take
$$tl_{j,n} =
cste \Bigl(
\frac{(\lo \lo n)^2}{(\lo n)^3}
\Bigr)
$$
with $cste = 1/(7 \pi)$, to obtain
a tail independent of $j$,
and therefore the
conditions of Rouch\'e 
\eqref{petitsecteur_aJItail}
 satisfied
with these new smaller radii and tails
in the main 
angular sector.
\end{proof}

\begin{remark}
\label{petitcercleapproxinfini}
For $n$ very large, up to second-order terms, \eqref{bjn_complet}
reduces to
$$4 \sin(\frac{\pi j}{n}) \, W^2
- 
\Bigl(
2 \sin(\frac{\pi j}{n})
+ 1
\Bigr) W + 1 ~=~0
$$
and \eqref{petitcercle_jmainDBJN}
to
\begin{equation}
\label{petitcercle_jmainD}
D\bigl(\frac{\pi}{a_{j,n}}
\bigr)
~=~
\lo \Bigl[\frac{1 + 2 \sin(\frac{\pi j}{n})
-
\sqrt{1 - 12 \sin(\frac{\pi j}{n}) 
+ 4 (\sin(\frac{\pi j}{n}))^2}}{8 \sin(\frac{\pi j}{n})}
\Bigr].
\end{equation}
\end{remark}

\begin{lemma}
\label{cnroucheasymptotic}
Let $n \geq 195$
and
$c_n$ defined by
$|z_{J_n , n}| = 1 - \frac{c_n}{n}$.
Let us put
$\kappa:=\kappa(1,a_{\max})$ for short.
Then
\begin{equation}
\label{cncnlimit}
c_n = - (\lo \kappa) \, (1 + \frac{1}{n})
+
\frac{1}{n} 
O\bigl(
\bigl(\frac{\lo \lo n}{\lo n}\bigr)^2
\bigr) ,
\end{equation} 
with $c = \lim_{n \to +\infty} c_n = 
- \lo \kappa = 1.76274\ldots$,
and, up to  $O(\frac{1}{n}\bigl( 
\bigl(
\frac{\lo \lo n}{\lo n}
\bigr)^2\bigr))$-terms,
\begin{equation}
\label{cnroucheminilimit}
\frac{(1 - \frac{c_n}{n})^{2 n}}
{(1 - \frac{c_n}{n}) - (1 - \frac{c_n}{n})^{n}}
=
\frac{e^{-2 c}}{1 - e^{- c}}
\Bigl(
1 +
\frac{c}{2 n (1 - e^{- c})}
\bigl[
2 - c e^{-c} - 2 c
\bigr]
\Bigr) 
\end{equation}
with $e^{-2 c}/(1 - e^{- c}) = 0.0355344\ldots$
\end{lemma}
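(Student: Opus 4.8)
The plan is to bootstrap both expansions from the two facts already established about the extremal root $z_{J_n,n}$: the expansion of its argument in Proposition~\ref{argumentlastrootJn}, together with the intermediate expansion $\psi_n = \frac{\kappa\lo\kappa}{n\cos(\varphi_{lim}/2)} + \frac1n O\bigl((\tfrac{\lo\lo n}{\lo n})^2\bigr)$ of $\psi_n := 2\pi J_n/n - \varphi_{lim}$ obtained inside its proof (equation~\eqref{psinasymptotic}), and the expansion of its modulus, equation~\eqref{devopomainCONSTANTEc}. First I would compute $c_n$. Since \eqref{devopomainCONSTANTEc} reduces everything to $\lo(2\sin(\pi J_n/n))$, and $\pi J_n/n = \tfrac12\varphi_{lim} + \tfrac12\psi_n$ with $\sin(\varphi_{lim}/2) = \kappa/2$ and $\cos(\varphi_{lim}/2) = \sqrt{1-\kappa^2/4}$, I would expand the sine to first order in $\psi_n$; the key simplification is that the factor $\cos(\varphi_{lim}/2)$ multiplying $\psi_n$ exactly cancels the $\cos(\varphi_{lim}/2)$ in the denominator of \eqref{psinasymptotic}, so that $2\sin(\pi J_n/n) = \kappa\bigl(1 + \tfrac{\lo\kappa}{n}\bigr) + \tfrac1n O\bigl((\tfrac{\lo\lo n}{\lo n})^2\bigr)$. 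Taking $\lo$ and substituting back into \eqref{devopomainCONSTANTEc} gives $|z_{J_n,n}| = 1 + \tfrac{\lo\kappa}{n}\bigl(1+\tfrac1n\bigr) + \tfrac1n O\bigl((\tfrac{\lo\lo n}{\lo n})^2\bigr)$, which is \eqref{cncnlimit}; and $c = \lim c_n = -\lo\kappa = 1.76274\ldots$ follows at once from $\lim 2\pi J_n/n = 2\arcsin(\kappa/2) = \varphi_{lim}$ in \eqref{philimite}, since $\lo(2\sin(\varphi_{lim}/2)) = \lo\kappa$.

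For the second formula I would set $u := |z_{J_n,n}| = 1 - c_n/n$ and rewrite $\frac{u^{2n}}{u - u^n} = \frac{u^{2n-1}}{1 - u^{n-1}}$. Using $n\,\lo u = -c_n - \frac{c_n^2}{2n} + O(1/n^2)$ together with the expansion of $c_n$ from the first step, I would expand $u^n$, hence $u^{2n-1}$ and $u^{n-1}$, to first order in $1/n$, and expand $1 - u^{n-1}$ about $1 - e^{-c}$. The zeroth-order term of the resulting product is $\frac{e^{-2c}}{1-e^{-c}}$, which equals $\kappa^2/(1-\kappa) = 0.0355344\ldots$ because $e^{-c} = \kappa$; collecting the $1/n$-terms (the contributions of $u^{2n-1}$ and of $1/(1 - u^{n-1})$ combined) then assembles into $\frac{c}{2n(1-e^{-c})}\,[\,2 - ce^{-c} - 2c\,]$, which is the claimed correction, all computed up to terms of size $O\bigl(\tfrac1n(\tfrac{\lo\lo n}{\lo n})^2\bigr)$.

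The main obstacle is purely the control of error terms. One has to propagate two scales, the ordinary $O(1/n)$ and the finer $O\bigl(\tfrac1n(\tfrac{\lo\lo n}{\lo n})^2\bigr)$, through the chain $\arcsin \to \sin \to \lo \to (\,\cdot\,)^n$, the last step being the delicate one: a term of size $\tfrac1n(\tfrac{\lo\lo n}{\lo n})^2$ sitting inside $c_n$ is multiplied by $n$ on entering the exponent $n\,\lo u$, and I would have to check that after re-exponentiating it returns to order $\tfrac1n(\tfrac{\lo\lo n}{\lo n})^2$ and does not contaminate the $O(1)$ or $O(1/n)$ coefficients of the ratio. One must also be careful to retain the sub-leading $c/n$ correction in $c_n$ and the $-c_n^2/(2n)$ term produced by $(1-c_n/n)^n$, since these two are exactly the ingredients that feed the $1/n$-coefficient of the final ratio; everything else is the same routine Taylor algebra already carried out in \S\ref{S4.2} and in the proof of Proposition~\ref{argumentlastrootJn}.
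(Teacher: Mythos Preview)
Your proposal is correct and follows essentially the same route as the paper's proof, which simply cites the expansions \eqref{psinasymptotic} and \eqref{devopomainCONSTANTEc} and says ``then \eqref{cnroucheminilimit} follows.'' You have merely unpacked the intermediate algebra (the cancellation of $\cos(\varphi_{lim}/2)$ and the $u^{2n-1}/(1-u^{n-1})$ expansion) that the paper leaves implicit, and your caution about propagating the $(\lo\lo n/\lo n)^2$ scale through the $n$-th power is well placed.
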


\begin{proof}
The asymptotic expansion 
\eqref{cncnlimit}
of $c_n$ 
is deduced from the asymptotic expansions
of $\psi_n$ 
and $z_{J_n , n}$ given
by \eqref{psinasymptotic} and
\eqref{devopomainCONSTANTEc}
(Proposition 3.5
in \cite{vergergaugry6}). 
We deduce
the limit 
$
c ~:=~ - \lo (\kappa(1,a_{\max})) =
1.76274\ldots
$
and then \eqref{cnroucheminilimit}
follows.
\end{proof}

\begin{definition}
\label{definitionHn}
Let $n \geq n_2 := 260$.
We denote by $H_n$ the 
largest integer $j \geq \lceil v_n \rceil$
such that
\begin{equation}
\label{HnJnmini}
\arg(z_{J_n , n}) - \arg(z_{j, n}) 
\geq 
\frac{(1 - \frac{c_n}{n})^{2 n}}
{(1 - \frac{c_n}{n}) - (1 - \frac{c_n}{n})^{n}}.
\end{equation}
\end{definition}

\begin{proposition}
\label{hnasymptotic}
Let $n \geq 260$. Let denote
$\kappa := \kappa(1,a_{\max})$
for short.
Then
$$
\arg(z_{H_n , n})
~=~
2 \arcsin\bigl(\frac{\kappa}{2}\bigr) ~-~ 
\frac{\kappa^2}{1 - \kappa} \hspace{6.5cm}\mbox{}
$$
\begin{equation}
\label{hhnasymptotic}
+ \frac{\lo \kappa}{n}
\left[
\frac{\kappa}
{\sqrt{4 - \kappa^2}}
+
\frac{2 + \kappa \, \lo(\kappa)+ 2 \, \lo(\kappa)}
{2 ( 1 - \kappa )}
\right]
+ \frac{1}{n} O\bigl(
\Bigl(
\frac{\lo \lo n}{\lo n}
\Bigr)^2
\bigr),
\end{equation}
with, at the limit,
$$\lim_{n \to +\infty}
\arg(z_{H_n , n})
=
2 \arcsin\bigl(\frac{\kappa}{2}\bigr) ~-~ 
\frac{\kappa^2}{1 - \kappa} =
0.13625 .
$$
\end{proposition}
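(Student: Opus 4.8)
The plan is to obtain \eqref{hhnasymptotic} by subtracting two asymptotic expansions that are already in hand. Write $R_{n}$ for the right-hand side of \eqref{HnJnmini}, namely $R_{n} = (1-\tfrac{c_{n}}{n})^{2n}\big/\big((1-\tfrac{c_{n}}{n}) - (1-\tfrac{c_{n}}{n})^{n}\big)$, whose expansion is furnished by Lemma \ref{cnroucheasymptotic} (eq.\ \eqref{cnroucheminilimit}, together with \eqref{cncnlimit} for $c_{n}$), and recall the expansion of $\arg(z_{J_{n},n})$ from Proposition \ref{argumentlastrootJn} (eq.\ \eqref{argzJJJn}). By Definition \ref{definitionHn}, $H_{n}$ is the largest index $j\geq\lceil v_{n}\rceil$ with $\arg(z_{J_{n},n})-\arg(z_{j,n})\geq R_{n}$. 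Arguing exactly as in the proof of Proposition \ref{argumentlastrootJn}, where the defining inequality \eqref{JJJn} is treated as an equality modulo its terminant, I would treat \eqref{HnJnmini} as the asymptotic equality $\arg(z_{J_{n},n})-\arg(z_{H_{n},n}) = R_{n}$ up to a terminant, using Proposition \ref{zedeJIargumentsORDRE1} to control the increment of $j\mapsto\arg(z_{j,n})$; this reduces the statement to evaluating $\arg(z_{J_{n},n})-R_{n}$ and to showing the discreteness correction is absorbed in $\frac{1}{n}O((\lo\lo n/\lo n)^{2})$.

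Next I would substitute. Since $c=-\lo\kappa$ by Lemma \ref{cnroucheasymptotic}, we have $e^{-c}=\kappa$ and $e^{-2c}=\kappa^{2}$, so the leading term $\tfrac{e^{-2c}}{1-e^{-c}}$ of $R_{n}$ equals $\tfrac{\kappa^{2}}{1-\kappa}$ and the bracketed factor $2-ce^{-c}-2c$ in \eqref{cnroucheminilimit} becomes $2+\kappa\lo\kappa+2\lo\kappa$. Combining this with \eqref{argzJJJn}, whose $O(1/n)$ contribution is $\tfrac{\kappa\lo\kappa}{n\sqrt{4-\kappa^{2}}}$, and reorganizing by order in $n$, one is led to
$$\arg(z_{H_{n},n}) = 2\arcsin\!\Bigl(\frac{\kappa}{2}\Bigr) - \frac{\kappa^{2}}{1-\kappa} + \frac{\lo\kappa}{n}\Bigl[\frac{\kappa}{\sqrt{4-\kappa^{2}}} + \frac{2+\kappa\lo\kappa+2\lo\kappa}{2(1-\kappa)}\Bigr] + \frac{1}{n}O\Bigl(\Bigl(\frac{\lo\lo n}{\lo n}\Bigr)^{2}\Bigr),$$
which is \eqref{hhnasymptotic}; letting $n\to+\infty$ and inserting $\kappa=\kappa(1,a_{\max})=0.171573\ldots$ gives $2\arcsin(\kappa/2)-\kappa^{2}/(1-\kappa)=0.13625\ldots$, the claimed limit (consistently with the numerical values $e^{-2c}/(1-e^{-c})=0.0355344\ldots$ of Lemma \ref{cnroucheasymptotic} and $2\arcsin(\kappa/2)=0.171784\ldots$ of Proposition \ref{argumentlastrootJn}).

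The main obstacle is the uniform bookkeeping of terminants, in two respects. First, one must justify replacing the inequality \eqref{HnJnmini} defining the integer $H_{n}$ by an equality at the required precision: the consecutive values $\arg(z_{j,n})$ are spaced by $\tfrac{2\pi}{n}(1+o(1))$, so the rounding built into ``largest admissible $j$'' must be shown to be compatible with the displayed error, again mimicking the treatment of $J_{n}$ in Proposition \ref{argumentlastrootJn}. Second, one must propagate the terminants through the composition of the three expansions --- that of $c_{n}$ in \eqref{cncnlimit}, that of $R_{n}$ in \eqref{cnroucheminilimit}, and that of $\arg(z_{J_{n},n})$ in \eqref{argzJJJn} --- and check that all contributions beyond the explicit $O(1/n)$ term are uniformly $\frac{1}{n}O((\lo\lo n/\lo n)^{2})$. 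No input about the local geometry of $\partial\mathcal{G}$ is needed, since every estimate bears on the explicit trinomial roots $z_{j,n}$ of $G_{n}$.
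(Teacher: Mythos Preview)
Your proposal is correct and follows essentially the same approach as the paper: the paper's proof also treats the defining inequality \eqref{HnJnmini} as an equality, invokes the expansion \eqref{cnroucheminilimit} of Lemma \ref{cnroucheasymptotic} for the right-hand side, and subtracts it from the expansion \eqref{argzJJJn} of $\arg(z_{J_n,n})$ from Proposition \ref{argumentlastrootJn}. Your treatment is in fact more explicit than the paper's, which does not spell out the substitution $c=-\lo\kappa$ or address the discreteness and terminant issues you raise.
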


\begin{proof}
The asymptotic expansion 
of the right-hand side
term of \eqref{HnJnmini} is
\begin{equation}
\label{cielrouche}
\frac{(1 - \frac{c_n}{n})^{2 n}}
{(1 - \frac{c_n}{n}) - (1 - \frac{c_n}{n})^{n}}
=
\frac{e^{-2 c}}{1 - e^{- c}} 
\Bigl(
1 +
\frac{c (2 - c e^{-c} - 2 c)}{2 n (1 - e^{- c})}
\Bigr) + \ldots
\end{equation}
Then the
asymptotic expansion
of $\arg(z_{H_n , n})$ comes from
\eqref{HnJnmini}
in which the inequality is 
replaced by an equality,
and from the
asymptotic expansion \eqref{argzJJJn} of
$\arg(z_{J_n , n})$
(Proposition \ref{argumentlastrootJn}).
\end{proof}

For $n$ large enough, $\arg(z_{H_n , n})$
is equal to $2 \pi \frac{H_n}{n}$, up 
to higher order - terms, and a definition of
$H_n$ in terms of asymptotic expansions could be:
\begin{equation}
\label{hhhnasymptotic}
H_n\! =\! \lfloor
\frac{n}{2 \pi}
\bigl(
 2 \arcsin\bigl(\frac{\kappa}{2}\bigr) - 
\frac{\kappa^2}{1 - \kappa}\bigr)
\!-\!
\lo(\kappa)
\Bigl[
\frac{\kappa}
{\sqrt{4 - \kappa^2}}
+
\frac{2 + \kappa \, \lo(\kappa)+ 2 \, \lo(\kappa)}
{2 ( 1 - \kappa )}
\Bigr]
\rfloor,
\end{equation}
For simplicity's sake, we
will take the following 
definition of $H_n$
\begin{equation}
\label{hhhndefinition}
H_n := \lfloor
\frac{n}{2 \pi}
\bigl(
 2 \arcsin\bigl(\frac{\kappa}{2}\bigr) ~-~ 
\frac{\kappa^2}{1 - \kappa}\bigr) - 1
\rfloor .
\end{equation}
\begin{remark}
\label{value260}
The value $n_2 = 260$ is calculated
by the inequality
$ \frac{2 \pi v_n}{n} < \arg(z_{H_n , n})$
which should be valid 
for all $n \geq 260$, 
where $H_n$ is given by
\eqref{hhhndefinition},
$\arg(z_{H_n , n})$ by
\eqref{hhnasymptotic},
where $(v_n)$ is the
delimiting sequence (cf Appendix) of the
transition region of the boundary of the bump sector.
A first minimal value of $n$ 
is first estimated by
$2 \pi \frac{\lo n}{n} < D(\arg(z_{H_n , n}))$
using \eqref{hhnasymptotic}.
Then it is corrected so that
the numerical value
of the tail of the asymptotic expansion
in \eqref{hhnasymptotic} be taken into account
in this inequality.
\end{remark}

\begin{theorem}
\label{absencezeroesOutside}
Let $n \geq n_2 := 260$. 
Denote by
$\dc_{n}$ the subdomain of the open 
unit disk, symmetrical with respect to the real axis, 
defined by the conditions:

\begin{equation}
\label{cerclescalottes_0}
 |z| < 1 - \frac{c_n}{n}, 
\qquad \frac{1}{n}
\left(\frac{\lo \lo n}{\lo n}\right)^2 < 
|z - \theta_n|,
\end{equation}
\begin{equation}
\label{cerclescalottes_1}
\frac{\pi |z_{j,n}|}{n \, a_{\max}} < |z - z_{j,n}|,
\quad 
\qquad 
\mbox{for}~~ j= 1, 2, \ldots, J_n ,
\end{equation}
and,
for $\displaystyle j = 
J_n + 1,
\ldots, 2 J_n - H_n + 1$, 

\begin{equation}
\label{cerclescalottes}
\displaystyle \frac{\pi |z_{j,n}|}
{n \, s_{j,n}} < |z - z_{j,n}| ,
\qquad \mbox{with}~~
s_{j,n} = a_{\max}
\Bigl[
1 +
\frac{a_{\max}^{2}
(j - J_n)^2}{\pi^2 \,
J_{n}^{2}}
\Bigr]^{-1/2} . 
\end{equation}

Then, for any real number 
$\beta > 1$ 
having $\dyg(\beta) = n$,  
the Parry Upper function
$f_{\beta}(z)$ does not vanish at
any point $z$ in $\dc_{n}$.
\end{theorem}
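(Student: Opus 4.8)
The plan is to prove the stronger pointwise statement that $|f_\beta(z) - G_n(z)| < |G_n(z)|$ for every $z \in \dc_n$, where $n = \dyg(\beta)$; by the triangle inequality $|f_\beta(z)| \geq |G_n(z)| - |f_\beta(z) - G_n(z)|$ this already forces $f_\beta(z) \neq 0$, so no appeal to Rouché's theorem or the argument principle is needed. By Proposition \ref{fbetainfinie} and the minimal gappiness \eqref{minimalgappiness} one has $f_\beta(z) - G_n(z) = \sum_{q \geq 1} z^{m_q}$ with $m_q \geq n + q(n-1)$, whence $|f_\beta(z) - G_n(z)| \leq \sum_{q\geq1}|z|^{m_q} \leq \frac{|z|^{2n-1}}{1-|z|^{n-1}}$ on $\{|z|<1\}$; this majorant is increasing in $|z|$, so on $\dc_n \subset \{|z| < 1 - c_n/n\}$ it is bounded by its value at $1 - c_n/n$, whose asymptotics are given by Lemma \ref{cnroucheasymptotic}. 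Thus it suffices to check $|G_n(z)| > \frac{|z|^{2n-1}}{1-|z|^{n-1}}$ throughout $\dc_n$, and I would carry this out region by region, keeping the \emph{variable} majorant near $\theta_n$ and near the $z_{j,n}$ but using its constant-order value only on the outer part.

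Near $\theta_n$ I would expand $G_n$ at its simple zero, using $G_n'(\theta_n) = 1 + n\theta_n^{n-1}$ and Proposition \ref{thetanExpression}, to obtain $|G_n(z)| \gtrsim \frac{(\lo\lo n)^2}{n\lo n}$ on the boundary $\{|z-\theta_n| = \frac{1}{n}(\lo\lo n/\lo n)^2\}$ and outside it, while the majorant there is $\asymp \frac{(\lo n)^2}{n^2}$; since the former dominates the latter for $n$ large, the inequality holds, and this is one of the conditions that fix the threshold $n_2 = 260$ (together with those recorded in Remark \ref{value260}). Near the roots $z_{j,n}$ with $1 \leq j \leq J_n$ the required inequality $\frac{|z|^{2n-1}}{1-|z|^{n-1}} < |G_n(z)|$ is exactly the Rouché estimate proved in Theorem \ref{cercleoptiMM} (main sector, $\lceil v_n\rceil \leq j \leq J_n$) and in Proposition \ref{cercleoptiMMBUMP} (bump sector, $1 \leq j \leq \lfloor v_n\rfloor$); by the monotonicity of $X\mapsto\kappa(X,a)$ established in that proof it persists for $z\in\dc_n$ in a neighbourhood of $z_{j,n}$ outside $D_{j,n}$. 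On the boundaries of the slightly enlarged disks of \eqref{cerclescalottes}, i.e. for $J_n < j \leq 2J_n-H_n+1$, one has $|G_n(z)| \approx |G_n'(z_{j,n})|\,\frac{\pi |z_{j,n}|}{n\,s_{j,n}} \gtrsim \frac{\pi\kappa}{s_{j,n}}$, and since $s_{j,n} \leq a_{\max}$ while $\pi\kappa/a_{\max}$ exceeds $\frac{(1-c_n/n)^{2n-1}}{1-(1-c_n/n)^{n-1}}$ the inequality again holds; the particular ($\ell^2$-type) shape of $s_{j,n}$ is what keeps these enlarged disks mutually disjoint and inside $\{|z|<1\}$.

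The genuinely delicate region is the outer shell: points $z\in\dc_n$ with $|z|$ close to $1-c_n/n$ and $\arg z$ between $\arg(z_{J_n,n})$ and $\arg(z_{2J_n-H_n+1,n})$. There the majorant is of constant order $\frac{(1-c_n/n)^{2n-1}}{1-(1-c_n/n)^{n-1}} \to e^{-2c}/(1-e^{-c})$, whereas $|G_n(z)| = |{-1}+z+z^n|$ is controlled only via $|1-z| - |z^n| \approx 2\sin(\tfrac{\arg z}{2}) - \kappa$, which surpasses the majorant precisely once $\arg z$ is past the limiting value $2\arcsin(\tfrac{\kappa}{2}) + \tfrac{\kappa^2}{1-\kappa} = \lim_n \arg(z_{2J_n-H_n+1,n})$ (Propositions \ref{argumentlastrootJn}, \ref{hnasymptotic}). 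The point of Definition \ref{definitionHn} — where $\arg(z_{J_n,n}) - \arg(z_{H_n,n})$ is required to be at least $\frac{(1-c_n/n)^{2n}}{(1-c_n/n)-(1-c_n/n)^{n}}$, which is exactly the majorant evaluated at $1-c_n/n$ — and of excising disks out to index $2J_n-H_n+1$, is to remove from $\dc_n$ the whole annular arc on which $|G_n|$ fails to dominate the majorant; the inequality $\tfrac{2\pi v_n}{n} < \arg(z_{H_n,n})$ from Remark \ref{value260} then ensures strictness for $n\geq260$. On what is left of $\dc_n$ either the majorant is of strictly smaller order than its value near $1-c_n/n$ (e.g. close to $\theta_n$ or in the bump sector, where Proposition \ref{cercleoptiMMBUMP} also applies), or $|1-z|$, hence $|G_n(z)|$, is bounded away from $0$ off the excised disks, and the inequality is immediate.

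I expect this last, outer-shell estimate to be the main obstacle. It requires combining the order-$3$ asymptotic expansions of $|z_{j,n}|$ and $\arg(z_{j,n})$ (Propositions \ref{zedeJIargumentsORDRE1}, \ref{zedeJImodulesORDRE3}, \ref{zedeJIMoinsUNzedeJI}) with the exact limiting value $e^{-2c}/(1-e^{-c})$ and its first-order correction from Lemma \ref{cnroucheasymptotic}, and checking that after removing the disks $D_{j,n}$ ($j\leq J_n$) and the enlarged disks ($J_n<j\leq 2J_n-H_n+1$) every surviving point of the outer shell really does have argument beyond the critical value — so that the crude bound $|1-z|-|z^n| > \frac{|z|^{2n-1}}{1-|z|^{n-1}}$ suffices there — with all the error terms shown to be negligible for $n \geq 260$.
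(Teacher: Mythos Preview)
Your overall plan --- reducing to $\frac{|z|^{2n-1}}{1-|z|^{n-1}} < |G_n(z)|$ and checking region by region --- is exactly what the paper does; the paper phrases it as Rouch\'e on $\partial\overline{\dc_n}$ rather than a pointwise check, but since $G_n$ has no zero on $\overline{\dc_n}$ the two framings are equivalent (apply the maximum modulus principle to $(f_\beta - G_n)/G_n$). Your treatments of the small disk around $\theta_n$, of the disks $D_{j,n}$ for $j \le J_n$, and of the enlarged disks for $J_n < j \le 2J_n-H_n+1$ are essentially the paper's.

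The gap is in the outer shell. Your claim that ``every surviving point of the outer shell really does have argument beyond the critical value'' is false: the excised disks centred at $z_{j,n}$ have radius $\sim \pi/(n\,a_{\max})$ but angular spacing $\sim 2\pi/n$, so they do not overlap, and the arcs of $\{|z| = 1 - c_n/n\}$ between successive excisions for $H_n \lesssim j \lesssim 2J_n - H_n + 1$ remain in $\partial\overline{\dc_n}$. On those arcs your reverse--triangle bound $\bigl||1-z| - |z^n|\bigr|$ is useless: near $\arg z \approx \arg(z_{J_n,n})$ both $|1-z|$ and $|z|^n$ equal $\kappa$ to leading order, so the bound is $O(1/n)$, far below the constant--order majorant $e^{-2c}/(1-e^{-c}) \approx 0.0355$.

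What the paper does instead is expand $Z(\varphi) := |G_n((1-c_n/n)e^{i\varphi})|^2$ explicitly (equation \eqref{zedefi}) as $2 + e^{-2c} - 2\cos\varphi + 4e^{-c}\sin(\tfrac{\varphi}{2})\sin(n\varphi - \tfrac{\varphi}{2})$, observe that this is almost--periodic in $\varphi$ with period $2\pi/n$, with local minima at $\varphi \approx \arg(z_{j,n})$ of size $\approx 2\pi|j - J_n|/n$ (equation \eqref{successiveminima}). The definition of $H_n$ is calibrated so that these minima already exceed the majorant once $|j - J_n| \ge J_n - H_n$, which handles the outer circle in the sectors $\arg z \le \arg(z_{H_n,n})$ and $\arg z \ge \arg(z_{2J_n-H_n+1,n})$ with no excision at all. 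In the intermediate range the paper uses that $\sqrt{Z}$ is unimodal between successive minima, so on each residual arc $y'_j\,y_{j+1}$ it is bounded below by its endpoint values; those are shown to exceed the majorant via the first--order estimate $|G_n(y_j)| \ge \tfrac{\pi|z_{J_n,n}|}{a_{\max}}e^{-c} - \tfrac{2\pi|j-J_n|}{n}$ and the numerical check $2\,\tfrac{e^{-2c}}{1-e^{-c}} < \tfrac{\pi e^{-c}}{a_{\max}}$. Incidentally, the role of the peculiar $s_{j,n}$ is not to keep the enlarged disks disjoint or inside $|z|<1$, but to make the chord of each enlarged disk with $\{|z| = 1 - c_n/n\}$ have the \emph{same} half--length $\pi|z_{J_n,n}|/(n\,a_{\max})$ independently of $j$; that uniform chord length is precisely what makes the endpoint estimate above work uniformly in $j$.
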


\begin{proof}
Assume $\beta > 1$ such that
$\theta_{n-1} < \beta^{-1} < \theta_n$.
We will apply the general form of the 
Theorem of Rouch\'e to the compact
$\kc_n$ which is 
the adherence of the domain $\dc_{n}$,
i.e. we will show that
the inequality (and symmetrically with 
respect to the real axis)
\begin{equation}
\label{roucheKn}
|f_{\beta}(z) - G_{n}(z)| < |G_{n}(z)|,
\quad \quad z~ \in~ \partial \kc_{n}^{ext} \,  
\cup \, C_{1,n} \, 
\cup \, C_{2,n} \cup
\ldots \cup \, C_{J_n , n} 
\end{equation}
holds, with $z \in {\rm Im}(z) \geq 0$,
where $\partial \kc_n$
is the union of: (i) the arcs of the
circles defined by the equalities in
\eqref{cerclescalottes_1}
and
\eqref{cerclescalottes},
arcs which lie in $|z| \leq 1 - c_n/n$,  
and circles
for which the intersection
with $|z| = 1 - c_n/n$
is not empty,
(ii) the arcs of 
$C(0,1-c_n/n)$ which have empty intersections
with the interiors
of the disks defined by
the inequalities ``$>$", instead of
``$<$", in
\eqref{cerclescalottes_1}
and
\eqref{cerclescalottes},
which join two successive circles.
The two functions
$f_{\beta}(z)$
and $G_{n}(z)$ are continuous
on the compact
$\kc_n$, holomorphic
in its interior $\dc_n$, and
$G_n$ has no zero 
in $\kc_n$.
As a consequence
the function $f_{\beta}(z)$ will have no zero in
the interior $\dc_n$ of $\kc_n$.

Instead of using $f_{\beta}(z)$ itself
in \eqref{roucheKn},
we will show
that the following inequality
holds true 
\begin{equation}
\label{rouchecercleNONZERO}
\frac{
\left|z\right|^{2 n -1}}{1 - |z|^{n-1}}
~<~
\left|-1 + z + z^n \right| , 
\quad ~\mbox{for all}~ 
z 
\in \partial \kc_{n}^{ext}
\end{equation}
what will imply the claim.

The Rouch\'e inequalities
\eqref{roucheKn} 
\eqref{rouchecercleNONZERO} 
hold true
on the (complete) 
circles $C_{j,n}, 1 \leq j \leq J_n$
by Theorem \ref{cercleoptiMM}
and Proposition \ref{cercleoptiMMBUMP};
these conditions 
become out of reach
for $j$ taking higher values (i.e.
in $\{J_n + 1, \ldots,
\lfloor n/6\rfloor\}$), but we will show that
they remain true
on the arcs defined by
the equalities
in \eqref{cerclescalottes}.

The domain $\dc_n$ only depends 
upon the dynamical degree $n$ of
$\beta$, not of $\beta$ itself.

Let us prove that 
the external Rouch\'e circle
$|z| = 1 - c_n/n$ intersects
all the circles 
$C_{J_n - k,n}, k = 0 , 1, \ldots,
k_{\max}$,
with
$k_{\max} := \lfloor 
J_{n} (\frac{\pi}{a_{\max}})\rfloor$.
Indeed, up to 
$\frac{1}{n} O 
\bigl(\bigl(\frac{\lo \lo n}{\lo n}\bigr)^2 \bigr)$-
terms, 
from Proposition
\ref{argumentlastrootJn},
$$\lo (2 \sin(\pi \frac{J_n}{n}))
= \lo (2 \sin(\pi \frac{(J_n - k) + k}{n}))
= \lo \bigl(
2 \pi \frac{J_n - k}{n}(1 + \frac{k}{ J_n - k})
\bigr)$$
\begin{equation}
\label{intero}
=\lo \bigl(2 \sin(\pi \frac{J_n - k}{n})\bigr) 
+ \frac{k}{J_n} . 
\end{equation}
Since $|z_{J_n , n}|=
1 - c_n/n = 
1 +
\frac{1}{n}
\lo (2 \sin(\pi \frac{J_n}{n}))
+
\frac{1}{n} O 
\bigl(\bigl(\frac{\lo \lo n}{\lo n}\bigr)^2 \bigr)$, 
we deduce from
\eqref{intero}, 
with 
$k \leq  k_{\max}$,
that the point 
$z \in C(0,1 - c_n/n )$ 
for which
$\arg(z) =  \arg(z_{J_n - k,n})$
is such that
$$|z_{J_n - k ,n} - z | 
~=~
\frac{k}{n \, J_n}
~\leq~ 
 \, \frac{\lfloor J_n (\frac{\pi}{a_{\max}})
\rfloor}{n \, J_n}
~\leq~ \frac{\pi}{n \, a_{\max}}
$$
up to 
$\frac{1}{n} O 
\bigl(\bigl(\frac{\lo \lo n}{\lo n}\bigr)^2 \bigr)$-
terms.  
As soon as $n$ is large enough, 
we deduce
that $z$ lies in the interior of
$D_{J_n - k,n}$. 
Since the function
$x \to \lo (2 \sin(\pi x))$ is negative and
strictly increasing
on $(0, 1/6)$, the sequence
$(|z_{j,n}|)_{j=H_n , \ldots, J_n}$
is strictly increasing,
by \eqref{devopomainCONSTANTEc}. 
Hence we deduce that the
circle $|z| = 1 - c_n/n$
intersects 
all the circles 
$C_{j,n}$
for $j=J_n - k_{\max}, \ldots, J_n$.

The same arguments show that
the external Rouch\'e circle
$|z|= 1 - c_n/n$ intersects all the circles
$C(z_{j,n}, \frac{\pi |z_{j,n}|}
{n \, s_{j,n}})$
for $j = J_n + 1, J_n + 2, \ldots, 2 J_n - H_n + 1$.

The quantities 
$s_{j,n}$, for 
$j = J_n + 1, \ldots , 2 J_n - H_n +1$,
are easily calculated (left to the reader)
so that
the distance (length of the $j$-th circle segment)
$$\left|
~\frac{z_{j,n}}{|z_{j,n}|} (1 - \frac{c_n}{n}) - y_j
~\right|
=
\left|
~\frac{z_{j,n}}{|z_{j,n}|} (1 - \frac{c_n}{n}) - y'_j 
~\right|$$
for $y_j, y'_j \in 
C(z_{j,n}, \frac{\pi |z_{j,n}|}{n \, s_{j,n}})
\cap C(0, 1 - \frac{c_n}{n}), y_j \neq y'_j$,
be independent of 
$j$ in the interval 
$\{J_n +1 , \ldots, 2 J_n - H_n + 1\}$
and equal to
\begin{equation}
\label{chord}
\frac{\pi |z_{J_n , n}|}{n \, a_{\max}}.
\end{equation}
Then the two sequences
of moduli of centers
$(|z_{j,n}|)_{j=J_n + 1 , \ldots, 2 J_n - H_n +1}$
and of radii\newline
$(\frac{\pi |z_{j,n}|}
{n \, s_{j,n}})_{j=J_n + 1 , \ldots, 2 J_n - H_n +1}$
are both increasing, with the fact that
the corresponding disks
$D(z_{j,n}, \frac{\pi |z_{j,n}|}
{n \, s_{j,n}})$
keep constant the 
intersection chord  
$\arg(y_j) - \arg(y'_j)
= 
\frac{\pi |z_{J_n , n}|}{n \, a_{\max}}$
with the external Rouch\'e 
circle $|z| = 1 - c_n/n$.

\begin{figure}
\begin{center}
\includegraphics[width=8cm]{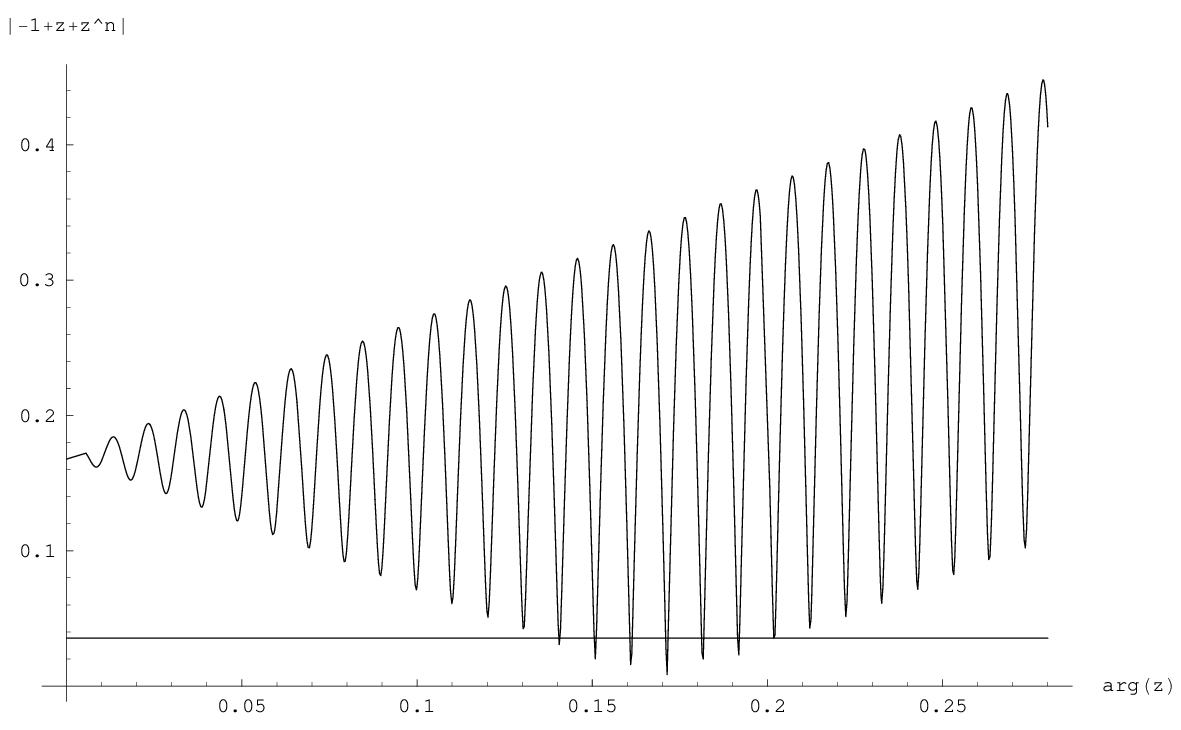}
\end{center}
\caption{
Oscillations of the upper bound
$|-1 + z + z^n|$ of
the Rouch\'e inequality \eqref{rouchecercle},
for $z$ running over the curve
$|z|=1 - c_n / n$ (here represented with $n = 615$)
as a function of $\arg(z)$ in 
$[0, 0.28]$.
The minima correspond to the angular positions
of the zeroes $z_{j,n}$
of the trinomial $-1 + X + X^n$,
for $j = 1, 2, \ldots, H_n,
\ldots, J_n , \ldots, 2 J_n - H_n + 1, \ldots$
($J_{615} = 17 , H_{615} = 12$).
The angular separation between two successive minima
is $\approx 2 \pi/n$. The difference between
two successive minima is
$\approx 2 \pi/n$. For $n=615$,
the arguments
$2 \pi (\lo n)/n$ (limiting the bump sector),
$\arg(z_{H_n , n})$ and
$\arg(z_{J_n , n})$ are respectively equal to
$0.0656\ldots, 0.12189\ldots, 0.17129\ldots$.
The horizontal line at the $y$-coordinate 
$0.0354...$ is the value of the 
left-hand side term of
the Rouch\'e inequality \eqref{rouchecercle}
(Proposition \ref{roucheKn}); 
it is always strictly smaller than
the minimal
value of
the oscillating function $|-1 + z + z^n|$
on the external boundary
$\partial \kc_{n}^{ext}$, 
whose geometry surrounds the roots
$z_{j,n}$ for $j$ 
between $H_n + 1$ and $2 J_n - H_n + 1$.
}
\label{oscill}
\end{figure}

Let $z \in C(0, 1 - \frac{c_n}{n})$,
$\varphi := \arg(z) \in [0, \pi]$. 
Denote by
$Z(\varphi) := 
| G_{n}((1 - \frac{c_n}{n}) e^{i \varphi})|^2
= \bigl|-1  + (1 - \frac{c_n}{n}) e^{i \varphi}
+ (1 -\frac{c_n}{n})^n e^{i \, n \, \varphi}\bigr|^2$.
The expansion of the function
$Z(\varphi)$ as a function
of $\varphi$, up to
$O(1/n)$- terms, is the following:
$Z(\varphi) =$
$$(-1 + (1 - \frac{c_n}{n}) \cos(\varphi)
\!+\!
(1 - \frac{c_n}{n})^n \cos(n \varphi))^2
\!+\!
((1 - \frac{c_n}{n}) \sin(\varphi)
)\!+\!
(1 - \frac{c_n}{n})^n \sin(n \varphi))^2
$$
$$
=2 + e^{- 2 c} - 2 \cos(\varphi)
- 2 e^{-c} \cos(n \, \varphi)
+ 2 e^{- c} \cos(\varphi) \cos(n \, \varphi)
+ 2 e^{- c} \sin(\varphi) \sin(n \, \varphi)
$$
$$
= 2 + e^{- 2 c} - 2 \cos(\varphi)
- 4 e^{-c} \sin(\frac{\varphi}{2})
\left(\cos(n \, \varphi) \sin(\frac{\varphi}{2})
- \sin(n \, \varphi) \cos(\frac{\varphi}{2})
\right)
$$
\begin{equation}
\label{zedefi}
= 2 + e^{- 2 c} - 2 \cos(\varphi)
+ 4 e^{-c} \sin(\frac{\varphi}{2})
\sin(n \varphi - \frac{\varphi}{2}).
\end{equation}
The function $Z(\varphi)$,
defined on $[0, \pi/3]$,
is almost-periodic (in the sense of Besicovitch
and Bohr), 
takes the value $0$  at
$\varphi = \arg(z_{J_n ,n})$, and
therefore, up to 
$O(1/n)$-terms, has its minima 
at the
successive arguments 
$\arg(z_{J_n ,n}) + \frac{2 k \pi}{n}$
for
$|k|= 0, 1, 2, \ldots, J_n - H_n + 1, \ldots$
(Figure \ref{oscill}).
For such integers $k$,
from \eqref{zedefi}, we deduce
the successive minima
\begin{equation}
\label{successiveminima}
|-1 + z_{J_n ,n} e^{- 2 i k \pi/n}
+ (z_{J_n ,n} e^{- 2 i k \pi/n})^n|
=
|G_{n}(z_{J_n ,n})| 
+ \frac{2 |k| \pi}{n} = \frac{2 |k| \pi}{n}
\end{equation}
up to 
$\frac{1}{n} O \bigl(\bigl(\frac{\lo \lo n}{\lo n}
\bigr)^2\bigr)$- terms, with
$\arg(z_{J_n , n} e^{- 2 i k \pi/n})
=
\arg(z_{J_n - k,n})$
up to
$O(1/n)$-terms.

With the above notations, 
denote by $y_j , y'_j$ the two points
of $C(0, 1 - \frac{c_n}{n})$
which belong to
$C_{j,n}$ for $2 H_n - J_n \leq j \leq J_n$,
to
$C(z_{j,n}, \frac{\pi |z_{j,n}|}{n \, s_{j,n}})$
for $J_n + 1 \leq j \leq 2 J_n - H_n + 1$.
Writing them by increasing argument, we
have:
\begin{equation}
\label{listeyjyprimej}
y_{2 H_n - J_n}, y'_{2 H_n - J_n}, \ldots, 
y_{H_{n}}, y'_{H_{n}}, \ldots,
y_{J_{n}}, y'_{J_{n}},~
y_{J_{n} + 1}, y'_{J_{n} + 1},
\ldots,
y_{2 J_{n} - H_n + 1},~ y'_{2 J_{n} - H_n + 1}.
\end{equation}
The Rouch\'e inequality
\eqref{roucheKn} is obviously satisfied
at each $y_j$ and $y'_j$ for $j= 2 H_n - J_n, \ldots, J_n$.
Let us show that 
this inequality holds
at each point
$y_j$ and $y'_j$ for $j= J_n + 1, \ldots, 
2 J_n - H_n +1$.
Indeed, for such a point, say
$y_j$, 
there exists
$$\xi_j
=
w_j \, z_{J_n , n} e^{2 i (j - J_n) \pi/n} 
+ (1-w_j) \, y_j
, \quad \mbox{for some}~ w_j \in [0,1],$$  
lying in the segment
$\left[z_{J_n , n} e^{2 i (j - J_n) \pi/n}, y_j
\right]$
such that
$$G_{n}(y_j) = G_{n}(z_{J_n , n} e^{2 i (j-J_n) \pi/n})
+ (y_j - z_{J_n , n} e^{2 i (j-J_n) \pi/n}) \, G'_{n}(\xi_j) 
$$
with, using \eqref{chord},
$$|G_{n}(y_j) - G_{n}(z_{J_n , n} e^{2 i (j - J_n) \pi/n})|
=
|y_j - z_{J_n , n} e^{2 i (j - J_n) \pi/n}| |G'_{n}(\xi_j)|
=
\frac{\pi |z_{J_n , n}|}{n \, a_{\max}}
|G'_{n}(\xi_j)|.$$
The derivative of $G_{n}(z)$
is $G'_{n}(z) = 1 + n z^{n-1}$.
Up to $O(1/n)$-terms, 
the line generated by the
segment $\left[z_{J_n , n} e^{2 i (j - J_n) \pi/n}, 
y_j \right]$
is tangent to the circle
$C(0, 1-c_n/n)$, and the
modulus $\frac{1}{n}|G'_{n}(\xi_j)|$ 
satisfies
$$\frac{1}{n}|G'_{n}(\xi_j)| 
\,=\,
\frac{1}{n}|G'_{n}(z_{J_n , n} e^{2 i (j - J_n) \pi/n})|
\,=\,
\frac{1}{n}|G'_{n}(z_{J_n , n})|
\,=\,
\lim_{n \to +\infty}
\frac{1}{n}|G'_{n}(z_{J_n , n})|
\,=\,
e^{-c} .$$ 
From
$|G_{n}(y_j)| \geq
\bigl||G_{n}(y_j) - G_{n}(z_{J_n , n} e^{2 i (j - J_n) \pi/n})|
- |G_{n}(z_{J_n , n} e^{2 i (j - J_n) \pi/n})|\bigr|
$ 
and \eqref{chord}
we deduce
\begin{equation}
\label{gagao_0}
|G_{n}(y_j)| \geq
\,  \frac{\pi |z_{J_n , n}|}{a_{\max}}
 e^{-c} - \frac{2 \pi |j - J_n| }{n} .
\end{equation}
But, by definition of $H_n$, still
up to $O(1/n)$-terms,
for $|j - J_n| \leq J_n - H_n - 1$,
\begin{equation}
\label{gagao_1}
\frac{2 \pi |j - J_n|}{n} \leq
\frac{2 \pi \, (J_n - H_n - 1)}{n} =
\arg(z_{J_n , n}) -\arg(z_{H_n +1, n}) 
\leq \frac{e^{-2 c}}{1 - e^{-c}} .
\end{equation}
This inequality is in particular
satisfied for the last two
values of $| j - J_n |$ which are
$J_n - H_n$ and $J_n - H_n +1$
up to $O(1/n)$-terms.

Since the inequality
\begin{equation}
\label{gagao_2}
0.0710\ldots = 
~~2 \frac{e^{-2 c}}{1 - e^{-c}} 
~<~
\frac{\pi |z_{J_n , n}|}{a_{\max}} e^{-c}
~~=~ 0.0914\ldots
\end{equation}
holds, 
from \eqref{gagao_0}, \eqref{gagao_1}
and \eqref{gagao_2},
as soon as $n$ is large enough,
we deduce the Rouch\'e inequality
$$|G_{n}(y_j)| ~\geq~
\, 
\frac{\pi |z_{J_n , n}|}{a_{\max}} 
- \frac{e^{-2 c}}{1 - e^{-c}} 
~\geq~
\frac{e^{-2 c}}{1 - e^{-c}} .$$
Therefore
the conditions of Rouch\'e 
\eqref{rouchecercleNONZERO} 
hold at all the points $y_j$ and $y'_j$ of
\eqref{listeyjyprimej}.

Let us prove that the conditions of 
Rouch\'e \eqref{rouchecercleNONZERO} 
hold on each arc $y'_j~y_{j+1}$ of
the circle $|z| = 1 - c_n/n$,
for $j = 2 H_n - J_n, 2 H_n - J_n + 1 , 
\ldots, 2 J_n - H_n$.
Indeed, from \eqref{zedefi}, 
the derivative
$Z'(\varphi)$ takes a positive value at
the extremity $y'_j$ 
while it takes a negative value
at the other extremity $y_{j+1}$.
$Z(\varphi)$ is almost-periodic
of almost-period $2 \pi/n$. 
The function $\sqrt{Z(\varphi)}$ is increasing
on
$(\arg(z_{j , n}),
 \arg(z_{j , n})+  \frac{\pi}{n})$
and decreasing on 
$(\arg(z_{j , n}) +  \frac{\pi}{n},
\arg(z_{j , n}) + 2 \frac{\pi}{n})$;
on the arc $y'_j~y_{j+1}$ it takes
the value $|G_{n}(y'_j)| 
\geq \frac{e^{-2 c}}{1 - e^{-c}}$,
admits a maximum, and decreases
to 
$|G_{n}(y_{j+1})|
\geq \frac{e^{-2 c}}{1 - e^{-c}}$. 
Hence, \eqref{rouchecercle} holds true for
all $z \in C(0, 1 - c_n/n)$ with
$\arg(y'_j) \leq \arg(z) \leq \arg(y_{j+1})$.

Let us now prove that
the condition of Rouch\'e 
\eqref{rouchecercle} is
satisfied in the angular sector
$0 \leq \arg(z) \leq \arg(z_{H_n , n})$.
Indeed, in this angular sector,
the successive minima of
$\sqrt{Z(\varphi)}$ are all
above 
$\frac{e^{-2 c}}{1 - e^{-c}}$
by the definition of $H_n$ and
\eqref{successiveminima}. Hence the claim.

Let us prove that
the condition of Rouch\'e 
\eqref{rouchecercle} is
satisfied in the angular sector
\newline$\arg(z_{2 J_n - H_n + 1, n}) \leq \arg(z) 
\leq \frac{\pi}{2}$.
In this angular sector,
the oscillations of $\sqrt{Z(\varphi)}$
still occur by the form
of \eqref{zedefi} and
the successive minima of
$\sqrt{Z(\varphi)}$ are all
above 
$\frac{e^{-2 c}}{1 - e^{-c}}$
for $\frac{2 J_n - H_n + 2}{J_n} \leq \arg(z)
\leq \pi/2$, 
by 
\eqref{successiveminima}
for $k \geq J_n - H_n + 1$. 
We deduce the claim.

The condition of Rouch\'e 
\eqref{rouchecercle} is also
satisfied in the angular sector
$\pi \leq \arg(z) \leq \pi/2$,
since then $\cos(\varphi) \leq 0$
and therefore
$\sqrt{Z(\varphi)} \geq 
\sqrt{2 + e^{-2 c} - 4 e^{- c}} =1.15\ldots$.
Since this lower bound is
greater than the
value
$\frac{e^{-2 c}}{1 - e^{-c}} = 0.0354\ldots$
we deduce the claim.

Let us show that 
the conditions of Rouch\'e 
\eqref{rouchecercle} are also
satisfied on the arcs
\newline$C(z_{j,n}, \frac{\pi |z_{j,n}|}{n \, s_{j,n}})
\cap \overline{D}(0, 1 - \frac{c_n}{n})$
for $j= J_n + 1 , \ldots, 2 J_n - H_n +1$.
For such an integer $j$, 
let us denote such an arc
by $y_j~y'_j$.
The two extremities $y_j$ and $y'_j$
of  the arc $y_j~y'_j$ of
the circle
$C(z_{j,n}, \frac{\pi |z_{j,n}|}{n \, s_{j,n}})$
define the same value of the difference cosine,
say 
$X_j := \newline\cos(\arg(y_j - z_{j,n}) - \arg(z_{j,n}))
=
\cos(\arg(y'_j - z_{j,n}) - \arg(z_{j,n}))$,
by \eqref{chord}.
The conditions of Rouch\'e are 
already satisfied at the points
$y_j$ and $y'_j$ by the above.
Recall that, 
for any fixed 
$a \geq 1$, the function
$\kappa(X,a)$,
defined in \eqref{amaximalfunctionX},
is such that
the partial derivative $\partial \kappa_X$
of $\kappa(X,a)$ 
is strictly negative
on the interior of 
$[-1, +1] \times [1, +\infty)$. 
In particular
the function $\kappa(X,s_{j,n})$
is decreasing.
For any point $\Omega$ of the arc
$y_j~y'_j$, we denote by
$X =\cos(\arg(\Omega - z_{j,n}) - \arg(z_{j,n}))$.
We deduce, up to $O(1/n)$-terms,
$$
\frac{e^{-2 c}}{1 - e^{-c}} 
~\leq~ \kappa(X_j,s_{j,n}) ~\leq~ \kappa(X,s_{j,n}),
\qquad \mbox{for all}~~X \in [-1, X_j],$$
hence the result.
\end{proof}

\begin{remark}
\label{lenticulusforalgebraicintegers}
In the case where
$\beta \in (1, \theta_{6}^{-1})$ is an 
algebraic integer
such that
$\beta \not\in
\{\theta_{n}^{-1} \mid n \geq 6\}$,
the lenticulus $\mathcal{L}_{\beta}$ of 
Galois conjugates of $1/\beta$ 
in the angular sector
$\arg z \in \{-\frac{\pi}{3}, +\frac{\pi}{3}\}$ 
is obtained by truncation and 
a slight deformation
of $\lc_{\theta_{\dyg(\beta)}^{-1}}$.
The asymptotic 
expansion of the minorant of the Mahler measure
${\rm M}(\beta)$ will
be obtained from this lenticulus
as a function of the
dynamical degree $\dyg(\beta)$.
\end{remark}

\subsection{Identification of the lenticuli of roots as conjugates}
\label{S5.4}

\mbox{}\newline In this paragraph,
$\beta \in (1,(1+\sqrt{5})/2)$
is assumed to be
a reciprocal algebraic integer 
which is fixed,
with $\dyg(\beta) = n \geq 260$,
${\rm M}(\beta) \leq 1.176280\ldots$
(we continue to be under the asssumptions of 
the Important Nota (N) in the Introduction - 
cf Section \ref{S5.9} also
to discard some non-relevant families of reciprocal
algebraic integers $\beta > 1$ tending to 1).
Since $\beta$ is reciprocal
the Parry Upper function
$f_{\beta}(x)$ is a power series which is
never a polynomial.
The Mahler measure
${\rm M}(\beta)$ is a function of
the roots of $P^{*}_{\beta}(z)=P_{\beta}(z)$
as
\begin{equation}
\label{mahlermeasurelent}
{\rm M}(\beta)
=
\prod_{\gamma ~
\mbox{{\tiny conjugate of}}~ \beta^{-1},
|\gamma|<1} |\gamma|^{-1}.
\end{equation}
This product is over
the set of conjugates $\gamma$
of $\beta^{-1}$,
$|\gamma|<1$.
Up till now, 
the only element of this set, coming
from the zeroes of $f_{\beta}(z)$, is
$\beta^{-1}$. It is a common
(simple) zero of  $P^{*}_{\beta}(z)$ and
$f_{\beta}(z)$. We will show that 
this set contains other zeroes
of $f_{\beta}(z)$.
In $\S$\ref{S5.4.1} 
we prove that the 
separation of the roots of $f_{\beta}(z)$
of modulus $< 1$
into two distinct
subcollections occurs inside the angular
sector, roughly (``the cusp region")
$$
\arg(z) ~\in ~\bigl[\,  
- \frac{\pi}{18},
+ \frac{\pi}{18}  \,\bigr] 
$$
given by \eqref{angularsectoramax}: 
those which are
off the unit circle form a lenticular shape,
those which are very close to the unit circle
lie in a thin annular neighbourhood of
$|z|=1$. 
In $\S$\ref{S5.4.2} 
we use the 
R\'enyi-Parry numeration system to
construct sequences of rewriting polynomials
``between" the minimal polynomial
$P_{\beta}(z) = P^{*}_{\beta}(z)$
and $f_{\beta}(z)$.
Two sequences of polynomial functions
are used to identify
the conjugates of $\beta^{-1}$ 
with the zeroes of $f_{\beta}(z)$:  
the sequence of polynomial
sections of $f_{\beta}(z)$,
the sequence of rewriting polynomials.
As a consequence, the product
\eqref{mahlermeasurelent}
will be over a set containing
the collection of
lenticular zeroes of $f_{\beta}(z)$.

\

\subsubsection{Lenticular roots of $f_{\beta}(x)$ and its polynomial sections}
\label{S5.4.1}

For $n \geq 3$, 
the polynomial sections of 
$f_{\beta}(x)$, $\beta \in (1,(1+\sqrt{5})/2)$, 
are of the type
\begin{equation}
\label{fclasseB}
-1 + x + x^n +
x^{m_1} + x^{m_2} + \ldots + x^{m_s}
\end{equation}
where $s \geq 0$, $m_1 - n \geq n-1$, 
$m_{q+1}-m_q \geq n-1$ for $1 \leq q < s$.
Denote by
$\mathcal{B}$ the class of the polynomials
defined by \eqref{fclasseB},
and by
$\mathcal{B}_n$ those whose third monomial is 
exactly
$x^n$, so that
$$\mathcal{B} =
\cup_{n \geq 3} \mathcal{B}_n .$$
The case ``$s=0$" corresponds to 
the trinomials $G_{n}(z) :=
-1+z+z^n$ (Selmer \cite{selmer}).

\begin{theorem}
\label{thm2lenticuli}
Let $c_{lent} := \min_{n \geq 260}
(c_n - \frac{\pi}{a_{max}})$.
Let $n \geq 260$ and
$\beta > 1$ be a reciprocal algebraic 
integer such that $\dyg(\beta) = n$,
${\rm M}(\beta) \leq 1.176280\ldots$.
Denote by
$$f_{\beta}(z) =
-1 + z + z^n +
z^{m_1} + z^{m_2} + \ldots + z^{m_j}+
z^{m_{j+1}}+\ldots,$$
where 
$m_1 - n \geq n-1$, 
$m_{j+1}-m_j \geq n-1$ for $j \geq 1$,
the Parry Upper function at $\beta$.
Then   
the zeroes  
of  $f_{\beta}(z)$ of modulus $< 1$ 
which lie in
$- \arg(z_{J_n , n}) - 
\frac{\pi}{n a_{max}}
< 
\arg z 
< 
+ \arg(z_{J_n , n})
+
\frac{\pi}{n a_{max}}$
either belong to
$$\Bigl\{z \mid \bigl||z|-1\bigr| < \frac{1}{3}
\frac{c_{lent}}{n}
\Bigr\}\quad
\mbox{or to}
\quad \bigl\{z \mid ||z|-1| > \frac{c_{lent}}{n}
\bigr\} 
.$$
In the second class of zeroes, all the zeroes are simple, and lie in the union 
$$D_{0,n} ~\cup~
\bigcup_{j=1}^{J_n}
(\overline{D_{j,n}} \cup D_{j,n});$$
there is one zero per disk $D_{j,n}$, 
$\overline{D_{j,n}}$, the disk
$D_{0,n}$ containing the element $\beta^{-1}$.
\end{theorem}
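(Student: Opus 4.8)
The plan is to combine two ingredients already established in the excerpt: the Rouché analysis of Theorem~\ref{cercleoptiMM}, Proposition~\ref{cercleoptiMMBUMP} and Theorem~\ref{omegajnexistence}, which exhibits a simple zero $\omega_{j,n}$ in each disk $D_{j,n}$ (and $\overline{D_{j,n}}$) for $1\le j\le J_n$ together with the zero $\beta^{-1}\in D_{0,n}$; and the zero-free region of Theorem~\ref{absencezeroesOutside}, which says that $f_\beta(z)$ has no zero in the subdomain $\dc_n$ of the open unit disk. First I would fix $n\ge 260$ and a reciprocal algebraic integer $\beta$ with $\dyg(\beta)=n$, and restrict attention to the angular sector $-\arg(z_{J_n,n})-\frac{\pi}{na_{\max}}<\arg z<+\arg(z_{J_n,n})+\frac{\pi}{na_{\max}}$, which by construction is the union of the closed Rouché disks $\overline{D_{j,n}}$, $0\le j\le J_n$ (and their complex conjugates), together with the part of the annulus $1-\frac{c_n}{n}\le|z|<1$ that $\dc_n$ fails to cover near those disks.

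Next I would argue the dichotomy directly from the geometry of the disks. A zero of $f_\beta(z)$ of modulus $<1$ in this sector must, by Theorem~\ref{absencezeroesOutside}, lie outside $\dc_n$; inside the sector the complement of $\dc_n$ in $|z|<1$ is contained in the union of the disks $D_{j,n}$ ($0\le j\le J_n$, and conjugates) together with the thin shell $\{z:|z|\ge 1-\frac{c_n}{n}\}$. A point of $D_{j,n}$ for $1\le j\le J_n$ satisfies $\bigl||z|-1\bigr|\le|z_{j,n}|-1|+\frac{\pi|z_{j,n}|}{na_{\max}}$; using the expansion $|z_{j,n}|=1+\frac1n\lo(2\sin\frac{\pi j}{n})+\frac1nO((\lo\lo n/\lo n)^2)$ from Proposition~\ref{zedeJImodulesORDRE3} and the monotonicity of $x\mapsto\lo(2\sin\pi x)$ on $(0,1/6)$, one sees $\bigl||z|-1\bigr|\ge\frac1n(c_n-\frac{\pi}{a_{\max}})+\frac1nO(\dots)\ge\frac{c_{lent}}{n}$ after absorbing the error term (this is where $c_{lent}:=\min_{n\ge260}(c_n-\frac{\pi}{a_{\max}})$ enters, and one checks $c_{lent}>0$ from $c=-\lo\kappa=1.7627\ldots$ and $\pi/a_{\max}<c$). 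The disk $D_{0,n}$ lies in $|z-\theta_n|<\frac1n(\lo\lo n/\lo n)^2$, hence again $\bigl||z|-1\bigr|$ is of order $\frac{\lo n}{n}$, far exceeding $\frac{c_{lent}}{n}$. Conversely any zero in the residual shell $\{|z|\ge 1-\frac{c_n}{n}\}\setminus\dc_n$ that is {\em not} in one of the $D_{j,n}$ automatically satisfies $\bigl||z|-1\bigr|<\frac{1}{3}\frac{c_{lent}}{n}$: this is forced because the part of $\partial\dc_n$ built from $C(0,1-c_n/n)$ is excluded only on arcs whose radial thickness is controlled, and any zero trapped between that circle and $|z|=1$ in the gaps between consecutive disks $C_{j,n}$ (and $C(z_{j,n},\pi|z_{j,n}|/(ns_{j,n}))$) must have $|z|$ between $1-c_n/n$ and $1$; sharpening the exclusion radius so that the uncovered shell has thickness $<\frac13\frac{c_{lent}}{n}$ is exactly the calibration recorded in the definitions of $\dc_n$ and of the radii $s_{j,n}$. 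Simplicity of the zeros in the second class, and the one-per-disk count, then follow from the Rouché count already proven in Theorem~\ref{omegajnexistence} (each $D_{j,n}$, $\overline{D_{j,n}}$ contains exactly one zero of $f_\beta$, matching the one zero of $G_n$ there), and $D_{0,n}$ contains only $\beta^{-1}$ since $\beta^{-1}$ is a simple zero of $f_\beta$ by Theorem~\ref{parryupperdynamicalzeta} and the interval $(\theta_{n-1},\theta_n)$ is contained in $D_{0,n}$ by Lemma~\ref{tzero}.

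The main obstacle I anticipate is not the existence/simplicity part — that is essentially Theorem~\ref{omegajnexistence} plus Theorem~\ref{absencezeroesOutside} — but establishing the {\em clean separation constant}: namely that the ``nonlenticular'' zeros satisfy the strict bound $\bigl||z|-1\bigr|<\frac13\frac{c_{lent}}{n}$ while the lenticular ones satisfy $\bigl||z|-1\bigr|>\frac{c_{lent}}{n}$, with no zero landing in the intermediate band $[\frac13\frac{c_{lent}}{n},\frac{c_{lent}}{n}]$. This requires checking that the Rouché disks $D_{j,n}$, which have radius $\frac{\pi|z_{j,n}|}{na_{\max}}$, are small enough that a zero inside one of them is pushed by at least $\frac{c_{lent}}{n}$ away from $|z|=1$ — i.e. that $c_n-\frac{\pi}{a_{\max}}$ stays bounded below by a positive constant uniformly in $n\ge260$, after subtracting the tail $\frac1nO((\lo\lo n/\lo n)^2)$ — and simultaneously that the external Rouché curve $|z|=1-c_n/n$ together with the capping arcs of radius $\pi|z_{j,n}|/(ns_{j,n})$ leaves no room for a stray zero between radius $1-\frac13\frac{c_{lent}}{n}$ and $1$ outside the $D_{j,n}$. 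Both are finite uniform estimates in $n$, reducible to the asymptotic expansions of $c_n$ (Lemma~\ref{cnroucheasymptotic}) and of $\arg(z_{J_n,n})$, $\arg(z_{H_n,n})$, and to a numerical verification at $n=260$ as in Remarks~\ref{value195} and~\ref{value260}. Once these constants are pinned down the statement follows by inspecting, disk by disk, the complement of $\dc_n$ inside the prescribed sector.
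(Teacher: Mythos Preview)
Your approach has a genuine gap. Theorem~\ref{absencezeroesOutside} only guarantees that $f_\beta$ is zero-free in $\dc_n$, whose outer boundary is the circle $|z|=1-\frac{c_n}{n}$. Thus the complement of $\dc_n$ inside the sector contains the entire annulus $1-\frac{c_n}{n}\le|z|<1$, of thickness $c_n/n\approx 1.76/n$, not $\tfrac13 c_{lent}/n\approx 0.41/n$. Nothing in the Rouch\'e machinery lets you push the zero-free curve closer to $|z|=1$: the inequality $|z|^{2n-1}/(1-|z|^{n-1})<|G_n(z)|$ on which the whole construction rests blows up on the left as $|z|\to 1^-$. So your claim that ``sharpening the exclusion radius so that the uncovered shell has thickness $<\tfrac13\tfrac{c_{lent}}{n}$ is exactly the calibration recorded in the definitions of $\dc_n$'' is simply false, and no amount of numerical verification at $n=260$ will rescue it. The intermediate band $\{z:\tfrac13\tfrac{c_{lent}}{n}\le\bigl||z|-1\bigr|\le\tfrac{c_{lent}}{n}\}$ lies entirely outside $\dc_n$ and outside the Rouch\'e disks, and your argument says nothing about it.

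The paper closes this gap by a completely different mechanism that you do not mention: it passes to the polynomial sections $S_s$ of $f_\beta$, invokes an explicit thickness estimate $e(s)$ (from \cite{dutykhvergergaugry}) bounding how far from $|z|=1$ the zeros of $S_s$ in $\widehat{\mathcal{S}_n}$ can stray, and uses $e(s)\to 0$ as $s\to\infty$. A hypothetical zero of $f_\beta$ in $\widehat{\mathcal{S}_n}\cap\{|z|<1-\tfrac13\tfrac{c_{lent}}{n}\}$ would, by Hurwitz's theorem, attract zeros of $S_s$ for all large $s$; but once $e(s)<c_{lent}/(10n)$ those zeros are confined to the thin shell near $|z|=1$, a contradiction. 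The lenticular part of your argument (existence, simplicity, one-per-disk, the lower bound $\bigl||z|-1\bigr|>c_{lent}/n$ on the Rouch\'e disks) is fine; what is missing is this Hurwitz/polynomial-section step for the nonlenticular class.
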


\begin{proof} 
Denote by
$$\mathcal{S}_{n}
:= \Bigl\{z \mid \theta_{n-1} \leq |z| < 1,
- \arg(z_{J_n , n}) - 
\frac{\pi}{n a_{max}}
\leq 
\arg z \leq + \arg(z_{J_n , n}) 
+ \frac{\pi}{n a_{max}}
\Bigr\}$$
the truncated angular sector and let
$$\widehat{\mathcal{S}_{n}}
:= \stackrel{o}{\mathcal{S}_{n}} 
\, \setminus
\, 
\Bigl(
\bigcup_{j=1}^{J_n} \left(
D_{j,n} \cup \overline{ D_{j,n}}
\right)
\,
\cup \, 
D(\theta_n , \theta_n - \theta_{n-1})
\Bigr)^{cl}
$$
the open truncated
angular sector obtained from
$\mathcal{S}_{n}$
by removing the closure of the Rouch\'e disks
$\overline{D_{j,n}}, D_{j,n}$ centered at
the zeroes $z_{j,n}$ of $G_{n}(z)$
in $\mathcal{S}_{n}$ of
respective radius
$\frac{\pi |z_{j,n}|}{n a_{max}}$,
and of
$D(\theta_n , \theta_n - \theta_{n-1})$.
The argument $\arg(z_{J_n , n})$ is defined
in \eqref{argzJJJn}.
The analytic function 
$G_{n}(z)$ has no zero
in the adherence 
$\overline{\widehat{\mathcal{S}_{n}}}$
of $\widehat{\mathcal{S}_{n}}$
and
reaches its infimum 
$\inf_{z \in \widehat{\mathcal{S}_n}} 
|-1+z+z^n| > 0$
on the boundary
$\partial \widehat{\mathcal{S}_{n}}$
of $\widehat{\mathcal{S}_{n}}$.
On the Rouch\'e circles
$C_{j,n}, \overline{C_{j,n}}$, 
$j=1, \ldots, J_n$,
using \eqref{rouchecercle}
and
\eqref{rouchecercleBUMP}, 
this infimum
is bounded from below by
$$\frac{
\left|Z\right|^{2 n -1}}{1 - |Z|^{n-1}}$$
where $Z$ is the point of $C_{1,n}$
of smallest modulus, which is
such that
$|Z| = |\theta_{n} - 
\frac{\pi}{n a_{max}} \theta_n|
$ at the first order.
Putting aside the Rouch\'e circles,
using the inequality
$|-1+z+z^n| \geq ||-1+z|- |z^n||$,
the minimum of
$|-1+z+z^n|$ on the arcs
$|z|=1,
|z|=\theta_{n-1}$, the segments
$\arg x = \pm 
(\arg(z_{J_n , n}) + \frac{\pi}{n a_{max}})$
and the circle $C(\theta_n , 
\theta_n - \theta_{n-1})$
on $\partial \widehat{\mathcal{S}_{n}}$
is bounded from below by
$$|-1+\theta_{n-1}| - |\theta_{n-1}^{n}|
=
(1-\theta_{n-1})^2 .
$$
Denote
$$\delta_n := \min \Bigl\{
(1-\theta_{n-1})^2 , 
\frac{
\left|Z\right|^{2 n -1}}{1 - |Z|^{n-1}}
\Bigr\}.$$
We have:
$0 < \delta_n \leq 
\inf_{z \in \widehat{\mathcal{S}_n}} 
|-1+z+z^n|$ and
$\lim_{n \to \infty}
\delta_n = 0$.
It is easy to show that
$$\lim_{n \to \infty}\frac{\lo \delta_n}{n} =0.$$
Using \S 5.3 in \cite{dutykhvergergaugry}
this limit condition allows to calculate
a first-order estimate of the thickness
of the annular neighbourhood
of the unit circle, in
$\widehat{\mathcal{S}_{n}}$,
which contains the roots of a polynomial
section
$-1 + z + z^n +
z^{m_1} + z^{m_2} + \ldots + z^{m_s}$
of $f_{\beta}(z)$;
this estimate is
\begin{equation}
\label{epaisseurss}
e(s) = 1 -
\left(
1 - 2 \frac{(n -1) (s - \delta_n)}{
(n -1) (s^2 +  s) + 2 (m_s - n) }
\right)^{1/(n-1)} .
\end{equation}
In the expression
\eqref{epaisseurss}
$n$ is fixed, as well as the 
sequence $(m_j)_{j \geq 1}$
since $\beta$ is fixed, therefore
$f_{\beta}(z)$ also;
the integer $m_s$ tends to infinity, if $s$
tends to infinity,
since 
$m_s - n
\geq 
(m_1 - n) +\sum_{j=2}^{s} (m_{j} - m_{j-1})
\geq s (n-1)$;
the integer $s$ is large enough
(at least to have
$s - \delta_n > 0$)
and
$\lim_{s \to \infty} e(s) =0$.

Among all the Rouch\'e disks
$D_{j,n}$, $1 \leq j \leq J_n$,
the $J_{n}$th
Rouch\'e disk 
$D_{J_n , n}$ is the closest
to the unit circle
(by (iii-2) in
Proposition \ref{closetoouane}).
By Lemma \ref{cnroucheasymptotic} 
its center is $z_{J_n , n}$,
of modulus 
$|z_{J_n , n}| = 1 - \frac{c_n}{n}$,
and its radius is
$\frac{\pi |z_{J_n , n}|}{n a_{max}}
< \frac{\pi}{n a_{max}}$.
By Lemma \ref{cnroucheasymptotic}
the limit $c = \lim_{n \to \infty}
c_n$ exists, is positive, 
and, from a numerical viewpoint,
$c - \frac{\pi}{a_{max}} = 1.76274
- 0.53479 = 1.22794\ldots$.
By the asymptotic
expansion of $c_n$
in Lemma \ref{cnroucheasymptotic},
the constant 
$c_{lent} := \min_{n \geq 260}
(c_n - \frac{\pi}{a_{max}})$ 
is positive.
The disk
$\{z \mid |z| < 1 - \frac{c_{lent}}{n}\}$
contains all the Rouch\'e disks
$D_{j,n}, \overline{D_{j,n}}$,
$1 \leq j \leq J_n$, and $D_{0,n}$.

Let assume that $f_{\beta}(z)$ has a zero
in
$$\widehat{\mathcal{S}_{n}}
\cap
\Bigl\{z \mid |z| < 1 - \frac{1}{3} 
\frac{c_{lent}}{n}
\Bigr\}
.$$
Denote it by
$z$, counted with 
multiplicity. 
There exists $r > 0$ small enough such that
the open disk $D(z,r)$ be included
in $\widehat{\mathcal{S}_{n}}
\cap
\{z \mid |z| < 1 - \frac{1}{3} 
\frac{c_{lent}}{n}\}$ and only contains
the zero $z$ of $f_{\beta}(z)$.
By Hurwitz Theorem (for instance
cf \S 11 in Chap. 2 in \cite{sakszygmund}) 
the number of zeroes of any polynomial
section 
$-1 + z + z^n +
z^{m_1} + z^{m_2} + \ldots + z^{m_s}$
of $f_{\beta}(z)$ 
in $D(z,r)$ should be equal to 
the multiplicity $\geq 1$ of $z$, as soon as
$s$ is large enough, say
$s \geq s_0$ for some $s_0$.

Since $\lim_{s \to 0} e(s) = 0$,
we obtain a contradiction by taking
$s_0$ such that
$e(s) \leq \frac{c_{lent}}{10 \,n}$ for all
$s \geq s_0$. The constant 10, 
at the denominator, is 
arbitrary and may be taken eventually
larger.
This means that all the zeroes 
of all the polynomial 
sections of $f_{\beta}(z)$,
in $\widehat{\mathcal{S}_{n}}$, 
for all $s \geq s_0$,
are contained in
$$1 - \frac{c_{lent}}{3 \,n}
< |z| < 1.
$$
But 
$\{z \mid 1 - \frac{c_{lent}}{3 \,n}
< |z| < 1\}
\cap
D(z,r) = \emptyset$. Contradiction.

Therefore the 
zeroes of
$f_{\beta}(z)$ which lie in the open
angular sector
$$\{z \in \cb : |z| < 1,
- \arg(z_{J_n , n}) - 
\frac{\pi}{n a_{max}}
<
\arg z 
< 
\arg(z_{J_n , n}) 
+ \frac{\pi}{n a_{max}}
\}$$
are located either
in the Rouch\'e disks by Theorem
\ref{cercleoptiMM}
and Theorem \ref{omegajnexistence},
or in a small neighbourhood
of the unit circle included
in $\{z \mid 1 - \frac{c_{lent}}{3 \,n}
< |z| < 1\}$. This dichotomy
naturally 
extends to the zeroes of
any polynomial section
of $f_{\beta}(z)$ 
(cf the proofs of 
Theorem
\ref{cercleoptiMM},
Theorem \ref{omegajnexistence}
and Theorem \ref{thm2lenticuli}).
\end{proof}

\begin{definition}
\label{lenticularzerodefinition}
Let $n \geq 260$. Let
$\beta > 1$ be a reciprocal algebraic 
integer such that $\dyg(\beta) = n$,
with ${\rm M}(\alpha)
< 1.176280\ldots$.
The zeroes of $f_{\beta}(z)
:= 
-1 + x + x^n +
x^{m_1} + x^{m_2} + \ldots + x^{m_s}+\ldots$,
resp. of any of its polynomial
section
$f(x) := 
-1 + x + x^n +
x^{m_1} + x^{m_2} + \ldots + x^{m_s}
\in \mathcal{B}_n$,
which belong to the angular sector
\begin{equation}
\label{angularsectornnn}
\Bigl\{z \in \cb : |z| < 1 - 
\frac{c_{lent}}{n} , \,
|\arg z| \leq \arg(z_{J_n , n}) 
+ \frac{\pi}{n a_{max}}
\Bigr\}
\end{equation}
are called the {\em lenticular zeroes}
of $f_{\beta}(z)$, resp. of $f$.
\end{definition}

\begin{theorem}
\label{thm1factorization}
For any $f \in \mathcal{B}_n$, $n \geq 3$,
denote by
$$f(x) =
-1 + x + x^n +
x^{m_1} + x^{m_2} + \ldots + x^{m_s} =
A(x) B(x) C(x),$$
where $s \geq 1$, $m_1 - n \geq n-1$, 
$m_{j+1}-m_j \geq n-1$ for $1 \leq j < s$,
the factorization of  $f$ where $A$ is 
the cyclotomic part, 
$B$ the reciprocal noncyclotomic part,
$C$ the nonreciprocal part.
Then 
\begin{itemize}
\item[(i)] the  nonreciprocal part $C$ is
nontrivial, irreducible, 
and never vanishes on the unit circle,
\item[(ii)] if $\gamma_{s} > 1$
denotes the real algebraic integer
uniquely determined 
by the sequence
$(n, m_1 , m_2 , \ldots , m_s)$ 
such that
$1/\gamma_s$ is the unique
real root of $f$ in $(0,1)$, 
the polynomial
$-C^*(X)$, opposite of
the reciprocal polynomial  of
$C(X)$,
is the minimal polynomial
of $\gamma_s$, and 
$\gamma_s$ is 
a nonreciprocal algebraic integer.
\end{itemize}
\end{theorem}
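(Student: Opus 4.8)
The plan is to reduce the whole statement to the single assertion that the nonreciprocal part $C$ is irreducible, everything else being a formal consequence. First I would record that $f(x)=-1+x+x^n+x^{m_1}+\cdots+x^{m_s}$ is exactly the Parry Upper function $f_{\gamma_s}(x)$ of the simple Parry number $\gamma_s>1$ attached to its (finite, self-admissible) digit string by Proposition~\ref{betacharacterized} and Proposition~\ref{fbetainfinie}, so that $-f=P^{*}_{\gamma_s,P}$ by Theorem~\ref{carlsonpolya}; in particular $f(1/\gamma_s)=0$, and since $f'(x)=1+nx^{n-1}+\sum_j m_j x^{m_j-1}>0$ on $(0,\infty)$ this zero is simple. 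The coefficient sequence $-1,t_1,t_2,\ldots$ of $f$ has exactly one sign change, so by Descartes's rule $f$ has exactly one positive real zero, counted with multiplicity; since $f(0)=-1<0$ and $f(1)=s+1>0$ this zero is $\rho_0:=1/\gamma_s\in(0,1)$, and $f(\gamma_s)\neq 0$ because $\gamma_s>1>\rho_0$.

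Granting that $C$ is irreducible, I would then dispatch the easy parts of (i) and all of (ii). The zero $\rho_0$ is not a zero of the cyclotomic part $A$ (it is not a root of unity) and not a zero of the reciprocal part $B$ (else $\gamma_s=1/\rho_0$ would be a zero of $B$, hence of $f$, contradicting $f(\gamma_s)\neq 0$), so $\rho_0$ is a zero of $C$, which is therefore nonconstant: this proves nontriviality. For the unit circle, $C\in\zb[X]$ has real coefficients, so its zeros of modulus $1$ are closed under $w\mapsto\bar w=1/w$, and the product $R$ over these zeros (with multiplicities) would be a nontrivial self-reciprocal (up to sign) factor of $C$ if nonempty — impossible since $C$ is a product of nonreciprocal irreducible factors. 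Hence $C$ never vanishes on $|z|=1$. Finally, with $C$ irreducible and $C(0)\neq 0$ (all zeros of $f$ are nonzero), $-C^{*}$ is monic, irreducible, and vanishes at $\gamma_s=1/\rho_0$, so it is the minimal polynomial of $\gamma_s$; and $\gamma_s$ is a nonreciprocal algebraic integer because $C^{*}$ is nonreciprocal, the reciprocal of a nonreciprocal polynomial being again nonreciprocal.

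The hard part — and essentially the only content of the theorem — is proving that $C$ is irreducible. Through $-f=P^{*}_{\gamma_s,P}$ this is equivalent to showing that the nonreciprocal part of the Parry polynomial $P_{\gamma_s,P}$ of the simple Parry number $\gamma_s$ equals its minimal polynomial $P_{\gamma_s}$, i.e. that the complementary factor $P_{\gamma_s,P}/P_{\gamma_s}$ (in Boyd's sense) carries no nonreciprocal irreducible factor. I would attack this through the theory of factorization of Parry polynomials of Pinner--Vaaler~\cite{pinnervaaler} and Verger-Gaugry~\cite{vergergaugry3}, exploiting the lacunarity constraint $m_{q+1}-m_q\geq n-1$ imposed by the R\'enyi--Parry dynamics: either by running a Ljunggren--Schinzel-type argument directly on the lacunary polynomial $f$ (a suitable factorization of $f(x)f^{*}(x)$, controlling common factors of $C$ and $C^{*}$ and excluding a splitting $C=C_1C_2$ by comparing the locations of roots), or by obtaining a Rouch\'e root-count showing that $f$ has exactly one zero of modulus $\leq\theta_{n-1}$, namely $\rho_0$, together with a uniquely determined configuration of zeros off $|z|=1$, which forces $C$ to be the single irreducible factor through $\rho_0$. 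Establishing that no surviving reciprocal Ljunggren factor can occur in $C$, uniformly over the entire family $\mathcal{B}_n$ under only the Parry gap hypothesis, is where the main obstacle lies.
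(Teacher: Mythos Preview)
Your reductions are correct and cleanly organized: the Descartes count giving a unique positive zero $\rho_0=1/\gamma_s\in(0,1)$, the argument that $\rho_0$ must lie in $C$ (hence $C$ is nontrivial), the observation that an irreducible nonreciprocal integer polynomial cannot vanish on $|z|=1$ (so neither can their product $C$), and the derivation of (ii) from the irreducibility of $C$ are all sound. One small point you glide over: to get $-C^{*}$ \emph{monic} you need $C(0)=-1$, which follows from $f(0)=-1$, $f(1)=s+1\neq 0$ (so $\Phi_1\nmid A$, hence $A(0)=1$), and $B(0)=1$ for $B$ monic reciprocal.

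That said, the paper does not prove this theorem here: its entire proof is the one-line citation ``Theorem~3 in \cite{dutykhvergergaugry}''. So there is no in-paper argument to compare your sketch against. You have correctly isolated the only substantive claim --- irreducibility of $C$ --- and your proposed lines of attack (Ljunggren--Schinzel type arguments on the lacunary $f$, the Pinner--Vaaler factorization theory invoked in \cite{vergergaugry3}, or a Rouch\'e root-count forcing a single nonreciprocal factor) are in the right circle of ideas; the cited companion paper \cite{dutykhvergergaugry} works with exactly this class $\mathcal{B}$ of almost-Newman lacunary polynomials. But as you yourself acknowledge, you have not closed the argument, and the obstacle you name --- excluding a second nonreciprocal irreducible factor uniformly over $\mathcal{B}_n$ under only the gap hypothesis $m_{q+1}-m_q\geq n-1$ --- is genuine. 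Your write-up is thus a correct reduction plus an honest outline of what remains, which is also the status of the result in this paper: the actual work is deferred to the external reference.
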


\begin{proof}
Theorem 3 in \cite{dutykhvergergaugry}.
\end{proof}

Let us precise the behaviour
of the reciprocal noncyclotomic parts $B$
of $f(x)$ in 
Theorem \ref{thm1factorization}
on the lenticular roots of $f$.

\begin{proposition}
\label{Bcyclolenticules}
Let $\beta > 1$ be a 
reciprocal algebraic integer having
$\dyg(\beta) \geq 260$.
Let 
$f_{\beta}(x) =
-1 + x + x^n +
x^{m_1} + x^{m_2} + \ldots + x^{m_s} +\ldots$
be the Parry Upper function at $\beta$
and, for $s \geq 0$, denote its $s$-th 
polynomial section by
$$f(x) =
-1 + x + x^n +
x^{m_1} + x^{m_2} + \ldots + x^{m_s}
\quad \mbox{factorized as} ~= A(x) B(x) C(x),$$
where $s \geq 1$, $m_1 - n \geq n-1$, 
$m_{j+1}-m_j \geq n-1$ for $1 \leq j < s$,
where $A$ is the cyclotomic part, 
$B$ the reciprocal noncyclotomic part,
$C$ the nonreciprocal part of $f$.

There exists $s_0$ (depending upon $n$)
such that
the reciprocal
noncyclotomic part $B$
of $f(x)$, if any,
does not vanish on the lenticular roots
of $f$, as soon as 
$s \geq s_0$.
\end{proposition}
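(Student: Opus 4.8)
The plan is to pit the reciprocity of $B$ against the elementary fact that, for $s$ large, the $s$-th polynomial section $f:=f_s=-1+x+x^n+x^{m_1}+\dots+x^{m_s}$ of $f_\beta$ cannot vanish outside the closed disk $\{|z|\le(1-c_{lent}/n)^{-1}\}$: there the top monomial $x^{m_s}$ dominates all the others, thanks to the gap conditions $m_j-m_{j-1}\ge n-1$. A lenticular root $\omega$ of $f$ satisfies $|\omega|<1-c_{lent}/n<1$ by definition, so $\omega\neq 0,\pm 1$ and $\omega$ lies off the unit circle; hence $\omega$ is not a root of the cyclotomic part $A$ of the factorization $f=ABC$ of Theorem~\ref{thm1factorization}, and it remains to exclude that it is a root of the reciprocal part $B$. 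If it were, then $1/\omega$ would be a root of $B$, hence of $f$, with $|1/\omega|>(1-c_{lent}/n)^{-1}$, contradicting the domination fact; so $\omega$ must be a root of the nonreciprocal part $C$ (consistent with Theorem~\ref{thm1factorization}(ii), which already places the real lenticular root $1/\gamma_s$ among the roots of $C$).

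Concretely I would proceed in three steps. First, recall the factorization $f=ABC$ into pairwise coprime cyclotomic, reciprocal noncyclotomic and nonreciprocal parts, and record that each lenticular root has modulus $<1-c_{lent}/n$, so it can only be a root of $B$ or of $C$. Second -- the key estimate -- I would show that for any $w$ with $|w|\ge(1-c_{lent}/n)^{-1}$, writing $m_0:=n$ so that $m_j-m_{j-1}\ge n-1$ for $1\le j\le s$,
\[
|f(w)|\;\ge\;|w|^{m_s}\Bigl(1-\frac{|w|^{-(n-1)}}{1-|w|^{-(n-1)}}-2\,|w|^{-s(n-1)}\Bigr).
\]
Since $|w|^{-1}\le 1-c_{lent}/n$, and since Lemma~\ref{cnroucheasymptotic} gives $c_{lent}=\min_{m\ge 260}(c_m-\pi/a_{\max})\ge 1.22$ (because $c_m=-(\lo\kappa)(1+1/m)+\frac1m O((\lo\lo m/\lo m)^2)$ with $-\lo\kappa=1.76\ldots$), one has $|w|^{-(n-1)}\le(1-c_{lent}/n)^{n-1}<e^{-c_{lent}(n-1)/n}$, so the geometric term $\tfrac{|w|^{-(n-1)}}{1-|w|^{-(n-1)}}$ stays below $\tfrac12$ for every $n\ge 260$; the parenthesis is then $>\tfrac12-2(1-c_{lent}/n)^{s(n-1)}$, which is positive as soon as $s$ exceeds some $s_0(n)$, because $(1-c_{lent}/n)^{s(n-1)}\to 0$ as $s\to\infty$ ($n$ being fixed). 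Third, conclude by the reciprocity argument above: for $s\ge s_0(n)$ no lenticular root of $f$ is a root of $B$, which is the assertion.

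The one point needing care is the uniformity in $n$ of the bound on the geometric-series term, i.e. checking that $(1-c_{lent}/n)^{n-1}\big/\bigl(1-(1-c_{lent}/n)^{n-1}\bigr)$ is bounded away from $1$ for all $n\ge 260$; this follows from the lower bound $c_{lent}\ge 1.22$ supplied by Lemma~\ref{cnroucheasymptotic} (the quantity is in fact $<0.43$). Everything else reduces to the gap structure of $f_\beta$ recorded in Proposition~\ref{fbetainfinie} and Theorem~\ref{zeronzeron}, and to the localisation of the lenticular roots established in \S\ref{S5.3}. Finally, the required dependence of $s_0$ on $n=\dyg(\beta)$ alone (and not on the tail exponents $(m_q)_{q>s}$) is automatic, since the estimate involves only $n$ and $m_s$, and $m_s\ge s(n-1)+n\to\infty$ with $s$.
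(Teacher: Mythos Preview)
Your argument is correct and is genuinely different from the paper's route, and in fact more direct. Both proofs reduce to the same core observation: if $B(\omega)=0$ for a lenticular root $\omega$, then (since $B=B^*$) $f^*(\omega)=0$, equivalently $f(1/\omega)=0$, and one must rule this out for $s$ large. You do this by a straight growth estimate: the top monomial $w^{m_s}$ dominates once $|w|\ge(1-c_{lent}/n)^{-1}$, thanks to the gap conditions, giving an explicit zero-free region for $f$ outside $|z|\le(1-c_{lent}/n)^{-1}$. The paper instead rewrites $f^*(r_s)=0$ as $G_n^*(r_s)\,r_s^{m_s-n}=-(1+r_s^{m_s-m_{s-1}}+\cdots+r_s^{m_s-m_1})$, packages the right-hand side into an auxiliary Parry-type function $f_\beta^{[s]}$, and then lets $s\to\infty$: the left side tends to $0$ (since $|r_s|<1$ and $m_s\to\infty$) while the right side has liminf bounded below by $1-\frac{|z_{J_n,n}|^{n-1}}{1-|z_{J_n,n}|^{n-1}}>0$, a geometric-series bound of the same flavour as yours but evaluated \emph{inside} the unit disk rather than outside. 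Your approach buys simplicity and a clean uniform zero-free region; the paper's approach stays inside the $|z|<1$ framework it has been developing and reuses the lenticular machinery (Theorem~\ref{omegajnexistence}) for $f_\beta^{[s]}$, though in the end that reuse is inessential to the contradiction.

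The one point you flag deserves a slightly sharper statement: your estimate needs $(1-c_{lent}/n)^{n-1}<1/2$ (so that the geometric term is $<1$), which in the limit $n\to\infty$ is $e^{-c_{lent}}<1/2$, i.e.\ $c_{lent}>\log 2\approx 0.693$. The paper only asserts $c_{lent}>0$ in Theorem~\ref{thm2lenticuli}, but the asymptotic $c_n=-(\lo\kappa)(1+1/n)+\tfrac{1}{n}O((\tfrac{\lo\lo n}{\lo n})^2)$ of Lemma~\ref{cnroucheasymptotic}, with $-\lo\kappa=1.763\ldots$ and $\pi/a_{\max}=0.535\ldots$, does give $c_n-\pi/a_{\max}\to 1.228\ldots$ and makes $c_{lent}>\log 2$ numerically clear for $n\ge 260$; it would be worth stating this as a separate lemma rather than absorbing it into the prose. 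Note, incidentally, that the paper's corresponding positivity condition is $\kappa/(1-\kappa)<1$, i.e.\ $\kappa<1/2$, which is less restrictive because it works with $c_n$ rather than $c_{lent}=c_n-\pi/a_{\max}$; this is the price you pay for using the cruder radius $1-c_{lent}/n$ in place of the exact $|z_{J_n,n}|=1-c_n/n$.
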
 

\begin{proof}
The result is true
in the case 
``$s=0$"  
since a trinomial
$G_{n}(X)$ is either nonreciprocal
and irreducible or the 
product of a nonreciprocal
irreducible polynomial by a
cyclotomic polynomial
(by Proposition \ref{irredGn}) 
\cite{selmer}. 
 Let us assume 
$n = \dyg(\beta) \geq 260$ ($n$ is fixed)
and make it explicit.
Let $z_{j,n}$ be a zero of
$G_{n}(x) = -1 + x + x^n$ which
belongs to the angular sector
\eqref{angularsectornnn}.
Then
the reciprocal trinomial
$G_{n}^{*}(z_{j,n}) =
-z_{j,n}^n + z_{j,n}^{n-1} + 1$
at $z_{j,n}$ is such that
$$|G_{n}^{*}(z_{j,n})| =
|z_{j,n} ( 1+z_{j,n}^{n-2})|
= |z_{j,n} (1 + (1-z_{j,n})/z_{j,n}^2)|$$
$$
\geq 
\theta_{n-1}
(1 -
|\frac{1-z_{J_n , n}}{z_{J_n , n} z}|
)
\geq 
\theta_{n-1} (1 - \kappa(1, a_{max}))
$$
by Proposition \ref{closetoouane} and
Definition \ref{Jndefinition}. 
Obviously 
$\min_{n \geq 260}
\theta_{n-1} (1 - \kappa(1, a_{max}))$
is $> 0$ since
$\lim_{n \to \infty}
\theta_{n} (1 - \kappa(1, a_{max}))
= 1 - \kappa(1, a_{max}) = 1 - 0.171573\ldots =
0.828427\ldots$.
In other terms $G_{n}^{*}(x)$ 
does not vanish on the lenticular zeroes
$z$ of $G_{n}(x)$ which lie 
in  \eqref{angularsectornnn}, 
with the lower bound
$\min_{n \geq 260}
\theta_{n-1} (1 - \kappa(1, a_{max}))$, 
independent of $n$ and uniform on all the
lenticular roots of $f$ in \eqref{angularsectornnn}.

We now consider the general case
``$s \geq 1$" and extend the previous case. 
Let $D_{j,n}$ (defined in Theorem
\ref{omegajnexistence}, with
$D_{-j,n} = \overline{D_{j,n}}$) be a Rouch\'e disk
in the open angular sector
\eqref{angularsectornnn}, 
that is for $j \in \{0, 1, \ldots, J_n\}$. 
Denote by $r_s$ the unique zero in
$D_{j,n}$ of
the $s$th polynomial section
$f(x) =
-1 + x + x^n +
x^{m_1} + x^{m_2} + \ldots + x^{m_s}
\in 
\mathcal{B}_n$. 
Recall that in $D_{j,n}$
the unique zero of
$f_{\beta}(z)$ is
$\omega_{j,n}$ (with
$\omega_{0,n} := \beta^{-1}$)
so that
$\lim_{s \to \infty} r_s = 
\omega_{j,n}$. 
We will show that
the polynomial
$f^{*}(x) = A^{*}(x) B^{*}(x) C^{*}(x)
= x^{m_s} f(1/x)
$, 
reciprocal of
$f(x)$, does not vanish on
the lenticular zero $r_s$ of $f(x)$. 

Assume the contrary.
Then 
\begin{equation}
\label{zerodureciproque}
0 = f^{*}(r_s)
=
G_{n}^{*}(r_s)\, 
r_{s}^{m_s - n}
+
\bigl(1 + r_{s}^{m_s - m_{s-1}}
+
r_{s}^{m_s - m_{s-2}}
+
\ldots
+
r_{s}^{m_s - m_{1}}
\bigr).
\end{equation}
From the polynomial $f^*$, let us construct
the associated Parry Upper function
$$f_{\beta}^{[s]}(z) :=
G_{n}(z)
+
z^{m_1} \bigl(
f^{*}(z) - G_{n}^{*}(z) \, z^{m_s - n}
\bigr)
+ (f_{\beta}(z) - f(z))$$
$$
=
-1 + z + z^n + z^{m_1}
+ z^{m_1 + m_s - m_{s-1}}
+
z^{m_1 + m_s - m_{s-2}}
+\ldots
+
z^{ m_s }+
\mbox{tail of}~ f_{\beta}(z).
$$
For
$1 \leq q \leq s-1$ the 
conditions 
$(m_s - m_{s-(q+1)})
-
(m_s - m_{s-q}) = 
m_{s-q} - m_{s-(q+1)} \geq n-1$  imply that
Theorem \ref{omegajnexistence} applies.
For every $s \geq 1$ the Parry Upper function
$f_{\beta}^{[s]}(z)$ has a zero in $D_{j,n}$.
But, from \eqref{zerodureciproque},
we would have
\begin{equation}
\label{zerodureciproquemodules}
\left|G_{n}^{*}(r_s)\, 
r_{s}^{m_s - n}\right|
=
\Bigl|
\frac{f_{\beta}^{[s]}(r_s) -
G_{n}(r_s) - 
\sum_{q=s+1}^{\infty} r_{s}^{m_q}}{r_{s}^{m_1}}
\Bigr|.
\end{equation}
The limit
$\lim_{s \to \infty}
\sum_{q=s+1}^{\infty} r_{s}^{m_q}
=0$ holds and
$$\liminf_{s \to \infty}
\Bigl|
\frac{f_{\beta}^{[s]}(r_s) -
G_{n}(r_s) - 
\sum_{q=s+1}^{\infty} r_{s}^{m_q}}{r_{s}^{m_1}}
\Bigr|
=
\liminf_{s \to \infty}
\Bigl|
\frac{f_{\beta}^{[s]}(\omega_{j,n}) -
G_{n}(\omega_{j,n})}{\omega_{j,n}^{m_1}}
\Bigr|
$$
admits a lower bound
which is strictly positive.
Indeed,
this lower bound can be computed
from any series
$\sum_{q=0}^{\infty} z^{d_q}$, $d_0 = 0,
d_{q+1}-d_q \geq n-1, q \geq 0$,
by
$$
|1 + \omega_{j,n}^{d_1}
+
\omega_{j,n}^{d_2}
+
\ldots
+
\omega_{j,n}^{d_q}
+\ldots|
\geq
1 - |\omega_{j,n}|^{n-1}
\frac{1}{1 - |\omega_{j,n}|^{n-1}}
$$
and approximated by
\begin{equation}
\label{minototal}
1 - |z_{j,n}|^{n-1}
\frac{1}{1 - |z_{j,n}|^{n-1}}
=
1-\frac{|1-z_{j,n}|}{|z_{j,n}| - |1-z_{j,n}|}
\geq
1-\frac{|1-z_{J_n,n}|}{|z_{J_n,n}| - |1-z_{J_n,n}|} > 0.
\end{equation}
The contradiction comes from the fact
that the lhs of \eqref{zerodureciproquemodules}
tends to 0 when $s$ tends to infinity, whereas
the rhs of \eqref{zerodureciproquemodules}
has a positive liminf by
\eqref{minototal}. We deduce
that $B(r_s)=B^{*}(r_s) = 0$ cannot hold as soon as $s$
is large enough.
\end{proof}

\begin{corollary}
\label{Bcyclolenticulescoro}
Let $\beta > 1$ be a 
reciprocal algebraic integer having
$n =
\dyg(\beta) \geq 260$, 
with ${\rm M}(\alpha)
< 1.176280\ldots$.
For $s$ large enough, 
if $\gamma_s > 1$ denotes the 
real root of the polynomial
$C^{*}(z)$, all the lenticular
zeroes of the $s$-th polynomial section
$f(z) = A(z)B(z)C(z)$ 
of $f_{\beta}(z)$
are conjugates of
$\gamma_{s}^{-1}$ where conjugation is 
relative to the irreducible nonreciprocal
polynomial part
$C(z)$. 
The degree of the irreducible
nonreciprocal factor
$C(X)$ of $f$ satisfies
\begin{equation}
\label{degreeCsection}
\deg(C) \geq 1 + 
2 J_n = 1+
\frac{n}{\pi}
\bigl(
2 \arcsin\bigl( \frac{\kappa}{2} \bigr) 
\bigr)
+
\frac{2 \kappa \, \lo \kappa}
{\pi \,\sqrt{4 - \kappa^2}}
+ 
O\bigl(
\bigl(
\frac{\lo \lo n}{\lo n}
\bigr)^2
\bigr).
\end{equation}
\end{corollary}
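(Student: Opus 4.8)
The plan is to assemble three facts already available in the excerpt: the factorization structure of a polynomial section of $f_\beta$ from Theorem \ref{thm1factorization}, the non-vanishing of the reciprocal noncyclotomic part on the lenticular roots from Proposition \ref{Bcyclolenticules}, and the precise localization and count of the lenticular zeroes from Theorem \ref{omegajnexistence} together with the dichotomy of Theorem \ref{thm2lenticuli}. Fix $n = \dyg(\beta) \geq 260$. First I would choose $s_0$ (depending on $n$) large enough so that, simultaneously, the dichotomy of Theorem \ref{thm2lenticuli} holds for the $s$-th polynomial section $f = A B C$ of $f_\beta$ and, by Proposition \ref{Bcyclolenticules}, the reciprocal noncyclotomic part $B$ of $f$ does not vanish at any lenticular root of $f$; the latter is the substantive ingredient, and it is precisely where the hypothesis ``$s$ large enough'' enters.

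Next I would show that every lenticular zero $\omega$ of $f$ is a root of the nonreciprocal part $C$. Indeed, by definition a lenticular zero satisfies $|\omega| < 1 - c_{lent}/n < 1$, so it cannot be a root of the cyclotomic part $A$, whose zeroes all lie on $|z| = 1$; and, by the choice of $s_0$, it is not a root of $B$; hence $C(\omega) = 0$. By Theorem \ref{thm1factorization}(i), $C$ is nontrivial and irreducible over $\qb$, and by Theorem \ref{thm1factorization}(ii) its opposite reciprocal $-C^*$ is the minimal polynomial of the nonreciprocal algebraic integer $\gamma_s > 1$, where $1/\gamma_s$ is the unique real root of $f$ in $(0,1)$. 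Since $C^*(\gamma_s) = 0$ forces $C(1/\gamma_s) = 0$, the number $\gamma_s^{-1}$ is a root of the irreducible polynomial $C$, so the roots of $C$ are exactly the conjugates of $\gamma_s^{-1}$, conjugation being taken relative to $C$. This establishes the first assertion; one checks along the way that the unique zero of $f$ in $D_{0,n}$, being real and in $(0,1)$, is $1/\gamma_s$ itself.

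Finally I would count the lenticular zeroes. By Theorem \ref{omegajnexistence} and the dichotomy of Theorem \ref{thm2lenticuli} applied to $f$, the zeroes of $f$ lying in $\{\,|z| < 1 - c_{lent}/n,\ |\arg z| \leq \arg(z_{J_n, n}) + \pi/(n a_{\max})\,\}$ are simple and distributed one per disk, namely one in $D_{0,n}$ and one in each of $D_{j,n}$ and $\overline{D_{j,n}}$ for $j = 1, \ldots, J_n$; as the Rouch\'e disks are pairwise disjoint, this produces exactly $1 + 2 J_n$ distinct lenticular zeroes of $f$, all of them roots of the irreducible polynomial $C$, whence $\deg(C) \geq 1 + 2 J_n$. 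Substituting the asymptotic expansion \eqref{Jnasymptotic} of $J_n$ from Proposition \ref{argumentlastrootJn} gives
\[
\deg(C) \geq 1 + 2 J_n = 1 + \frac{n}{\pi}\bigl(2 \arcsin\bigl(\tfrac{\kappa}{2}\bigr)\bigr) + \frac{2 \kappa \, \lo \kappa}{\pi \sqrt{4 - \kappa^2}} + O\Bigl(\bigl(\tfrac{\lo \lo n}{\lo n}\bigr)^2\Bigr),
\]
which is \eqref{degreeCsection}. The main obstacle is conceptual rather than computational: it is Proposition \ref{Bcyclolenticules} that rules out a reciprocal noncyclotomic factor absorbing some lenticular roots; everything else is bookkeeping, the key structural point being that the same Rouch\'e disks $D_{j,n}$ work for every polynomial section of $f_\beta$, not merely for $f_\beta$ itself, which is already built into the statements of Theorem \ref{cercleoptiMM}, Theorem \ref{omegajnexistence} and Theorem \ref{thm2lenticuli}.
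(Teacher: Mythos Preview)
Your proposal is correct and follows essentially the same approach as the paper's proof, which is a terse two-sentence argument stating only that $C$ has at least $1+2J_n$ zeroes and then invoking the asymptotic \eqref{Jnasymptotic} for $J_n$. You have faithfully unpacked the implicit details: ruling out $A$ via $|\omega|<1$, ruling out $B$ via Proposition~\ref{Bcyclolenticules}, identifying the roots of the irreducible $C$ with the conjugates of $\gamma_s^{-1}$ via Theorem~\ref{thm1factorization}, and counting the lenticular zeroes via Theorem~\ref{omegajnexistence} and Theorem~\ref{thm2lenticuli}.
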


\begin{proof}
The irreducible nonreciprocal part
$C$ of $f$ has at least $1 + 2 J_n$
zeroes, where $J_n$ is given
by \eqref{Jnasymptotic}. The
constant $
2 \arcsin\bigl( \frac{\kappa}{2} \bigr)
= 0.171784\ldots$ is given
by \eqref{philimite}.
\end{proof}

\

\subsubsection{Rewriting trail and rewriting polynomials from ``$P_{\beta}$" to ``$f_{\beta}$"}
\label{S5.4.2}

(cf \cite{dutykhvergergaugry2} for more details)
As mentioned above $\beta$ is a reciprocal
algebraic integer which is fixed
such that $\dyg(\beta)=n \geq 260$,
${\rm M}(\beta) \leq 1.176280\ldots$, as well as
its minimal polynomial $P_{\beta}$ and
its Parry Upper function $f_{\beta}$.
Let $P_{\beta}(X)=P_{\beta}^{*}(X)
= 1 + a_1 X + a_2 X^2
+ a_3 X^3 + \ldots + a_{d-1} X^{d-1} + X^d$,
$a_i \in \zb$, $a_{d-j}=a_j, d \geq 1$, be the minimal
polynomial of $\beta$. 
Let $f_{\beta}(z)
= -1 + t_1 z + t_2 z^2 + t_3 z^3 + \ldots$
be the Parry Upper function at $\beta$,
written
in the generic form 
(with $t_1 = 1$, 
$t_2 = t_3 = \ldots = t_{n-1} =0$,
$t_n = 1$, etc).
Recall that the sequence
$(t_i)_{i \geq 1}$ is unique, 
entirely characterizes
$\beta$ and is Lyndon (self-admissible). 
Polynomial sections of $f_{\beta}(X)$
are denoted by $S_q$: for $q \geq 1$,
$S_{q}(z) = 
-1 + t_1 z + t_2 z^2 + \ldots
+ t_q z^q$, $|z| < 1$.
Since $f_{\beta}(\beta^{-1})=0$
and that, for any
$q \geq 1$, $f_{\beta}(\beta^{-1})-
S_{q}(\beta^{-1})$ is a sum of positive terms,
we can permute them and group them in order
to obtain $d$ components
in the $\qb$-basis
$\{1, \beta^{-1}, \beta^{-2}, \ldots,
\beta^{-d+1}\}$.
With
$$g_{q,j}(z):=
~z^{-j} \times
\sum_{i=q+1 \atop i \, \equiv \,q+1+j ~\mbox{{\tiny mod}}~ d}^{\infty}
t_i z^{i- (q+1)}\,
\qquad
q \geq 1, j=0, 1, \ldots, d-1,
$$
we obtain
the existence of
$d$ power series
$g_{q,0}(z), g_{q,1}(z), \ldots,
g_{q,d-1}(z)$, all defined 
on the open unit disk, such that
\begin{equation}
\label{fbetaSbetadecompo} 
f_{\beta}(\beta^{-1})-
S_{q}(\beta^{-1}) =
\frac{1}{\beta^{q+1}}\,
\Bigl(
\sum_{j=0}^{d-1}
g_{q,j}(\beta^{-1}) \beta^{-j}
\Bigr),
\qquad q \geq 1.
\end{equation}
Let us restrict
$z$ to the open angular sector 
\eqref{angularsectornnn}:
$$z \in 
\bigl\{z \mid |z| < 1 - 
\frac{c_{lent}}{n} , \,
|\arg z| \leq \arg(z_{J_n , n}) 
+ \frac{\pi}{n a_{max}}
\bigr\}.
$$
Then all the power series 
$g_{q,j}(z)$
are 
absolutely convergent in this sector,
having the same uniform upper bound
in modulus
\begin{equation}
\label{gqj_top}
|g_{q,j}(z)| \leq 
\frac{1}{1-(1-c_{lent}/n)^d},
\qquad \mbox{for all}~
q \geq 1, j=0, 1, \ldots, d-1,
\end{equation}
which is 
independent of $q$ and $j$.

Now, let us use the numeration
system in base $\beta$
in order to obtain alternate expressions
of the $d$ components
$g_{q,j}(\beta^{-1})$
in \eqref{fbetaSbetadecompo}.
It will allow to ``restore" the digits
$t_i$ of $f_{\beta}$ one after the other.
The 
identities 
$P_{\beta}(\beta^{-1})=0$
and
$f_{\beta}(\beta^{-1})=0$
give two
$\beta$-representations of 1, the second 
one being the R\'enyi $\beta$-expansion of 1:
\begin{equation}
\label{equa1P}
1= -a_1 \beta^{-1} - a_2 \beta^{-2}
- a_3 \beta^{-3} + \ldots - a_{d-1} 
\beta^{-(d-1)} - \beta^{-d}
=
1 - P_{\beta}(\beta^{-1})
,
\end{equation}
\begin{equation}
\label{equa1f}
1= t_1 \beta^{-1} + t_2 \beta^{-2}
+
t_3 \beta^{-3}+ \ldots = 1 + 
f_{\beta}(\beta^{-1}).
\end{equation}
Let us construct an infinite chain
of intermediate
$\beta$-representations of 1 between them.
Let us show that,
for
every
$q \geq 1$,
there exists
a polynomial
$A_q \in \zb[X]$, with
$\deg(A_q) \leq q$
and $A_{q}(0) = -1$,
and a
$d$-tuple
$(h_{q,0} , h_{q,1} , \ldots,
h_{q, d-1})$ of integers
such that
\begin{equation}
\label{AqPbetadecompo}
A_{q}(z)P_{\beta}(z) = S_{q}(z) +
z^{q+1}
\Bigl(
\sum_{j=0}^{d-1}
h_{q,j} \,z^{j}
\Bigr),
\end{equation}
satisfying
\begin{equation}
\label{hacheqj_gqj}
h_{q,j}=
g_{q,j}(\beta^{-1}) , \qquad
\mbox{for}\,
j = 0, 1, 2, \ldots, d-1. 
\end{equation}
For obtaining $A_1$ the quantity
$0 = \beta^{-1}(a_1 + 1) P_{\beta}(\beta^{-1})$
is added to \eqref{equa1P}.
Then
$$1=1+(-1 + (1+a_1)\beta^{-1})
P_{\beta}(\beta^{-1})
=
1 + S_{1}(\beta^{-1})
+
\beta^{-2}
\Bigl(
\sum_{j=0}^{d-1}
h_{1,j} \,z^{j}
\Bigr)$$
with $h_{1,j}
=
-a_{j+2} + a_{j+1} (a_1 + 1)$
for $j=0,1, \ldots, d-3$,
$h_{1,d-2}
=
-1 + a_{d-1} (a_1 + 1)$
and
$h_{1,d-1}
=
(a_1 + 1)$.
We deduce $A_{1}(z) = -1 + (a_1 + 1) z$.
From \eqref{fbetaSbetadecompo} and 
the fact that the lhs of 
\eqref{AqPbetadecompo} is equal to 0
for $z = \beta^{-1}$
we also deduce
$$h_{1,j}=
g_{1,j}(\beta^{-1}) , \qquad
\mbox{for}\,
j = 0, 1, 2, \ldots, d-1.$$
Now let us proceed recursively. 
Let us assume that
$A_1 , A_2 , \ldots, A_q$ are already constructed,
with
\eqref{AqPbetadecompo} and
\eqref{hacheqj_gqj} satisfied.
For obtaining $A_{q+1}$ let us first observe,
from \eqref{AqPbetadecompo}, 
that
$h_{q,0} \in \zb$ is the coefficient
of $\beta^{-(q+1)}$ in the 
$\beta$-representation of 1 which is
\begin{equation}
\label{qe_betarepresentation}
1 = 1+A_{q}(\beta^{-1}) P_{\beta}(\beta^{-1}).
\end{equation}
Let us add the quantity
$0= \beta^{-(q+1)} (t_{q+1} - h_{q,0})
P_{\beta}(\beta^{-1})$
to \eqref{qe_betarepresentation}
and consider the polynomial $A_{q+1}(z) = 
A_{q}(z) +(t_{q+1} - h_{q,0}) z^{q+1}$.
Then we obtain the  
$\beta$-representation of 1:
$$1 = 1 + A_{q+1}(\beta^{-1})P_{\beta}(\beta^{-1})
=
1 + S_{q+1}(\beta^{-1})
+
\beta^{-(q+1)}
\Bigl(
\sum_{j=0}^{d-1}
h_{q+1,j} \,\beta^{-j}
\Bigr)
$$
where the $d$-tuple
$(h_{q+1,0} , h_{q+1,1} , \ldots,
h_{q+1, d-1})$ of integers
can be readily computed from
$(h_{q,0} , h_{q,1} , \ldots,
h_{q, d-1})$ 
and $t_{q+1}$.
From \eqref{fbetaSbetadecompo} and 
the fact that the lhs of 
\eqref{AqPbetadecompo} is equal to 0
for $z = \beta^{-1}$
we deduce
$$h_{q+1,j}=
g_{q+1,j}(\beta^{-1}) , \qquad
\mbox{for}\,
j = 0, 1, 2, \ldots, d-1.$$  
  
\begin{definition}
\label{rewritingtrailandpolynomials}  
The sequence
of $\beta$-representations of 1 
$$(1 = 1+A_{q}(\beta^{-1})
P_{\beta}(\beta^{-1}))_{q \geq 1}$$
is called the
{\em rewriting trail from ``$P_{\beta}$" to
``$f_{\beta}$"}.   
The polynomial 
$$A_{q}(X) = -1 + (a_1 + 1) X
+
\sum_{j=1}^{q-1} (t_{j+1} - h_{j,0}) X^{j+1} 
\quad \in \zb[X]$$ 
is called
the {\em $q$th rewriting polynomial}
of the
rewriting trail from ``$P_{\beta}$" to
``$f_{\beta}$".
\end{definition}

\begin{proposition}
\label{APversfbeta}
Let $\beta > 1$ be a reciprocal
algebraic integer, $\dyg(\beta)=n \geq 260$,
and denote by $P_{\beta}$
its minimal polynomial and by $f_{\beta}$
its Parry Upper function at $\beta$.
If $x \neq \beta^{-1}$ 
is a zero of $P_{\beta}(z)$ in
the open angular sector 
\begin{equation}
\label{angularsectornnnbis}
z \in 
\Bigl\{z \mid |z| < 1 - 
\frac{c_{lent}}{n} , \,
|\arg z| \leq \arg(z_{J_n , n}) 
+ \frac{\pi}{n a_{max}}
\Bigr\},
\end{equation}
then $x$ is a lenticular zero 
of $f_{\beta}(z)$.
\end{proposition}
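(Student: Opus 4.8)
The plan is to use the rewriting trail from ``$P_{\beta}$'' to ``$f_{\beta}$'' built in \S\ref{S5.4.2}, \emph{not} the factorization $P_{\beta}=U_{\beta}\times f_{\beta}$ of Theorem \ref{splitBETAdivisibility}. The reason is that $U_{\beta}(z)$ is only guaranteed to be holomorphic on $D(0,\theta_{\dyg(\beta)-1})$ and may carry poles in the annulus $\theta_{\dyg(\beta)-1}<|z|<1$, which is exactly the region where the root $x$ under consideration is allowed to lie; so one cannot simply read off from the factorization that a zero of $P_{\beta}$ there must be a zero of $f_{\beta}$. First I would fix $x\neq\beta^{-1}$, a root of $P_{\beta}(z)$ satisfying \eqref{angularsectornnnbis}; in particular $|x|<1-\tfrac{c_{lent}}{n}<1$, with $n=\dyg(\beta)$.

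Next I would evaluate the polynomial identity \eqref{AqPbetadecompo} at $z=x$ for each $q\geq 1$. Since $P_{\beta}(x)=0$ this gives, with $d=\deg P_{\beta}$ and the integers $h_{q,j}$ of Definition \ref{rewritingtrailandpolynomials},
\[
S_{q}(x)=-\,x^{q+1}\sum_{j=0}^{d-1}h_{q,j}\,x^{j},\qquad q\geq 1 .
\]
The crux is a uniform-in-$q$ bound on the right-hand side. By \eqref{hacheqj_gqj} one has $h_{q,j}=g_{q,j}(\beta^{-1})$; and $\beta^{-1}$ lies in the angular sector \eqref{angularsectornnn}, since $\arg(\beta^{-1})=0$ and $\beta^{-1}\leq\theta_n<|z_{1,n}|\leq|z_{J_n,n}|=1-\tfrac{c_n}{n}<1-\tfrac{c_{lent}}{n}$ by Proposition \ref{closetoouane} and Lemma \ref{cnroucheasymptotic}. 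Hence \eqref{gqj_top} applies and yields $|h_{q,j}|\leq\bigl(1-(1-\tfrac{c_{lent}}{n})^{d}\bigr)^{-1}=:C$, a constant depending only on $\beta$ (through $n$ and $d$), not on $q$. Consequently $\bigl|\sum_{j=0}^{d-1}h_{q,j}x^{j}\bigr|\leq C\sum_{j\geq 0}|x|^{j}=C(1-|x|)^{-1}\leq Cn/c_{lent}$ for every $q$.

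Then I would let $q\to\infty$. Because $|x|<1$ and $f_{\beta}$ has radius of convergence $1$, the polynomial sections converge, $S_{q}(x)\to f_{\beta}(x)$; and $\bigl|x^{q+1}\sum_{j}h_{q,j}x^{j}\bigr|\leq (Cn/c_{lent})\,|x|^{q+1}\to 0$. Therefore $f_{\beta}(x)=0$, i.e. $x$ is a zero of the Parry Upper function lying in the sector \eqref{angularsectornnn}; by the dichotomy of Theorem \ref{thm2lenticuli} and the Definition following it, $x$ is a lenticular zero of $f_{\beta}(z)$, as claimed.

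The computations in the middle paragraph are routine bookkeeping; the conceptual content — and the reason the proposition is not immediate — is that the argument cannot pass through $U_{\beta}=P_{\beta}/f_{\beta}$, so the rewriting-trail construction, which restores the digits $t_{i}$ of $f_{\beta}$ one by one while keeping the uniform control \eqref{gqj_top} of the ``remainder components'' $g_{q,j}$ on the lenticular sector, is essential. I expect the only point needing genuine care to be the verification that $\beta^{-1}$ (and more generally the relevant evaluation points) really do fall inside the sector \eqref{angularsectornnn}, so that \eqref{gqj_top} is legitimately invoked and the bound on $h_{q,j}$ is valid.
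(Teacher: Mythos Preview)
Your proof is correct and follows essentially the same approach as the paper: evaluate the rewriting-trail identity \eqref{AqPbetadecompo} at $x$, use the uniform bound \eqref{gqj_top} on the integers $h_{q,j}=g_{q,j}(\beta^{-1})$ to kill the tail $x^{q+1}\sum_j h_{q,j}x^j$, and conclude $f_{\beta}(x)=\lim_q S_q(x)=0$, hence $x$ is lenticular by Theorem \ref{thm2lenticuli}. Your final step is in fact slightly cleaner than the paper's, which phrases the conclusion via Hurwitz's theorem and limit points of the zeroes of the sections $S_q$ rather than passing directly from $S_q(x)\to 0$ to $f_{\beta}(x)=0$; your explicit verification that $\beta^{-1}$ lies in the sector \eqref{angularsectornnn} so that \eqref{gqj_top} applies is also a point the paper leaves implicit.
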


\begin{proof}
Let us use the
rewriting trail and 
the rewriting polynomials 
from ``$P_{\beta}$" to
``$f_{\beta}$". Let us assume that
$x \neq \beta^{-1}$ is
a zero of $P_{\beta}$
in the open angular sector
\eqref{angularsectornnnbis}, and
denote by $\sigma: \beta^{-1} \to x$
the conjugation. The image by $\sigma$
of the $\qb$-basis
$\{1, \beta^{-1}, \ldots,
\beta^{-d+1}\}$
is the $\qb$-basis
$\{1, x, \ldots,
x^{d-1}\}$.
From
\eqref{AqPbetadecompo} and
\eqref{hacheqj_gqj} we have
\begin{equation}
\label{interAPS}
0 = A_{q}(x)P_{\beta}(x) = S_{q}(x) +
x^{q+1}
\Bigl(
\sum_{j=0}^{d-1}
h_{q,j} \,x^{j}
\Bigr),
\end{equation}
with
$$\sigma(h_{q,j})=
h_{q,j}=
g_{q,j}(\sigma(\beta^{-1})) , \qquad
\mbox{for}\,\,
q \geq 1, j = 0, 1, 2, \ldots, d-1. 
$$
Since
$|h_{q,j}| \leq 
(1-(1-c_{lent}/n)^d)^{-1}
$
for all $q \geq 1$,
$j=0,1, \ldots, d-1$
by \eqref{gqj_top},
and that
$|x|<1$, 
we have
$$\lim_{q \to \infty}
x^{q+1}
\Bigl(
\sum_{j=0}^{d-1}
h_{q,j} \,x^{j}
\Bigr) = 0.$$
Therefore, from \eqref{interAPS}, 
we deduce
$\lim_{q \to \infty}
S_{q}(x)=0$. As a consequence 
the zero
$x$ necessarily belongs to the
set of limit points 
of the zeroes of
the polynomial sections
$S_{q}(z)$,
zeroes which lie in the angular sector
\eqref{angularsectornnnbis}. This
set of limit points is the
set of lenticular zeroes
of $f_{\beta}$, by Hurwitz's Theorem and
the uniform convergence of
$(S_q)$ to $f_{\beta}$ on every compact
of $D(0,1)$.
Hence the result.
\end{proof}

In the following Proposition
we continue the investigation
of the relative positioning of the zeroes of
$f_{\beta}(z)$ with respect to
those of the minimal polynomial
$P_{\beta}(z)$
in
the open angular sector 
\begin{equation}
\label{angularsectornnnter_02} 
\bigl\{z \mid |z| < 1 - 
\frac{c_{lent}}{n} , \,
|\arg z| \leq \arg(z_{J_n , n}) 
+ \frac{\pi}{n a_{max}}
\bigr\}.
\end{equation}
Indeed, since $\beta$ is fixed, both
collections of zeroes
are fixed. The purpose of 
Proposition \ref{ASversPbeta} is only
to discriminate the possible locations
from the impossible ones.
More explicitely,
from Proposition
\ref{APversfbeta} the zeroes 
of $P_{\beta}(z)$ which
lie the 
angular sector
\eqref{angularsectornnnter_02}
form a subcollection of the
lenticular zeroes of $f_{\beta}(z)$.
A natural question is whether
the remaining zeroes of
$f_{\beta}(z)$ in this sector
are also
zeroes of $P_{\beta}(z)$, or not?
The following Proposition
answers this question, 
using 
the tails of the 
rewriting polynomials constructed
from the polynomial sections
``$S_{s}$" of $f_{\beta}$
to ``$P_{\beta}$" (with $s$ large enough),
and the structure of
the type of factorization of the
polynomial sections of
$f_{\beta}(z)$.

\begin{proposition}
\label{ASversPbeta}
Let $\beta > 1$ be a reciprocal
algebraic integer, $\dyg(\beta)=n \geq 260$,
M$(\beta) \leq 1.176280\ldots$,
and denote by $P_{\beta}$
its minimal (reciprocal) 
polynomial and by $f_{\beta}$
its Parry Upper function.
The minimal polynomial
$P_{\beta}(X)$ of  $\beta > 1$ is 
such that there does not exist
an
irreducible integer polynomial  
$\widetilde{P_{\beta}}(X)$
and an
integer $r \geq 2$ ($r$ being the largest
integer having this property)
such that it is written under the form 
\begin{equation}
\label{requalityPP_}
P_{\beta}(X) = \widetilde{P_{\beta}}(X^r).
\end{equation}

If $x \neq \beta^{-1}$ 
is a lenticular zero of $f_{\beta}(z)$,
then 
$x$ is a zero 
of $P_{\beta}$.
\end{proposition}

\begin{proof}
Let us assume that
there exists a lenticular zero
$x$ of $f_{\beta}(z)$, then
such that $f_{\beta}(x)=0$, having the 
property $P_{\beta}(x)\neq 0$,
and show the contradiction.
We can add the assumption that
its imaginary part $\Im(x)$
is $ > 0$.
Denote $\nu := |P_{\beta}(x)| > 0$.
Let us consider
the $s$-th polynomial section
$S_{s}(z)=-1 + \sum_{j=1}^{s} t_j z^j$ 
of $f_{\beta}(z)$, where the integer
$s$, taken large enough, will be
fixed below.

The $s$-th polynomial section
$S_{s}(z)$ admits a unique real zero
in $(0,1)$.
Indeed $S_{s}(0)=-1$,
$S_{s}(1) > 1$, and
the derivative  of the restriction
of $S_{s}(z)$ to $[0,1]$ is positive
on $[0,1]$.
The polynomial
$S^{*}_{s}(z)$, reciprocal polynomial
of $S_{s}(z)$, admits a unique
real zero, say 
$\gamma_s, > 1$. 
We have: $\deg(S_s) \leq s$
and $\lim_{s \to \infty} 
\gamma_{s}^{-1} = \beta^{-1}$.
The real number
$\gamma_{s}$
is a nonreciprocal
algebraic integer which is such that 
$1 < \gamma_s < \beta$:
indeed, $y \to S_{s}(y)$ is strictly increasing
on $(0,1)$ and 
$S_{s}(\beta^{-1}) = 
-1 + \sum_{j=1}^{s} t_j \beta^{-j}
=
f_{\beta}(\beta^{-1}) 
-\sum_{j=s+1}^{\infty} t_j \beta^{-j}
=-\sum_{j=s+1}^{\infty} t_j \beta^{-j} <0
$,
so that $\beta^{-1} < \gamma_{s}^{-1}$.
There exists an integer, say $W_{\nu}$,
such that: (i) 
$\gamma_{s}^{-1}$ belongs 
to the open angular sector
\eqref{angularsectornnnbis} 
for all $s \geq W_{\nu}$
and (ii)
$s \geq W_{\nu} \Longrightarrow
|P_{\beta}(\gamma_{s}^{-1})| < 
\min\{1, \nu/2\}.$
In the following we take
$s \geq W_{\nu}$.

There exists
$j \in \{1, 2, \ldots, J_n\}$ such that
$x = \omega_{j,n}$ in the
$j$-th Rouch\'e disk
$D_{j,n}$. In this disk
$D_{j,n}$ the polynomial section
$S_{s}(z)$ has a unique (lenticular) zero;
let us denote it by
$r_s$. We have:
$\lim_{s \to \infty} r_s = 
\omega_{j,n}$
and $r_s$ is equal to
the conjugate
$\sigma_{s}(\gamma_{s}^{-1})$
of $\gamma_{s}^{-1}$
for some $\sigma_s$ which is the conjugation
relative to the irreducible nonreciprocal
(never trivial) part $C$ of $S_s$ by 
Proposition \ref{Bcyclolenticules}
and Corollary \ref{Bcyclolenticulescoro}.
We have: $C(\gamma_{s}^{-1}) =
C(r_s) =0$.
Denote by $s_c = \deg(C)$ the degree
of the component $C$.
The irreducible 
polynomials $P_{\beta}(X)$ and
$C(X)$ are coprime: indeed 
the first one is reciprocal
while the second one is nonreciprocal.
The integer $s_c$ is a function of $s$.
Denote
$$d := \deg (P_{\beta})
\qquad {\rm and} \qquad
H:= \max_{j =1, \ldots, d-1} \{|a_j|\} \geq 1$$
the (na\"ive) height of $P_{\beta}(X)
= 1 + \sum_{j=1}^{d-1} a_j X^j + X^d$.

\begin{rewritingtrail}
\label{heightP}
\cite{dutykhvergergaugry2}
In the same way as above 
let us construct the
rewriting trail  
from ``$S_{s}$" to
``$P_{\beta}$", at $\gamma_{s}^{-1}$.
The starting point is the identity
$1 = 1$, to which we add 
$0=  S_{\gamma_{s}}(\gamma_{s}^{-1})$
in the (rhs) right hand side.
Then we define the rewriting trail from
the R\'enyi 
$\gamma_{s}^{-1}$-expansion of 1
\begin{equation}
\label{gammasexpansionSs}
1=1+S_{\gamma_{s}}(\gamma_{s}^{-1})
=
t_{1}\gamma_{s}^{-1}  +t_2 \gamma_{s}^{-2} + \ldots 
+ t_{s-1} \gamma_{s}^{-(s-1)} +t_s \gamma_{s}^{-s}
\end{equation}
(with $t_1 = 1,
t_2 = t_3 = \ldots =
t_{n-1} = 0,
t_n = 1,$ etc) to
\begin{equation}
\label{equa1Pgammas}
- a_1 \gamma_{s}^{-1} - a_2 \gamma_{s}^{-2} + \ldots - 
a_{d-1} \gamma_{s}^{-(d-1)} - \gamma_{s}^{-d}
= 1 - P_{\beta}(\gamma_{s}^{-1}),
\end{equation}
by ``restoring" the digits
of $1 - P_{\beta}(X)$ one after the other,
from the left.
We obtain a sequence
$(A'_{q}(X))_{q \geq 1}$ of
rewriting polynomials involved
in this rewriting trail; 
for $q \geq 1$,
$A'_q \in \zb[X]$,
$\deg(A'_q) \leq q$
and $A'_{q}(0) = 1$.
At the first step we add $0=
-(-a_1 - t_1) \gamma_{s}^{-1} S_{\gamma_{s}}^{*}(\gamma_{s}^{-1})$;
and we obtain 
$$1= -a_1 \gamma_{s}^{-1}$$
$$+(-(-a_1 -t_1) t_1 + t_2) \gamma_{s}^{-2}
+(-(-a_1 -t_1) t_2 + t_3) \gamma_{s}^{-3} + \ldots
$$
so that the height of the polynomial
$$(-(-a_1 -t_1) t_1 + t_2) X^{2}
+(-(-a_1 -t_1) t_2 + t_3) X^{3} + \ldots
$$
is $\leq H+2$.
At the second step we add
$0=
-(-a_2 - (-(-a_1 -t_1) t_1 + t_2)) \gamma_{s}^{-2} 
S_{\gamma_{s}}^{*}(\gamma_{s}^{-1})
$.
Then we obtain
$$1= -a_1 \gamma_{s}^{-1} - a_2 \gamma_{s}^{-2}$$
$$-
[(-a_2 - (-(-a_1 -t_1) t_1 + t_2))t_1
+ (-(-a_1 -t_1) t_2 + t_3)] \gamma_{s}^{-3}+\ldots
$$
where the height of the polynomial
$$-[(-a_2 - (-(-a_1 -t_1) t_1 + t_2))t_1
+ (-(-a_1 -t_1) t_2 + t_3)] X^{3}+\ldots
$$
is $\leq H + (H+2)+(H+2)=3 H+4$.
Iterating this process $d$ times 
we obtain
$$1= -a_1 \gamma_{s}^{-1} - 
a_2 \gamma_{s}^{-2} -\ldots
- a_d \gamma_{s}^{-d}$$
$$+~~
polynomial ~~remainder~~ in~~ \gamma_{s}^{-1}.
$$
Denote by $V(\gamma_{s}^{-1})$
this polynomial remainder in $\gamma_{s}^{-1}$,
for some $V(X) \in \zb[X]$,
and $X$ specializing in $\gamma_{s}^{-1}$.
If we denote the upper bound of the
height of the polynomial remainder
$V(X)$, 
at step $q$, by $\lambda_q H + v_q$, 
we readily
deduce: $v_q = 2^q$, and
$\lambda_{q+1} = 2 \lambda_{q} +1$, $q \geq 1$,
with $\lambda_1 = 1$; then 
$\lambda_{q} = 2^{q}-1$.

To summarize,
the first 
rewriting polynomials of the
sequence
$(A'_{q}(X))_{q \geq 1}$ 
involved in this rewriting trail
are
$$A'_{1}(X) = 
-1 - (-a_1 - t_1) X,$$ 
$$A'_{2}(X) 
= 
-1 - (-a_1 - t_1) X -
(-a_2 - (-(-a_1 -t_1) t_1 + t_2)) X^2 , \quad {\rm etc}.$$

\

For $q \geq \deg(P_{\beta})$, all the coefficients 
of $P_{\beta}$ are ``restored"; denote
by
$(h_{q,j})_{j=0,1,\ldots,s-1}$ the $s$-tuple
of integers produced by this rewriting trail,
at step $q$. It is such that
\begin{equation}
\label{AprimeSPreste}
A'_{q}(\gamma_{s}^{-1}) S_{\gamma_{s}}^{*}(\gamma_{s}^{-1})
=
-P(\gamma_{s}^{-1}) + \gamma_{s}^{-q-1}
\Bigl(
\sum_{j=0}^{s-1} h_{q,j} \gamma_{s}^{-j}
\Bigr).
\end{equation}
Then take $q=d$.
The (lhs) left-and side 
of \eqref{AprimeSPreste} is equal to 
$0$.
Thus 
$$P(\gamma_{s}^{-1}) =
 \gamma_{s}^{-d-1}
\Bigl(
\sum_{j=0}^{s-1} h_{d,j} \gamma_{s}^{-j}
\Bigr)
\qquad
\Longrightarrow
\qquad
P(\gamma_{s}) =
\sum_{j=0}^{s-1} h_{d,j} \gamma_{s}^{-j-1}.
$$
The height
of the polynomial
\begin{equation}
\label{Wpolynomial}
W(X) :=\sum_{j=0}^{s-1} h_{d,j} X^{j+1}
\qquad {\rm is}\qquad \leq
 (2^d -1) H + 2^d,
 \end{equation} 
 and is independent of $s
 \geq W_v$.
\end{rewritingtrail}

\

For any $s \geq W_{\nu},$ 
let us observe that
$- P_{\beta}(\gamma_{s}^{-1})
$ is $> 0$, and that the sequence
$(\gamma_{s}^{-1})_s$ is decreasing.
Indeed, by Proposition
\ref{APversfbeta}, the polynomial function
$x \to P_{\beta}(x)$ is positive
on $(0, \beta^{-1})$, vanishes
at $\beta^{-1}$, 
and changes its sign
for
$x > \beta^{-1}$, 
so that 
$P_{\beta}(\gamma_{s}^{-1}) < 0$.
We have: $\lim_{s \to \infty}
P_{\beta}(\gamma_{s}^{-1}) =
P_{\beta}(\beta^{-1})=0$.

Let us use the 
$\gamma_{s}$-shift and the
greedy (R\'enyi) $\gamma_{s}$-expansion of
$- P_{\beta}(\gamma_{s}^{-1})$: 
there exists
an unique sequence of integers
$(\widehat{t_i})_{i \geq 1} \not\equiv (0)$ 
in the alphabet
$\{0, 1\}$ of the
$\gamma_{s}$-shift, such that
\begin{equation}
\label{boutgammashift}
- P_{\beta}(\gamma_{s}^{-1})
= \frac{\widehat{t_1}}{\gamma_s} +
\frac{\widehat{t_2}}{\gamma_{s}^{2}} +
\frac{\widehat{t_3}}{\gamma_{s}^{3}} +
\ldots .
\end{equation}
The integers $\widehat{t_i}$
are given by the
$\gamma_{s}$-transformation
$T_{\gamma_{s}}: [0,1] \to [0,1],
x \to \{\gamma_{s} x\}$, as in
\eqref{xexpansion}.
Explicitely, 
the digits, all in the alphabet
$\{0,1\}$, are

$\widehat{t_1} = \lfloor \gamma_s (- P_{\beta}(\gamma_{s}^{-1})) \rfloor$,

$\widehat{t_2} 
= \lfloor \gamma_s \{\gamma_s (- P_{\beta}(\gamma_{s}^{-1})) \} \rfloor$,

$\widehat{t_3} 
= \lfloor \gamma_s \{\gamma_s \{\gamma_s (- P_{\beta}(\gamma_{s}^{-1})) \} \} \rfloor, \, 
\ldots$ \,, and depend upon $\gamma_{s}$.

\noindent
Since $\lim_{s \to \infty}
P_{\beta}(\gamma_{s}^{-1})
=
P_{\beta}(\beta^{-1}) = 0$
there exists an increasing  sequence 
$(u_s)_{s \geq W_{\nu}}$ of positive integers,
satisfying
$\widehat{t_1}= \widehat{t_2}
=\ldots = \widehat{t_{u_s -1}} = 0$,
$\widehat{t_{u_s}}=1$,
such that 
the
identity 
between $- P_{\beta}(\gamma_{s}^{-1})$ and
its greedy expansion holds, as:
\begin{equation}
\label{boutgammashift_uZERO}
- P_{\beta}(\gamma_{s}^{-1})
= \frac{\widehat{t_{u_s}}}{\gamma_{s}^{u_s}} +
\frac{\widehat{t_{u_s + 1}}}{\gamma_{s}^{u_s + 1}} +
\frac{\widehat{t_{u_s + 2}}}{\gamma_{s}^{u_s + 2}} +
\ldots .
\end{equation}
The sequence $(u_s)$ is defined by the bounds
\begin{equation}
\label{bound_us}
\bigl|
\beta^{u_s} (P_{\beta}(\gamma_{s}^{-1}))
\bigr|
\geq
\bigl|
\gamma_{s}^{u_s} (P_{\beta}(\gamma_{s}^{-1}))
\bigr| \geq 1
\end{equation}
and
$$\bigl|
\gamma_{s}^{u_s} (P_{\beta}(\gamma_{s}^{-1}))
\bigr| \leq 
\frac{1}{1-\gamma_{s}^{-1}}.$$
Therefore, in \eqref{boutgammashift_uZERO},

\begin{equation}
\label{hgammasSeries}
\widehat{t_i} \in \{0,1\}, \quad i \geq 1,
\qquad
{\rm and}
\qquad
\lim_{s \to +\infty} u_s = +\infty .
\end{equation}

Now the lhs of \eqref{boutgammashift_uZERO} belongs to
$\mathbb{Q}(\gamma_{s})$. 
For conjugating \eqref{boutgammashift_uZERO}
by $\sigma_s$, if 
the image by
$\sigma_s$ of the lhs of
\eqref{boutgammashift_uZERO}
belongs to $\mathbb{Q}(r_s)$,
there are three cases for
the conjugation of the rhs of \eqref{boutgammashift_uZERO}:
\begin{enumerate}
\item[(i)] the rhs of \eqref{boutgammashift_uZERO} is finite (ends in infinitely many zeroes),
\item[(ii-1)] 
the rhs of \eqref{boutgammashift_uZERO} is eventually periodic (infinite and
ultimately periodic),
\item[(ii-2)] the rhs of \eqref{boutgammashift_uZERO} is infinite
and not eventually periodic.
\end{enumerate}
\

{\bf Case (i)}:
say  that
$\frac{\widehat{t_{u_s}}}{\gamma_{s}^{u_s}} +
\frac{\widehat{t_{u_s + 1}}}{\gamma_{s}^{u_s + 1}} +
\ldots+
\frac{\widehat{t_{u_s + N}}}{\gamma_{s}^{u_s + N}}$
is the rhs of \eqref{boutgammashift_uZERO}.
Then its image by $\sigma_s$ is
$\widehat{t_{u_s}} \,r_{s}^{u_s} +
\widehat{t_{u_s + 1}} \,r_{s}^{u_s + 1} +
\ldots+
\widehat{t_{u_s + N}} \, r_{s}^{u_s + N}$
and we have the equality
$$P_{\beta}(r_{s})
=
\sigma_{s}\left(- \sum_{j=0}^{s_c-1}
h''_{j} \,\gamma_{s}^{-j-d-2}
\right)=
- \sum_{j=0}^{s_c-1}
h''_{j} \,r_{s}^{j+d+2}
$$
$$=
\sigma_{s} \left( 
\frac{\widehat{t_{u_s}}}{\gamma_{s}^{u_s}} +
\frac{\widehat{t_{u_s + 1}}}{\gamma_{s}^{u_s + 1}} +
\ldots +
\frac{\widehat{t_{u_s + N}}}{\gamma_{s}^{u_s + N}} 
 \right)
=
\widehat{t_{u_s}} r_{s}^{u_s} +
\widehat{t_{u_s + 1}} r_{s}^{u_s + 1} +
\ldots +
\widehat{t_{u_s + N}} r_{s}^{u_s + N} .$$ 
Conjugation by $\sigma_s$ is done 
term by term.

\

{\bf Case (ii-1)}: 
the rhs of \eqref{boutgammashift_uZERO} 
is eventually periodic. Let us write it
$$
=
\frac{\widehat{t_{u_s}}}{\gamma_{s}^{u_s}} +
\frac{\widehat{t_{u_s + 1}}}{\gamma_{s}^{u_s + 1}} +
\ldots+
\frac{\widehat{t_{u_s + N}}}{\gamma_{s}^{u_s + N}}
+
\sum_{i=0}^{\infty}
\left(
\frac{\widehat{t_{u_s + N + 1}}}{\gamma_{s}^{u_s + N + i q + 1}} +
\frac{\widehat{t_{u_s + N + 2}}}{\gamma_{s}^{u_s + N + i q  +  2}} +
\ldots+
\frac{\widehat{t_{u_s + N + q}}}{\gamma_{s}^{u_s + N+ i q +q}}
\right).$$
The period is not equal to zero. The period length is
$q$.
We have: $|r_s|=|\sigma_{s}(\gamma_{s}^{-1})| < 1$. 
Then it is equal to
$$=
\frac{\widehat{t_{u_s}}}{\gamma_{s}^{u_s}} +
\ldots+
\frac{\widehat{t_{u_s + N}}}{\gamma_{s}^{u_s + N}}
+
\sum_{i=0}^{\infty}
\gamma_{s}^{- i q }
\left(
\frac{\widehat{t_{u_s + N + 1}}}{\gamma_{s}^{u_s + N  + 1}} +
\ldots+
\frac{\widehat{t_{u_s + N + q}}}{\gamma_{s}^{u_s + N +q}}
\right)$$
$$=
\frac{\widehat{t_{u_s}}}{\gamma_{s}^{u_s}} +
\ldots+
\frac{\widehat{t_{u_s + N}}}{\gamma_{s}^{u_s + N}}
+
\frac{1}{1-
\gamma_{s}^{- q }}
\left(
\frac{\widehat{t_{u_s + N + 1}}}{\gamma_{s}^{u_s + N  + 1}} +
\ldots+
\frac{\widehat{t_{u_s + N + q}}}{\gamma_{s}^{u_s + N +q}}
\right)$$
and its image by $\sigma_s$ is
$$=
\widehat{t_{u_s}} r_{s}^{u_s} +
\ldots+
\widehat{t_{u_s + N}} r_{s}^{u_s + N}
+
\frac{1}{1-
r_{s}^{q }}
\left(
\widehat{t_{u_s + N + 1}} r_{s}^{u_s + N  + 1} 
+
\ldots+
\widehat{t_{u_s + N + q}} r_{s}^{u_s + N +q}
\right)$$
$$=
\widehat{t_{u_s}} r_{s}^{u_s} +
\ldots+
\widehat{t_{u_s + N}} r_{s}^{u_s + N}
+
\sum_{i=0}^{\infty}
\left(
\widehat{t_{u_s + N + 1}} r_{s}^{u_s + N+ i q + 1} 
+
\ldots+
\widehat{t_{u_s + N + q}} r_{s}^{u_s + N + i q +q}
\right) .$$
The series can be conjugated term by term
by $\sigma_s$;
in this case we have the identity
$$P_{\beta}(r_{s})
=
\sigma_{s} \left( 
\frac{\widehat{t_{u_s}}}{\gamma_{s}^{u_s}} +
\frac{\widehat{t_{u_s + 1}}}{\gamma_{s}^{u_s + 1}} +
\ldots +
 \right)
=
\widehat{t_{u_s}} r_{s}^{u_s} +
\widehat{t_{u_s + 1}} r_{s}^{u_s + 1} +
\ldots  
.$$

\

{\bf Case (ii-2)}: (the reader may have interest
in \cite{dutykhvergergaugry2} for more details) if the rhs
of \eqref{boutgammashift_uZERO}
is 
not eventually periodic its conjugation
by $\sigma_s$ cannot be done term by term.
This difficulty
is overcome by enlarging 
the alphabet $\{0,1\}$ to a bigger alphabet
$\mathcal{A}$ and by replacing
the R\'enyi expansion
by a
$(\gamma_{s}, \mathcal{A})$-eventually periodic 
representation of $- P_{\beta}(\gamma_{s}^{-1})$.

Let us recall
the definitions.
The $(\delta, \mathcal{A})$-representations for
a given $\delta \in \mathbb{C}$,
$|\delta| > 1$ and
a given alphabet
$\mathcal{A} \subset \mathbb{C}$ finite,
are expressions of the form
$\sum_{k \geq -L} a_k \delta^{-k}$,
$a_k \in \mathcal{A}$, 
for some integer $L$. We denote
$$
{\rm Per}_{\mathcal{A}}(\delta)
:=
\{x \in \mathbb{C}
:
x \,\,{\rm has \,an \,eventually \,periodic}\,
(\delta, \mathcal{A}){\rm -representation}\}.
$$

\begin{theorem}[Kala - Vavra \cite{kalavavra}]
\label{kalavavra}
Let $\delta \in \mathbb{C}$ be an 
algebraic number of degree $d$, 
$|\delta|>1$, 
and
$a_d x^d - a_{d-1} x^{d-1}
- \ldots - a_1 x -a_0 \in \mathbb{Z}[x]$,
$a_0 a_d \neq 0$, be its minimal
polynomial. Suppose that
$|\delta'| \neq 1$ for any conjugate 
$\delta'$ of $\delta$,
Then there exists a finite alphabet $\mathcal{A} \subset \mathbb{Z}$ such that
$\mathbb{Q}(\delta) = {\rm Per}_{\mathcal{A}}(\delta)$.
\end{theorem}

Let us apply Theorem \ref{kalavavra}
to $\delta = \gamma_{s}$. By Proposition 5
in \cite{dutykhvergergaugry} $\gamma_s$
has no conjugate of modulus 1. 
Therefore there exists a finite alphabet
$\mathcal{A} \subset \mathbb{Z}$ 
such that the lhs of \eqref{boutgammashift_uZERO}
be identified with a
$(\gamma_{s}, \mathcal{A})$- representation
which is eventually periodic, for some
integer $u_s \in \zb$:
\begin{equation}
\label{hgammasSeries_alphabetA}
- P_{\beta}(\gamma_{s}^{-1})
= 
\frac{\widehat{t_{u_s }}}{\gamma_{s}^{u_s }} +
\frac{\widehat{t_{u_s + 1 }}}{\gamma_{s}^{u_s + 1 }} +
\frac{\widehat{t_{u_s + 2 }}}{\gamma_{s}^{u_s + 2 }} +
\ldots ~~.
\end{equation}
Being eventually periodic,
the representation
\eqref{hgammasSeries_alphabetA}
can now be
conjugated term by term by $\sigma_s$, 
since $|\sigma_{s}(\gamma_{s}^{-1})|<1$,
as in case (ii-1).
In \eqref{hgammasSeries_alphabetA} the digits
$\widehat{t_{i }}$ belong to 
a symmetrical alphabet
$\mathcal{A} =\{-m, \ldots, 0, \ldots, m\}$;
the integer $m$ is provided
by the rewriting trail, given in
``{\it (Rewriting trail)}. \ref{heightP}",
and
\eqref{Wpolynomial}: we have
$m = \lceil 2((2^d -1) H + 2^d)/3 \rceil$.

\

Indeed, by 
Theorem \ref{kalavavra}
there exist 
a preperiod $R(X) \in \mathcal{A}[X]$, 
a period
$T(X) \in \mathcal{A}[X]$ such that
$$\widehat{W}(\gamma_{s}^{-1})
:=
-P_{\beta}(\gamma_{s}) 
= R(\gamma_{s}^{-1})
+ \gamma_{s}^{-\deg R -1} 
\sum_{j=0}^{\infty} 
\frac{1}{\gamma_{s}^{j (\deg T + 1)}}
T(\gamma_{s}^{-1}),
$$
both polynomials $R$ and $T$ depending upon $s$.
Since the relation
$S_{\gamma_{s}}(\gamma_{s}^{-1})
=
-1 +t_{1} \gamma_{s}^{-1} 
+t_2 \gamma_{s}^{-2} + \ldots 
+ t_{s-1} \gamma_{s}^{-s+1} 
+t_s \gamma^{-s} = 0$ holds, we may assume
$\deg R \leq s-1$,
$\deg T \leq s-1$.
Then, for $X$ specialized at $\gamma_{s}^{-1}$,
we have the identity
\begin{equation}
\label{Wrepresentation}
\widehat{W}(X) = R(X) +
X^{L}\frac{T(X)}{1-X^r}
\end{equation}
for some positive integers $L, r$.
The height of $(1-X^r) \widehat{W}(X)$ is 
$\leq 2 ((2^d -1) H + 2^d)$
and, with 
$\mathcal{A}$
assumed $=\{-m, \ldots,0,\ldots,+m\}$,
the height of 
$(1-X^r) R(X) + X^L T(X)$ is less than
$3 m$. Therefore $m$ is 
$\leq 2 ((2^d -1) H + 2^d)/3$.
We can take 
$m = \lceil 2((2^d -1) H + 2^d)/3 \rceil$.
The alphabet 
$\mathcal{A} = \{-m, \ldots, m\}$
only
depends upon the degree $d$ and the height
$H$ of the polynomial $P_{\beta}$, and
does not depend upon $s$.

\

We now assume
$0 \neq |P(\gamma_s)| \ll 1$.
The
$(\gamma_{s}, \mathcal{A})$-eventually periodic
representation
of $-P(\gamma_{s})$
starts as
$$-P(\gamma_{s})= \widehat{W}(\gamma_{s}^{-1})
=
\frac{\widehat{t_{u_s }}}{\gamma_{s}^{u_s }}
+\frac{\widehat{t_{u_s +1}}}{\gamma_{s}^{u_s +1}}
+\frac{\widehat{t_{u_s +2}}}{\gamma_{s}^{u_s +2}}
+\ldots,\qquad {\rm with}~
|\widehat{t_{j}}| \leq m, j=u_s, u_s +1, \ldots$$
with $\,\widehat{t_{u_s }} \neq 0$.
The exponent $u_s$ appearing
in the first term  
is 
defined in \cite{frougnypelantovasvobodova};
by
Theorem 4, Remarks 5 to 7,
in \cite{frougnypelantovasvobodova},
there exists a positive real
number $\kappa_{\gamma_s , \mathcal{A}} > 0$
such that $u_s$ is the minimal integer 
such that
$$\gamma_{s}^{u_s -1} \geq 
\frac{\kappa_{\gamma,_s  \mathcal{A}}}
{|P(\gamma_s )|} .$$
Since $\lim_{s \to \infty} \gamma_s
= \beta > 1$ and that the alphabet
$\mathcal{A}$ does not depend upon
$s$, from Theorem 4, Remarks 5 to 7,
in \cite{frougnypelantovasvobodova},
we can replace 
$\kappa_{\gamma,_s  \mathcal{A}}$
by a constant $\kappa > 0$,
independent of $s$
(cf also \cite{dutykhvergergaugry2}). 
Thus
$$\lim_{s \to \infty} u_s = +\infty.$$

The 
sequence $(u_{s })$ is here defined by the bounds
\begin{equation}
\label{bound_usA}
\bigl|
\beta^{u_{s} - 1} (P_{\beta}(\gamma_{s}^{-1}))
\bigr|
\geq
\bigl|
\gamma_{s}^{u_s -1} (P_{\beta}(\gamma_{s}^{-1}))
\bigr| \geq \kappa
\end{equation}
and
$$\bigl|
\gamma_{s}^{u_s} (P_{\beta}(\gamma_{s}^{-1}))
\bigr| \leq 
\frac{m}{1-\gamma_{s}^{-1}}.$$
To the collection $(\gamma_s)_{s \geq W_{\nu}}$
is associated the collection
$(\sigma_{s}: \gamma_{s} \to r_s)_{s \geq W_{\nu}}$ 
of $\qb$-automorphisms of $\cb$.
Now, for any $s \geq W_{\nu}$, 
let us conjugate the eventually periodic
representation of
$-P_{\beta}(\gamma_{s}^{-1})$
by $\sigma_{s}$, 
term by term.
For the cases (i) and (ii-1)
we consider
\eqref{boutgammashift_uZERO},
and in the case (ii-2) we consider
\eqref{hgammasSeries_alphabetA}.

Using
\eqref{angularsectornnnbis}
we deduce:

\noindent
{\bf Case (i) and (ii-1):} with the minimal alphabet
$\{-1,0,+1\}$,
\begin{equation}
\label{zerolenticularMAJO}
\bigl|
P_{\beta}(r_{s})
\bigr|
\leq
| r_{s} |^{u_s} \frac{1}{1 - |r_s|}
\leq
\frac{n}{c_{lent}}
(1-\frac{c_{lent}}{n})^{u_s}
,
\end{equation}

\noindent
{\bf Case (ii-2):} with the alphabet
$\mathcal{A} =
\{-m, \ldots, +m\}$,
\begin{equation}
\label{zerolenticularMAJO_A}
\bigl|
P_{\beta}(r_{s})
\bigr|
\leq
| r_{s} |^{u_s} \frac{m}{1 - |r_s|}
\leq
\frac{n \, m}{c_{lent}}
(1-\frac{c_{lent}}{n})^{u_s } .
\end{equation}
In both cases, $\lim_{s \to \infty} u_s =+\infty$.
The rhs of \eqref{zerolenticularMAJO},
resp.
of \eqref{zerolenticularMAJO_A},
tends to 0 if $s$ tends to infinity.
We have: $\lim_{s \to \infty}
P_{\beta}(r_{s}) = 0$.
But $x = \lim_{s \to \infty} r_s$
and $z \to
P_{\beta}(z)$ is continuous.
Therefore there exists $s_0 \geq W_{\nu}$ 
such that $s \geq s_0 \Longrightarrow
\bigl|
P_{\beta}(r_{s})
\bigr| < \nu/2$. Contradiction.

\

The only limit possibility
is $P_{\beta}(x)=0$, for all the lenticular
zeroes $x$ of $f_{\beta}$. 
This provides a collection
of lenticular zeroes
of $P_{\beta}(z)$ in the cusp, and
implies a complete
identification of these zeroes
with the lenticular poles
of $\zeta_{\beta}(z)$.
\end{proof}

To summarize,

\begin{proposition}
\label{identificationFULL}
Under the assumptions of
Definition \ref{lenticularzerodefinition},
if $\Omega$ is a lenticular zero of
$f_{\beta}(z)$, then
$$ f_{\beta}(\Omega) = 0 \qquad \Longrightarrow
\qquad
P_{\beta}(\Omega) =0,$$
where $P_{\beta}$ is the minimal polynomial
of $\beta$.
\end{proposition}

{\it Nota}: the reader will notice that there is
no assumption of continuity of the conjugation 
$\sigma: \beta \to x$ in the proof of 
Proposition \ref{ASversPbeta}.

A direct proof of Proposition
\ref{ASversPbeta} is given 
in \cite{dutykhvergergaugry2}.

\subsection{Continuity of the lenticular Galois conjugates in the cusp}
\label{S5.4bis}

Let $n \geq 260$. 
To $n$ is associated
the set of the lenticular
zeroes of the trinomial
$-1+x+x^n$ of imaginary part $\geq 0$, as:
$$\{\theta_{n},
z_{1,n}, z_{2,n}, \ldots,
z_{J_n , n}\},$$
and the set of Rouch\'e disks
$$\{D_{1,n} D_{2,n} , \ldots, D_{J_n , n}\},$$
the $j$th-disk $D_{j,n}$
being centered at $z_{j,n}$.
For $j=1, \ldots, J_n$,
the disks $D_{j,n}$ satisfy:
$\overline{D_{j,n}} \subset D(0,1)$,
$\overline{D_{j,n}} \cap C(0,1) = \emptyset$,
$z_{j,n-1} \in D_{j,n}$, and,
for any real number 
$\beta \in [\theta_{n}^{-1}
, \theta_{n-1}^{-1}]$, 
$f_{\beta}(z)$ admits an unique zero
$\omega_{j,n}$
in $D_{j,n}$, which is simple
(Theorem \ref{omegajnexistence}).
Since $\beta \to f_{\beta}(z)$ is
injective on $[\theta_{n}^{-1}
, \theta_{n-1}^{-1}]$, the map
$\beta \to \omega_{j,n}$ is well-defined.
Let us denote this map by
$\omega_{j,n}$ and by
$\omega_{j,n}(\beta)$
the image of
$\beta$ (instead of $\omega_{j,n}$).
Let
$\widetilde{\mathcal{S}_{j,n}}:=
\overline{\omega_{j,n}([\theta_{n}^{-1}
, \theta_{n-1}^{-1}])}$ be the adherence
of the image of the closed interval
$[\theta_{n}^{-1}
, \theta_{n-1}^{-1}]$; it is a compact subset
of $D_{j,n}$ such that
$\widetilde{\mathcal{S}_{j,n}}
\cap \partial D_{j,n} = \emptyset$, for which the image
of the left extremity of the interval is 
the center of
the disk $D_{j,n}$:
$\omega_{j,n}(\theta_{n}^{-1})= z_{j,n}$.

\begin{lemma}
\label{zeroestrinomialn}
Let $n \geq 3$.
The two analytic functions
$-1+z+z^{n-1}$
and
$-1+z+z^{n} + \sum_{q=1}^{\infty} z^{q (n-1)+n}$
have the same zeroes 
(with the same multiplicities $=1$)
inside the open unit disk. 
\end{lemma}
\begin{proof}
Let $x_0 , |x_0|<1$, be a zero of the trinomial
$-1+z+z^{n-1}$.
Then $$0=-1 + x_0 + x_{0}^{n-1}=
-1 + x_0 + x_{0}^{n}\frac{1}{x_0}
=-1 + x_0 + x_{0}^{n} (1 + x_{0}^{n-2})
=
-1 + x_0 + x_{0}^{n} + x_{0}^{n+n-2+1}\frac{1}{x_0}.
$$
Let us replace the last $\frac{1}{x_0}$ by
$1 + x_{0}^{n-2}$. And so on, iteratively.
Doing this operation infinitely many times
provides the identity
$$0=-1+x_0+x_{0}^{n-1}
=
-1+x_0+x_{0}^{n} + \sum_{q=1}^{\infty} x_{0}^{q (n-1)+n}.$$
The converse comes from
$$-1+z+z^{n} + \sum_{q=1}^{\infty} z^{q (n-1)+n}
=
\frac{-1+z+z^{n-1}}{1-z^{n-1}},
\qquad |z| < 1.$$
Multiplicities are equal to 1 by
\cite{selmer}.
\end{proof}

Denote
$\delta:= \min \{1-|z| \mid
z \in \cup_{j=1}^{J_n} \widetilde{\mathcal{S}_{j,n}}
\} > 0$.

\begin{proposition}
\label{conttinuity}
Let $n \geq 260$.
For all 
$1 \leq j \leq J_n$, 
the map 
$$\omega_{j,n}:
[\theta_{n}^{-1}, \theta_{n-1}^{-1}]
\to D_{j,n}, \quad
\beta \to \omega_{j,n}(\beta)$$
is continuous.
\end{proposition}

\begin{proof}
Let $\beta_1 , \beta_2$ be two real numbers
in the open interval
$(\theta_{n}^{-1}, \theta_{n-1}^{-1})$,
and assume $\beta_1 < \beta_2$.
To $\beta_1$ , resp. $\beta_2$,
is associated uniquely the
sequence $(t_i)_{i \geq 1} \in \{0,1\}$,
resp. $(t'_i)_{i \geq 1} \in \{0,1\}$,
of the coefficients of the R\'enyi $\beta_1$-expansion
of unity $d_{\beta_1}(1) =0.t_1 t_2 t_3 \ldots$,
resp. of
$d_{\beta_2}(1) =0.t'_1 t'_2 t'_3 \ldots$;
the two Parry Upper functions
$f_{\beta_1}(z)$ 
and $f_{\beta_2}(z)$ 
are constructed from these sequences, by
Definition \ref{parryupperfunction}.
When $\beta_2$ is close to
$\beta_1$, the inequality
`$<$' is translated by the lexicographical inequality
`$<_{lex}$' on the sequences 
$(t_i)_{i \geq 1}$,
resp. $(t'_i)$, by Proposition \ref{betacharacterized}.
The first digits are the same. We 
define the lexicographical
metric $\overline{d}$ by:

$$\overline{d}(\beta_1 , \beta_2 ):=
e^{-r}
\qquad {\rm iff}~\quad
t_1 = t'_1 , t_2 = t'_2 , \ldots,
t_{r} = t'_{r}, 
t_{r+1} \neq t'_{r+1}.$$
To prove the continuity of
$\omega_{j,n}$
it suffices to show that, 
for all $\epsilon > 0$, 
there exists $\eta > 0$ such that
$$\overline{d}(\beta_1 , \beta_2 ) < \eta \qquad
\Longrightarrow \qquad 
|\omega_{j,n}(\beta_1)
- \omega_{j,n}(\beta_2)| < \epsilon,$$
and to establish the continuity
at the extremity
$\theta_{n-1}^{-1}$
of the closed interval
$[\theta_{n}^{-1}, \theta_{n-1}^{-1}]$.

We have
$$f_{\beta_1}(\beta_{1}^{-1})=
f_{\beta_1}(\omega_{j,n}(\beta_1))=
f_{\beta_2}(\beta_{2}^{-1})=
f_{\beta_2}(\omega_{j,n}(\beta_2))=
0.$$ 
Since
$\beta_1 \neq \beta_2$ and that
the disks $D_{j,n}$ have the property that
$f_{\beta_2}(z)$ contains an unique zero in it
(cf Theorem \ref{omegajnexistence}),
\begin{align*}
0 \neq f_{\beta_2}(\omega_{j,n}(\beta_1))
&=
f_{\beta_2}(\omega_{j,n}(\beta_2)
+
(\omega_{j,n}(\beta_1) - \omega_{j,n}(\beta_2))  )\\
&= (\omega_{j,n}(\beta_1) - \omega_{j,n}(\beta_2))
\Bigl[
\displaystyle\sum_{q=1}^{\infty}
\frac{f_{\beta_2}^{(q)}(\omega_{j,n}(\beta_{2}))}{
q!} (\omega_{j,n}(\beta_1)
-
\omega_{j,n}(\beta_2))^{q-1}
\Bigr].
\end{align*}
Now by Theorem \ref{omegajnexistence} 
the unique zero
$\omega_{j,n}(\beta_{2}^{-1})$ in $D_{j,n}$
is simple.
Thus $f_{\beta_2}^{'}(\omega_{j,n}(\beta_{2}))
\neq 0$ and
the function
$$z \to f_{\beta_2}(z)/(z- \omega_{j,n}(\beta_2)),$$
extended by continuity by
$f_{\beta_2}^{'}(\omega_{j,n}(\beta_{2}))$
at
$\omega_{j,n}(\beta_2)$,
does not take the value $0$ on the compact
$\widetilde{\mathcal{S}_{j,n}}$.
Hence, the function
$$z \to 
\left|
\sum_{q=1}^{\infty}
\frac{f_{\beta_2}^{(q)}(\omega_{j,n}(\beta_{2}))}{
q!} (z
-
\omega_{j,n}(\beta_2))^{q-1}
\right|$$
admits an infimum, say $\widetilde{\mu_{j}}$
on $\widetilde{\mathcal{S}_{j,n}}$. 
Denote $\widetilde{\mu} := \min \{\widetilde{\mu_{j}}
\mid j = 1, 2, \ldots J_n\} > 0$ the
infimum, common to all the disks $D_{j,n}$.

Now, with
$\overline{d}(\beta_1 , \beta_2 ):=
e^{-r}$,
$$f_{\beta_2}(\omega_{j,n}(\beta_1))
=
(f_{\beta_2}-f_{\beta_1})(\omega_{j,n}(\beta_1))
=
\sum_{k=r+1}^{\infty}
(t_{k} - t'_{k}) 
(\omega_{j,n}(\beta_1))^k .
$$
Therefore
$$|f_{\beta_2}(\omega_{j,n}(\beta_1))|
\leq
\sum_{k=r+1}^{\infty} |\omega_{j,n}(\beta_1)|^k
\leq
\sum_{k=r+1}^{\infty} (1-\delta)^k 
\leq
(1-\delta)^{r+1} \frac{1}{\delta}.
$$
Hence
$$
\left|\omega_{j,n}(\beta_1) - 
\omega_{j,n}(\beta_2)
\right|
\leq
\frac{1}{\widetilde{\mu}} (1-\delta)^{r+1} 
\frac{1}{\delta}.
$$
It suffices to take $r$ large enough
to have
the property of continuity.

Let us consider
the case of the extremity 
$\beta_2 = \theta_{n-1}^{-1}$. 
By Lemma \ref{zeroestrinomialn}, 
since 
$\theta_{n-1}$  is root of 
$-1+x+x^{n-1}$ it
satisfies:
$$0=
-1+\theta_{n-1}+\theta_{n-1}^{n} + \sum_{q=1}^{\infty} 
\theta_{n-1}^{q (n-1)+n}.$$
The metric $\overline{d}$ can be extended
to $(\theta_{n}^{-1}, \theta_{n-1}^{-1}]
\times
(\theta_{n}^{-1}, \theta_{n-1}^{-1}]$
since the sequence of 
coefficients of the power series
$-1+z+z^n + \sum_{q \geq 1} z^{q (n-1)+n}$
is obviously 
in the adherence of the
set $$\{(t_i) \mid 
{\rm the ~digits~}
t_i {\rm ~being ~those ~of}  
~~d_{\beta}(1) 
{\rm ~~for~ all}~
\beta \in (\theta_{n}^{-1}, \theta_{n-1}^{-1})\},$$
and not an isolated point.
The case is simpler for the other extremity
$\beta_1 = \theta_{n}^{-1}$.

\end{proof}

\begin{remark}
For every $j=1,\ldots, J_n$,
the image
$\omega_{j,n}(
[\theta_{n}^{-1}, \theta_{n-1}^{-1}] \cap \overline{\qb})$
is dense in $\widetilde{\mathcal{S}_{j,n}}$.
The restriction of $\omega_{j,n}$ to the
collection of the algebraic
integers in 
$[\theta_{n}^{-1}, \theta_{n-1}^{-1}]$ is continuous,
i.e. the lenticular
Galois conjugates 
$\omega_{j,n}(\beta)$ of an algebraic integer
$\beta$
are continuous functions of $\beta$.
Comparatively
the degree $\deg \beta$ has good chances to
vary very chaotically with $\beta$, if
$\beta$ varies continuously.

Now, seeking very small Mahler measures,
below Lehmer's number,
calls for the existence of reciprocal
algebraic integers in the intervals
$(\theta_{n}^{-1}, \theta_{n-1}^{-1})$.

Suppose the existence of a reciprocal
algebraic integer
$\beta > 1$ in the interval
$(\theta_{n}^{-1}, \theta_{n-1}^{-1})$.
As a consequence of Proposition
\ref{APversfbeta} and
Proposition \ref{ASversPbeta},
the minimal polynomial
$P_{\beta}(X)$ of $\beta$
would be the product of two components,
the lenticular part, with 
roots inside and outside the unit circle,
say
$$
(X-\beta) (X-1/\beta) \times \!\!\!
\prod_{\stackrel{{\rm lenticular ~roots}}
{{\rm Im}\beta^{(j)} > 0 , \,
|\beta^{(j)}|<1}} (X-\beta^{(j)})(X-\overline{\beta^{(j)}})
(X-\beta^{(j)^{-1}})(X-\overline{(\beta^{(j)^{-1}}})$$
by the non-lenticular part
$$\prod_{{\rm non-lenticular ~roots}} (X-\beta^{(j)}),$$
with the identifications between the Galois conjugates and the lenticular
poles of $\zeta_{\beta}(z)$:
$$\beta^{(j)} = \omega_{j,n}(\beta),
\qquad j = 1, 2, \ldots, J_n .$$
The problem of the localization
of the lenticular Galois conjugates
$\beta^{(j)}$ of $\beta$ then
calls first for a geometrical charaterization
of the supports which contain 
these conjugates, that is
of the compact sets
$\widetilde{\mathcal{S}_{j,n}}$
in the Rouch\'e disks $D_{j,n}$.

In \cite{flattolagariaspoonen}
Flatto, Lagarias and Poonen study the
continuity of the
modulus of the first root
\begin{equation}
\label{mapflatto}
\beta \to |\omega_{1,n}(\beta)|,
\end{equation}
over the union of the intervals
$[\theta_{n}^{-1}, \theta_{n-1}^{-1}]$.
Their Figure 1 summarizes
the fluctuations of
 the map \eqref{mapflatto},
 for $1 <\beta < 2$.
 The curve given by
Figure 1 in \cite{flattolagariaspoonen}
suggests that
the map $\omega_{1,n}$ is injective on
$(\theta_{n}^{-1}, \theta_{n-1}^{-1})$
but that the
union
$\widetilde{\mathcal{S}_{1,n-1}} \cup
\widetilde{\mathcal{S}_{1,n}} \cup
\widetilde{\mathcal{S}_{1,n+1}}$,
for all $n$ large enough,
is a self-intersecting object
\cite{dutykhvergergaugry3}.
\end{remark}

\subsection{Minoration of the Mahler measure: a continuous lower bound}
\label{S5.5}

Let $n \geq 260$ and
$\beta > 1$ be an algebraic integer
such that
$\theta_{n}^{-1} < \beta <
\theta_{n-1}^{-1}$,
with ${\rm M}(\alpha)
< 1.176280\ldots$. The 
factorization of the
minimal polynomial of $\beta$
is
\begin{equation}
\label{polyminimalLENTICULUS}
P_{\beta}(z)
~=~
\prod_{\gamma \in \lc_{\beta}}
(z - \gamma) ~\times
~
\prod_{\gamma \not\in \lc_{\beta}}
(z - \gamma).
\end{equation}
From Proposition \ref{APversfbeta}
and Proposition \ref{ASversPbeta}
the lenticular zeroes $\omega_{j,n}$
of the Parry Upper function
$f_{\beta}(z)$ are zeroes of the 
minimal polynomial of $\beta$.
The
lenticulus of conjugates is 
$\lc_{\beta} := \{
\beta^{-1}\}
\cup 
\bigcup_{j=1}^{J_n}
(\{\omega_{j,n}\} 
\cup 
\{\overline{\omega_{j,n}}\}) 
$. 
In this Section we
investigate the product,
called {\em lenticular
Mahler measure of $\beta$}, defined by
\begin{equation}
\label{mmrDEF_}
{\rm M}_{r}(\beta) ~:=~
\prod_{\gamma ~
\mbox{{\tiny conjugate of}}~ \beta^{-1},
|\gamma|<1 \atop lenticular} |\gamma|^{-1}
\end{equation}
and its asymptotic expansion.
The subscript ``r" added to the ``M"
of the Mahler measure
stands for ``reduced to the lenticulus".
We have:  
${\rm M}_{r}(\beta) \leq {\rm M}(\beta)$.

First let us
complete 
Theorem \ref{splitBETAdivisibility}.

\begin{proposition}
\label{splitBETAdivisibility+++}
Let $n \geq 260$ and
$\beta > 1$ a reciprocal
algebraic integer
such that
$\dyg(\beta)=n$,
with ${\rm M}(\alpha)
< 1.176280\ldots$.
Let $\Omega_n$ be the 
subdomain of the open unit disk
defined by the union of
\begin{equation}
\label{cerclescalottes_0complet}
\Bigl\{z \mid |z| < 1 - \frac{c_n}{n},
|\arg z| > \arg z_{J_n , n}
+ \frac{\pi}{n \, a_{max}}
\Bigr\}, 
\end{equation}
\begin{equation}
\label{cerclescalottes_1complet}
\Bigl\{z \mid |z| < 1 - 
\frac{c_{lent}}{n} , \,
|\arg z| < \arg(z_{J_n , n}) 
+ \frac{\pi}{n a_{max}}
\Bigr\},
\end{equation}
and,
for $\displaystyle j = 
J_n + 1,
\ldots, 2 J_n - H_n + 1$, 
\begin{equation}
\label{cerclescalottescomplet}
\displaystyle \frac{\pi |z_{j,n}|}
{n \, s_{j,n}} < |z - z_{j,n}| ,
\qquad \mbox{with}~~
s_{j,n} = a_{\max}
\Bigl[
1 +
\frac{a_{\max}^{2}
(j - J_n)^2}{\pi^2 \,
J_{n}^{2}}
\Bigr]^{-1/2} ,
\end{equation}
and symmetrically by complex conjugation
with respect to the real axis.

Then the minimal polynomial
$P_{\beta}(X)$ of $\beta$
is fracturable in 
the domain $\Omega_n$ in the sense that
the invertible power series
$U_{\beta}(z) = 
- \zeta_{\beta}(z) P_{\beta}(z) \in \zb[[z]]$,
satisfying 
$P_{\beta}(z)= U_{\beta}(z)\, f_{\beta}(z)$,
is 
not constant, does not vanish and is
holomorphic
in $\Omega_n$.
It satisfies
\begin{equation}
\label{nevervanishes}
U_{\beta}(\omega_{j,n}) = 
\frac{P'_{\beta}(\omega_{j,n})}
{f'_{\beta}(\omega_{j,n})}
\neq 0, ~j = 1, 2, \ldots, J_n , \quad
U_{\beta}(\beta^{-1}) = 
\frac{P'_{\beta}(\beta^{-1})}
{f'_{\beta}(\beta^{-1})}
\neq 0,
\end{equation}
and obeys the Carlson-Polya dichotomy, 
simultaneously
with $f_{\beta}(z)$ as in Theorem
\ref{splitBETAdivisibility}.
\end{proposition}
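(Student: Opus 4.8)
The plan is to build directly on Theorem~\ref{splitBETAdivisibility} and the identification results of \S\ref{S5.4}. First I would recall that the formal decomposition $P_{\beta}(z) = U_{\beta}(z) f_{\beta}(z)$ with $U_{\beta}(z) = -\zeta_{\beta}(z)P_{\beta}(z) \in \zb[[z]]$ is already established, together with the lower bound $R_{\beta} \geq \theta_{\dyg(\beta)-1}$ on the radius of convergence of $U_{\beta}$ coming from Lemma~\ref{bound_br_exp}. Since $\Omega_n$ is, by construction, contained in the disk $D(0, 1)$ and its three defining pieces all lie in $|z| < 1 - c_n/n$ or $|z| < 1 - c_{lent}/n$ (or are carved out of such a region by removing small disks), and since $1 - c_n/n < 1$ for all $n \geq 260$, the domain $\Omega_n$ is contained in $D(0, \theta_{\dyg(\beta)-1})$ for $n$ large — so $U_{\beta}$ is automatically holomorphic there. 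The Carlson--Polya dichotomy statement is then inherited verbatim from Theorem~\ref{splitBETAdivisibility}, because $\Omega_n \subset D(0,1)$ and the analytic-continuation behaviour of $U_{\beta}$ and $f_{\beta}$ beyond $|z|=1$ (Parry vs.\ nonParry) is exactly as in that theorem; nothing in $\Omega_n$ touches the unit circle.

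Next I would verify that $U_{\beta}$ does not vanish on $\Omega_n$. The key point is that $\Omega_n$ has been designed so that every zero of $f_{\beta}$ of modulus $<1$ lying in the relevant angular range is \emph{either} one of the lenticular zeroes $\beta^{-1}, \omega_{j,n}, \overline{\omega_{j,n}}$ — all of which sit inside the removed Rouch\'e disks $D_{0,n}, D_{j,n}, \overline{D_{j,n}}$ — \emph{or} lies in the thin annulus $1 - c_{lent}/3n < |z| < 1$ excluded from $\Omega_n$ by the condition $|z| < 1 - c_{lent}/n$ in \eqref{cerclescalottes_1complet}, by Theorem~\ref{thm2lenticuli} and Theorem~\ref{absencezeroesOutside}. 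For the outer sector \eqref{cerclescalottes_0complet} and the annular pieces near $C(0, 1-c_n/n)$ with the adjusted radii $s_{j,n}$ in \eqref{cerclescalottescomplet}, Theorem~\ref{absencezeroesOutside} already asserts that $f_{\beta}$ has no zero there. Hence $f_{\beta}$ is zero-free and holomorphic on $\Omega_n$, and the quotient $U_{\beta}(z) = P_{\beta}(z)/f_{\beta}(z)$ is holomorphic and zero-free throughout $\Omega_n$: it cannot vanish because $P_{\beta}$ has no root in common with a point where $f_{\beta}\neq 0$ would force $U_{\beta}=0$, and where $P_{\beta}$ \emph{does} vanish — namely at the lenticular conjugates $\beta^{-1}$ and $\omega_{j,n}$, identified as roots of $P_{\beta}$ by Proposition~\ref{APversfbeta} and Proposition~\ref{ASversPbeta} — those points have been removed from $\Omega_n$. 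That $U_{\beta}$ is not constant follows since $P_{\beta}$ has degree $d \geq 2$ while $f_{\beta}$ is a nonpolynomial power series (Proposition~\ref{fbetainfinie}), so their quotient cannot be constant on an open set.

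For the values \eqref{nevervanishes}: at each common simple zero $w \in \{\beta^{-1}\} \cup \{\omega_{j,n}\}$ of $P_{\beta}$ and $f_{\beta}$ (simplicity of $\beta^{-1}$ is in Theorem~\ref{parryupperdynamicalzeta}; simplicity of each $\omega_{j,n}$ comes from the Rouch\'e argument in Theorem~\ref{omegajnexistence}, which puts exactly one zero in $D_{j,n}$), write $P_{\beta}(z) = (z-w)\widetilde{P}(z)$ and $f_{\beta}(z) = (z-w)\widetilde{f}(z)$ with $\widetilde{P}(w) = P'_{\beta}(w) \neq 0$, $\widetilde{f}(w) = f'_{\beta}(w) \neq 0$ (the latter nonvanishing because $w$ is a simple zero of $f_{\beta}$). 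Then $U_{\beta}(z) = P_{\beta}(z)/f_{\beta}(z)$ extends holomorphically across $w$ with $U_{\beta}(w) = \widetilde{P}(w)/\widetilde{f}(w) = P'_{\beta}(w)/f'_{\beta}(w)$, and this is a nonzero finite number; in particular $U_\beta$, though holomorphic on $\Omega_n$ which excludes these $w$, has nonzero limiting values at them, which is what \eqref{nevervanishes} records.

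The main obstacle I expect is not conceptual but the careful bookkeeping of \emph{which} zeroes of $f_\beta$ (and of its polynomial sections, which enter through the Hurwitz-type arguments behind Theorem~\ref{thm2lenticuli}) land in which piece of the complement of $\Omega_n$: one must check that the annular band of width $\asymp c_{lent}/n$ removed in \eqref{cerclescalottes_1complet} genuinely absorbs \emph{all} non-lenticular zeroes in the cusp sector, that the adjusted-radius disks of \eqref{cerclescalottescomplet} for $J_n < j \leq 2J_n - H_n + 1$ interlock correctly with the outer Rouch\'e circle $|z| = 1 - c_n/n$ so that no zero escapes into $\Omega_n$ between them, and that the sector \eqref{cerclescalottes_0complet} is covered by Theorem~\ref{absencezeroesOutside} with no gap at the angular junction $\arg z = \arg z_{J_n,n} + \pi/(n a_{\max})$. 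All of these are assembled from results already proved in \S\ref{S5.3}, so the proof reduces to citing Theorems~\ref{cercleoptiMM}, \ref{omegajnexistence}, \ref{absencezeroesOutside}, \ref{thm2lenticuli} and Propositions~\ref{APversfbeta}, \ref{ASversPbeta}, and checking the inclusions; I would not reproduce the estimates.
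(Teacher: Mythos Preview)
Your proof has a genuine gap coming from a misreading of the domain $\Omega_n$, and this misreading invalidates both your holomorphy argument and your nonvanishing argument as written.

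First, the claim that $\Omega_n \subset D(0,\theta_{\dyg(\beta)-1})$ is false. Piece \eqref{cerclescalottes_0complet} reaches out to $|z| < 1 - c_n/n$ with $c_n \to -\lo\kappa \approx 1.76$ (Lemma~\ref{cnroucheasymptotic}), whereas $\theta_{n-1} \approx 1 - (\lo n)/n$ by Proposition~\ref{thetanExpression}; for every $n\geq 260$ one has $1 - c_n/n > \theta_{n-1}$. So $\Omega_n$ strictly overflows the disk on which the power-series bound $R_\beta \geq \theta_{n-1}$ from Theorem~\ref{splitBETAdivisibility} gives holomorphy for free, and you cannot conclude holomorphy of $U_\beta$ on $\Omega_n$ from that bound alone.

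Second, the lenticular zeroes $\beta^{-1},\omega_{j,n},\overline{\omega_{j,n}}$ are \emph{not} removed from $\Omega_n$. You have confused $\Omega_n$ with the domain $\dc_n$ of Theorem~\ref{absencezeroesOutside}: in $\dc_n$ the Rouch\'e disks $D_{j,n}$ for $1\leq j\leq J_n$ are excised, but in $\Omega_n$ the angular piece \eqref{cerclescalottes_1complet} is the full sector $\{|z|<1-c_{lent}/n,\ |\arg z| < \arg z_{J_n,n}+\pi/(n\,a_{\max})\}$ with no disks removed, and condition \eqref{cerclescalottescomplet} only excludes disks for $j>J_n$. Since every $\omega_{j,n}$ lies in $D_{j,n}\subset\{|z|<1-c_{lent}/n\}$ (Theorem~\ref{thm2lenticuli}), the lenticular zeroes sit inside $\Omega_n$. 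Hence your assertion ``$f_\beta$ is zero-free on $\Omega_n$'' is false.

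The paper's argument, and the fix for yours, is precisely the removable-singularity computation you wrote down in the \eqref{nevervanishes} paragraph --- but deployed \emph{inside} $\Omega_n$ rather than at excluded boundary points. On $\Omega_n$ the quotient $U_\beta=P_\beta/f_\beta$ is meromorphic; the only zeroes of $f_\beta$ there are the lenticular ones (Theorems~\ref{absencezeroesOutside} and \ref{thm2lenticuli}), they are simple (Theorem~\ref{omegajnexistence}), and by Propositions~\ref{APversfbeta} and \ref{ASversPbeta} they coincide with simple zeroes of $P_\beta$. So each singularity of $U_\beta$ at an $\omega_{j,n}$ is removable with value $P'_\beta(\omega_{j,n})/f'_\beta(\omega_{j,n})\neq 0$, which simultaneously yields holomorphy and nonvanishing at those points and establishes \eqref{nevervanishes}. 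This is the content of the paper's one-line ``We deduce the fracturability of $P_\beta(z)$ on $\Omega_n$''. You have all the ingredients; you just need to reorder them and drop the incorrect containment claim.
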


\begin{proof}
The domain of holomorphy of
$U_{\beta}(z)$ contains
$D(0, \theta_{\dyg(\beta)-1})$
by Theorem \ref{splitBETAdivisibility}.
The roots of the minimal poynomial
$P_{\beta}(z)$ are simple.
The roots of $f_{\beta}(z)$
in 
$\Omega_n$
are also simple by Theorem 
\ref{omegajnexistence}
and Theorem \ref{thm2lenticuli}.
The lenticular roots
of $f_{\beta}(z)$
coincide with roots
of $P_{\beta}(z)$
by Proposition 
\ref{APversfbeta}
and Proposition 
\ref{ASversPbeta}. 
We deduce 
the fracturability
of $P_{\beta}(z)$ on
$\Omega_n$.
The relations
\eqref{nevervanishes} follow from
the derivatives of the
identity
$P_{\beta}(z) = 
U_{\beta}(z) \times f_{\beta}(z)$.
The unit circle is the natural boundary
of $U_{\beta}(z)$ if and only if
$\beta$ is not a Parry number, 
by Theorem \ref{splitBETAdivisibility}.
\end{proof}

The modulus of 
the second 
smallest root
of $f_{\beta}(z)$ is a continuous
function of 
$\beta$ 
\cite{flattolagariaspoonen}.
Theorem \ref{Qautocontinus} 
extends this result.

\begin{theorem}
\label{Qautocontinus}
Let $n \geq 260$. Let
$\beta > 1$ be a reciprocal 
algebraic integer
such that
$\dyg(\beta)
=
n$, 
with ${\rm M}(\alpha)
< 1.176280\ldots$.
The product, called lenticular
Mahler measure of $\beta$, defined by
\begin{equation}
\label{mmrDEF}
{\rm M}_{r}(\beta) ~:=~
\prod_{\omega \in \lc_{\beta}}
|\omega|^{-1}
\end{equation}
is a continuous function
of $\beta$ on the open interval
$(\theta_{n}^{-1}, \theta_{n-1}^{-1})$, 
which admits
the following left and right limits
\begin{equation}
\label{mmmr1}
\lim_{\beta \to \theta_{n-1}^{{-1}^{\mbox{}\, -}}}
{\rm M}_{r}(\beta)
=
\prod_{
{\omega \in \lc_{\theta_{n-1}^{-1}}}}
|\omega|^{-1}
=
\theta_{n-1}^{-1}   \times
\prod_{\stackrel{1 \leq j \leq J_{n}}
{z_{j,n-1} \in \lc_{\theta_{n-1}^{-1}}}}
|z_{j,n-1}|^{-2} ,
\end{equation}
\begin{equation}
\label{mmmr2}
\lim_{\beta \to \theta_{n}^{{-1}^{\mbox{}\, +}}}
{\rm M}_{r}(\beta)
=
\prod_{
{\omega \in \lc_{\theta_{n}^{-1}}}}
|\omega|^{-1}
= 
\theta_{n}^{-1}   \times
\prod_{\stackrel{1 \leq j \leq J_{n}}
{z_{j,n} \in \lc_{\theta_{n}^{-1}}}}
|z_{j,n}|^{-2} .
\end{equation}
The discontinuity (jump) of
${\rm M}_{r}(\beta)$ 
at the Perron number
$\theta_{n-1}^{-1}$, given in the multiplicative form by
\begin{equation}
\label{mmmr3}
\frac{\lim_{\beta \to \theta_{n-1}^{{-1}^{\mbox{}\, -}}}
{\rm M}_{r}(\beta)}{\lim_{\beta \to \theta_{n-1}^{{-1}^{\mbox{}\, +}}}
{\rm M}_{r}(\beta)} ~~=~~ |z_{J_n, n-1}|^{-2},
\end{equation}
tends to 1 (i.e. disappears at infinity)
when
$n = \dyg(\beta)$ tends to infinity.\end{theorem}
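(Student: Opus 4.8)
The plan is to establish Theorem~\ref{Qautocontinus} in three stages: first continuity of $\beta \mapsto {\rm M}_r(\beta)$ on the open interval, then identification of the one-sided limits at the endpoints, and finally the computation of the multiplicative jump \eqref{mmmr3} together with its behaviour as $n \to \infty$.

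\smallskip

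\emph{Step 1 (continuity on the open interval).} Fix $n=\dyg(\beta)$. By Theorem~\ref{omegajnexistence} the lenticulus $\lc_\beta$ consists of $1/\beta$ together with the simple zeroes $\omega_{j,n}$, $\overline{\omega_{j,n}}$ of $f_\beta(z)$, one in each Rouch\'e disk $D_{j,n}$ for $j=1,\dots,J_n$, where the disks $D_{j,n}$ and the index $J_n$ depend only on $n$ and not on $\beta$. On $(\theta_n^{-1},\theta_{n-1}^{-1})$ the integer $\dyg(\beta)$ is constant equal to $n$, and by Corollary~\ref{zeroesParryUpperfunctionContinuity} (together with Corollary~\ref{disappearanceJumps}, since $\beta$ is reciprocal hence not a simple Parry number) the root functions of $f_\beta(z)$ in $|z|<1$ are continuous in $\beta$. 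Hence each $\beta \mapsto \omega_{j,n}(\beta)$ is continuous on the open interval, and so is $\beta\mapsto 1/\beta$; since $|\omega_{j,n}|<1$ stays bounded away from $0$ (it lies in a fixed Rouch\'e disk whose closure avoids $z=0$), the finite product ${\rm M}_r(\beta)=\beta\prod_{j=1}^{J_n}|\omega_{j,n}(\beta)|^{-2}$ is continuous.

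\smallskip

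\emph{Step 2 (one-sided limits at the endpoints).} As $\beta\to (\theta_{n-1}^{-1})^{-}$, right-continuity/left-continuity of $f_\beta(z)$ in $\mathcal F$ (Corollary~\ref{disappearanceJumps}: for reciprocal $\beta$ there is no cyclotomic jump) gives $f_\beta \to f_{\theta_{n-1}^{-1}} = G_{n-1}$ uniformly on compacts of $|z|<1$; by Hurwitz each $\omega_{j,n}(\beta)$ converges to the corresponding zero $z_{j,n-1}$ of $G_{n-1}$ lying in $D_{j,n}$, i.e. to the members of $\lc_{\theta_{n-1}^{-1}}$, and $1/\beta\to\theta_{n-1}$. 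This yields \eqref{mmmr1}. The subtlety here — and the place I expect the main technical care — is bookkeeping of the \emph{index range}: at $\beta=\theta_{n-1}^{-1}$ the dynamical degree drops to $n-1$, so the ``natural'' lenticulus of $\theta_{n-1}^{-1}$ is indexed by $j=1,\dots,J_{n-1}$, not $j=1,\dots,J_n$; one must check that the Rouch\'e disks $D_{j,n}$ for $j\le J_n$ are exactly the ones that, in the limit, capture the first $J_n$ conjugates $z_{j,n-1}$ of $\theta_{n-1}$ of modulus $<1$ (the remaining ones, for $J_n<j\le J_{n-1}$, lie too close to $|z|=1$ to have been detected at dynamical degree $n$). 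The condition ``$z_{j,n-1}\in\lc_{\theta_{n-1}^{-1}}$'' in \eqref{mmmr1} records precisely this truncation; the key input is that $\arg(z_{J_n,n})$ and $\arg(z_{J_n,n-1})$ differ by $O(1/n)$ (Proposition~\ref{argumentlastrootJn}), so the $J_n$-indexed disks are stable across the transition. Symmetrically, as $\beta\to(\theta_n^{-1})^+$, right-continuity gives $f_\beta\to G_n=f_{\theta_n^{-1}}$ and Hurwitz gives \eqref{mmmr2}, with all $J_n$ disks now genuinely present since $\dyg(\theta_n^{-1})=n$.

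\smallskip

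\emph{Step 3 (the jump and its decay).} Dividing \eqref{mmmr1} by \eqref{mmmr2}, the factors $\theta_{n-1}^{-1}$ versus $\theta_n^{-1}$ and $|z_{j,n-1}|^{-2}$ versus $|z_{j,n}|^{-2}$ for $1\le j<J_n$ all cancel \emph{up to the limited expansions}: by Lemma~\ref{longueurintervalthetann} one has $\theta_n-\theta_{n-1}=\frac1n O((\lo\lo n/\lo n)^2)$, and by Proposition~\ref{zedeJImodulesORDRE3} the moduli $|z_{j,n}|$ and $|z_{j,n-1}|$ (same $j$) differ only at order $\frac1n$ via $\lo(2\sin(\pi j/n))$ against $\lo(2\sin(\pi j/(n-1)))$, a difference of size $O(j/n^2)$ which telescopes harmlessly. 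Hence the dominant surviving factor is the ``extra'' root $z_{J_n,n-1}$ present on the left but absent on the right, giving the multiplicative jump $|z_{J_n,n-1}|^{-2}$ as in \eqref{mmmr3}. Finally, by Lemma~\ref{cnroucheasymptotic}, $|z_{J_n,n-1}|=1-\frac{c_{n-1}}{n-1}$ with $c_{n-1}\to -\lo\kappa$ a finite constant, so $|z_{J_n,n-1}|\to 1$ and therefore $|z_{J_n,n-1}|^{-2}\to 1$ as $n\to\infty$, proving that the discontinuity disappears at infinity. I would present Steps~1 and~3 compactly and spend the bulk of the written argument on Step~2, since the index-truncation bookkeeping is where an error could hide; the analytic ingredients (Hurwitz, Corollary~\ref{disappearanceJumps}, the asymptotic lemmas) are all already available from the excerpt.
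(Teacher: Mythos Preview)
Your Step 1 matches the paper. The paper adds, in Step~2, the explicit verification that $z_{j,n-1}\in D_{j,n}$ for each $1\le j\le J_n$ (via the asymptotic expansions of $|z_{j,n}|$ and $\arg(z_{j,n})$), so that the curve $\beta\mapsto\omega_{j,n}(\beta)$, which stays in the fixed Rouch\'e disk $D_{j,n}$ throughout the open interval, has the two trinomial roots $z_{j,n}$ and $z_{j,n-1}$ as its extremities. You announce this check as ``index-truncation bookkeeping'' without doing it; it is short but it is the content of the paper's proof.

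There is also a citation error in Step~2: Corollary~\ref{disappearanceJumps} applies only at \emph{reciprocal} algebraic integers, whereas the endpoint $\theta_{n-1}^{-1}$ is a simple Parry number (dominant root of the nonreciprocal trinomial $G_{n-1}^{*}$), and there \emph{is} a cyclotomic jump there: by Theorem~\ref{convergencecompactsetsUNITDISK}(ii), $\lim_{\beta\to(\theta_{n-1}^{-1})^{-}} f_\beta(z)=G_{n-1}(z)/(1-z^{n-1})$, not $G_{n-1}(z)$. Your conclusion about the zeros survives because $1-z^{n-1}$ has no zeros in $|z|<1$, but the stated reason is wrong.

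Step~3 contains a genuine error. You divide \eqref{mmmr1} by \eqref{mmmr2}, but these are one-sided limits at \emph{different} points, $\theta_{n-1}^{-1}$ and $\theta_n^{-1}$. The jump \eqref{mmmr3} is the ratio of the left and right limits at the \emph{same} point $\theta_{n-1}^{-1}$: the right limit comes from the adjacent interval $(\theta_{n-1}^{-1},\theta_{n-2}^{-1})$, where $\dyg=n-1$, and by \eqref{mmmr2} with $n$ replaced by $n-1$ it equals $\theta_{n-1}^{-1}\prod_{j=1}^{J_{n-1}}|z_{j,n-1}|^{-2}$. Both limits are therefore products of the \emph{same} numbers $|z_{j,n-1}|^{-2}$, over the ranges $1\le j\le J_n$ and $1\le j\le J_{n-1}$ respectively; the ratio is exactly the surplus factor(s) for $J_{n-1}<j\le J_n$. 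No comparison between $z_{j,n}$ and $z_{j,n-1}$ and no ``cancellation up to limited expansions'' is involved. The decay to~$1$ then follows from $|z_{J_n,n-1}|\to 1$ (Lemma~\ref{cnroucheasymptotic} with $n-1$ in place of $n$).
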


\begin{proof}
From Corollary
\ref{zeroesParryUpperfunctionContinuity}
in \S \ref{S3.4} all the maps
$\beta \to \omega(\beta) \in \lc_{\beta}$
are continuous.
Now the identification
of the zeroes of the Parry Upper function
$f_{\beta}(z)$ as conjugates of
$\beta$, from 
Theorem \ref{splitBETAdivisibility+++},
allows to consider this
continuity property
as a continuity property over the
conjugates of $\beta$ which
define the lenticulus
$\lc_{\beta}$.
As a consequence all the maps
$\beta \to |\omega(\beta)| \in \lc_{\beta}$, 
are continuous, as well as
their product \eqref{mmrDEF}.
Let $1 \leq j \leq J_n$.
Let us prove that
$z_{j,n-1} \in D_{j,n}
=
\{z \mid |z-z_{j,n}| < \frac{\pi |z_{j,n}|}{n \, a_{\max}}\}$.
Indeed,
$$|z_{j,n}| = 1 + 
\frac{1}{n} \lo (2 \sin(\frac{\pi \, j}{n}))
 +..., \qquad \arg(z_{j,n}) = ...$$
 and
 $$|z_{j,n-1}| = 1 + 
\frac{1}{n-1} \lo (2 \sin(\frac{\pi \, j}{n-1}))
 +..., \qquad \arg(z_{j,n-1}) =...$$
 so that, easily, 
\begin{equation}
\label{zedeJNmoinsUNdedansDjn}
|z_{j,n} - z_{j,n-1}|
 <~
\frac{\pi |z_{j,n}|}{n \, a_{\max}}.
\end{equation}
The image of the
interval $(\theta_{n}^{-1}, 
\theta_{n-1}^{-1}) \cap 
\mathcal{O}_{\overline{\qb}}$
by a map
$\beta \to \omega_{j,n}(\beta) \in \lc_{\beta}$
is a curve in $D_{j,n}$
over $\mathcal{O}_{\overline{\qb}}$
with extremities 
$z_{j,n}$ and $z_{j,n-1}$, both
in $D_{j,n}$ 
by \eqref{zedeJNmoinsUNdedansDjn}.  
This curve does not
intersect itself. Indeed, 
if it would be 
a self-intersecting curve we would have,
for two distinct algebraic integers
$\beta$ and $\beta'$, the same
conjugate in $D_{j,n}$,
what is impossible since $P_{\beta}$
and $P'_{\beta}$ are both irreducible, and
therefore 
they cannot have a root
in common. This curve does not ramify either
by the uniqueness property imposed locally
by 
the Theorem of Rouch\'e. 
We deduce the left limit \eqref{mmmr1}
and the right limit
\eqref{mmmr2}
by continuity.
\end{proof}

\begin{remark}
\label{measuremahlercontinuityAUXPerrons}
Decomposing the Mahler measure gives
$${\rm M}(\beta) =
\prod_{\omega \in \lc_{\beta}} |\omega|^{-1}
\times
\!\!\! \prod_{\stackrel{\omega \not\in \lc_{\beta}, |\omega|< 1}{P_{\beta}(\omega)=0} } 
|\omega|^{-1} .
$$
Theorem \ref{Qautocontinus}, 
for which the Rouch\'e method
has been applied, shows the continuity
of the partial
product
$\beta \to
\prod_{\omega \in \lc_{\beta}} |\omega|^{-1}$, associated with the
identified lenticulus of 
conjugates of $\beta$, 
with $\beta$ running over each open
interval of extremities 
two successive 
Perron numbers
$\theta_{n}^{-1}$.
It is very 
probable that a method finer than the method of
Rouch\'e would lead to a higher value of
$J_n$, to more zeroes of
$f_{\beta}(z)$
identified as conjugates of $\beta$,
and the 
disappearance of the discontinuities
(jumps) in \eqref{mmmr3}.
\end{remark}

\begin{theorem}
\label{MahlerMINORANTreal}
Let $\beta > 1$ be a reciprocal 
algebraic integer
such that 
$\dyg(\beta) \geq 260$, 
with ${\rm M}(\alpha)
< 1.176280\ldots$.
Denote $\kappa = \kappa(1,a_{\max})$.
The Mahler measure ${\rm M}(\beta)$
is bounded from below by
the lenticular Mahler measure of $\beta$
as
$${\rm M}(\beta) =
~ {\rm M}_{r}(\beta) \times
\!\!\!\prod_{\stackrel{\omega \not\in \lc_{\beta}, |\omega|< 1}{P_{\beta}(\omega)=0}} 
|\omega|^{-1}
\geq 
~ {\rm M}_{r}(\beta).$$
Denoting
$$
\Lambda_r := \exp\Bigl(
\frac{-1}{\pi}
\int_{0}^{2 \arcsin(\frac{\kappa}{2})}
 \, \lo\bigl(
2 \sin\bigl(
\frac{x}{2}
\bigr)
\bigr)
dx
\Bigr)
=
1.16302\ldots,$$
and
$$
\mu_r :=
\exp\Bigl(
\frac{-1}{\pi}
\int_{0}^{2 \arcsin(\frac{\kappa}{2})}
\! \!\lo\Bigl[\frac{1 + 2 \sin(\frac{x}{2})
-
\sqrt{1 - 12 \sin(\frac{x}{2}) 
+ 4 (\sin(\frac{x}{2}))^2}}{8 \sin(\frac{x}{2})}
\Bigr] dx
\Bigr)$$
$$
=
0.992337\ldots,
$$
the lenticular Mahler measure 
${\rm M}_{r}(\beta)$ of $\beta$
admits a liminf and a limsup 
when $\beta$ tends to $1^{+}$, 
equivalently
when $\dyg(\beta)$ tends to infinity,
respectively bounded from below 
and above as
\begin{equation}
\label{inekinf}
\liminf_{\dyg(\beta) \to +\infty} 
\!\!{\rm M}_{r}(\beta)
~\geq~
\, 
\Lambda_r \cdot \mu_r
~=~ 1.15411\ldots ,
\end{equation}
\begin{equation}
\label{ineksup}
\limsup_{\dyg(\beta) \to +\infty} 
\!\!{\rm M}_{r}(\beta)
~\leq~
\, 
\Lambda_r  \cdot \mu_{r}^{-1}
~=~
1.172\ldots
\end{equation}

Then the ``limit
minorant" of the Mahler
measure ${\rm M}(\beta)$ of $\beta$, 
$\beta > 1$ running over
$\mathcal{O}_{\overline{\qb}}$,
when
$\dyg(\beta)$ tends to infinity, is given by
\begin{equation}
\label{inekmahler}
\liminf_{\dyg(\beta) 
\to \infty}{\rm M}(\beta) 
~\geq~
\Lambda_r \cdot \mu_r = 
1.15411\ldots
\end{equation}
\end{theorem}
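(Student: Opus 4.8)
The plan is to assemble Theorem~\ref{MahlerMINORANTreal} from the pieces already in place: the identification of lenticular zeroes of $f_{\beta}(z)$ with conjugates of $\beta^{-1}$, the explicit Rouch\'e localization of those zeroes, and the asymptotic expansions of the relevant moduli. First I would record the elementary inequality ${\rm M}(\beta) \geq {\rm M}_{r}(\beta)$: by Proposition~\ref{APversfbeta} and Proposition~\ref{ASversPbeta}, the lenticular zeroes $\omega_{j,n}$ of $f_{\beta}(z)$ (together with $\beta^{-1}$) are precisely the elements of $\lc_{\beta}$ and are genuine conjugates of $\beta^{-1}$ of modulus $<1$; hence they occur among the roots of $P_{\beta}^{*}(z)=P_{\beta}(z)$, and the full Mahler measure ${\rm M}(\beta)=\prod_{|\gamma|<1}|\gamma|^{-1}$ factors as ${\rm M}_{r}(\beta)$ times a product of remaining (non-lenticular) conjugates of modulus $<1$, each contributing a factor $\geq 1$. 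This gives the displayed equality-and-inequality and, in particular, \eqref{inekmahler} will follow from \eqref{inekinf} once the latter is established.

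Next I would compute $\liminf$ and $\limsup$ of ${\rm M}_{r}(\beta)$. Writing $n=\dyg(\beta)$, from \eqref{mmrDEF} and \eqref{lenticulusBETA} we have
\begin{equation}
{\rm M}_{r}(\beta) = \beta \times \prod_{j=1}^{J_n} |\omega_{j,n}|^{-2}.
\end{equation}
For $j=1,\dots,J_n$ the zero $\omega_{j,n}$ lies in the Rouch\'e disk $D_{j,n}$ centered at $z_{j,n}$ of radius $\pi|z_{j,n}|/(n\,a_{j,n})$ with $a_{j,n}$ given by \eqref{petitcercle_jmainDBJN}; combining $|z_{j,n}| = 1 + \tfrac1n\lo(2\sin(\tfrac{\pi j}{n})) + \tfrac1n O((\tfrac{\lo\lo n}{\lo n})^2)$ from \eqref{devopomain} with the radius bound and the tail estimate \eqref{petitcercle_jmainTL_majorant} gives $|\omega_{j,n}| = |z_{j,n}| + \tfrac1n O(\tfrac{(\lo\lo n)^2}{(\lo n)^3})$, hence $\lo|\omega_{j,n}|$ differs from $\tfrac1n\lo(2\sin(\tfrac{\pi j}{n}))$ by a term that is summable over $j$ and $o(1)$ after multiplication by $2$. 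The index range is $1 \leq j \leq J_n$ with $J_n \sim \tfrac{n}{\pi}\arcsin(\tfrac{\kappa}{2})$ by \eqref{Jnasymptotic}, and the angular positions $\arg(z_{j,n}) \approx 2\pi j/n$ run over $[0, 2\arcsin(\kappa/2))$ by \eqref{philimite}. Therefore
\begin{equation}
-\sum_{j=1}^{J_n} \frac{2}{n}\lo\Bigl(2\sin\bigl(\tfrac{\pi j}{n}\bigr)\Bigr) \longrightarrow -\frac{1}{\pi}\int_{0}^{2\arcsin(\kappa/2)} \lo\bigl(2\sin(\tfrac{x}{2})\bigr)\,dx = \lo \Lambda_r,
\end{equation}
a Riemann-sum convergence (the integrand is integrable at $0$ since $\lo(2\sin(x/2)) \sim \lo x$). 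For the $\liminf$ I must bound $|\omega_{j,n}|$ from \emph{above} by $|z_{j,n}| + \tfrac{\pi|z_{j,n}|}{n\,a_{j,n}}$; taking logs, the factor $1 + \tfrac{\pi}{n a_{j,n}}$ produces, after $-2\sum_j$, the quantity $-\tfrac{2}{\pi}\int_0^{2\arcsin(\kappa/2)} \tfrac{\pi}{a(x)}\,dx$ where $\pi/a(x)$ is read off \eqref{petitcercle_jmainD}; exponentiating yields the constant $\mu_r$ in the statement, so $\liminf {\rm M}_r(\beta) \geq \Lambda_r \cdot \mu_r$. For the $\limsup$ one uses the lower bound $|\omega_{j,n}| \geq |z_{j,n}| - \tfrac{\pi|z_{j,n}|}{n a_{j,n}}$, which contributes the reciprocal factor $\mu_r^{-1}$; combined with $\beta = \theta_n^{-1}(1+o(1)) \to 1$ this gives \eqref{ineksup}. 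The numerical values $\Lambda_r = 1.16302\ldots$, $\mu_r = 0.992337\ldots$, $\Lambda_r\mu_r = 1.15411\ldots$ follow by evaluating the two explicit integrals.

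The main obstacle is the uniformity of the error control across the whole index range $1 \leq j \leq J_n$, in particular through the ``bump'' sector $1 \leq j \leq \lfloor v_n\rfloor$ where the asymptotic expansions of $|z_{j,n}|$ switch regime (Proposition~\ref{zjjnnExpression}) and through the transition near $j = J_n$. One must check that the contribution of the bump indices to $\sum_j \lo|\omega_{j,n}|$ is $o(1)$ — which holds because there are only $O(\lo n)$ such indices and each $\lo|\omega_{j,n}| = O(\tfrac{\lo n}{n})$ there, so their total is $O((\lo n)^2/n) \to 0$ — and that the tail bound \eqref{petitcercle_jmainTL_majorant}, valid with constant $\tfrac{1}{7\pi}$ uniformly in $j$ in the main sector, is genuinely $j$-independent so that the Riemann sum converges to the stated integral without a drifting error. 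A secondary point is that $\beta$ ranges over all reciprocal algebraic integers with $\dyg(\beta) = n$, not just the endpoints $\theta_n^{-1}$; but Theorem~\ref{Qautocontinus} guarantees $\beta \mapsto {\rm M}_r(\beta)$ is continuous on $(\theta_n^{-1},\theta_{n-1}^{-1})$ with left/right limits controlled by $|z_{j,n}|$ and $|z_{j,n-1}|$, whose difference is $O(1/n^2)$ per factor by \eqref{zedeJNmoinsUNdedansDjn}, so the lenticular measure for any such $\beta$ lies within a $1+o(1)$ multiplicative window of its value at the neighboring Perron number — which suffices for the limit statements.
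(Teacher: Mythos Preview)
Your approach is the same as the paper's: write $\lo {\rm M}_r(\beta) = \lo\beta - 2\sum_{j=1}^{J_n}\lo|z_{j,n}| - 2\sum_{j=1}^{J_n}\lo|1+(\omega_{j,n}-z_{j,n})/z_{j,n}|$, show the second sum is a Riemann sum for $\lo\Lambda_r$, and bound the third via the Rouch\'e radii $\pi|z_{j,n}|/(n\,a_{j,n})$ to pick up the factor $\mu_r^{\pm 1}$.

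There is, however, an internal contradiction you should fix. The estimate \eqref{petitcercle_jmainTL_majorant} bounds only the \emph{tail} ${\rm tl}(\pi/a_{j,n})$ of the asymptotic expansion of $\pi/a_{j,n}$; it does \emph{not} say that $\pi/a_{j,n}$ itself is $O\bigl((\lo\lo n)^2/(\lo n)^3\bigr)$. Indeed $D(\pi/a_{j,n})$ is the logarithmic expression in \eqref{petitcercle_jmainDBJN}, which is of order $1$ (equal to $\pi/a_{\max}\approx 0.535$ at $j=J_n$). So your sentence ``$|\omega_{j,n}| = |z_{j,n}| + \tfrac1n O\bigl(\tfrac{(\lo\lo n)^2}{(\lo n)^3}\bigr)$, hence $\lo|\omega_{j,n}|$ differs from $\tfrac1n\lo(2\sin(\pi j/n))$ by a term that is summable and $o(1)$'' is false; if it were true the limit would be exactly $\Lambda_r$ and $\mu_r$ would be $1$. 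What is true is only $\bigl||\omega_{j,n}|-|z_{j,n}|\bigr|\le \pi|z_{j,n}|/(n\,a_{j,n})$, and after summation this produces the nontrivial factor $\mu_r^{\pm 1}$ exactly as you derive two sentences later. Delete the misleading claim and your argument is the paper's. (Also: the Riemann-sum normalization gives $-\tfrac{1}{\pi}\int$, not $-\tfrac{2}{\pi}\int$, since $\Delta x_j = 2\pi/n$; this is a harmless slip.)
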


\begin{proof}
The value
$2 \arcsin(\kappa(1,a_{\max})/2)
= 0.171784\ldots$ is given
by Proposition
\ref{argumentlastrootJn},
and $a_{\max} = 5.8743\ldots$ by 
Theorem \ref{cercleoptiMM}. 
The variations of the 
Mahler measure ${\rm M}(\beta)$
of $\beta$ can be fairly large when
$\beta$ approaches $1^{+}$. 
On the contrary 
the lenticular Mahler measure
${\rm M}_{r}(\beta)$ is a continuous
function of $\beta$ on
$(1, \theta_{260}^{-1})$
except at the point discontinuities
which are
the Perron numbers
$\theta_{n}^{-1}$ by 
\eqref{mmmr1}, \eqref{mmmr2}
and \eqref{mmmr3},
writing
$n = \dyg(\beta)$ for short.

First, by Proposition
\ref{argumentlastrootJn}
let us observe that 
the Riemann-Stieltjes sum
$$
S(f,n) := - 2 \sum_{j = 1}^{J_n}
\frac{1}{n} \, \lo \bigl(2 \, \sin\bigl(\frac{\pi j}{n}\bigr)  \bigr)
=
\frac{-1}{\pi} \,
\sum_{j= 1}^{J_n}
(x_{j} - x_{j-1}) f(x_j)$$
with
$x_j = \frac{2 \pi j}{n}$
and
$f(x) := \lo \bigl(2 \, \sin\bigl(\frac{x}{2}\bigr)  \bigr)
$
converges to the limit
\begin{equation}
\label{SfnLAMBDAr}
\lim_{n \to \infty} S(f,n) ~=~ \frac{-1}{\pi} \,
\int_{0}^{0.171784\ldots} f(x) dx 
~=~ 
\lo \, \Lambda_r 
~=~ \lo (1.16302\ldots).
\end{equation}
This limit is a log-sine
integral 
\cite{borweinborweinstraubwan}
\cite{borweinstraub}.
Let us now show how $\Lambda_r$
is related to  
$\liminf_{\dyg(\beta) \to \infty}{\rm M}_{r}(\beta)$  
and 
$\limsup_{\dyg(\beta) \to \infty}{\rm M}_{r}(\beta)$
to deduce
\eqref{inekinf} and \eqref{ineksup}.

Taking only into account the lenticular 
zeroes of $P_{\beta}(z)$, which constitute
the lenticulus $\lc_{\beta}$,
from Theorem \ref{cercleoptiMM}
and
Proposition \ref{cercleoptiMMBUMP}, 
we obtain
$$
\lo {\rm M}_{r}(\beta) =
- \lo (\frac{1}{\beta}) - 
2 \sum_{j=1}^{J_n} \lo |\omega_{j,n}|
= \lo (\frac{1}{\beta}) - 
2 \sum_{j=1}^{J_n} \lo |(\omega_{j,n}-z_{j,n})
+z_{j,n}|
$$
\begin{equation}
\label{logmmmr}
=~  \lo (\beta) - 
2 \sum_{j=1}^{J_n} \lo |z_{j,n}|
- 2 \sum_{j=1}^{J_n} 
\lo \bigl|1 + \frac{\omega_{j,n}-z_{j,n}}
{z_{j,n}}\bigr|.
\end{equation}
Obviously the first term 
of \eqref{logmmmr} tends to 0 when
$\dyg(\beta)$ tends to $+\infty$
since $\displaystyle \lim_{n \to \infty} \theta_n = 1$
(Proposition \ref{thetanExpression}).
Let us turn to the 
third summation in \eqref{logmmmr}.
The $j$-th root 
$\omega_{j,n} \in \lc_{\beta}$ of
$f_{\beta}(z)$ 
is the unique root
of $f_{\beta}(z)$ in the disk
$D_{j,n}
=\{z \mid |\omega_{j,n}-z_{j,n}| < \frac{\pi |z_{j,n}|}{n \, a_{\max}}\}$.
From Theorem \ref{omegajnexistence} 
we have the more precise localization 
in $D_{j,n}$:
$|\omega_{j,n}-z_{j,n}|
< \frac{\pi |z_{j,n}|}{n \, a_{j, n}}$
for $j = \lceil v_n \rceil, 
\ldots, J_n$ (main angular sector),
with
$$D(\frac{\pi}{a_{j,n}}) 
=
\lo\Bigl[\frac{1 + B_{j,n}
-
\sqrt{1 - 6 B_{j,n} 
+ B_{j,n}^2}}{4 B_{j,n}}
\Bigr]
$$ 
and 
$B_{j,n} =
2 \sin(\frac{\pi j}{n})
\Bigl(
1 - \frac{1}{n} \lo (2 \sin(\frac{\pi j}{n}))
\Bigr)
$ (from \eqref{petitcercle_jmainDBJN}).
  
For $j = \lceil v_n \rceil, 
\ldots, J_n$
the following inequalities hold:
$$1 - \frac{1}{n} D(\frac{\pi}{a_{j,n}})
\leq
|1 + \frac{\omega_{j,n}-z_{j,n}}
{z_{j,n}}|
\leq 
1 + \frac{1}{n} D(\frac{\pi}{a_{j,n}}),$$
up to second order terms.  
Let us apply
the remainder Theorem
of alternating series:
for $x$ real,
$|x| < 1$,
$|\lo(1+x) - x | ~\leq~ \frac{x^2}{2}$. 
Then the third summation 
in \eqref{logmmmr} satisfies 
$$-2 \,\lim_{n \to \infty} 
\sum_{j=1}^{J_n}
\frac{1}{n}
\lo\Bigl[\frac{1 + 2 \sin(\frac{\pi j}{n})
-
\sqrt{1 - 12 \sin(\frac{\pi j}{n}) 
+ 4 (\sin(\frac{\pi j}{n}))^2}}{8 \sin(\frac{\pi j}{n})}
\Bigr]
$$ 
\begin{equation}
\label{inflenticulusMAHLERm}
\leq
\liminf_{n \to \infty}
\left(
- 2 \sum_{j=1}^{J_n} 
\lo \bigl|1 + \frac{\omega_{j,n}-z_{j,n}}
{z_{j,n}}\bigr|\right)
\end{equation}
and
$$
\limsup_{n \to \infty}
\left(- 2 \sum_{j=1}^{J_n} 
\lo \bigl|1 + \frac{\omega_{j,n}-z_{j,n}}
{z_{j,n}}\bigr|
\right) \leq
$$
\begin{equation}
\label{suplenticulusMAHLERm}
+ 2 \,\lim_{n \to \infty} 
\sum_{j=1}^{J_n}
\frac{1}{n}
\lo\Bigl[\frac{1 + 2 \sin(\frac{\pi j}{n})
-
\sqrt{1 - 12 \sin(\frac{\pi j}{n}) 
+ 4 (\sin(\frac{\pi j}{n}))^2}}{8 \sin(\frac{\pi j}{n})}
\Bigr]
\end{equation}
Let us convert the limits to
integrals. 
The Riemann-Stieltjes sum
$$
S(F,n) :=
- 2 \sum_{j = 1}^{J_n}
\frac{1}{n} \, \lo \Bigl[
\frac{1 + 2 \sin(\frac{\pi j}{n})
-
\sqrt{1 - 12 \sin(\frac{\pi j}{n}) 
+ 4 (\sin(\frac{\pi j}{n}))^2}}{8 \sin(\frac{\pi j}{n})}
\Bigr]
$$
$$
=
\frac{-1}{\pi} \,
\sum_{j= 1}^{J_n}
(x_{j} - x_{j-1}) F(x_j)$$
with
$x_j = \frac{2 \pi j}{n}$
and
$F(x) := \lo \Bigl[\frac{1 + 2 \sin(\frac{x}{2})
-
\sqrt{1 - 12 \sin(\frac{x}{2}) 
+ 4 (\sin(\frac{x}{2}))^2}}{8 \sin(\frac{x}{2})}
\Bigr]$
converges to the limit
\begin{equation}
\label{limiteF}
\lim_{n \to \infty} S(F,n) 
= \frac{-1}{\pi} \,
\int_{0}^{0.171784\ldots} F(x) dx 
= 
\lo \, \mu_r 
\quad \mbox{with}
\quad
\mu_r ~=~ 0.992337\ldots.
\end{equation}
From the inequalities 
\eqref{inflenticulusMAHLERm}
and
\eqref{suplenticulusMAHLERm},
with the limit
\eqref{limiteF} as an integral,
and 
by taking the exponential of
\eqref{logmmmr}, we
obtain 
the two multiplicative factors
$\mu_r$
and
$\mu_{r}^{-1}$
of $\Lambda_r$
in
\eqref{inekinf}, resp. in
\eqref{ineksup}.

Let us show that the second 
summation in \eqref{logmmmr}
gives the term
$\Lambda_r$ 
in the 
inequalities \eqref{inekinf} 
and \eqref{ineksup}, 
when $n$ tends to infinity.
From \eqref{SfnLAMBDAr} 
it will suffice to show
that
\begin{equation}
\label{limmm}
\lim_{n \to \infty} S(f,n) =   
-2 \, \lim_{n \to \infty} \sum_{j=1}^{J_n} \lo |z_{j,n}| 
\end{equation}
The identity \eqref{limmm}
only concerns the roots
of the trinomials $G_n$. It was already 
proved to be true, but 
with $\lfloor n/6\rfloor$ 
instead of $J_n$ as maximal index $j$, 
in the summation,
in \cite{vergergaugry6} \S 4.2, pp 111--115.
The arguments of the proof are the same,
the domain of integration being now
$(0, \lim_{n \to \infty} 2 \pi \frac{J_n}{n}]$
given by
Proposition \ref{argumentlastrootJn}. 
\end{proof}

\subsection{Poincar\'e asymptotic expansion of the lenticular Mahler measure}
\label{S5.6}

The aim of this subsection is to prove 
Theorem \ref{Lrasymptotictheorem}, 
in the continuation of the last paragraph.

The logarithm of the
lenticular Mahler measure 
${\rm M}_{r}(\beta)$
of
$\beta > 1$, with
$\dyg(\beta) \geq 260$, given 
by \eqref{logmmmr}, 
admits the lower bound 
\begin{equation}
\label{logmmmrMINORANT}
L_{r}(\beta) 
=  \lo (\beta) - 
2 \sum_{j=1}^{J_n} \lo |z_{j,n}|
- 2 \sum_{j=1}^{\lfloor v_n \rfloor} 
\lo (1 + \frac{\pi}{n \, a_{max}})
- 2 \sum_{j=\lceil v_n \rceil}^{J_n} 
\lo (1 + \frac{\pi}{n \, a_{j,n}})
\end{equation}
which 
is only a function
of $n = \dyg(\beta)$, where
$(a_{j,n})$ is given 
by Theorem \ref{omegajnexistence},
the sequence $(v_n)$ by the Appendix,
and $J_n$ by Definition
\ref{Jndefinition}
and Proposition \ref{argumentlastrootJn}. 
From \eqref{inflenticulusMAHLERm},
\eqref{limiteF}
and \eqref{limmm}, the limit is
$\lim_{\dyg(\beta) \to \infty}
L_{r}(\beta) = \lo \Lambda_r + \lo \mu_r$.
In Theorem \ref{Lrasymptotictheorem}, 
we will gather the asymptotic contributions 
of each term and obtain  
the asymptotic expansion 
of $L_{r}(\beta)$
as a function of $n$.

(i) First term in \eqref{logmmmrMINORANT}:
from Lemma \ref{remarkthetan}
and 
Theorem \ref{betaAsymptoticExpression},
\begin{equation}
\label{sz1}
\lo (\beta) = 
\frac{\lo n}{n}(1 - \lambda_n) +
\frac{1}{n} O\left(
\left(
\frac{\lo \lo n}{\lo n}
\right)^2 
\right)
=
O\left(
\frac{\lo n}{n}
\right);
\end{equation}

(ii) second term in \eqref{logmmmrMINORANT}: 
from Proposition
\ref{zedeJImodulesORDRE3},
$\displaystyle
\sum_{j= \lceil v_n \rceil}^{J_n}
\lo |z_{j,n}| =$
$$
\sum_{j= \lceil v_n \rceil}^{J_n} \lo \Bigl(
1 + \frac{1}{n} \, \lo \bigl(2 \, \sin\bigl(\frac{\pi j}{n}\bigr)  \bigr)
+ \frac{1}{2 n}
\left( \frac{\lo \lo n}{\lo n} \right)^2
+
 \frac{1}{n} O\left(
\frac{(\lo \lo n)^2}{(\lo n)^3}
\right) \Bigr)$$
with the constant 1 involved in the Big O. Let us 
apply the remainder Theorem of alternating series: 
for $x$ real, $|x|<1$,
$\left| \lo (1+x)  - x \right| \leq 
\frac{x^2}{2}$.
Then
$$\left|
\sum_{j= \lceil v_n \rceil}^{J_n}
\lo |z_{j,n}|
-
\sum_{j= \lceil v_n \rceil}^{J_n}
\frac{1}{n} \, \lo \bigl(2 \, \sin\bigl(\frac{\pi j}{n}\bigr)  \bigr)
- \sum_{j= \lceil v_n \rceil}^{J_n}
\frac{1}{2 n}
\left( \frac{\lo \lo n}{\lo n} \right)^2
\right| $$
$$\leq
\sum_{j= \lceil v_n \rceil}^{J_n}
\frac{1}{n} \left| O
\left( \frac{(\lo \lo n)^2}{(\lo n)^3} \right) \right|
$$ 
\begin{equation}
\label{mmm03_mieux}
+
\frac{1}{2} 
\sum_{j= \lceil v_n \rceil}^{J_n}
\frac{1}{n^2}
\left[
\lo \bigl(2 \, \sin\bigl(\frac{\pi j}{n}\bigr)  \bigr)
+ 
\frac{1}{2}
\left( \frac{\lo \lo n}{\lo n} \right)^2
+
O\left(
\frac{(\lo \lo n)^2}{(\lo n)^3} \right)
\right]^2 .
\end{equation}
For $1 \leq j \leq J_n$,
the inequalities
$0 < 2 \sin(\pi j/n)\leq 1$
and
$\lo(2 \sin(\pi j/n))<0$ hold.
Then 
$|\lo(2 \sin(\pi j/n)|
\leq 
|\lo(2 \sin(\pi/n))|=O(\lo n)
$.
On the other hand, 
the two $O(~)$s in the rhs of
\eqref{mmm03_mieux}
involve a constant which 
does not depend upon $j$.
Therefore, from Proposition 
\ref{argumentlastrootJn},
the rhs of
\eqref{mmm03_mieux} is
$$=
O\left(
\Bigl(\frac{(\lo \lo n)^2}{(\lo n)^3}
\Bigr)
\right)
+ O\left(
\frac{\lo^2 n}{n}
\right)
=
O\left(
\Bigl(\frac{(\lo \lo n)^2}{(\lo n)^3}
\Bigr)
\right).
$$ 
On the other hand, 
the two regimes of asymptotic
expansions
in the Bump give (Appendix)
$$  
\sum_{j= \lceil u_n \rceil}^{\lfloor v_n \rfloor}
\lo |z_{j,n}|
= O\left(\frac{(\lo n)^{2+\epsilon}}{n}
\right),\quad 
\sum_{j= 1}^{\lfloor u_n \rfloor}
\lo |z_{j,n}|
=
O\left(\frac{(\lo n)^2}{n}
\right)$$
and
$$\sum_{j=\lceil \lo n \rceil}^{\lceil v_n \rceil}
\frac{2}{n} \lo \Bigl(
2 \sin(\frac{\pi j}{n})
\Bigr)
=
O\left(
\frac{(\lo n)^{2+\epsilon}}{n}
\right).
$$
Therefore
\begin{equation}
\label{sz2}
-2
\sum_{j=1}^{J_n}
\lo |z_{j,n}|
=
-
\sum_{j=\lceil \lo n \rceil}^{J_n}
\frac{2}{n} \, 
\lo \bigl(2 \, \sin\bigl(\frac{\pi j}{n}\bigr)  
\bigr)
+ O\left(
\left( \frac{\lo \lo n}{\lo n} \right)^2
\right)
\end{equation}
with the constant 
$\frac{1}{2 \pi} \arcsin(\frac{\kappa}{2})$ 
(from Proposition \ref{argumentlastrootJn})
involved in the Big O. 

(iii) third term in \eqref{logmmmrMINORANT}: 
with the definition
of $\epsilon$ and $(v_n)$ (Appendix),
\begin{equation}
\label{sz3}
- 2 \sum_{j=1}^{\lfloor v_n \rfloor} 
\lo (1 + \frac{\pi}{n \, a_{max}})
=
O\left(
\frac{(\lo n)^{1+\epsilon}}{n}
\right);
\end{equation}

(iv) fourth term in \eqref{logmmmrMINORANT}: 
from the 
Theorem of alternating series,
\begin{equation}
\label{ineqmu_r}
|\sum_{j=\lceil v_n \rceil}^{J_n} 
\lo (1 + \frac{\pi}{n \, a_{j,n}})
- \sum_{j=\lceil v_n \rceil}^{J_n}
\frac{1}{n}
D(\frac{\pi}{a_{j,n}})
- \sum_{j=\lceil v_n \rceil}^{J_n}
\frac{1}{n}
{\rm tl}(\frac{\pi}{a_{j,n}})|
\leq
\frac{1}{2}
\sum_{j=\lceil v_n \rceil}^{J_n}
\left(
\frac{\pi}{n \, a_{j,n}}
\right)^2 .
\end{equation}
The terminant
${\rm tl}(\frac{\pi}{ a_{j,n}})
=
O \Bigl(
\frac{(\lo \lo n)^2}{(\lo n)^3}
\Bigr)$
is given by 
\eqref{petitcercle_jmainTL_majorant}.
From Theorem \ref{omegajnexistence}, 
with $B_{j,n} =
2 \sin(\frac{\pi j}{n})
\Bigl(
1 - \frac{1}{n} \lo (2 \sin(\frac{\pi j}{n}))
\Bigr)
$,
it is easy to show
$$
D(\frac{\pi}{a_{j,n}}) 
=
\lo\Bigl[\frac{1 + B_{j,n}
-
\sqrt{1 - 6 B_{j,n} 
+ B_{j,n}^2}}{4 B_{j,n}}
\Bigr]$$
$$
= 
\lo\Bigl[\frac{1 + 
2 \sin(\frac{\pi j}{n})
-
\sqrt{1 - 12 \sin(\frac{\pi j}{n})
+ 4 \sin(\frac{\pi j}{n})^2}}
{8 \sin(\frac{\pi j}{n})}
\Bigr]
+O\left(
\frac{\lo n}{n}
\right).
$$ 
The rhs of \eqref{ineqmu_r} is
$= O\left(\frac{1}{n}
\right)$.
Then~
$- 2 \, \sum_{j=\lceil v_n \rceil}^{J_n} 
\lo (1 + \frac{\pi}{n \, a_{j,n}})
=$
\begin{equation}
\label{sz4}
\sum_{j=\lceil v_n \rceil}^{J_n}
\frac{-2}{n}\lo\Bigl[\frac{1 + 
2 \sin(\frac{\pi j}{n})
-
\sqrt{1 - 12 \sin(\frac{\pi j}{n})
+ 4 \sin(\frac{\pi j}{n})^2}}
{8 \sin(\frac{\pi j}{n})}
\Bigr]
+ O\Bigl(\frac{(\lo \lo n)^2}{(\lo n)^3}
\Bigr) .
\end{equation}
The summation 
$\sum_{j=\lceil v_n \rceil}^{J_n}$ can 
be replaced by 
$\sum_{j=\lceil \lo n \rceil}^{J_n}$. 
Indeed, from the definition of the sequence
$(v_n)$
(Appendix),
$$\sum_{j=\lceil \lo n \rceil}^{\lceil v_n \rceil}
\frac{2}{n} \, 
\lo\Bigl[\frac{1 + 
2 \sin(\frac{\pi j}{n})
-
\sqrt{1 - 12 \sin(\frac{\pi j}{n})
+ 4 \sin(\frac{\pi j}{n})^2}}
{8 \sin(\frac{\pi j}{n})}
\Bigr]
= 
O\left(
\frac{(\lo n)^{2+\epsilon}}{n}
\right).$$
Inserting the 
contributions 
\eqref{sz1} 
\eqref{sz2}
\eqref{sz3}
\eqref{sz4} in 
\eqref{logmmmrMINORANT} leads to
$$
L_{r}(\beta) =
 \lo \Lambda_r + \lo \mu_r
+
\Bigl(-
\lo \Lambda_r
-
\sum_{j=\lceil \lo n \rceil}^{J_n}
\frac{2}{n} \, \lo \bigl(2 \, \sin\bigl(\frac{\pi j}{n}\bigr)  \bigr)
\Bigr)$$
$$
+
\Bigl(-
\lo \mu_r
-
\sum_{j=\lceil \lo n \rceil}^{J_n}
\frac{2}{n} \, \lo \bigl(
\frac{1 + 
2 \sin(\frac{\pi j}{n})
-
\sqrt{1 - 12 \sin(\frac{\pi j}{n})
+ 4 \sin(\frac{\pi j}{n})^2}}
{8 \sin(\frac{\pi j}{n})}
\bigr)  \bigr)
\Bigr)
$$
\begin{equation}
\label{esti}
+O\Bigl(
\Bigl( \frac{\lo \lo n}{\lo n} \Bigr)^2
\Bigr)
\end{equation}
with the constant 
$\frac{1}{2 \pi} \arcsin(\frac{\kappa}{2})$ 
involved in the Big O.
Let us denote by
$\Delta_1$ the first
term within brackets,
resp.
$\Delta_2$ the second term
within brackets,  in \eqref{esti}
so that
\begin{equation}
\label{LrDelta1Delta2}
D(L_{r}(\beta)) = \lo ( \Lambda_r \mu_r ) + \Delta_1 + \Delta_2.
\end{equation}

\noindent
{\em Calculation of $|\Delta_1|$:}
let us estimate 
and give an upper bound of $|\Delta_1| =$
\begin{equation}
\label{diffloglambda}
\left|
\frac{-1}{\pi}\int_{0}^{2 \arcsin(\kappa/2)}
\lo \Bigl(
2 \sin (x/2)
\Bigr) dx 
-
\sum_{j=\lceil \lo n \rceil}^{J_n}
\frac{-2}{n} \, \lo \bigl(2 \, \sin\bigl(\frac{\pi j}{n}\bigr)  \bigr)
\right|.
\end{equation}
In \eqref{diffloglambda} the sums are truncated 
Riemann-Stieltjes sums of $\lo \Lambda_r$, the 
integral being 
$\lo \Lambda_r$. Referring to Stoer and Bulirsch 
(\cite{stoerbulirsch}, pp 126--128) we now replace 
$\lo \Lambda_r$ by an approximate value obtained 
by integration of an interpolation polynomial
by the methods of Newton-Cotes; we just need to know this approximate value
up to 
$O\bigl(
\Bigl( \frac{\lo \lo n}{\lo n} \Bigr)^2
\bigr)$. 
Up to 
$O\bigl(
\Bigr( \frac{\lo \lo n}{\lo n} 
\Bigr)^2
\bigr)$, we will show that: 

\noindent
(i--1) {\it an upper bound of 
\eqref{diffloglambda}  is ($\kappa$ stands for
$\kappa(1,a_{max})$ as in} Proposition
\ref{argumentlastrootJn})
$$ \frac{\arcsin(\kappa/2)}{\pi} \, \frac{1}{\lo n},$$ 
(ii--1) {\it the approximate value of $\lo \Lambda_r$ is independent of the integer $m$ (i.e. step length) used in the Newton-Cotes formulas, assuming the weights $(\alpha_q)_{q=0, 1, \ldots, m}$ associated with $m$ all positive. Indeed, if $m$ is arbitrarily large, the estimate of the integral should be very good by these methods, ideally exact at the limit ($m ``=" +\infty$)}. 

\vspace{0.2cm}

\noindent
{\it Proof of (i--1)}: 
we consider the decomposition of the interval of integration as

\noindent
$\bigl(0, 2 \arcsin(\kappa/2)\bigr] =$ 
\begin{equation}
\label{intervaldecomposition}
\bigl(0, \frac{2 \pi \lceil \lo n \rceil}{n} \bigr]
\cup \Bigl(
\bigcup_{j=\lceil \lo n \rceil}^{J_n - 1}
\bigl[
\frac{2 \pi j}{n}, \frac{2 \pi (j+1)}{n}
\bigr]
\Bigr)
\cup 
\Bigl[\frac{2 \pi J_n}{n}, 
2 \arcsin(\kappa/2)\Bigr]
\end{equation}
and proceed by calcutating the estimations of 
\begin{equation}
\label{estijj}
\left|
\frac{-1}{\pi}\int_{\frac{2 \pi j}{n}}^{\frac{2 \pi (j+1)}{n}}
\lo \Bigl(
2 \sin (x/2)
\Bigr) dx
-
\frac{-2}{n} \, \lo \bigl(2 \, \sin\bigl(\frac{\pi j}{n}\bigr)  \bigr)
\right|
\end{equation}
on the intervals $\mathcal{I}_j := \bigl[
\frac{2 \pi j}{n}, \frac{2 \pi (j+1)}{n}
\bigr]$, $j=
\lceil \lo n \rceil, 
\lceil \lo n \rceil + 1, \ldots,
J_n - 1$.
On each such $\mathcal{I}_j$, the function
$f(x)$ is approximated by its interpolation polynomial
$P_{m}(x)$, where $m \geq 1$ is the number of subintervals forming an uniform partition of
$\mathcal{I}_j$ given by
\begin{equation}
\label{yqinterpolation}
y_q = \frac{2 \pi j}{n} + q \frac{2 \pi}{n} \frac{1}{m}, \qquad q = 0, 1, \ldots, m, 
\end{equation}
of step length 
$h_{NC} := \frac{2 \pi}{n \, m}$, and
$P_m$ the interpolating polynomial 
of degree $m$ or less with
$$P_{m}(y_q) = f(y_q), \qquad \mbox{for}~~
q = 0, 1, \ldots, m.$$
The Newton-Cotes formulas
$$\int_{\frac{2 \pi j}{n}}^{\frac{2 \pi (j+1)}{n}} 
P_{m}(x) dx = h_{NC} \, \sum_{q=0}^{m} \alpha_q f(y_q)$$
provide approximate values of 
$\int_{\frac{2 \pi j}{n}}^{\frac{2 \pi (j+1)}{n}} f(x)
dx$, 
where the $\alpha_q$ are the weights obtained by 
integrating 
the Lagrange's interpolation polynomials.
Steffensen \cite{steffensen} (\cite{stoerbulirsch}, p 127) 
showed that the approximation error may be expressed as 
follows:
$$\int_{\frac{2 \pi j}{n}}^{\frac{2 \pi (j+1)}{n}} P_{m}(x) dx - \int_{\frac{2 \pi j}{n}}^{\frac{2 \pi (j+1)}{n}} f(x)dx = h_{NC}^{p+1} \cdot K \cdot f^{(p+1)}(\xi),
\qquad \xi \in \stackrel{o}{\mathcal{I}_j},$$
where $p \geq 2$ is an integer related to $m$, and $K$ a constant.

Using \cite{stoerbulirsch}, p. 128, and $m=1$, the method 
being the ``Trapezoidal rule", we have: 
``$p=2$, $K = 1/12, 
\alpha_0 = \alpha_1 = 1/2$". Then
\eqref{estijj}
is estimated by
$$\left|
\frac{1}{2} \frac{2 \pi}{n}
\left[
\frac{-1}{\pi} \, \lo \bigl(2 \, \sin\bigl(\frac{\pi j}{n}\bigr)  \bigr) +
\frac{-1}{\pi} \, \lo \bigl(2 \, \sin\bigl(\frac{\pi (j+1)}{n}\bigr)  \bigr)
\right]
-
\frac{-2}{n} \, \lo \bigl(2 \, \sin\bigl(\frac{\pi j}{n}\bigr)  \bigr)\right|$$
\begin{equation}
\label{majo}
= \frac{1}{n}
\left|
\, \lo \bigl(2 \, \sin\bigl(\frac{\pi j}{n}\bigr)  \bigr) -
 \, \lo \bigl(2 \, \sin\bigl(\frac{\pi (j+1)}{n}\bigr)  \bigr)
)\right|
=
\frac{2 \pi}{n^2}
\left|
\frac{\cos(\xi / 2)}{2 \sin (\xi / 2)}
\right|
\leq 
\frac{1}{n} \, \frac{1}{\lo n} 
\end{equation}
for some $\xi \in \stackrel{o}{\mathcal{I}_j}$, for large $n$.
The (Steffensen's) approximation error 
``$h_{NC}^3 \cdot (1/12) \cdot f^{(2)}(\xi)$"
for the trapezoidal rule, relative to
\eqref{estijj}, is
\begin{equation}
\label{steff+}
\frac{1}{\pi} \,
\left(\frac{2 \pi}{n}\right)^3 \, \frac{1}{12}
\, \left|
\frac{-1}{4 \sin^2(\xi/2)}
\right| \leq
\frac{1}{6 n} \frac{1}{(\lo n)^2} .
\end{equation}
By Proposition \ref{argumentlastrootJn}  
the integral
$$\left|
\frac{-1}{\pi}\int_{\frac{2 \pi J_n}{n}}^{2 \arcsin{\kappa/2}}
\lo \bigl(
2 \sin (x/2)
\bigr) dx
\right|\qquad \mbox{is a} \qquad 
O\bigl(\frac{1}{n}\bigr).
$$
Then, summing up the contributions of all the intervals 
$\mathcal{I}_j$, we obtain the 
following upper bound of \eqref{diffloglambda} 
\begin{equation}
\label{cekireste}
\left|
\frac{-1}{\pi}\int_{0}^{(2 \pi \lo n)/n}
\lo \bigl(
2 \sin (x/2)
\bigr) dx
\right|
+
\frac{\arcsin(\kappa/2)}{\pi} \, \frac{1}{\lo n}.
\end{equation}
with global (Steffensen's) approximation error, from \eqref{steff+}, 
$$O(\frac{1}{(\lo n)^2})$$
By integrating by parts the integral 
in \eqref{cekireste}, for large $n$, it is easy to 
show that this integral is 
$=O\left(\frac{(\lo n)^2}{n}\right)$. 
We deduce the 
following asymptotic expansion
\begin{equation}
\Delta_1 =
\frac{\rc}{\lo n}
+
O(\frac{1}{(\lo n)^2})
\qquad
\mbox{with}\quad |\rc| < \frac{\arcsin(\kappa/2)}{\pi}.
\end{equation}

\noindent
{\it Proof of (ii--1)}: Let us show that 
the upper bound 
$\frac{\arcsin(\kappa/2)}{\pi} \, \frac{1}{\lo n}$
is independent of the integer $m$ used, 
once assumed the positivity
of the weights $(\alpha_q)_{q=0, 1, \ldots, m}$. 
For $m \geq 1$ fixed, this is merely a consequence
of the relation between the weights in 
the Newton-Cotes formulas. 
Indeed, we have $\sum_{q=0}^{m} \alpha_q = m$, and therefore
$$
\left|\int_{\frac{2 \pi j}{n}}^{\frac{2 \pi (j+1)}{n}} P_{m}(x) dx - h_{NC} m f(y_0)\right|= 
h_{NC} \, 
\left|\sum_{q=0}^{m} \alpha_q (f(y_q) - f(y_0))\right|
$$
$$
\leq 
h_{NC} \, 
\bigl(\sum_{q=0}^{m} | \alpha_q | \bigr) 
\sup_{\xi \in \mathcal{L}_j}
\left| f'(\xi)
\right|.$$
Since $h_{NC} m = \frac{2 \pi}{n}$
and that
the inequality 
$\sup_{\xi \in \mathcal{L}_j}
\left| f'(\xi)
\right| \leq |f'((2 \pi \lo n)/n)|$ holds
uniformly for all $j$,
we deduce the same upper bound
as in \eqref{majo} for the Trapezoidal rule.
Summing up the contributions over all the
intervals $\mathcal{I}_j$, we obtain
the same upper bound
\eqref{cekireste} of \eqref{diffloglambda} as before.

As for the (Steffensen's) approximation errors, they make use of the successive derivatives
of the function $f(x) = \lo(2 \sin (x/2))$. We have: 
$$f'(x) = \frac{\cos(x/2)}{2 \sin(x/2)}, ~~
f''(x) = -\frac{1}{4 \sin^{2}(x/2)},
~~f'''(x) = \frac{\cos(x/2)}{4 \sin^{3}(x/2)}\ldots$$
Recursively, it is easy to show that
the $q$-th derivative of $f(x)$, $q \geq 1$, is a rational function of the two
quantities $\cos(x/2)$ 
and $\sin(x/2)$ with bounded numerator on the interval $(0, \pi/3]$, and
a denominator which is $\sin^{q}(x/2)$.
For the needs of majoration in the Newton-Cotes formulas
over each interval of the collection
$(\mathcal{I}_j)$, 
this denominator takes its smallest value
at $\xi = (2 \pi \lceil \lo n \rceil)/n$.
Therefore, for large $n$, 
the (Steffensen's) approximation error
``$h_{NC}^{p+1} \cdot K \cdot f^{(p)}(\xi)$"
on one interval $\mathcal{I}_j$ is
$$O\left(\Bigl(\frac{2 \pi}{n m}\Bigr)^{p+1}
\cdot K \cdot 
\frac{n^p}{(\pi \, \lo n)^p}\right) = 
O\left(\frac{1}{
n (\lo n)^p}
\right).$$
By summing up over the intervals
$\mathcal{I}_j$, we obtain
the global (Steffensen's) approximation error ($p \geq 2$)
$$O\left(\frac{1}{(\lo n)^p}\right)
\qquad \mbox{which is a}
\qquad 
O\left(
\left( \frac{\lo \lo n}{\lo n} \right)^2
\right).$$

\noindent
{\em Calculation of $|\Delta_2|$}:
we proceed as above for establishing an upper bound of
$$|\Delta_2|
=
\Bigl|
\frac{-1}{\pi}
\int_{0}^{2 \arcsin(\frac{\kappa(1,a_{\max})}{2})}
\! \!\lo\Bigl[\frac{1 + 2 \sin(\frac{x}{2})
-
\sqrt{1 - 12 \sin(\frac{x}{2}) 
+ 4 (\sin(\frac{x}{2}))^2}}{8 \sin(\frac{x}{2})}
\Bigr] dx\bigr.$$
\begin{equation}
\label{diffloglambda_plus}
-
\sum_{j=\lceil \lo n \rceil}^{J_n}
\frac{- 2}{n} \, \lo \bigl(
\frac{1 + 
2 \sin(\frac{\pi j}{n})
-
\sqrt{1 - 12 \sin(\frac{\pi j}{n})
+ 4 \sin(\frac{\pi j}{n})^2}}
{8 \sin(\frac{\pi j}{n})}
\bigr)  
\Bigr|
\end{equation}

In \eqref{diffloglambda_plus} the sums are truncated 
Riemann-Stieltjes sums of $\lo \mu_r$, the 
integral being 
$\lo \mu_r$. 
As above, the methods of Newton-Cotes
(Stoer and Bulirsch 
(\cite{stoerbulirsch}, pp 126--128) 
will be applied 
to compute an
approximate value of the integral up to 
$O\bigl(
\Bigl( \frac{\lo \lo n}{\lo n} \Bigr)^2
\bigr)$. 
Up to 
$O\bigl(
\Bigr( \frac{\lo \lo n}{\lo n} 
\Bigr)^2
\bigr)$, we will show that:

(i--2) {\it an upper bound of 
\eqref{diffloglambda_plus}  is ($\kappa$ stands for
$\kappa(1,a_{max})$ as in} Proposition
\ref{argumentlastrootJn})

\begin{equation}
\label{delta2upperbound}
 \frac{4 \,\arcsin(\kappa/2)}{\kappa \sqrt{2 \kappa (3-\kappa) \lo (1/\kappa)}} \, \frac{1}{\sqrt{n}}
\qquad \mbox{{\it which is a}}
~~
O\bigl(
\Bigr( \frac{\lo \lo n}{\lo n} 
\Bigr)^2
\bigr),
\end{equation}
{\it in other terms that 
\eqref{diffloglambda_plus}
is equal to zero 
up to} 
$O\bigl(
\Bigr( \frac{\lo \lo n}{\lo n} 
\Bigr)^2
\bigr)$,

(ii--2) {\it the approximate value of 
$\lo \mu_r$ is independent of the step length $m$ 
used in the Newton-Cotes formulas, 
assuming the weights $(\alpha_q)_{q=0, 1, \ldots, m}$ 
associated with $m$ all positive}. 

\vspace{0.2cm}

\noindent
{\it Proof of (i--2)}: The decomposition
of the interval 
of integration
$\bigl(0, 2 \arcsin(\kappa/2)\bigr]$
remains the same as above, given by
\eqref{intervaldecomposition}.
Let us treat the
complete interval 
of integration
$\bigl(0, 2 \arcsin(\kappa/2)\bigr]$
by subintervals.
We first
proceed by 
estimating an upper bound of
$$\Bigl|
\frac{-1}{\pi}
\int_{\frac{2 \pi j}{n}}^{\frac{2 \pi (j+1)}{n}}
\! \!\lo\Bigl[\frac{1 + 2 \sin(\frac{x}{2})
-
\sqrt{1 - 12 \sin(\frac{x}{2}) 
+ 4 (\sin(\frac{x}{2}))^2}}{8 \sin(\frac{x}{2})}
\Bigr] dx\bigr.$$
\begin{equation}
\label{estijj_plus}
-
\frac{- 2}{n} \, \lo \bigl(
\frac{1 + 
2 \sin(\frac{\pi j}{n})
-
\sqrt{1 - 12 \sin(\frac{\pi j}{n})
+ 4 \sin(\frac{\pi j}{n})^2}}
{8 \sin(\frac{\pi j}{n})}
\bigr)  
\Bigr|
\end{equation}
on the intervals $\mathcal{I}_j := \bigl[
\frac{2 \pi j}{n}, \frac{2 \pi (j+1)}{n}
\bigr]$, $j=
\lceil \lo n \rceil, 
\lceil \lo n \rceil + 1, \ldots,
J_n - 1$. 
Let
$$
F(x):= \lo\Bigl[\frac{1 + 2 \sin(\frac{x}{2})
-
\sqrt{1 - 12 \sin(\frac{x}{2}) 
+ 4 (\sin(\frac{x}{2}))^2}}{8 \sin(\frac{x}{2})}
\Bigr].
$$
On each interval
$\mathcal{I}_j$ 
the function
$F(x)$ is approximated by its interpolation polynomial
(say) $P_{F, m}(x)$, where $m \geq 1$ is the number of 
subintervals 
of
$\mathcal{I}_j$ given by their extremities $y_q$ 
by \eqref{yqinterpolation},
of step length 
$h_{NC} := \frac{2 \pi}{n \, m}$, and
$P_{F, m}$ the interpolating polynomial 
of degree $m$ or less with
$$P_{F, m}(y_q) = F(y_q), \qquad \mbox{for}~~
q = 0, 1, \ldots, m.$$
The Newton-Cotes formulas
\begin{equation}
\label{NCformula}
\int_{\frac{2 \pi j}{n}}^{\frac{2 \pi (j+1)}{n}} 
P_{F, m}(x) dx = h_{NC} \, \sum_{q=0}^{m} \alpha_q F(y_q)
\end{equation}
provide the approximate values 
$\int_{\frac{2 \pi j}{n}}^{\frac{2 \pi (j+1)}{n}} F(x)
dx$, 
where the $\alpha_q$s are the weights obtained by 
integrating 
the Lagrange's interpolation polynomials.
Using \cite{stoerbulirsch}, p. 128, and $m=1$, the method 
being the ``Trapezoidal rule", we have: $p=2$, $K = 1/12, 
\alpha_0 = \alpha_1 = 1/2$. Then
\eqref{estijj_plus}
is estimated by
$$\left|
\frac{1}{2} \frac{2 \pi}{n}
\left[
\frac{-1}{\pi} \, F\bigl(\frac{2 \pi j}{n}\bigr)  +
\frac{-1}{\pi} \, F\bigl(\frac{2\pi (j+1)}{n}\bigr) 
\right]
-
\frac{-2}{n} \,  F\bigl(\frac{2 \pi j}{n}\bigr) 
\right|$$
\begin{equation}
\label{majoF}
= \frac{1}{n}
\left|
\, F\bigl(\frac{2 \pi j}{n}\bigr) -
 \, F\bigl(\frac{2 \pi (j+1)}{n}\bigr)
)\right|
=
\frac{2 \pi}{n^2}
\left|F'(\xi)
\right|
\end{equation}
for some $\xi \in \stackrel{o}{\mathcal{I}_j}$, 
for large $n$. 
As in 
Remark 
\ref{openingangle_sin_quadratic_alginteger},
let $x = 2 \arcsin(\kappa/2)$.
The derivative
\begin{equation} 
\label{derivativeFF}
F'(y)= \frac{\cos(y/2)
(-2 \sin(y/2) + 1 -
\sqrt{4 \sin^{2}(y/2) - 12 \sin(y/2)
+1}}{4 \sin(y/2) \sqrt{4 \sin^{2}(y/2) - 12 \sin(y/2)
+1})} \, > 0
\end{equation}
is increasing on the interval 
$(0, x)$.
When $y=\frac{2 \pi J_n}{n} < x$ tends to $x^{-}$,
by Proposition \ref{argumentlastrootJn}
and Remark \ref{openingangle_sin_quadratic_alginteger},
since
$0 < \sqrt{4 \sin^{2}(y/2) - 12 \sin(y/2)
+1} \leq 1$ is close to zero for 
$y=2 \pi J_n /n$, 
the following
inequality holds
\begin{equation}
\label{zouzou1}
F'(\frac{2 \pi J_n}{n})
\leq \frac{2/\kappa}{ 
\sqrt{4 \sin^{2}(\frac{\pi J_n}{n}) - 
12 \sin(\frac{\pi J_n}{n})+1}} .
\end{equation}
The upper bound is a function 
of $n$ which comes from
the asymptotic expansion of 
$\frac{\pi J_n}{n} - \frac{x}{2}$, as
deduced from \eqref{Jnasymptotic}.
Indeed, from \eqref{Jnasymptotic}
and using Remark 
\ref{openingangle_sin_quadratic_alginteger} (ii), 
$$4 \sin^{2}(\frac{\pi J_n}{n}) 
\!-\! 
12 \sin(\frac{\pi J_n}{n})
\!+\!1 
\!=\! 
(\frac{\pi J_n}{n} - \frac{x}{2})
[8 \sin(x/2) \cos(x/2) -12 \cos(x/2)] 
\!+\! O(\frac{1}{n^2})
$$
\begin{equation}
\label{zouzou2}
= \frac{2 \kappa (3-\kappa) \lo (1/\kappa)}{n}
+
\frac{1}{n}
O\bigl(
\bigl(
\frac{\lo \lo n}{\lo n}
\bigr)^2
\bigr)
\end{equation}
From \eqref{zouzou1}
and \eqref{zouzou2} we deduce 
$|F'(\frac{2 \pi J_n}{n})|
<  \frac{(2/\kappa)}{\sqrt{2 \kappa (3-\kappa) \lo (1/\kappa)}} \sqrt{n}$.
From \eqref{majoF}, we deduce
the following upper bound of 
\eqref{estijj_plus} on each $\mathcal{I}_j := \bigl[
\frac{2 \pi j}{n}, \frac{2 \pi (j+1)}{n}
\bigr]$:
\begin{equation}
\label{pitou}
\frac{4 \pi}{\kappa \sqrt{2 \kappa (3-\kappa) \lo (1/\kappa)}} \,\frac{1}{n^{3/2}} .
\end{equation}
By summing up the contributions, 
for
$j=
\lceil \lo n \rceil, \ldots, J_n - 1$,
from \eqref{pitou}
and the asymptotics of $J_n$ given
by \eqref{Jnasymptotic},
we deduce 
the upper bound \eqref{delta2upperbound}
of $|\Delta_2|$.

Let us prove that the method of 
numerical integration we use leads to 
a (Steffensen's) approximation error 
which is a $O\bigl(
\bigl(
\frac{\lo \lo n}{\lo n}
\bigr)^2
\bigr)$.
The (Steffensen's) approximation error 
``$h_{NC}^3 \cdot (1/12) \cdot F^{(2)}(\xi)$"
for the trapezoidal rule
applied to \eqref{estijj_plus} 
(\cite{stoerbulirsch}, p. 127--128) is
\begin{equation}
\label{steff++}
\frac{1}{\pi} \,
\left(\frac{2 \pi}{n}\right)^3 \, \frac{1}{12}
\, \left|
F^{(2)}(\xi)
\right| 
\qquad \mbox{
for some~} 
\xi \in \stackrel{o}{\mathcal{I}_j}.
\end{equation}
The second derivative $F''(y)$
is positive and increasing on $(0,\frac{2 \pi J_n}{n})$.
It is easy to show that there exists a constant
$C > 0$ such that
$$F''(\frac{2 \pi J_n}{n})
\leq \frac{C}{ 
(4 \sin^{2}(\frac{\pi J_n}{n}) - 
12 \sin(\frac{\pi J_n}{n})+1)^{3/2}} .$$
Using the asymptotic expansion of $J_n$ 
(\eqref{Jnasymptotic};
Remark 
\ref{openingangle_sin_quadratic_alginteger} (ii);
\eqref{zouzou2}),
there exist 
$C_1 > 0$ such that
\begin{equation}
\label{steff++majo}
F''(\frac{2 \pi J_n}{n})
\leq  C_1 \, n^{3/2} .
\end{equation}
From \eqref{steff++} and \eqref{steff++majo}, 
summing up 
the contributions for
$j= \lceil \lo n \rceil, 
 \ldots,
J_n - 1$,  
the global (Steffensen's) approximation error
of \eqref{diffloglambda_plus} 
for $|\Delta_2|$
admits the following upper bound, 
for some constants $C'_{2} > 0, C_2 > 0$, 
$$C'_2 \,  \frac{J_n}{n^3} \, n^{3/2} =
C_2 \frac{1}{\sqrt{n}} \qquad \mbox{which is a}~~
O\bigl(
\bigl(
\frac{\lo \lo n}{\lo n}
\bigr)^2
\bigr) .$$ 
Now let us turn to the extremity intervals.
Using the Appendix, and  
\eqref{Jnasymptotic} in
Proposition \ref{argumentlastrootJn}, 
it is easy to show that
the two integrals
$$\frac{-1}{\pi} 
\int_{0}^{\frac{2 \pi \lceil \lo n \rceil}{n}}
\quad
{\rm and}
\quad
\frac{-1}{\pi} 
\int_{\frac{2 \pi J_n }{n}}^{2 \arcsin(\kappa/2)}
\qquad
\mbox{are}
\qquad
O\left(
\left(
\frac{\lo \lo n }{ \lo n} 
\right)^2
\right).$$

\noindent 
{\it Proof of (ii--2)}: 
On each interval
$\mathcal{I}_j := \bigl[
\frac{2 \pi j}{n}, \frac{2 \pi (j+1)}{n}
\bigr]$, $j=
\lceil \lo n \rceil, 
\ldots,
J_n - 1$, let us assume that
the number $m$ of 
subintervals 
of
$\mathcal{I}_j$ given by their extremities $y_q$ 
by \eqref{yqinterpolation}, is $\geq 2$.
The weights $\alpha_q$ 
in \eqref{NCformula} are assumed to be positive.

The upper bound 
 $\frac{4 \,\arcsin(\kappa/2)}{\kappa \sqrt{2 \kappa (3-\kappa) \lo (1/\kappa)}} \,
  \frac{1}{\sqrt{n}}$
of 
\eqref{diffloglambda_plus}
is independent of $m \geq 2$, 
once assumed the positivity
of the weights $(\alpha_q)_{q=0, 1, \ldots, m}$,
since, due to 
the relation between the weights in 
the Newton-Cotes formulas 
$\sum_{q=0}^{m} \alpha_q = m$, 
$$
\left|\int_{\frac{2 \pi j}{n}}^{\frac{2 \pi (j+1)}{n}} 
P_{m}(x) dx - h_{NC} m F(y_0)\right|= 
h_{NC} \, 
\left|\sum_{q=0}^{m} \alpha_q (F(y_q) - F(y_0))\right|
$$
$$
\leq 
h_{NC} \, 
\bigl(\sum_{q=0}^{m} | \alpha_q | \bigr) 
\sup_{\xi \in \mathcal{L}_j}
\left| F'(\xi)
\right|.$$
Since $h_{NC} m = \frac{2 \pi}{n}$
and that
$\sup_{\xi \in \mathcal{L}_j}
\left| F'(\xi)
\right| \leq |F'((2 \pi J_n)/n)|$ holds
uniformly for all $j=
\lceil \lo n \rceil, \ldots, J_n - 1$,
we deduce the same upper bound
\eqref{pitou} as for the Trapezoidal rule.
Summing up the contributions over all the
intervals $\mathcal{I}_j$, we obtain
the same upper bound
\eqref{delta2upperbound} 
of \eqref{diffloglambda_plus}, as before.

As for the (Steffensen's) approximation 
errors involved 
in the numerical integration
\eqref{NCformula} there are 
``$h_{NC}^{p+1} \cdot K \cdot F^{(p)}(\xi)$"
on one interval $\mathcal{I}_j$, for some $p \geq 2$.
They make use of the successive derivatives
of the function $F(x)$.
It can be shown that they contribute negligibly,
after summing up over all the intervals
$\mathcal{I}_j$, as
$
O\left(
\left( \frac{\lo \lo n}{\lo n} \right)^2
\right).$

Gathering the different terms from (i--1)(i--2),
the Steffenssen's error terms
and the error terms due to the 
numerical integration by the
Newton-Cotes method (ii--1)(ii--2),
we have proved the following theorem.

\begin{theorem}
\label{Lrasymptotictheorem}
Let $\beta > 1$ be a
reciprocal algebraic integer
such that
$n=\dyg(\beta) \geq 260$, 
with ${\rm M}(\alpha)
< 1.176280\ldots$.
The asymptotic expansion
of the minorant
$L_{r}(\beta)$ of \,$\lo {\rm M}_{r}(\beta)$
is
\begin{equation}
\label{Lrasymptotics}
L_{r}(\beta) = \lo \Lambda_r \mu_r 
+
\frac{\rc}{\lo n}
+
O\bigl(
\bigl(
\frac{\lo \lo n}{\lo n}
\bigr)^2
\bigr),
\quad
\mbox{with}\quad 0 < \rc < \frac{\arcsin(\kappa/2)}{\pi}
\end{equation}
and $\rc$ depending upon 
$n$.
\end{theorem}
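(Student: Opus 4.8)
The plan is to assemble the asymptotic expansion of $L_{r}(\beta)$ from the four contributions to \eqref{logmmmrMINORANT}, which have already been isolated in the preceding paragraphs, and then to control the two error terms $\Delta_1$ and $\Delta_2$ defined through \eqref{LrDelta1Delta2}. I would start from the decomposition \eqref{LrDelta1Delta2}, namely $D(L_{r}(\beta)) = \lo (\Lambda_r \mu_r) + \Delta_1 + \Delta_2$, which is nothing but a bookkeeping of \eqref{esti}: the constant $\lo \Lambda_r$ is the integral limit of the second summation in \eqref{logmmmrMINORANT} by \eqref{SfnLAMBDAr} and \eqref{limmm}, the constant $\lo \mu_r$ is the integral limit of the fourth summation by \eqref{limiteF}, the first and third summations contribute only $O((\lo n)^{1+\epsilon}/n)$ and $O((\lo n)/n)$ respectively by \eqref{sz1} and \eqref{sz3}, and the ``thickness'' remainders are absorbed into the Big O by the estimates \eqref{sz2} and \eqref{sz4}. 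The point that survives is that $\Delta_1$ and $\Delta_2$ measure the discrepancy between a finite Riemann--Stieltjes sum (on the mesh $x_j = 2\pi j/n$, running up to $J_n$) and the corresponding log-sine type integral on $(0, 2\arcsin(\kappa/2)]$.

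Next I would treat $\Delta_1$ exactly as in the ``Proof of (i--1)'' and ``Proof of (ii--1)'' passages: decompose the interval of integration as in \eqref{intervaldecomposition}, approximate $f(x) = \lo(2\sin(x/2))$ on each $\mathcal{I}_j$ by a Newton--Cotes interpolation polynomial, and invoke Steffensen's error formula. The trapezoidal estimate \eqref{majo} gives the per-interval bound $\frac{1}{n}\frac{1}{\lo n}$ and the extremity integrals are $O((\lo n)^2/n)$ after one integration by parts, so that $\Delta_1 = \rc/\lo n + O(1/(\lo n)^2)$ with $|\rc| < \arcsin(\kappa/2)/\pi$; the independence of this bound from the step length $m$ follows from $\sum_{q=0}^m \alpha_q = m$ together with positivity of the weights, and the Steffensen errors telescope to $O(1/(\lo n)^p) = O((\lo\lo n/\lo n)^2)$. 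For $\Delta_2$ I would repeat the scheme with $F(x)$ in place of $f(x)$; here the subtlety, and the hard part, is that $F'(y)$ and $F''(y)$ blow up as $y \to (2\arcsin(\kappa/2))^-$ because the discriminant $\sqrt{1 - 12\sin(y/2) + 4\sin^2(y/2)}$ under the square root vanishes there --- this is precisely the double-root degeneracy noted in Remark \ref{openingangle_sin_quadratic_alginteger}. The key computation is the linearization \eqref{zouzou2}: $4\sin^2(\pi J_n/n) - 12\sin(\pi J_n/n) + 1 = 2\kappa(3-\kappa)\lo(1/\kappa)/n + \frac{1}{n}O((\lo\lo n/\lo n)^2)$, obtained from the asymptotics \eqref{Jnasymptotic} of $J_n$. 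This turns the a priori divergent derivative bounds into $|F'(2\pi J_n/n)| = O(\sqrt n)$ and $|F''(2\pi J_n/n)| = O(n^{3/2})$, which, multiplied by the mesh factors $1/n^2$ and $h_{NC}^3 = O(1/n^3)$ and summed over the $\asymp J_n \asymp n$ intervals, yield $|\Delta_2| = O(1/\sqrt n) = O((\lo\lo n/\lo n)^2)$ and a Steffensen error of the same order.

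I would then collect: the leading constant $\lo(\Lambda_r \mu_r)$, the term $\rc/\lo n$ coming from $\Delta_1$, and all of $\Delta_2$, the extremity integrals, the per-interval Newton--Cotes errors from (ii--1) and (ii--2), and the residual Big O's from \eqref{sz1}, \eqref{sz2}, \eqref{sz3}, \eqref{sz4}, every one of which is $O((\lo\lo n/\lo n)^2)$. This gives \eqref{Lrasymptotics}, $L_r(\beta) = \lo\Lambda_r\mu_r + \rc/\lo n + O((\lo\lo n/\lo n)^2)$ with $0 < \rc < \arcsin(\kappa/2)/\pi$, the constant $\rc$ depending on $n$ through the truncation at $J_n$. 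The main obstacle, as flagged, is the boundary singularity of $F$ at $2\arcsin(\kappa/2)$; everything rests on the square-root cancellation \eqref{zouzou2} being sharp enough that the growing derivatives are exactly compensated by the shrinking mesh, and on verifying that the Steffensen remainders for all orders $p \ge 2$ remain controlled near that endpoint. A secondary point to be careful about is the positivity and sign of $\rc$: the lower bound $\rc > 0$ (rather than just $|\rc| < \arcsin(\kappa/2)/\pi$ as first obtained for $\Delta_1$) should be read off from the fact that $\lo(2\sin(x/2)) < 0$ on the relevant range, so that replacing the tail integral $\frac{-1}{\pi}\int_0^{(2\pi\lo n)/n}\lo(2\sin(x/2))\,dx$ --- which is positive --- by zero shifts $\Delta_1$ strictly upward, forcing $\rc > 0$ for $n$ large.
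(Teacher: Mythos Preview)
Your proposal is correct and follows exactly the paper's approach: the proof in the paper is literally the one-sentence summary ``Gathering the different terms from (i--1)(i--2), the Steffensen's error terms and the error terms due to the numerical integration by the Newton--Cotes method (ii--1)(ii--2), we have proved the following theorem,'' and you have reproduced in detail precisely that assembly --- the decomposition \eqref{LrDelta1Delta2}, the Newton--Cotes treatment of $\Delta_1$ via \eqref{majo}--\eqref{cekireste}, the handling of the endpoint singularity of $F$ through \eqref{zouzou1}--\eqref{zouzou2}, and the collection of all remainders into $O((\lo\lo n/\lo n)^2)$.

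One remark on the sign of $\rc$: you are right to flag that the argument in (i--1) only yields $|\rc| < \arcsin(\kappa/2)/\pi$, whereas the statement asserts $0 < \rc$. The paper itself does not make the positivity explicit in this proof (and in fact the complex analogue, Theorem~\ref{Lrasymptotictheoremcomplexe}, reverts to $|\rc| < \arcsin(\kappa/2)/\pi$). Your heuristic --- that the missing tail integral $\frac{-1}{\pi}\int_0^{(2\pi\lo n)/n}\lo(2\sin(x/2))\,dx$ is positive and dominates the per-interval discrepancies --- points in the right direction but would need the observation that $f(x)=\lo(2\sin(x/2))$ is increasing and negative on the range, so the left Riemann sum systematically overestimates $\lo\Lambda_r$ term by term, to be made rigorous.
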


\

\subsection{A Dobrowolski type minoration}
\label{S5.7}

Denote by $\rc_n$ the positive real number
$\rc$ in \eqref{Lrasymptotics}.
Let us show that it is substantially
smaller than the bound  
$\frac{\arcsin(\kappa/2)}{\pi}$.

\begin{lemma}
\label{petitrnBIGOlimit}
With the same notations as in Theorem
\ref{Lrasymptotictheorem}, there exists
an integer $\eta \geq 260$ such that 
\begin{equation}
\label{petitrn}
\left|
\rc_n
+
O\bigl(
\frac{(\lo \lo n)^2}{\lo n}
\bigr)
\right|
< \frac{\arcsin(\kappa/2)}{\pi},
\qquad n \geq \eta.
\end{equation}
\end{lemma}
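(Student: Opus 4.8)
\textbf{Proof plan for Lemma \ref{petitrnBIGOlimit}.}
The statement to be proved is that the coefficient $\rc_n$ appearing in the asymptotic expansion \eqref{Lrasymptotics} of $L_r(\beta)$, together with its associated second-order $O\bigl((\lo\lo n)^2/\lo n\bigr)$ correction, stays, in absolute value, strictly below the bound $\arcsin(\kappa/2)/\pi$ once $n=\dyg(\beta)$ exceeds some explicit threshold $\eta\geq 260$. My plan is to trace the bound $\arcsin(\kappa/2)/\pi$ back through the proof of Theorem \ref{Lrasymptotictheorem} to see exactly where each contribution enters, and then to show that the contributions which actually force the coefficient up to this bound decay.

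First I would recall that in the proof of Theorem \ref{Lrasymptotictheorem} the term $\rc/\lo n$ came entirely from $\Delta_1$, i.e.\ from the discrepancy between the integral $\lo\Lambda_r$ and its truncated Riemann--Stieltjes sum over the interval $\bigl(0,2\arcsin(\kappa/2)\bigr]$, decomposed as in \eqref{intervaldecomposition}. There the upper bound $\frac{\arcsin(\kappa/2)}{\pi}\frac{1}{\lo n}$ was obtained by summing the per-interval estimates \eqref{majo}, each of size $\frac{1}{n}\frac{1}{\lo n}$, over the $J_n\sim \frac{n}{\pi}\arcsin(\kappa/2)$ intervals $\mathcal{I}_j$ (Proposition \ref{argumentlastrootJn}). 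The point I want to exploit is that this is only an \emph{upper} bound for the error of the trapezoidal rule: the actual value of $\rc_n$ is $\lo n$ times the signed discrepancy, and the contribution of the left end interval $\bigl(0,\frac{2\pi\lceil\lo n\rceil}{n}\bigr]$ was shown to be $O\bigl((\lo n)^2/n\bigr)$, hence negligible after multiplication by $\lo n$. So I would argue that $\rc_n$ is controlled by the \emph{main block} of intervals, where the cancellation inherent in the trapezoidal rule (the Steffensen error term $\frac{1}{6n}\frac{1}{(\lo n)^2}$ per interval, summed to $O(1/(\lo n)^2)$) shows the actual discrepancy is strictly smaller than the crude sum of absolute values. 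Concretely, I would bound $|\rc_n|$ by $\lo n$ times the sum of the signed per-interval trapezoidal errors, each of which is $\frac{2\pi}{n^2}\bigl|\frac{\cos(\xi_j/2)}{2\sin(\xi_j/2)}\bigr|$ with $\xi_j$ in the interior of $\mathcal{I}_j$, recognize this as $\frac{1}{\lo n}$ times a Riemann sum converging to $\frac{1}{\pi}\int_0^{2\arcsin(\kappa/2)}(\text{derivative factor})$, and check that the limiting integral, evaluated by integration by parts, is strictly below $\arcsin(\kappa/2)/\pi$; the strict inequality is inherited for $n$ large enough, absorbing the $O\bigl((\lo\lo n)^2/\lo n\bigr)$ correction into the slack. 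Finally, the threshold $\eta$ is obtained by making the slack explicit: $\eta$ is the least integer $\geq 260$ for which the numerical value of the limiting discrepancy plus the numerical bound on the $O\bigl((\lo\lo n)^2/\lo n\bigr)$ term is still $<\arcsin(\kappa/2)/\pi = 0.0273\ldots$, which is exactly the kind of explicit constant-chasing done for the value $n_2=260$ in Remark \ref{value260}.

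The main obstacle I anticipate is keeping the sign bookkeeping honest: the bound $\arcsin(\kappa/2)/\pi$ was derived by the triangle inequality, discarding cancellation, so to get a \emph{strict} improvement I must either (a) evaluate the limiting Riemann integral of the derivative factor $\frac{|\cos(\xi/2)|}{2\sin(\xi/2)}$ over $(0,2\arcsin(\kappa/2))$ in closed form and compare it numerically to $\arcsin(\kappa/2)/\pi$, or (b) exhibit a structural reason (e.g.\ monotonicity of $\lo(2\sin(x/2))$ on the relevant range together with the midpoint-versus-endpoint character of the trapezoidal error) forcing the signed sum to be a definite proportion below the crude bound. Option (a) reduces to checking $\frac{1}{\pi}\bigl[\lo(2\sin(\arcsin(\kappa/2)))-\lim_{x\to 0^+}\lo(2\sin(x/2))\cdot(\text{vanishing weight})\bigr]<\arcsin(\kappa/2)/\pi$, i.e.\ essentially $|\lo\kappa|<1$ up to lower-order terms, which holds since $\kappa=0.171573\ldots$ gives $|\lo\kappa|=1.7627\ldots$—so in fact one needs the correct asymptotic \emph{weight} $O(1/\lo n)$ multiplying this, and the genuinely small quantity is $\rc_n=O(1)$ but bounded away from $\arcsin(\kappa/2)/\pi$ by the trapezoidal-rule slack rather than by size. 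I would therefore lean on option (a) with a careful asymptotic evaluation, since the excerpt already commits to explicit numerical thresholds throughout §\ref{S5}, and the conclusion \eqref{petitrn} with such an $\eta$ is precisely what Theorem \ref{mainLEHMERtheorem} and Theorem \ref{mainDOBROWOLSLItypetheorem} consume downstream.
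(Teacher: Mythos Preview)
Your plan has a genuine gap: the ``limiting integral'' you want to compare against does not exist. The per-interval error \eqref{majo} is $\frac{2\pi}{n^2}\bigl|\frac{\cos(\xi_j/2)}{2\sin(\xi_j/2)}\bigr|$, and the would-be integrand $\frac{\cos(x/2)}{2\sin(x/2)}$ is non-integrable at $0$; there is no finite constant for $\rc_n$ to approach, and your option (a) collapses at exactly the point you flag in your obstacle paragraph. (If one keeps the true lower cutoff $j\geq\lceil\lo n\rceil$ and sums $\sum_j\frac{1}{nj}$, the discrepancy is $O\bigl(\frac{\lo n}{n}\bigr)$, so $\rc_n\to 0$---but this is not the limit comparison you describe.) Moreover, the Steffensen term \eqref{steff+} you invoke is not what drives $\rc_n$: it already sums to $O(1/(\lo n)^2)$ and sits in the Big~O. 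The term that produces $\rc_n/\lo n$ is the rectangle-versus-trapezoid difference \eqref{majo}, and what is needed is a bound on that sum which is a \emph{definite constant factor} below the crude $\frac{\arcsin(\kappa/2)}{\pi}\frac{1}{\lo n}$.

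The paper obtains this factor by an elementary splitting you do not mention. Fix $c>1$ (the paper takes $c=3$) and cut the range at $X_n:=c\lceil\lo n\rceil$. For $\lceil\lo n\rceil\leq j<X_n$ there are only $(c-1)\lceil\lo n\rceil$ terms, each bounded crudely by $\frac{1}{n\lo n}$, summing to $(c-1)/n$; after multiplication by $\lo n$ this is absorbed in the Big~O. For $X_n\leq j\leq J_n$ the cotangent is decreasing, so $\bigl|\frac{\cos(\xi/2)}{2\sin(\xi/2)}\bigr|\leq\frac{1}{2\sin(\pi X_n/n)}\approx\frac{n}{2\pi c\lo n}$, giving a per-term bound $\frac{1}{nc\lo n}$; summed over $J_n\sim\frac{n}{\pi}\arcsin(\kappa/2)$ terms this yields $\frac{1}{c}\cdot\frac{\arcsin(\kappa/2)}{\pi\lo n}$. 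Hence $|\rc_n|\leq\frac{1}{c}\cdot\frac{\arcsin(\kappa/2)}{\pi}$, and with $c=3$ the remaining $\frac{2}{3}\cdot\frac{\arcsin(\kappa/2)}{\pi}$ of slack accommodates the $O\bigl((\lo\lo n)^2/\lo n\bigr)$ term for $n\geq\eta$. The observation you are missing is simply that the crude per-interval bound $\frac{1}{n\lo n}$ in \eqref{majo} is only tight near $j\sim\lo n$; away from there it improves by any prescribed constant factor, and that alone gives the lemma.
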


\begin{proof}
Let $X_n := c \lceil \lo n \rceil$
with $c$ a positive constant such that 
$\lceil \lo n \rceil
< X_n
< J_n$.
The limit 
$\lim_{n \to \infty} X_n /n = 0$ holds.
Recall that $J_n$ is given by
\eqref{Jnasymptotic}.

The quantity $\rc_n$ comes from
the integration of the
$j$th-subdivision step
\eqref{estijj} by \eqref{majo}, 
in order to give
an estimate of the development term
of $|\Delta_{1}|$ given by
\eqref{diffloglambda}.  
This $j$th-subdivision step
of integration provides the estimated
term
(cf \eqref{majo})
\begin{equation}
\label{majorefine}
\frac{2 \pi}{n^2}
\left|
\frac{\cos(\xi / 2)}{2 \sin (\xi / 2)}
\right|
\mbox{\qquad for some} 
\, \, \xi \in \bigl(
\frac{2 \pi j}{n}, \frac{2 \pi (j+1)}{n}
\bigr).
\end{equation}
Since the cotangent function is 
positive and
strictly decreasing
on $(0, \pi/2)$,
the upper bound of \eqref{majorefine}
is naturally the one given by
the first interval of the subdivision\newline
$\Bigl[\frac{2 \pi \lceil \lo n \rceil}{n}, 
\frac{2 \pi (\lceil \lo n \rceil+1)}{n}
\Bigr]$, that is
$\frac{1}{n} \, \frac{1}{\lo n} $.
Finding a smaller upper bound of every term
\eqref{majorefine} 
for the other values 
$j \in \{\lceil \lo n \rceil + 1, \ldots,
J_n\}$
is probably
important. Our intention
is not to do it. 
We will just cut the following
summation into two parts.

\begin{equation}
\label{diffloglambdarefine}
\frac{-1}{\pi}
\int_{\frac{2 \pi \lceil \lo n \rceil}{n}}^{\frac{2 \pi J_n}{n}}
\lo \Bigl(
2 \sin (x/2)
\Bigr) dx 
-
\sum_{j=\lceil \lo n \rceil}^{J_n}
\frac{-2}{n} \, \lo \bigl(2 \, \sin\bigl(\frac{\pi j}{n}\bigr)  \bigr)
\end{equation}

\begin{equation}
\label{estijjrefine_01}
= \sum_{j=\lceil \lo n \rceil}^{X_n - 1}
\left(
\frac{-1}{\pi}\int_{\frac{2 \pi j}{n}}^{\frac{2 \pi (j+1)}{n}}
\lo \Bigl(
2 \sin (x/2)
\Bigr) dx
-
\frac{-2}{n} \, \lo \bigl(2 \, \sin\bigl(\frac{\pi j}{n}\bigr)  \bigr)
\right)
\end{equation}
 \begin{equation}
\label{estijjrefine_02}
+ \sum_{j= X_n}^{J_n - 1}
\left(
\frac{-1}{\pi}\int_{\frac{2 \pi j}{n}}^{\frac{2 \pi (j+1)}{n}}
\lo \Bigl(
2 \sin (x/2)
\Bigr) dx
-
\frac{-2}{n} \, \lo \bigl(2 \, \sin\bigl(\frac{\pi j}{n}\bigr)  \bigr)
\right).
\end{equation}
Each term of \eqref{estijjrefine_01}
is bounded by $\frac{1}{n} \, \frac{1}{\lo n} $
from above, as previously.
On the contrary, each term of
\eqref{estijjrefine_02} is such that
$$
\frac{2 \pi}{n^2}
\left|
\frac{\cos(\xi / 2)}{2 \sin (\xi / 2)}
\right|
\leq 
\frac{1}{n^2}
\left|
\frac{1}{X_n / n}\right| = 
\frac{1}{n} \, \frac{1}{c\, \lo n}.
$$
Summing up the two contributions, we
obtain the following upper bound of
\eqref{diffloglambdarefine}:
$$
(X_n -1 - \lceil \lo n \rceil) \frac{1}{n} \, \frac{1}{\lo n}
+
(J_n - X_n -1) \frac{1}{n} \, \frac{1}{c \,
\lo n}
$$
$$\leq
(c-1) \frac{1}{n} + 
\frac{1}{c}
\frac{\arcsin(\kappa/2)}{\pi \, \lo n}
$$
The first term $(c-1) \frac{1}{n}$ is
a $O\bigl(\frac{1}{n}\bigr)$ and,
multiplied by $\lo n$, 
is inserted in the
Big O of \eqref{petitrn}.
The second term
$\frac{1}{c}
\frac{\arcsin(\kappa/2)}{\pi \, \lo n}
$ is an upper bound of
$\rc_n / \lo n$. 
Let us fix the constant $c$.
Take for instance $c=3$.
The function $(\lo \lo x)^2 / \lo x$ 
tends to 0 when $x$ goes to infinity.
Therefore there exists an integer
$\eta$ such that
all the functions, depending upon $n$, 
``grouped
in the Big O" of
\eqref{petitrn} satisfy (in short form):
$$\Bigl|O\bigl(
\frac{(\lo \lo n)^2}{\lo n}
\bigr)\Bigr|
< \frac{2}{3}
\frac{\arcsin(\kappa/2)}{\pi},
\quad n \geq \eta.
$$
We deduce Lemma \ref{petitrnBIGOlimit}.
\end{proof}

The decomposition 
of $L_{r}(\beta)$
in \eqref{Lrasymptotics}
provide the following
Dobrowolski type minoration
of the Mahler measure ${\rm M}(\beta)
\geq {\rm M}_{r}(\beta)$.

\begin{theorem}
\label{dobrominorationREEL}
Let $\beta > 1$ be a reciprocal 
algebraic integer
such that $\dyg(\beta) \geq \eta$, 
with ${\rm M}(\alpha)
< 1.176280\ldots$. Then
\begin{equation}
\label{dobrominoREEL}
{\rm M}(\beta) \geq 
\Lambda_r \mu_r \, 
-
\frac{\Lambda_r \mu_r \,\arcsin(\kappa/2)}{\pi}\, 
\frac{1}{\lo (\dyg(\beta))}
\end{equation}
\end{theorem}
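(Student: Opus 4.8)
The plan is to combine the asymptotic expansion of the lenticular minorant $L_r(\beta)$ from Theorem \ref{Lrasymptotictheorem} with the refined bound on the $n$-dependent term from Lemma \ref{petitrnBIGOlimit}, and then to exponentiate. First I would recall that by Theorem \ref{MahlerMINORANTreal} we have ${\rm M}(\beta) \geq {\rm M}_r(\beta)$ for every reciprocal algebraic integer $\beta > 1$ with $\dyg(\beta) \geq 260$, and that $\lo {\rm M}_r(\beta) \geq L_r(\beta)$ by the construction in \S\ref{S5.6} (from \eqref{logmmmrMINORANT}). So it suffices to produce a clean lower bound for $L_r(\beta)$ of the shape appearing in \eqref{dobrominoREEL}.

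Next, write $n = \dyg(\beta)$ and invoke Theorem \ref{Lrasymptotictheorem}:
\begin{equation}
\label{planstepLr}
L_r(\beta) = \lo(\Lambda_r \mu_r) + \frac{\rc_n}{\lo n} + O\Bigl(\Bigl(\frac{\lo\lo n}{\lo n}\Bigr)^2\Bigr),
\end{equation}
with $0 < \rc_n < \arcsin(\kappa/2)/\pi$. The key point, supplied by Lemma \ref{petitrnBIGOlimit}, is that for $n \geq \eta$ the combined quantity $\rc_n + O\bigl((\lo\lo n)^2/\lo n\bigr)$ is still bounded in absolute value by $\arcsin(\kappa/2)/\pi$; hence, absorbing the error term of \eqref{planstepLr} into a single coefficient, for $n \geq \eta$ one has
\begin{equation}
\label{planstepLrlower}
L_r(\beta) \geq \lo(\Lambda_r \mu_r) - \frac{\arcsin(\kappa/2)}{\pi}\cdot\frac{1}{\lo n}.
\end{equation}
Then I would exponentiate: since $e^x \geq 1 + x$ is the wrong direction, I instead use that $u \mapsto e^u$ is increasing together with the elementary inequality $e^{A-t} \geq e^A(1-t) = e^A - e^A t$ valid for all real $t$ (because $e^{-t} \geq 1-t$). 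Applying this with $A = \lo(\Lambda_r\mu_r)$ and $t = \frac{\arcsin(\kappa/2)}{\pi\lo n} \geq 0$ gives
\begin{equation}
\label{planstepexp}
{\rm M}_r(\beta) \geq e^{L_r(\beta)} \geq \Lambda_r\mu_r - \frac{\Lambda_r\mu_r\,\arcsin(\kappa/2)}{\pi}\cdot\frac{1}{\lo(\dyg(\beta))},
\end{equation}
and chaining with ${\rm M}(\beta) \geq {\rm M}_r(\beta)$ yields \eqref{dobrominoREEL}. One should double-check that $\eta \geq 260$ so that all the earlier results (existence of the lenticulus, Theorem \ref{MahlerMINORANTreal}, Theorem \ref{Lrasymptotictheorem}) are in force on the range $\dyg(\beta) \geq \eta$; this is guaranteed by the statement of Lemma \ref{petitrnBIGOlimit}.

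The main obstacle is not in this final assembly, which is essentially bookkeeping plus the one convexity inequality $e^{-t}\geq 1-t$; the real work has already been done upstream. The delicate point to be careful about is the legitimacy of step \eqref{planstepLrlower}: one must verify that the implied constant in the $O\bigl((\lo\lo n/\lo n)^2\bigr)$ term of \eqref{planstepLr} is exactly the one controlled in Lemma \ref{petitrnBIGOlimit} (it is, since that $O$-term is literally the aggregate of the Steffensen and Newton--Cotes error terms identified in the proof of Theorem \ref{Lrasymptotictheorem}), so that the sign and magnitude bound $\bigl|\rc_n + O(\cdots)\bigr| < \arcsin(\kappa/2)/\pi$ genuinely applies. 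Once that is granted, \eqref{planstepLrlower} and hence \eqref{planstepexp} follow immediately, completing the proof of Theorem \ref{dobrominorationREEL}.
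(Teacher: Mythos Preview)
Your proposal is correct and follows essentially the same route as the paper: invoke Theorem \ref{Lrasymptotictheorem} for the asymptotic form of $L_r(\beta)$, use Lemma \ref{petitrnBIGOlimit} to absorb the error term into a single bounded coefficient for $n\geq\eta$, and exponentiate. You are in fact slightly more explicit than the paper about the exponentiation step (your use of $e^{-t}\geq 1-t$) and about checking $\eta\geq 260$, but the argument is the same.
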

\begin{proof}
Taking the exponential of
\eqref{Lrasymptotics} gives
$$
{\rm M}_{r}(\beta)
\geq
\exp(L_{r}(\beta))
=
\Lambda_r \mu_r 
\Bigl(
1 
+
\frac{\rc}{\lo n}
+
O\bigl(
\bigl(
\frac{\lo \lo n}{\lo n}
\bigr)^2
\bigr)
\Bigr)
$$
and \eqref{dobrominoREEL} follows from 
Lemma \ref{petitrnBIGOlimit}.
\end{proof}

\

\subsection{The impossible convergence of ${\rm M}(\beta)$ to $1$ when $\beta > 1$ tends to $1$}
\label{S5.9}

Let $(\beta_q)_{q \geq 1}$ be an 
infinite sequence of real
reciprocal algebraic integers $> 1$ tending to 1.
Without loss of generality, we assume
that it is strictly decreasing, with
$\beta_1 \leq 1.176280\ldots$, 
$\house{\beta_1} \leq 1.176280\ldots$
${\rm M}(\beta_{1}) \leq 1.176280\ldots$,
and 
that there exists only one $\beta_q$ in an
interval $(\theta_{n}^{-1},\theta_{n-1}^{-1})$
for some $n$. 
We denote it by $\beta_{q_n}$ and have:
$$\theta_{n}^{-1}\quad < \beta_{q_n} < \quad 
\theta_{n-1}^{-1},
\qquad
\beta_{q_n} \leq 
\house{\beta_{q_n}} \leq
{\rm M}(\beta_{q_n})
\qquad (n \geq n_0).$$
We allow $n$ to run over
a strictly increasing sequence $\mathcal{I}$
of integers 
$n_0 , n_1 , n_2, \ldots$.
We assume that the sequence of Mahler measures
$({\rm M}(\beta_{q_n}))_{n \in \mathcal{I}}$ 
and the sequence of houses
$(\house{\beta_{q_n}})_{n  \in \mathcal{I}}$
are decreasing:
$$1 < \ldots < \beta_{q_{n+1}} < 
\beta_{q_n} <
\beta_{q_{n-1}} < \ldots < 
\beta_{q_{n_0}} \leq \ldots \leq
1.176280\ldots,$$
$$1 < \ldots \leq \house{\beta_{q_{n+1}}} 
\leq \house{\beta_{q_n}} \leq
\house{\beta_{q_{n-1}}} \leq \ldots \leq 
\house{\beta_{q_{n_0}}} \leq \ldots \leq
1.176280\ldots,$$
$$1 \leq \ldots
\leq {\rm M}(\beta_{q_{n+1}}) \leq {\rm M}(\beta_{q_n})
\leq {\rm M}(\beta_{q_{n-1}}) \leq \ldots 
\leq 1.176280\ldots.$$

Let us call "Main Case" when the sequence
$({\rm M}(\beta_{q_n}))_{n \in \mathcal{I}}$
is not stationary after a certain rank.
Let us call "Second Case" when the sequence
$({\rm M}(\beta_{q_n}))_{n \in \mathcal{I}}$
is stationary after a certain rank.
Whatever $\mathcal{I}$,
by
Theorem \ref{dobrominorationREEL},
we have the universal lower bound
and the asymptotic limit satisfying:
$\lim_{n \in \mathcal{I}, n \to +\infty}
{\rm M}(\beta_{q_n}) \geq \Lambda_r \mu_r$.
The universal
lower bound for
${\rm M}(\beta_{q_n})$, for any $n
\in \mathcal{I}$,
is then this limit value
$\Lambda_r \mu_r$
diminished by a
quantity calculated by means of the
Dobrowolsky type inequality, as explained
in Section \ref{S6.4}.

\

Let us give two examples of sequences of 
reciprocal algebraic integers $\beta_{q_n}> 1$ 
which converge
to $1$ which do not possess any lenticular
conjugate (except
$\beta_{q_n}^{-1}$),
to better understand the assumptions involved. 
Recall that the definition of a lenticular
zero is given in Definition \ref{lenticularzerodefinition}.

\subsubsection{``Second Case": Minimal polynomial $P_{\beta}(X) = \widetilde{P_{\beta}}(X^r)$ for $r \geq 2$ - Roots of unity}
\label{S5.9.1}

\

Let $\beta_1 = 
2+\sqrt{3} \in (1, +\infty)$. The minimal polynomial
of $\beta_1$ is
$P_{\beta_1}(X) = X^2 - 4 X + 1$. It is
reciprocal. 

\begin{lemma}
\label{beta_1_referee}
Let $r \geq 1$ be an integer and
denote by $\beta_r$ the real number in 
$(1,\infty)$ with
$\beta_{r}^{r}=\beta_1$. 
Then $\beta_r$ is a reciprocal
algebraic integer,
$\lim_{r \to \infty} \beta_r =1$, with
minimal polynomial
$X^{2r} - 4 X^r + 1$.
The Mahler measure
is constant on the family
$(\beta_r)_{r \geq 1}$, i.e.,
for all $r \geq 1$,  ${\rm M}(\beta_r)=
\beta_1$.
\end{lemma}

\begin{proof}
Say $r \geq 2$ and let $\gamma \in\cb$ satisfy
$\gamma^r = \beta_1$.
Then 
$\gamma$ is an algebraic integer.
We claim $\gamma \not\in K =\qb(\sqrt{3})$.
Indeed,
if $\gamma \in K$, then
$|\gamma| \leq \sqrt{2 + \sqrt{3}}
< 2$ and
$|\gamma'| \leq \sqrt{2 - \sqrt{3}} < 0.6$
with $\gamma'$ the conjugate of $\gamma$.
We write $\gamma = a+b \sqrt{3}$ 
with $a, b \in \zb$, so
$\gamma' = a - b \sqrt{3}$.
Then $b = (\gamma - \gamma')/(2 \sqrt{3})$
and so
$|b| \leq (2+0.6)/(2 \sqrt{3}) < 1$
which implies $b=0$.
So $|a|=|\gamma|<2$ implies
$\gamma \in \{0, \pm 1\}$, a contradiction.

So $\beta_r$  is not the $r$-th power 
of any element of $K$
if $r \geq 2$.
Moreover
$-\beta_1 /4 < 0$ is not a fourth 
power in 
the real quadratic field $K$.
From this and from a classical result from the 
theory of fields, Theorem VI.9.1
in S. Lang, {\it Algebra}, Graduate 
Texts in Mathematics, {\bf 211}, (2002),
we conclude that $X^r - \beta_1$ is irreducible
in $K[X]$ for
all $r \geq 1$.
 
So
$$X^{2 r} - 4 X^r + 1
=P_{\beta_1}(X^r)$$
is the $\zb$-minimal polynomial of $\beta_r$.
In particular $\beta_r$ is reciprocal.

Now, if $\epsilon$ runs over the
set of $r$-th roots of unity,
$\epsilon^{r} = 1$, then the set of the
conjugates of $\beta_r$
is $\{\beta_r \,\epsilon \mid {\rm all}~
\epsilon,
\epsilon^{r} = 1\}$, since
$(\beta_r \,\epsilon)^r = \beta_1$
which implies
${\rm M}(\beta_r) = {\rm M}(\beta_1)$ for all
$r \geq 2$.

The action of the $r$-th roots of unity,
for $r$ tending to infinity, does not produce reciprocal
algebraic integers of smaller Mahler measure
than ${\rm M}(\beta_1)$.
We have
$$1 \ldots < \beta_{r+1} < \beta_r < \ldots
< \beta_{2} < \beta_1 = 2+\sqrt{3}$$
with
$$\ldots =
{\rm M}(\beta_r+1)
=
{\rm M}(\beta_r)
=
\ldots
=
{\rm M}(\beta_2) = {\rm M}(\beta_1) = 2+\sqrt{3}.$$
\end{proof}
In this note we 
are concerned with the attack of the
Conjecture of Lehmer. 
Therefore,
for obtaining
a minorant of ${\rm M}(\beta_r)$,
because of the Mahler measure
remains constant on the sequence 
$(\beta_r)_{r \geq 1}$, we
replace $\dyg(\beta_r)$ by
$\dyg(\beta_{r}^{r})$ and use
the lenticular minorant
relative to $f_{\beta_{r}^{r}}(z)
= f_{\beta_1}(z)$.

The fact is that $\beta_1 = 2 + \sqrt{3}
\geq (1+\sqrt{5})/2$ and that
``$\dyg$" has not yet be defined
on $[(1+\sqrt{5})/2, \infty)$. 
By convention, say
$\dyg(\gamma)=1$ when $\gamma > (1+\sqrt{5})/2$.

So $\dyg(\beta_r)$ is replaced by
$\dyg(\beta_{r}^{r}) = 1$.
The minimal polynomial of
$\beta_1$ is relative to the Main Case.
Now the set of lenticular roots of 
$f_{\beta_1}(z)$ is 
$\{1/\beta_1\}$, and
${\rm M}(1/\beta_1)=
\beta_1$. We have proved
${\rm M}(\beta_r) \geq \beta_1$. 
The equality holds;
in this case the lenticular minorant
is exactly the Mahler measure ${\rm M}(\beta_1)
=
\beta_1$.
\vspace{0.2cm}

The general strategy is the same.
Let $\beta > 1$ be a reciprocal algebraic integer
of dynamical degree
$\dyg(\beta) \geq 260$, for which
the minimal polynomial 
$$P_{\beta}(X) =  1 + \sum_{j=1}^{d-1} a_j X^j + X^d
=
\prod_{k=1}^{d} (X-\beta^{(k)}),
\qquad \beta^{(1)}=\beta,
$$
is relative to the Main Case. 
Then,
for any $q \geq 2$, 
we define the reciprocal algebraic integer
$\beta_q > 1$, root of the reciprocal
polynomial
$$P_{\beta_q}(X) =  1 + \sum_{j=1}^{d-1} a_j X^{q j} + 
X^{q d} =
P_{\beta}(X^q).$$
For all $q \geq 2$, we have:
${\rm M}(\beta_q) = {\rm M}(\beta)$,
considering the equations
$X^q - \beta^{(k)}$,
$|\beta^{(k)}| \geq 1$,
and
if $\gamma^q =  \beta^{(k)}$,
$(\gamma \, \epsilon)^q =  \beta^{(k)}$
for any $\epsilon$,
$\epsilon^q = 1$.
For any $q \geq 2$,
there exists
$n \geq 260$ such that
$$\theta_{n}^{-1} < \beta_q <
\theta_{n-1}^{-1}.
$$
Then to find a minorant of 
${\rm M}(\beta_q)$ we
$${\rm replace~~} n= \dyg(\beta_q)
\qquad {\rm by}\qquad \dyg(\beta)$$
and 
consider the set of the lenticular zeroes of
$f_{\beta_{q}^{q}}(z)
=
f_{\beta}(z)$. Then
we obtain the lenticular minorant
(of Definition \ref{complexeALPHA})
$${\rm M}_{r}(\beta) \leq {\rm M}(\beta_q).$$ 
This amounts to
the case, called Main Case in (N),
relative to $\beta$
and $P_{\beta}(X)$.

\

\subsubsection{An example outside the Problem of Lehmer}
\label{S5.9.2}

Let $n \geq 2$ be an integer. 
Let
$\beta_{n}$ the unique root $> 1$ of the reciprocal
integer polynomial
$$P_{n}(X) = X^{2^n}-2 X^{2^n -1}
- 8 X^{2^{n-1}} - 2 X + 1 \quad \in \zb[X].$$

We show that the Conjecture of Lehmer is true for the
family $\{\beta_n : n \geq 2\}$
(in the sense that there is a common
lower bound $> 1$ for all the Mahler measures
${\rm M}(\beta_n)$). 
This is due to 
Lemma \ref{betan_mahler} 
and to the fact that the house
$\house{\beta_n}= \beta_n$ tends to 2 as $n \to \infty$
((iv) in Proposition \ref{betan_refereeFauxEx}). 
Since 2 does not belong to the interval
$(1, 1.32\ldots)$,
a dynamical degree dyg of 
$\house{\beta_n}$
cannot be defined.
Therefore a lenticular minorant
of the Mahler measure
${\rm M}(\beta_n)$ has no sense.
We cannot
expect any help from any lenticular root
of $f_{\house{\beta_n}}(z)$ in this case, since lenticular 
roots do not exist.

This example does not constitute
an attack of the problem of Lehmer, but,
interestingly, when we consider the integer
polynomial
$$P_{n}(-X) = X^{2^n}+2 X^{2^n -1}
- 8 X^{2^{n-1}} + 2 X + 1 \quad \in \zb[X],$$
we find the sequence of reciprocal
algebraic integers
$(-\gamma_n)_{n \geq 2}$
in $(1, +\infty)$ which are roots of 
$P_{n}(-z)$ and
which converges to 1, 
by (iv) in 
Proposition \ref{betan_refereeFauxEx}.
For this family it is tempting
to try to use the lenticular roots of
$f_{-\gamma_n}(z)$ for $n$ large enough,
which exist, as $n$ tends to infinity,
to establish
a minorant of 
${\rm M}(-\gamma_n) = {\rm M}(\beta_n)$.
It is hopeless.
The conditions of identification
of the lenticular roots of
$f_{-\gamma_n}(z)$
with some zeroes of $P_{n}(-z)$
in $\{z \in \cb :
-\pi/18 \leq \arg z \leq +\pi/18,~
|z| < 1\}$,
are not satisfied (cf \cite{dutykhvergergaugry2});
in this angular sector,
the only zero which is common to
$f_{-\gamma_n}(z)$ and
$P_{n}(-z)$ is 
$(-\gamma_n)^{-1}$ (cf (v) in 
Proposition \ref{betan_refereeFauxEx}).

\begin{proposition}
\label{betan_refereeFauxEx}
Let $n \geq 2$. Then
\begin{itemize}
\item[(i)] the polynomial $P_{n}(X)$ is not of the form
$Q(X^r)$ for some integer $r \geq 2$ and some 
integer polynomial $Q$,
\item[(ii)] $P_n$ is irreducible in
$\qb[X]$,
\item[(iii)] $P_n$ has no root on the unit circle,
\item[(iv)] for $n$ large enough,
$P_n$ admits four real roots
$\gamma_n , \gamma_{n}^{-1} , \beta_n , \beta_{n}^{-1}$
which have the following properties:
$$\gamma_n < -1 < \gamma_{n}^{-1} < 0
< \beta_{n}^{-1} < 1 < \beta_n,$$
$$
\beta_n = \house{\gamma'} > 2~ 
\mbox{for any conjugate}~ \gamma' ~\mbox{of}~
\beta_n, ~\mbox{in particular}~ \beta_n 
= \house{\gamma_n} > 2,$$
$$\lim_{n \to +\infty} \gamma_n = -1,
\qquad \lim_{n \to +\infty} \beta_n = 2,$$
\item[(v)] for $n$ large enough, 
$P_n$ has no root 
in the annulus $\{z \in \cb : |\gamma_{n}^{-1}| < |z| < 1\}$.
\end{itemize}
\end{proposition}

\begin{proof}
(i) This is readily due to the fact that 
$2^n$ and $2^n -1$ are coprime.

(ii) The shifted polynomial
$P_{n}(X+1)$ modulo 2
satifies:
$$P_{n}(X+1) \equiv (X+1)^{2^n} + 1 
\equiv X^{2^n} \quad (\!\!\!\!\!\!\!\mod 2 \zb[X] ).$$
Moreover $P_{n}(-1)= -2$ is not divisible by
$2^2$. By Eisenstein's criterium,
$P_{n}(X+1)$ is irreducible in
$\zb[X]$.

(iii) Assume $z$, $|z|=1$, is a zero of $P_n$. 
We have:
$$|z^{2^n}
+ 2 z^{2^n - 1}
+ 2 z + 1| \leq 1 + 2 + 2 + 1 = 6.$$
Then
$$|P_{n}(z)|
\geq \left| |-8 z^{2^{(n-1)}}|
-
|z^{2^n}+ 2 z^{2^n - 1}
+ 2 z + 1| \right|\geq 2.$$
Contradiction.

(iv) For $n$ large enough, let us prove
$$-1 - \frac{1}{2^{n-1}} < \gamma_n < -1.$$
Let us abbreviate $m= 2^{n-1}$.
Then
$$P_{n}(-1-\frac{1}{m})
=
(-1-\frac{1}{m})^{2 m}
-2 (-1-\frac{1}{m})^{2 m-1}
-8 (-1-\frac{1}{m})^{m}
-2 (-1-\frac{1}{m})
+1$$
$$
=
(1+\frac{1}{m})^{2 m}
+2 (1+\frac{1}{m})^{2 m-1}
-8 (1+\frac{1}{m})^{m}
+2 (1+\frac{1}{m})
+1,
$$
which converges to $e^2 + 2 e^2 -8 e + 3 > 0$
as $m \to \infty$. 
Since $P_{n}(-1) <0$, we obtain the existence of a root
in the interval $(-1 - \frac{1}{2^{n-1}}, -1)$.
By Descartes's rule, the number of positive 
real roots of
$P_{n}(-X)$ is equal to the number of sign changes of the polynomial, which is equal to 2.
Since $P_{n}(-X)$ is reciprocal, there is only one root
in $(-1-\frac{1}{m}, -1)$ ;
$\gamma_n$ is the only root
of $P_{n}(X)$ in this interval. 

By Descartes's rule applied to
$P_{n}(X)$, since $P_n$ is reciprocal, 
there is only one root
of $P_n$ in $(1,\infty)$. 
For $n \geq 2$, observe that
$P_{n}(-2)< 0$,
$P_{n}(-3)>0$, and then
$-3 < \beta_n < -2$.
For $n$ large enough, let us prove
that
$$
\beta_n - 2  = 2 ^{-2^{n-1}+4} (1+o(1)),
\qquad \mbox{as}~ n \to \infty.$$
Indeed, if we set $\beta_n = 2 + u$,
$$P_{n}(2+u)
= 0 =
(2+u)^{2 m}
-2 (2+u)^{2 m-1}
-8 (2+u)^{m}
-2 (2+u)
+1$$
gives, expanding at the first order,
$$u \left[
2^{2m - 1}
-8 m 2^{m-1} - 2 + \ldots
\right] = 
3 + 8 \times 2^m ,
$$
hence the result.

Now, let $\xi$ a primitive root of unity,
$\xi^m = 1$.
Let us prove that the other roots $\gamma'$
of $P_n$, $\gamma' \neq \beta_n$,
$|\gamma'| > 1$, 
are 
\begin{equation}
\label{xiroots}
\xi^{j} \gamma_n + v_j
\qquad \mbox{with}~
|v_j| \leq 2 |\gamma_n|^{-2 m+3} , \quad 
j = 1, 2, \ldots, m-1.
\end{equation}
Indeed, if we set $\gamma' = \xi^j \gamma_n + v_j$, 
we have
$$P_{n}(\gamma')
= 0 =
(\xi^j \gamma_n + v_j)^{2 m}
-2 (\xi^j \gamma_n + v_j)^{2 m-1}
-8 (\xi^j \gamma_n + v_j)^{m}
-2 (\xi^j \gamma_n + v_j)
+1.$$
Consequently, for $j \neq 0$,
expanding at the first order,
$$v_j \left[
2 \times (\xi^j \gamma_n)^{2 m -2}
-8 m (\xi^j \gamma_n)^{m-1}+ \ldots
\right] = 
2 (\xi^j -1) \gamma_n  ,
$$
hence the result.
Since $|\gamma_n| < 1+1/2^{n-1}$
we deduce
from
\eqref{xiroots}, for any $\epsilon > 0$, that
$$\lim_{n \to \infty} |\gamma_n|^{-2 \times 2^{n-1} + 3}
~\leq~ e^{-2} + \epsilon.$$
Thus, since $e^{-2} < 0.5$,
for any root $\gamma'$ of 
$P_n$, for $n$ large enough,
we have:
$|\gamma'| \leq |\gamma_n| + |v_j|
\leq 1+1/m + e^{-2} + \epsilon$, hence
$\house{\gamma'} = \beta_n$.

(v) We use Rouch\'e's Theorem and assume that
$m$ 
is large enough. The root
$1/\gamma_n$ is the unique root in
$[-1,0)$ of $P_{n}(x)$, from (iii).
For all
$r \in [-1, 1/\gamma_{n})$, we find
$0 > P_{n}(r) =
r^{2^n}-2 r^{2^n -1}
- 8 r^{2^{n-1}} - 2 r + 1,$
hence
$$|r|^{2m}+2 |r|^{2m -1}
+ 2 |r| + 1 < 8 |r|^{m}.$$
Therefore, for all 
$z \in \cb$ with $|z|=|r|$, we find
$$|P_{n}(z)+8z^m|
=|z^{2m}-2 z^{2m -1} - 2 z + 1|
\leq |z|^{2m}+2 |z|^{2m -1} + 2 |z| + 1
< |8 z^m|.$$
By Rouch\'e's Theorem
$z \mapsto P_{n}(z)$ and
$z \mapsto -8 z^m$ have the same number of roots,
counted with multiplicities,
in $\{z \in \cb : |z|< |r|\}$.
This number equals $m$
for all $r \in [-1, 1/\gamma_{n})$.
Therefore $P_{n}(z)$ has no roots 
with absolute value in
$(|1/\gamma_{n}|, 1)$. 
\end{proof}

\begin{lemma}
\label{betan_mahler}
For $n$ large enough, ${\rm M}(\beta_n) =
{\rm M}(\gamma_n) \geq 4.6$.
\end{lemma}

\begin{proof}

From  Proposition
\ref{betan_refereeFauxEx}, 
the number of conjugates of $\beta_n$
of modulus $> -\gamma_n$
is exactly $2^{n-1}$.
Therefore,  the Mahler measure of $\beta_n$
satisfies
$${\rm M}(\beta_n) > \beta_n (-\gamma_{n})^{2^{n-1}-1}.$$ 
A lower bound 
of $-\gamma_n$, as a function of $n$, 
is obtained as follows.
We have: $P_{n}(-1) = -2$, 
$P_{n}(-1-\frac{1}{m})$ which tends to
$e^2 + 2 e^2 - 8 e +3 > 0$ as $m \to \infty$. 
Observe that
$P'_{n}(-1) = - 2^n$ which tends to $-\infty$ as
$n \to \infty$,
and 
$$P'_{n}(-1-\frac{1}{m}) = 
2m (-1-\frac{1}{m})^{2m-1}
- 2 (2m -1)(-1-\frac{1}{m})^{2m-2}
- 8 m (-1-\frac{1}{m})^{m-1} -2
$$ 
which tends to $-\infty$ as
$m \to \infty$, with
$\frac{1}{m} P'_{n}(-1-\frac{1}{m})$ converging to
$-6 e^2 + 8 e < 0$ .
Since
$\displaystyle P_{n}(-1-1/m) -P_{n}(\gamma_n) = 
P_{n}(-1-1/m) =
(-1-\frac{1}{m} - \gamma_n) P'_{n}(-1-\frac{1}{m})$ 
at the first order, we deduce that
$m (1+\frac{1}{m} + \gamma_n)$ 
converges to 
$\displaystyle \frac{3 e^2 -8 e+3}{6 e^2 - 8 e}$ 
as $m$ tends to $+\infty$.
Therefore, when $m$ is large,
we have approximately
$$-\gamma_n =  +1 + \frac{1}{m}\frac{3 (e^2 -1)}{6 e^2 - 8 e}.$$
We deduce, for any $\epsilon > 0$,
$${\rm M}(P_n) = {\rm M}(\beta_n) 
~>~ 2 \times e^{\bigl(
\frac{3 (e^2 -1)}{6 e^2 - 8 e}\bigr)} - \epsilon,
\quad \approx 4.66\ldots - \epsilon.
$$
as soon as $n$ is large enough.
\end{proof}

\begin{remark}

Lemma \ref{betan_mahler} shows that
the family $(\beta_n)$ 
is not concerned with the Problem of Lehmer.
It is concerned with
the problem of the topology and the search for
limit points of the set
of Mahler measures 
of algebraic numbers in the half-line
$(1,\infty)$.
Indeed, the Mahler measure ${\rm M}(\beta_n)$
(calculated by Graeffe's method)
tends to a limit, as 
$n \to \infty$.

\

\begin{center}
\begin{tabular}{lc}
\hline

$n$ & \qquad ${\rm M}(\beta_n)=
{\rm M}(-\gamma_n)$\\

\hline
2 & \qquad 7.095126\ldots\\
3 & \qquad 7.273581\ldots\\
4 & \qquad 7.275408\ldots\\
5 & \qquad 7.275409\ldots\\
6 & \qquad 7.275409\ldots\\
\ldots & \qquad \ldots\\
\hline
\end{tabular}
\end{center}

\

Whether this limit $7.275409\ldots$
is algebraic or transcendental
is not known.
The characterization of limit points
has been tackled by Boyd
and Mossinghoff \cite{boydmossinghoff}, Deninger
\cite{deninger} and many subsequent
contributions \cite{vergergaugrySurvey}, 
but never by means of
dynamical zeta functions of numeration systems.
It asks the question of how it could be
investigated by such means.
\end{remark}

\section{Minoration of the Mahler measure M$(\alpha)$ for $\alpha$ a reciprocal complex algebraic integer of house $\house{\alpha} > 1$ close to one. Proofs of the Conjectures}
\label{S6}

Let $\alpha$ be a nonreal complex 
reciprocal algebraic integer, for which
$\house{\alpha} > 1$ (case {\bf (ii)} 
in Section $\S$\ref{S1})
is close to one.  
The minimal polynomial of $\alpha$ is
denoted by
$P_{\alpha}(X) \in \zb[X]$. 
If $\alpha^{(i)}$  
is a conjugate of maximal
modulus of $\alpha$, $\alpha^{(i)}$ 
is conjugated with
$\overline{\alpha^{(i)}}$,
$(\alpha^{(i)}){}^{-1}$,
$\overline{(\alpha^{(i)}){}^{-1}}$; 
the house
$\house{\alpha}$ of $\alpha$, resp.
its inverse $\house{\alpha}^{-1}$,
is root of the quadratic equation
$$X^2 - \alpha^{(i)} \overline{\alpha^{(i)}} =0,
\qquad \mbox{resp. of} \quad
X^2 - (\alpha^{(i)})^{-1} (\overline{\alpha^{(i)}})^{-1} =0 .$$
The house $\house{\alpha}$ 
and its inverse $\house{\alpha}^{-1}$
are real algebraic integers
of degree $\leq \deg(\alpha)+2$ for which 
we assume
$1 <\house{\alpha} < \Theta = \theta_{5}^{-1}$.
The mapping $\alpha \to \house{\alpha}, 
\mathcal{O}_{\overline{\qb}} \to \mathcal{O}_{\overline{\qb}}
\cap (1, \infty)$ is 
not continuous.

Writing $\beta = \house{\alpha}$, 
the preceding analytic functions
$\zeta_{\beta}(z)$,
$f_{\beta}(z)$
of the R\'enyi-Parry dynamical system 
of the $\beta$-shift,
defined in Section $\S$\ref{S4},
can be applied once $\house{\alpha}> 1$ 
is close enough to $1^{+}$; 
the quantities
$\dyg(\beta)$, 
$\lc_{\beta}$, ${\rm M}(\beta)$,
${\rm M}_{r}(\beta)$ are also
well-defined.
The minoration
of the Mahler measure
${\rm M}(\house{\alpha})$
is of Dobrowolski type as in 
Theorem \ref{dobrominorationREEL},
as a function of the dynamical degree
$\dyg(\beta)$.
The domain $\Omega_n$ on which there is
fracturability of
the polynomial $P_{\beta}(z)$
is defined in Proposition
\ref{splitBETAdivisibility+++}.

\subsection{Fracturability of the minimal polynomial of $\alpha$ by the Parry Upper function at $\house{\alpha}$}
\label{S6.1}

The following Theorem is
an extension of 
Theorem \ref{splitBETAdivisibility}
and Proposition \ref{splitBETAdivisibility+++}.

\begin{theorem}
\label{divisibilityALPHA}
Let $\alpha$ be a nonreal complex 
reciprocal algebraic integer, 
for which
$1 < \house{\alpha} < \Theta =
\theta_{5}^{-1}$, 
with ${\rm M}(\alpha)
< 1.176280\ldots$.
Denote
$\beta = \house{\alpha}$.
Then the following formal decomposition of 
the minimal polynomial of $\alpha$
\begin{equation}
\label{decompoAlphaBeta}
P_{\alpha}(X) = P_{\alpha}^{*}(X)
=
U_{\alpha}(X) \times
f_{\beta}(X)
\end{equation}
holds, as the product
of the Parry Upper function 
at $\beta$
\begin{equation}
\label{formlacualphaformel}
f_{\beta}(X)= G_{\dyg(\beta)}(X)
+ X^{m_1} + X^{m_2} + X^{m_3}+ \ldots.
\end{equation}
with
$m_0:= \dyg(\beta)$,
$m_{q+1} - m_q \geq \dyg(\beta)-1$ for $q \geq 0$,
and the invertible formal series 
$U_{\alpha}(X) \in \zb[[X]]$, 
quotient of $P_{\alpha}$ by
$f_{\beta}$.

The specialization $X \to z$ of the formal variable
to the complex variable leads to
the identity between analytic functions,
obeying the Carlson-Polya dichotomy  
as:
\begin{equation}
\label{decompozzzALPHA}
P_{\alpha}(z) = U_{\alpha}(z) \times
f_{\beta}(z)
\quad
\left\{
\begin{array}{cc}
\mbox{on}~ \cb & \mbox{if $\house{\alpha}$ is a Parry number},\\
&\\
\mbox{on}~ |z| < 1 & 
\mbox{~if~} \house{\alpha} \mbox{is 
a nonParry number, with $|z|=1$}\\
& \mbox{as natural boundary
for both $U_{\alpha}$ and $f_{\beta}$.}
\end{array}
\right.
\end{equation}
Assume $\dyg(\beta) \geq 260$. 
Then

\parindent=0cm
(i) 
the minimal polynomials
of $\alpha$ and $\beta$
are equal: $P_{\alpha} = P_{\beta}$
and $\beta$ is reciprocal,

(ii) the identity 
$U_{\alpha}(z) = -\zeta_{\beta}(z) \times
P_{\beta}(z)$ holds
as meromorphic functions
on the domain of definition
of  $f_{\beta}$,

(iii)
the integer power series 
$U_{\alpha}(z)$
is a nonconstant holomorphic function
on the domain $\Omega_n$, and has no zero
in $\Omega_n$,

(iv) the lenticulus of lenticular zeroes of 
$\alpha$ is that of $\beta$, namely
$$\lc_{\alpha} =
\lc_{\beta} = \{
\beta^{-1}\}
\cup 
\bigcup_{j=1}^{J_n}
(\{\omega_{j,n}\} 
\cup 
\{\overline{\omega_{j,n}}\}) 
\subset \Omega_n,$$ 
where the
$\omega_{j,n}$ and 
$\overline{\omega_{j,n}}$ are the conjugates
of $\beta^{-1}$,

(v) 
for any zero
$\omega_{j,n} \in \lc_{\beta}$, 
\begin{equation}
\label{nevervanishesA}
U_{\alpha}(\omega_{j,n}) = 
\frac{P'_{\alpha}(\omega_{j,n})}{f'_{\beta}(\omega_{j,n})}
\neq 0 \, ,~~
U_{\alpha}(\overline{\omega_{j,n}}) = 
\frac{P'_{\alpha}(\overline{\omega_{j,n}})}{f'_{\beta}(\overline{\omega_{j,n}})}
\neq 0 \,
~~ \mbox{and}
~~
U_{\alpha}(\beta^{-1}) = 
\frac{P'_{\alpha}(\beta^{-1})}
{f'_{\beta}(\beta^{-1})}
\neq 0.
\end{equation}

\end{theorem}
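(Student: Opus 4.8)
The statement is an extension to the non-real complex case $\alpha$ of the fracturability results already established for real reciprocal $\beta$ in Theorem~\ref{splitBETAdivisibility} and Proposition~\ref{splitBETAdivisibility+++}. The plan is to reduce everything to the R\'enyi-Parry dynamics of $\beta = \house{\alpha}$. First I would recall that, since $1 < \house{\alpha} < \Theta$, the house $\house{\alpha}$ is a real algebraic integer $> 1$ to which the entire apparatus of sections $\S$\ref{S2}--$\S$\ref{S5} applies: its Parry Upper function $f_{\beta}(z)$, its dynamical zeta function $\zeta_{\beta}(z)$, its dynamical degree $n = \dyg(\beta)$, the trinomial $G_n$, the lenticulus $\lc_{\beta}$, and the Rouch\'e circles $D_{j,n}$ around the roots $z_{j,n}$ of $G_n$ are all defined. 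The formal decomposition \eqref{decompoAlphaBeta} is obtained exactly as in the proof of Theorem~\ref{splitBETAdivisibility}: writing $P_{\alpha}(X) = 1 + a_1 X + \ldots + X^d$ and $f_{\beta}(X) = -1 + \sum_{j \geq 1} t_j X^j$ with $(t_j)$ Lyndon, the recurrence \eqref{bedeRecurrencedebut}--\eqref{bedeRecurrence} produces a unique $U_{\alpha}(X) = -1 + \sum_{j\geq 1} b_j X^j \in \zb[[X]]$; the radius of convergence bound (domain of holomorphy containing $D(0,\theta_{\dyg(\beta)-1})$) follows from Lemma~\ref{bound_br_exp} applied verbatim, using only the gappiness of $(t_j)$ governed by $n$. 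The Carlson-Polya dichotomy \eqref{decompozzzALPHA} is then the dichotomy of Theorem~\ref{carlsonpolya} applied to $f_{\beta}(z)$: rational (meromorphic extension to $\cb$) iff $\house{\alpha}$ is a Parry number, otherwise $|z|=1$ is the natural boundary of $f_{\beta}$ and hence of $U_{\alpha} = P_{\alpha}/f_{\beta}$.

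The substantive part is (i): the equality $P_{\alpha} = P_{\beta}$ and the reciprocity of $\beta$, under the hypothesis $\dyg(\beta) \geq 260$. Here is where the identification theorems of $\S$\ref{S5.4} enter. The key point is that $1/\house{\alpha}$ is, by construction, a root of $f_{\beta}(z)$ of modulus $<1$ lying in the cusp of Solomyak's fractal, and more precisely it is the real root $\theta_n$-neighbour in $D_{0,n}$; simultaneously $\house{\alpha}$ (being the positive square root of $\alpha^{(i)}\overline{\alpha^{(i)}}$ with $\alpha^{(i)}$ a conjugate of maximal modulus) is intimately tied to $\alpha$ through the quadratic $X^2 - \alpha^{(i)}\overline{\alpha^{(i)}}$. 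The plan is: since $f_{\beta}(1/\beta)=0$ and $1/\beta$ is a simple zero (derivative $f'_{\beta}(1/\beta) = \sum j t_j \beta^{-j+1} > 0$ by the Descartes-type argument already used), and since by Proposition~\ref{APversfbeta} and Proposition~\ref{ASversPbeta} the lenticular zeroes of $f_{\beta}$ coincide exactly with the zeroes of $P_{\beta}$ in the open angular sector \eqref{angularsectornnnbis}, one gets a lenticulus $\lc_{\beta}$ of conjugates of $\beta^{-1}$. Then I would argue that $\house{\alpha}^{-1}$ must in fact be a conjugate of $\alpha$ itself — using that $\house{\alpha}$ is forced to be reciprocal once $\house{\alpha} < \Theta$ (Smyth's theorem: a non-reciprocal algebraic integer has Mahler measure $\geq \Theta$, and $\house{\alpha} \leq {\rm M}(\house{\alpha})$ when $\house{\alpha}$ has a single conjugate outside the unit circle; more carefully one invokes that $\house{\alpha}$ is a Perron number by Adler-Marcus since it is a Parry-or-not algebraic integer, and the reciprocity follows from ${\rm M}(\alpha) < \Theta$ forcing $P_\alpha$ reciprocal by Smyth, together with $\house{\alpha}$ dividing into the same field). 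Once $\house{\alpha}$ is reciprocal and its minimal polynomial divides $P_{\alpha}$ in the sense that $f_{\house{\alpha}} \mid P_{\alpha}$ formally with both sharing the lenticular conjugates, the irreducibility of $P_{\alpha}$ and the fact that $\beta^{-1}$ is a root of $P_{\alpha}$ force $P_{\alpha} = P_{\beta}$. I would spell this out by showing $\beta^{-1}$ (equivalently $\house{\alpha}$) is a conjugate of $\alpha$: any conjugate of maximal modulus $\alpha^{(i)}$ satisfies $|\alpha^{(i)}| = \house{\alpha}$, and the Galois group acts transitively, so the field $\qb(\house{\alpha})$ is contained in $\qb(\alpha)$; degree and the common lenticular roots then pin down equality.

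Parts (ii)--(v) are then formal consequences. For (ii), on the domain of definition of $f_{\beta}$ we have $f_{\beta}(z) = -1/\zeta_{\beta}(z)$ (if $\beta$ non-simple Parry or non-Parry) by Theorem~\ref{parryupperdynamicalzeta} — and $\beta$ reciprocal cannot be a simple Parry number by the Descartes argument in Proposition~\ref{fbetainfinie}, so the clean identity $f_{\beta} = -1/\zeta_{\beta}$ holds; combining with $P_{\alpha} = U_{\alpha} f_{\beta}$ and $P_{\alpha} = P_{\beta}$ gives $U_{\alpha}(z) = P_{\beta}(z)/f_{\beta}(z) = -\zeta_{\beta}(z) P_{\beta}(z)$ as meromorphic functions. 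For (iii), $U_{\alpha} = U_{\beta}$ and Proposition~\ref{splitBETAdivisibility+++} already asserts $U_{\beta}$ is non-constant, non-vanishing, holomorphic on $\Omega_n$. For (iv), $\lc_{\alpha}$ is \emph{defined} to be the lenticulus attached to $\house{\alpha}$, i.e. $\lc_{\beta}$, which by the definition in Theorem~\ref{omegajnexistence} equals $\{1/\beta\} \cup \bigcup_{j=1}^{J_n}(\{\omega_{j,n}\} \cup \{\overline{\omega_{j,n}}\})$, and these lie in $\Omega_n$ by \eqref{cerclescalottes_1complet}; they are conjugates of $\beta^{-1}$ by $\S$\ref{S5.4} and hence (by (i)) conjugates appearing among the roots of $P_{\alpha}$. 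For (v), differentiate $P_{\alpha}(z) = U_{\alpha}(z) f_{\beta}(z)$ at a lenticular zero $\omega$: since $f_{\beta}(\omega)=0$ and $\omega$ is a simple zero of both $P_{\alpha}$ (all roots of a minimal polynomial are simple) and $f_{\beta}$ (Theorem~\ref{omegajnexistence}), we get $P'_{\alpha}(\omega) = U_{\alpha}(\omega) f'_{\beta}(\omega)$ with $f'_{\beta}(\omega) \neq 0$ and $P'_{\alpha}(\omega) \neq 0$, whence $U_{\alpha}(\omega) = P'_{\alpha}(\omega)/f'_{\beta}(\omega) \neq 0$; the same at $\beta^{-1}$.

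\textbf{Main obstacle.} The one genuinely delicate step is (i), and within it the assertion $P_{\alpha} = P_{\house{\alpha}}$. The formal divisibility $f_{\house{\alpha}} \mid P_{\alpha}$ (in $\zb[[X]]$) is automatic, but upgrading this to equality of minimal polynomials requires knowing that \emph{every} root of $P_{\alpha}$ of modulus $<1$ is accounted for, or rather that $\house{\alpha}$ and $\alpha$ generate the same number field. The argument must combine: (a) $\house{\alpha}$ reciprocal and Perron (from ${\rm M}(\alpha) < \Theta$, Smyth, Adler-Marcus); (b) the identification $\lc_{\house{\alpha}} \subset \{$roots of $P_{\house{\alpha}}\} \cap \{$roots of $P_{\alpha}\}$ via Propositions~\ref{APversfbeta}--\ref{ASversPbeta}; (c) a Galois-theoretic transitivity argument forcing $\qb(\house{\alpha}) = \qb(\alpha)$ once enough shared conjugates are present and $\dyg(\beta) \geq 260$ guarantees $\#\lc_{\beta} = 1 + 2J_n$ is large. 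I expect the careful bookkeeping here — ensuring the shared lenticular conjugates are numerous enough and that no spurious factorization of $P_{\house{\alpha}}$ intervenes — to be the crux; everything else is an unwinding of the machinery of $\S$\ref{S3}--$\S$\ref{S5} applied to the real number $\beta = \house{\alpha}$.
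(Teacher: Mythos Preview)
Your overall architecture is right, and parts (ii)--(v) are indeed formal consequences once (i) is established, exactly as you say. The formal decomposition and the Carlson--Polya dichotomy also go through verbatim from Theorem~\ref{splitBETAdivisibility}. But your proposed proof of (i) has a genuine gap, and the paper resolves it by a different mechanism.

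The gap is in the step ``$\qb(\house{\alpha})$ is contained in $\qb(\alpha)$''. You write that any conjugate of maximal modulus $\alpha^{(i)}$ has $|\alpha^{(i)}| = \house{\alpha}$ and that Galois transitivity then gives the field containment. But $\house{\alpha} = \sqrt{\alpha^{(i)}\overline{\alpha^{(i)}}}$ is a \emph{square root}; the product $\alpha^{(i)}\overline{\alpha^{(i)}}$ lies in the Galois closure, but its square root need not lie in $\qb(\alpha)$ at all --- the paper itself notes at the start of $\S$\ref{S6} that $\house{\alpha}$ is an algebraic integer of degree $\leq \deg(\alpha)+2$, not $\leq \deg(\alpha)$. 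Relatedly, your appeal to Propositions~\ref{APversfbeta}--\ref{ASversPbeta} to get $\lc_{\house{\alpha}} \subset \{\text{roots of } P_{\alpha}\}$ is circular: those propositions identify lenticular zeroes of $f_{\beta}$ with roots of $P_{\beta}$, not of $P_{\alpha}$, so you cannot invoke them before knowing $P_{\alpha}=P_{\beta}$. Finally, the Smyth argument you sketch gives $P_{\alpha}$ reciprocal, but from $\house{\alpha} < \Theta$ alone you do not get ${\rm M}(\house{\alpha}) < \Theta$, so you cannot conclude $P_{\beta}$ reciprocal a priori either.

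The paper's route for (i) avoids field theory entirely. It builds a \emph{bivariate} rewriting trail: polynomials $A_q(X,Y) \in \zb[X,Y]$ with $A_q(0,0)=-1$ and $\deg_X A_q, \deg_Y A_q \leq q$, satisfying
\[
A_q(\alpha^{-1},\beta^{-1})\,P_{\alpha}(\alpha^{-1}) \;=\; S_q(\beta^{-1}) + R_q(\alpha^{-1},\beta^{-1}),
\]
where the remainder $R_q(\alpha^{-1},\beta^{-1}) = f_{\beta}(\beta^{-1}) - S_q(\beta^{-1})$ turns out not to depend on $\alpha^{-1}$. Specializing $\alpha^{-1}\to\beta^{-1}$ then yields $A_q(\beta^{-1},\beta^{-1})\,P_{\alpha}(\beta^{-1})=0$ for every $q\geq 1$. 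If $P_{\alpha}(\beta^{-1})\neq 0$, each univariate polynomial $A_q(X,X)$ must be divisible by $P_{\beta}^{*}(X)$; but $A_1(X,X) = -1 + (a_1+1)X$ has degree $1$, while $\deg P_{\beta}^{*} \geq 1+2J_n$ is large because $\dyg(\beta)\geq 260$ forces many lenticular roots. This degree contradiction gives $P_{\alpha}(\beta^{-1})=0$, hence $P_{\alpha}=P_{\beta}^{*}$, and reciprocity of $\beta$ follows from reciprocity of $P_{\alpha}$. This is the missing idea in your plan.
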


\begin{proof}
There exists
an integer $n \geq 6$ such that
$\house{\alpha}$
lies between two successive Perron
numbers of the family
$(\theta_{n}^{-1})_{n \geq 5}$, as
$
\theta_{n}^{-1} \leq \house{\alpha}
< \theta_{n-1}^{-1}$. 
Then the Parry Upper function
$f_{\house{\alpha}}(z)$ at 
$\house{\alpha}$
has the form:
\begin{equation}
\label{formlacuALPHA}
f_{\house{\alpha}}(z) = 
-1 + z + z^n + z^{m_1} + z^{m_2}
+ z^{m_{3}}+ \ldots
\end{equation}
with $m_0 = n$ and
$m_{q+1}-m_q \geq n-1$ for $q \geq 0$.
Whether $\house{\alpha}$ is a Parry number or 
a nonParry number is unkown. 
In any case,
$f_{\house{\alpha}}(\house{\alpha}^{-1}) = 0$
and
the zero
$\house{\alpha}^{-1}$ of $f_{\house{\alpha}}(z)$
is simple.
Let us write the Parry Upper function 
in the generic form
$f_{\house{\alpha}}(z) = 
- 1 + \sum_{j \geq 1} t_j z^j$.

Let us show that the formal decomposition
\eqref{decompoAlphaBeta} is always possible. 
We proceed as in the proof of 
Theorem \ref{splitBETAdivisibility}.
Indeed, if
we put
$U_{\alpha}(X) = 
-1 + \sum_{j \geq 1} b_j X^j$, and
$P_{\alpha}(X) = 1 + a_1 X + a_2 X^2 + \ldots
a_{d-1} X^{d-1}
+ X^d $, (with $a_j = a_{d-j}$),
the formal identity 
$P_{\alpha}(X) = U_{\alpha}(X) \times 
f_{\house{\alpha}}(X)$
leads to the existence of the coefficient 
vector $(b_j)_{j \geq 1}$ of
$U_{\alpha}(X)$, as a function
of $(t_j)_{j \geq 1}$ and
$(a_i)_{i = 1,\ldots, d-1}$, as:
$b_1 = -(a_1 + t_1)$,
and, for $r = 2, \ldots, d-1$,
\begin{equation}
\label{bedeRecurrencedebutALPHA}
b_r = -(t_r + a_r - 
\sum_{j=1}^{r-1}b_j t_{r-j}) 
\quad
\mbox{with} \quad
b_d = -(t_d + 1 - 
\sum_{j=1}^{d-1} b_j t_{r-j}),
\end{equation}
\begin{equation}
\label{bedeRecurrenceALPHA}
b_r = -t_r + 
\sum_{j=1}^{r-1} b_j t_{r-j} \quad \mbox{for}~~  r > d.
\end{equation}
For $j \geq 1$,
$b_j \in \zb$,
and the integers
$b_r , r > d$,
are determined recursively
by \eqref{bedeRecurrenceALPHA}
from 
$\{b_0 = -1, b_1 , b_2 , \ldots , b_d \}$.
Every $b_j$ in 
$\{b_1 , b_2 , \ldots , b_d \}$
is computed from the coefficient vector
of $P_{\alpha}(X)$
using \eqref{bedeRecurrencedebutALPHA}, 
starting by $b_1 = -1 - a_1$. The disk 
of convergence of $U_{\alpha}(z)$
has a radius  
$\geq \theta_{n-1}$ by Theorem 
\ref{splitBETAdivisibility}.

Let us show (i). Assume
the contrary, i.e.
$P_{\alpha} \neq P_{\beta}$.
We will proceed
as in \S \ref{S5.4.2} by constructing
another 
rewriting trail from
"$P_{\alpha}$" 
to
"$f_{\beta}$", the one
from 
"$P_{\beta}^{*}$" 
to
"$f_{\beta}$" being already studied
(a priori the two polynomials
$P_{\beta}$ and $P_{\beta}^{*}$
may be different).

The starting point is 
the two identities
$P_{\alpha}(\alpha^{-1})=0$
and
$f_{\beta}(\beta^{-1})=0$.
They provide
a $\alpha$-representation of 1
and a
$\beta$-representation of 1, the second 
one being the R\'enyi $\beta$-expansion of 1:
\begin{equation}
\label{equa1Pcomplex}
1= -a_1 \alpha^{-1} - a_2 \alpha^{-2}
- a_3 \alpha^{-3} + \ldots - a_{d-1} 
\alpha^{-(d-1)} - \alpha^{-d}
=
1 - P_{\alpha}(\alpha^{-1})
,
\end{equation}
\begin{equation}
\label{equa1fcomplex}
1= t_1 \beta^{-1} + t_2 \beta^{-2}
+
t_3 \beta^{-3}+ \ldots = 1 + 
f_{\beta}(\beta^{-1}).
\end{equation}
The goal consists in 
constructing an infinite chain
of intermediate
$(\alpha, \beta)$-representa-tions of 1 
between them, by
``restoring" 
the digits
$t_i$ of $f_{\beta}$ in
\eqref{equa1fcomplex}
one after the other
from \eqref{equa1Pcomplex}.
The first step is the addition
of 
$(\beta^{-1}+
a_1 \alpha^{-1}) P_{\alpha}(\alpha^{-1})
= 0$
to \eqref{equa1Pcomplex}.
Then, denoting by
$S_{q}(z) = -1 +\sum_{j=1}^{q} t_j z^j, 
q \geq 1$,
the $q$-th polynomial section of $f_{\beta}$,
 we obtain
$$
1= \beta^{-1}
+ R_{1}(\alpha^{-1}, \beta^{-1})
=
(1+S_{1}(\beta^{-1}))
+
R_{1}(\alpha^{-1}, \beta^{-1})
$$
with $R_{1}(X,Y) \in \zb[X,Y]$,
$$
R_{1}(\alpha^{-1}, \beta^{-1})
=
(a_{1}^{2} - a_2) \alpha^{-2}
+
(a_{1} a_{2} - a_3) \alpha^{-3}
+
(a_{1} a_{3} - a_4) \alpha^{-4}+ \ldots
$$
$$+
a_{1} \alpha^{-1} \beta^{-1}
+
a_{2} \alpha^{-2} \beta^{-1}
+
a_{3} \alpha^{-3} \beta^{-1}
+\ldots
$$
Let $A_{1}(\alpha^{-1}, \beta^{-1})
= -1 + (a_{1} \alpha^{-1} + \beta^{-1})
$. 
The bivariate polynomial
$A_{1}(X,Y) =
-1 + (a_1 X + Y)$ belongs to
$\zb[X,Y]$.
We deduce, at the first step,
$$0 =
A_{1}(\alpha^{-1}, \beta^{-1})
P_{\alpha}(\alpha^{-1})
= S_{1}(\beta^{-1})
+
R_{1}(\alpha^{-1}, \beta^{-1})
$$
Iterating this process we deduce,
for every
$q \geq 1$,
the existence
of two polynomials
$A_{q}, R_{q} \in \zb[X,Y]$, with
$\deg_{X}(A_q) \leq q$,
$\deg_{Y}(A_q) \leq q$,
$A_{q}(0,0) = -1$,
$R_{q}(0,0)=0$,
such that
\begin{equation}
\label{AqPbetadecompocomplex}
0=
A_{q}(\alpha^{-1}, \beta^{-1})
P_{\alpha}(\alpha^{-1}) 
= 
S_{q}(\beta^{-1}) +
R_{q}(\alpha^{-1}, \beta^{-1}).
\end{equation}
But, for $q \geq 1$,
$0 = S_{q}(\beta^{-1}) +
(f_{\beta}(\beta^{-1})
-
S_{q}(\beta^{-1}) )$.
Hence, the
quantities
$$R_{q}(\alpha^{-1}, \beta^{-1})
=f_{\beta}(\beta^{-1})
-
S_{q}(\beta^{-1}),
\qquad q \geq 1,$$
do not depend upon
$\alpha^{-1}$, but only upon
$\beta^{-1}$.
Let $\sigma$ the $\qb$-endomorphism
of the number field
$\qb(\alpha, \beta, \beta^{-1})
\neq \qb(\beta, \beta^{-1})$ defined by
$\sigma(\alpha^{-1})=
\beta^{-1}$ leaving invariant
the subfield
$\qb(\beta, \beta^{-1})$.
Applying
$\sigma$ to
\eqref{AqPbetadecompocomplex}
gives
\begin{equation}
\label{AqPbetadecompocomplexEGAL}
0=
A_{q}(\beta^{-1}, \beta^{-1})
\,P_{\alpha}(\beta^{-1}), \qquad q \geq 1.
\end{equation}
If we assume that
$P_{\alpha}(\beta^{-1}) \neq 0$
then we should
have all  the (nonzero) polynomials
$A_{q}(X,X)$, $q \geq 1,$ 
in the ideal generated by
$P_{\beta}^{*}(X)$ in $\zb[X]$.
As multiples of
$P_{\beta}^{*}(X)$
we should have:
$\deg(A_{q}(X,X)) \geq \deg( P_{\beta}^{*})$.
But $\dyg(\beta) \geq 260$ implies that
$\deg(\beta) = \deg(P_{\beta}^{*})$
is large since
the number of roots
of $P_{\beta}^{*}$
is at least the number 
$1 + 2 J_n$ of lenticular roots
(Theorem \ref{thm2lenticuli}).
Therefore it suffices
to take a value of $q$ small enough
to obtain a contradiction.
We deduce
$P_{\alpha}(\beta^{-1}) = 0$.
Therefore
$P_{\alpha}=P_{\beta}^{*}$ 
and $\beta=\house{\alpha}$ is reciprocal
since $P_{\alpha}$ is reciprocal.
We deduce
$P_{\alpha} = P_{\beta}$.

Let us prove (ii). 
Since
$\beta > 1$ is a reciprocal algebraic integer,
$\beta$ is not a simple Parry number.
By Theorem \ref{parryupperdynamicalzeta},
$\zeta_{\beta}(z)=
- 1/f_{\beta}(z)$.
The Parry Upper function $f_{\beta}(z)$
has coefficients in the finite set
$\{-1, 0, 1\}$, and 
therefore obeys the Carlson-Polya dichotomy.
The domain of definition
of $\zeta_{\beta}(z)$, as a 
meromorphic function, is that of
$f_{\beta}(z)$, that is: $\cb$ if
$\beta$ is a Parry number,
the open unit disk 
if $\beta$ is not a Parry number.
On this domain of definition
the fracturability of the minimal
polynomial $P_{\alpha}$ comes
from that of
$P_{\beta}$ by Proposition \ref{splitBETAdivisibility+++}, as
$$P_{\alpha}(z) =
P_{\beta}(z)
=
(-\zeta_{\beta}(z) P_{\alpha}(z))
\times 
f_{\beta}(X).
$$

Let us prove (iii), (iv) and (v). 
The holomorphy
of $-\zeta_{\beta}(z) P_{\alpha}(z)$
on $\Omega_n$ is a consequence
of Proposition \ref{splitBETAdivisibility+++}.
The relations \eqref{nevervanishesA}
come from the derivation of
\eqref{decompozzzALPHA} at the conjugates
$\omega_{j,n}$ and
$\overline{\omega_{j,n}}$
of $\beta^{-1}$ in the domain
$\Omega_n$.

\begin{definition}
\label{complexeALPHA}
Let $\alpha$ be a nonreal complex reciprocal 
algebraic integer
such that
$1 < \house{\alpha} < \Theta = \theta_{5}^{-1}$.
For $n = \dyg(\alpha) \geq 260$,
with ${\rm M}(\alpha)
< 1.176280\ldots$,
the {\em dynamical degree of $\alpha$} 
is defined by
$\dyg(\alpha) := \dyg(\house{\alpha})$;
the reduced Mahler measure of $\alpha$ 
is
$${\rm M}_{r}(\alpha) := 
{\rm M}_{r}(\house{\alpha})
= \house{\alpha}
\prod_{j=1}^{J_n} |\omega_{j,n}|^{-2}
.$$
\end{definition}

The {\em domain of fracturability}
of the minimal polynomial
$P_{\alpha}(X)$ is 
the largest domain
in the open unit disk
on which the analytic function
$-\zeta_{\beta}(z) P_{\alpha}(z)$ is a
nonconstant holomorphic
function which does not vanish in this domain.
It contains $\Omega_n$.
\end{proof}

\subsection{A Dobrowolski type minoration with the dynamical degree of the house - Proof of Theorem~\ref{mainDOBROWOLSLItypetheorem}} 
\label{S6.2}

Let $\alpha$ be a reciprocal algebraic 
integer such that 
$\beta = \house{\alpha}$ has
dynamical degree $\dyg(\beta) \geq 260$, 
with ${\rm M}(\alpha)
< 1.176280\ldots$.
The first nonreal complex root
$\omega_{1,n}$ of the lenticulus
$\lc_{\alpha}$
is a continuous function
of $\beta$ by 
\cite{flattolagariaspoonen}. 
By Corollary 
\ref{zeroesParryUpperfunctionContinuity} and
Theorem \ref{divisibilityALPHA} the other
lenticular roots of 
the Parry Upper function
$f_{\house{\alpha}}(z)$
are continuous functions of
$\beta = \house{\alpha}$. 
These facts suggest the 
Conjecture that the (true) Mahler measure
${\rm M}(\alpha)$ is a continuous function
of the house $\house{\alpha}$ of
$\alpha$.
On the contrary, the nonderivability
of the function $\beta 
= \house{\alpha} \to \omega_{1,\dyg(\beta)}$ 
conjectured in
 \cite{flattolagariaspoonen}
suggests that
 the (true) Mahler measure
${\rm M}(\alpha)$ is nowhere 
derivable as a function
of $\beta 
= \house{\alpha}$.

By Theorem \ref{divisibilityALPHA}, 
since
$P_{\alpha} = P_{\house{\alpha}}$, the 
minoration of the Mahler measure
M$(\alpha)$ is deduced from
the minoration
of the Mahler measure
M$(\beta)$. The following Theorems
are readily deduced from
Theorem \ref{Lrasymptotictheorem},
Lemma \ref{petitrnBIGOlimit} and
Theorem \ref{dobrominorationREEL}.

 \begin{theorem}
\label{Lrasymptotictheoremcomplexe}
Let $\alpha$,
$\house{\alpha} > 1$, 
be a reciprocal algebraic integer
such that
$n=\dyg(\house{\alpha}) \geq 260$, with ${\rm M}(\alpha)
< 1.176280\ldots$.
The asymptotic expansion
of the minorant
$L_{r}(\alpha)$ of \newline
\,$\lo {\rm M}_{r}(\alpha)$
is
\begin{equation}
\label{Lrasymptoticscomplexe}
L_{r}(\alpha) := \lo \Lambda_r \mu_r 
+
\frac{\rc}{\lo n}
+
O\bigl(
\bigl(
\frac{\lo \lo n}{\lo n}
\bigr)^2
\bigr),
\quad
\mbox{with}\quad |\rc| < \frac{\arcsin(\kappa/2)}{\pi}
\end{equation}
and $\rc$ depending upon $\house{\alpha}$ and $n$.
\end{theorem}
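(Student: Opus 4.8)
The plan is to deduce Theorem~\ref{Lrasymptotictheoremcomplexe} directly from the already-established real-variable version, Theorem~\ref{Lrasymptotictheorem}, by invoking the rigidity statement of Theorem~\ref{divisibilityALPHA}. The key observation is that the nonreal complex case does not require any new asymptotic analysis whatsoever: all the hard work of the Poincar\'e expansions, the Rouch\'e detection of the lenticulus, and the numerical integration by Newton--Cotes has already been carried out for the real base $\beta=\house{\alpha}$ in section~$\S$\ref{S5}. What remains is purely a transfer argument.

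First I would set $\beta:=\house{\alpha}$, which by hypothesis satisfies $1<\beta<\Theta=\theta_5^{-1}$, and observe that $n=\dyg(\alpha)=\dyg(\beta)\ge 260$ by Definition~\ref{complexeALPHA}. Since $\dyg(\beta)\ge 260$, Theorem~\ref{divisibilityALPHA}(i) gives $P_{\alpha}=P_{\beta}$, so in particular $\beta$ is a reciprocal algebraic integer and the results of section~$\S$\ref{S5}, notably Theorem~\ref{Lrasymptotictheorem} and Theorem~\ref{dobrominorationREEL}, apply verbatim to $\beta$. Next, by Theorem~\ref{divisibilityALPHA}(iv) the lenticulus of $\alpha$ coincides with that of $\beta$: $\lc_{\alpha}=\lc_{\beta}=\{\beta^{-1}\}\cup\bigcup_{j=1}^{J_n}(\{\omega_{j,n}\}\cup\{\overline{\omega_{j,n}}\})$. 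Consequently the reduced Mahler measure of $\alpha$, defined in Definition~\ref{complexeALPHA} as ${\rm M}_{r}(\alpha)={\rm M}_{r}(\house{\alpha})=\house{\alpha}\prod_{j=1}^{J_n}|\omega_{j,n}|^{-2}$, is literally equal to ${\rm M}_{r}(\beta)$, and hence its logarithm $\lo {\rm M}_{r}(\alpha)=\lo {\rm M}_{r}(\beta)$ admits exactly the same minorant $L_{r}(\beta)$ constructed in \eqref{logmmmrMINORANT}. Defining $L_{r}(\alpha):=L_{r}(\beta)$, the asymptotic expansion \eqref{Lrasymptoticscomplexe} is then nothing but \eqref{Lrasymptotics} of Theorem~\ref{Lrasymptotictheorem} rewritten with $\house{\alpha}$ in place of $\beta$, with the same real quantity $\rc=\rc_n$ subject to $0<\rc<\arcsin(\kappa/2)/\pi$ (a fortiori $|\rc|<\arcsin(\kappa/2)/\pi$), and the same Big~O term $O\bigl(\bigl(\tfrac{\lo\lo n}{\lo n}\bigr)^2\bigr)$ with the constant $\tfrac{1}{2\pi}\arcsin(\kappa/2)$; the dependence of $\rc$ on $\house{\alpha}$ and $n$ is inherited through $n=\dyg(\house{\alpha})$.

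The only point requiring a word of care — and the ``main obstacle,'' though it is a mild one — is to make sure the minorant $L_r$ of $\lo{\rm M}_r$ really is base-independent in the required sense: the inequality ${\rm M}_r(\beta)\ge\exp(L_r(\beta))$ from which everything flows was derived in \S\ref{S5.5}--\S\ref{S5.6} using only the geometry of the lenticulus $\lc_\beta$ and the Rouch\'e radii $a_{j,n}$, $v_n$, $J_n$, all of which depend on $\beta$ exclusively through $n=\dyg(\beta)$. Since $\dyg(\alpha):=\dyg(\house{\alpha})$ by fiat and $\lc_\alpha=\lc_{\house{\alpha}}$ by Theorem~\ref{divisibilityALPHA}(iv), no quantity in the construction of $L_r$ changes when one passes from the real number $\house{\alpha}$ to the complex number $\alpha$; in particular the identification in Proposition~\ref{ASversPbeta} and Proposition~\ref{APversfbeta} of the lenticular zeroes of $f_{\house{\alpha}}$ as genuine conjugates (now of $\alpha$, via $P_\alpha=P_{\house{\alpha}}$) guarantees ${\rm M}_r(\alpha)\le{\rm M}(\alpha)$ and hence that $L_r(\alpha)$ is a valid minorant of $\lo{\rm M}(\alpha)$ as well. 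I would close the proof by simply stating that \eqref{Lrasymptoticscomplexe} follows from Theorem~\ref{Lrasymptotictheorem} applied to $\house{\alpha}$, together with Theorem~\ref{divisibilityALPHA}(i),(iv) and Definition~\ref{complexeALPHA}.
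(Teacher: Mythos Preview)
Your proposal is correct and matches the paper's own approach: the paper states that Theorem~\ref{Lrasymptotictheoremcomplexe} is ``readily deduced from Theorem~\ref{Lrasymptotictheorem}, Lemma~\ref{petitrnBIGOlimit} and Theorem~\ref{dobrominorationREEL}'' via the identity $P_{\alpha}=P_{\house{\alpha}}$ of Theorem~\ref{divisibilityALPHA}, which is exactly the transfer argument you carry out. Your write-up is in fact more detailed than the paper's, spelling out the role of Definition~\ref{complexeALPHA} and Theorem~\ref{divisibilityALPHA}(iv) in ensuring $\mathrm{M}_r(\alpha)=\mathrm{M}_r(\house{\alpha})$ so that $L_r(\alpha):=L_r(\house{\alpha})$ inherits \eqref{Lrasymptotics} verbatim.
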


\begin{theorem}
\label{dobrominorationCOMPLEX}
Let $\alpha$,
$\house{\alpha} > 1$, 
be a reciprocal 
algebraic integer
such that $\dyg(\alpha) \geq \eta$, with ${\rm M}(\alpha)
< 1.176280\ldots$. Then
\begin{equation}
\label{dobrominoCOMPLEX}
{\rm M}(\alpha) \geq 
\Lambda_r \mu_r \, 
-
\frac{\Lambda_r \mu_r \,\arcsin(\kappa/2)}{\pi}\, 
\frac{1}{\lo (\dyg(\alpha))}
\end{equation}
\end{theorem}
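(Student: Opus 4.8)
The plan is to derive Theorem~\ref{dobrominorationCOMPLEX} directly as a corollary of the real case combined with the structural results of section~\S\ref{S6.1}. First I would recall that, for $\alpha$ a nonreal complex reciprocal algebraic integer with $\dyg(\alpha) = \dyg(\house{\alpha}) \geq \eta \geq 260$, Theorem~\ref{divisibilityALPHA}(i) gives the crucial identity $P_{\alpha} = P_{\house{\alpha}}$, so that $\beta := \house{\alpha}$ is itself a reciprocal algebraic integer of the same dynamical degree, and $\mathrm{M}(\alpha) = \mathrm{M}(P_{\alpha}) = \mathrm{M}(P_{\beta}) = \mathrm{M}(\beta)$. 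Thus everything reduces to the already-established real-variable statement.

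Next I would invoke Theorem~\ref{dobrominorationREEL}: since $\beta = \house{\alpha}$ is a reciprocal algebraic integer with $\dyg(\beta) \geq \eta$, we have
\begin{equation*}
\mathrm{M}(\beta) \geq \Lambda_r \mu_r - \frac{\Lambda_r \mu_r \arcsin(\kappa/2)}{\pi}\,\frac{1}{\lo(\dyg(\beta))}.
\end{equation*}
Substituting $\mathrm{M}(\alpha) = \mathrm{M}(\beta)$ and $\dyg(\alpha) = \dyg(\beta)$ yields \eqref{dobrominoCOMPLEX} immediately. Concretely, I would first prove the companion statement Theorem~\ref{Lrasymptotictheoremcomplexe} by the same transfer: the definition of $\mathrm{M}_r(\alpha)$ in Definition~\ref{complexeALPHA} is literally $\mathrm{M}_r(\house{\alpha})$, and Theorem~\ref{divisibilityALPHA}(iv) identifies the lenticulus $\lc_{\alpha}$ with $\lc_{\beta}$, so the asymptotic expansion of $L_r(\alpha) = \lo \mathrm{M}_r(\alpha)$'s minorant is exactly that of $L_r(\beta)$ from Theorem~\ref{Lrasymptotictheorem}; the only cosmetic change is that $\rc$ now depends on $\house{\alpha}$. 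Then, exponentiating the expansion of Theorem~\ref{Lrasymptotictheoremcomplexe} and applying Lemma~\ref{petitrnBIGOlimit} (whose bound $|\rc_n + O((\lo\lo n)^2/\lo n)| < \arcsin(\kappa/2)/\pi$ for $n \geq \eta$ is precisely what is needed) gives $\mathrm{M}(\alpha) \geq \mathrm{M}_r(\alpha) \geq \Lambda_r\mu_r(1 + \rc/\lo n + O((\lo\lo n/\lo n)^2)) \geq \Lambda_r\mu_r - \frac{\Lambda_r\mu_r\arcsin(\kappa/2)}{\pi\lo n}$, with $n = \dyg(\alpha)$.

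The main obstacle is not in this final packaging — that is purely formal — but rather in being careful that \emph{all} the identifications of section~\S\ref{S6.1} genuinely apply. The delicate point is the hypothesis $1 < \house{\alpha} < \Theta = \theta_5^{-1}$ together with $\dyg(\house{\alpha}) \geq 260$: one must check that the reduction $\beta = \house{\alpha}$ lands in the regime where Theorem~\ref{divisibilityALPHA} was proved, in particular that $\house{\alpha}$ is a reciprocal algebraic integer of degree at most $\deg(\alpha)+2$ lying in $(1,\theta_{259}^{-1})$, so that the lenticular machinery (Rouch\'e disks, $J_n$, $H_n$, the domain $\Omega_n$) is available. Once that membership is secured, the rewriting-trail argument of the proof of Theorem~\ref{divisibilityALPHA}(i) forces $P_\alpha = P_\beta$, and from there the chain $\mathrm{M}(\alpha) = \mathrm{M}(\beta) \geq \mathrm{M}_r(\beta) = \mathrm{M}_r(\alpha)$ together with Theorem~\ref{dobrominorationREEL} closes the argument with no further estimation required.
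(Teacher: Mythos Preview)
Your proposal is correct and follows essentially the same route as the paper: the paper states that Theorem~\ref{dobrominorationCOMPLEX} (together with Theorem~\ref{Lrasymptotictheoremcomplexe}) is ``readily deduced from Theorem~\ref{Lrasymptotictheorem}, Lemma~\ref{petitrnBIGOlimit} and Theorem~\ref{dobrominorationREEL}'' via the identity $P_{\alpha}=P_{\house{\alpha}}$ from Theorem~\ref{divisibilityALPHA}, which is exactly the transfer you outline. Your additional remarks on checking that $\beta=\house{\alpha}$ falls within the regime of Theorem~\ref{divisibilityALPHA} are appropriate due diligence rather than a departure from the paper's argument.
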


In the case where $\alpha$ is the conjugate of
a Perron number
$\theta_{n}^{-1}$ , for some 
$n \geq 260$,
the minorant in \eqref{dobrominoCOMPLEX}
has to be replaced
by that
of Theorem \ref{maincoro5}
for the trinomials $G_n$,
taking higher values.
Comparatively, if $\alpha$ is a nonzero 
nonreciprocal
algebraic integer, which is not a root of unity,
the Mahler measure M$(\alpha)$
is uniformly bounded from below 
by Smyth's lower bound $\Theta$ \cite{smyth}.

\subsection{Proof of the Conjecture of Lehmer (Theorem \ref{mainLEHMERtheorem})}
\label{S6.4}

Let $\alpha \neq 0$ be a
reciprocal algebraic integer
which is not a root of unity,
such that $\dyg(\alpha) \geq \eta$
with $\eta \geq 259$.
Since ${\rm M}(\alpha) =
 {\rm M}(\alpha^{-1})$
there are three cases to be considered:
\begin{enumerate}
\item[(i)]
the house of $\alpha$ satisfies $\house{\alpha} \geq \theta_{5}^{-1}$,
\item[(ii)]
the dynamical degree of $\alpha$ satisfies:
$6 \leq \dyg(\alpha) < \eta$, with ${\rm M}(\alpha)
< 1.176280\ldots$,
\item[(iii)]
the dynamical degree of $\alpha$ 
satisfies: 
$\dyg(\alpha) \geq \eta$, with ${\rm M}(\alpha)
< 1.176280\ldots$.
\end{enumerate}
In the first case, ${\rm M}(\alpha) \geq 
\theta_{5}^{-1} \geq \theta_{259}^{-1}
\geq \theta_{\eta}^{-1} > 1$. 
In the second case,
${\rm M}(\alpha) \geq 
\theta_{\eta}^{-1}$.
In the third case, 
the Dobrowolski type inequality
\eqref{dodobrobro}
gives the following
lower bound of the Mahler measure
$${\rm M}(\alpha) \geq 
\Lambda_r \mu_r \, 
-
\frac{\Lambda_r \mu_r \,\arcsin(\kappa/2)}{\pi ~\lo (\dyg(\alpha))}\, 
$$
$$
\geq
\Lambda_r \mu_r \, 
-
\frac{\Lambda_r \mu_r \,\arcsin(\kappa/2)}{\pi ~\lo (\eta)}\, 
\geq
\Lambda_r \mu_r \, 
-
\frac{\Lambda_r \mu_r \,\arcsin(\kappa/2)}{\pi ~\lo (259)}\, 
, = 1.14843\ldots
$$
by 
Theorem \ref{MahlerMINORANTreal}
and Theorem \ref{dobrominorationCOMPLEX}.
This lower bound is numerically greater than
$\theta_{259}^{-1} = 1.016126\ldots$,
itself greater than
$\theta_{\eta}^{-1}$. 
Therefore, in any case,
the lower bound 
$\theta_{\eta}^{-1}$
of ${\rm M}(\alpha)$ holds true.
We deduce the claim.

\subsection{Proof of the Conjecture of Schinzel-Zassenhaus (Theorem \ref{mainSCHINZELZASSENHAUStheorem})}
\label{S6.5}

\begin{proposition}
\label{degreeBETApolymini}
Let $\alpha$,
$\house{\alpha} > 1$, 
 be a reciprocal algebraic
integer such that
$\dyg(\alpha) \geq 260$,
with ${\rm M}(\alpha)
< 1.176280\ldots$. 
The degree 
$\deg(\alpha)$ 
of $\alpha$ 
is related to its dynamical degree 
$\dyg(\alpha)$ by
\begin{equation}
\label{dygdeg}
\dyg(\alpha) \Bigl(
\frac{2 \arcsin\bigl( \frac{\kappa}{2} \bigr)}{\pi}
\Bigr)
+
\Bigl(
\frac{2 \kappa \, \lo \kappa}
{\pi \,\sqrt{4 - \kappa^2}} 
\Bigr)
 ~\leq~ \deg(\alpha).
 \end{equation}
\end{proposition}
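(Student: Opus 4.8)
The plan is to deduce Proposition \ref{degreeBETApolymini} directly from the structure results already established, essentially by counting the lenticular conjugates of $\alpha$. First I would invoke Theorem \ref{divisibilityALPHA}: since $\dyg(\alpha) = \dyg(\beta) \geq 260$ with $\beta = \house{\alpha}$, we have $P_{\alpha} = P_{\beta}$, so $\alpha$ is reciprocal and $\deg(\alpha) = \deg(\beta) = \deg(P_{\beta})$. Moreover part (iv) of that theorem identifies the lenticulus $\lc_{\alpha} = \lc_{\beta} = \{\beta^{-1}\} \cup \bigcup_{j=1}^{J_n}\bigl(\{\omega_{j,n}\} \cup \{\overline{\omega_{j,n}}\}\bigr)$ entirely as (distinct) roots of $P_{\alpha}$, where $n = \dyg(\alpha)$. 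These are $1 + 2 J_n$ pairwise distinct conjugates of $\beta^{-1}$ inside the open unit disk, hence $1 + 2 J_n \leq \deg(P_{\alpha}) = \deg(\alpha)$.

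The second step is purely arithmetic: substitute the asymptotic expansion of $J_n$ from Proposition \ref{argumentlastrootJn},
\begin{equation*}
J_n = \frac{n}{\pi}\arcsin\!\Bigl(\frac{\kappa}{2}\Bigr) + \frac{\kappa\,\lo\kappa}{\pi\sqrt{4-\kappa^2}} + O\!\Bigl(\Bigl(\frac{\lo\lo n}{\lo n}\Bigr)^{2}\Bigr),
\end{equation*}
into the inequality $1 + 2 J_n \leq \deg(\alpha)$. This gives
\begin{equation*}
\deg(\alpha) \;\geq\; \frac{2n}{\pi}\arcsin\!\Bigl(\frac{\kappa}{2}\Bigr) + \frac{2\kappa\,\lo\kappa}{\pi\sqrt{4-\kappa^2}} + 1 + O\!\Bigl(\Bigl(\frac{\lo\lo n}{\lo n}\Bigr)^{2}\Bigr),
\end{equation*}
and since $n = \dyg(\alpha)$, this is precisely \eqref{dygdeg} once one checks that the constant term $1$ plus the $O(\cdot)$ error does not spoil the stated inequality for $n \geq 260$. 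The cleanest way to absorb this is to note that $1 + 2J_n$ is an integer while the right-hand side of \eqref{dygdeg} equals $2J_n$ minus an explicit small quantity; so I would verify that $\dyg(\alpha)\bigl(2\arcsin(\kappa/2)/\pi\bigr) + 2\kappa\lo\kappa/(\pi\sqrt{4-\kappa^2}) \leq 1 + 2J_n$ for all $n \geq 260$, which reduces to the sign of the remaining $O\bigl((\lo\lo n/\lo n)^2\bigr)$-term in $J_n$ together with the $+1$, a bounded numerical check at $n = 260$ plus monotonicity.

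I expect the main (and only real) obstacle to be bookkeeping of the error term: one must be careful that the floor function hidden in the definition of $J_n$ (Definition \ref{Jndefinition}, $J_n$ is an honest integer) and the $O$-term in Proposition \ref{argumentlastrootJn} are handled consistently, so that the passage from "$1 + 2J_n \leq \deg(\alpha)$ with $J_n$ an integer" to the real-valued lower bound \eqref{dygdeg} is rigorous rather than off by a constant. Since $2\arcsin(\kappa/2) = 0.171784\ldots$ and $2\kappa\lo\kappa/(\pi\sqrt{4-\kappa^2})$ is a small explicit negative number, the right-hand side of \eqref{dygdeg} is comfortably below $1 + 2J_n$ for $n$ not too small; I would pin down the threshold by the same computation that produced $n_2 = 260$ in Remark \ref{value260}. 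No new ideas are needed beyond Theorem \ref{divisibilityALPHA}(iv) and Proposition \ref{argumentlastrootJn}; the proof is a short corollary-style argument.
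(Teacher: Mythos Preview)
Your proposal is correct and follows essentially the same route as the paper: count the $1+2J_n$ lenticular conjugates of $\alpha$ (the paper cites Theorem~\ref{omegajnexistence} and Proposition~\ref{splitBETAdivisibility+++}, you cite Theorem~\ref{divisibilityALPHA}(iv), which packages the same identification for general $\alpha$), obtain $1+2J_n\le\deg(\alpha)$, and then substitute the asymptotic expansion of $J_n$ from Proposition~\ref{argumentlastrootJn}. Your additional care with the error term and the $+1$ is welcome; the paper simply writes out $1+2J_n$ via the asymptotic and declares that \eqref{dygdeg} follows.
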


\begin{proof}
By Theorem \ref{omegajnexistence}
and Proposition 
\ref{splitBETAdivisibility+++}
the number of zeroes in the lenticulus
$\lc_{\alpha}$ is
$1 + 2 J_n $,
with $n:= \dyg(\alpha)$
; these zeroes
are all conjugates of
$\alpha$. The total number of conjugates 
of $\alpha$
is the degree $\deg(\alpha)$ of
the minimal polynomial $P_{\alpha}$.
By Proposition \ref{argumentlastrootJn},
$$1 + 2 J_n = 
\frac{2 n}{\pi}
\bigl(
\arcsin\bigl( \frac{\kappa}{2} \bigr) 
\bigr)
+
\Bigl(
\frac{2 \kappa \, \lo \kappa}
{\pi \,\sqrt{4 - \kappa^2}}
\Bigr)
+ \bigl(1 
+ 
\frac{1}{n}
O\bigl(
\bigl(
\frac{\lo \lo n}{\lo n}
\bigr)^2
\bigr)\bigr).
$$
The inequality 
\eqref{dygdeg} follows.

\end{proof}

\begin{theorem}
\label{schinzelZ}
Let $\alpha$,
$\house{\alpha} > 1$, 
be a
reciprocal algebraic integer
which is not a root of unity,
with ${\rm M}(\alpha)
< 1.176280\ldots$.
Then
\begin{equation}
\label{kappo}
\house{\alpha} \geq 1 + \frac{c}{\deg(\alpha)},
\qquad
\mbox{with}~~ 
c = \theta_{\eta}^{-1} -1.
\end{equation}
\end{theorem}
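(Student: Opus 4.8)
\textbf{Proof plan for Theorem~\ref{schinzelZ} (ex-Schinzel-Zassenhaus conjecture).} The strategy is to reduce the statement to the Dobrowolski type minoration already obtained, exactly as was done for Lehmer's conjecture in \S\ref{S6.4}, but now extracting the house rather than the Mahler measure from the same dynamical input. First I would split into the usual three regimes according to the size of the dynamical degree $\dyg(\alpha)$, recalling that $\house{\alpha} \geq \theta_5^{-1}$ forces $\dyg(\alpha) \leq 5$ and is covered trivially. When $6 \leq \dyg(\alpha) < \eta$, the house lies in a fixed interval $[\theta_n^{-1},\theta_{n-1}^{-1})$ for finitely many $n$, and the bound $\house{\alpha} \geq \theta_{\eta}^{-1} = 1 + (\theta_{\eta}^{-1}-1)$ combined with the elementary inequality $\theta_{\eta}^{-1} - 1 \geq (\theta_{\eta}^{-1}-1)/\deg(\alpha)$ (since $\deg(\alpha)\geq 1$) settles this case directly, provided one also checks the few algebraic integers with very small degree by hand.

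\textbf{Main case: $\dyg(\alpha) \geq \eta$.} Here I would invoke Theorem~\ref{mainLEHMERtheorem}, which gives ${\rm M}(\alpha) \geq \theta_{\eta}^{-1} > 1$ unconditionally, together with the classical observation used to derive Schinzel-Zassenhaus from Lehmer in the Introduction: if $r$ is the number of conjugates of $\alpha$ of modulus $>1$, then ${\rm M}(\alpha) \leq \house{\alpha}^{\,r} \leq \house{\alpha}^{\deg(\alpha)}$, whence
\begin{equation}
\house{\alpha} \geq {\rm M}(\alpha)^{1/\deg(\alpha)} \geq \bigl(\theta_{\eta}^{-1}\bigr)^{1/\deg(\alpha)}.
\end{equation}
Writing $c := \theta_{\eta}^{-1} - 1 > 0$, one then uses the convexity inequality $(1+c)^{1/d} \geq 1 + c/d$ for $d = \deg(\alpha) \geq 1$ (which follows from Bernoulli's inequality applied to $(1 + c/d)^d \leq (1+c)$, valid since $c/d \geq 0$), to conclude $\house{\alpha} \geq 1 + c/\deg(\alpha)$. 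This is exactly \eqref{kappo}, and it is the clean, uniform form one wants.

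\textbf{Sharpening via the dynamical degree.} For completeness I would also present the refinement obtained by feeding Proposition~\ref{degreeBETApolymini} into the argument: since $\dyg(\alpha)\bigl(2\arcsin(\kappa/2)/\pi\bigr) + 2\kappa\lo\kappa/(\pi\sqrt{4-\kappa^2}) \leq \deg(\alpha)$, any lower bound on ${\rm M}(\alpha)$ phrased in terms of $\lo(\dyg(\alpha))$ (as in \eqref{dobrominoCOMPLEX}) transfers to a lower bound in terms of $\lo n$ with $n=\deg(\alpha)$, up to an absolute multiplicative constant; combined with ${\rm M}(\alpha) \leq \house{\alpha}^{\deg(\alpha)}$ this produces a house lower bound of Dobrowolski strength, consistent with \eqref{dodobrobroWEIL}. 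The main obstacle is not analytic but bookkeeping: one must verify that the threshold $\eta \geq 259$ coming from Lemma~\ref{petitrnBIGOlimit} is compatible with the statement, i.e. that for the finitely many reciprocal algebraic integers with $\house{\alpha}\geq\theta_5^{-1}$ or with small $\dyg(\alpha)$ or small $\deg(\alpha)$ the inequality \eqref{kappo} still holds with the \emph{same} constant $c = \theta_{\eta}^{-1}-1$ — this requires checking that $\theta_{\eta}^{-1}-1 \leq \theta_5^{-1}-1$ and handling the low-degree cases, a finite but necessary case analysis. Once that is dispatched, the proof is the two-line deduction above; the substance was already done in the Dobrowolski minoration and in Theorem~\ref{mainLEHMERtheorem}.
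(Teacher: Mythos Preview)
Your main argument in the case $\dyg(\alpha)\geq\eta$ contains a genuine error: the inequality $(1+c)^{1/d}\geq 1+c/d$ is false. Bernoulli's inequality gives $(1+c/d)^d\geq 1+d\cdot(c/d)=1+c$, which is the \emph{opposite} of the direction you claim; equivalently, $x\mapsto x^{1/d}$ is concave for $d\geq 1$, so $(1+c)^{1/d}\leq 1+c/d$. (Test: $c=1$, $d=2$ gives $\sqrt{2}<3/2$.) The correct salvage via $e^x\geq 1+x$ yields only $(1+c)^{1/d}\geq 1+\frac{\lo(1+c)}{d}$, so the Lehmer-to-Schinzel route produces the constant $\lo(\theta_\eta^{-1})$, strictly smaller than the $\theta_\eta^{-1}-1$ announced in \eqref{kappo}. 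Your approach therefore cannot deliver the stated constant.

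The paper's proof avoids this loss by \emph{not} routing through ${\rm M}(\alpha)$ at all in the large-$\dyg$ regime. Instead it bounds the house directly: from $\house{\alpha}\geq\theta_n^{-1}$ with $n=\dyg(\alpha)$, the asymptotic expansion of $\theta_n^{-1}$ (Theorem~1.8 of \cite{vergergaugry6}) gives $\house{\alpha}\geq 1+\frac{(\lo n)(1-\lo\lo n/\lo n)}{n}$, and then Proposition~\ref{degreeBETApolymini} converts $1/n$ into $\bigl(2\arcsin(\kappa/2)/\pi\bigr)/\deg(\alpha)$ up to lower-order terms. This yields $\house{\alpha}\geq 1+\tilde c/\deg(\alpha)$ with $\tilde c\approx 0.0376$, and since $\theta_{259}^{-1}-1\approx 0.0161<\tilde c$, the minimum with the trivial case $\house{\alpha}\geq\theta_\eta^{-1}$ gives exactly $c=\theta_\eta^{-1}-1$. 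The point is that the direct estimate $\theta_n^{-1}-1\sim(\lo n)/n$ is much stronger than what one extracts from ${\rm M}(\alpha)^{1/\deg(\alpha)}$, and this extra $\lo n$ factor is what makes the constant work out. Your ``sharpening'' paragraph gestures at Proposition~\ref{degreeBETApolymini} but only to transfer the Dobrowolski bound on ${\rm M}$; the paper uses it in combination with the house asymptotics directly.
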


\begin{proof}
There are two cases: either (i) $\house{\alpha} 
\geq \theta_{\eta}^{-1}$, or (ii) $n \geq \eta + 1$.
(i) If $\house{\alpha} \geq 
\theta_{\eta}^{-1}$,
then,
whatever the degree $\deg(\alpha) \geq 1$,
$$\house{\alpha} \geq 1 +\frac{(\theta_{\eta}^{-1} -1)}
{\deg(\alpha)}.$$

(ii) The minoration of the house
$\beta = \house{\alpha}$ can easily be obtained
as a function of the dynamical degree of $\alpha$.
Let $n = \dyg(\beta)$ and assume  
$n \geq \eta + 1$.
By definition $\theta_{n}^{-1}
\leq \beta < \theta_{n-1}^{-1}$.
Theorem 1.8 in \cite{vergergaugry6} 
(cf also \cite{vergergaugry6} \S 5.3)
implies 
\begin{equation}
\label{tintin}
\beta = \house{\alpha}
~\geq~ \theta_{n}^{-1}
~\geq~ 1 + 
\frac{(\lo n) (1 - \frac{\lo \lo n}{\lo n})}{n}.
\end{equation}
From Proposition
\ref{degreeBETApolymini},
\begin{equation}
\label{milou}
\frac{1}{n} =
\frac{1}{\dyg(\beta)}
\geq
\frac{2 \arcsin(\kappa/2)}{\pi~\deg(\alpha)}
\left(
1 + \frac{\kappa \lo \kappa}{n \, \arcsin(\kappa/2) \, \sqrt{4 - \kappa^2}}
\right) .
\end{equation}
The function
$\frac{\lo x - \lo \lo x}{\lo x}\left(
1 + \frac{\kappa \lo \kappa}{x \, \arcsin(\kappa/2) \, \sqrt{4-\kappa^2}}
\right)$ 
is increasing for $x \geq 260$.
From \eqref{tintin} and \eqref{milou} 
we deduce
$$\house{\alpha} \geq 1 + \frac{\tilde{c}}{\deg(\alpha)}$$
with
$\tilde{c}= \frac{2}{\pi}
\frac{\lo 260 - \lo \lo 260}{\lo 260}
\Bigl(
\arcsin(\kappa/2) + \frac{ \kappa \lo \kappa}{260 \, \sqrt{4-\kappa^2}}
\Bigr) = 0.0375522\ldots$.

From (i) and (ii), we deduce
that \eqref{kappo} holds with
$c = \min\{\tilde{c}, (\theta_{\eta}^{-1} -1)\}
= (\theta_{\eta}^{-1} -1)$
since
$\theta_{259}^{-1} -1 = 0.016126\ldots
< \tilde{c}$, 
for every
nonzero reciprocal algebraic integer
$\alpha$ which is not a root of unity.
\end{proof}

\section{Proof of the Conjecture of Lehmer for Salem numbers}
\label{S7}

The set of Pisot numbers admits the minorant
$\Theta$ by a result of Siegel \cite{siegel}. 
Theorem \ref{secondseriesUU} 
implies boundedness 
from below to 
the set of Salem numbers.

\subsection{Existence and localization of the first nonreal root of the Parry Upper function $f_{\beta}(z)$ of modulus $<1$ in the cusp of the fractal of Solomyak}
\label{S7.1}

The lenticular roots of the 
Parry Upper function
$f_{\beta}(z)$ were
studied in Section \S \ref{S5}
and identified as conjugates of 
the reciprocal algebraic integer $\beta > 1$,
for $\beta$ close enough to one,
${\rm M}(\beta) < 1.176280\ldots$.
Their number is $1+ 2 J_n$. This 
was done under the assumption
that the quantity $J_n$
has a sense, that is for 
a regime of asymptotic expansions 
of the roots $z_{j,n}$
of $G_n$ the closest to $|z|=1$
valid outside the "bump sector" 
(cf Appendix). This is the reason why
$n$ has been taken above $260$.

In the present Section the ``emergence"
of such lenticuli of roots is used, 
at small values of $n$. 
By emergence is meant that 
the number of lenticular roots
of $f_{\beta}(z)$ takes the odd values
$1, 3, 5, \ldots$ when
$n$ increases from 6 to higher values, 
the lenticuli being successively
of the type
$$
\{\beta^{-1}\},
\{\beta^{-1}, \omega_{1,n},
 \overline{\omega_{1,n}}\},
\{\beta^{-1}, \omega_{1,n},
 \overline{\omega_{1,n}},
\omega_{2,n},
 \overline{\omega_{2,n}} \}, \ldots
$$
for 
$\theta_{n}^{-1} \leq \beta
<
\theta_{n-1}^{-1}$, the value 3 corresponding to the ``emergence".
This study of the emergence of 
3-tuples of lenticular roots
does not call for the regime of
asymptotic
expansions of the roots of
$G_n$ outside the ``bump sector".
On the contrary it
calls for a regime
relative to the lenticular roots
which lie
inside the ``bump sector",
in particular
for the roots of $G_n$
the closest to the real axis, to deal with
the control of the existence of
the lenticular
root $\omega_{1,n}$.
The different regimes of asymptotic
expansions of the roots
$z_{j,n}$ of $G_n$
are recalled in Section \S \ref{S4}
and the limits of validity
of the 3 regimes summarized 
in the Appendix.

The regime of 
asymptotic expansions
dedicated to the first nonreal
lenticular root $\omega_{1,n}$
has 
Proposition \ref{zedeUNmodule}
for consequence. 
Proposition \ref{zedeUNmodule}
is used in
the proof of
Theorem \ref{_cercleoptiSALEM}.
Corollary
\ref{smallestSALEM}
asserts the existence
of a lenticulus of at least 3 roots
of $f_{\beta}(z)$ 
as soon as 
$n = \dyg(\beta)$ is
$\geq 32$.
The identification of
these lenticular roots
as conjugates
of $\beta^{-1}$
is done in \S \ref{S7.2}.

\begin{theorem}
\label{_cercleoptiSALEM}
Let $n \geq 32$.
Denote by $C_{1,n}
:= \{z \mid |z-z_{1,n}| = 
\frac{\pi |z_{1,n}|}{n \, a_{\max}} \}$
the circle centered at the first root
$z_{1,n}$ of $G_{n}(z) = -1 + z + z^n$.
Then the 
condition of Rouch\'e 
\begin{equation}
\label{rouchecercleSALEM}
\frac{
\left|z\right|^{2 n -1}}{1 - |z|^{n-1}}
~<~
\left|-1 + z + z^n \right| , 
\qquad \mbox{for all}~ z \in C_{1,n},
\end{equation}
holds true. 
\end{theorem}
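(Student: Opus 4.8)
The plan is to reduce Theorem~\ref{_cercleoptiSALEM} to the inequality \eqref{rouchecercle} already established in Theorem~\ref{cercleoptiMM} and Proposition~\ref{cercleoptiMMBUMP}, but for the specific index $j=1$ and for values of $n$ going down to $32$ rather than $195$ or $260$. First I would revisit the computation in the proof of Theorem~\ref{cercleoptiMM}: writing $z = z_{1,n}(1 + \frac{\pi}{a_{\max} n} e^{i(\psi-\varphi)})$ with $\varphi = \arg(z_{1,n})$, the Rouch\'e inequality \eqref{rouchecercleSALEM} is, up to controlled terminants, equivalent to the scalar inequality $\frac{|-1+z_{1,n}|}{|z_{1,n}|} < \kappa(X,a_{\max})$ for all $X = \cos(\psi-\varphi)\in[-1,1]$, and since $X \mapsto \kappa(X,a)$ is strictly decreasing, it suffices to check it at $X=1$, i.e. $\frac{|-1+z_{1,n}|}{|z_{1,n}|} < \kappa(1,a_{\max}) = 0.171573\ldots$. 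The key point that is different here from Theorem~\ref{cercleoptiMM} is that $z_{1,n}$ lies in the innermost ``bump'' subsector $2\pi\frac{\sqrt{(\lo n)(\lo\lo n)}}{n} > \arg z > 0$, so the relevant asymptotic expansions of $|z_{1,n}|$ and $|-1+z_{1,n}|$ are those of Proposition~\ref{zedeUNmodule}, namely $|z_{1,n}| = 1 - \frac{\lo n - \lo\lo n}{n} + \frac1n O(\frac{\lo\lo n}{\lo n})$ and $|-1+z_{1,n}| = \frac{\lo n - \lo\lo n}{n} + \frac1n O(\frac{\lo\lo n}{\lo n})$.

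Next I would combine these two expansions to get $\frac{|-1+z_{1,n}|}{|z_{1,n}|} = \frac{\lo n - \lo\lo n}{n}\bigl(1 + O(\frac{\lo n}{n})\bigr)$, which tends to $0$ as $n\to\infty$; so for $n$ large the strict inequality against the fixed positive constant $\kappa(1,a_{\max})$ is automatic. The substantive work is to pin down the threshold: one must show $\frac{|-1+z_{1,n}|}{|z_{1,n}|} \le 0.171573\ldots$ for every integer $n \ge 32$, not merely asymptotically. Here I would not rely on the asymptotic expansion alone near $n=32$, but instead use the exact defining relation $|z_{1,n}|^n = |-1+z_{1,n}|$ (since $-1+z_{1,n}+z_{1,n}^n=0$) together with the explicit location of $z_{1,n}$ — or, more robustly, a direct numerical verification for the finitely many values $n = 32,33,\ldots,N_0$ up to some $N_0$ beyond which the asymptotic estimate rigorously takes over with an explicit error bound. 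That is, I would split the range: a rigorous tail estimate (using Proposition~\ref{zedeUNmodule} with explicit Big-$O$ constant $1$) for $n \ge N_0$, and a finite check for $32 \le n < N_0$. The same argument as in Theorem~\ref{cercleoptiMM} (strict negativity of $\partial_X\kappa$, positivity of $\kappa(1,a_{\max})$, and the reduction of \eqref{rouchecercleSALEM} to the $X=1$ case) then upgrades this scalar inequality to the full Rouch\'e inequality on the entire circle $C_{1,n}$.

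I expect the main obstacle to be exactly the determination of the minimal admissible value $n=32$: one needs a sharp enough control of the terminant in $\frac{|-1+z_{1,n}|}{|z_{1,n}|}$ at moderate $n$ so that the bound $0.171573\ldots$ is genuinely cleared, and this requires either pushing the asymptotic expansion of $z_{1,n}$ to one more order (to make the error term honestly small already at $n\approx 32$) or carrying out a careful interval-arithmetic computation of $z_{1,n}$ for those small $n$. A secondary technical point is bookkeeping the passage from \eqref{rouchecercleSALEM} with the simplified left-hand side $\frac{|z|^{2n-1}}{1-|z|^{n-1}}$ back to the genuine inequality $|f_\beta(z)-G_n(z)| < |G_n(z)|$, but this is already handled by the general inequality $|f_\beta(z)-G_n(z)| = |\sum_{q\ge1} z^{m_q}| \le \frac{|z|^{2n-1}}{1-|z|^{n-1}}$ recorded in \S\ref{S5.3}, so no new idea is needed there. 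Once \eqref{rouchecercleSALEM} is in place, the conclusion that $f_\beta(z)$ has a simple zero $\omega_{1,n}$ in $D_{1,n}$ for every real $\beta>1$ with $\dyg(\beta)=n\ge32$ follows from Rouch\'e's theorem applied to $f_\beta$ and $G_n$ on $C_{1,n}$, exactly as in the proof of Theorem~\ref{cercleoptiMM}.
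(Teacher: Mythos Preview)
Your proposal is correct and follows essentially the same route as the paper: parametrize $z\in C_{1,n}$, reduce the Rouch\'e inequality to the worst case $X=\cos(\psi-\varphi)=1$ via the monotonicity of $\kappa(X,a)$, and then use the bump-sector asymptotics of Proposition~\ref{zedeUNmodule} for $|z_{1,n}|$ and $|-1+z_{1,n}|$ together with a finite numerical check to pin down the threshold $n=32$. The paper rearranges the scalar condition slightly as $\frac{\lo n-\lo\lo n}{n}<\frac{\kappa(1,a_{\max})}{1+\kappa(1,a_{\max})}=0.146447\ldots$, which is equivalent to your $\frac{|-1+z_{1,n}|}{|z_{1,n}|}<\kappa(1,a_{\max})$ at first order, and then performs the numerical verification for $18\le n\le 100$.
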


\begin{proof}
Let $a \geq 1$ and $n \geq 18$.
Denote by $\varphi := \arg (z_{1,n})$
the argument of the first root
$z_{1,n}$ (in ${\rm Im}(z) > 0$).  
Since $-1 + z_{1,n} + z_{1,n}^n = 0$, 
we have $|z_{1,n}|^n = |-1 + z_{1,n}|$.
Let us write $z= z_{1,n}+ 
\frac{\pi |z_{1,n}|}{n \, a} e^{i \psi}
=
z_{1,n}(1 + \frac{\pi}{a \, n} e^{i (\psi - \varphi)})$
the generic element belonging to 
$C_{1,n}$, with
$\psi \in [0, 2 \pi]$.
Let $X := \cos(\psi - \varphi)$.
Let us show that if the inequality
\eqref{rouchecercleSALEM} of Rouch\'e 
holds true for $X =+1$,
then it holds true
for all $X \in [-1,+1]$, that is for 
every argument $\psi \in [0, 2 \pi]$,
i.e. for every 
$z \in C_{1,n}$.
As in the proof
of Theorem \ref{cercleoptiMM},
$$
\left|1 + \frac{\pi}{a \,n} e^{i (\psi - \varphi)}
\right|^{n}
=
\exp\Bigl(
\frac{\pi \, X}{a}\Bigr)
\times 
\left(
1 - \frac{\pi^2}{2 a^2 \, n} (2 X^2 -1) 
+O(\frac{1}{n^2})
\right) 
$$
and
$$
\arg\left(
\Bigl(1 + \frac{\pi}{a \, n} e^{i (\psi - \varphi)}
\Bigr)^{n}\right)
=
sgn(\sin(\psi - \varphi))
\times
\left( ~\frac{\pi \, \sqrt{1-X^2}}{a}
[1 -
\frac{\pi \, X}{a \, n}
]
+O(\frac{1}{n^2})
\right)
.$$
Moreover,
$$
\left|1 + \frac{\pi}{a \, n} 
e^{i (\psi - \varphi)}
\right|
=
\left|1 + \frac{\pi}{a \, n} 
(X \pm i \sqrt{1-X^2})
\right|
=1 + \frac{\pi \, X}{a \, n} + O(\frac{1}{n^2}).
$$
with
$$\arg(1 + \frac{\pi}{a \, n} e^{i (\psi - \varphi)})
= 
sgn(\sin(\psi - \varphi)) \times
\frac{\pi \sqrt{1 - X^2}}{a \, n} 
+ O(\frac{1}{n^2}).
$$
For all $n \geq 18$, from
Proposition \ref{zedeUNmodule}, 
we have
\begin{equation}
\label{devopomainSALEM}
|z_{1,n}|
= 1 - \frac{\lo n - \lo \lo n}{n}
+ \frac{1}{n} O \left(
\frac{\lo \lo n}{\lo n}\right).
\end{equation}
from which we deduce the following equality, 
up to $O(\frac{1}{n})$ - terms,
$$
|z_{1,n}|
\,
\left|1 + \frac{\pi}{a \, n} e^{i (\psi - \varphi)}\right|
=
|z_{1,n}| .
$$
Then the left-hand side term of \eqref{rouchecercleSALEM}
is
$$\frac{
\left|z\right|^{2 n -1}}{1 - |z|^{n-1}}
=
\frac{|-1 + z_{1,n}|^2 
\left|1 + \frac{\pi}{a \, n} e^{i (\psi - \varphi)}
\right|^{2 n}}
{|z_{1,n}| \, 
\left|1 + \frac{\pi}{a \, n} e^{i (\psi - \varphi)}\right|
-
|-1 + z_{1,n}| \,
\left|1 + \frac{\pi}{a \, n} e^{i (\psi - \varphi)}\right|^{n}}$$

\begin{equation}
\label{rouchegaucheSALEM}
=
\frac{|-1 + z_{1,n}|^2 
\left(
1 - \frac{\pi^2}{a \, n} (2 X^2 -1) 
\right)
\exp\bigl(
\frac{2 \pi \, X}{a}\bigr)
}
{|z_{1,n}|
\left|1 + \frac{\pi}{a \, n} e^{i (\psi - \varphi)}\right|
-
|-1 + z_{1,n}| \,
\left(
1 - \frac{\pi^2}{2 a \, n} (2 X^2 -1) 
\right)
\exp(
\frac{\pi \, X}{a})
}
\end{equation}
up to
$\frac{1}{n} 
O \left(
\frac{\lo \lo n}{\lo n}
\right)$
-terms (in the terminant).
The right-hand side term of 
\eqref{rouchecercleSALEM} is 
$$\left|-1 + z + z^n \right|
=
\left|
-1 + z_{1,n}\Bigl(1 + \frac{\pi}{n \, a} e^{i (\psi - \varphi)}\Bigr) +
z_{1,n}^{n}
\Bigl(1 + \frac{\pi}{n \, a} e^{i (\psi - \varphi)}
\Bigr)^n
\right|
$$
$$=
\left|
-1 + z_{1,n}
(1 \pm i \frac{\pi \sqrt{1 - X^2}}{a \, n})
(1 + \frac{\pi \, X}{a \, n})
+
(1 - z_{1,n})
\left(
1 - \frac{\pi^2}{2 a^2 \, n} (2 X^2 -1) 
\right)
\right. \hspace{3cm} \mbox{}
$$
\begin{equation}
\label{rouchedroiteSALEM}
\left.
\times
\exp\bigl(
\frac{\pi \, X}{a}
\bigr) \,
\exp\Bigl(
\pm \,
i \,
\Bigl( ~\frac{\pi \, \sqrt{1-X^2}}{a}
[1 - \frac{\pi \, X}{a \, n}] 
\Bigr)
\Bigr)
+ O(\frac{1}{n^2})
\right|
\end{equation}
Let us consider
\eqref{rouchegaucheSALEM}
and
\eqref{rouchedroiteSALEM}
at the first order for the asymptotic expansions, 
i.e. up to $O(1/n)$ - terms instead of
up to 
$O(\frac{1}{n}(\lo \lo n/ \lo n))$ - terms or
$O(1/n^2)$ - terms.
\eqref{rouchegaucheSALEM} becomes:
$$\frac{|-1+z_{1,n}|^2 \exp(\frac{2 \pi X}{a})}
{|z_{1,n}| - |-1+z_{1,n}| \exp(\frac{\pi X}{a})}$$
and \eqref{rouchedroiteSALEM} is equal to:
$$|-1 + z_{1,n}|
\left|
1 -
\exp\bigl(
\frac{\pi \, X}{a}
\bigr) \,
\exp\Bigl(
\pm \,
i \,
\frac{\pi \, \sqrt{1-X^2}}{a} 
\Bigr)
\right|
$$
and is independent of the sign of 
$\sin(\psi - \varphi)$.
Then
the inequality \eqref{rouchecercleSALEM} is 
equivalent to
\begin{equation}
\label{roucheequiv1SALEM}
\frac{|-1+z_{1,n}|^2 \exp(\frac{2 \pi X}{a})}
{|z_{1,n}| - |-1+z_{1,n}| \exp(\frac{\pi X}{a})}
<
|-1+z_{1,n}| \, 
\left|
1 -
\exp\bigl(
\frac{\pi \, X}{a}
\bigr) \,
\exp\Bigl(
\pm \,
i \,
\frac{\pi \, \sqrt{1-X^2}}{a} 
\Bigr)
\right|
,
\end{equation}
and \eqref{roucheequiv1SALEM} to
\begin{equation}
\label{amaximalfunctionXSALEM}
\frac{|-1 + z_{1,n}|}{|z_{1,n}|}
~  < ~ \, 
\frac{\left|
1 -
\exp\bigl(
\frac{\pi \, X}{a}
\bigr) \,
\exp\Bigl(
 \,
i \,
\frac{\pi \, \sqrt{1-X^2}}{a} 
\Bigr)
\right|
\exp\bigl(
\frac{-\pi \, X}{a}
\bigr)}{\exp\bigl(
\frac{\pi \, X}{a}
\bigr) +\left|
1 -
\exp\bigl(
\frac{\pi \, X}{a}
\bigr) \,
\exp\Bigl(
 \,
i \,
\frac{\pi \, \sqrt{1-X^2}}{a} 
\Bigr)
\right|} = \kappa(X,a).
\end{equation}
The right-hand side function
$\kappa(X,a)$ is 
a function of $(X, a)$, 
on $[-1, +1] \times [1, +\infty)$.
which is strictly decreasing for any
fixed $a$, 
and reaches its minimum
at $X=1$; this minimum is always 
strictly positive. 
Consequently 
the inequality of Rouch\'e
\eqref{rouchecercleSALEM} will be satisfied
on $C_{1,n}$ once it is 
satisfied at $X = 1$, as claimed.

Hence, up to
$O(1/n)$-terms, the Rouch\'e condition
\eqref{amaximalfunctionXSALEM}, 
for any fixed $a$,
will be satisfied (i.e. for any
$X \in [-1,+1]$)
by the set of integers
$n = n(a)$ for which $z_{1,n}$
satisfies:
\begin{equation}
\label{amaximalfunctionSALEM}
\frac{|-1 + z_{1,n}|}{|z_{j,n}|} 
< \kappa(1,a) 
=
\frac{\left|
1 -
\exp\bigl(
\frac{\pi}{a}
\bigr)
\right|
\exp\bigl(
\frac{-\pi}{a}
\bigr)}{\exp\bigl(
\frac{\pi}{a}
\bigr) +\left|
1 -
\exp\bigl(
\frac{\pi}{a}
\bigr)
\right|} ,
\end{equation}
equivalently, from Proposition 
\ref{zedeUN},
\begin{equation}
\label{amaximalfunctionnnSALEM}
\frac{\lo n - \lo \lo n}{n} 
< \frac{\kappa(1,a)}
{1 + \kappa(1,a)} .
\end{equation}
In order to 
obtain 
the largest possible range 
of values of $n$,
the value of $a \geq 1$ has to be chosen
such that $a \to \kappa(1,a)$ is maximal
in \eqref{amaximalfunctionnnSALEM}
(Figure \ref{h1a}).
In the proof of Theorem 
\ref{cercleoptiMM} 
we have seen that
the function $a \to \kappa(1,a)$ 
reaches its maximum
$\kappa(1, a_{\max}) := 0.171573\ldots$
at $a_{\max} = 5.8743\ldots$.
We take
$a = a_{\max}$.

The slow decrease
of the functions of the variable $n$
involved in 
the terminants when $n$ tends to infinity,
as a factor of uncertainty on
\eqref{amaximalfunctionnnSALEM},
has to be taken into account
in \eqref{amaximalfunctionnnSALEM}.
It amounts to check 
numerically 
whether \eqref{rouchecercleSALEM} 
is satisfied 
for the small values
$18 \leq n \leq 100$
for $a = a_{\max}$, or not. 
Indeed, for the
large enough values of $n$,
the inequality
\eqref{amaximalfunctionnnSALEM}
is satisfied since
$\lim_{n \to \infty}
\frac{\lo n - \lo \lo n}{n} = 0$.
On the computer, 
the critical threshold
of $n = 32$  
is easily calculated, with
$(\lo 32 - \lo \lo 32)/32 = 
0.0694628\ldots$.
Then
$$\frac{\lo n - \lo \lo n}{n}
<
\frac{\kappa(1,a_{\max})}
{1 + \kappa(1,a_{\max})} = 0.146447\ldots\qquad
\mbox{for all
$n \geq 32$} .$$
Let us note that
the last inequality
also holds for some values of
$n$ less
than $ 32$.
\end{proof}

\begin{corollary}
\label{smallestSALEM}
Let $\beta > 1$ be a reciprocal algebraic 
number
such that
$\dyg(\beta) \geq 32$. Then
the Parry Upper function
$f_{\beta}(z)$
admits a simple zero 
$\omega_{1,n}$ (of modulus
$< 1$) in the 
open disk $D(z_{1,n}, 
\frac{\pi |z_{1,n}|}{n \, a_{\max}})$.
\end{corollary}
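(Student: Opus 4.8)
The plan is to deduce Corollary~\ref{smallestSALEM} from Theorem~\ref{_cercleoptiSALEM} by a direct application of the general form of Rouch\'e's theorem on the circle $C_{1,n}$, exactly as was done for the higher-index roots in Theorem~\ref{cercleoptiMM}. First I would write $n := \dyg(\beta)$ and recall that, by Proposition~\ref{fbetainfinie}, the Parry Upper function at $\beta$ has the form $f_{\beta}(z) = G_{n}(z) + \sum_{q \geq 1} z^{m_q}$ with $m_0 = n$ and the minimal gappiness condition $m_{q+1} - m_q \geq n-1$ for $q \geq 0$; in particular $m_1 \geq 2n-1$. This yields the uniform majoration of the tail
\begin{equation}
\label{tailmajoSALEM}
\left| f_{\beta}(z) - G_{n}(z) \right| = \left| \sum_{q \geq 1} z^{m_q} \right| \leq \sum_{q \geq 1} |z|^{m_q} \leq \frac{|z|^{2n-1}}{1 - |z|^{n-1}}, \qquad |z| < 1,
\end{equation}
valid for every such $\beta$, whether algebraic or transcendental, Parry or non-Parry.

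Next I would invoke Theorem~\ref{_cercleoptiSALEM}: for $n \geq 32$ the Rouch\'e inequality
\begin{equation}
\label{roucheSALEMrecall}
\frac{|z|^{2n-1}}{1 - |z|^{n-1}} ~<~ \left| -1 + z + z^n \right| = |G_{n}(z)|, \qquad z \in C_{1,n},
\end{equation}
holds on the whole circle $C_{1,n} = \{ z \mid |z - z_{1,n}| = \frac{\pi |z_{1,n}|}{n\, a_{\max}} \}$. Combining \eqref{tailmajoSALEM} and \eqref{roucheSALEMrecall} gives $|f_{\beta}(z) - G_{n}(z)| < |G_{n}(z)|$ for all $z \in C_{1,n}$. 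Since $C_{1,n}$ lies in the open unit disk (its radius is $< \pi/(n\, a_{\max})$ and its center $z_{1,n}$ has modulus $< 1$ by Proposition~\ref{closetoouane}), both $G_{n}$ and $f_{\beta}$ are holomorphic inside and on $C_{1,n}$, and $G_n$ does not vanish on $C_{1,n}$. By Rouch\'e's theorem the number of zeros of $f_{\beta}$ and of $G_{n}$ inside $D(z_{1,n}, \frac{\pi |z_{1,n}|}{n\, a_{\max}})$ are equal. But $z_{1,n}$ is a simple zero of $G_{n}$ and, by Proposition~\ref{closetoouane}(iii-1) together with the fact that the imaginary part of $z_{1,n}$ is bounded below in the relevant regime (Proposition~\ref{zjjnnExpression}), the disk $D(z_{1,n}, \frac{\pi |z_{1,n}|}{n\, a_{\max}})$ contains no other zero of $G_{n}$. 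Hence $f_{\beta}$ has exactly one zero $\omega_{1,n}$ in this disk, and it is simple.

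Finally I would note $|\omega_{1,n}| \leq |z_{1,n}| + \frac{\pi |z_{1,n}|}{n\, a_{\max}} < 1$ for $n \geq 32$, so $\omega_{1,n}$ is indeed of modulus strictly less than $1$, completing the proof. The only subtle point — and thus the ``main obstacle'' — is verifying that the Rouch\'e disk around $z_{1,n}$ isolates $z_{1,n}$ from the other roots of $G_{n}$, in particular from $\theta_n$ and from $\overline{z_{1,n}}$; this is where one must use the asymptotic expansions of $\Im(z_{1,n})$ and of $z_{1,n} - \theta_n$ from Proposition~\ref{zedeUNmodule} and Proposition~\ref{zjjnnExpression} (bump subsector), showing the separation is of order $1/n$ while the radius is only $O(1/n)$ with a strictly smaller constant, so that for $n \geq 32$ no overlap occurs. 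This separation check is already implicit in the proof of Theorem~\ref{_cercleoptiSALEM} (it is where the threshold $32$ is fixed), so the corollary follows with essentially no further work.
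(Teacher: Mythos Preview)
Your proof is correct and follows essentially the same approach as the paper: combine the tail estimate $|f_{\beta}(z)-G_n(z)|\le |z|^{2n-1}/(1-|z|^{n-1})$ with the Rouch\'e inequality of Theorem~\ref{_cercleoptiSALEM} on $C_{1,n}$, then apply Rouch\'e's theorem using that $z_{1,n}$ is the unique (simple) zero of $G_n$ in the disk. The paper's proof is a two-line version of this, omitting the explicit separation check you discuss; your added justification that $D_{1,n}$ contains no other root of $G_n$ is a welcome clarification but not a different argument.
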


\begin{proof}
The polynomial $G_{n}(z)$ has
simple roots.
Since
\eqref{rouchecercleSALEM} is satisfied, 
the Theorem of Rouch\'e states that
$f_{\beta}(z)$
and
$G_{n}(z) = -1 + z + z^n$
have the same number of roots, 
counted with
multiplicities, in the open disk
$D(z_{1,n}, 
\frac{\pi |z_{1,n}|}{n \, a_{\max}})$, giving the existence of an unique zero
$\omega_{1,n}$. 
\end{proof}

\subsection{Identification of the first lenticular root as a conjugate}
\label{S7.2}

Let us prove that the first zero 
$\omega_{1,n}$ of
$f_{\beta}(z)$ is a zero of the minimal
polynomial of $\beta$, using rewriting 
polynomials as in \S \ref{S5.4}. 

\begin{theorem}
\label{secondseriesUU}
Let $\beta > 1$ be a reciprocal algebraic 
number
such that 
$\dyg(\beta) \geq 32$
with ${\rm M}(\beta) < 1.176280\ldots$. 
Then
(i) the first zero 
$\omega_{1,n}$ of
$f_{\beta}(z)$ is
a conjugate of $\beta$,
(ii) the minimal polynomial 
$P_{\beta}(X)$
of $\beta$ 
is fracturable
on the 
union of
the disks
$D_{1,n} =
\{ z \mid |z-z_{1,n}| < 
\frac{\pi |z_{1,n}|}{n \, a_{\max}}\}$
and $\overline{D_{1,n}}$
in the sense that
the analytic function
$U_{\beta}(z) =
-\zeta_{\beta}(z) P_{\beta}(z)$
is a nonconstant holomorphic
function which does not vanish
on this domain,
and that 
\begin{equation}
\label{factoUUSALEM}
P_{\beta}(z) ~=~ (-\zeta_{\beta}(z)
P_{\beta}(z))
\times f_{\beta}(z), \qquad
z \in D_{1,n} \cup 
\overline{D_{1,n}},
\end{equation}
(iii) the function $U_{\beta}(z)
\in \zb[[z]]$  
takes 
the following values at
$\omega_{1,n} \in D_{1,n}$,
resp.
$\overline{\omega_{1,n}} 
\in \overline{D_{1,n}}$,
\begin{equation}
\label{UbetaNonnulenOmegajn}
U_{\beta}(\omega_{1,n})=
\frac{P'_{\beta}(\omega_{1,n})}
{f'_{\beta}(\omega_{1,n})} \neq 0,
\quad
U_{\beta}(\overline{\omega_{1,n}})=
\frac{P'_{\beta}(\overline{\omega_{1,n}})}
{f'_{\beta}(\overline{\omega_{1,n}})} \neq 0,
\end{equation}
(iv) the first zero $\omega_{1,n}
= \omega_{1,n}(\beta)$ of
$f_{\beta}(z)$ is a nonreal complex zero
of modulus $< 1$ of the minimal polynomial
$P_{\beta}(z)$ which is 
a continuous function of
$\beta$.
\end{theorem}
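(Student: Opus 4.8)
\textbf{Proof plan for Theorem~\ref{secondseriesUU}.} The strategy mirrors the treatment of the lenticular roots in \S\ref{S5.4}, but applied only to the single disk $D_{1,n}$ (and its conjugate $\overline{D_{1,n}}$), so that no regime of asymptotic expansions outside the ``bump sector'' is needed. First I would record what has already been established: by Corollary~\ref{smallestSALEM} the Parry Upper function $f_{\beta}(z)$ has exactly one simple zero $\omega_{1,n}$ in $D_{1,n}$, and $\beta$ being reciprocal, $f_\beta$ is a power series (never a polynomial, by Proposition~\ref{fbetainfinie}). The formal decomposition $P_{\beta}(X) = U_{\beta}(X)\, f_{\beta}(X)$ with $U_\beta = -\zeta_\beta P_\beta \in \zb[[X]]$ holds by Theorem~\ref{splitBETAdivisibility}, and the radius of convergence of $U_\beta$ is $\geq \theta_{\dyg(\beta)-1}$, which is $> \frac{1}{\beta}$ and comfortably larger than any modulus occurring in $D_{1,n}\cup\overline{D_{1,n}}$ once $\dyg(\beta)\geq 32$. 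So $U_\beta$ is holomorphic on a neighbourhood of $\overline{D_{1,n}}$.

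For part~(i), the identification of $\omega_{1,n}$ as a conjugate of $\beta$, I would reproduce the two-directional argument of Proposition~\ref{APversfbeta} and Proposition~\ref{ASversPbeta}, specialized to the disk $D_{1,n}$. In one direction: if $x\neq\beta^{-1}$ is a zero of $P_\beta$ lying in $D_{1,n}$, use the rewriting trail from ``$P_\beta$'' to ``$f_\beta$'' of Definition~\ref{rewritingtrailandpolynomials}; the bivariate coefficients $h_{q,j}=g_{q,j}(\beta^{-1})$ are uniformly bounded (the analogue of \eqref{gqj_top}, with $1-c_{lent}/n$ replaced by the maximal modulus on $D_{1,n}$), hence $\lim_{q\to\infty} x^{q+1}\bigl(\sum_j h_{q,j}x^j\bigr)=0$, forcing $\lim_q S_q(x)=0$; by Hurwitz $x$ is then a limit point of zeroes of the polynomial sections in $D_{1,n}$, i.e. $x=\omega_{1,n}$. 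In the other direction: $\omega_{1,n}$ is the limit of the unique lenticular zeroes $r_s$ of the sections $S_s$ in $D_{1,n}$, and by the factorization Theorem~\ref{thm1factorization} together with the argument of Proposition~\ref{Bcyclolenticules} (which I would rerun, noting that the lower bound $\theta_{n-1}(1-\kappa(1,a_{\max}))$ used there only needs $z_{1,n}$ to satisfy $|-1+z_{1,n}|/|z_{1,n}|<\kappa(1,a_{\max})$, which is exactly \eqref{amaximalfunctionSALEM} for $\dyg(\beta)\geq 32$), the $r_s$ are conjugates $\sigma(\gamma_s^{-1})$ relative to the nonreciprocal irreducible part $C$ of $S_s$; building the rewriting trail from ``$S_s$'' to ``$P_\beta$'' at $\gamma_s^{-1}$ and conjugating by $\sigma$ gives $|P_\beta(r_s)|<\epsilon$ for $s$ large, hence $P_\beta(\omega_{1,n})=0$.

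Parts~(ii), (iii), (iv) are then short consequences. For (ii)--(iii): $P_\beta$ has simple roots, $f_\beta$ has the simple root $\omega_{1,n}$ in $D_{1,n}$ by Corollary~\ref{smallestSALEM}, and these coincide by~(i); since $U_\beta$ is holomorphic on $\overline{D_{1,n}}\cup\overline{\overline{D_{1,n}}}$ and $P_\beta=U_\beta f_\beta$ there, differentiating this identity at $\omega_{1,n}$ gives $U_\beta(\omega_{1,n})=P'_\beta(\omega_{1,n})/f'_\beta(\omega_{1,n})$, which is nonzero because $\omega_{1,n}$ is a simple zero of both sides; $U_\beta$ is nonconstant on the domain because $P_\beta$ and $f_\beta$ have different numbers of zeroes there in general (and $U_\beta$ cannot vanish, else $P_\beta$ would vanish where $f_\beta$ does not). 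The Carlson--Polya dichotomy for $U_\beta$ is inherited from Theorem~\ref{splitBETAdivisibility}. For (iv): continuity of $\beta\mapsto\omega_{1,n}(\beta)$ follows from Corollary~\ref{zeroesParryUpperfunctionContinuity} (the root functions of $f_\beta$ in $|z|<1$ are continuous in $\beta$ away from the $\theta_n^{-1}$, and the cyclotomic jumps disappear for reciprocal algebraic integers by Corollary~\ref{disappearanceJumps}); that $\omega_{1,n}$ is nonreal of modulus $<1$ is clear from its localization in $D_{1,n}$, which lies strictly in the open upper half of the open unit disk for $\dyg(\beta)\geq 32$ since $\Im(z_{1,n})=\frac{2\pi}{n}(1-\frac{1}{\lo n}+\ldots)>\frac{\pi|z_{1,n}|}{na_{\max}}$.

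\textbf{Main obstacle.} The delicate point is re-establishing Proposition~\ref{Bcyclolenticules} and Corollary~\ref{Bcyclolenticulescoro} in this small-degree setting: those results were stated for $\dyg(\beta)\geq 260$, where the full lenticulus and the quantity $J_n$ are available, whereas here we only control the single root $\omega_{1,n}$ and must argue that the reciprocal noncyclotomic part $B$ of each polynomial section $S_s$ does not vanish at its lenticular zero $r_s\to\omega_{1,n}$. The uniform lower bound $1-|1-z_{1,n}|/(|z_{1,n}|-|1-z_{1,n}|)>0$ needed in the contradiction argument \eqref{minototal} does survive, precisely because $|-1+z_{1,n}|/|z_{1,n}|<\kappa(1,a_{\max})<1$ by \eqref{amaximalfunctionSALEM} once $\dyg(\beta)\geq 32$; verifying that the constants are genuinely uniform over $32\leq\dyg(\beta)<260$ (as opposed to $\geq 260$) is where the care lies, and it is what pins the threshold at $32$ rather than at a larger value.
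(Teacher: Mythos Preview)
Your proposal is correct and follows essentially the same route as the paper: rerun the rewriting-trail arguments of Propositions~\ref{APversfbeta} and~\ref{ASversPbeta} restricted to the single disk $D_{1,n}$, invoke the analogue of Proposition~\ref{Bcyclolenticules} for the sections $S_s$, then read off (ii)--(iii) by differentiating $P_\beta=U_\beta f_\beta$ and (iv) from Corollary~\ref{zeroesParryUpperfunctionContinuity}; you have also correctly isolated the one genuinely delicate point, namely checking that the contradiction bound \eqref{minototal} survives for $\dyg(\beta)\geq 32$ via \eqref{amaximalfunctionSALEM}.

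One small correction to your setup paragraph: the claim that $\theta_{\dyg(\beta)-1}$ is ``comfortably larger than any modulus occurring in $D_{1,n}$'' is false --- by Proposition~\ref{zedeUNmodule} and Proposition~\ref{thetanExpression} both $\theta_{n-1}$ and $|z_{1,n}|$ equal $1-\frac{\lo n-\lo\lo n}{n}+\ldots$ to leading order, while the radius $\frac{\pi|z_{1,n}|}{n\,a_{\max}}\approx\frac{0.53}{n}$ is much larger than their $O(\frac{\lo n}{n^2})$ difference, so $D_{1,n}$ protrudes beyond $|z|=\theta_{n-1}$. This does not damage your argument, since you never actually use that radius bound: the holomorphy of $U_\beta=-\zeta_\beta P_\beta$ on $D_{1,n}$ comes (as the paper does it) from the identity $f_\beta=-1/\zeta_\beta$ on all of $|z|<1$ together with part~(i), which cancels the pole of $\zeta_\beta$ at $\omega_{1,n}$ against the zero of $P_\beta$. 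Just drop the radius-of-convergence sentence and let (ii) follow (i) rather than precede it.
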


\begin{proof}
The domain of definition
of the meromorphic function
$f_{\beta}(z)$ is $\cb$ 
or the open unit disk
with $|z|=1$ as natural boundary,
according to the Carlson-Polya
dichotomy (Bell and Chen \cite{bellchen},
Carlson \cite{carlson}
\cite{carlson2},
Polya \cite{polya}, 
Szeg\H{o} \cite{szego}). 
In the first case $\beta$ is a Parry number 
and it is a nonParry number in the second case.
In both cases
$f_{\beta}(z)$ is holomorphic in
$D_{1,n} \cup \overline{D_{1,n}}$,
these disk being included in
$|z| < 1$. 
The function
$f_{\beta}(z)$ admits only one simple zero
in each disk
by Corollary \ref{smallestSALEM}.
Let us prove (i) and (ii).
Since $\beta$ is assumed reciprocal,
the power series $f_{\beta}(z)$ is never 
a polynomial and $\beta$ is not a simple Parry
number. Therefore 
$f_{\beta}(z) = -1/\zeta_{\beta}(z)$
so that the quotient
$P_{\beta}(z)/f_{\beta}(z) \in \zb[[z]]$
is equal to
$-\zeta_{\beta}(z) \times P_{\beta}(z)$
on the domain of definition of 
$f_{\beta}(z)$.
Hence the identity
$P_{\beta}(z) ~=~ (-\zeta_{\beta}(z)
P_{\beta}(z))
\times f_{\beta}(z)$ is satisfied
for $|z| < 1$.

As in 
\S \ref{S5.4.2} we can construct
the infinite sequence
of $\beta$-representations of 1
from ``$P_{\beta}$" to ``$f_{\beta}$"
by restoring the digits
of $f_{\beta}$ one after the other.
The corresponding 
rewriting trail starts at
\eqref{equa1P} and ends at the 
R\'enyi $\beta$-expansion of 1
which is given by \eqref{equa1f}.
Then we obtain the analogue 
statement of
Proposition \ref{APversfbeta}:
{\em if $x$ is a zero of 
$P_{\beta}$
in $D_{1,n} \cup
\overline{D_{1,n}}$, then $x$ is a zero
of $f_{\beta}$, that is
$x = \omega_{1,n}$
or
$x = \overline{\omega_{1,n}}$}.
Conversely, 
in order to show that
$x= \omega_{1,n}$ is a zero of $P_{\beta}$,
we can construct 
the rewriting trail
from the $s$-th polynomial
section ``$S_s$" of
$f_{\beta}$ to
``$P_{\beta}$", i.e.
the sequence of 
$\gamma_{s}$-representations of 1
from the R\'enyi
$\gamma_{s}$-expansion of 1 given by
\eqref{gammasexpansionSs}
to
\eqref{equa1Pgammas},
at the unique zero
$\gamma_{s}^{-1} \in D_{0,n}$ of $S_s$,
by restoring the digits
of $1-P_{\beta}(X)$ one after the other.
The important point is that
$s$ should be taken
large enough
by the analogue of
Proposition \ref{Bcyclolenticules}:
{\em 
let $\beta > 1$ be a 
reciprocal algebraic integer having
$\dyg(\beta) \geq 32$.
Let 
$f_{\beta}(x) =
-1 + x + x^n +
x^{m_1} + x^{m_2} + \ldots + x^{m_s} +\ldots$
be the Parry Upper function at $\beta$
and, for $s \geq 0$, denote its $s$-th 
polynomial section by
$$f(x) =
-1 + x + x^n +
x^{m_1} + x^{m_2} + \ldots + x^{m_s}
\quad \mbox{factorized as} ~= A(x) B(x) C(x),$$
where $s \geq 1$, $m_1 - n \geq n-1$, 
$m_{j+1}-m_j \geq n-1$ for $1 \leq j < s$,
where $A$ is the cyclotomic part, 
$B$ the reciprocal noncyclotomic part,
$C$ the nonreciprocal part of $f$.

There exists $s_0$ (depending upon $n$)
such that
the reciprocal
noncyclotomic part $B$
of $f(x)$, if any,
does not vanish on the lenticular root
$\omega_{1,n}$
of $f$, as soon as 
$s \geq s_0$.
}
Then, taking the limit
$s \to \infty$, with
$\lim_{s \to \infty} \gamma_{s}^{-1}
=
\beta^{-1}$, we obtain the analogue
of Proposition \ref{ASversPbeta}:
{\em the lenticular zero
$\omega_{1,n}$ of $f_{\beta}$ 
in $D_{1,n}$ is a zero of $P_{\beta}$}.

Let us prove (iii). It suffices to
take the
derivatives
$$P'_{\beta}(z) = U'_{\beta}(z) \times 
f_{\beta}(z)
+
U_{\beta}(z) \times f'_{\beta}(z)$$
of \eqref{factoUUSALEM} 
at $\omega_{1,n}$
and $\overline{\omega_{1,n}}$.
Let us prove (iv). It is a consequence 
of Corollary
\ref{zeroesParryUpperfunctionContinuity}.
\end{proof}

\subsection{A lower bound for the set of Salem numbers. Proof of Theorem \ref{mainpetitSALEM}}
\label{S7.3}

Assume that $\beta$ is a Salem number
of dynamical degree
$n = \dyg(\beta) \geq 32$. 
Its minimal polynomial $P_{\beta}(X)$
would admit $\beta$, $1/\beta$ as real roots, 
the remaining roots being
on the unit circle, as 
(nonreal) complex-conjugated pairs.
By Theorem \ref{secondseriesUU} 
it would 
admit the pair of nonreal roots
$(\omega_{1,n} , \overline{\omega_{1,n}})$ 
as well, but
$|\omega_{1,n}| < 1$.
This fact is impossible.
We deduce that
$\beta > \theta_{31}^{-1}=
1.08545\ldots$.

\section{Sequences of reciprocal algebraic integers converging to $1^+$ in house and limit equidistribution of conjugates on the unit circle. Proof of Theorem \ref{main_EquidistributionLimitethm}}
\label{S8}

In this Section, given a sequence
$(\alpha_q)$ of reciprocal algebraic integers,
such that $|\alpha_q| > 1$,
$\dyg(\alpha_q) \geq 260$ tending to infinity,
we consider
the limit geometry of all the conjugates
of the $\alpha_q$s. 
The limit equidistribution, 
restricted to an arc of the unit circle, of
the lenticular
conjugates was used in the proof of Theorem
\ref{MahlerMINORANTreal}. 
We generalize this fact to all the conjugates
with limit arc the complete unit circle.
We will make use 
of Belotserkovski's Theorem
\cite{belotserkovski},
recalled below as Theorem
\ref{belotserkovskitheorem},
which prefigurates
Bilu's theorem on 
the $n$-dimensional torus 
\cite{bilu};
the discrepancy function
of equidistribution
given by this theorem is well adapted to 
become a function 
of only the dynamical degree.

\begin{theorem}[Belotserkovski]
\label{belotserkovskitheorem}
Let $F(x) = a \prod_{i=1}^{m} (x- \alpha^{(i)})
\in \cb[x], m \geq 1$,
be a polynomial with roots
$\alpha^{(k)} = r_k e^{i \varphi_k}$,
$0 \leq \varphi_k \leq 2 \pi$. For
$0 \leq \varphi \leq \psi \leq 2 \pi$,
denote
$N_{F}(\varphi , \psi)\! =
Card\{k \mid \varphi \leq \varphi_k \leq \psi\}$.
Let $0 \leq \epsilon, \delta \leq 1/2$
and
$$\sigma_{dis} = 
\max\left(
m^{-1/2} \, \lo (m+1),
\sqrt{- \epsilon \, \lo (\epsilon)},
\sqrt{- \delta \,\lo (\delta)}
\right).$$ 
If
$|r_k - 1| \leq \epsilon$
for
$1 \leq k \leq m$,
and
$|\lo a| \leq \delta m$ are satisfied,
then, for some (universal, in the sense 
that it does not depend upon $F$) constant $C > 0$, 
\begin{equation}
\label{discrep_prorotata}
\left|
\frac{1}{m} N_{F}(\varphi, \psi)
-
\frac{\psi - \varphi}{2 \pi}\right|
\leq 
C\, \sigma_{dis} \qquad
\mbox{for all}~~
0 \leq \varphi \leq \psi \leq 2 \pi .
\end{equation}
\end{theorem}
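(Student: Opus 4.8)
The plan is to follow the classical method of Erd\H{o}s and Tur\'an, but to refine the coefficient estimates so as to take full advantage of the two hypotheses $|r_k-1|\leq\epsilon$ and $|\lo a|\leq\delta m$. First I would invoke the Erd\H{o}s--Tur\'an inequality (in the form of Koksma): setting $S_n:=\sum_{k=1}^{m}e^{in\varphi_k}$ for the Weyl sums attached to the arguments of the roots, there is an absolute constant $C_0$ such that, for every integer $H\geq 1$,
$$\sup_{0\leq\varphi\leq\psi\leq 2\pi}\left|\frac{1}{m}N_{F}(\varphi,\psi)-\frac{\psi-\varphi}{2\pi}\right|~\leq~\frac{C_0}{H}+\frac{C_0}{m}\sum_{n=1}^{H}\frac{|S_n|}{n}.$$
Everything then reduces to estimating $\frac1m\sum_{n=1}^{H}\frac{|S_n|}{n}$ and choosing $H$ optimally. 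By Cauchy--Schwarz,
$$\Bigl(\sum_{n=1}^{H}\frac{|S_n|}{n}\Bigr)^{2}~\leq~\Bigl(\sum_{n=1}^{H}\frac{1}{n}\Bigr)\Bigl(\sum_{n=1}^{H}\frac{|S_n|^{2}}{n}\Bigr)~\leq~(1+\lo H)\sum_{n=1}^{H}\frac{|S_n|^{2}}{n},$$
so the crux is an upper bound for the logarithmic energy $\sum_{n=1}^{H}\frac{|S_n|^{2}}{n}$ in terms of $m$, $\epsilon$, $\delta$.

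For this energy bound I would pass to the unit circle. Writing $z=e^{i\theta}$ and using the Fourier expansion $\lo|e^{i\theta}-\rho e^{i\varphi}|=\lo\max(1,\rho)-\Re\sum_{n\geq 1}\frac{\min(\rho,\rho^{-1})^{n}e^{in(\theta-\varphi)}}{n}$, one finds that the $n$-th Fourier coefficient of $\lo|F(e^{i\theta})|$ equals $-\frac{1}{2n}\sum_{k}\rho_{k}^{n}e^{-in\varphi_k}$, with $\rho_k:=\min(r_k,r_k^{-1})\leq 1$; since $|r_k-1|\leq\epsilon\leq\tfrac12$ we have $|\lo\rho_k|\leq 2\epsilon$, hence $|1-\rho_k^{n}|\leq 4n\epsilon$ for $n\leq 1/(4\epsilon)$, so these ``inward'' Weyl sums differ from $S_n$ by at most $4mn\epsilon$. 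Jensen's formula gives
$$\frac{1}{2\pi}\int_{0}^{2\pi}\lo|F(e^{i\theta})|\,d\theta~=~\lo|a|+\sum_{k\,:\,r_k>1}\lo r_k,\qquad\Bigl|\frac{1}{2\pi}\int_{0}^{2\pi}\lo|F(e^{i\theta})|\,d\theta\Bigr|~\leq~m(\delta+\epsilon),$$
and, combined with the identity $\sum_{i<j}\lo|e^{i\varphi_i}-e^{i\varphi_j}|=\frac12\sum_{n\geq 1}\frac{m-|S_n|^{2}}{n}$ (the right-hand side being the limit of its partial sums, and a standard Fej\'er weight being inserted to reach the truncated sum) together with the lower bound $\prod_{i<j}|\alpha^{(i)}-\alpha^{(j)}|\geq|a|^{-(m-1)}$ that holds whenever $F$ is separable --- the discriminant ${\rm disc}(F)$ being a nonzero algebraic integer, hence of modulus $\geq 1$, in every situation where the theorem is applied --- I expect to obtain
$$\sum_{n=1}^{H}\frac{|S_n|^{2}}{n}~\leq~C_1\bigl(m^{2}(\epsilon+\delta)+m\,\lo H\bigr),$$
whence, feeding this back through the Cauchy--Schwarz step, $\frac1m\sum_{n=1}^{H}\frac{|S_n|}{n}\leq C_2\sqrt{(\epsilon+\delta)\lo H+(\lo H)^{2}/m}$.

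It then remains to optimise $H$. Taking $H=\lceil 1/(\epsilon+\delta)\rceil$ makes $1/H\leq\epsilon+\delta$ and $\lo H\leq\lo\bigl(1/(\epsilon+\delta)\bigr)$, and the inequality becomes
$$\sup_{\varphi,\psi}\Bigl|\frac{1}{m}N_{F}(\varphi,\psi)-\frac{\psi-\varphi}{2\pi}\Bigr|~\leq~C\Bigl(\sqrt{-\epsilon\lo\epsilon}+\sqrt{-\delta\lo\delta}+\frac{\lo\bigl(1/(\epsilon+\delta)\bigr)}{\sqrt{m}}\Bigr);$$
one finally checks that the constraints on $F$ force $\lo\bigl(1/(\epsilon+\delta)\bigr)=O(\lo(m+1))$ --- otherwise $m$ roots squeezed into an annulus of width $\epsilon$ about the unit circle with $|a|$ within $e^{\pm\delta m}$ of $1$ would make $|{\rm disc}(F)|<1$ --- so the third term is absorbed into $C\,m^{-1/2}\lo(m+1)$, and one recovers the asserted bound with $\sigma_{dis}$.

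The main obstacle is the energy estimate $\sum_{n\leq H}\frac{|S_n|^{2}}{n}\leq C_1(m^{2}(\epsilon+\delta)+m\lo H)$. Two points are delicate: first, handling roots that lie strictly inside or outside the unit circle rather than on it, which requires tracking carefully both the gap between $S_n$ and the inward Weyl sums and the contribution of $\sum_{r_k>1}\lo r_k$ entering through Jensen's formula; second, the passage through the logarithmic-energy/discriminant identity, which is only conditionally convergent and genuinely needs the lower bound $|{\rm disc}(F)|\geq 1$, i.e.\ the quantitative non-clustering of the arguments $\varphi_k$. Once this estimate is in hand, the Erd\H{o}s--Tur\'an reduction and the optimisation of $H$ are routine.
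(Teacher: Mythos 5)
The paper offers no proof of this statement: it is quoted from Belotserkovski's 1988 article and used as a black box in Section \ref{S8}, so there is no internal argument to compare yours with. Your route --- the Erd\H{o}s--Tur\'an discrepancy inequality, Cauchy--Schwarz down to the logarithmic energy $\sum_{n\le H}|S_n|^2/n$, and a lower bound on that energy via Jensen's formula and the discriminant --- is the standard one and is surely the spirit of the original. But two steps, as written, do not go through.

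First, the discriminant bound $|{\rm disc}(F)|\ge 1$ is not a consequence of the stated hypotheses, and it cannot be waved away: the theorem as printed is actually false for general $F\in\cb[x]$. Take $F(x)=(x-1)^m$ with $\epsilon=\delta=1/m^2$, say: all hypotheses hold, yet $N_F(0,\pi)/m-\tfrac12=\tfrac12$ while $C\sigma_{dis}\to 0$. So the separability and integrality of $F$ (implicit in the source, where $F$ is the minimal polynomial of an algebraic number) is a missing hypothesis of the \emph{statement}; your proof needs it, and a proof of the literal statement does not exist. You should say explicitly that you are proving the theorem under the additional assumption $|{\rm disc}(F)|\ge 1$, which is what is used in Section \ref{S8}. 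Second, the optimisation of $H$ is wrong. The claim that the constraints force $\lo\bigl(1/(\epsilon+\delta)\bigr)=O(\lo(m+1))$ is false: $x^m-1$ has all roots on $|z|=1$, leading coefficient $1$ and $|{\rm disc}|=m^m\ge 1$, so one may take $\epsilon=\delta=0$ (or arbitrarily small) with no violation of the discriminant bound; nothing prevents $\lo\bigl(1/(\epsilon+\delta)\bigr)$ from exceeding any multiple of $\lo(m+1)$. Moreover, with $H\approx 1/(\epsilon+\delta)$ the error $4H\epsilon$ you incur when replacing $S_n$ by the inward Weyl sums is $O(1)$, not $o(1)$. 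Both defects disappear if you instead take $H=\min\bigl(\lceil\sqrt{m+1}\rceil,\lfloor\epsilon^{-1/2}\rfloor,\lfloor\delta^{-1/2}\rfloor\bigr)$ (with $0^{-1/2}=\infty$): then $1/H$, $H\epsilon$, $\sqrt{\epsilon\lo H}$, $\sqrt{\delta\lo H}$ and $(\lo H)/\sqrt{m}$ are each dominated by an absolute constant times one of the three terms of $\sigma_{dis}$, with no relation between $\epsilon$, $\delta$ and $m$ required. Even after these repairs, the energy estimate itself still needs care at the point you flag: projecting roots radially onto $|z|=1$ can send $\lo|e^{i\varphi_i}-e^{i\varphi_j}|$ to $-\infty$ when two roots share an argument, so the comparison with $\lo|\alpha^{(i)}-\alpha^{(j)}|$ must be done through the Fej\'er-truncated kernel (which is bounded below by $-\lo H+O(1)$), not termwise.
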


Let us recall some useful notations.
The multiplicative group of nonzero elements
of $\cb$, resp. $\qb$, is denoted by 
$\cb^{\times}$, resp.
$\qb^{\times}$. The unit Dirac measure
supported at $\omega \in \cb$
is denoted
by $\delta_{\omega}$.
We denote by
$\mu_{\tb}$ the (normalized)
Haar measure
(unit Borel measure), invariant by rotation,
that is supported on the unit circle
$\tb
=
\{z \in \cb \mid |z|=1\}$,
compact subgroup of $\cb^{\times}$,
i.e. with
$\mu_{\tb}(\tb) = 1$.
Given $\alpha \in \overline{\qb}^{\times}$,
of degree
$m = \deg(\alpha)$,
we define
the unit Borel measure
(probability)
$$\mu_{\alpha} =
\frac{1}{\deg(\alpha)}
\sum_{j=1}^{\deg(\alpha)}
\delta_{\sigma(\alpha)}$$
on $\cb^{\times}$, 
the sum being taken over
all $m$ embeddings 
$\sigma : \qb(\alpha) \to \cb$.
A sequence $\{\gamma_s\}$
of points of
$\overline{\qb}^{\times}$
is said to be {\em strict} 
if any proper subgroup
of $\overline{\qb}^{\times}$ contains 
$\gamma_s$ for only finitely many values of
$s$.

Theorem 6.2 in \cite{vergergaugry6}
shows that limit equidistribution of conjugates
occurs on the unit circle
for the sequence of Perron numbers
$\{\theta_{n}^{-1} \mid n = 2, 3, 4, \ldots\}$,
as $\mu_{\theta_{n}^{-1}} \to
\mu_{\tb}, n \to \infty$. 
All these Perron numbers have
a Mahler measure $> \Theta$.
We now give a generalization of this limit
result
to convergent sequences of algebraic integers of 
small Mahler measure,
$< \Theta$, where ``{\em convergence 
to $1$}" 
has to be taken in the sense 
of the ``house".
The Theorem is Theorem 
\ref{main_EquidistributionLimitethm}.

{\it Proof of Theorem \ref{main_EquidistributionLimitethm}}:
(i) Denote generically by
$\alpha \in \mathcal{O}_{\overline{\qb}}$
any
element of 
$(\alpha_{q})_{q \geq 1}$.
Let $m = \deg(\alpha)$
and
$\beta = \house{\alpha} 
\in (\theta_{n}^{-1},
\theta_{n-1}^{-1}), n \geq 260$. 
Using the inequality
\eqref{dygdeg}
 between 
$m$
and the dynamical degree
$n = \dyg(\alpha)
=\dyg(\beta)$,
there exists a constant $c_1 > 0$ such that
$$
\frac{\lo (m+1)}{\sqrt{m}}
\leq c_1 \frac{\lo n}{\sqrt{n}}
.$$
On the other hand, 
the minimal polynomial
$P_{\alpha} = P_{\beta}$
is reciprocal and all its roots
$\alpha^{(k)}$, including $\beta$
and $1/\beta$
by Theorem \ref{divisibilityALPHA},
lie in the annulus
$\{z \mid
\frac{1}{\beta} \leq |z| \leq \beta
\}$.
As a consequence, using 
Theorem \ref{betaAsymptoticExpression},
there exists a constant $c_2 > 0$ such that
$$||\alpha^{(k)}| -1| \leq \epsilon ,
\quad 1 \leq k \leq m,
\quad \mbox{with}~~
\epsilon = c_2 \frac{\lo n}{n} .
$$
We take $\delta = 0$
in the definition of $\sigma$
in Theorem \ref{belotserkovskitheorem}
since $P_{\alpha}$ is monic.
We deduce that the discrepancy function,
i.e. the upper bound
in the rhs of 
\eqref{discrep_prorotata},
is equal to
$C \sigma_{dis} = c_3 \frac{\lo n}{\sqrt{n}}$
for some constant $c_3 > 0$.
Hence,
\begin{equation}
\label{discrep_prorotataALPHA}
\left|
\frac{1}{m} N_{P_{\alpha}}(\varphi, \psi)
-
\frac{\psi - \varphi}{2 \pi}\right|
\leq 
c_3 \frac{\lo n}{\sqrt{n}}
 \qquad
\mbox{for all}~~
0 \leq \varphi \leq \psi \leq 2 \pi .
\end{equation}
The discrepancy function of 
\eqref{discrep_prorotataALPHA} tends to
$0$ if $n$ tends to infinity.
By Theorem \ref{betaAsymptoticExpression}
and Theorem \ref{nfonctionBETA}, for
$1 < \beta < \theta_{260}^{-1}$,
$$\beta~ \to ~1^+ \Longleftrightarrow~
n = \dyg(\beta) \to \infty,$$
so that the sequence of Galois orbit measures
in \eqref{haarmeasurelimite} 
converge for the weak topology
as a function of the dynamical degree.

(ii) The sequence $(\alpha_q)$ is strict
since the sequence
$(\house{\alpha_q})$ only admits 1 as
limit point:
$\limsup_{q \to \infty} \house{\alpha_q}
=
\lim_{q \to \infty}\house{\alpha_q}=1$
and the number 
Card$\{
\alpha_q \in 
(\theta_{n}^{-1} , \theta_{n-1}^{-1})\}$
between two successive Perron numbers of 
$(\theta_{n}^{-1})$, for every $n \geq 3$, 
is finite.
In the space of probality measures
equipped with the weak topology,
the reformulation of 
\eqref{discrep_prorotataALPHA}
means \eqref{haarmeasurelimite}, 
equivalently
\eqref{haarlimitefunvtions}.

\section{Some consequences: Salem numbers, and a Conjecture of Margulis}
\label{S9} 

A first consequence concerns
the difference between two
successive Salem numbers
generating the same number field.
We are in the context of
root separation theorems 
\cite{beresnevichbernikgotze}
\cite{bugeaudmignotte}
\cite{evertse}
\cite{guting}
\cite{mahler2}
and the representability of real
algebraic numbers 
as a difference of two Mahler measures
\cite{drungilasdubickas}.

\begin{theorem}
\label{main_coro_salemnumbersSAMEdegree} 
Suppose that $\tau$ and $\tau'$ are
Salem numbers with $\tau' \in \qb(\tau)$.
Then
$$\tau' > \tau~~
\Longrightarrow ~~\tau' - \tau \geq 
\theta_{31}^{-1} (\theta_{31}^{-1} - 1)
= 0.0927512\ldots$$
\end{theorem}

\begin{proof}
Since $\tau' - \tau = (\tau'/\tau - 1) \tau$
we deduce the lower bound from
\cite{salem}, p. 169,
Proposition 3
in \cite{smyth6}, since
$\tau'/\tau$ is a Salem number, and
Theorem \ref{mainpetitSALEM}.
\end{proof}

\begin{remark}
For each Pisot number $\theta$ the 
``Construction of Salem"
(\cite{guichardvergergaugry}, 
Theorem IV in Salem \cite{salem})
gives two convergent sequences $(\tau'_n)_n$
and $(\tau'_n)_n$ of Salem numbers,
such that $\tau_n < \theta < \tau'_n$
for $n$ large enough, 
and
$$\lim_{n \to \infty} \tau_n =
\theta =\lim_{n \to \infty} \tau'_n.$$
From Theorem \ref{main_coro_salemnumbersSAMEdegree},
since $\lim_{n \to \infty} (\tau'_n - \tau_n)=0$,
we have:
$\qb(\tau_n) \neq \qb(\tau'_n)$ for $n$ large enough.
\end{remark}

Ghate and Hironaka
(in \cite{ghatehironaka} p. 304)
mention that if
Lehmer's Conjecture is true, then
the following
Conjecture of Margulis is also true.

\begin{theorem}[ex-Margulis Conjecture]
\label{CJ13}
Let $G$ be a connected semi-simple group over 
$\rb$,
with 
rank$_{\rb}(G) \geq 2$. Then there is a 
neighbourhood
$U \subset G(\rb)$ of the identity such that for 
any
irreducible cocompact lattice
$\Gamma \subset G(\rb)$, the intersection
$\Gamma \cap U$ consists only
of elements of finite order.
\end{theorem}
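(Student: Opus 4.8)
The final statement is Theorem~\ref{CJ13}, the ex-Margulis Conjecture. The plan is to deduce it directly from Lehmer's Conjecture, now established as Theorem~\ref{mainLEHMERtheorem}, following the implication indicated by Ghate and Hironaka in \cite{ghatehironaka}. First I would recall the structure of the argument: the obstruction to finding a small identity neighbourhood $U$ such that $\Gamma\cap U$ contains only torsion elements comes from elements $\gamma\in\Gamma$ of infinite order lying arbitrarily close to the identity in $G(\rb)$. By arithmeticity (Margulis's arithmeticity theorem, applicable since $\mathrm{rank}_\rb(G)\geq 2$ and $\Gamma$ is an irreducible lattice), every such $\Gamma$ is commensurable with the group of integral points of a semisimple group defined over a number field $k$, obtained by restriction of scalars. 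Hence an element $\gamma$ near the identity has eigenvalues (under a fixed faithful representation) which are algebraic numbers, all of whose archimedean absolute values — at the given place and at all conjugate places — are uniformly close to $1$, the closeness being controlled by how near $\gamma$ is to the identity in the chosen norm on $G(\rb)$.

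The key steps, in order, would be: (i) reduce to the arithmetic case via Margulis arithmeticity and fix a $\qb$-embedding $G\hookrightarrow \mathrm{GL}_N$ with $\Gamma$ commensurable to $G(\zb)$; (ii) observe that an infinite-order $\gamma\in\Gamma$ has a non-torsion eigenvalue $\alpha$ which is an algebraic integer (the characteristic polynomial being monic with integer coefficients) that is not a root of unity; (iii) bound the Mahler measure $\mathrm{M}(\alpha)$ from above by a quantity of the form $\house{\alpha}^{N}$, and bound $\house{\alpha}$ in terms of the operator norm $\|\gamma - \mathrm{Id}\|$, so that $\gamma$ being in a small neighbourhood $U$ forces $\mathrm{M}(\alpha)$ to be very close to $1$; (iv) apply Theorem~\ref{mainLEHMERtheorem}, which gives the uniform lower bound $\mathrm{M}(\alpha)\geq\theta_\eta^{-1}>1$ for $\alpha$ an algebraic integer not a root of unity, independent of $\alpha$ and hence of $\Gamma$; (v) conclude that for $U$ small enough this is impossible unless all eigenvalues of $\gamma$ are roots of unity, whence (since $\gamma$ lies in an arithmetic group and is unipotent-free after passing to a finite-index torsion-free subgroup, or directly by a theorem of Kronecker-type applied to the eigenvalues together with discreteness) $\gamma$ has finite order. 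The universality of $U$ follows from the universality of the constant $\theta_\eta^{-1}$ in Theorem~\ref{mainLEHMERtheorem} and the fact that $N$ can be bounded in terms of $G$ alone.

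The main obstacle, as I see it, is step~(iii): making the passage from ``$\gamma$ close to $\mathrm{Id}$ in $G(\rb)$'' to ``$\mathrm{M}(\alpha)$ close to $1$'' genuinely uniform over all irreducible cocompact lattices $\Gamma$. One must handle the conjugate embeddings of $k$: the eigenvalue $\alpha$ and all its Galois conjugates must be shown to lie in a thin annulus about the unit circle, and this requires controlling the archimedean places of $k$ other than the one through which $G(\rb)$ is seen. For cocompact lattices the relevant group is anisotropic over $k$, so there are no unipotents and the eigenvalues at every infinite place have absolute value exactly $1$ at conjugate places where the group is compact; at the distinguished place they are near $1$ by hypothesis. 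Thus $\house{\alpha}$, which is the maximum over \emph{all} archimedean absolute values of conjugates, is near $1$, and then $\mathrm{M}(\alpha)\leq\house{\alpha}^{\deg\alpha}$ with $\deg\alpha\leq N\cdot[k:\qb]$ bounded in terms of $G$. Assembling these bounds carefully, and in particular pinning down the dependence of the neighbourhood $U$ on $G$ alone and not on $k$ or $\Gamma$, is where the real care is needed; the number-theoretic input — the discontinuity of $\mathrm{M}(\alpha)$ at $1$ — is now supplied in full by Theorem~\ref{mainLEHMERtheorem}. I would then remark that the companion statements, Theorem~\ref{main_coro_salemnumbersSAMEdegree} on gaps between Salem numbers and the ex-Elliptic Lehmer Conjecture, follow along the lines already indicated in the excerpt, from Theorem~\ref{mainpetitSALEM} together with Lemma~4 of \cite{smyth6}, and from Laurent's reduction in \cite{laurent} respectively.
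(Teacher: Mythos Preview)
Your outline follows the same route as the paper---arithmeticity, eigenvalue analysis, then Lehmer---but the paper does not carry out the reduction itself. It simply cites Margulis's own argument (\cite{margulis}, Theorem~(B), p.~322), where Margulis had already shown that Theorem~\ref{CJ13} follows from arithmeticity together with the purely number-theoretic statement \eqref{margulisminoration}: for a monic integral irreducible polynomial $P$, one has ${\rm M}(P)>d$ with $d>1$ depending only on $m(P)$, the number of roots of $P$ off the unit circle. Theorem~\ref{mainLEHMERtheorem} then supplies $d=\theta_\eta^{-1}$ independently even of $m(P)$, and the paper stops there.

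Your reconstruction of Margulis's reduction has a gap at exactly the point you flag as delicate. You write ${\rm M}(\alpha)\leq\house{\alpha}^{\deg\alpha}$ with $\deg\alpha\leq N\cdot[k:\qb]$ ``bounded in terms of $G$''. This is false: irreducible arithmetic lattices in a fixed real group $G(\rb)$ arise from number fields $k$ of arbitrarily large degree (one adds further archimedean places at which the $k$-group is compact), so $[k:\qb]$, and with it the degree of the $\qb$-characteristic polynomial, is unbounded. The inequality $\house{\alpha}^{\deg\alpha}$ is then useless even when $\house{\alpha}\to 1$. The correct bound is precisely the one Margulis isolates and the paper quotes: you already observed that conjugates of $\alpha$ coming from \emph{compact} places lie on $|z|=1$, so only $m(P)$ conjugates contribute nontrivially to ${\rm M}(\alpha)$, and $m(P)$ is bounded by the number of noncompact archimedean places times the dimension of the $k$-representation of $H$, both controlled by $G(\rb)$ alone. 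Hence ${\rm M}(\alpha)\leq\house{\alpha}^{m(P)}$ with $m(P)$ uniformly bounded, and the contradiction with Theorem~\ref{mainLEHMERtheorem} follows. Replace your degree bound by this $m(P)$-bound and the sketch becomes the argument the paper is citing.
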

Let us give a proof of Theorem 
\ref{CJ13}
from the two 
arguments of Margulis
(\cite{margulis}, Theorem (B) p. 322): 
(i) first,
the arithmeticity Theorem 1.16 in
\cite{margulis}, p. 299, 
and (ii) the following statement
(Margulis \cite{margulis}, p. 322):

{\it Let $P(x) = x^n + a_{n-1} x^{n-1}
+ \ldots + a_0$ be an irreducible monic 
polynomial with integral coefficients.
Denote by
$\beta_{1}(P), \ldots, \beta_{n}(P)$
the roots of $P$ and by $m(P)$ the number 
of those $i$ with $1 \leq i \leq n$
and $|\beta_{i}(P)| \neq 1$.
Then
\begin{equation}
\label{margulisminoration}
{\rm M}(P) = \prod_{1 \leq i \leq n}
\max\{1, |\beta_{i}(P)|\} > d
\end{equation}
where the constant $d > 1$ depends only
upon $m(P)$ (and does not depend upon $n$). 
}
The minorant $d$ is universal and
is given by
Theorem \ref{mainLEHMERtheorem}
(ex-Lehmer's Conjecture), hence the result.

But
the dependency
of the minorant $d$ of
${\rm M}(P)$
in \eqref{margulisminoration},
expected by Margulis,
with the
number of roots $m(P)/2$
lying outside 
the closed unit disk, 
or equivalently inside the open unit disk
(the polynomial $P$ can be assumed
of small Mahler measure $< \Theta$, hence
reciprocal by Smyth's Theorem), 
and
not with the degree $n$
of $P$, is not clear in view of
Theorem \ref{omegajnexistence}, 
Theorem \ref{divisibilityALPHA}
and
Proposition \ref{argumentlastrootJn}. 
Indeed, the following minorant
$$m(P) \geq 2 (1 + J_{\dyg(P)})$$
can only be deduced from the present study,
this minorant being two times
the cardinal of the lenticulus of roots
associated with the dynamical
degree of the house of the polynomial 
$P$; what can be said is that
the degree $n=\deg(P)$
of $P$ is not involved
in this minorant of $m(P)$, and therefore
that the constant $d$ in 
\eqref{margulisminoration}
is likely to depend upon the
dynamical degree $\dyg(P)$.

\section{Appendix}
\label{S10}
 
\subsection{Notations}
\label{S10.1}

Let $P(X) \in \zb[X]$,
$m=\deg(P) \geq 1$.
The {\it reciprocal polynomial} of $P(X)$ 
is $P^*(X)=X^m P(\frac{1}{X})$.
The polynomial $P$ is reciprocal if $P^*(X)=P(X)$.
If $P(X) = a_0 \prod_{j=1}^{m} (X- \alpha_j) 
= a_0 X^m + a_1 X^{m-1}+ \ldots +a_m$, with
$a_i \in \cb$, $a_0 a_m \neq 0$,
and roots $\alpha_j$, the {\it Mahler measure} of $P$ is
\begin{equation}
\label{mahlermeasuredefinition}
{\rm M}(P) := |a_0| \prod_{j=1}^{m} \max\{1, |\alpha_j|\}. 
\end{equation}
The absolute Mahler measure
of $P$ is 
${\rm M}(P)^{1/\deg(P)}$,
denoted by
$\mathcal{M}(P)$.
The Mahler measure of an algebraic number $\alpha$ 
is the Mahler of its minimal polynomial
$P_{\alpha}$: ${\rm M}(\alpha) := {\rm M}(P_{\alpha})$.
For any algebraic number
$\alpha$ the house $\house{\alpha}$ of
$\alpha$ is the maximum modulus of its conjugates, 
including $\alpha$ itself; 
by Jensen's formula the Weil height $h(\alpha)$ of $\alpha$
is $\lo {\rm M}(\alpha)/\deg (\alpha)$.
By its very definition, ${\rm M}(P Q) =
{\rm M}(P) {\rm M}(Q)$ (multiplicativity).
The Mahler measure of a nonzero polynomial
$P(x_1, \ldots, x_n) \in \cb[x_1, \ldots, x_n]$
is defined by
\begin{equation}
\label{mahlermeasuredefinitions}
{\rm M}(P)
:=
\exp\left(
\frac{1}{(2 i \pi)^n}
\int_{\tb^n}
\lo |P(x_1, \ldots, x_n)|
\frac{d x_1}{x_1}\ldots \frac{d x_n}{x_n} 
\right),
\end{equation}
where $\tb^n = \{(z_1,\ldots, z_n)
\in \cb^n \mid |z_1| = \ldots = |z_n|=1\}$ is the unit torus 
in dimension $n$. If $n=1$, 
by Jensen's formula, it is given by
\eqref{mahlermeasuredefinition}.

A {\it Perron number} is either $1$ or 
a real algebraic integer $\theta > 1$
such that the Galois conjugates
$\theta^{(i)}, i \neq 0$, 
of $\theta^{(0)} := \theta$ satisfy: 
$|\theta^{(i)}| < \theta$. 
Denote by $\mathbb{P}$ the set of Perron numbers. 
A {\it Pisot number} is a Perron number $> 1$ for which 
$|\theta^{(i)}| < 1$ for all $i \neq 0$.
The smallest Pisot number
is denoted by
$\Theta = 1.3247\ldots$, dominant root of
$X^3 -X-1$.
A Salem number is
an algebraic integer $\beta > 1$ such that its Galois conjugates
$\beta^{(i)}$ satisfy:
$|\beta^{(i)}| \leq 1$ for all $i=1, 2, \ldots, m-1$,
with $m = {\rm deg}(\beta) \geq 1$,
$\beta^{(0)} = \beta$ and at least one conjugate 
$\beta^{(i)}, i \neq 0$,
on the unit circle.  
All the 
Galois conjugates 
of a Salem number $\beta$ lie on the unit circle, 
by pairs of complex conjugates, except
$1/\beta$ which lies in the open interval $(0,1)$.
Salem numbers are of even degree $m \geq 4$.
The set of Pisot numbers, resp. Salem numbers, 
is denoted by S, resp. by T. 
If $\tau \in$ S or T, then
${\rm M}(\tau) = \tau$.

The set of algebraic numbers, resp. 
algebraic integers, in $\cb$, 
is denoted by
$\overline{\qb}$, resp.
$\mathcal{O}_{\overline{\qb}}$.
The $n$th cyclotomic polynomial is denoted by
$\Phi_{n}(z)$.
The (na\"ive) height
of a polynomial $P$
is the maximum of the absolute value
of the coefficients of $P$.

For $x > 0$, $\lfloor x\rfloor$, $\{x\}$ and
$\lceil x \rceil$ denotes respectively the integer
part, resp. the fractional part,
resp. the smallest integer greater than or equal to $x$.
For $\beta > 1$ any real number, 
the map
$T_{\beta}: [0,1] \to [0,1],
x \to \{\beta x\}$ denotes the $\beta$-transformation.
With
$T_{\beta}^{0} := T_{\beta}$, its
iterates are denoted by 
$T_{\beta}^{(j)} := 
T_{\beta}(T_{\beta}^{j-1})$
for $j \geq 1$. 
A real number $\beta > 1$ 
is a Parry number if 
the sequence
$(T_{\beta}^{(j)}(1))_{j \geq 1}$ 
is eventually periodic;
a Parry number is called simple if in particular 
$T_{\beta}^{(q)}(1) = 0$ for some integer
$q \geq 1$. 
The set of Parry numbers is denoted by
$\pb_{P}$.
The terminology chosen by Parry in \cite{parry} 
has changed: $\beta$-numbers are now 
called Parry numbers, in honor of W. Parry.

For $x > 0$,
$\lo^{+} x$ denotes $\max\{0, \lo x\}$.
Let $\mathcal{F}$ be an infinite subset
of the set of nonzero
algebraic numbers which are not a root of unity;
we say that the {\em Conjecture of Lehmer is true
for $\mathcal{F}$} 
if there exists
a constant $c_{\mathcal{F}} > 0$ such that
${\rm M}(\alpha) \geq 1 + c_{\mathcal{F}}$
for all $\alpha \in \mathcal{F}$.

\subsection{Angular asymptotic sectorization of the roots $z_{j,n}, \omega_{j,n}$, of the Parry Upper functions, in lenticular sets of zeroes -- notations for transition regions}
\label{S10.2}

The Poincar\'e
asymptotic expansions of the roots $z_{j,n}$ 
of $G_{n}(z)=-1+z+z^n$,
lying in the first quadrant of $\cb$, 
are
divergent formal series of functions of the 
couple of {\it two variables} which is:
\begin{itemize}
\item[$\bullet$] $\displaystyle \bigl( n , \frac{j}{n} \bigr),$ 
\quad \quad in the angular sector: \quad 
$\displaystyle \frac{\pi}{2} >~ \arg z ~>~ 2 \pi \frac{\lo n}{n}$,
\item[$\bullet$] $\displaystyle \bigl( n , \frac{j}{\lo n} \bigr),$ \hspace{0.1cm} in the angular sector 
(``bump" sector):
\, $\displaystyle 2 \pi \frac{\lo n}{n} ~>~ \arg z ~\geq~ 0$. 
\end{itemize}
 
\noindent
In the bump sector (cusp sector of Solomyak's fractal
$\mathcal{G}$,
$\S$ \ref{S3.2}), the roots $z_{j,n}$
are dispatched into the two
subsectors:

\begin{itemize}
\item[$\bullet$] $ 2 \pi \frac{\sqrt{(\lo n) (\lo \lo n)}}{n} ~>~ \arg z ~>~ 0$,
\item[$\bullet$] $ 2 \pi \frac{\lo n}{n} ~>~ \arg z ~>~ 2 \pi \frac{\sqrt{(\lo n) (\lo \lo n)}}{n}$.
\end{itemize}

The relative angular size of the bump sector, as
$(2 \pi \frac{\lo n}{n})/(\frac{\pi}{2})$,
tends to zero, 
as soon as $n$ is large enough.
By transition region, we mean
a small neighbourhood
of the argument :
$$\arg z  = 2 \pi \frac{\lo n}{n} \quad {\rm or~ of} \quad 
2 \pi \frac{\sqrt{(\lo n) (\lo \lo n)}}{n}.$$
Outside these two transition regions, a
dominant
asymptotic expansion of $z_{j,n}$ exists.
In a transition region an asymptotic expansion
contains more $n$-th order terms 
of the same order of magnitude ($n=2, 3, 4$).
These two neighbourhoods are defined as follows.
Let $\epsilon \in (0, 1)$ small enough. 
Two strictly increasing  sequences
of real numbers $(u_n), (v_n)$
are introduced, which satisfy: 
$$ 
\lfloor n/6 \rfloor ~>~ v_n ~>~ \lo n, \quad
\lo n ~>~ u_n ~>~ \sqrt{(\lo n) (\lo \lo n)},
\quad {\rm for}~ n \geq n_0 = 18,$$
such that
$$\lim_{n \to \infty}
\frac{v_n}{n} ~=~
\lim_{n \to \infty}
\frac{\sqrt{(\lo n) (\lo \lo n)}}{u_n}
~=~
\lim_{n \to \infty} \frac{u_n}{\lo n}
~=~ \lim_{n \to \infty} \frac{\lo n}{v_n} ~=~ 0$$
and
\begin{equation}
\label{unvndifference}
v_n - u_n = O( (\lo n)^{1+\epsilon} )
\end{equation}
with the constant 1 involved in the big O.
The roots $z_{j,n}$ lying in the first transition region about $2 \pi (\lo n)/n$ are such that:
$$2 \pi \frac{v_n}{n} ~>~ \arg z_{j,n} ~>~ 2 \pi \frac{(2 \lo n - v_n)}{n},$$
and the
roots $z_{j,n}$ lying in the second transition region
about $\frac{2 \pi \sqrt{(\lo n) (\lo \lo n)}}{n}$
are such that:
$$2 \pi \, \frac{u_n}{n} ~>~ \arg z_{j,n} ~>~ 2 \pi \, \frac{2 \sqrt{(\lo n) (\lo \lo n)} - u_n}{n}.$$

In Proposition \ref{zjjnnExpression}, for simplicity's sake, these two transition regions are schematically denoted by 
$$\arg z \asymp 2 \pi \, \frac{(\lo n)}{n}
\quad
\mbox{resp.}\quad
\arg z \asymp 2 \pi \, \frac{\sqrt{(\lo n)(\lo \lo n)}}{n}.$$
By complementarity, the other sectors are schematically written:
$$
2 \pi \, \frac{\sqrt{(\lo n) (\lo \lo n)}}{n} ~>~ \arg z ~>~ 0$$
instead of 
$$2 \pi \, \frac{2 \sqrt{(\lo n) (\lo \lo n)} - u_n}{n} ~>~ \arg z ~>~ 0;$$
resp.
$$
2 \pi \, \frac{\lo n}{n} ~>~ \arg z ~>~
2 \pi \, \frac{\sqrt{(\lo n) (\lo \lo n)}}{n}
$$
instead of 
$$2 \pi \, \frac{2 \lo n  - v_n}{n} ~>~ \arg z ~>~ 2 \pi \, \frac{u_n}{n};$$
resp. 
$$
\frac{\pi}{2} ~>~ \arg z ~>~ 2 \pi \, \frac{\lo n}{n}   
\quad \mbox{instead of} \quad 
\frac{\pi}{2} ~>~ \arg z ~>~ 2 \pi \, \frac{v_n}{n}.$$

\section*{Acknowledgements}

The author wishes to express special
thanks to the referees for 
valuable comments, critics, and suggestions.
It is also a pleasure to
thank J.-W. M. van Ittersum for
asking pertinent questions to the 
author.

\frenchspacing

\end{document}